\let\savering\ring
\let\ring\relax
\let\ring\savering
\pgfplotsset{compat=1.14}
\titleformat{\subsection}[runin]
       {\normalfont\bfseries}
       {\thesubsection}
       {0.5em}
       {}
       [.]
\titleformat{\subsubsection}[runin]
       {\normalfont\bfseries}
       {\thesubsubsection}
       {0.5em}
       {}
       [.]
\newtheorem{Th}{Theorem}[section]
\newtheorem{Def}[Th]{Definition}
\newtheorem{Rq}[Th]{Remark}
\newtheorem{Pro}[Th]{Proposition}
\newtheorem{Cor}[Th]{Corollary}
\newtheorem{Lem}[Th]{Lemma}
\theoremstyle{empty}
\newtheorem{refproof}{Proof}
\newcommand{\R}{\mathbb{R}}
\newcommand{\C}{\mathscr{C}}
\newcommand{\T}{\mathbf{T}}
\newcommand{\K}{\widehat{\mathbb{P}}_0}
\newcommand{\dr}{\mathrm{d}}
\newenvironment{proof}{\noindent\textit{Proof.~}}{\hfill$\Box$\bigbreak} 
\title{Global existence and modified scattering for the solutions to the Vlasov-Maxwell system with a small distribution function}
\author{L\'eo Bigorgne\footnote{Univ Rennes, CNRS, IRMAR - UMR 6625, F-35000 Rennes, France.
{\em E-mail address:} {\tt leo.bigorgne@univ-rennes.fr}}}
\date{}
\begin{document}

\maketitle
    
\begin{abstract}
The purpose of this paper is twofold. In the first part, we provide a new proof of the global existence of the solutions to the Vlasov-Maxwell system with a small initial distribution function. Our approach relies on vector field methods, together with the Glassey-Strauss decomposition of the electromagnetic field, and does not require any support restriction on the initial data or smallness assumption on the Maxwell field. Contrary to previous works on Vlasov systems in dimension $3$, we do not modify the linear commutators and avoid then many technical difficulties.

In the second part of this paper, we prove a modified scattering result for these solutions. More precisely, we obtain that the electromagnetic field has a radiation field along future null infinity and approaches, for large time, a smooth solution to the vacuum Maxwell equations. As for the Vlasov-Poisson system, in constrast, the distribution function converges to a new density function $f_\infty$ along \textit{modifications} of the characteristics of the free relativistic transport equation. In order to define these logarithmic corrections, we identify an effective asymptotic Lorentz force. By considering logarithmical modifications of the linear commutators, defined in terms of derivatives of the asymptotic Lorentz force, we finally prove higher order regularity results for $f_\infty$.
\end{abstract}

    \tableofcontents

\section{Introduction}

The Vlasov-Maxwell system, which is used to describe the dynamics of collisionless plasma, can be written as
\begin{alignat}{2}
& \partial_t f+\widehat{v} \cdot \nabla_x f + ( E+\widehat{v} \times B) && \cdot \nabla_v f = 0, \label{VM11}  \\
&  \nabla_x \cdot E = \int_{\R^3_v}f \mathrm{d}v,  && \partial_t E = \nabla_x \times B -\int_{\R^3_v} \widehat{v}f \mathrm{d}v, \label{VM22}  \\
&  \nabla_x \cdot B = 0,   && \partial_t B = - \nabla_x \times E, \label{VM33}
\end{alignat} 
where
\begin{itemize}
\item $f:\R_{+, \, t} \times \R^3_x \times \R^3_v \rightarrow \R_+$ is the density distribution function of the particles. 
\item $\widehat{v}=\frac{v}{v^0}$, with $v^0:=\sqrt{1+|v|^2}$, is the relativistic speed of a particle of momentum $v \in \R^3_v$.
\item $\int_{\R^3_v} f \mathrm{d}v$ and $\int_{\R^3_v} \widehat{v}f \mathrm{d}v$ are respectively the total charge density and the total current density.
\item $E,B:\R_{+, \, t} \times \R^3_x \rightarrow \R^3$ are respectively the electric and the magnetic field.
\end{itemize}
For simplicity, we assume that the plasma is composed by one species of particles of charge $q=1$ and mass $m=1$. Our results can be extended without any additional difficulty to several families of particles of different charges and positive masses\footnote{The case of massless particles requires in fact a different analysis \cite{massless}.}. We refer to \cite{Glassey} for a detailed introduction to these equations.

The initial value problem for the Vlasov-Maxwell equations, together with a regular initial data set $(f_0,E_0,B_0)$ composed by a function $f_0:\R^3_x \times \R^3_v \rightarrow \R_+$ and two fields $E_0,B_0:\R^3_x \rightarrow \R^3$ satisfying the constraint equations $\nabla_x \cdot E_0=\int_v f_0 \mathrm{d}v$ and $\nabla_x \cdot B_0=0$, is well-posed \cite{Woll}. On the other hand, the global existence problem for classical solutions to the Vlasov-Maxwell system is still open\footnote{In contrast, a global in time existence result for weak solutions was proved by DiPerna-Lions \cite{DPLions} and revisited in \cite{Reinweak}} and has only been addressed in some particular cases, such as under certain symmetry assumptions \cite{GSc90,GSc97,Gsc98,LukStrain,Rein2,XWang}. For the general case, since the pioneering work of Glassey-Strauss \cite{GlStrauss}, several continuation criteria have been obtained \cite{GlasseyStrauss0,GlStracrit,KlSta,BGP2,Pallard1,SAI,LukStrain2,Kunze,Pallard2,Patel}.

\subsection{Small data solutions to the Vlasov-Maxwell system} Much more is known for this particular perturbative regime, in which global existence holds and the solutions disperse. In \cite{GSt}, for small compactly supported initial data, Glassey-Strauss proved the optimal decay rate $\int_v f \mathrm{d}v \lesssim t^{-3}$ on the velocity average of the distribution function and obtained estimates for the electromagnetic field and its first order derivatives. Shorly after, in the multi-species case, the smallness assumptions on the individual particle densities is relaxed by \cite{GSc}. Later, Schaeffer \cite{Sc} removed the support restriction on the velocity variable. However, his method lead to a loss on the estimate of $\int_v f \mathrm{d}v$. 

It is only recently that all the compact support assumptions on the initial data were removed in two independent results \cite{dim3,Wang}. Both of these works are based on vector field methods and the latter use also Fourier analysis. These robust approaches allowed to derive sharp pointwise decay estimates on the solutions and their (high order) derivatives. Moreover, in \cite{dim3}, the initial decay hypothesis in $v$ is optimal and improved estimates on certain \textit{null} components of the electromagnetic field are derived. Finally, using the framework of Glassey-Strauss and without any compact support restriction, Wei-Yang derived a global existence result which does not require the initial Maxwell field to be small \cite{WeiYang}. 

In the first part of this article, we provide an alternative but shorter proof of the main results of \cite{dim3,Wang}, without assuming any smallness assumption on the electromagnetic field. Compared to \cite{WeiYang}, we require more regularity on the initial data but our method allows us to control the derivatives of the solutions, up to an arbitrary order $N$. These informations are needed for the second part of the paper.

\subsection{Modified scattering results for the Vlasov-Poisson system} 

Sharp decay estimates for the small data solutions to the Vlasov-Poisson system were first derived by \cite{Bardos} and then, with various improvements, by \cite{HRV,Poisson,Duan,smallSchaeffer} (for the relativistic cases, see \cite{rVP0,rVPWang,rVP}). Modified scattering for these solutions was established in \cite{Choi} and then in \cite{scattPoiss,Panka}, where more informations is obtained on the asymptotic dynamics governing the modification of the linear characteristics. Furthermore, a scattering map has been constructed by \cite{scattmap} and let us finally mention that similar results hold for perturbations of a point charge \cite{PausaWid,PWY}.

In the second part of this paper, we investigate such problems in the context of the Vlasov-Maxwell equations. In particular, as \cite{scattPoiss} for the Vlasov-Poisson system, we prove that
$$ \int_{\R^3_x} f(t,x,v) \dr x \to Q_\infty (v), \qquad \text{as $t \to +\infty$.}$$
The scattering charge $Q_\infty$ is deeply related to the leading order term of the asymptotic expansion of both the charge density $\int_v f \dr v$ and the current density. It allows us to define an asymptotic Lorentz force $v \mapsto \mathrm{Lor} (v)$, from which we deduce the modified scattering statement for $f$ (see Theorem \ref{Th0} and Remark \ref{Rqcharact} for more details). We also prove higher order regularity properties for the limit distribution $f_\infty$, which require a more thorough analysis. To our knowledge, there is no such regularity result for the Vlasov-Poisson system.

\subsection{Vector field methods for relativistic transport equations}

Our analysis of the asymptotic behavior of both the electromagnetic field and the distribution function relies on vector field methods (see Section \ref{subsecvfm} for an overview of the key ideas). These kind of techniques was first developed by Klainerman \cite{Kl85} in order to study solutions to nonlinear wave equations and then adapted by \cite{CK} to the Maxwell equations. It is only recently that the approach has been adapted to relativistic transport equations by Fajman-Joudioux-Smulevici \cite{FJS}, leading in particular to a proof of the stability of Minkowski spacetime for both the massive and massless Einstein-Vlasov system \cite{FJS3,EVmassless} (see also \cite{LindbladTaylor,Taylor} for alternative proofs). Our work \cite{dim3} concerning the small data solutions to the Vlasov-Maxwell system relies on such techniques as well. The method has also been successfully used to derive boundedness and decay estimates for the solutions to the massless Vlasov equation on a fixed Kerr black hole \cite{ABJ,Schwa}. Finally, even if it concerns the non-relativistic setting, let us also mention that such approaches have been applied in the collisional regime \cite{ Chatu1,Chatu2,Chatu3}.

In order to deal with slowly decaying error terms, all the works on the small data solutions to massive relativistic Vlasov systems or the Vlasov-Poisson system \cite{FJS3,dim3,Poisson,Duan}, based on vector field methods, require to dynamically modify certain linear commutators of the Vlasov operator. One of the main novelty of this article consists in proving that the solutions are global without using these modified vector fields, which considerably simplifies the analysis. For this, even though certain quantities grow logarithmically in time, we are able to close the energy estimates by identifying several hierarchies in the commuted equations (see Section \ref{SubsecdiffVlasov} for more details). It is then important to derive the optimal decay rate $t^{-3}$ for $\int_v f \mathrm{d}v$ and its derivatives by a method allowing well-chosen weighted $W^{N,\infty}_{x,v}$ norms of the distribution function to grow slowly in time. We believe that this approach could be applied to other system of equations, in particular for both the Einstein-Vlasov and the Vlasov-Poisson systems.

\subsection{The main result} We present here a short version of our main result, stated in Theorems \ref{Th1}-\ref{Th2} below, where we also describe the behavior of the derivatives of the solutions.
\begin{Th}\label{Th0}
Any solution $(f,E,B)$ to the Vlasov-Maxwell system \eqref{VM11}-\eqref{VM33} arising from a small initial distribution function and smooth as well as sufficiently decaying initial data is global in time. Moreover,
\begin{enumerate}
\item there exists a solution $(E^{\mathrm{vac}}\!,B^{\mathrm{vac}})$ to the vacuum Maxwell equations\footnote{The vacuum Maxwell equations are given by \eqref{VM22}-\eqref{VM33} with $f=0$.} approaching $(E,B)$ as $t+|x| \! \to \infty$,
$$ \forall \, (t,x) \in \R_+\times \R^3, \qquad   \left|E-E^{\mathrm{vac}} \right|(t,x) +\left|B-B^{\mathrm{vac}} \right|(t,x) \leq C_q (1+t+|x|)^{-1-q}, \qquad \frac{1}{2} \leq q <1.$$
\item The Lorentz force has a self-similar asymptotic profile $v \mapsto \mathrm{Lor}(v)$,
$$ \forall \, (t,x,v) \in \R_+ \times \R^3_x \times \R^3_v, \qquad \big| t^2\big( E(t,x+t\widehat{v})+\widehat{v} \times B(t,x+t\widehat{v}) \big)-\mathrm{Lor}(v) \big| \lesssim \langle x \rangle^2 |v^0|^8\frac{\log^n(3+t)}{1+t},$$
where $\langle x \rangle :=(1+|x|^2)^{\frac{1}{2}}$ and, say, $n=70$. We have modified scattering to a new density function $f_{\infty}:\R^3_x \times \R^3_v \rightarrow \R_+$,
$$ \forall \; t \geq 3, \qquad \big\| f \big( t,X_{\C}(t,\cdot,\cdot),\cdot \big)-f_{\infty} \big\|_{L^1_{x,v} \cap L^{\infty}_{x,v}} \lesssim t^{-1} \log^n(t),$$
where the cartesian components $X^k_\C$ of the modified spatial characteristics $X_{\C} \in \R^3_x$ are defined as
$$ X^k_{\C}(t,x,v) := x^k+t\widehat{v}^k-\frac{\log(t)}{v^0} \left( \mathrm{Lor}^k(v)-\widehat{v} \cdot \mathrm{Lor}(v) \, \widehat{v}^k \right), \qquad 1 \leq k \leq 3.$$
\end{enumerate}
\end{Th}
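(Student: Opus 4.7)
The plan is to separate global existence from the asymptotic analysis, and within the asymptotic analysis to isolate first the radiation field of the Maxwell part and second the effective Lorentz force that governs the modified characteristics of $f$.

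For global existence and the decay estimates needed downstream, the strategy is a continuity (bootstrap) argument on weighted energies. Commute the Vlasov operator $\T = \partial_t + \widehat{v}\cdot\nabla_x + (E+\widehat{v}\times B)\cdot\nabla_v$ with the complete lifts to $\R^3_x\times\R^3_v$ of the Poincar\'e Killing fields (together with a suitable scaling), and commute the Maxwell system with the conformal Killing fields of Minkowski. One would propagate $L^2$ energies for $(E,B)$ and weighted $L^1_{x,v}$ and $L^\infty_{x,v}$ norms of $f$, then recover the sharp velocity-average decay $\int_v f\,\dr v \lesssim t^{-3}$ via the Glassey-Strauss representation. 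The main obstacle, and the point where this proof departs from previous works such as \cite{dim3,Wang}, is that the commutator $[\T,Z]$ produces a term proportional to $\nabla_v f$ multiplied by a field derivative whose decay is only borderline (logarithmically divergent) in time; rather than dynamically modifying the $Z$'s, one organizes the commuted equations into a hierarchy in which the top-order norms are allowed to grow like $\log^n(t)$ while the low-order norms remain bounded, and one closes by reinjection of the improved low-order decay.

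For the first conclusion, construct the scattering pair $(E^{\mathrm{vac}},B^{\mathrm{vac}})$ by an inverse-wave argument. Using the Glassey-Strauss decomposition, split $(E,B)$ into a homogeneous piece solving vacuum Maxwell from the initial data and an inhomogeneous piece given by retarded convolution with the source $\int_v \widehat{v} f\,\dr v$. By step one this source decays uniformly as $t^{-3}$, so a corrector $(E^{\mathrm{corr}},B^{\mathrm{corr}})$ solving vacuum Maxwell can be defined by an absolutely convergent time integral from $t$ to $+\infty$; setting $(E^{\mathrm{vac}},B^{\mathrm{vac}})$ to be the sum, the difference $(E-E^{\mathrm{vac}},B-B^{\mathrm{vac}})$ is exactly the tail of that integral, controlled by $C_q(1+t+|x|)^{-1-q}$ for any $q<1$. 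The ceiling $q<1$ reflects the slow decay of the source along directions aligned with particle velocities $\widehat{v}$.

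For the second conclusion, the heart is to identify the asymptotic Lorentz force. Evaluating the full force along a free trajectory $x+t\widehat{v}$ using the Glassey-Strauss representation, the initial-data contribution decays faster than $t^{-2}$ thanks to the improved decay of the good null components, while the Vlasov contribution, after the change of variables $y = z+t\widehat{w}$ in the spatial integral, produces at leading order $t^{-2}$ times an integral over $w$ of the scattering charge $Q_\infty(w) := \lim_{t\to\infty}\int_x f(t,x,\cdot)\,\dr x$, which is the definition of $\mathrm{Lor}(v)$. Substituting this leading-order force into the characteristic ODE $\dot v = E+\widehat v\times B$ and integrating yields the $\log(t)$ correction appearing in $X_\C$; the projection $\mathrm{Lor}^k - \widehat v\cdot\mathrm{Lor}\,\widehat v^k$ appears because $v^0 = \sqrt{1+|v|^2}$ is a conserved quantity of the leading system, so only the component of the force orthogonal to $\widehat v$ generates a secular displacement in position. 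Finally, showing $f(t,X_\C,\cdot) \to f_\infty$ in $L^1\cap L^\infty$ reduces to a Gr\"onwall estimate on the difference between the true and modified characteristics, using the pointwise control of $\nabla_{x,v} f$ from step one together with an $O(t^{-1}\log^n t)$ remainder in the force expansion; this $\log^n$ factor is precisely the channel through which the logarithmic growth of the commuted norms is transferred to the final scattering rate.
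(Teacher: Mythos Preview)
Your outline has the right architecture, but two of the stated mechanisms would not work as written, and a third step diverges substantially from the paper's route.

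The first issue is in the bootstrap. You propose to propagate $L^2$ energies for $(E,B)$ and $L^1_{x,v}$ norms for $f$, and to recover $\int_v f\,\dr v\lesssim t^{-3}$ ``via the Glassey--Strauss representation''. Neither is what the paper does, and the first would fail: the paper observes explicitly that $\|\widehat Z f\|_{L^1_{x,v}}\gtrsim\log t$ for homogeneous $\widehat Z$, so a Klainerman--Sobolev route through $L^1_{x,v}$ cannot reach the sharp $t^{-3}$ rate without modifying the commutators---precisely the device this paper avoids. The bootstrap assumptions here are pointwise decay estimates on $\mathcal L_{Z^\gamma}F$ and on $\int_v\widehat v^\mu\widehat Z^\beta f\,\dr v$; the propagated norms on $f$ are weighted $L^\infty_{x,v}$ norms (Proposition~\ref{estiLinfini}), and the hierarchy is indexed by $\beta_H$ (the number of homogeneous vector fields in $\widehat Z^\beta$) together with the power of the weight $\mathbf z$, not by derivative order. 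Also, Glassey--Strauss represents the \emph{field} in terms of $f$, not the other way around; the $t^{-3}$ decay of velocity averages is obtained by the change of variables $y=x-t\widehat v$ and uniform control of the spatial averages $\int_x\widehat Z^\beta f\,\dr x$ (Lemma~\ref{cdvbis}, Corollary~\ref{Corspatialave}, Proposition~\ref{Proestimoyvkernet}).

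Second, your claim that the initial-data contribution to the force along $x+t\widehat v$ decays faster than $t^{-2}$ from the good null components alone is incomplete when the plasma is charged: the initial field is then only $O(r^{-2})$, and one must first peel off the explicit pure-charge part $\overline F$ (Section~\ref{Subsecpurecharge}, Proposition~\ref{Cordataconvcharge}) before the remainder has the required extra decay. Third, the construction of $(E^{\mathrm{vac}},B^{\mathrm{vac}})$ in the paper is not a Cook-type tail integral: the paper shows $\underline\alpha(F)$ has a radiation field $\underline\alpha^{\mathcal I^+}$ on $\mathcal I^+$, builds a bijective scattering map $\mathscr F^+$ for vacuum Maxwell (Theorem~\ref{Thscat}, via the Lindblad--Schlue backward result for $\Box$), and takes $F^{\mathrm{vac}}=(\mathscr F^+)^{-1}(\underline\alpha^{\mathcal I^+})$; the pointwise rate $(1+t+|x|)^{-1-q}$ comes from Proposition~\ref{ProscatMax}. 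Finally, a minor point: the projection $\mathrm{Lor}^k-(\widehat v\cdot\mathrm{Lor})\widehat v^k$ arises from $\partial_{v^j}\widehat v^i=(\delta^i_j-\widehat v_j\widehat v^i)/v^0$, i.e.\ from linearizing $\widehat V$, not from any conservation of $v^0$ (which the Lorentz force does not preserve).
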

\begin{Rq}
No modification of the spatial characteristics is in fact required in the exterior of the light cone $\{|x| \geq t\}$ in order to prove such a result (see Section \ref{AnnexeC2}). We already observed in \cite{ext} that the small data solutions to the Vlasov-Maxwell system have a better behavior in this region.

Similarly, no correction of the linear characteristics should in principle be necessary in order to prove a scattering statement in higher dimensions. This is consistent with the result of \cite{PankaPoisson} concerning the Vlasov-Poisson system in dimension $d\geq4$ and our study of the asymptotic behavior of the small data solutions of the Vlasov-Maxwell system in high dimensions \cite{dim4}.

The case of massless particles differs from the case of massive particles treated in this paper. Indeed, in view of \cite{massless}, we expect the small data solutions of the massless Vlasov-Maxwell system to satisfy linear scattering in dimension $d=3$.
\end{Rq}
\begin{Rq}\label{Rqcharact}
The behavior of the Lorentz force along the linear trajectories suggests that the characteristics of the Vlasov-Maxwell system satisfy, for $t \gg 1$,
$$ \dot{X} =\widehat{V}, \qquad \dot{V} \approx t^{-2} \, \mathrm{Lor}(V),  \qquad \qquad X(0)=x_0, \qquad V(0)=v_0.$$
Hence, we can presume that $V$ converges to $v$, so that 
$$V(t) \approx v-\frac{1}{t}\mathrm{Lor}(v), \qquad \dot{X}(t) \approx \widehat{v}-\frac{1}{tv^0}\mathrm{Lor}(v)+\frac{\widehat{v} \cdot \mathrm{Lor}(v)}{tv^0} \widehat{v} +O(t^{-2})$$
and we can then expect $X(t) \approx X_{\C}(t,x_0,v)$. 

Moreover, we could in fact decompose $\mathrm{Lor}(v)$ as $E^\infty(v)+\widehat{v} \times B^\infty(v)$ and observe that, as $v \to 0$,
$$ X_{\C}(t,x,v) =x+tv-\log(t) E^\infty(v) +o(v).$$
In other words, for small velocities, the modified characteristics $X_\C$ of the Vlasov-Maxwell system approach the ones constructed in \cite{scattPoiss} for the Vlasov-Poisson system.
\end{Rq} 

\subsection{Structure of the paper}

In Section \ref{Sec2} we introduce the notations and the tools used throughout this article. Then, we state our main results, Theorems \ref{Th1}-\ref{Th2}, and present the key ideas of the proof. In Section \ref{Sec3}, we set up the bootstrap assumptions and discuss their immediate consequences. Section \ref{Sec4} concerns the study of the distribution function. In particular, we prove that weighted $L^\infty_{x,v}$ norms of $f$ and its derivatives grow at most logarithmically and we improve the bootstrap assumption on their velocity average. Then, in Section \ref{Sec5}, we conclude the proof of the global existence of the small data solutions to \eqref{VM11}-\eqref{VM33} by exploiting the Glassey-Strauss decomposition of the electromagnetic field in order to improve the bounds on $(E,B)$ and their derivatives. Next, in Section \ref{Sec6} we refine our estimates by proving that the particle current density and the electromagnetic field have a self-similar asymptotic profile. This allows us to define the modified trajectories along which the distribution function converges. Section \ref{Sec7} is devoted to the scattering results for the electromagnetic field. A crucial part of the proof consists in constructing a scattering map for the vacuum Maxwell equations. In Section \ref{Secenergy}, we relate the conserved total energy of the system to the ones of the scattering states. Finally, Appendix \ref{SecA}-\ref{SecB} contain two useful computations and Appendix \ref{SecAnnexeC} presents alternative expressions for the profile of $F$ and the modified characteristics.

\subsubsection*{Acknowledgements} I would like to thank Jacques Smulevici for suggesting me this problem. This work was conduced within the France 2030 framework porgramme, the Centre Henri Lebesgue ANR-11-LABX-0020-01.

\section{Preliminaries and detailed statement of the main result}\label{Sec2}
\subsection{Basic notations}

In this paper we work on the $1+3$ dimensional Minkowski spacetime $(\R^{1+3},\eta)$. We will use two sets of coordinates, the Cartesian $(x^0=t,x^1,x^2,x^3)$, in which $\eta=\mathrm{diag}(-1,1,1,1)$, and null coordinates $(u,\underline{u},\theta, \varphi)$, where
$$\underline{u}=t+r, \qquad u=t-r, \qquad r:= |x|=\sqrt{|x^1|^2+|x^2|^2+|x^3|^2},$$
and $(\theta,\varphi)\in ]0,\pi[ \times ]0,2 \pi[$ are spherical coordinates on the spheres of constant $(t,r)$. These coordinates are defined globally on $\R^{1+3}$ apart from the usual degeneration of spherical coordinates and at $r=0$. Sometimes, for a tensor field $T$ defined on $\R_+ \times \R^3_x$, it will be convenient to write
$$ T(u,\underline{u},\omega):=T\left( \frac{\underline{u}+u}{2},\frac{\underline{u}-u}{2} \omega \right), \qquad \underline{u} \geq 0, \quad |u| \leq \underline{u}, \quad \omega \in \mathbb{S}^2 .$$
We will work with the null frame $(L,\underline{L},e_\theta,e_\varphi)$, where $L=2\partial_u$, $\underline{L}=2\partial_{\underline{u}}$ are null derivatives and $(e_\theta,e_\varphi)$ is the standard orthonormal basis on the spheres. More precisely,
$$L=\partial_t+\partial_r , \qquad \underline{L}=\partial_t-\partial_r, \qquad e_\theta=\frac{1}{r}\partial_{\theta}, \qquad e_\varphi=\frac{1}{r \sin \theta} \partial_{\varphi}.$$
The Einstein summation convention will often be used, for instance $v^{\mu}\partial_{x^{\mu}} f=\sum_{\mu = 0}^3v^{\mu}\partial_{x^{\mu}}f$. The latin indices goes from $1$ to $3$ and the greek indices from $0$ to $3$. We will raise and lower indices using the Minkowski metric $\eta$, so that $x^i=x_i$ and $x^0=-x_0$. 

The four-momentum vector $\mathbf{v}=(v^{\mu})_{0 \leq \mu \leq 3}$ is parameterized by $v=(v^i)_{1 \leq i \leq 3} \in \R^3_v$ and $v^0=\sqrt{1+|v|^2}$ since the mass of the particles is equal to $1$. Let $(v^L,v^{\underline{L}},v^{e_1},v^{e_2})$ be the null components of the momentum vector and $\slashed{v}=(v^{e_\theta},v^{e_\varphi})$ its angular part, so that
$$\mathbf{v}=v^L L+ v^{\underline{L}} \underline{L}+v^{e_\theta}e_\theta+v^{e_\varphi}e_\varphi, \qquad v^L=\frac{v^0+\frac{x_i}{r}v^i}{2} , \qquad v^{\underline{L}}=\frac{v^0-\frac{x_i}{r}v^i}{2}, \qquad |\slashed{v}|^2=|v^{e_\theta}|^2+|v^{e_\varphi}|^2.$$
The relativistic speed $\widehat{v} \in \R^3$ is given by $\widehat{v}^i=\frac{v^i}{v^0}$ and, for convenience, we define the quantities
$$ \widehat{v}^0 := \frac{v^0}{v^0}=1, \qquad  \widehat{v}^L := \frac{v^L}{v^0}, \qquad  \widehat{v}^{\underline{L}} := \frac{v^{\underline{L}}}{v^0}, \qquad \widehat{\slashed{v}}:= \frac{\slashed{v}}{v^0} , \qquad \widehat{v}^{e_A} := \frac{v^{e_A}}{v^0}, \quad A \in \{ \theta,\varphi \}.$$
Sometimes, we will write $(|v^0|^pg)(w)$ to denote $|w^0|^pg(w)$, where $w \in \R^3_v$ and $g:\R^3_v \rightarrow \R$.

In order to capture the good properties of certain geometric quantities associated to the solutions in the good null directions $(L,e_\theta,e_\varphi)$, we introduce the Faraday tensor $F_{\mu \nu}$, which is a $2$-form, and the four-current density $J(f)_{\mu}$,
\begin{equation}\label{eq:defF} F = \begin{pmatrix}
0 & E^1 & E^2 & E^3\\
-E^1 & 0 & -B^3 & B^2 \\
- E^2 & B^3 & 0& -B^1 \\
- E^3 &-B^2 & B^1 & 0
\end{pmatrix}, \qquad \quad J(f) := \begin{pmatrix}
\, -\int_{\R^3_v}f \mathrm{d}v \, \\[2.4pt] \int_{\R^3_v}\frac{v_{1}}{v^0}f \mathrm{d}v \\[2.3pt] \int_{\R^3_v}\frac{v_{2}}{v^0}f \mathrm{d}v \\[2.3pt] \int_{\R^3_v}\frac{v_{3}}{v^0}f \mathrm{d}v
\end{pmatrix}.
\end{equation}
The cartesian components of $F$ are then either equal to $0$ or to a component of $\pm(E,B)$. We will in fact be more interested in its null decomposition $(\alpha(F),\underline{\alpha}(F),\rho(F),\sigma(F))$ defined, for $A \in \{\theta, \varphi\}$, as
\begin{equation}\label{defnullcompoF}
 \alpha(F)_{e_A}:=F_{e_A L}, \qquad \underline{\alpha}(F)_{e_A}:=F_{e_A \underline{L}}, \qquad \rho(F):=\frac{1}{2}F_{\underline{L} L}, \qquad \sigma(F):=F_{e_\theta e_\varphi}.
 \end{equation}
In particular, $\rho(F)=E \cdot \partial_r$ and $-\sigma(F)=B \cdot \partial_r$ are the radial components of the electric field and the magnetic field. Moreover, the $1$-forms $\alpha(F)$ and $\underline{\alpha}(F)$ are tangential to the $2$-spheres and we will use the pointwise norms
\begin{align*}
 |\alpha(F)|^2 &:= |\alpha(F)_{e_\theta}|^2+|\alpha(F)_{e_\varphi}|^2, \qquad \qquad |\underline{\alpha}(F)|^2 := |\underline{\alpha}(F)_{e_\theta}|^2+|\underline{\alpha}(F)_{e_\varphi}|^2, \\
|F|^2 &:= \sum_{0 \leq \mu < \nu \leq 3} |F_{\mu \nu}|^2 = \frac{1}{2}|\alpha(F)|^2+\frac{1}{2}|\underline{\alpha}(F)|^2+|\rho (F)|^2+|\sigma(F)|^2.
\end{align*}
The Vlasov equation \eqref{VM11} can be rewritten as
\begin{equation}\label{Vlasov1}
\T_F(f)=0, \qquad \text{where} \qquad \T_F : f \mapsto \partial_t f+\widehat{v} \cdot \nabla_x f+\widehat{v}^{\mu}{F_{\mu}}^j \partial_{v^j}f,
\end{equation}
and the Maxwell equations \eqref{VM22}-\eqref{VM33} take a concise form. The Gauss-Ampère law and the Gauss-Farady law\footnote{Note that $\nabla^{\mu} {}^*\! F_{\mu \nu}=0$ is equivalent to $\nabla_{[\lambda}F_{\mu \nu]}:=\nabla_\lambda F_{\mu \nu}+\nabla_\mu F_{ \nu \lambda}+\nabla_\nu F_{  \lambda \mu}=0$.}
\begin{equation}\label{Maxwell23}
\nabla^{\mu} F_{\mu \nu}=J(f)_{\nu}, \qquad \qquad \nabla^\mu {}^* \!F_{\mu \nu}=0,
\end{equation}
where ${}^* \! F_{\mu \nu} = \frac{1}{2} F^{\lambda \sigma} \varepsilon_{ \lambda \sigma \mu \nu}$ is the Hodge dual of $F$ and $\varepsilon$ the Levi-Civita symbol. Here $\nabla$ stands for the covariant derivative (or Levi-Civita connection), so that \eqref{Maxwell23} holds in any coordinate system.

The operators $\nabla_x$ and $\nabla_v$ will denote the standard gradients in $x$ and $v$ respectively. For instance,
$$ \nabla_x f= \big(\partial_{x^1} f, \, \partial_{x^2} f, \, \partial_{x^3} f \big), \qquad  \qquad \nabla_v f = \big( \partial_{v^1} f, \, \partial_{v^2} f,  \, \partial_{v^3}f \big).$$
Given a $2$-form $G$ and $0 \leq \lambda \leq 3$, we will denote by $\nabla_{\partial_{x^\lambda}} G$ the covariant derivative of $G$ according to $\partial_{x^\lambda}$, where $\partial_{x^0}=\partial_t$. For any multi-index $\kappa \in \{0, 1,2, 3\}^p$, we define $\nabla_{t,x}^\kappa G:=\nabla_{\partial_{x^{\kappa_1}}} \dots \nabla_{\partial_{x^{\kappa_p}}} G$. In cartesian coordinates, we then have
$$ \nabla_{t,x}^\kappa (G)_{\mu \nu} = \partial^\kappa_{t,x} \big( G_{\mu \nu} \big), \qquad 0 \leq \mu , \, \nu \leq 3. $$
Finally, for $x \in \R^3$ we will use the Japanese brackets $\langle x \rangle := (1+|x|^2)^{\frac{1}{2}}$ and the notation $D_1 \lesssim D_2$ will stand for the statement that $\exists \, C>0$ a positive constant independent of the solutions such as $ D_1 \leq C D_2$.

\subsection{Backward light cones and future null infinity}\label{subsec2} The scattering state for a smooth electromagnetic field $F$, which in our case is also called radiation field, will be a tensor field depending on the variables $(u,\omega)\in \R \times \mathbb{S}^2$. It will be obtained as the limit, when $\underline{u} \to +\infty$, of $rF(u,\underline{u},\omega)$. For this reason, we introduce the backward light cones $\underline{C}_{\underline{u}}$ and give their induced volume form $\dr \mu_{ \underline{C}_{\underline{u}}}$ in accordance with the choice of the null vector field $\underline{L}$ as their normal vector. Let, for any $\underline{u} \geq 0$, 
$$ \underline{C}_{\underline{u}}:= \{(t,x)  \in \mathbb{R}_+ \times \mathbb{R}^3 \, | \; t+|x|=\underline{u} \}, \qquad  \dr \mu_{\underline{C}_{\underline{u}}} =\frac{1}{2}r^2 \dr u \dr \mu_{\mathbb{S}^2},$$
where $\dr \mu_{\mathbb{S}^2}=\sin(\theta) \dr \theta \dr \varphi$ is the volume form on $\mathbb{S}^2$. 

Even if we will not need of this formalism, we mention that the radiation field is in fact defined on a part of the conformal boundary of the Minkowski space, called future null infinity $\mathcal{I}^+$ and corresponding to the future end points of the null geodesics $t-|x|=u$. It can be viewed as $\underline{C}_{+\infty}$. More precisely, $$(t,r,\omega) \mapsto (T(t,r)=\tan^{-1}(t+r)+\tan^{-1}(t-r), \qquad R(t,r)=\tan^{-1}(t+r)-\tan^{-1}(t-r), \omega) \in \R \times \mathbb{S}^3$$ is a conformal diffeomorphism between Minkowski spacetime and the interior of the triangle $0 \leq R \leq \pi$, $|T|=\pi-R$ of the space $\R \times \mathbb{S}^3$, equipped with the metric $-\dr T^2+\dr R^2+\sin^2( R) \dr \mu_{\mathbb{S}^2}$. Then $$ \mathcal{I}^+ := \{ (T,R,\omega) \in \R \times \mathbb{S}^3 \; | \; 0 < R < \pi, \quad T=\pi-R \}.$$
Past null infinity $\mathcal{I}^-$ is defined similarly as $\{ 0<R < \pi, \; T=R-\pi \}$ and can be viewed as $t-|x|=-\infty$.

\begin{tikzpicture}
\fill[color=gray!35] (3,0)--(7,0)--(7,4)--(0,4)--(0,3)--(3,0);
\draw [-{Straight Barb[angle'=60,scale=3.5]}] (0,-0.3)--(0,4);
\draw [-{Straight Barb[angle'=60,scale=3.5]}] (-0.2,0)--(7,0) ;
\draw (3,0.2)--(3,-0.2);
\draw (-0.18,3)--(0.2,3);
\draw (0,-0.5) node{$r=0$};
\draw (-0.7,0) node{$t=0$};
\draw (3,-0.5) node{$\underline{u}$};
\draw (-0.5,3) node{$\underline{u}$};
\draw (-0.5,3.8) node{$t$};
\draw (6.8,-0.5) node{$r$}; 
\draw[densely dashed] (3,0)--(0,3) node[scale=1.2,below, midway] {$\underline{C}_{\underline{u}}$};
\draw (4.8,2.5) node[ color=black!100, scale=1.2] {$\{t+|x| \geq \underline{u} \}$};

    \node (II)   at (11,1.8)   {};

\path  
  (II) +(90:2.5)  coordinate    (IItop)
       +(-90:2.5) coordinate    (IIbot)
       +(0:2.5)   coordinate    (IIright)
       ;
\draw 
	
      (IIbot) --
          node[midway, above , sloped]    {\footnotesize{$r=0$}}   
          (IItop) --
          node[midway, above, sloped] {$\mathcal{I}^+$,  \hspace{0.8mm} \footnotesize{ $\underline{u}=+\infty$}} 
          (IIright) -- 
          node[midway, below, sloped] {$\mathcal{I}^-$, \hspace{0.8mm} \footnotesize{$ u=-\infty$}}
      (IIbot) -- cycle;
\draw (10.9,-0.8) node[scale=0.8]{$i^-$};
\draw (13.7,1.8) node[scale=0.8]{$i^0$};
\draw (10.9,4.4) node[scale=0.8]{$i^+$}; 
\draw (10.9,4.6) node[scale=0.8]{}; 

\draw[densely dashed] (11.7,0)--node[scale=0.6,midway, above, sloped]{$\underline{u}=\mathrm{cst}$}(11,0.7)--node[scale=0.6,midway, above, sloped]{$u=\mathrm{cst}$}(12.8,2.5);    
      
      \node[align=center,font=\bfseries, yshift=-2em] (title) 
    at (current bounding box.south)
    {The set $\underline{C}_{\underline{u}}$ and the Penrose diagram of the Minkowski space.};
\end{tikzpicture}

\subsection{Charged electromagnetic field}\label{Subsecpurecharge}

For our global existence result, it will be sufficient to assume that the electromagnetic field verifies $|F|(0,\cdot) \lesssim r^{-2}$ whereas our scattering result will require a slightly stronger initial decay hypothesis. However, if the plasma is not neutral, one cannot expect $F$ to decay faster than $r^{-2}$. Indeed, if $(f,F)$ is a sufficiently regular solution to \eqref{Vlasov1}-\eqref{Maxwell23} on $[0,T[$, we obtain from Gauss's law that the total charge
$$  Q_F(t):=\lim_{r \rightarrow + \infty}  \int_{\mathbb{S}^2 } \rho(F)(t,r\omega) r^2 \dr \mu_{\mathbb{S}^2}= \int_{x \in \R^3} \int_{v \in \R^3} f(t,x,v) dv dx, \qquad t \in [0,T[,$$
is a conserved quantity and that $|F| =o(r^{-2})$ implies $Q_F=0$. In order to avoid such a restrictive assumption, we introduce the pure charge part $\overline{F}$ of $F$,
\begin{equation}\label{defFoverlin}
\overline{F}(t,x) := \frac{Q_F}{4 \pi |x|^2} \frac{x_i}{|x|} dt \wedge dx^i , \qquad \rho(\overline{F})(t,x)= \frac{Q_F}{4 \pi |x|^2}, \qquad \alpha(\overline{F})=\underline{\alpha}(\overline{F})=\sigma(\overline{F})=0,
\end{equation}
which corresponds to the electromagnetic field generated by a point charge $Q_F$ at $x=0$. One can verify that $Q_{\overline{F}}=Q_F$, so that $F-\overline{F}$ is chargeless and it will then be consistent to assume that $F$ has an asymptotic expansion of the form $F=\overline{F}+O(r^{-2-\delta})$, $\delta >0$. In fact, $E=E^{\mathrm{df}}+E^{\mathrm{cf}}$ and $B=B^{\mathrm{df}}+B^{\mathrm{cf}}$ can be decomposed into their divergence-free and curl-free components. Then, $B^{\mathrm{cf}}=0$ and $E^{\mathrm{cf},i}=\overline{F}_{0i}+O(r^{-3})$ if $J(f)_0$ is sufficiently regular, so that the stronger initial decay assumption required for the scattering result concerns the divergence-free components of $E$ and $B$.

\subsection{Commutation vector fields}\label{subsecvfm}

We will derive estimates on both the electromagnetic field and the distribution function using vector field methods. These kind of approaches are usually based on
\begin{itemize}
\item a set of vector fields, which commute with the linear operator of the equation studied.
\item Energy inequalities, applied in order to prove boundedness for $L^2$ or $L^1$ norms of the solutions and their derivatives. For instance, see \cite[Section~$4.1$]{dim3}.
\item Weighted Sobolev embeddings, such as \cite[Theorem~$6$]{FJS}, used to obtain decay estimates on the fields.
\end{itemize}
In this paper, in order to simplify the analysis, we will prove $L^\infty$ estimates and then obtain pointwise decay estimates on the solutions in a different way (see Section \ref{SubsecIngred} for more details). We now elaborate on the commutators for the Maxwell equations and the ones for the relativistic transport equation.
\begin{Def}
Let $\mathbb{K}$ be the set composed by the vector fields
$$ \partial_t, \qquad \partial_{x^i}, \qquad \Omega_{0i}:=t\partial_{x^i}+x^i \partial_t, \qquad \Omega_{jk}:=x^j\partial_{x^k}-x^k\partial_{x^j}, \qquad S:=t\partial_t+ x^\ell \partial_{x^\ell}=t\partial_t+r\partial_r,$$
where $1 \leq i \leq 3$ and $1\leq j < k \leq 3$. The translations $\partial_{x^{\mu}}$, the Lorentz boosts $\Omega_{0i}$ and the rotations $\Omega_{jk}$ are Killing vector fields, so that they generate isometries of the Minkowski space. The scaling vector field $S$ is merely conformal Killing.
\end{Def}
We will use this set for differentiating the electromagnetic field geometrically. More precisely, for a $2$-form $F$ and a vector field $Z=Z^{\mu}\partial_{x^{\mu}}$, the Lie derivative $\mathcal{L}_Z(F)$ of $F$ with respect to $Z$ is given, in coordinates, by
$$ \mathcal{L}_Z(F)_{\mu \nu} = Z(F_{\mu \nu})+\partial_{\mu}(Z^{\lambda})F_{\lambda \nu}+\partial_{\nu}(Z^{\lambda}) F_{\mu \lambda}.$$
Furthermore, if $F$ is a smooth solution to the vacuum Maxwell equations $\nabla^{\mu} F_{\mu \nu}=\nabla^{\mu} {}^* \! F_{\mu \nu}=0$ and $Z \in \mathbb{K}$, then $\mathcal{L}_Z(F)$ is also a solution to the vacuum Maxwell equations, that is $\nabla^{\mu} \mathcal{L}_Z(F)_{\mu \nu}=\nabla^{\mu} {}^* \! \mathcal{L}_Z(F)_{\mu \nu}=0$.
\begin{Def}
Let $\K$ be the set composed by
$$ \partial_t, \qquad \partial_{x^i}, \qquad \widehat{\Omega}_{0i}:=t\partial_{x^i}+x^i \partial_t+v^0 \partial_{v^i}, \qquad \widehat{\Omega}_{jk}:=x^j\partial_{x^k}-x^k\partial_{x^j}+v^j\partial_{v^k}-v^k\partial_{v^j}, \qquad S=t\partial_t+r\partial_r,$$
where $1 \leq i \leq 3$ and $1\leq j < k \leq 3$. In fact, $\widehat{\partial}_{x^\mu}=\partial_{x^\mu}$, $\widehat{\Omega}_{0i}$ and $\widehat{\Omega}_{jk}$ are obtained as the complete lift, a classical operation in differential geometry\footnote{We refer to \cite[Section~$2G$]{FJS} for more details about the relations between the Vlasov operator on a Lorentzian manifold and the complete lift of its Killing vector fields.}, of the Killing fields $\partial_{x^\mu}$, $\Omega_{0i}$ and $\Omega_{jk}$.
\end{Def}
These vector fields have good commutation properties with the linear transport operator $\T_0=\partial_t+\widehat{v}\cdot \nabla_x$. Indeed, $[\T_0,S]=\T_0$ and $[v^0\T_0,\widehat{Z}]=0$ for all $\widehat{Z} \in \K \setminus \{S \}$.

In order to consider higher order derivatives, we introduce an ordering on $\mathbb{K}=\{Z^i \, | \, 1 \leq i \leq 11\}$ and on $\K=\{\widehat{Z}^i\,  | \, 1 \leq i \leq 11\}$. It will be convenient to assume that $Z^{11}=\widehat{Z}^{11}=S$ and $\widehat{Z^i}=\widehat{Z}^i$ for any $1 \leq i \leq 10$. Moreover, for a multi-index $\beta \in \llbracket 1,11 \rrbracket^p$ of length $|\beta|=p$, we denote by $\mathcal{L}_{Z^{\beta}}$ the Lie derivative $\mathcal{L}_{Z^{\beta_1}} \dots \mathcal{L}_{Z^{\beta_p}}$ of order $|\beta|$. Similarly, we define $\widehat{Z}^{\beta}$ as $\widehat{Z}^{\beta_1} \dots \widehat{Z}^{\beta_p}$. Note the equivalence between the pointwise norms
\begin{equation}\label{equinorm}
  \; \sum_{|\gamma| \leq N} \left|\mathcal{L}_{Z^{\gamma}}(F)\right| \lesssim \sum_{|\beta| \leq N} \sum_{0 \leq \mu,\nu \leq 3} \left| Z^{\beta}(F_{\mu \nu}) \right| \lesssim \sum_{|\gamma| \leq N} \left|\mathcal{L}_{Z^{\gamma}}(F)\right|.
\end{equation} 
Since $\mathcal{L}_{\partial_{x^\mu}}(F)$ and $\partial_{x^\mu}f$ have a better behavior than the other derivatives, it will be crucial, in order to identify certain hierarchies in the commuted equations, to count the number of homogeneous vector fields composing $Z^\beta$ or $\widehat{Z}^\beta$. We denote by $\beta_H$ (respectively $\beta_T$) the number of homogeneous vector fields $\Omega_{0i}$, $\Omega_{jk}$ and $S$ (respectively translations $\partial_{x^\mu}$) composing $Z^\beta$. Remark $\beta_H+\beta_T=|\beta|$ and that $\widehat{Z}^\beta$ is also composed by $\beta_H$ homogenous vector fields and $\beta_T$ translations. If $Z^\beta = \Omega_{01}\partial_t S$, we have $\beta_H=2$ and $\beta_T=1$. 

The following geometric commutation formula, proved in \cite[Lemma~$2.8$]{massless}, will be fundamental for us.
\begin{Lem}\label{LemCom}
Let $G$ be a $2$-form and $g : [0,T[ \times \R^3_x \times \R^3_v \rightarrow \R$ be a function, both of class $C^1$, such that 
$$\nabla^{\mu} G_{\mu \nu} = J(g)_\nu, \qquad \nabla^{\mu} {}^* \! G_{\mu \nu}=0.$$
Let further $Z \in \mathbb{K} \setminus \{ S \}$ be a Killing vector field and $\widehat{Z} \in \K \setminus \{ S \}$ be its complete lift. Then,
\begin{align*}
&\nabla^{\mu } \mathcal{L}_Z(G)_{\mu \nu} = J(\widehat{Z}g)_{\nu},  \qquad \qquad \qquad \qquad \nabla^{\mu} {}^* \! \mathcal{L}_Z(G)_{\mu \nu}=0 , \\
& \nabla^{\mu } \mathcal{L}_S(G)_{\mu \nu} = J(Sg)_{\nu}+3J(g)_\nu, \qquad \qquad \nabla^{\mu} {}^* \! \mathcal{L}_S(G)_{\mu \nu}=0, \\
& \widehat{Z} \left( v^\mu {G_\mu}^j \partial_{v^j} g \right)= v^\mu {\mathcal{L}_Z(G)_\mu}^j \partial_{v^j} g +v^\mu {G_\mu}^j \partial_{v^j} \widehat{Z}g, \\
& S \left( v^\mu {G_\mu}^j \partial_{v^j} g \right)=v^{\mu} {\mathcal{L}_S(G)_{\mu}}^j \partial_{v^j} g-2v^\mu {G_\mu}^j \partial_{v^j} g +v^\mu {G_\mu}^j \partial_{v^j}S g.
\end{align*}
\end{Lem}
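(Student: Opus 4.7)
The four identities split naturally into two independent halves: the two Maxwell commutation formulas for $\mathcal{L}_Z(G)$ and $\mathcal{L}_S(G)$, and the two transport commutation formulas. I would prove each half separately.

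For the Maxwell half, the equation $\nabla^\mu {}^*\!\mathcal{L}_Z(G)_{\mu\nu}=0$ is immediate, no matter whether $Z$ is Killing or conformal Killing: it is equivalent to $d\mathcal{L}_Z G = 0$, and the exterior derivative commutes with the Lie derivative, so $d\mathcal{L}_Z G = \mathcal{L}_Z dG = 0$. For the inhomogeneous equation I would use the standard commutator identity between the divergence and the Lie derivative of a 2-form. For any Killing field $Z$ of Minkowski, $[\nabla^\mu,\mathcal{L}_Z]$ vanishes on tensors and hence $\nabla^\mu\mathcal{L}_Z(G)_{\mu\nu} = \mathcal{L}_Z(\nabla^\mu G_{\mu\nu}) = \mathcal{L}_Z(J(g))_\nu$. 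For the homothetic scaling $S$ (satisfying $\mathcal{L}_S\eta = 2\eta$), a direct computation in cartesian coordinates, using $\partial^\mu(x^\beta) = \eta^{\mu\beta}$ to relabel the extra divergence, yields the weight-corrected identity
$$\nabla^\mu \mathcal{L}_S(G)_{\mu\nu} = \mathcal{L}_S(\nabla^\mu G_{\mu\nu}) + 2\,\nabla^\mu G_{\mu\nu}.$$

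It then remains to rewrite $\mathcal{L}_Z(J(g))$ in terms of $J$ of a scalar function. From the componentwise representation $J(g)_\nu=\int_v \widehat{v}_\nu\, g\,\mathrm{d}v$ (with $\widehat{v}_0=-1$) and the formula $\mathcal{L}_Z(J(g))_\nu = Z(J(g)_\nu) + \partial_\nu Z^\lambda J(g)_\lambda$, I would verify case by case that the correction $\partial_\nu Z^\lambda J(g)_\lambda$ is produced, after integration by parts in $v$, by exactly the velocity part of the complete lift $\widehat{Z}$: namely $v^0\partial_{v^i}$ for boosts and $v^j\partial_{v^k}-v^k\partial_{v^j}$ for rotations. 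The symmetric piece arising from $\partial_{v^\ell}(v^i v^j v^k/v^0)$ cancels by symmetry in two spatial indices. For translations, where $\partial_\nu Z^\lambda=0$, the identity is trivial. This yields $\mathcal{L}_Z(J(g))_\nu = J(\widehat{Z}g)_\nu$ for the Killing fields. For $S$, $S(J(g)_\nu)=J(Sg)_\nu$ directly (no $v$-weights appear) while $\partial_\nu S^\lambda=\delta^\lambda_\nu$ contributes $J(g)_\nu$, so combining with the extra $2\nabla^\mu G_{\mu\nu}$ above gives the total coefficient $3$ claimed in the statement.

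For the transport half, I would argue by direct Leibniz expansion. For $\widehat{Z}\in\K\setminus\{S\}$, write
$$\widehat{Z}(v^\mu G_\mu{}^j \partial_{v^j}g) = \widehat{Z}(v^\mu)\,G_\mu{}^j \partial_{v^j}g + v^\mu\widehat{Z}(G_\mu{}^j)\,\partial_{v^j}g + v^\mu G_\mu{}^j\,\widehat{Z}(\partial_{v^j}g).$$
Since $G$ is independent of $v$, $\widehat{Z}(G_\mu{}^j) = Z(G_\mu{}^j)$, and commuting $\widehat{Z}$ through $\partial_{v^j}$ costs a commutator term. Comparing with $v^\mu\mathcal{L}_Z(G)_\mu{}^j\partial_{v^j}g + v^\mu G_\mu{}^j\partial_{v^j}\widehat{Z}g$, the discrepancy collapses to a multiple of $v^\mu v^\nu G_{\mu\nu}$, which vanishes by the antisymmetry of $G$. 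For $S$, which carries no $v$-derivative, $S$ commutes with $\partial_{v^j}$ and with $v^\mu$, so only $S(G_\mu{}^j)$ matters; with the convention that indices are raised by the fixed metric $\eta$ after computing the Lie derivative, the identity $\mathcal{L}_S(G)_\mu{}^j = S(G_\mu{}^j) + 2G_\mu{}^j$ (from $\mathcal{L}_S\eta=2\eta$) produces precisely the $-2v^\mu G_\mu{}^j\partial_{v^j}g$ correction in the statement.

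The main obstacle I expect is bookkeeping rather than any conceptual subtlety: carefully tracking the position of raised and lowered indices, the distinction between $\mathcal{L}_S$ acting on a $(0,2)$-tensor followed by index raising versus acting directly on a $(1,1)$-tensor, and identifying the cancellations induced by the antisymmetry of $G$ and by integrations by parts in $v$. Once these conventions are fixed, every step reduces to a short algebraic verification.
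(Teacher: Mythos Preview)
The paper does not give its own proof of this lemma; it simply records the statement and cites \cite[Lemma~$2.8$]{massless} for the argument. Your direct computational approach---commuting $\mathcal{L}_Z$ with the divergence via the Killing/conformal Killing structure, rewriting $\mathcal{L}_Z(J(g))$ as $J(\widehat{Z}g)$ by integration by parts in $v$, and handling the transport identity by Leibniz expansion with the cancellation $v^\mu v^\nu G_{\mu\nu}=0$---is correct and is precisely the kind of verification the cited reference carries out.
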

Iterating the above, we obtain that the structure of the Vlasov-Maxwell equations \eqref{Vlasov1}-\eqref{Maxwell23} are preserved by commutation.
\begin{Pro}\label{Com}
Let $(f,F)$ be a sufficiently regular solution to the Vlasov-Maxwell system. For any multi-index $\beta$, there exists $C^{\beta}_{\gamma, \kappa}, \, C^\beta_\xi \in \mathbb{Z}$ such that
\begin{align}\label{eq:comVla} 
\T_F \big( \widehat{Z}^{\beta} f \big) &= \sum_{\substack{ |\gamma|+|\kappa| \leq |\beta| \\ |\kappa| \leq |\beta|-1}} C^{\beta}_{\gamma,\kappa} \; \widehat{v}^\mu {\mathcal{L}_{Z^{\gamma}} (F)_{\mu}}^j \partial_{v^j} \widehat{Z}^{\kappa} f , \\
\nabla^{\mu} \mathcal{L}_{Z^\beta}(F)_{\mu \nu} &= \sum_{|\xi| \leq |\beta|} C_\xi^\beta J(\widehat{Z}^\xi f), \qquad \qquad \nabla^\mu {}^* \mathcal{L}_{Z^\beta}(F)_{\mu \nu} = 0.
\end{align}
Moreover, the multi-indices $|\gamma|+|\kappa| \leq |\beta|$ satisfy $\gamma_H +\kappa_H \leq \beta_H$ and the equality $\kappa_H = \beta_H$ implies $\gamma_T \geq 1$.
\end{Pro}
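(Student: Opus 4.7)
The plan is to proceed by induction on $|\beta|$. The base case $|\beta|=0$ reduces to the Vlasov-Maxwell system \eqref{Vlasov1}-\eqref{Maxwell23} itself, interpreted with an empty sum on the Vlasov side and the single contribution $\xi=\emptyset$, $C^\emptyset_\emptyset=1$ on the Maxwell side.

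For the inductive step, I would write $\widehat{Z}^\beta=\widehat{Z}\,\widehat{Z}^{\beta'}$ with $|\beta'|=|\beta|-1$, multiply the inductive Vlasov identity for $\widehat{Z}^{\beta'}f$ by $v^0$, and apply $\widehat{Z}$ to both sides. On the left, we exploit $[v^0\T_0,\widehat{Z}]=0$ for $\widehat{Z}\in\K\setminus\{S\}$ and $[v^0\T_0,S]=v^0\T_0$ (easily deduced from $[\T_0,S]=\T_0$ and $S(v^0)=0$) to recover $v^0\T_0(\widehat{Z}^\beta f)$ up to terms already of the desired shape. On the right, each application of $\widehat{Z}$ to a summand $v^\mu{\mathcal{L}_{Z^{\gamma'}}(F)_\mu}^j\partial_{v^j}\widehat{Z}^{\kappa'}f$ is treated via the two Leibniz-type identities of Lemma \ref{LemCom}, producing contributions that either append the new vector field to $\gamma'$ (via $\mathcal{L}_Z$), or to $\kappa'$, or---when $\widehat{Z}=S$---give the extra ``absorbed'' copy $-2\,v^\mu{\mathcal{L}_{Z^{\gamma'}}(F)_\mu}^j\partial_{v^j}\widehat{Z}^{\kappa'}f$. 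Dividing by $v^0$ and isolating the $(\gamma,\kappa)=(\emptyset,\beta)$ term on the left yields the announced expansion; the bound $|\kappa|\leq|\beta|-1$ is then automatic from $|\gamma|+|\kappa|\leq|\beta|$ together with $(\gamma,\kappa)\neq(\emptyset,\beta)$.

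The Maxwell side of the induction is parallel and shorter: assuming inductively $\nabla^\mu\mathcal{L}_{Z^{\beta'}}(F)_{\mu\nu}=\sum_{|\xi'|\leq|\beta'|}C^{\beta'}_{\xi'}J(\widehat{Z}^{\xi'}f)_\nu$ and $\nabla^\mu{}^*\!\mathcal{L}_{Z^{\beta'}}(F)_{\mu\nu}=0$, linearity of $J$ lets us apply Lemma \ref{LemCom} once more with source $g=\sum C^{\beta'}_{\xi'}\widehat{Z}^{\xi'}f$, producing the identity for $\beta$ with $|\xi|\leq|\beta|$ (the shift $\xi=Z\cdot\xi'$ accounts for the $\widehat{Z}$ applied, and when $\widehat{Z}=S$ the extra summand $+3J(g)$ from the lemma just rescales the $\xi=\xi'$ coefficients).

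The only genuinely delicate point, and the step I expect to be the main obstacle, is the refinement on homogeneous vector fields. The driving observation is that in Lemma \ref{LemCom}, translations and non-scaling homogeneous vector fields are always distributed to either $\gamma$ or $\kappa$ while preserving their type, whereas \emph{only} $S$ may be absorbed into a constant---through the $-2$ term in the $S$-identity, or through $[v^0\T_0,S]=v^0\T_0$. This gives $\gamma_H+\kappa_H\leq\beta_H$ by straightforward induction. For the implication $\kappa_H=\beta_H\Rightarrow\gamma_T\geq1$, we observe that $\kappa_H=\beta_H$ forces every homogeneous vector field of $\beta$ into $\kappa$ and, in particular, forbids any $S$-absorption, so that $|\gamma|+|\kappa|=|\beta|$; since $(\gamma,\kappa)=(\emptyset,\beta)$ has been moved to the left-hand side we must have $\gamma\neq\emptyset$, and $\gamma_H=\beta_H-\kappa_H=0$ then forces $\gamma$ to consist entirely of translations, whence $\gamma_T=|\gamma|\geq1$.
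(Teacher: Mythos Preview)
Your proof is correct and follows the same inductive scheme as the paper, iterating Lemma~\ref{LemCom}. The only notable difference is in the final combinatorial step: the paper records the stronger invariant $\gamma_T+\kappa_T=\beta_T$ (translations are never absorbed), from which $\kappa_H=\beta_H$ together with $|\kappa|\leq|\beta|-1$ immediately yields $\gamma_T=\beta_T-\kappa_T\geq 1$. Your route---arguing that $\kappa_H=\beta_H$ forbids any $S$-absorption and hence forces $|\gamma|+|\kappa|=|\beta|$---reaches the same conclusion but is slightly more roundabout; it implicitly relies on the same bookkeeping (each homogeneous field goes to $\gamma$, to $\kappa$, or is an absorbed $S$, while each translation goes to $\gamma$ or $\kappa$). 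One minor quibble: your sentence ``the bound $|\kappa|\leq|\beta|-1$ is then automatic from $|\gamma|+|\kappa|\leq|\beta|$ together with $(\gamma,\kappa)\neq(\emptyset,\beta)$'' is not a valid implication in the abstract (one could a priori have $\gamma=\emptyset$ and $|\kappa|=|\beta|$ with $\kappa\neq\beta$); it is true here only because the induction never produces such a pair, which you should state explicitly.
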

\begin{proof}
For the condition on the multi-indices $|\gamma|+|\kappa| \leq |\beta|$, remark from Lemma \ref{LemCom} that $\gamma_H+\kappa_H \leq \beta_H$ and $\gamma_T+\kappa_T=\beta_T$. Hence, if $\kappa_H = \beta_H$, we necessarily have $\kappa_T<\beta_H$ since $|\kappa| < |\beta|$. This implies $\gamma_T \geq 1$.
\end{proof}

\subsection{Weights preserved along the linear flow}\label{subsecweight}

The set $\mathbf{k}_1$ of weight functions composed by
\begin{equation}\label{defzzzz}
 z_{0i}:=t\widehat{v}^i-x^i, \qquad z_{jk}:=x^j\widehat{v}^k-x^k\widehat{v}^j, \qquad \qquad 1 \leq i \leq 3, \quad 1 \leq j < k \leq 3,
 \end{equation}
are conserved along any timelike straight line $t \mapsto (t,x+t\widehat{v})$. They are obtained as $|v^0|^{-1}\eta(\mathbf{v},K)$, where $K$ is a Killing vector field\footnote{On any smooth Lorentzian manifold $(\mathbf{Y},g)$, if $\gamma$ is a timelike geodesic and $K$ a Killing vector field, then $g(\dot{\gamma},K)=\mathrm{constant}$.} and they are then solution to the relativistic transport equation, $\forall \; z \in \mathbf{k}_1$, $\T_0(z)=0$. As a consequence, if $\T_0(g)=0$ then the same goes for $zg$, so that certain weighted norms of $g$ are conserved. In our nonlinear setting these norms will grow logarithmically in time and will then provide useful decay properties on the Vlasov field. For convenience, we will rather work with 
\begin{equation}\label{defz}
 \mathbf{z} := \Big(1+\sum_{z \in \mathbf{k}_1} z^2 \Big)^{\frac{1}{2}}, \qquad \qquad \T_0(\mathbf{z})=\widehat{v}^{\mu} \partial_{x^{\mu}} (\mathbf{z}) =0.
 \end{equation}
In particular, as $z_{0i} \in \mathbf{k}_1$, one has
\begin{equation}\label{eq:propz}
 1 \leq \mathbf{z}, \qquad \qquad \forall \; (t,x,v) \in \R_+ \times \R^3_x \times \R^3_v, \qquad \langle x \rangle \leq \mathbf{z}(t,x+t\widehat{v},v),
 \end{equation}
which will allow us to obtain space decay for $f(t,x+t\widehat{v},v)$, the particle density evaluated along the linear characteristics. Note also the following properties, which will be particularly useful for us in order to exploit the null structure of the system.
\begin{Lem}\label{nullcompo}
The four-momentum vector $\mathbf{v}$ has good null components, $v^{\underline{L}}$ and $\slashed{v}$. More precisely, 
$$ \forall \; (t,x,v) \in \R_+ \times \R^3_x \times \R^3_v, \qquad 0<\widehat{v}^{\underline{L}} \lesssim \frac{1+|t-|x||}{1+t+|x|}+ \frac{ \mathbf{z}}{1+t+|x|}, \qquad \qquad \big|\widehat{\slashed{v}}\big| \lesssim \frac{\mathbf{z}}{1+t+|x|}.$$
In certain circumstances, $v^{\underline{L}}$ will be the best component for exploiting decay in $t-r$. We will then use
 $$|v^0|^{-2}+|\widehat{\slashed{v}}|^2 \leq 4 \widehat{v}^{\underline{L}}.$$ 
\end{Lem}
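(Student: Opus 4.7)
The strategy is to unfold each null component into its Cartesian expression and then rewrite every occurrence of $t\widehat{v}^i$ and $x^j\widehat{v}^k-x^k\widehat{v}^j$ using the preserved weights in $\mathbf{k}_1$ of \eqref{defzzzz}. For $r>0$, I write $\widehat{x}:=x/r$ and set $z:=(z_{01},z_{02},z_{03})\in\R^3$, so that the identity $z_{0i}=t\widehat{v}^i-x^i$ becomes the single vector identity $t\widehat{v}=x+z$. Directly from the definitions, $\widehat{v}^L=(1+\widehat{x}\cdot\widehat{v})/2$ and $\widehat{v}^{\underline{L}}=(1-\widehat{x}\cdot\widehat{v})/2$; since $|\widehat{v}|<1$, both are strictly positive, in particular $\widehat{v}^{\underline{L}}>0$ and $\widehat{v}^L\leq 1$. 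For the last inequality in the statement I compute
$$ 4\widehat{v}^L\widehat{v}^{\underline{L}}\;=\;1-(\widehat{x}\cdot\widehat{v})^2\;=\;\bigl(1-|\widehat{v}|^2\bigr)+\bigl(|\widehat{v}|^2-(\widehat{x}\cdot\widehat{v})^2\bigr)\;=\;|v^0|^{-2}+|\widehat{\slashed{v}}|^2,$$
where I have used the Euclidean identity $|\widehat{\slashed{v}}|^2=|\widehat{v}|^2-(\widehat{x}\cdot\widehat{v})^2$; combined with $\widehat{v}^L\leq 1$, this gives the desired $|v^0|^{-2}+|\widehat{\slashed{v}}|^2\leq 4\widehat{v}^{\underline{L}}$.

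For the upper bound on $\widehat{v}^{\underline{L}}$, I apply $\widehat{x}\,\cdot$ to $t\widehat{v}=x+z$, obtaining $t\,\widehat{x}\cdot\widehat{v}=r+(x\cdot z)/r$, hence the key identity
$$ 2t\widehat{v}^{\underline{L}}\;=\;(t-r)-\tfrac{x\cdot z}{r},\qquad t,r>0.$$
Since $|x\cdot z|/r\leq|z|\leq\mathbf{z}$, this yields $\widehat{v}^{\underline{L}}\leq(|t-r|+\mathbf{z})/(2t)$. I would then close the estimate by a short case analysis on $(t,r)$: if $t+r\lesssim 1$, the trivial bound $\widehat{v}^{\underline{L}}\leq 1$ together with $\mathbf{z}\geq 1$ suffices; if $t+r\gtrsim 1$ and $r\lesssim t$, then $1/t$ is comparable to $1/(1+t+r)$ and the above display closes the bound directly; if $r\gg t$, then $|t-r|\gtrsim 1+t+r$ and the trivial bound $\widehat{v}^{\underline{L}}\leq 1$ is already of the desired size.

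For the angular part, using again $|\widehat{\slashed{v}}|^2=|\widehat{v}|^2-(\widehat{x}\cdot\widehat{v})^2$ together with $x^j\widehat{v}^k-x^k\widehat{v}^j=z_{jk}$, one finds $r^2|\widehat{\slashed{v}}|^2=|x\times\widehat{v}|^2=\sum_{j<k}z_{jk}^2\leq\mathbf{z}^2$. Inserting $t\widehat{v}=x+z$ into the same Euclidean identity gives $t^2|\widehat{\slashed{v}}|^2=|z|^2-(x\cdot z)^2/r^2\leq\mathbf{z}^2$. Hence $\max(t,r)\,|\widehat{\slashed{v}}|\leq\mathbf{z}$, and combining with the trivial bound $|\widehat{\slashed{v}}|\leq 1\leq\mathbf{z}$ on the region $t+r\leq 1$ yields $|\widehat{\slashed{v}}|\lesssim\mathbf{z}/(1+t+r)$.

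The only mildly delicate point is the case distinction for $\widehat{v}^{\underline{L}}$: the identity $2t\widehat{v}^{\underline{L}}=(t-r)-x\cdot z/r$ produces a denominator $t$ which is insufficient in the regime $r\gg t$, forcing us to rely in that range on the trivial bound $\widehat{v}^{\underline{L}}\leq 1$ and on $|t-r|$ dominating $1+t+r$. Everything else is a direct unpacking of the null decomposition in Cartesian coordinates.
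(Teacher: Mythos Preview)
Your proof is correct and self-contained. The paper itself defers the first two inequalities to \cite[Lemma~2.4]{dim3} and only proves the third; your computation $4\widehat{v}^L\widehat{v}^{\underline{L}}=1-(\widehat{x}\cdot\widehat{v})^2=|v^0|^{-2}+|\widehat{\slashed{v}}|^2$ is exactly the paper's identity \eqref{eq:techniv} divided by $|v^0|^2$, so for that part the two arguments coincide.

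For the first two estimates, your route via the vector identity $t\widehat{v}=x+z$ and the resulting formulas $2t\widehat{v}^{\underline{L}}=(t-r)-x\cdot z/r$ and $r^2|\widehat{\slashed{v}}|^2=\sum_{j<k}z_{jk}^2$, $t^2|\widehat{\slashed{v}}|^2=|z|^2-(x\cdot z)^2/r^2$ is the natural direct argument and is essentially what one finds in the cited reference. The case split for $\widehat{v}^{\underline{L}}$ (the identity controls only $t\widehat{v}^{\underline{L}}$, so one falls back on $\widehat{v}^{\underline{L}}\leq 1$ and $|t-r|\gtrsim 1+t+r$ when $r\gg t$) is handled correctly. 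One minor cosmetic point: the identity $t^2|\widehat{\slashed{v}}|^2=|z|^2-(x\cdot z)^2/r^2$ tacitly assumes $r>0$, but this is harmless since the null frame is undefined at $x=0$ anyway.
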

\begin{proof}
The first two inequalities are proved in \cite[Lemma~$2.4$]{dim3}. The last one ensues from
\begin{equation}\label{eq:techniv}
4v^0v^{\underline{L}}\geq 4v^L v^{\underline{L}}=|v^0|^2-\big|\frac{x^i}{r}v_i \big|^2=1+|v|^2-|v \cdot \partial_r|^2=1+|v \cdot e_{\theta}|^2+|v \cdot e_{\varphi}|^2=1+|\slashed{v}|^2.
\end{equation}
\end{proof}
Since the particles are massive and then travel at a speed strictly lower than $1$, the speed of light, Vlasov fields enjoy much better decay properties along null ray than along timelike geodesics $t \mapsto x+t\widehat{v}$. After a long time, much of the particles should be located in the interior of the light cone. We will capture this property with the following inequality.
\begin{Lem}\label{gainv}
By losing powers of $v^0$ and $\mathbf{z}$, one can gain decay near the light cone $t=|x|$,
$$ \forall \; (t,x,v) \in \R_+ \times \R^3_x \times \R^3_v, \qquad 1 \lesssim \frac{1+|t-|x||}{1+t+|x|}|v^0|^2+ \frac{|v^0|^2 \, \mathbf{z}}{1+t+|x|}. $$
Moreover, in the exterior of the light cone, for $|x| \geq t$, one has $1 \lesssim (1+t+|x|)^{-1}|v^0|^2\mathbf{z}$.
\end{Lem}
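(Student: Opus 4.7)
The statement has two parts. For the first, global inequality, my plan is to combine the two pieces of Lemma \ref{nullcompo} in a dual fashion: the \emph{upper} bound on $\widehat{v}^{\underline{L}}$ gives smallness, while the \emph{lower} bound gives us a way to insert $|v^0|^2$. Concretely, from $|v^0|^{-2}+|\widehat{\slashed{v}}|^2 \leq 4\widehat{v}^{\underline{L}}$ one immediately reads off
\[
\widehat{v}^{\underline{L}} \;\geq\; \frac{1}{4|v^0|^2},
\]
and the first displayed estimate in Lemma \ref{nullcompo} gives, for some constant $C>0$,
\[
\widehat{v}^{\underline{L}} \;\leq\; C\,\frac{1+|t-|x||}{1+t+|x|} \;+\; C\,\frac{\mathbf{z}}{1+t+|x|}.
\]
Chaining these two and multiplying through by $4|v^0|^2$ yields at once
\[
1 \;\leq\; 4C\,\frac{1+|t-|x||}{1+t+|x|}\,|v^0|^2 \;+\; 4C\,\frac{|v^0|^2\,\mathbf{z}}{1+t+|x|},
\]
which is the claim. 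The conceptual point is that these two bounds are essentially sharp converses of each other, so the massive case ($|\widehat{v}|<1$) forces the quoted inequality.

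For the exterior statement, the gain of decay comes from bounding $\mathbf{z}$ from below. Since $z_{0i}=t\widehat{v}^i-x^i\in \mathbf{k}_1$, we have $\mathbf{z}\geq |t\widehat{v}-x|$. In the region $|x|\geq t$, the vector $t\widehat{v}$ satisfies $|t\widehat{v}|=t|\widehat{v}|\leq t\leq|x|$, hence by the reverse triangle inequality
\[
\mathbf{z} \;\geq\; |x|-t|\widehat{v}| \;=\; (|x|-t) \;+\; t(1-|\widehat{v}|).
\]
The elementary identity $1-|\widehat{v}|\geq \tfrac{1}{2}(1-|\widehat{v}|^2)=\tfrac{1}{2|v^0|^2}$ then yields $\mathbf{z}\geq (|x|-t)+\tfrac{t}{2|v^0|^2}$, whence
\[
|v^0|^2\,\mathbf{z} \;\geq\; (|x|-t) \;+\; \tfrac{t}{2} \;=\; |x|-\tfrac{t}{2} \;\geq\; \tfrac{|x|+t}{2},
\]
using $|x|\geq t$ in the last inequality. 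Combining with $|v^0|^2\mathbf{z}\geq 1$ gives $|v^0|^2\mathbf{z}\gtrsim 1+t+|x|$ in the exterior, as claimed.

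No step is really an obstacle: the whole lemma is essentially a repackaging of Lemma \ref{nullcompo} together with the identity $4v^0 v^{\underline{L}}\geq 1+|\slashed{v}|^2$ already established in \eqref{eq:techniv}. The only thing to watch is that in the exterior refinement one genuinely uses the weight $\mathbf{z}$ (via the translation weights $z_{0i}$) rather than just the bound on $\widehat{v}^{\underline{L}}$, because in $\{|x|\geq t\}$ the quantity $1+|t-|x||$ can be as large as $1+|x|$ and would otherwise give back only the original inequality; it is the fact that $|x|-t\widehat{v}\cdot\widehat{v}/|\widehat{v}|$ controls both $|x|-t$ and $t/|v^0|^2$ simultaneously that produces the strict gain.
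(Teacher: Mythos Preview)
Your argument for the first inequality is exactly the paper's: chain the lower bound $1\leq 4|v^0|^2\widehat{v}^{\underline L}$ from \eqref{eq:techniv} with the upper bound on $\widehat{v}^{\underline L}$ from Lemma~\ref{nullcompo}. For the exterior estimate the paper simply cites \cite[Remark~2.5]{dim3}, so your direct computation is a welcome addition.

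There is, however, a small arithmetic slip in your last display: the step $|x|-\tfrac{t}{2}\geq \tfrac{|x|+t}{2}$ is equivalent to $|x|\geq 2t$, not $|x|\geq t$. The fix is immediate: from $|x|\geq t$ you get
\[
|x|-\tfrac{t}{2}\;\geq\;\tfrac{|x|}{2}\;\geq\;\tfrac{|x|+t}{4},
\]
which together with $|v^0|^2\mathbf{z}\geq 1$ still yields $|v^0|^2\mathbf{z}\gtrsim 1+t+|x|$. With this correction your proof is complete and matches the paper's approach.
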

\begin{proof}
For the first inequality, note that \eqref{eq:techniv} gives $1 \leq 4 |v^0|^2 \widehat{v}^{\underline{L}}$ and apply Lemma \ref{nullcompo}. For the second one, we refer to \cite[Remark~$2.5$]{dim3}.
\end{proof}
Recall from \cite[Lemma~$3.2$]{dim3} that $\mathbf{z}$ enjoys good commutation properties with the the vector fields of $\widehat{\mathbb{P}}_0$.
\begin{Lem}\label{zpreserv}
For any $a \in \R$ and $\widehat{Z} \in \widehat{\mathbb{P}}_0$, $|\widehat{Z}(\mathbf{z}^a)| \lesssim |a| \mathbf{z}^a$.
\end{Lem}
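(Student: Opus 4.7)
The plan is to reduce the problem, via the chain rule, to bounding $\widehat{Z}(z)$ for each elementary weight $z \in \mathbf{k}_1$, and then to verify by a short direct computation that each such derivative is again a linear combination of elements of $\mathbf{k}_1$ and constants, up to factors of $\widehat{v}$ (which are bounded by $1$).

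Concretely, writing $W := \sum_{z \in \mathbf{k}_1} z^2$, so that $\mathbf{z}^2 = 1+W$ and $\mathbf{z} \geq 1$, the chain rule gives
$$ \widehat{Z}(\mathbf{z}^a) \;=\; \tfrac{a}{2}\,(1+W)^{a/2-1}\,\widehat{Z}(W) \;=\; a\,\mathbf{z}^{a-2}\sum_{z \in \mathbf{k}_1} z\,\widehat{Z}(z).$$
Since $|z| \leq \mathbf{z}$ for every $z \in \mathbf{k}_1$ and $\mathbf{k}_1$ is finite, it suffices to prove the pointwise bound $|\widehat{Z}(z)| \lesssim \mathbf{z}$ for each $z \in \mathbf{k}_1$ and each $\widehat{Z} \in \widehat{\mathbb{P}}_0$, after which $|\widehat{Z}(\mathbf{z}^a)| \lesssim |a|\,\mathbf{z}^{a-2}\cdot \mathbf{z}\cdot \mathbf{z} = |a|\,\mathbf{z}^a$.

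The verification step is a short case check. For the translations, $\partial_t(z_{0i}) = \widehat{v}^i$, $\partial_{x^k}(z_{0i}) = -\delta_{ki}$, and $\partial_t(z_{jk}) = 0$, $\partial_{x^\ell}(z_{jk}) = \delta_{\ell j}\widehat{v}^k - \delta_{\ell k}\widehat{v}^j$, all bounded by $1 \leq \mathbf{z}$. For the scaling vector field, a direct computation using $S(t)=t$, $S(x^i)=x^i$ and $S(\widehat{v})=0$ gives $S(z) = z$ for any $z \in \mathbf{k}_1$. For the Lorentz boost $\widehat{\Omega}_{0i}$, using $v^0\partial_{v^i}(\widehat{v}^k) = \delta_{ik} - \widehat{v}^i\widehat{v}^k$, a brief calculation yields
$$ \widehat{\Omega}_{0i}(z_{0k}) = -\widehat{v}^k z_{0i}, \qquad \widehat{\Omega}_{0i}(z_{jk}) = \delta_{ij} z_{0k} - \delta_{ik} z_{0j} - \widehat{v}^i z_{jk},$$
each of which is controlled by $\mathbf{z}$. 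The rotations $\widehat{\Omega}_{jk}$ are handled analogously, using that $v^j \partial_{v^k} - v^k \partial_{v^j}$ annihilates $v^0$, which makes the verification immediate and produces only expressions of the form $\widehat{v}^\ell z'$ or $z'$ with $z'\in\mathbf{k}_1$.

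There is no real obstacle: the proof is a bookkeeping computation, and the only mild subtlety is keeping track of the $\widehat{v}$-dependence when differentiating in $v$ through the complete lifts $\widehat{\Omega}_{0i}$, which is handled uniformly by the identity $v^0 \partial_{v^i}(v^k/v^0) = \delta_{ik} - \widehat{v}^i\widehat{v}^k$. The factor $|a|$ in the estimate comes out of the chain rule, and the implicit constant depends only on the (finite) cardinality of $\mathbf{k}_1$ and on $\|\widehat{v}\|_\infty \leq 1$.
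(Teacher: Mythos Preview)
Your proof is correct and follows essentially the same approach as the one the paper defers to \cite[Lemma~3.2]{dim3}: reduce via the chain rule to bounding $\widehat{Z}(z)$ for each $z\in\mathbf{k}_1$, then verify by direct computation that the result is a bounded combination of weights in $\mathbf{k}_1$ and constants. Your explicit formulas for $\widehat{\Omega}_{0i}(z_{0k})$ and $\widehat{\Omega}_{0i}(z_{jk})$ are exactly right, and the remaining cases are indeed routine.
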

Finally, motivated by the fact that any regular solution to the linear relativistic transport equation $\T_0 (h)=0$ is constant along the timelike straight lines, $h(t,x+\widehat{v}t,v)=h(0,x,v)$, it will sometimes be useful to work with $g(t,x,v):=f(t,x+t\widehat{v},v)$, in particular during the study of the asymptotic properties of $\int_v f \dr v$ and its derivatives. The following result suggests that $g$ enjoys strong space decay and that its $v$ derivatives behave better than the ones of the distribution function $f$.
\begin{Lem}\label{gweightvderiv}
Let $f:[0,T[ \times \R^3_x \times \R^3_v \rightarrow \R$ be a sufficiently regular function and $g(t,x,v):= f(t,x+t\widehat{v},v)$. Then the following properties hold,
$$ \langle x \rangle^a  |g|(t,x,v) \leq |\mathbf{z}^a f|(t,x+t\widehat{v},v), \qquad v^0|\nabla_v g|(t,x,v) \leq \sum_{\widehat{Z} \in \widehat{\mathbb{P}}_0} \big|\mathbf{z}\widehat{Z} f \big|(t,x+t\widehat{v},v).$$ 
\end{Lem}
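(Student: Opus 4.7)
The first estimate follows by lifting \eqref{eq:propz} to powers. Indeed, for $a \geq 0$, the bound $\langle x \rangle \leq \mathbf{z}(t,x+t\widehat{v},v)$ from \eqref{eq:propz} gives $\langle x \rangle^a \leq \mathbf{z}^a$ evaluated at $\Phi := (t, x+t\widehat{v}, v)$; multiplying by $|f(\Phi)| = |g(t,x,v)|$ concludes the first estimate.

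For the second estimate, my plan is to apply the chain rule to $g = f \circ \Phi$ and then repackage the resulting terms so that they match vector fields of $\widehat{\mathbb{P}}_0$ acting on $f$. The chain rule, combined with the identity $v^0 \partial_{v^i} \widehat{v}^k = \delta_{ik} - \widehat{v}^i \widehat{v}^k$, produces
$$v^0 \partial_{v^i} g(t,x,v) = v^0 (\partial_{v^i} f)(\Phi) + t(\partial_{x^i} f)(\Phi) - t \widehat{v}^i \widehat{v}^k (\partial_{x^k} f)(\Phi).$$
Using the definition $\widehat{\Omega}_{0i} = t\partial_{x^i} + x^i \partial_t + v^0 \partial_{v^i}$ evaluated at $\Phi$ (where the coordinate $x^i$ becomes $x^i + t\widehat{v}^i$) and simplifying yields the compact form
$$v^0 \partial_{v^i} g = (\widehat{\Omega}_{0i} f)(\Phi) - x^i (\partial_t f)(\Phi) - t\widehat{v}^i (\T_0 f)(\Phi).$$

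The main obstacle is the last term: the prefactor $t\widehat{v}^i$ cannot be controlled directly by $\mathbf{z}(\Phi)$, since the weights $z_{0i}(\Phi) = -x^i$ and $z_{jk}(\Phi) = x^j\widehat{v}^k - x^k\widehat{v}^j$ making up $\mathbf{z}(\Phi)$ carry no $t$-dependence. The resolution is to invoke the scaling field $S = t\partial_t + x^j \partial_{x^j}$: evaluating $S$ on $f$ at $\Phi$ produces the identity $t(\T_0 f)(\Phi) = (Sf)(\Phi) - x^j (\partial_{x^j} f)(\Phi)$, which absorbs exactly the offending factor of $t$. Substituting back, one obtains a formula involving only the fields $\widehat{\Omega}_{0i}, \partial_t, S, \partial_{x^j} \in \widehat{\mathbb{P}}_0$, all with coefficients of the form $1$, $\widehat{v}^i$ (bounded by $1$), or $x^i$ (bounded by $\langle x \rangle \leq \mathbf{z}(\Phi)$ via \eqref{eq:propz}). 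Taking absolute values and summing over $i$ delivers the desired inequality.
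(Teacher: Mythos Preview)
Your proof is correct and follows essentially the same route as the paper's. The paper likewise computes the chain rule, substitutes $v^0\partial_{v^j}=\widehat{\Omega}_{0j}-t\partial_{x^j}-x^j\partial_t$, and then uses the scaling field $S$ to absorb the dangerous factor of $t$; its decomposition \eqref{reutiliser} of $x^j\partial_t + t\widehat{v}^j\widehat{v}^i\partial_{x^i}$ is algebraically equivalent to your identity $t(\T_0 f)(\Phi) = (Sf)(\Phi) - x^j(\partial_{x^j}f)(\Phi)$, and both lead to the same final expression $(\widehat{\Omega}_{0i}f)(\Phi) - x^i(\partial_t f)(\Phi) - \widehat{v}^i(Sf)(\Phi) + \widehat{v}^i x^j(\partial_{x^j}f)(\Phi)$.
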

\begin{proof}
The first property follows from $\mathbf{z}^2\geq 1+|z_{01}|^2+|z_{02}|^2+|z_{03}|^3$ and $|z_{0i}|(t,x+t\widehat{v},v)=|x^i|$. For the second one, we have, using the Einstein summation convention,
$$
 v^0\partial_{v^j}g(t,x,v) = (v^0 \partial_{v^j} f) (t,x+t\widehat{v},v)+t\partial_{x^j}f(t,x+t\widehat{v},v)- t \widehat{v}_j \widehat{v}^i  \partial_{x^i} f (t,x+t\widehat{v},v).
$$
The result then ensues from $v^0 \partial_{v^j}=\widehat{\Omega}_{0j}-t\partial_{x^j}-x^j \partial_t$ and
\begin{equation}\label{reutiliser} x^j \partial_t +t\widehat{v}^j \widehat{v}^i \partial_{x^i} = (x^j-t \widehat{v}^j) \partial_t +\widehat{v}^j t \partial_t +\widehat{v}^j (t \widehat{v}^i-x^i) \partial_{x^i} +\widehat{v}^j x^i \partial_{x^i}  = -z_{0j} \partial_t+\widehat{v}^j S +\sum_{1 \leq i \leq 3}\widehat{v}^j z_{0i} \partial_{x^i}  .
\end{equation}
\end{proof}

\subsection{Inverse function of the relativistic speed} In order to perform the change of variables $y=x-\widehat{v}t$ for integrals on the domain $\R^3_v$, it will be useful to determine certain properties of the function $v \mapsto \widehat{v}$.
\begin{Lem}\label{cdv}
We define, on the domain $\{ y \in \R^3 \; | \; |y|<1 \}$, the operator $\widecheck{\; \; }$ as 
$$y \mapsto \widecheck{y} = \frac{y}{\sqrt{1-|y|^2}}, \qquad \text{so that} \qquad \forall \; |y| <1, \quad v \in \R^3_v, \qquad \widehat{\widecheck{y}}=y, \quad \widecheck{\widehat{v}}=v .$$
Note also that $v^0=(1-|\widehat{v}|^2)^{\frac{-1}{2}}$. Moreover, for any $(t,x) \in \R_+ \times \R^3$, the Jacobian determinant of the transformation $v \mapsto x-\widehat{v}t$ is equal to $-\frac{t^3}{|v^0|^5}$.
\end{Lem}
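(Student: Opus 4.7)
The plan is to prove the three assertions by direct computation, treating them in turn.

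For the two identities $\widehat{\widecheck{y}}=y$ and $\widecheck{\widehat{v}}=v$, the key observation is that $|\widecheck{y}|^2 = |y|^2/(1-|y|^2)$, so that
\[ (\widecheck{y})^0 = \sqrt{1+|\widecheck{y}|^2} = \frac{1}{\sqrt{1-|y|^2}}, \qquad \widehat{\widecheck{y}} = \frac{\widecheck{y}}{(\widecheck{y})^0}= y.\]
Going the other way, $|\widehat{v}|^2=|v|^2/(1+|v|^2)$ gives $1-|\widehat{v}|^2=(v^0)^{-2}$, which immediately yields both $v^0=(1-|\widehat{v}|^2)^{-1/2}$ and $\widecheck{\widehat{v}} = v^0\widehat{v} = v$.

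For the Jacobian, I would compute $\partial_{v^j}\widehat{v}^i$ using $\partial_{v^j}v^0=\widehat{v}^j$, getting
\[ \frac{\partial \widehat{v}^i}{\partial v^j} = \frac{\delta^{ij}}{v^0}-\frac{v^i \widehat{v}^j}{(v^0)^2} = \frac{1}{v^0}\bigl( \delta^{ij}-\widehat{v}^i\widehat{v}^j\bigr).\]
Hence the matrix is $(v^0)^{-1}(I-\widehat{v}\widehat{v}^{\,\top})$. Since $\widehat{v}\widehat{v}^{\,\top}$ is a rank-one matrix with nonzero eigenvalue $|\widehat{v}|^2$, the matrix determinant lemma gives $\det(I-\widehat{v}\widehat{v}^{\,\top}) = 1-|\widehat{v}|^2 = (v^0)^{-2}$. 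Therefore
\[ \det\!\left( \frac{\partial \widehat{v}}{\partial v}\right) = \frac{1}{(v^0)^3} \cdot \frac{1}{(v^0)^2} = \frac{1}{(v^0)^5}.\]
The map $v\mapsto x-t\widehat{v}$ just differs from $-\widehat{v}$ by a constant, so multiplying the determinant by $(-t)^3$ gives the claimed value $-t^3/(v^0)^5$.

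There is no real obstacle here: the whole lemma reduces to elementary algebra and one application of the rank-one determinant identity. The only point requiring a moment of care is checking the sign of the Jacobian, since the factor $(-t)^3$ produces the minus sign in the final expression; everything else is essentially bookkeeping.
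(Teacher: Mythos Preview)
Your proof is correct and follows essentially the same approach as the paper: both compute $\partial_{v^j}\widehat{v}^i=(v^0)^{-1}(\delta^{ij}-\widehat{v}^i\widehat{v}^j)$ and then evaluate $\det(I-\widehat{v}\widehat{v}^{\top})=1-|\widehat{v}|^2=(v^0)^{-2}$, the paper via diagonalisation and you via the rank-one determinant identity, which amounts to the same thing.
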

\begin{proof}
The fact that $\widecheck{\; \;}$ is the reciprocal function of $\widehat{\; \;}$ can be obtained by direct computations. Let $V$ be the column vector such that its transpose is $V^T=\big( \frac{v^1}{v^0} , \frac{v^2}{v^0} , \frac{v^3}{v^0} \big)$. Then the Jacobian determinant of the transformation $v \mapsto x-\widehat{v}t$ is equal to
$$-\frac{t^3}{|v^0|^3}  \det \left( \mathrm{\mathbf{I}}_3-VV^T  \right) = -\frac{t^3}{|v^0|^3}  \det \left( \mathrm{diag} \left(1,1,1-\frac{|v|^2}{1+|v|^2} \right) \right) =-\frac{t^3}{|v^0|^5}.$$
\end{proof}
Let us also mention the inequality $2(1-|\widehat{v}|) \geq (1-|\widehat{v}|)(1+|\widehat{v}|)=|v^0|^{-2}$, which will be used several times throughout this paper.
\subsection{Complete version of the main result} We are now ready to give a full and detailed version of Theorem \ref{Th0}. Recall the alternative geometric form \eqref{Vlasov1}-\eqref{Maxwell23} of the Vlasov-Maxwell equations \eqref{VM11}-\eqref{VM33}.

\begin{Th}\label{Th1}
Let $N \geq 3$ and $(f_0,F_0)$ be an initial data set of class $C^N$ for the Vlasov-Maxwell system. Consider further $\Lambda  \geq \epsilon >0$, two constants $(N_v,N_x) \in \R_+^2$ and assume that
$$ \sum_{|\gamma| \leq N+1} \, \sup_{x \in \R^3} \, \langle x \rangle^{2+|\gamma|} |\nabla_x^{\gamma} F_0| (x) \leq \Lambda, \qquad  \sum_{|\beta|+|\kappa| \leq N} \, \sup_{(x,v) \in \R^6}  \langle v \rangle^{N_v +|\kappa|} \langle x \rangle^{N_x+|\beta|} \left| \partial_v^{\kappa} \partial_x^\beta f_0\right|(x,v) \leq \epsilon.$$
If $N_v \geq 15$ and $N_x > 7$, there exists $D >0$ and $\epsilon_0>0$, depending only on $(N,N_v,N_x)$, such that, if $\overline{\epsilon} := \epsilon \, e^{D \Lambda} \leq \epsilon_0$, then, the unique solution $(f,F)$ to \eqref{VM11}-\eqref{VM33} arising from these data is global in time. Moreover,
\begin{itemize}
\item the following pointwise estimates hold for the distribution function,
\begin{alignat*}{2}
 \forall \, (t,x,v) \in \R_+ \times \R_v^3 \times \R^3_x, \quad & \forall \, |\beta| \leq N, \qquad && |v^0|^{N_v-3} \left| \mathbf{z}^{N_x-2} \widehat{Z}^\beta f \right| (t,x,v) \lesssim \overline{\epsilon} \, \log^{3N_x+3N}(3+t), \\
 & \forall \, |\kappa| \leq N, \qquad &&  |v^0|^{N_v-3} \left|\partial_{t,x}^\kappa f \right|(t,x, v)  \lesssim \overline{\epsilon}.
 \end{alignat*}
\item The electromagnetic field and its derivatives $\mathcal{L}_{Z^{\gamma}}(F)$, up to order $|\gamma| \leq N-1$, decay as
$$ \forall \, (t,x) \in  \R_+ \times \R^3, \qquad  | \mathcal{L}_{Z^{\gamma}}F|(t,x)  \lesssim \Lambda (1+t+|x|)^{-1}(1+|t-|x||)^{-1}.$$
If $|\gamma| \leq N-2$, the good null components enjoy stronger decay properties near the light cone,
$$ \forall \, (t,x,v) \in \R_+ \times \R^3, \qquad    |\alpha ( \mathcal{L}_{Z^{\gamma}}F)|(t,x)\!+\!|\rho ( \mathcal{L}_{Z^{\gamma}}F)|(t,x)\!+\!|\sigma ( \mathcal{L}_{Z^{\gamma}}F)|(t,x)  \lesssim \Lambda \frac{\log(3+t)}{(1+t+|x|)^2}. $$
 \end{itemize}
\end{Th}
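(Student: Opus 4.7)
The plan is to run a continuity/bootstrap argument on the maximal time of existence $T_*$, with bootstrap assumptions that are slightly weaker versions of the pointwise estimates in the statement (weighted $L^\infty_{x,v}$ bounds on $\widehat{Z}^\beta f$ growing like $\log^{M}(3+t)$, uniform bounds on $\partial_{t,x}^\kappa f$, and the $(1+t+r)^{-1}(1+|t-r|)^{-1}$ and $\log(3+t)(1+t+r)^{-2}$ decay on $\mathcal{L}_{Z^\gamma}F$ and its good null components respectively). Because only $\epsilon$, and not $\Lambda$, must be small, I would propagate the $\Lambda$-dependence through a Gr\"onwall-type factor $e^{D\Lambda}$ on the Vlasov norms, so that $\bar{\epsilon}=\epsilon e^{D\Lambda}$ controls everything.

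\textbf{Step 1: Vlasov estimates along characteristics.} Applying the commutation formula of Proposition~\ref{Com} and integrating along the characteristics of $\T_F$, the main source term controlling $\widehat{Z}^\beta f$ is $\widehat{v}^\mu \mathcal{L}_{Z^\gamma}(F)_\mu{}^j \partial_{v^j}\widehat{Z}^\kappa f$. The key structural point, and the reason no modification of the commutators is needed, is the hierarchy in Proposition~\ref{Com}: whenever $\kappa_H = \beta_H$ (so the Vlasov factor is as bad as what we estimate) one has $\gamma_T \geq 1$, which means $\mathcal{L}_{Z^\gamma}F$ enjoys the extra decay $(1+t+r)^{-2}(1+|t-r|)^{-1}$ instead of the standard $(1+t+r)^{-1}(1+|t-r|)^{-1}$. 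Combining this with the null decomposition of $\widehat{v}^\mu F_\mu{}^j$, Lemma~\ref{nullcompo} to bound $\widehat{v}^{\underline{L}}$ and $\widehat{\slashed{v}}$ by $\mathbf{z}/(1+t+r)$, and Lemma~\ref{gainv} to exchange powers of $|v^0|\mathbf{z}$ for $(1+|t-r|)$ decay, reduces the integrand along a characteristic to something of size $\log^{M'}(3+s)/[(1+s)(1+|s-r|)^2]$, which is integrable in $s$ up to one extra logarithm. An induction on $\beta_H$, tracked carefully, produces the power $\log^{3N_x+3N}$. For the translation derivatives $\partial_{t,x}^\kappa f$ one has $\kappa_T=|\kappa|$, $\gamma_T\geq 1$ automatically, and the whole argument becomes integrable in time, yielding a uniform bound.

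\textbf{Step 2: Velocity averages and Glassey--Strauss.} Using the weighted bounds just obtained and the change of variables $v\mapsto x-\widehat{v}t$ of Lemma~\ref{cdv}, whose Jacobian is $t^3/|v^0|^5$, I get $t^{-3}\log^{M}(3+t)$ decay for $\int_v \widehat{Z}^\beta f\,\dr v$, provided $N_v-3 > 5$, which is ensured by $N_v\geq 15$. I then plug these estimates (and the analogous control of $\nabla_x$ of them) into the Glassey--Strauss representation of $\mathcal{L}_{Z^\gamma}(F)$, which writes each commuted field as a homogeneous part controlled by the Maxwell data (hence by $\Lambda$) plus a kernel integral of $J(\widehat{Z}^\xi f)$ and its first spatial derivative. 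For the general pointwise bound, direct estimation of the kernel recovers $\Lambda(1+t+r)^{-1}(1+|t-r|)^{-1}$, closing the bootstrap on $|\mathcal{L}_{Z^\gamma}F|$ with room. For the improved $\log/(1+t+r)^2$ decay of $\alpha$, $\rho$, $\sigma$, I would exploit that in the Glassey--Strauss kernels for those specific null components the singular factor $(1-\widehat{v}\cdot\omega)^{-1}$ is replaced by expressions involving $\widehat{v}^{\underline{L}}$, so that using $\widehat{v}^{\underline{L}}\lesssim (1+|t-r|)/(1+t+r)+\mathbf{z}/(1+t+r)$ gains one more power of $t+r$ at the cost of a single logarithm.

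\textbf{Main obstacle.} The genuine difficulty is Step~1: without modified commutators, the source term $\mathcal{L}_{Z^\gamma}(F)\cdot\partial_v\widehat{Z}^\kappa f$ with $|\gamma|+|\kappa|=|\beta|$ and $\kappa_H$ maximal barely fails to be time-integrable, so a na\"ive estimate would yield polynomial rather than logarithmic growth. The hierarchy $\kappa_H=\beta_H\Rightarrow\gamma_T\geq 1$ from Proposition~\ref{Com}, together with the null-component gain provided by Lemmas~\ref{nullcompo}--\ref{gainv} and the good commutation of $\mathbf{z}$ with $\widehat{\mathbb{P}}_0$ (Lemma~\ref{zpreserv}), is precisely what makes the integrand fall below the $s^{-1}$ threshold after one uses $\mathbf{z}/(1+s+r)\leq 1$ in a region where it is bad and integrates the remaining $(1+|s-r|)^{-2}$ factor. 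Organising this by induction on $|\beta|$ and on $\beta_H$, while keeping track of how many logarithmic factors are accumulated at each step, is the technical heart of the argument; once it is carried out, Steps~2 and~3 and the closure of the bootstrap follow a standard, if lengthy, pattern.
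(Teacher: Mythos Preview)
Your overall architecture (bootstrap, the $\beta_H$-hierarchy of Proposition~\ref{Com}, null structure of the Lorentz force, Glassey--Strauss for the field) matches the paper, but two steps are genuinely wrong or missing.

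\textbf{Good null components.} Your proposed mechanism does not work. The Glassey--Strauss representation is written for the \emph{Cartesian} components $F_{\mu\nu}$, and the kernel factor $(1+\omega\cdot\widehat v)^{-1}$ involves $\omega=(y-x)/|y-x|$ on the backward light cone; it has no relation to $\widehat v^{\underline L}=\tfrac12(1-\tfrac{x}{|x|}\cdot\widehat v)$ at the field point, so projecting onto $\alpha,\rho,\sigma$ produces no cancellation of the type you describe. The paper instead uses the null structure of the \emph{Maxwell equations} themselves (Lemma~\ref{improderiv0}, Corollary~\ref{Corgoodnull}): they imply $|\nabla_{\underline L}\alpha|+|\nabla_{\underline L}\rho|+|\nabla_{\underline L}\sigma|\lesssim |J|+(1+t+r)^{-1}\sum_{|\gamma|\le1}|\mathcal L_{Z^\gamma}G|$, and integrating this in the incoming direction from data yields the $\log(3+t)(1+t+r)^{-2}$ bound (Proposition~\ref{Proproofnullcompo}). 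This requires the sharp $t^{-3}$ decay of $J$, which is bootstrap assumption~\eqref{boot3}.

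\textbf{Log-free $t^{-3}$ and the top-order field.} Your change of variables only gives $\int_v \widehat Z^\beta f\,\dr v\lesssim \overline\epsilon\,t^{-3}\log^M(3+t)$ (this is Proposition~\ref{estimoyv}). That is \emph{not} enough: with the extra logarithm the $F^T$ contribution to Glassey--Strauss becomes $\overline\epsilon\,\log^M(3+t)(1+t+r)^{-7/4}(1+|t-r|)^{-1/4}$, which at $x=0$ equals $\overline\epsilon\,\log^M(3+t)(1+t)^{-2}$ and eventually exceeds the bootstrap target $C_{\mathrm{boot}}\Lambda(1+t)^{-2}$, so the argument does not close. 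The paper removes the logarithm by a step you omit: first showing that the \emph{spatial} averages $\int_x\widehat Z^\beta f\,\dr x$ are uniformly bounded in $(t,v)$ (Lemma~\ref{Lempartieltxave}, Corollary~\ref{Corspatialave}), then comparing $t^3\int_v$ to the spatial average via a mean-value expansion (Lemma~\ref{Lemgmoy}, Proposition~\ref{Proestimoyvkernet}). This costs one derivative, which is why the sharp $t^{-3}$ is only available for $|\beta|\le N-1$, and in turn forces the additional bootstrap assumption~\eqref{boot2} on $\nabla_{t,x}\mathcal L_{Z^\xi}F$, $|\xi|=N-1$, closed via the \emph{derivative} Glassey--Strauss decomposition (Proposition~\ref{GSdecomoderiv}). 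Without~\eqref{boot2} you also have no control on $\mathcal L_{Z^\gamma}F$ for $|\gamma|=N$, which you need in Step~1 when $|\beta|=N$.
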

Let us formulate two remarks.
\begin{enumerate}
\item More estimates, such as $\int_v f \dr v \lesssim t^{-3}$, are derived during the proof of Theorem \ref{Th1}.
\item With our method, contrary to our previous work \cite{dim3}, we cannot reach the optimal assumption $N_v=3$. We list in Remark \ref{Rqlossvz} below the precise parts of the proof where the control of higher spatial and momentum moments of $f$ are required.
\end{enumerate}
We now state our scattering result. For this, recall from \eqref{defFoverlin} the definition of the pure charge part $\overline{F}$ of $F$.
\begin{Th}\label{Th2}
Let $0 < \delta \leq 1$ and $(f,F)$ be a smooth solution to the Vlasov-Maxwell system arising from initial data satisfying the assumptions of Theorem \ref{Th1}. Suppose further that the initial electromagnetic field has the asymptotic expansion
\begin{equation}\label{assumasymptoexp}
 \sum_{|\gamma| \leq N+1} \, \sup_{|x| \geq 1} \, \langle x \rangle^{2+\delta+|\gamma|} \big| \nabla_{t,x}^\gamma \big(  F-\overline{F} \, \big) \big|(0,x) \leq \Lambda .
 \end{equation}
Then, with $n:=7(N_x+N)$, we have the following properties.
\begin{itemize}
\item The spatial average of $ f$ converges to a function $Q_{\infty} \in L^1(\R^3_v) \cap L^{\infty}(\R^3_v)$ of class $C^{N-1}$,
 $$\forall \; t \in \R_+, \qquad \bigg\| |v^0|^5 \bigg( \int_{\R^3_x}  f(t,x,v) \dr x - Q_{\infty}(v) \bigg) \bigg\|_{L^1_v\cap L^{\infty}_v} \lesssim \overline{\epsilon} \, \frac{\log^n(3+t)}{1+t}.$$
\item The four-current density $J(\widehat{Z}^\beta f)_{\mu}=\int_v \frac{v_{\mu}}{v^0}\widehat{Z}^\beta f \dr v$ has the following self-similar asymptotic profile. For any $|\beta| \leq N-1$ and $0 \leq \mu \leq 3$,
 $$  \forall \; t \in \R_+^*, \qquad \sup_{|x| < t} \bigg|t^3 \int_{\R^3_v} \frac{v^{\mu}}{v^0} \widehat{Z}^\beta f(t,x,v) \dr v - \frac{x^{\mu}}{t} \left( |v^0|^5Q_{\infty}^\beta \right) \left( \widecheck{\frac{x}{t}} \right)  \bigg| \lesssim \overline{\epsilon} \, \frac{\log^n(3+t)}{t},  \qquad x^0=t,$$
where $Q_\infty^\beta$ can be computed in terms of $\partial_v^\kappa Q_\infty$, $|\kappa| \leq |\beta|$. Moreover, $J(\widehat{Z}^\beta f)$ decays much faster in the exterior of the light cone.
\item The electromagnetic field and their derivatives up to order $|\gamma| \leq N-1$ have a self-similar asymptotic profile $v \mapsto \mathcal{L}_{Z^\gamma}(F)^\infty(v)$,
$$ \forall \, (t,x,v)\in \R_+\times \R^3_x \times \R^3_v, \qquad \left| t^2\mathcal{L}_{Z^\gamma}(F)(t,x+\widehat{v}t) - \mathcal{L}_{Z^\gamma}(F)^\infty(v) \right| \lesssim \Lambda \, \langle x \rangle^2|v^0|^8 \frac{\log^n(3+t)}{(1+t)^\delta}.$$
$F^\infty$ is of class $C^{N-1}$ and the components of $\mathcal{L}_{Z^\gamma}(F)^\infty$ can be computed in terms of $\partial_v^\kappa F^\infty _{\mu \nu}$, $|\kappa| \leq |\gamma|$.
\item We have modified scattering to a state $f_{\infty} \in L^1_{x,v} \cap L^\infty_{x,v}$ of class $C^{N-2}$. For any $|\kappa|+|\beta| \leq N-2$,
 $$  \forall \, t \geq 3, \qquad \left\| |v^0|^{N_v-10+|\xi|}\langle x \rangle^{N_x-4-|\xi|} \Big(\partial_v^\xi \partial_{x}^{\kappa} f(t,X_{\C}(t,x,v),v)-\partial_v^\xi \partial_{x}^{\kappa} f_{\infty}(x,v) \Big) \right\|_{ L^{\infty}_{x,v}} \lesssim \overline{\epsilon} \, \frac{\log^n(t)}{t^\delta},$$
 where the corrections of the linear spatial characteristics are defined as
 \begin{equation}\label{defXCTH}
  X^j_{\C}(t,x,v):= x^j+t\widehat{v}^j-\frac{\log(t)}{v^0} \, \widehat{v}^\mu \Big( F^{\infty,j}_{\mu}(v)+\widehat{v}^j F^{\infty}_{\mu 0}(v) \Big), \qquad 1 \leq j \leq 3  . 
  \end{equation}
 \item The modified complete lifts, of the Lorentz boosts $\widehat{\Omega}_{0k}$ and the rotations $\widehat{\Omega}_{jk}$, and the modified scaling,
 \begin{align*}
  \widehat{\Omega}_{\lambda k}^{\mathrm{mod}} & := \widehat{\Omega}_{\lambda k}- \frac{\log(t)}{v^0} \, \widehat{v}^\mu \Big( \mathcal{L}_{\Omega_{\lambda k}}(F)^{\infty , j}_{\mu }(v) +\widehat{v}^j \mathcal{L}_{\Omega_{\lambda k}}(F)^{\infty }_{\mu 0}(v) \Big) \partial_{x^j}, \qquad 0 \leq \lambda < k \leq 3, \\
 S^{\mathrm{mod}}  & := S+ \frac{\log(t)}{v^0} \, \widehat{v}^\mu \Big( F^{\infty,j}_{\mu}(v)+\widehat{v}^j F^{\infty}_{\mu 0}(v) \Big)  \partial_{x^j}, 
  \end{align*}
 verify the improved estimates $\| \widehat{\Omega}^{\mathrm{mod}}_{\lambda k} f (t,\cdot, \cdot) \|_{L^\infty_{x,v}} \lesssim \overline{\epsilon}$ and $\| S^{\mathrm{mod}} f (t,\cdot, \cdot) \|_{L^\infty_{x,v}} \lesssim \overline{\epsilon}$. 
\item For any $|\gamma| \leq N-3$, there exists a scattering state $\underline{\alpha}^{\mathcal{I}^+}_{\gamma}(u,\omega)$ on $\mathcal{I}^+$ such that,
 $$\forall \; \underline{u} \geq 3, \qquad \sup_{|u| \leq \underline{u}, \; \omega \in \mathbb{S}^2}  \left|r \underline{\alpha }( \mathcal{L}_{Z^{\gamma}}F)(u,\underline{u},\omega)-\underline{\alpha}^{\mathcal{I}^+}_{\gamma}(u,\omega)\right| \lesssim \Lambda \frac{\log(\underline{u})}{\underline{u}}.$$
 Moreover, $\underline{\alpha}^{\mathcal{I}^+}$ is of class $C^{N-3}$ and $\underline{\alpha}^{\mathcal{I}^+}_{\gamma}$ can be expressed in terms of the derivatives of $\underline{\alpha}^{\mathcal{I}^+}$.
 \item The conserved energy of the system can be related to the ones of the scattering states. For all $t \in \R_+$,
 $$\int_{\R^3_x} \int_{\R^3_v} v^0 f(t,x,v) \, \dr v \dr x+\frac{1}{2} \int_{\R^3_x} |F|^2(t,x) \, \dr x= \int_{\R^3_x} \int_{\R^3_v} v^0 f_\infty(x,v) \, \dr v \dr x+\frac{1}{4} \int_{\R^u} \int_{\mathbb{S}^2_\omega} \big|\underline{\alpha}^{\mathcal{I}^+}\big|^2(u,\omega) \, \dr \mu_{\mathbb{S}^2} \dr u.$$
\item If $N \geq 10$, there exists a solution $F^{\mathrm{vac}}$ of class $C^{N-5}$ to the vacuum Maxwell equations \eqref{Maxvac} such that, for any $\frac{1}{2} \leq q < 1$ and $|\gamma| \leq N-10$,
$$ \forall \; (t,x) \in \R_+ \times \R^3, \qquad |\mathcal{L}_{Z^\gamma}(F)-\mathcal{L}_{Z^\gamma}(F)^{\mathrm{vac}}|(t,x) \leq \Lambda C_q (1+t+|x|)^{-1-q}, \qquad \qquad C_q >0.$$
\end{itemize}
\end{Th}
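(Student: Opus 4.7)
The plan is to upgrade the qualitative pointwise bounds of Theorem \ref{Th1} to precise asymptotic profiles by exploiting the structure available along the linear characteristics $t\mapsto x+t\widehat{v}$. I would work primarily with the transported unknown $g(t,x,v):=f(t,x+t\widehat{v},v)$, which by \eqref{Vlasov1} satisfies
\[ \partial_t g(t,x,v) = -\widehat{v}^\mu F_\mu{}^j(t,x+t\widehat{v})\,(\partial_{v^j} f)(t,x+t\widehat{v},v). \]
From Theorem \ref{Th1}, together with Lemma \ref{nullcompo} which extracts the two copies of the good components of $F$ hidden in $\widehat{v}^\mu F_\mu{}^j$, this right-hand side decays in time as $t^{-2}(\log t)^{O(1)}$, and Lemma \ref{gweightvderiv} translates $v$-derivatives of $g$ into $\mathbf{z}$-weighted derivatives of $f$. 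Integrating in $t$ therefore makes $g$ and its higher derivatives Cauchy at the rate $t^{-\delta}\log^n(t)$; setting $Q_\infty(v):=\int_{\R^3_x} g_\infty(x,v)\,\dr x$ proves the first item.

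\textbf{Asymptotic profile of the current density.} For $|x|<t$ I would change variables via $v\mapsto y=x-t\widehat{v}$, whose Jacobian is $t^3/|v^0|^5$ by Lemma \ref{cdv}. The strong spatial decay of $g_\infty$ then concentrates the $v$-integral near the critical value $v=\widecheck{x/t}$, where $\widehat{v}^\mu=x^\mu/t$ with $x^0=t$, producing the leading-order profile $\frac{x^\mu}{t^4}(|v^0|^5 Q_\infty^\beta)(\widecheck{x/t})$. The constants $Q_\infty^\beta$ are obtained by rewriting the commutators $\widehat{Z}^\beta$ at fixed $(t,x)$ in terms of $\partial_v^\kappa$ at the shifted base point, using identities such as \eqref{reutiliser}. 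The faster decay in the exterior $|x|\geq t$ is a direct consequence of the exterior estimate in Lemma \ref{gainv}.

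\textbf{Self-similar profile of $F$ and radiation field.} Feeding these asymptotics into the Glassey-Strauss representation of Section \ref{Sec5}, which expresses $\mathcal{L}_{Z^\gamma}(F)$ as a spatial average of $J(\widehat{Z}^\xi f)$ and its first derivatives over past null cones, the $t^{-3}$ profile of the source combined with the $r^{-1}$ kernel yields the announced $t^{-2}$ self-similar profile $\mathcal{L}_{Z^\gamma}(F)^\infty(v)$ along the lines $t\mapsto x+t\widehat{v}$. The asymptotic Lorentz force $\mathrm{Lor}^k(v)=\widehat{v}^\mu F^{\infty,k}_\mu(v)$ is then a specific trace of $F^\infty$. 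In parallel, I construct the radiation field $\underline{\alpha}^{\mathcal{I}^+}$ by showing that $r\,\underline{\alpha}(\mathcal{L}_{Z^\gamma}F)(u,\underline{u},\omega)$ is Cauchy in $\underline{u}$ at fixed $(u,\omega)$, using the Maxwell equation $\nabla^\mu\mathcal{L}_{Z^\gamma}(F)_{\mu\nu}=J(\widehat{Z}^\xi f)_\nu$ projected on the angular directions together with the improved null decay of $\alpha,\rho,\sigma$. Here the strengthened initial decay \eqref{assumasymptoexp} on $F-\overline{F}$ is crucial in order to handle the contribution from spatial infinity.

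\textbf{Modified scattering and main obstacle.} With $\mathrm{Lor}(v)$ in hand, I would plug the ansatz \eqref{defXCTH} into the Vlasov equation and show that $\partial_t\bigl(f(t,X_\C(t,x,v),v)\bigr)$ is time-integrable: the key cancellation is that the $t^{-1}$ leading contribution produced by $\dot X_\C$ exactly matches the $t^{-2}\,\mathrm{Lor}(v)$ leading term of the full Lorentz force, leaving a remainder of size $t^{-1-\delta}\log^n(t)$. \emph{The main obstacle is the higher regularity statement:} naively applying $\widehat{Z}\in\K$ through $X_\C$ produces $\log(t)$ growth coming from the logarithmic factor in \eqref{defXCTH}, destroying the Cauchy property. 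This is precisely why one must introduce the modified commutators $\widehat{\Omega}^{\mathrm{mod}}_{\lambda k}$ and $S^{\mathrm{mod}}$, whose correction terms are tailored to absorb the $\widehat{Z}$-derivatives of $\log(t)\,\mathrm{Lor}(v)$; verifying $\|\widehat{\Omega}^{\mathrm{mod}}_{\lambda k} f(t,\cdot,\cdot)\|_{L^\infty_{x,v}}\lesssim \overline\epsilon$ requires pointwise control of $\partial_v^\xi\mathcal{L}_{Z^\gamma}(F)^\infty$, which itself demands higher regularity of $Q_\infty$, closing a bootstrap between the Vlasov and Maxwell profiles. The energy identity then follows by dominated convergence from $f\to f_\infty$ and $r\,\underline{\alpha}\to\underline{\alpha}^{\mathcal{I}^+}$ combined with conservation of $\int v^0 f+\tfrac{1}{2}|F|^2$. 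Finally, the vacuum Maxwell scattering field $F^{\mathrm{vac}}$ is produced in Section \ref{Sec7} by a scattering map reconstructing a solution of the vacuum system out of $\underline{\alpha}^{\mathcal{I}^+}$, with the quantitative rate $(1+t+|x|)^{-1-q}$ following from the $\delta$-improvement in \eqref{assumasymptoexp} and a standard weighted energy estimate for $F-F^{\mathrm{vac}}$.
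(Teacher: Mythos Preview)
Your first step contains a genuine error that undermines the argument. You claim that
\[
\partial_t g(t,x,v) = -\widehat{v}^\mu F_\mu{}^j(t,x+t\widehat{v})\,(\partial_{v^j} f)(t,x+t\widehat{v},v)
\]
decays like $t^{-2}\log^{O(1)}(t)$ and hence $g$ is Cauchy, converging to some $g_\infty$. This is false, and is in fact the very reason modified scattering is required. The factor $(\partial_{v^j}f)(t,x+t\widehat{v},v)$ grows linearly in $t$: writing $v^0\partial_{v^j}=\widehat\Omega_{0j}-t\partial_{x^j}-x^j\partial_t$ and evaluating at $(t,x+t\widehat v)$, the piece $t\partial_{x^j}f$ contributes a factor $t$. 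Combined with $|F|(t,x+t\widehat v)\sim t^{-2}$, the product is only $\sim t^{-1}$, which is not integrable. Lemma \ref{gweightvderiv} does not rescue this: it controls $\nabla_v g$ (chain rule included), not $(\nabla_v f)(t,x+t\widehat v,v)$, and these differ precisely by the offending $t\partial_x f$ term. You actually acknowledge the $t^{-1}$ leading behaviour yourself in the modified-scattering paragraph, so the opening step is inconsistent with your own later analysis.

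The paper's route to $Q_\infty$ avoids this completely: it never claims $g$ converges pointwise. Instead it bounds $\partial_t\!\int_{\R^3_x}\widehat Z^\beta f\,\dr x$ directly (Lemma \ref{Lempartieltxave} and Proposition \ref{ProQinf}). After one integration by parts in $x$, the transport part $\widehat v\cdot\nabla_x f$ vanishes, and after rewriting $\partial_{v^j}$ via the identity above, the $t\partial_{x^j}$ and $x^i\partial_{x^i}$ pieces integrate by parts away as well, leaving a genuinely $t^{-2}\log^{O(1)}(t)$ remainder. Thus $\int_x f\,\dr x$ converges even though $g$ does not; $Q_\infty$ is defined as this limit, not as the $x$-integral of a nonexistent $g_\infty$. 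Once $Q_\infty$ is in hand, the remainder of your outline (self-similar current via the change of variables of Lemma \ref{cdv}, profile of $F$ via the Glassey--Strauss decomposition, modified characteristics and commutators, radiation field, energy identity, vacuum approximation) is broadly aligned with the paper's Sections \ref{Sec6}--\ref{Secenergy}.
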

\begin{Rq}
As suggested by the scattering result, we could improve the logarithmic powers in the $L^\infty_{x,v}$ estimates for $f$ stated in Theorem \ref{Th1}. We could then prove that Theorem \ref{Th2} holds for $n=3N_x+3N$. However, such tinies improvement would require a relatively long and technical proof.
\end{Rq}
\begin{Rq}
We emphasize two main differences with previous works on Vlasov systems in dimension $3$ based on vector field methods \cite{FJS3,Poisson,dim3,Duan}.
\begin{enumerate}
\item The logarithmic correction of the linear commutators $\widehat{\Omega}_{\lambda \nu}$ and $S$ can be geometrically interpreted in terms of the asymptotic dynamic of the Lorentz force $\widehat{v}^\mu {F_{\mu k}}$ and its derivatives (see also Remark \ref{RqLor}).
\item Our approach does not require to modify the linear commutators in order to prove the global existence of the solutions, so that we avoid many technical difficulties. In these previous works, the analysis of the Vlasov field relied on propagating $L^1_{x,v}$ bounds. The source term of the wave equations (or the Poisson equation) were estimated through weighted-Sobolev embeddings as $t^3|Z^\beta \int_v  f \dr v| \leq t^3\int_v|\widehat{Z}^\beta f| \dr v \lesssim  \mathbb{E}(t)$, where $\mathbb{E}(t)$ is a certain $L^1_{x,v}$ norm. However, we know from Theorems \ref{Th1}-\ref{Th2} that, in general, $\|\widehat{Z}f\|_{L^1_{x,v}} \gtrsim \log(t)$ if $\widehat{Z}\neq \partial_{t,x}$. As a consequence, the optimal decay $t^{-3}$ cannot be obtained in such a way without modifying the linear commutators.
\end{enumerate}
\end{Rq}
\begin{Rq}
The profile $F^\infty$ of $F$ can be explicitly expressed in terms of the limit of the spatial average $Q_\infty$ (see Remark \ref{Rqexplicompu} and Annexe \ref{AnnexeC1}). Moreover, the Maxwell field admits the decomposition $F=F^T+F^2$, where
$$ \lim_{t \to + \infty} t^2 F(t,x+t\widehat{v}) = \lim_{t \to + \infty} t^2F^T(t,x+t\widehat{v}) =F^\infty(v), \qquad \qquad \lim_{\underline{u} \to + \infty} rF^T(u,\underline{u},\omega)=0.$$
In other words, the part of the electromagnetic field which gives rise to $F^\infty$ (respectively $\underline{\alpha}^{\mathcal{I}^+}$) has no impact on $\underline{\alpha}^{\mathcal{I}^+}$ (respectively $F^\infty$).
\end{Rq}

\subsection{Key ingredients of the proof}\label{SubsecIngred}
For the global existence result, our strategy relies on vector field methods and a continuity argument. The proof then essentially consists in improving bootstrap assumptions, which are pointwise decay estimates on the solutions and their derivatives. The scattering statements are then obtained by refining the analysis carried out during of the proof of Theorem \ref{Th1} and by investigating further the asymptotic behavior of the electromagnetic field.

\subsubsection{The large Maxwell field}

The assumptions of Theorems \ref{Th1}-\ref{Th2} imply that, initially, the distribution function $f$ is at most of size $\epsilon \ll 1$ and the electromagnetic field $F$ is at most of size $\Lambda$. The goal of our bootstrap argument is to prove that these properties are preserved over time. Our proof allows for $\Lambda$ to be large for the following reasons.
\begin{itemize}
\item Since the Maxwell equations are \textit{linear}, we can expect $F(t,\cdot)$ and its derivatives to be at most of size $\Lambda+C\epsilon \sim \Lambda$, provided that $\epsilon$ is small enough. Here, the constant $C$ possibly depends on $\Lambda$. Indeed, the data are bounded by $\Lambda$ and we expect the source term $J(f)$ to remains of size $\epsilon$.
\item In contrast, the Vlasov equation is nonlinear and we can expect, at first glance, to bound $\|\partial_{t,x}^\kappa f(t,\cdot)\|_{L^\infty_{x,v}}$ by $\epsilon+D\Lambda\epsilon = C(\Lambda)\epsilon$.
\end{itemize}
In fact, since our argument will rely on Grönwall's inequality, $C(\Lambda)$ will rather be of the form $e^{D\Lambda}$. The difficulty, if $\Lambda$ is large, is related to the logarithmic growth of quantities such as $\|\widehat{\Omega}_{01} f  \|_{L^\infty_{x,v}}$. More precisely, certain error terms are at the threshold of time integrability. Consequently a naive application of Grönwall's inequality would lead to $\|\widehat{\Omega}_{01} f  \|_{L^\infty_{x,v}} \lesssim \epsilon \, (1+t)^{D \Lambda}$. We discuss how to circumvent this obstacle in the next section.

\subsubsection{Estimates for the Vlasov field}\label{SubsecdiffVlasov} In order to control sufficiently well the electromagnetic field and close our estimates, we would like to recover the linear decay for $|\int_v \widehat{Z}^\beta f(t,x,v) \dr v | \lesssim t^{-3}$, with $|\beta| \leq N-1$, and similar quantities. This is done as follows
\begin{itemize}
\item The main step consists in proving that $|v^0|^{N_v}  \mathbf{z}^{N_x}\widehat{Z}^\beta f$ grows slowly, and in fact logarithmically, in time.
\item Then, by performing the standard change of variables $y=x-t\widehat{v}$, we are able to reduce the problem to proving a uniform bound for the spatial averages $|v^0|^5\int_y \widehat{Z}^\beta f(t,y,v) \dr y$. This turns out to be a consequence of the first step as well but our argument requires a loss of regularity, which is why we do not attain the optimal decay $t^{-3}$ for the top order derivatives $|\beta|=N$.
\end{itemize}
Let us illustrate certain difficulties of the first step, which relies on Duhamel formula, by considering the first order derivatives. If $Z \in \mathbb{K} \setminus \{S \}$ is a Killing vector field, then
\begin{equation}\label{eq:trian}
 |\T_F(\widehat{Z}f)|=|\widehat{v}^\mu {\mathcal{L}_Z(F)_{\mu}}^j \partial_{v^j}f | \lesssim \sum_{1 \leq j \leq 3}\frac{t+|x|}{v^0} |\widehat{v}^\mu {\mathcal{L}_Z(F)_{\mu}}^j||\partial_{t,x} f|+\text{better terms.}
 \end{equation}
Since $\mathcal{L}_Z(F)$ is supposed to decay as\footnote{This pointwise decay estimate is consistent with the expected behavior of the source term of the Maxwell equations.} $|\mathcal{L}_Z(F)| \lesssim \Lambda (1+t+|x|)^{-1}(1+|t-|x||)^{-1}$, there are two problems.
\begin{enumerate}
\item The decay rate degenerates near the light cone $t=|x|$.
\item Even far from the light cone, $|T_F(\widehat{Z}f)| \sim \Lambda t^{-1} |\partial_{t,x} f|$ is not integrable in time, preventing us to prove that $\|\widehat{Z}f\|_{L^\infty_{x,v}}$ grows slowly by a direct application of Grönwall's inequality if $\Lambda$ is large. 
\end{enumerate}

We deal with the first issue by taking advantage of the null structure of the Lorentz force, which, roughly speaking, allows us to transform decay in $t-r$ into decay in $t+r$. More precisely, $\widehat{v}^\mu {\mathcal{L}_Z(F)_{\mu}}^j$ can be decomposed as the sum of terms containing either a good null component $\alpha$, $\rho$ or $\sigma$ of $\mathcal{L}_Z(F)$ or one of the good null components of $\widehat{v}$. The first ones enjoy improved decay estimates near the light cone whereas the latters allow us to exploit the decay in $t-r$. We refer to Lemmas \ref{Lorentzforce} and \ref{techLemTF} for more details.

We circumvent the second problem by identifying hierarchies in the commuted equations. More precisely, if $Z= \partial_{x^\mu}$ is a translation, one can use that $|\mathcal{L}_{\partial_{x^\mu}}(F)|\lesssim t^{-1}(1+|t-|x||)^{-2}$ in order to prove that $T_F(\partial_{x^\mu}f)$ is in fact time integrable. Then, one can observe that the system of the commuted Vlasov equations \eqref{eq:trian} is in some sense triangular and expect $\|\widehat{Z}f\|_{L^\infty_{x,v}}$ to grow at most logarithmically. A toy model for the system of the first order commuted equations, once the null structure is well understood, is then
$$
\T_F(g)=\Lambda (1+t)^{-2} g+\Lambda (1+t)^{-3}h, \qquad \quad \T_F(h)=\Lambda (1+t)^{-1} g+\Lambda (1+t)^{-2}h, \qquad \qquad g \geq 0 , \, h \geq 0,
$$
where $g$ is supposed to capture the behavior of $|\partial_{x^\mu}f|$, $0 \leq \mu \leq 3$, and $h$ the one of $|\widehat{Z}f|$, with $\widehat{Z}$ is a homogeneous vector field such as $\widehat{\Omega}_{01}$. The source terms having $h$ as a factor represent the strongly decaying error terms in \eqref{eq:trian}. Using Duhamel formula and applying Grönwall's inequality, we have, for $\mathbb{E}(t):= \|g(t,\cdot,\cdot) \|_{L^\infty_{x,v}}+\|h(t,\cdot,\cdot) \|_{L^\infty_{x,v}}$, 
$$ \mathbb{E}(t) \leq \mathbb{E}(0)+\int_{s=0}^t \frac{\Lambda }{ 1+s} \mathbb{E}(s) \dr s, \qquad \qquad \mathbb{E}(t) \leq \mathbb{E}(0)\,(1+t)^{\Lambda}.$$
As mentioned earlier, without any smallness assumption on $\Lambda$, this estimate is not good enough in order to derive a satisfying decay estimate for $\int_v f \dr v$. The idea then is to exploit that 
$$\T_F \big( \log^{-1}(3+t) \big) \leq 0, \qquad \T_F \big(h \log^{-2}(3+t) \big) \leq \Lambda (1+t)^{-1}\log^{-2}(3+t) g+\Lambda (1+t)^{-2}h\log^{-2}(3+t).$$
By considering the hierarchised norm $\overline{\mathbb{E}}(t) :=\|g(t,\cdot,\cdot) \|_{L^\infty_{x,v}}+\|h(t,\cdot,\cdot) \|_{L^\infty_{x,v}} \log^{-2}(3+t)$, we finally get
$$ \overline{\mathbb{E}}(t) \leq \overline{\mathbb{E}}(0)+\int_{s=0}^t \frac{2\Lambda }{ (1+s)\log^{2}(3+s)}\overline{\mathbb{E}}(s) \dr s, \qquad \qquad \overline{\mathbb{E}}(t) \leq \overline{\mathbb{E}}(0) e^{ 2\Lambda}.$$
More generally, the hierarchies are determined by the number of homogeneous vector fields $\beta_H$ composing $\widehat{Z}^\beta$ and the exponent of the weight $\mathbf{z}$.

A new difficulty arises for the higher order derivatives since we do not have improved estimates at our disposal on the good null components of $\mathcal{L}_{Z^\gamma}(F)$, for $|\gamma| \geq N-1$. This time, we transform decay in $t-r$ into decay in $t+r$ by losing powers of $|v^0|^2\mathbf{z}$ through Lemma \ref{gainv}. For this, it is important to observe that, in the error terms, a such $\mathcal{L}_{Z^\gamma}(F)$ is always multiplied by a low order derivative of $f$. We can then close the estimates by propagating weaker $L^\infty_{x,v}$ norms of $\widehat{Z}^\beta f$ when $|\beta| \geq N-1$.
\begin{Rq}
Let us make some comparisons between the decay properties of the electromagnetic $F$ and the ones of the electric field $E$ associated to a solution to the Vlasov-Poisson system arising from small data. 
\begin{itemize}
\item As $\|E (t,\cdot)\|_{L^\infty_x}\lesssim t^{-2}$ and $|F|(t,x) \lesssim t^{-1} (1+|t-|x||)^{-1}$, the electromagnetic field has a much weaker decay rate near the light cone $t=r$ than $E$.
\item The difference is even more marked for their derivatives since $|\partial_{t,x}^\kappa E|(t,x) \lesssim t^{-2-|\kappa|}$ whereas we merely have $|\mathcal{L}_{\partial_{t,x}^\kappa}F|(t,x) \lesssim t^{-1} (1+|t-|x||)^{-1-|\kappa|}$. Thus, in order to exploit the extra decay provided by these derivatives of $F$, one has to take advantage of the null structure of the system or to lose powers of $|v^0|^2\mathbf{z}$.
\end{itemize} 
\end{Rq}

\subsubsection{Estimates for the electromagnetic field}

We control the cartesian components of $\mathcal{L}_{Z^\gamma}(F)$ using the representation formula for the wave equation since, for instance, $\Box F_{01}=-\int_v \partial_{x^1}f+\widehat{v}^1 \partial_t f \dr v$. However, two difficulties arise for the higher order derivatives.
\begin{enumerate}
\item There is a loss of regularity. We need to control $\int_v \widehat{v}^\mu \partial_{t,x} \widehat{Z}^\gamma f \dr v $ in order to estimate $\mathcal{L}_{Z^\gamma}(F)$.
\item With our method, we do not have the optimal decay rate for $\int_v  \widehat{Z}^\gamma f \dr v$, $|\gamma|=N$. Moreover, any logarithmic loss would prevent us to close our estimates.
\end{enumerate} 
We treat the first problem by using the Glassey-Strauss decomposition of the electromagnetic field \cite{GlStrauss}, presented in detail in Section \ref{subsecGS}. The idea is to express the derivatives $\partial_{x^\mu}$ in terms of derivatives tangential to backward light cones and $\T_0=\partial_t+\widehat{v} \cdot \nabla_x$, which is transverse to light cones. Exploiting then the Vlasov equation $\T_F(f)=0$, we can perform integration by parts and save one derivative. 

We deal with the second issue by estimating $\nabla_{t,x} \mathcal{L}_{Z^\xi}(F)$, for $|\xi|=N-1$, by the Glassey-Strauss decomposition of the derivatives of the electromagnetic field. Roughly speaking, it allows us to control the inhomogeneous part of $\nabla_{t,x} \mathcal{L}_{Z^\xi}(F)$ by $\int_v |v^0|^3|\widehat{Z}^\beta f| \dr v$, where $|\beta| \leq N-1$ (see Proposition \ref{GSdecomoderiv} and Corollary \ref{estikernels} for more details). However, with this process, we get a bad control of the other top order derivatives near the light cone, 
$$|\mathcal{L}_{ZZ^\xi}(F)|(t,x) \lesssim (1+t+|x|)|\nabla_{t,x} \mathcal{L}_{Z^\xi} F|(t,x)+|\mathcal{L}_{Z^\xi} F|(t,x)\lesssim (1+|t-r|)^{-2} \log(3+|t-r|), \quad \; \;  |\xi| = N-1.$$ This forces us to lose a power more of $|v^0|^2 \mathbf{z}$ for the estimates of the top order derivatives of the Vlasov field $f$.

Once we proved that the solutions are global in time, we use null properties of the Maxwell equations \eqref{Maxwell23} to derive the existence of a scattering state for $F$ and its derivatives. We then address the problem of finding a solution $F^{\mathrm{vac}}$ to the vacuum Maxwell equations which approaches $F$ by constructing a scattering map for these equations. For this, we make crucial use of the corresponding result for the homogeneous wave equation \cite{SchlueLindblad}. This is carried out in Section \ref{Sec7}.

\subsubsection{Modified scattering result}\label{subsecmotivatedby}

In the context of the Vlasov-Poisson system, except for the trivial solution, the distribution function does not converge along the linear characteristics \cite{Choi22}. We then do not expect $f(t,x+t\widehat{v},v)$ to converge and the reason is related to the long range effect of the Lorentz force (recall Remark \ref{Rqcharact}). More precisely, isolating the leading order term of the source term of the Maxwell equations,
$$  \sup_{|x| < t} \bigg|t^3 \int_{\R^3_v} \frac{v^{\mu}}{v^0}  f(t,x,v) \dr v - \frac{x^{\mu}}{t} \left( |v^0|^5Q_{\infty} \right) \left( \widecheck{\frac{x}{t}} \right)  \bigg| = O \big(t^{-\frac{\delta}{2}} \big),   \qquad Q_\infty(v) := \lim_{t \to + \infty} \int_{\R^3_x} f(t,x,v) \dr v,$$
where $x^0=t$, we are able to prove $t^2F(t,x+t\widehat{v})= F^\infty(v) +O(t^{-\delta/2} )$. Consequently, the slow decay of the electromagnetic field along timelike trajectories implies  that the right hand side of
$$ \partial_t \big( f(t,x+t\widehat{v},v) \big)\! = \! \frac{t}{v^0} \widehat{v}^\mu \big( {F_{\mu}}^j(t,x+t\widehat{v})+\widehat{v}^j F_{\mu 0}(t,x+t\widehat{v}) \big) \partial_{x^j} f(t,x+t\widehat{v},v)+ O \big(t^{-\frac{\delta}{2}} \big)$$
should not be time integrable, preventing $f(t,x+t\widehat{v},v)$ to converge. Instead, by considering the logarithmic corrections $X_\C$, given in \eqref{defXCTH}, of the timelike straight lines, one can compensate the worst term in the right hand side of the previous identity and prove the modified scattering statement $f(t,X_\C,v) \to f_\infty (x,v)$.

Although the regularity of $f_\infty$ according to $x$ can be obtained in a similar fashion, the regularity in $v$ requires a more thorough analysis. In fact, $ v^0 \partial_{v^i}  ( f(t,X_\C,v) )$ can be expressed as terms such as $\widehat{\Omega}_{0i}f(t,X_\C,v)$ which, contrary to $\partial_{t,x}f(t,X_\C,v)$, does not converge. The reason is related to the weak decay of the error term $[\T_F,\widehat{\Omega}_{0i}] \sim t^{-1}$. As for the characteristics, the idea consists in considering a logarithmic correction of $\widehat{\Omega}_{0i}$, introduced and studied in Section \ref{subsecmodicom}, which has improved commutation properties with the Vlasov operator $\T_F$. As stated in Theorem \ref{Th2}, these corrections are given in terms of first order derivatives of the effective electromagnetic field $F^\infty(v)$.

\subsection{Null properties of electromagnetic fields}

We recall here the classical results which will be used throughout this paper in order to study solutions to the Maxwell equations
\begin{equation}\label{Maxwithsource}
\nabla^{\mu} F_{\mu \nu} = J_{\nu}, \qquad \qquad \nabla^{\mu} {}^* \! F_{\mu \nu}=0,
\end{equation}
where the source term $J=J_{\mu} \mathrm{d}x^{\mu}$ is a sufficiently regular $1$-form. In particular, solutions to the vacuum Maxwell equations will satisfy
\begin{equation}\label{Maxvac}
\nabla^{\mu} F_{\mu \nu} = 0, \qquad \qquad \nabla^{\mu} {}^*  \! F_{\mu \nu}=0.
\end{equation} 
We point out that some of the estimates presented here could be refined in a general setting. For the purpose of performing energy estimates during the construction of the scattering map for \eqref{Maxvac}, we recall the electromagnetic stress–energy tensor.
\begin{Def}\label{Defenergytensor}
Let $G$ be a $2$-form of class $C^1$ such that $\nabla^{\mu} G_{\mu \nu}=J_{\nu}$ and $\nabla^{\mu} {}^* \! G_{\mu \nu}=0$. The energy-momentum tensor $\mathbb{T}[G]_{\mu \nu}$ is defined as 
$$ \mathbb{T}[G]_{\mu \nu}:=G_{\mu \beta} {G_{\nu}}^{\beta}-\frac{1}{4}\eta_{\mu \nu} G_{\xi \lambda} G^{\xi \lambda}.$$
Moreover, we have
$$ \nabla^{\mu} T[G]_{\mu \nu}=G_{\nu \lambda} J^{\lambda}, \qquad T[G]_{L L}=|\alpha(G)|^2, \qquad T[G]_{\underline{L} \underline{L} }=|\underline{\alpha}(G)|^2, \qquad T[G]_{L \underline{L}}=|\rho(G)|^2+|\sigma(G)|^2.$$
\end{Def}
We now present inequalities relying on the relations
\begin{equation}\label{eq:nullderiv}
(t-r)\underline{L}=S-\frac{x^i}{r}\Omega_{0i}, \quad (t+r)L=S+\frac{x^i}{r}\Omega_{0i}, \quad re_{\theta}=-\cos (\varphi) \Omega_{13}-\sin(\varphi)\Omega_{23} ,\quad re_{\varphi}= \Omega_{12}.
\end{equation}
\begin{Lem}\label{improderiv0}
Let $G$ be a sufficiently regular solution to the Maxwell equations \eqref{Maxwithsource} with a smooth source term $J$. Then 
$$ \forall \; |x| \geq \frac{1+t}{2}, \qquad \left(\left| \nabla_{\underline{L}} \alpha( G) \right|+\left| \nabla_{\underline{L}} \rho( G) \right|+\left| \nabla_{\underline{L}} \sigma( G) \right|\right)(t,x) \lesssim |J|(t,x)+ \sum_{|\gamma| \leq 1} \frac{\left| \mathcal{L}_{Z^{\gamma}}(G)\right|(t,x)}{1+t+|x|} $$
and
$$\forall \, (t,x) \in \R_+ \times \R^3, \qquad \left| \nabla_{L} \left(r \underline{\alpha}(G) \right) \right|(t,x) \lesssim r|J|(t,x)+ \sum_{|\gamma| \leq 1}  \left| \rho (\mathcal{L}_{Z^{\gamma}}G)\right|\!(t,x)+\left| \sigma (\mathcal{L}_{Z^{\gamma}}G)\right|\!(t,x)  .$$
\end{Lem}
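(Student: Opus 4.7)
\medskip
\noindent\textit{Proof plan.}
The plan is to decompose the two Maxwell equations \eqref{Maxwithsource} in the null frame $(L,\underline{L},e_\theta,e_\varphi)$ in order to express a bad $\underline{L}$-derivative of a good null component (respectively the $L$-derivative of $\underline{\alpha}$) as a combination of angular derivatives, lower-order terms and source terms. Working out $\nabla^\mu G_{\mu\nu}=J_\nu$ and $\nabla^\mu {}^*G_{\mu\nu}=0$ component by component, and using the covariant derivatives of the null frame (which produce at worst $r^{-1}$-factors times components of $G$), one obtains schematically
\begin{align*}
\nabla_{\underline{L}}\rho(G) &= \slashed{\mathrm{div}}\,\underline{\alpha}(G) + O\!\left(r^{-1}|G|\right) + O(|J|),\\
\nabla_{\underline{L}}\sigma(G) &= \slashed{\mathrm{curl}}\,\underline{\alpha}(G) + O\!\left(r^{-1}|G|\right) + O(|J|),\\
\nabla_{\underline{L}}\alpha(G)_{e_A} &= \slashed{\nabla}_{\! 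A}\rho(G) + {}^*\slashed{\nabla}_{\! A}\sigma(G) + O\!\left(r^{-1}|G|\right) + O(|J|),\\
\nabla_{L}\underline{\alpha}(G)_{e_A} &= -\slashed{\nabla}_{\! A}\rho(G) + {}^*\slashed{\nabla}_{\! A}\sigma(G) + O\!\left(r^{-1}|G|\right) + O(|J|),
\end{align*}
the point being that in every relation the bad derivative of the left-hand side is paid for by purely \emph{tangential} (to the $2$-sphere) derivatives of the other null components, plus $J$ and lower-order curvature terms.

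The next step is to convert the tangential derivatives into rotation vector field derivatives. From the third and fourth identities of \eqref{eq:nullderiv}, one has $r\,e_\theta=-\cos(\varphi)\Omega_{13}-\sin(\varphi)\Omega_{23}$ and $r\,e_\varphi=\Omega_{12}$, so for any scalar function $h$ one obtains $r|\slashed{\nabla}h|\lesssim \sum_{ij}|\Omega_{ij}(h)|$. Applied to the null components of $G$, and using that the rotations $\Omega_{jk}$ preserve the spherical foliation so that $[\mathcal{L}_{\Omega_{jk}},\text{projection onto the null frame}]$ yields only lower-order terms of the form $|G|$, this gives
$$r\,|\slashed{\nabla}\,\rho(G)|+r\,|\slashed{\nabla}\,\sigma(G)|+r\,|\slashed{\mathrm{div}}\,\underline{\alpha}(G)|+r\,|\slashed{\mathrm{curl}}\,\underline{\alpha}(G)|\lesssim \sum_{|\gamma|\leq 1}|\mathcal{L}_{Z^\gamma}(G)|.$$
Plugging this into the first three relations above and using that in the region $|x|\geq (1+t)/2$ one has $r^{-1}\lesssim (1+t+|x|)^{-1}$, the first estimate of the lemma follows.

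For the second estimate, one starts from the Maxwell relation for $\nabla_L\underline{\alpha}_{e_A}$ displayed above. Using $L(r)=1$ together with $\nabla_L e_A$ being bounded by $r^{-1}e_A$-type terms (producing harmless contributions of the form $r^{-1}\cdot r\,|\underline\alpha|\lesssim|G|$), one finds
$$\nabla_L\!\left(r\,\underline{\alpha}(G)_{e_A}\right)=-r\,\slashed{\nabla}_{\! A}\rho(G)+r\,{}^*\slashed{\nabla}_{\! A}\sigma(G)+O(|G|)+O(r|J|).$$
Applying the rotation identity to the two $r\slashed{\nabla}$ terms produces precisely $\rho(\mathcal{L}_{\Omega_{jk}}G)$ and $\sigma(\mathcal{L}_{\Omega_{jk}}G)$ up to lower-order corrections absorbed in $|\rho(G)|+|\sigma(G)|$, which yields the announced bound — globally in $(t,x)$, since here no conversion from $r^{-1}$ to $(1+t+|x|)^{-1}$ is needed. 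The only delicate point in the argument is the bookkeeping of the connection terms arising when expanding the covariant derivatives of the projected components and when commuting $\mathcal{L}_{\Omega_{jk}}$ with the frame projection; all such terms are nonetheless $O(r^{-1}|G|)$ (resp.\ $O(|G|)$ after the multiplication by $r$), which is exactly what the stated inequalities allow.
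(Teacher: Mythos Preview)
Your approach is essentially the same as the paper's: write the Maxwell equations in the null frame, isolate the transversal derivative of each null component in terms of angular derivatives of the others plus source and $r^{-1}$ terms, and then trade the angular derivatives for rotation vector fields via \eqref{eq:nullderiv}. The paper quotes the null Maxwell equations from \cite{CK} and, for the conversion of $\slashed{\nabla}$-derivatives into $|\zeta(\mathcal{L}_{Z^\gamma}G)|$, invokes \cite[Lemma~D.2]{massless}; you essentially reprove that conversion by hand. For the first inequality your argument is fine.

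For the second inequality there is a genuine gap. You write that ``$\nabla_L e_A$ being bounded by $r^{-1}e_A$-type terms'' produces a harmless $O(|G|)$ contribution, and then conclude the announced bound. But the right-hand side of the lemma contains \emph{only} $|\rho(\mathcal{L}_{Z^\gamma}G)|+|\sigma(\mathcal{L}_{Z^\gamma}G)|$, not the full $|G|$; an extra $|G|$ term would in particular carry $|\underline{\alpha}(G)|$, which is exactly the component you cannot afford on the right (this is the whole point of the estimate, used e.g.\ in Proposition~\ref{Corgoodnull2} to show $r\underline{\alpha}$ converges from the decay of $\rho,\sigma$ alone). The fix is that on Minkowski one has $\nabla_L e_A=0$ exactly (compute in spherical coordinates: $\nabla_{\partial_r}(r^{-1}\partial_\theta)=0$ and $\nabla_{\partial_t}\partial_\theta=0$), so there is no such error term at all. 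The paper explicitly uses this: ``remark further that $\nabla_L e_A=0$ implies $|\nabla_L(r\underline{\alpha})|\lesssim\sum_B|\nabla_L(r\underline{\alpha}_{e_B})|$''. Once you replace your loose ``$\nabla_L e_A=O(r^{-1})$'' by the identity $\nabla_L e_A=0$, the $O(|G|)$ term disappears, and your remaining steps give the stated bound.
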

\begin{Rq}
Compared to $Z \in \mathbb{K}$, $Z \neq \partial_{x^{\mu}}$, the derivatives tangential to the light cone $(L,e_{\theta},e_{\varphi})$ provide an extra decay in $t+r$ whereas $\underline{L}$ merely provides an additional decay in $t-r$. The second estimate then reflects that $\alpha$, $\rho$ and $\sigma$ are the good null components. The last inequality provides an improved control of $\nabla_L(r\underline{\alpha})$ near the light cone and will be useful in order to prove the existence of scattering states.
\end{Rq}
\begin{proof}
Let us denote by $\slashed{\nabla}$ the intrinsic covariant differentiation on the spheres and by $\zeta$ any of the null components $ \alpha,  \, \underline{\alpha}, \, \rho$ or $ \sigma$. Then, according to \cite[Lemma $D.2$]{massless}, we have for all $(t,x) \in \R_+ \times \R^3$,
 $$(1+t+|x|)\left| \nabla_L \zeta( G) \right|\!(t,x)+(1+|x|)\left| \slashed{\nabla} \zeta( G) \right|\!(t,x)+(1+|t-|x||)\left| \nabla_{\underline{L}} \zeta( G) \right|\!(t,x)\lesssim  \sum_{|\gamma| \leq 1} \left| \zeta (\mathcal{L}_{Z^{\gamma}}G)\right|\!(t,x).$$
We now express the Maxwell equations in null coordinates. According to \cite[equations~$(M_1'')$~-~$(M_6'')$]{CK}, we have for any $A \in \{\theta , \varphi \}$,
\begin{alignat*}{2} 
&\nabla_{\underline{L}}  \rho(G)-\frac{2}{r} \rho(G)- \slashed{\nabla}^{e_B} \underline{\alpha}(G)_{e_B}  =  J_{\underline{L}}, \qquad \qquad && \nabla_{\underline{L}}  \alpha(G)_{e_A}-\frac{\alpha(G)_{e_A}}{r}+\slashed{\nabla}_{e_A} \rho (G)-\varepsilon^{AB} \slashed{\nabla}_{e_B} \sigma(G) = J_{e_A}  , \\ 
& \nabla_{\underline{L}}  \sigma(G)-\frac{2}{r} \sigma(G)+ \varepsilon^{AB} \slashed{\nabla}_{e_A} \underline{\alpha}(G)_{e_B} = 0, \qquad \qquad  && \nabla_{L}  \underline{\alpha}(G)_{e_A}+\frac{\underline{\alpha}(G)_{e_A}}{r}-\slashed{\nabla}_{e_A} \rho (G)-\varepsilon^{AB} \slashed{\nabla}_{e_B} \sigma(G) = J_{e_A}. 
\end{alignat*}
This allows us to deduce the first estimate. For the last one, use the same arguments and remark further that $\nabla_{L}e_A=0$ implies
$$\left|\nabla_{L}(r \underline{\alpha})\right|\lesssim \sum_{B \in \{ \theta , \varphi \} } \left|\nabla_{L}(r \underline{\alpha})_{e_B}\right|= \sum_{B \in \{ \theta , \varphi \} } \left|\nabla_{L}(r \underline{\alpha}_{e_B})\right|= \sum_{B \in \{ \theta , \varphi \} } \left|r\nabla_{L}  \underline{\alpha}(G)_{e_B}+\underline{\alpha}(G)_{e_B}\right| .$$
\end{proof}
In the same spirit, we have the following identity which is proved in \cite[Proposition~$3.7$, equation $(18)$]{dim3}.
\begin{Lem}\label{improderiv}
For any sufficiently regular $2$-form $G$ and any null component $\zeta \in \{ \alpha, \underline{\alpha}, \rho, \sigma \}$,
$$\forall \; (t,x) \in \R_+ \times \R^3, \qquad \left|  \zeta( \nabla_{t,x}G) \right|(t,x)  \lesssim  \sum_{|\gamma| \leq 1}\frac{\left| \zeta (\mathcal{L}_{Z^{\gamma}}G)\right|(t,x)}{1+|t-|x||} +\frac{\left| \mathcal{L}_{Z^{\gamma}}(G)\right|(t,x)}{1+t+|x|}.$$
\end{Lem}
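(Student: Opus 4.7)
The claim is a purely algebraic bound for a generic $2$-form, with no field equation used. My plan exploits the identities \eqref{eq:nullderiv}, which express $L$ and $r e_A$ as combinations of elements of $\mathbb{K}$ over $(t+r)$ or $r$, and $\underline{L}$ as $\tfrac{1}{t-r}(S - \tfrac{x^i}{r}\Omega_{0i})$ when $t \neq r$. These are responsible for the $(1+t+|x|)^{-1}$ weight (through $L, e_A$) and the $(1+|t-|x||)^{-1}$ weight (through $\underline{L}$) on the right-hand side.

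First, I would decompose $\partial_t$ and $\partial_{x^i}$ in the null frame $(L, \underline{L}, e_\theta, e_\varphi)$ with bounded coefficients and, for each $e$ in the frame, apply Leibniz:
\[
(\nabla_e G)(X, Y) = e\bigl(G(X, Y)\bigr) - G(\nabla_e X, Y) - G(X, \nabla_e Y),
\]
with $X, Y$ the frame vectors defining $\zeta$. Since the Minkowski connection coefficients of the null frame are of order $1/r$, the last two terms contribute $O(|G|/r)$, which is absorbed in the $\frac{|\mathcal{L}_{Z^\gamma}(G)|}{1+t+|x|}$ piece once the region $r \leq 1$ is treated separately. Using \eqref{eq:nullderiv}, I would then replace each $e(\zeta(G))$ by a combination of $Z(\zeta(G))$ with $Z \in \mathbb{K}$, divided by the appropriate weight, and convert
\[
Z\bigl(G(X, Y)\bigr) = (\mathcal{L}_Z G)(X, Y) + G([Z, X], Y) + G(X, [Z, Y]).
\]
The brackets $[Z, L], [Z, \underline{L}], [Z, e_A]$ are bounded combinations of the frame when $Z$ is the scaling $S$ or a rotation $\Omega_{jk}$; for the Lorentz boosts $Z = \Omega_{0i}$ they contain a $\tfrac{t-r}{r}$ angular piece, but the prefactor $\tfrac{x^i}{r}$ arising from the decomposition of $L$ and $\underline{L}$ in \eqref{eq:nullderiv} produces algebraic cancellations upon summation over $i$, reducing the residual to contributions of size $|G|$.

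The main technical subtlety is handling the regions where \eqref{eq:nullderiv} degenerates. Near the light cone $|t-r| \leq 1$, where the $\underline{L}$ identity fails, it suffices to note that $\zeta(\nabla_{t,x}G) = \zeta(\mathcal{L}_{\partial_{t,x}} G)$ and that $1+|t-r|$ is of order one there, so the trivial estimate $|\zeta(\nabla_{t,x}G)| \leq \sum_{Z^\gamma \in \{\partial_t, \partial_{x^i}\}} |\zeta(\mathcal{L}_{Z^\gamma}G)| \lesssim \sum_{|\gamma|\leq 1} \frac{|\zeta(\mathcal{L}_{Z^\gamma}G)|}{1+|t-r|}$ does the job. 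Near the origin $r \leq 1$, I would use only the non-degenerate $L$ and $\underline{L}$ identities from \eqref{eq:nullderiv} together with the decomposition $\partial_{x^i} = (x^i/r)\partial_r + e_\perp^i$ (where $|e_\perp^i| \leq 1$); both $1+|t-r|$ and $1+t+r$ are comparable to $1+t$ there, and the argument closes. Glueing the regions yields the claim.
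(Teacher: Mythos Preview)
Your overall strategy is the standard one and the cancellation you single out for the Lorentz boosts (that $\sum_i \tfrac{x^i}{r}[\Omega_{0i},L]$ and $\sum_i \tfrac{x^i}{r}[\Omega_{0i},\underline L]$ lose their dangerous $\tfrac{t\mp r}{r}$ angular parts because $\sum_i \tfrac{x^i}{r}e_\perp^i=0$) is exactly what makes the $\underline L$ and $L$ contributions close. The paper itself does not give a proof here but refers to \cite[Proposition~3.7]{dim3}, and your outline matches that argument in spirit.

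There is, however, a genuine gap in your treatment of the angular directions. Your claim that the Leibniz correction terms ``contribute $O(|G|/r)$, which is absorbed in the $\frac{|\mathcal{L}_{Z^\gamma}(G)|}{1+t+|x|}$ piece once the region $r\le 1$ is treated separately'' fails in the whole zone $1\lesssim r\ll t$: there $\tfrac{1}{r}$ is not bounded by $\tfrac{1}{1+|t-r|}+\tfrac{1}{1+t+r}$ (take $t=100$, $r=2$). The same issue affects the principal angular term: from $re_A=\text{(bounded)}\cdot\Omega_{jk}$ you only extract $\tfrac{1}{r}\,\zeta(\mathcal L_{\Omega_{jk}}G)$, which again is not controlled by the right-hand side in that zone. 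Your sketch for $r\le 1$ inherits the same defect, since you never explain how the tangential piece $e_\perp^i$ acquires a $\tfrac{1}{t}$ factor.

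The missing ingredient is the companion identity $t\,\Omega_{jk}=x^j\Omega_{0k}-x^k\Omega_{0j}$ (used elsewhere in the paper, e.g.\ in the proof of Proposition~\ref{gainderivdecaymoyv}), equivalently $t\,e_A=e_A^{\,i}\,\Omega_{0i}$ with bounded coefficients $e_A^{\,i}=\langle e_A,\partial_{x^i}\rangle$. In the region $t\ge r$ this writes the angular covariant derivative as $\tfrac{1}{t}$ times a bounded combination of $\nabla_{\Omega_{0i}}G$, and since the $\nabla\to\mathcal L$ correction for a single $\Omega_{0i}$ is $O(|G|)$, you obtain $|\zeta(\nabla_{e_A}G)|\lesssim \tfrac{1}{t}\bigl(\sum_{|\gamma|\le 1}|\zeta(\mathcal L_{Z^\gamma}G)|+|G|\bigr)\lesssim \tfrac{1}{1+t+r}\sum_{|\gamma|\le 1}|\mathcal L_{Z^\gamma}G|$, which \emph{is} absorbed by the second term of the right-hand side. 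In the complementary region $r\ge t$ your original $re_A=\Omega_{jk}$ argument already gives the correct $\tfrac{1}{1+t+r}$ weight. With this split the proof closes; without it, it does not.
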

We now illustrate how the previous Lemmas can be used in order to obtain improved estimates for the good null components of the electromagnetic field.
\begin{Cor}\label{Corgoodnull}
Consider a $2$-form $G$ of class $C^1$, solution to the Maxwell equations \eqref{Maxwithsource} with a continuous source term $J$. Assume that there exists two constants $C[G] >0$ and $q >0$ such that
\begin{equation}\label{eq:assump}
 \forall \; (t,x) \in \R_+ \times \R^3, \qquad (1+t+|x|) |J|(t,x)+\sum_{|\gamma| \leq 1} |\mathcal{L}_{Z^{\gamma}}(G)|(t,x) \leq \frac{C[G]}{(1+t+|x|)(1+|t-|x||)^q}.
 \end{equation}
Then, for all $(t,x) \in \R_+ \times \R^3$,
$$  \big( |\alpha (G) |+|\rho (G)|+|\sigma(G)| \big)(t,x) \lesssim C[G]  \left\{
    \begin{array}{ll}
     (1+t+|x|)^{-1-q} & \mbox{if $0<q<1$, } \\
        \log(3+t) (1+t+|x|)^{-2} & \mbox{if $q=1$, } \\
          (1+t+|x|)^{-2}(1+|t-|x||)^{-q+1} & \mbox{if $q>1$}.
    \end{array}
\right. $$
Moreover, if $G$ is merely defined on $[0,T[\times \R^3$, $T>0$, we have the weaker estimate for the case $q >1$,
$$ \forall \; (t,x) \in [0,T[ \times \R^3, \qquad \big( |\alpha (G) |+|\rho (G)|+|\sigma(G)| \big)(t,x) \lesssim C[G] (1+t+|x|)^{-2} \qquad \mbox{if $q>1$}. $$
\end{Cor}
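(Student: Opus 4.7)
The strategy is to exploit Lemma \ref{improderiv0}, which estimates $\nabla_{\underline{L}}\zeta$ for each good null component $\zeta \in \{\alpha(G),\rho(G),\sigma(G)\}$ in the exterior region $\mathcal{E} := \{|x| \geq (1+t)/2\}$, by integrating $\zeta$ along ingoing null rays (integral curves of $\underline{L}$). In the interior region $\mathcal{I} := \{|x| < (1+t)/2\}$, where the lemma does not apply, we simply observe that $1 + |t-|x|| \gtrsim 1 + t + |x|$, so the pointwise hypothesis \eqref{eq:assump} with $|\gamma|=0$ immediately yields $|\zeta|(t,x) \leq |G|(t,x) \lesssim C[G](1+t+|x|)^{-1-q}$, which dominates each of the claimed bounds in all three regimes.

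Fix now $(t,x) \in \mathcal{E}$, set $\omega = x/|x|$, $\underline{u} = t+|x|$, $u = t-|x|$, and parametrize the past ingoing null ray through $(t,x)$ by $\gamma(s) = (s, (\underline{u}-s)\omega)$. A direct verification shows that $(t,x) \in \mathcal{E}$ implies $\gamma(s) \in \mathcal{E}$ for all $s \in [0,t]$, so Lemma \ref{improderiv0} applies along the entire ray. Integrating $\partial_s(\zeta \circ \gamma) = (\nabla_{\underline{L}}\zeta) \circ \gamma$ from $s=0$ gives
\[ \zeta(t,x) = \zeta(0, \underline{u}\omega) + \int_0^t \nabla_{\underline{L}}\zeta(\gamma(s))\, ds.\]
The initial-data term is bounded by $C[G](1+\underline{u})^{-1-q}$ using \eqref{eq:assump} at $t = 0$. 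For the integral, Lemma \ref{improderiv0} together with \eqref{eq:assump} gives $|\nabla_{\underline{L}}\zeta|(\gamma(s)) \lesssim C[G](1+\underline{u})^{-2}(1+|2s-\underline{u}|)^{-q}$, so after the substitution $u' = 2s-\underline{u}$,
\[ \int_0^t |\nabla_{\underline{L}}\zeta|(\gamma(s))\, ds \lesssim \frac{C[G]}{(1+\underline{u})^2} \int_{-\underline{u}}^{u} \frac{du'}{(1+|u'|)^q}.\]
Evaluating this one-dimensional integral produces contributions of size $(1+\underline{u})^{1-q}$, $\log(1+\underline{u})$, and $O(1)$ respectively for $0<q<1$, $q=1$, and $q>1$, which yields the claimed bounds in the first two regimes and the weaker bound $C[G](1+\underline{u})^{-2}$ for $q>1$; this is exactly the statement asserted when $G$ is merely defined on $[0,T[$.

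To upgrade to the sharper bound $(1+\underline{u})^{-2}(1+|u|)^{1-q}$ when $q > 1$ and $G$ is globally defined, note first that when $u \leq 0$ an elementary estimate already gives $\int_{-\underline{u}}^u (1+|u'|)^{-q}du' \lesssim (1+|u|)^{1-q}$, so backward integration suffices. When $u > 0$, the idea is instead to integrate \emph{forward} from $s = t$ along the same null ray until it reaches the origin at $s = \underline{u}$:
\[ \zeta(t,x) = \zeta(\underline{u}, 0) - \int_t^{\underline{u}} \nabla_{\underline{L}}\zeta(\gamma(s))\, ds,\]
where the boundary term is controlled via $|\zeta| \leq |G|$ and \eqref{eq:assump} by $C[G](1+\underline{u})^{-1-q}$, while the integral gives
\[ \frac{C[G]}{(1+\underline{u})^2} \int_u^{\underline{u}} \frac{du'}{(1+u')^q} \lesssim \frac{C[G]}{(1+\underline{u})^2}(1+u)^{1-q}.\]
Combining the two regimes of $u$ and comparing $(1+\underline{u})^{-1-q} \leq (1+\underline{u})^{-2}(1+|u|)^{1-q}$ for $q>1$ gives the claimed sharper bound.

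The main technical subtlety is the degeneration of the null frame $(e_\theta, e_\varphi)$ at $r = 0$ in the forward-integration argument, which only affects $\alpha$ and $\sigma$. One handles it by interpreting the boundary value $\zeta(\underline{u}, 0)$ as a limit controlled by the cartesian norm $|G|(\underline{u}, 0)$ (which is smooth at the origin), and using that the integrand $|\nabla_{\underline{L}}\zeta|(\gamma(s))$ remains pointwise integrable up to $s = \underline{u}$ by the bound above. Everything else reduces to the mechanical integration of the pointwise bounds supplied by Lemma \ref{improderiv0} and the hypothesis \eqref{eq:assump}.
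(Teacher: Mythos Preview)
Your proof is correct and follows essentially the same strategy as the paper: handle the interior $\{|x|<(1+t)/2\}$ directly via the hypothesis, and in the exterior integrate the good null components along ingoing null rays using the bound on $\nabla_{\underline L}\zeta$ from Lemma~\ref{improderiv0}. The backward integration from $\{t=0\}$, the case distinction on the sign of $u$, and the evaluation of the one-dimensional integral all match the paper's argument.

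There is one genuine technical point in the forward integration for the sharp $q>1$ bound when $u>0$: the ray $\gamma(s)$ leaves $\mathcal{E}$ once $s>(2\underline{u}-1)/3$, so Lemma~\ref{improderiv0} no longer applies there, and your assertion that the integrand ``remains pointwise integrable up to $s=\underline u$ by the bound above'' is not justified on that portion. Your diagnosis---that the subtlety is only the frame degeneration at $r=0$ and affects only $\alpha$ and $\sigma$---is also not quite right: the restriction $|x|\ge(1+t)/2$ in Lemma~\ref{improderiv0} stems from the $1/r$ terms in the null Maxwell equations and concerns $\rho$ as well. The clean fix is simply to stop the forward integration at the boundary of $\mathcal{E}$, i.e.\ at $s_0=(2\underline{u}-1)/3$, where the hypothesis already gives $|\zeta|\le|G|\lesssim C[G](1+\underline u)^{-1-q}$ since $1+|u'(s_0)|\sim 1+\underline u$; the integral over $[t,s_0]$ then produces the desired $(1+\underline u)^{-2}(1+u)^{1-q}$. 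The paper's own proof integrates all the way to $(t+r,0)$ exactly as you do, so it shares this informal step.
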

\begin{proof}
Note first that the assumptions give $|G|(t,x) \lesssim (1+t+|x|)^{-1-q}$ if $1+t \geq 2|x|$ or $|x| \geq 2(1+t)$. We then fix $(t,r\omega) \in \R_+ \times \R^3$ such that $1+t \leq 2r \leq 4(1+t)$, $\omega \in \mathbb{S}^2$ and we denote by $\zeta$ any of the null components $\alpha$, $\rho$ or $\sigma$. Consider further
$$ \phi (u,\underline{u}) := \zeta (G) \left( \frac{\underline{u}+u}{2}, \frac{\underline{u}-u}{2} \omega \right).$$
By Lemma \ref{improderiv0} and \eqref{eq:assump}, we have
$$\left| \nabla_{\partial_u} \phi \right|(u,\underline{u}) = \frac{1}{2}\left| \nabla_{\underline{L}} \zeta(G) \right|\left( \frac{\underline{u}+u}{2}, \frac{\underline{u}-u}{2} \omega \right) \lesssim \frac{C[G]}{(1+\underline{u})^2(1+|u|)^{q}}.$$
Now, remark that, for $t-r \leq 0$,
\begin{align*}
 |\zeta(G)|(t,r\omega)&=|\phi|(t-r,t+r)= \left|\phi(-t-r,t+r)+\int_{u=-t-r}^{-|t-r|} \nabla_{\partial_u} \phi(u,t+r) \dr u \right| \\
 & \lesssim |\zeta(G)|(0,t\omega+r\omega)+\frac{C[G]}{(1+t+r)^2} \int_{u=-t-r}^{-|t-r|} \frac{\dr u}{(1+|u|)^{q}} .
 \end{align*}
 Similarly, if $t-r \geq 0$, we obtain by integrating between $u=t-r$ and $t+r$,
 $$ |\zeta(G)|(t,r\omega) \lesssim |\zeta(G)|(t+r,0)+\frac{C[G]}{(1+t+r)^2} \int_{u=|t-r|}^{t+r} \frac{\dr u}{(1+|u|)^{q}}.$$
 By \eqref{eq:assump}, $|\zeta(G)|(t+r,0)+|\zeta(G)|(0,t\omega+r\omega)\lesssim C[G] (1+t+r)^{-1-q}$ and the first part of the result then follows from the computations of the integrals in the previous two estimates. For the case $q=1$, note that $\log(1+t+r) \leq 3\log (3+t)$ since $r \leq 2+2t$.
 
 If $G$ is merely defined on $[0,T[ \times \R^3$ and $t <T$, then we cannot apply the previous computations in the case $t \geq r$. Instead, we integrate between $u=0$ and $t-r$ in order to get
 $$ |\zeta(G)|(t,r\omega) \lesssim |\zeta(G)|\left(\frac{t+r}{2},\frac{t+r}{2}\omega \right)+\frac{C[G]}{(1+t+r)^2} \int_{u=0}^{|t-r|} \frac{\dr u}{(1+|u|)^{q}}.$$
 It remains to bound $|\zeta(G)|\left(\frac{t+r}{2},\frac{t+r}{2}\omega\right) $ by the estimate obtained in the region $t \leq r$ and to compute the integral in the three different cases. 
\end{proof}
Finally, we prove pointwise decay estimates for a solution to the homogeneous wave equation. Since the cartesian components $F_{\mu \nu}$ of a solution $F$ to the vacuum Maxwell equations verify $\Box F_{\mu \nu}=0$, the next result will also allow us to estimate such electromagnetic fields.
\begin{Pro}\label{decaylinMax}
Let $\phi$ be a $C^2$ solution to the free wave equation $\Box \, \phi =0$ such that
$$ \mathcal{E}^q[\phi]:= \sup_{x \in \R^3} \, \langle x \rangle^{q} \left| \phi \right|(0,x)+ \sup_{x \in \R^3} \, \langle x \rangle^{q+1} \left|\partial_{t,x} \phi \right|(0,x) <+\infty, \qquad q \geq 2.$$
Then, there holds
\begin{equation*}
\forall \; (t,x) \in \R_+ \times \R^3, \qquad \left| \phi \right|(t,x) \lesssim \frac{\mathcal{E}^q[\phi]}{(1+t+|x|)(1+|t-|x||)^{q-1}}.
\end{equation*}
\end{Pro}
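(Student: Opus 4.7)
The plan is to invoke the classical Kirchhoff representation formula for the 3D homogeneous wave equation: for $\phi$ of class $C^2$ with $\Box\phi=0$,
\begin{equation*}
\phi(t,x) \;=\; \fint_{\partial B(x,t)} \bigl[\, \phi(0,y) + t\,\partial_t \phi(0,y) + \nabla \phi(0,y)\cdot (y-x) \,\bigr]\, \dr S(y).
\end{equation*}
Since $|y-x|=t$ on the sphere of integration, the initial data assumption bounds the integrand by $\mathcal{E}^q[\phi]\bigl(\langle y\rangle^{-q} + t\langle y\rangle^{-q-1}\bigr)$, so it suffices to control the two spherical averages of $\langle y\rangle^{-q}$ and $t\langle y\rangle^{-q-1}$. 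The case $x=0$ is immediate, since on $\partial B(0,t)$ we have $|y|=t$, so both averages produce the desired bound $(1+t)^{-q}$.

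Assume therefore $|x|>0$, set $A:=|t-|x||$ and $B:=t+|x|$, and parametrize $y=x+t\omega$ with $\omega\in\mathbb{S}^2$. Standard spherical reduction, together with the change of variable $\rho\,\dr\rho = t|x|\,\dr s$ (with $s=\cos\theta$), reduces both averages to one–dimensional integrals of $\rho\langle\rho\rangle^{-p}$ over $[A,B]$, namely
\begin{equation*}
\fint_{\partial B(x,t)}\!\!\langle y\rangle^{-p}\,\dr S \;=\; \frac{1}{2\,t|x|}\int_A^B \rho\,\langle \rho\rangle^{-p}\,\dr \rho.
\end{equation*}
The key algebraic identity for closing the proof is $B^2-A^2 = 4\,t|x|$, so that $t|x|=\tfrac{1}{4}(B-A)(B+A)$ and $B-A=2\min(t,|x|)$.

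For the contribution from $\phi(0,\cdot)$, I would use that $\rho\langle\rho\rangle^{-q}\le \langle\rho\rangle^{1-q}$ is a decreasing function of $\rho$ (since $q\ge 2$), so it is bounded on $[A,B]$ by $\langle A\rangle^{1-q}\lesssim (1+A)^{1-q}$. Integrating yields $\int_A^B \rho\langle\rho\rangle^{-q}\,\dr\rho \lesssim (B-A)(1+A)^{1-q}$, whence dividing by $t|x|$ and using the above identity gives $\lesssim (B+A)^{-1}(1+A)^{1-q}\lesssim (1+B)^{-1}(1+A)^{1-q}$, which is exactly the target bound. For the contribution involving $t\partial_t\phi(0,\cdot)$ and $\nabla\phi(0,\cdot)\cdot(y-x)$, the explicit antiderivative gives
\begin{equation*}
\frac{t}{2\,t|x|}\int_A^B \rho\,\langle \rho\rangle^{-q-1}\,\dr\rho \;=\; \frac{1}{2(q-1)\,|x|}\bigl(\langle A\rangle^{1-q}-\langle B\rangle^{1-q}\bigr).
\end{equation*}
I would then split into two regimes: when $B\ge 2A$, the identity $B=t+|x|\le 2A = 2|t-|x||$ forces $|x|\gtrsim t$, hence $|x|\sim B$, so the surviving factor $\langle A\rangle^{1-q}/|x|$ is bounded by $(1+B)^{-1}(1+A)^{1-q}$; when $B\le 2A$, the mean-value theorem applied to $\rho\mapsto\langle\rho\rangle^{1-q}$ produces a factor $B-A\le 2|x|$ which cancels the $|x|^{-1}$, and since $A\sim B$ on this range one directly obtains $\lesssim (1+B)^{-1}(1+A)^{1-q}$.

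The main technical obstacle is reconciling the Jacobian factor $(t|x|)^{-1}$ with the desired $(1+t+|x|)^{-1}$ decay when $|x|$ is small compared to $t$; this is precisely what is achieved by exploiting $B-A=2\min(t,|x|)$ and the case split on the ratio $B/A$ above. Once both integrals are handled, summing their contributions yields the claimed pointwise decay for all $(t,x)\in\R_+\times\R^3$.
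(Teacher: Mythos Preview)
Your proof is correct and follows essentially the same route as the paper: both invoke Kirchhoff's formula and then bound the spherical integrals of $\langle y\rangle^{-p}$, with the paper simply quoting \cite[Lemma~4.1]{WeiYang} (recorded as \eqref{decayhYang}) while you derive these bounds directly via the radial reduction $\rho\,\dr\rho = t|x|\,\dr s$ and the identity $B^2-A^2=4t|x|$. One small slip: in the case $B\ge 2A$ you wrote ``$B=t+|x|\le 2A$'', which should read $\ge$; your conclusion $|x|\sim B$ is nonetheless correct, since $t+|x|\ge 2|t-|x||$ forces $t/3\le |x|\le 3t$.
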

\begin{proof}
By Kirchhoff's formula we have
$$ \phi(t,x) = \frac{1}{4\pi t^2} \int_{|y-x|=t} \phi(0,y)\dr y+\frac{1}{4\pi t} \int_{|y-x|=t}\frac{y-x}{|y-x|}\cdot \nabla_y \phi(0,y)+ \partial_t \phi(0,y) \dr y.$$
We obtain the result by applying\footnote{The case $2<p<3$, not considered by \cite{WeiYang}, can be treated as the case $p=2$ since $\int_b^a \frac{\lambda \dr \lambda}{(1+\lambda)^p} \leq (1+b)^{p-2}\int_b^a \frac{\lambda \dr \lambda}{(1+\lambda)^2}$.} \cite[Lemma~$4.1$]{WeiYang}, which gives that for any $h \in C(\R^3)$ such that $|h|(x) \leq K_0(1+|x|)^{-p}$,
\begin{equation}\label{decayhYang}
 \int_{|y-x|=t}|h|(y) \dr y \leq \left\{ 
	\begin{array}{ll}
        8 \pi K_0 t^2 (1+t+|x|)^{-1}(1+|t-|x||)^{-p+1} & \mbox{if $2\leq p <3$, } \\
        4\pi K_0 t (1+t+|x|)^{-1}(1+|t-|x||)^{-p+2} & \mbox{if $p \geq 3$}.
    \end{array} 
\right. 
\end{equation}
\end{proof}

\section{Strategy of the proof and the bootstrap assumptions}\label{Sec3}

Let $N \geq 3$, $N_v \geq 15$, $N_x > 7$ and consider an initial data set $(f_0,F_0)$ satisfying the hypotheses of Theorem \ref{Th1}. By a standard local well-posedness argument, there exists a unique maximal solution $(f,F)$ to the Vlasov-Maxwell system arising from these data. Let $T_{\mathrm{max}} \in \R_+^* \cup\{+\infty\}$ such that the solution is defined on $[0,T_{\mathrm{max}}[$. By continuity, there exists a largest time $0<T\leq T_{\mathrm{max}}$ and a constant $C_{\mathrm{boot}}>0$, independent of $\epsilon$, such that the following bootstrap assumptions hold. For all $(t,x) \in [0,T[ \times \R^3$,
\begin{flalign}
&\forall \; |\gamma| \leq N-1,&  \left| \mathcal{L}_{Z^{\gamma}}(F) \right|(t,x) & \leq \frac{C_{\mathrm{boot}}\Lambda}{(1+t+|x|)(1+|t-|x||)}, \tag{BA1} &\label{boot1}\\
&\forall \; |\gamma| = N-1,&  \left| \nabla_{t,x}\mathcal{L}_{Z^{\gamma}}(F) \right|(t,x) & \leq  \frac{ C_{\mathrm{boot}}\Lambda \log (3+|t-|x||)}{(1+t+|x|)(1+|t-|x||)^2}, \tag{BA2} &\label{boot2} \\
&\forall \; |\beta| = N-2,&  \left| \int_{\R_v^3} \frac{v^{\mu}}{v^0} \widehat{Z}^{\beta} f(t,x,v) \dr v\right| & \leq  \frac{C_{\mathrm{boot}} \Lambda}{(1+t+|x|)^3}, \qquad \qquad \qquad \qquad \qquad 0 \leq \mu \leq 3. \tag{BA3} &\label{boot3}
\end{flalign}
The goal consists in improving, for $C_{\mathrm{boot}}$ chosen large enough and if $\epsilon$ is small enough, \eqref{boot1}-\eqref{boot3}. We stress that \eqref{boot3} will only be used for the proof of Proposition \ref{Proproofnullcompo}, where we improve the pointwise decay estimates for the good null components of the electromagnetic field. 

\subsection{Immediate consequences of the bootstrap assumptions}\label{subsecpointwisedecay}

We start by improving, near the light cone, the estimates for the good null components of the electromagnetic field and its derivatives up to order $N-2$. 
\begin{Pro}\label{Proproofnullcompo}
For any $|\gamma| \leq N-2$ and all $(t,x) \in [0,T[ \times \R^3$, we have
\begin{align*}
\big(|\alpha (\mathcal{L}_{Z^{\gamma}} F)|+|\rho (\mathcal{L}_{Z^{\gamma}} F)|+|\sigma (\mathcal{L}_{Z^{\gamma}} F)| \big)(t,x) &\lesssim \frac{\Lambda \log(3+t)}{(1+t+|x|)^2(1+|t-|x|)^{\gamma_T}}, \\
|\underline{\alpha}(\mathcal{L}_{Z^{\gamma}} F)|(t,x) & \lesssim \frac{\Lambda}{(1+t+|x|)(1+|t-|x||)^{1+\gamma_T}},
\end{align*}
where we recall that $\gamma_T$ is number of translations $\partial_{x^{\mu}}$ composing $Z^{\gamma}$.
\end{Pro}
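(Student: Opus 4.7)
The plan is to combine Corollary \ref{Corgoodnull} (which yields a baseline bound without any gain from translations) with an iteration of Lemma \ref{improderiv} (which extracts one factor of $(1+|t-|x||)^{-1}$ per translation). First I would apply Corollary \ref{Corgoodnull} with $q=1$ to the $2$-form $\mathcal{L}_{Z^\gamma}(F)$. Its hypothesis \eqref{eq:assump} is verified from \eqref{boot3} for the source (by Proposition \ref{Com}, the source of $\mathcal{L}_{Z^\gamma}(F)$ is a sum of $J(\widehat{Z}^\xi f)$ with $|\xi| \leq |\gamma| \leq N-2$, so that $(1+t+|x|)|J| \lesssim \Lambda(1+t+|x|)^{-2} \leq \Lambda((1+t+|x|)(1+|t-|x||))^{-1}$) and from \eqref{boot1} for $|\mathcal{L}_{Z^{\leq 1}} \mathcal{L}_{Z^\gamma}(F)|$, which is of order at most $N-1$. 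This yields
\[
\big(|\alpha| + |\rho| + |\sigma|\big)(\mathcal{L}_{Z^\gamma} F)(t,x) \lesssim \frac{\Lambda \log(3+t)}{(1+t+|x|)^2}
\]
for every $\gamma$ with $|\gamma| \leq N-2$, that is, the target bound (a) without the $(1+|t-|x||)^{-\gamma_T}$ gain.

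To recover this gain, I would write $\mathcal{L}_{Z^\gamma}(F) = \nabla_{\partial_{x^{\mu_1}}} \cdots \nabla_{\partial_{x^{\mu_{\gamma_T}}}} \mathcal{L}_{Z^{\gamma_H}}(F)$ up to commutator terms. Since $[\partial_{x^\mu}, Z] \in \mathbb{K}$ is always a translation or zero, such rearrangements do not create new vector fields, do not increase $\gamma_T$, and strictly decrease $|\gamma|$; the resulting commutator terms are absorbed by an outer induction on $|\gamma|$. Iterating Lemma \ref{improderiv} exactly $\gamma_T$ times then yields the schematic pointwise estimate
\[
|\zeta(\mathcal{L}_{Z^\gamma} F)|(t,x) \lesssim \sum_{\substack{|\gamma'| \leq \gamma_T \\ a+b = \gamma_T}} \frac{Y_{a,b}(\mathcal{L}_{Z^{\gamma' \gamma_H}} F)(t,x)}{(1+|t-|x||)^a (1+t+|x|)^b},
\]
where $Y_{a,b}$ denotes the null component $\zeta$ when $b=0$ (all iterations stayed in the null-component branch of Lemma \ref{improderiv}) and the full norm $|\cdot|$ when $b \geq 1$, and the inner multi-indices satisfy $|\gamma' \gamma_H| \leq |\gamma| \leq N-2$.

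Finally, I would estimate each inner term by the first paragraph (for null-component numerators) or by \eqref{boot1} (for full-norm numerators), using the inequality $1+|t-|x|| \leq 1+t+|x|$ to trade every surplus $(1+t+|x|)^{-1}$ for a $(1+|t-|x||)^{-1}$. The null-component contribution then gives $\Lambda\log(3+t)(1+t+|x|)^{-2}(1+|t-|x||)^{-\gamma_T}$, as required for (a). For bound (b) on $\underline{\alpha}$, which is \emph{not} a good null component, I would use only \eqref{boot1} at the innermost step, producing $\Lambda\big((1+t+|x|)^{1+b}(1+|t-|x||)^{1+a}\big)^{-1}$ for each term, which the inequality $(1+t+|x|)^b \geq (1+|t-|x||)^b$ bounds by $\Lambda\big((1+t+|x|)(1+|t-|x||)^{1+\gamma_T}\big)^{-1}$ without any log factor. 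The main subtlety is that Lemma \ref{improderiv} must be iterated exactly $\gamma_T$ times: stopping earlier would leave a full-norm term that, when estimated via \eqref{boot1}, contributes only a single factor of $(1+|t-|x||)^{-1}$ and thereby falls short of the target decay whenever $\gamma_T \geq 2$.
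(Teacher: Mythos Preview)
Your proof is correct and follows essentially the same approach as the paper: apply Corollary \ref{Corgoodnull} (with $q=1$, using \eqref{boot1} and \eqref{boot3} via Proposition \ref{Com}) to obtain the baseline bounds for all $|\gamma|\le N-2$, then extract the extra $(1+|t-|x||)^{-\gamma_T}$ by iterating Lemma \ref{improderiv} after pushing the translations to the outside. The only cosmetic difference is that the paper records the commutator bookkeeping as a single explicit identity, $\mathcal{L}_{Z^{\gamma}}(F)=\sum_{|\kappa|=\gamma_T}\sum_{|\xi|\le\gamma_H}D^{\gamma}_{\kappa,\xi}\,\nabla_{t,x}^{\kappa}\mathcal{L}_{Z^{\xi}}(F)$, whereas you handle the commutators by an outer induction on $|\gamma|$; both are equivalent since $[\partial_{x^\mu},Z]$ is always a translation or zero.
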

\begin{proof}
Consider $|\gamma| \leq N-2$ and recall from Proposition \ref{Com} that $\mathcal{L}_{Z^{\gamma}}F$ is solution to the Maxwell equations \eqref{Maxwithsource} with a source term which is a linear combination of $J(\widehat{Z}^{\beta}f)$, $|\beta| \leq N-2$, which are bounded by the bootstrap assumption \eqref{boot3}. Hence, by applying Corollary \ref{Corgoodnull} and using the bootstrap assumption \eqref{boot1}, we get 
\begin{align*}
\left(|\alpha (\mathcal{L}_{Z^{\gamma}} F)|+|\rho (\mathcal{L}_{Z^{\gamma}} F)|+|\sigma (\mathcal{L}_{Z^{\gamma}} F)| \right)(t,x) &\lesssim \Lambda\log(3+t)(1+t+|x|)^{-2}, \\
|\underline{\alpha}(\mathcal{L}_{Z^{\gamma}} F)|(t,x)\lesssim \left| \mathcal{L}_{Z^{\gamma}}(F) \right|(t,x) & \lesssim \Lambda (1+t+|x|)^{-1}(1+|t-|x||)^{-1}.
\end{align*}
Now, remark that for any $0 \leq \mu \leq 3$ and $Z \in \mathbb{K}$, we have $[Z,\partial_{x^{\mu}}]=0$ or $[Z,\partial_{x^{\mu}}]=\pm \partial_{x^{\lambda}}$ for a certain $0 \leq \lambda \leq 3$. As a consequence, and since $\mathcal{L}_{\partial_{x^{\mu}}}=\nabla_{\partial_{x^\mu}}$, there exists constants $D_{\kappa, \xi}^{\gamma} \in \mathbb{N}$ such that
\begin{equation}\label{eq:improtranla}
 \mathcal{L}_{Z^{\gamma}}(F) = \sum_{|\kappa| = \gamma_T}\sum_{|\xi| \leq |\gamma|-\gamma_T} D_{\kappa, \xi}^{\gamma}\mathcal{L}_{\partial_{t,x}^{\kappa}Z^{\xi}}(F)= \sum_{|\kappa| = \gamma_T}\sum_{|\xi| \leq |\gamma|-\gamma_T} D_{\kappa, \xi}^{\gamma}\nabla_{t,x}^{\kappa}\mathcal{L}_{Z^{\xi}}(F).
 \end{equation}
The result then follows from $\gamma_T$ applications of Lemma \ref{improderiv}.
\end{proof}

In contrast, we have a very bad control of the top order derivatives near the light cone.
\begin{Pro}\label{Prodecaytoporder}
For any $|\gamma| = N$, there holds
$$ \forall \, (t,x) \in [0,T[ \times \R^3, \qquad \left|\mathcal{L}_{Z^{\gamma}} F \right|(t,x) \lesssim  \Lambda \frac{\log(3+|t-|x||)}{(1+|t-|x||)^{2+\gamma_T}}.$$
If $|\gamma| \leq N-1$, we have the better estimate
$$ \forall \, (t,x) \in [0,T[ \times \R^3, \qquad \left|\mathcal{L}_{Z^{\gamma}} F \right|(t,x) \lesssim  \Lambda (1+t+|x|)^{-1} (1+|t-|x||)^{-1-\gamma_T}.$$
\end{Pro}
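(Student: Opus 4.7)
My plan is to adapt the strategy of Proposition \ref{Proproofnullcompo}: combine the commutator identity \eqref{eq:improtranla} (which extracts $\gamma_T$ translations from $\mathcal{L}_{Z^\gamma}$) with Lemma \ref{improderiv} (which trades a $\nabla_{t,x}$-derivative for a factor of $(1+|t-|x||)^{-1}$ at the cost of one extra Lie derivative), starting from the bootstrap estimates \eqref{boot1} and \eqref{boot2}. A useful preliminary observation is that \eqref{boot2} extends to all $|\gamma|\leq N-1$: for $|\gamma|\leq N-2$, Lemma \ref{improderiv} applied to $\nabla_{t,x}\mathcal{L}_{Z^\gamma}F$ and \eqref{boot1} on the resulting Lie derivatives of $F$ yield the bound
\[
|\nabla_{t,x}\mathcal{L}_{Z^\gamma}F|(t,x) \lesssim \Lambda\log(3+|t-|x||)\,(1+t+|x|)^{-1}(1+|t-|x||)^{-2}, \qquad |\gamma|\leq N-1.
\]

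For the second estimate of the proposition, the case $\gamma_T=0$ is exactly \eqref{boot1}. When $\gamma_T\geq 1$, I use \eqref{eq:improtranla} to rewrite $\mathcal{L}_{Z^\gamma}F$ as a sum of $\nabla^{\gamma_T}_{t,x}\mathcal{L}_{Z^\xi}F$ with $|\xi|\leq |\gamma|-\gamma_T$, then iterate Lemma \ref{improderiv} exactly $\gamma_T$ times to convert each $\nabla_{t,x}$ into a factor $(1+|t-|x||)^{-1}$. At the end, all emerging Lie derivatives of $F$ are of order at most $|\gamma|\leq N-1$, so \eqref{boot1} closes the argument.

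For the first estimate ($|\gamma|=N$), the subcase $\gamma_T\geq 1$ is analogous but I iterate Lemma \ref{improderiv} only $\gamma_T-1$ times, so that the innermost object remains a $\nabla_{t,x}\mathcal{L}_{Z^\eta}F$ with $|\eta|\leq N-1$; the preliminary extension of \eqref{boot2} then applies and produces the sharper bound
\[
|\mathcal{L}_{Z^\gamma}F|(t,x)\lesssim \Lambda\log(3+|t-|x||)\,(1+t+|x|)^{-1}(1+|t-|x||)^{-1-\gamma_T},
\]
which implies the claim since $1+|t-|x||\leq 1+t+|x|$.

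The main obstacle is the case $|\gamma|=N$ with $\gamma_T=0$, where \eqref{eq:improtranla} is vacuous and only \eqref{boot1} is available at order $N-1$. I will peel off a single homogeneous vector field: write $Z^\gamma=Z_1Z^{\gamma'}$ with $|\gamma'|=N-1$, $\gamma'_T=0$, and $Z_1\in\{\Omega_{0i},\Omega_{jk},S\}$. Each such $Z_1$ has the form $a^\mu(t,x)\partial_{x^\mu}$ with $|a^\mu|\lesssim 1+t+|x|$ and $|\partial a^\mu|\lesssim 1$, so the coordinate formula for the Lie derivative of a $2$-form gives
\[
|\mathcal{L}_{Z^\gamma}F|(t,x) \;\lesssim\; (1+t+|x|)\,|\nabla_{t,x}\mathcal{L}_{Z^{\gamma'}}F|(t,x) + |\mathcal{L}_{Z^{\gamma'}}F|(t,x).
\]
The factor $(1+t+|x|)$ cancels exactly the $(1+t+|x|)^{-1}$ produced by the extended \eqref{boot2} applied to $\nabla_{t,x}\mathcal{L}_{Z^{\gamma'}}F$, yielding the required main term $\Lambda\log(3+|t-|x||)(1+|t-|x||)^{-2}$; the contribution of $|\mathcal{L}_{Z^{\gamma'}}F|\lesssim \Lambda(1+t+|x|)^{-1}(1+|t-|x||)^{-1}$ coming from \eqref{boot1} is strictly smaller and absorbed.
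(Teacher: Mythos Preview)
Your proof is correct and follows essentially the same approach as the paper's: the paper also peels off one vector field via $|\mathcal{L}_Z G|\lesssim (1+t+r)|\nabla_{t,x}G|+|G|$ and combines the bootstrap assumptions \eqref{boot1}--\eqref{boot2}, then invokes \eqref{eq:improtranla} together with Lemma \ref{improderiv} to extract the extra $(1+|t-|x||)^{-\gamma_T}$ decay. Your organization is slightly different (you first extend \eqref{boot2} to all $|\gamma|\leq N-1$, and for $|\gamma|=N$ with $\gamma_T\geq 1$ you iterate Lemma \ref{improderiv} only $\gamma_T-1$ times before applying this extension, which in fact yields the sharper bound $\Lambda\log(3+|t-|x||)(1+t+|x|)^{-1}(1+|t-|x||)^{-1-\gamma_T}$), but the underlying argument is the same.
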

\begin{proof}
Let $|\gamma|=N$, $(t,x) \in [0,T[ \times \R^3$ and note that $|\mathcal{L}_ZG| \lesssim (1+t+r) |\nabla_{t,x} G|+|G|$ for any $Z \in \mathbb{K}$ and any $2$-form $G$. Consequently, we obtain from the bootstrap assumptions \eqref{boot1}-\eqref{boot2} that
$$ \left|\mathcal{L}_{Z^{\gamma}}F\right|(t,x) \lesssim (1+t+|x|) \frac{\Lambda \log(3+|t-|x||)}{(1+t+|x|)(1+|t-|x||)^2}+ \frac{\Lambda}{(1+t+|x|)(1+|t-|x||)} \lesssim \Lambda \frac{\log(3+|t-|x||)}{(1+|t-|x||)^2}.$$
As previously, when $\gamma_T \geq 1$, the extra decay in $t-r$ is given by \eqref{eq:improtranla} and Lemma \ref{improderiv}. The case $|\gamma| \leq N-1$ is easier and follows from \eqref{boot1}, \eqref{eq:improtranla} and Lemma \ref{improderiv}. 
\end{proof}

\subsection{Structure of the proof} The remainder of the paper is divided as follows.
\begin{enumerate}
\item First, in Section \ref{Sec4}, we prove that for any $|\beta| \leq N$, an $L^{\infty}_{x,v}$ norm of $\widehat{Z}^{\beta}f$, weighted by powers of $v^0$ and $\mathbf{z}$, grows at most logarithmically in time. Next, we control uniformly in time weighted space averages of $\widehat{Z}^{\beta}f$, for any $|\beta| \leq N-1$. This will allow us to prove, in Section \ref{subsecaveragev}, decay estimates for $\int_v \widehat{Z}^{\beta}f \dr v$ and improve \eqref{boot3}.
\item Then, we introduce the Glassey-Strauss decomposition of the electromagnetic field in Section \ref{subsecGS}. It allows us to improve the bootstrap assumptions \eqref{boot1} and \eqref{boot2}, respectively in Sections \ref{subsecba1} and \ref{subsecba2}, thus implying the global existence of the solution $(f,F)$.
\item Finally, refining the estimates carried out during the previous steps, we prove our modified scattering result for the distribution function in Section \ref{Sec6}. The scattering result for the electromagnetic field is treated in Section \ref{Sec7} and will require an additional step, the construction of a scattering map for the vacuum Maxwell equations.
\end{enumerate}
\begin{Rq}\label{Rqlossvz}
If one is interested in relaxing the assumptions on $N_v$ and $N_x$, though it would force us to either modify the proof or obtain weaker rate of convergences, we give here the precise results where losses in $v^0$ and $\mathbf{z}$ occur.
\begin{itemize}
\item Two powers of $\mathbf{z}$ are lost in order to close the $L^\infty_{x,v}$ estimates in Proposition \ref{estiLinfini}. $5+\delta$ powers of $\mathbf{z}$ are required in order to apply Lemma \ref{Lempartieltxave} and prove boundedness for $\int_x f \dr x$ and its derivatives.
\item Three powers of $v^0$ are lost for closing the $L^\infty_{x,v}$ estimates, eight for the pointwise decay estimates (see Lemma \ref{Lemgmoy} and Proposition \ref{Proestimoyvkernet}). Finally, the Glassey-Strauss decomposition of the derivative of the Maxwell field requires to lose four powers of $v^0$, as suggested by Proposition \ref{GSdecomoderiv} and Corollary \ref{estikernels}.
\end{itemize} 
Note that the various applications of Proposition \ref{estimoyv} will not require to control as much moments of $f$ than the results mentioned here.
\end{Rq}

\section{Estimates for the distribution function}\label{Sec4}

\subsection{Control of the Lorentz force} In view of the structure of the error terms for the commuted Vlasov equations, given by Proposition \ref{Com}, it is important to obtain precise estimates of the Lorentz force and its derivatives by exploiting its null structure. 
\begin{Lem}\label{Lorentzforce}
Let $|\gamma| \leq N-2$ and $j \in \llbracket 1, 3 \rrbracket$. For all $(t,x,v) \in [0,T[ \times \R^3_x \times \R^3_v$, we have
$$
\frac{1}{v^0}\left| \widehat{v}^{\mu} {\mathcal{L}_{Z^{\gamma}}(F)_{\mu}}^j \right|(t,x)  \lesssim  \frac{\Lambda \, \log(3+t)}{(1+t+|x|)^2} +\frac{\Lambda \, \widehat{v}^{\underline{L}}}{(1+t+|x|)(1+|t-|x||)}.
$$
If $\gamma_T \geq 1$, then we have the improved estimate
$$
\frac{1}{v^0}\left| \widehat{v}^{\mu} {\mathcal{L}_{Z^{\gamma}}(F)_{\mu}}^j \right| (t,x)  \lesssim \frac{\Lambda}{(1+t+|x|)^{\frac{5}{2}}} +\frac{\Lambda \, \widehat{v}^{\underline{L}}}{(1+t+|x|)(1+|t-|x||)^{2}}.
$$
\end{Lem}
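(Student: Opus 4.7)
The plan is to expand $\widehat{v}^\mu \mathcal{L}_{Z^\gamma}(F)_\mu{}^{\,j}$ in the null frame, decomposing both the momentum vector as $\mathbf{v}=v^L L+v^{\underline{L}}\underline{L}+v^{e_A}e_A$ and the free cartesian index via $\partial_{x^j}=\frac{x^j}{|x|}\partial_r+\slashed{\partial}_{x^j}$ with $\partial_r=\frac{1}{2}(L-\underline{L})$. A direct computation then yields an explicit expression of $\widehat{v}^\mu \mathcal{L}_{Z^\gamma}(F)_\mu{}^{\,j}$ as a linear combination, with coefficients of modulus bounded by $1$, of products of one null component of $\mathcal{L}_{Z^\gamma}(F)$ with one null component of $\widehat{v}$. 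The essential algebraic observation, standard in the null-structure analysis of Maxwell--matter systems, is that the bad field component $\underline{\alpha}(\mathcal{L}_{Z^\gamma}F)$ never pairs with the large velocity factor $\widehat{v}^L$: it appears only multiplied by $\widehat{v}^{\underline{L}}$ (arising in the angular directions from $\widehat{v}^\mu F_{\mu e_A}$) or by $\widehat{\slashed{v}}$ (arising in the radial direction from $\widehat{v}^\mu F_{\mu r}$), whereas all other terms involve at least one of the good null components $\alpha$, $\rho$, $\sigma$.

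With this structural decomposition at hand, the first estimate follows by term-by-term bounds. For the good-component contributions, Proposition~\ref{Proproofnullcompo} combined with $\tfrac{1}{v^0}|\widehat{v}|\leq 1$ yields the first term $\Lambda\log(3+t)(1+t+|x|)^{-2}$. The $\widehat{v}^{\underline{L}}\underline{\alpha}$ contribution is controlled by the bootstrap assumption \eqref{boot1} together with $\tfrac{1}{v^0}\widehat{v}^{\underline{L}}\leq \widehat{v}^{\underline{L}}$, producing the second term. For the remaining $\widehat{\slashed{v}}\,\underline{\alpha}$ contribution, the crucial step is to absorb the $\widehat{\slashed{v}}$ into a genuine $\widehat{v}^{\underline{L}}$ factor by exploiting the $1/v^0$ prefactor on the left-hand side together with the inequality $|v^0|^{-2}+|\widehat{\slashed{v}}|^2\leq 4\widehat{v}^{\underline{L}}$ from Lemma~\ref{nullcompo}: by AM--GM,
\[
\frac{|\widehat{\slashed{v}}|}{v^0}\;\leq\;\tfrac{1}{2}\bigl(|v^0|^{-2}+|\widehat{\slashed{v}}|^2\bigr)\;\leq\;2\widehat{v}^{\underline{L}},
\]
so this contribution is also dominated by $\widehat{v}^{\underline{L}}|\underline{\alpha}|$, which \eqref{boot1} bounds by the second term of the claimed estimate.

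For the improved estimate when $\gamma_T\geq 1$, the sharper version of Proposition~\ref{Proproofnullcompo} (whose refinement stems from identity \eqref{eq:improtranla} and Lemma~\ref{improderiv}) supplies an additional factor $(1+|t-|x||)^{-\gamma_T}$ on every null component of $\mathcal{L}_{Z^\gamma}(F)$. The two $\underline{\alpha}$-contributions then directly acquire the extra $(1+|t-|x||)^{-1}$ decay, producing the second term $\Lambda\widehat{v}^{\underline{L}}(1+t+|x|)^{-1}(1+|t-|x||)^{-2}$ of the improved estimate. For the good-component contributions, the resulting bound $\Lambda\log(3+t)(1+t+|x|)^{-2}(1+|t-|x||)^{-\gamma_T}$ is absorbed into the first term $\Lambda(1+t+|x|)^{-5/2}$ by trading the logarithmic factor against half a power of $(1+t+|x|)$ via $\log(3+t)\lesssim (1+t+|x|)^{1/2}$, and redistributing the remaining $(1+|t-|x||)^{-1}$ decay between the two terms of the improved estimate via a case split based on the distance to the light cone. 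The main obstacle is the null-frame bookkeeping that ensures the bad combination $\widehat{v}^L\underline{\alpha}$ does not survive in the decomposition; once this algebraic verification is in place, the estimates reduce to a direct application of Proposition~\ref{Proproofnullcompo}, Lemma~\ref{nullcompo}, and the bootstrap \eqref{boot1}.
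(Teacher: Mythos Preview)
Your null-frame decomposition and the treatment of the first estimate are correct and match the paper's approach. The gap is in the improved estimate for $\gamma_T\geq 1$, specifically in your handling of the good-component contribution.

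Once you have used $\tfrac{1}{v^0}|\widehat v|\le 1$ on the terms involving $\alpha,\rho,\sigma$, the resulting bound
\[
\frac{\Lambda\log(3+t)}{(1+t+|x|)^{2}(1+|t-|x||)}
\]
carries no residual $v$-dependence. Your proposed ``case split based on the distance to the light cone'' then has to show
\[
\frac{1}{(1+t+|x|)^{3/2}(1+|t-|x||)}\;\lesssim\;\frac{1}{(1+t+|x|)^{5/2}}\;+\;\frac{\widehat v^{\underline L}}{(1+t+|x|)(1+|t-|x||)^{2}},
\]
but this fails uniformly in $v$: on the light cone $t=|x|$ the left-hand side is $\sim (1+t)^{-3/2}$, while for $v$ large and parallel to $x$ one has $\widehat v^{\underline L}\sim |v^0|^{-2}$, so the right-hand side is $\sim (1+t)^{-5/2}+|v^0|^{-2}(1+t)^{-1}$, which can be made arbitrarily smaller than the left-hand side by taking $|v|$ large.

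The fix, which is exactly what the paper does, is \emph{not} to throw away the $1/v^0$ on the good components. Instead use $|v^0|^{-1}\le 2\sqrt{\widehat v^{\underline L}}$ from Lemma~\ref{nullcompo} (the same inequality you already invoked for the $\widehat{\slashed v}\,\underline\alpha$ term) to obtain
\[
\frac{1}{v^0}\bigl|\widehat v^{\mu}{\mathcal L_{Z^\gamma}(F)_\mu}^{j}\bigr|\;\lesssim\;\sqrt{\widehat v^{\underline L}}\,\bigl(|\alpha|+|\rho|+|\sigma|\bigr)+\widehat v^{\underline L}\,|\underline\alpha|.
\]
For $\gamma_T\ge 1$ the first term is now $\sqrt{\widehat v^{\underline L}}\cdot \Lambda\log(3+t)(1+t+|x|)^{-2}(1+|t-|x||)^{-1}$, and an AM--GM splitting of the product $\sqrt{\widehat v^{\underline L}}\cdot\bigl[\cdots\bigr]$ gives
\[
\lesssim\;\Lambda\frac{\log^2(3+t)}{(1+t+|x|)^{3}}\;+\;\frac{\Lambda\,\widehat v^{\underline L}}{(1+t+|x|)(1+|t-|x||)^{2}}\;\lesssim\;\frac{\Lambda}{(1+t+|x|)^{5/2}}\;+\;\frac{\Lambda\,\widehat v^{\underline L}}{(1+t+|x|)(1+|t-|x||)^{2}}.
\]
The retained $\sqrt{\widehat v^{\underline L}}$ is precisely what makes the split work.
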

\begin{proof}
Recall the definition of the null components of a $2$-form \eqref{defnullcompoF} and expand $\widehat{v}^{\mu} {F_{\mu}}^j$ according to the null frame $(\underline{L},L,e_\theta,e_{\varphi})$ in order to get
\begin{align}
\nonumber \left| \widehat{v}^{\mu} {F_{\mu}}^j \right| & = \left| \widehat{v}^{L} {F_{L}}^j+\widehat{v}^{\underline{L}} {F_{\underline{L}}}^j+\widehat{v}^{e_\theta} {F_{e_\theta}}^j+\widehat{v}^{e_\varphi} {F_{e_\varphi}}^j \right| \\ 
& \lesssim \widehat{v}^L(|\alpha(F)|+|\rho(F)|)+\widehat{v}^{\underline{L}}(|\rho (F)|+ |\underline{\alpha} (F)|)+| \widehat{\slashed{v}}|(|\sigma (F)|+|\alpha (F)|+|\underline{\alpha} (F)|).  \label{nullexpand} 
\end{align}
Since $\widehat{v}^L, \, \widehat{v}^{\underline{L}}, \, |\widehat{\slashed{v}}| \leq 1$ and $|\widehat{\slashed{v}}|+|v^0|^{-1} \leq 2 \sqrt{\widehat{v}^{\underline{L}}}$ by Lemma \ref{nullcompo}, we obtain
\begin{equation}\label{eq:compu1bis}
  \frac{1}{v^0}|\widehat{v}^{\mu} {F_{\mu}}^j| \lesssim \sqrt{\widehat{v}^{\underline{L}}} \Big( |\alpha(F)|+|\rho(F)|+|\sigma(F)| \Big)+ \widehat{v}^{\underline{L}} |\underline{\alpha} (F)|.
  \end{equation}
Remark that the same applies to $\mathcal{L}_{Z^{\gamma}}(F)$, $|\gamma| \leq N-2$, so that the first estimate ensues from Proposition \ref{Proproofnullcompo}. Assume now that $\gamma_T \geq1$ and apply once again \eqref{eq:compu1bis} to $\mathcal{L}_{Z^{\gamma}}F$ together with Proposition \ref{Proproofnullcompo}. We obtain
\begin{align*}
\frac{1}{v^0}\left| \widehat{v}^{\mu} {\mathcal{L}_{Z^{\gamma}}(F)_{\mu}}^j \right|(t,x) & \lesssim \frac{\Lambda \, \log(3+t)\sqrt{\widehat{v}^{\underline{L}}}}{(1+t+|x|)^2(1+|t-|x||)} +\frac{\Lambda \, \widehat{v}^{\underline{L}}}{(1+t+|x|)(1+|t-|x||)^2} \\
& \lesssim \Lambda\frac{\log^2(3+t)}{(1+t+|x|)^3}+\frac{\Lambda \, \widehat{v}^{\underline{L}}}{(1+t+|x|)(1+|t-|x||)^2},
\end{align*}
which implies the result.
\end{proof}
If $N-1 \leq |\gamma|\leq N$, we do not have improved estimates on the null components of the electromagnetic field. Moreover, if $|\gamma|=N$ and $\gamma_T=0$, we have a very bad control of $\mathcal{L}_{Z^{\gamma}}F$ near the light cone. The idea then is to transform decay in $t-r$ into decay in $t+r$ at the cost of powers of $\mathbf{z}$ and $v^0$.
\begin{Lem}\label{Lorentzforce2}
Consider $|\gamma| \leq N$ and $j \in \llbracket 1, 3 \rrbracket$. Then, for all $(t,x,v) \in \R_+ \times \R^3_x \times \R^3_v$,
$$\frac{1}{v^0}\left| \widehat{v}^{\mu} {\mathcal{L}_{Z^{\gamma}}(F)_{\mu}}^j \right|(t,x) \lesssim \frac{1}{v^0}\left| \mathcal{L}_{Z^{\gamma}} F \right|(t,x)  \lesssim \Lambda\frac{\log(3+t+|x|)}{(1+t+|x|)^2}|v^0|^3 \mathbf{z}^2(t,x,v).$$
and, if $\gamma_T \geq 1$,
$$\frac{1}{v^0}\left| \widehat{v}^{\mu} {\mathcal{L}_{Z^{\gamma}}(F)_{\mu}}^j \right|(t,x) \lesssim \frac{1}{v^0}\left| \mathcal{L}_{Z^{\gamma}} F \right|(t,x)  \lesssim \Lambda\frac{\log(3+t+|x|)}{(1+t+|x|)^3}|v^0|^3 \mathbf{z}^2(t,x,v).$$
\end{Lem}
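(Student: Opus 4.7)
The first inequality is a direct consequence of $|\widehat{v}^\mu| \leq 1$ for all $0 \leq \mu \leq 3$, so the only substantive task is bounding $|\mathcal{L}_{Z^\gamma} F|$ in terms of negative powers of $(1+t+|x|)$ rather than $(1+|t-|x||)$.

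The key preliminary observation is that dividing the inequality of Lemma \ref{gainv} by $1+|t-|x||$ and then using $(1+|t-|x||)^{-1} \leq 1 \leq \mathbf{z}$ to absorb the resulting first term, one gets the trade
$$\frac{1}{1+|t-|x||} \lesssim \frac{|v^0|^2 \mathbf{z}}{1+t+|x|}.$$
Iterating $k$ times converts a factor $(1+|t-|x||)^{-k}$ into $(1+t+|x|)^{-k}$ at the cost of $|v^0|^{2k} \mathbf{z}^k$. In particular, two applications will suffice for both estimates of the statement, which accounts for the $|v^0|^4 \mathbf{z}^2$ produced on the right-hand side once one multiplies the statement by $v^0$.

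I then distinguish cases. If $|\gamma| \leq N-1$, Proposition \ref{Prodecaytoporder} provides $|\mathcal{L}_{Z^\gamma} F|(t,x) \lesssim \Lambda (1+t+|x|)^{-1}(1+|t-|x||)^{-1-\gamma_T}$, and a single application of the trade (two if $\gamma_T \geq 1$) already yields the claim, in fact without the logarithmic factor. If $|\gamma| = N$ and $\gamma_T = 0$, the first estimate of Proposition \ref{Prodecaytoporder} gives $|\mathcal{L}_{Z^\gamma} F|(t,x) \lesssim \Lambda \log(3+|t-|x||) (1+|t-|x||)^{-2}$; applying the trade twice and bounding $\log(3+|t-|x||) \leq \log(3+t+|x|)$ produces the first estimate of the lemma.

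The remaining case $|\gamma| = N$, $\gamma_T \geq 1$ is the only delicate one, as Proposition \ref{Prodecaytoporder} by itself provides no decay in $t+|x|$ when $|\gamma|=N$. Here the expansion \eqref{eq:improtranla}, applied with one translation pulled out of $Z^\gamma$, writes $\mathcal{L}_{Z^\gamma} F$ as a finite linear combination of terms $\nabla_{t,x} \mathcal{L}_{Z^{\gamma'}}(F)$ with $|\gamma'| = N-1$. The bootstrap assumption \eqref{boot2} then yields
$$|\mathcal{L}_{Z^\gamma} F|(t,x) \lesssim \frac{\Lambda \log(3+|t-|x||)}{(1+t+|x|)(1+|t-|x||)^2},$$
and two applications of the trade conclude the improved estimate. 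The main obstacle in this step, and the reason it is delicate, is that the naive bound $|\mathcal{L}_{Z^\gamma} F| \lesssim (1+t+|x|)|\nabla_{t,x}\mathcal{L}_{Z^{\gamma'}} F|$ underlying Proposition \ref{Prodecaytoporder} would lose the crucial factor $(1+t+|x|)^{-1}$: obtaining the improvement requires genuinely exploiting that \eqref{boot2} has already been assumed pointwise for top-order $\nabla_{t,x}$ derivatives.
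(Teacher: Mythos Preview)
Your proof is correct and follows essentially the same approach as the paper's: both derive the trade $(1+t+|x|) \lesssim (1+|t-|x||)|v^0|^2\mathbf{z}$ from Lemma~\ref{gainv}, apply Proposition~\ref{Prodecaytoporder} for the first estimate, and use \eqref{eq:improtranla} together with \eqref{boot2} for the second. The paper's proof is simply more terse, treating all $|\gamma|\leq N$ at once for the first estimate rather than splitting into $|\gamma|\leq N-1$ and $|\gamma|=N$; one minor slip is that after pulling out a translation you should write $|\gamma'|\leq N-1$ rather than $|\gamma'|=N-1$, but the bound from \eqref{boot2} (or the stronger one from Proposition~\ref{Prodecaytoporder} when $|\gamma'|<N-1$) covers all cases.
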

\begin{proof}
Recall from Lemma \ref{gainv} that $(1+t+r)^2 \lesssim (1+|t-r|)^2|v^0|^4\mathbf{z}^2$. The first estimate then follows from Proposition \ref{Prodecaytoporder} and the second one from \eqref{boot2} together with \eqref{eq:improtranla}. 
\end{proof}
\begin{Rq}\label{Rqbetteresti}
If $|\gamma| \leq N-1$, we have $| \mathcal{L}_{Z^{\gamma}} F |(t,x)  \lesssim \Lambda (1+t+|x|)^{-2} |v^0|^2 \mathbf{z}(t,x,v)$. If $|\gamma| \leq N-2$, by combining Lemmas \ref{nullcompo} and \ref{Lorentzforce}, we could even save a power of $|v^0|^3 \mathbf{z}$ in the first estimate of the Lorentz force and then avoid any loss in $v$.
\end{Rq}

\subsection{Pointwise bounds for $f$ and its derivatives} As explained in Section \ref{SubsecdiffVlasov}, the main difficulties here are related to the weak decay rate of the electromagnetic field. We deal with them by exploiting several hierarchies in the commuted equations and by taking advantage of the null structure of the Lorentz force. Our approach, based on the method of characteristics, will require various applications of the following result.

\begin{Lem}\label{techLemTF}
Let $g: [0,T[ \times \R^3_x \times \R^3_v \rightarrow \R_+$ and $h: [0,T[ \times \R^3_x \times \R^3_v \rightarrow \R_+$ be two nonnegative sufficiently regular functions such that, for all $(t,x,v) \in [0,T[ \times \R^3_x \times \R^3_v$,
$$ |\T_F(g)|(t,x,v) \leq \frac{C_g}{(1+t)\log^2(3+t)}g+ \frac{C_g \, \widehat{v}^{\underline{L}}}{(1+|t-|x||)\log^2(3+|t-|x||)}g+\frac{1}{(1+t)\log^2(3+t)}h,$$
for some constant $C_g >0$. Then,
 $$\forall \, (t,x,v) \in [0,T[ \times \R^3_x \times \R^3_v, \qquad \quad |g|(t,x,v) \leq \Big( \|g(0,\cdot,\cdot)\|_{L^{\infty}_{x,v}}+ 3 \| h \|_{L^\infty_{t,x,v}} \Big) e^{6C_g}.$$
\end{Lem}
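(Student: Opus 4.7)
The plan is to integrate the hypothesis along the characteristics of $\T_F$ and apply a Grönwall argument. Let $(X(s,x,v), V(s,x,v))$ denote the characteristic of the Vlasov--Maxwell flow issued from $(x,v)$ at $s=0$; it is well defined on $[0,T[$ by the regularity of $F$. Setting $G(s) := g(s, X(s), V(s))$, the chain rule gives $G'(s) = (\T_F g)(s,X(s),V(s))$, so the hypothesis becomes a differential inequality $G'(s) \leq \phi(s)\, G(s) + \psi(s)$, where
$$\phi(s) := \frac{C_g}{(1+s)\log^2(3+s)} + \frac{C_g\,\widehat{V}^{\underline{L}}(s)}{(1+|s-|X(s)||)\log^2(3+|s-|X(s)||)}, \qquad \psi(s) := \frac{\|h\|_{L^\infty_{t,x,v}}}{(1+s)\log^2(3+s)}.$$
Grönwall's inequality then reduces the problem to the two scalar estimates $\int_0^t \phi \leq 6\,C_g$ and $\int_0^t \psi \leq 3\,\|h\|_{L^\infty_{t,x,v}}$.

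The purely time-dependent pieces (all of $\psi$ and the first term of $\phi$) are handled by the elementary bound $\int_0^\infty \frac{\dr s}{(1+s)\log^2(3+s)} < 3$. The interesting term is the second piece of $\phi$, and the key step is to observe that along the characteristic
$$ \frac{\dr}{\dr s}\bigl(s - |X(s)|\bigr) = 1 - \frac{X(s)\cdot \widehat{V}(s)}{|X(s)|} = 2\,\widehat{V}^{\underline{L}}(s), $$
using the definition $\widehat{v}^{\underline{L}} = \tfrac{1}{2}\bigl(1 - \tfrac{x_i \widehat{v}^i}{r}\bigr)$. Since $\widehat{V}^{\underline{L}} > 0$ by Lemma \ref{nullcompo}, the map $s \mapsto s - |X(s)|$ is strictly increasing, and the substitution $u = s - |X(s)|$, with $\dr u = 2\widehat{V}^{\underline{L}}\,\dr s$, reduces the integral to
$$ \int_0^t \frac{\widehat{V}^{\underline{L}}(s)\,\dr s}{(1+|s-|X(s)||)\log^2(3+|s-|X(s)||)} = \frac{1}{2}\int_{-|x|}^{t-|X(t)|} \frac{\dr u}{(1+|u|)\log^2(3+|u|)} < 3.$$

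The hard part, and really the core of the lemma, is this null-frame substitution: the factor $\widehat{V}^{\underline{L}}$ in the hypothesis is exactly the Jacobian needed to convert the non-time-integrable weight $(1+|t-r|)^{-1}\log^{-2}(3+|t-r|)$ into an integrable quantity along characteristics. A minor technicality is that the chain rule for $|X(s)|$ fails at the instants when $|X| = 0$; however, $s \mapsto s - |X(s)|$ remains globally Lipschitz and strictly increasing (by positivity of $\widehat{V}^{\underline{L}}$ away from the origin), so the substitution holds almost everywhere and no weight is lost. Combining the two scalar bounds, Grönwall yields $G(t) \leq \bigl(\|g(0,\cdot,\cdot)\|_{L^\infty_{x,v}} + 3\|h\|_{L^\infty_{t,x,v}}\bigr)\,e^{6 C_g}$, and since $(x,v)$ was arbitrary this is exactly the claimed pointwise bound.
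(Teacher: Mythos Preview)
The proposal is correct and follows essentially the same approach as the paper: integrate along characteristics, bound the time-weight $\int_0^\infty (1+s)^{-1}\log^{-2}(3+s)\,\dr s \leq 3$, and handle the $\widehat{v}^{\underline{L}}$-weighted term via the change of variables $u = s - |X(s)|$ with Jacobian $2\widehat{V}^{\underline{L}}$, then apply Gr\"onwall. Your remark on the failure of differentiability of $|X(s)|$ at the origin is a nice addition that the paper does not make explicit.
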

\begin{proof}
Fix, for all this proof, $(x,v) \in \R^3_x \times \R^3_v$ and denote by $t \mapsto (X_t,V_t)$ the characteristic of the operator $\T_F=\partial_t+\widehat{v}^i\partial_{x^i}+\widehat{v}^{\mu} {F_{\mu}}^j \partial_{v^j}$ satisfying
$$\forall \; 1 \leq j \leq 3, \qquad  \dot{X}^{j}_t = \widehat{V}^{j}_t, \quad \dot{V}^{j}_t = \widehat{V}^{\mu}_t{F_{\mu}}^j(t,X_t), \qquad X_0=x, \quad V_0=v.$$
According to Duhamel formula, we have
$$ \forall \, t \in [0,T[ , \qquad  g\big(t,X_t,V_t\big)=g(0,x,v)+\int_{s=0}^t \T_F(g)\big(s,X_s,V_s \big) \dr s.$$
We are then lead to introduce the two functions
$$\psi_1(s):= (1+s)^{-1} \log^{-2}(3+s), \qquad \psi_2(s):= \widehat{v}^{\underline{L}}\big(X_s \big) \big(1+\big|s-|X_s| \, \big|\big)^{-1}\log^{-2}\big(3+\big|s-|X_s| \, \big|\big).$$
In view of the expression of $\T_F(g)$, we have for all $t \in [0,T[$, 
$$g\big(t,X_t,V_t \big) \leq \|g(0,\cdot,\cdot)\|_{L^{\infty}_{x,v}}+  \| h \|_{L^\infty_{t,x,v}} \int_{s=0}^{t} \psi_1(s) \dr s+\int_{s=0}^t C_g\big( \psi_1(s)+\psi_2(s)\big) g \big(s,X_s,V_s \big) \dr s .$$
Consequently, Grönwall's inequality and
$$ \int_{s=0}^{+\infty} \psi_1(s) \dr s = \int_{s=0}^{+\infty} \frac{\dr s}{(1+s) \log^2(3+s)} \leq \int_{s=0}^{+\infty} \frac{3\, \dr s}{(3+s) \log^2(3+s)} \leq \frac{3 }{\log(3)} \leq 3$$
yield
$$ \forall \, t \in [0,T[, \qquad \sup_{0 \leq s \leq t}g\big(s,X_s,V_s \big) \leq \Big( \|g(0,\cdot,\cdot) \|_{L^{\infty}_{x,v}}+ 3 \| h \|_{L^\infty_{t,x,v}} \Big) \exp \left( 3C_g +C_g\int_{s=0}^t \psi_2(s) \dr s \right).$$
It remains us to estimate the integral of $\psi_2$. For this, we will perform a change of variables reflecting that the Vlasov operator reads, in the coordinate system $(u,x,v)$, where $u=t-|x|$,
$$ \T_F = \partial_u-\widehat{v}^i \frac{x_i}{|x|}\partial_u+\widehat{v}^i \partial_{x^i}+\widehat{v}^{\mu} {F_{\mu}}^j \partial_{v^j}=2\widehat{v}^{\underline{L}} \partial_u+\widehat{v}^i \partial_{x^i}+\widehat{v}^{\mu} {F_{\mu}}^j \partial_{v^j} .$$
As $\widehat{v}^{\underline{L}} >0$ by Lemma \ref{nullcompo}, we can then parameterize $t \mapsto (X_t,V_t)$ by the variable $u$. Hence, we perform the change of variables $\widetilde{u}(s)=s-|X_s|$, so that $\widetilde{u}'(s)=2\widehat{V}^{\underline{L}}(X_s)>0$ and
$$ \int_{s=0}^t \psi_2(s) \dr s = \int_{u=t-|x|}^{\widetilde{u}(t)} \frac{\dr u}{2(1+|u|) \log^2(3+|u|)} \leq \int_{u \in \R} \frac{\dr u}{2(1+|u|) \log^2(3+|u|)} \leq 3.$$

\end{proof}

We are now able to prove that quantities such as $\mathbf{z}\widehat{Z}^{\beta}f$ are almost uniformly bounded in phase space. We recall that for a multi-index $\beta$, the number of homogeneous vector fields (respectively translations) composing $\widehat{Z}^\beta$ is denoted by $\beta_H$ (respectively $\beta_T$).
\begin{Pro}\label{estiLinfini}
There exists $D>0$, depending only on $(N,N_v,N_x)$, such that the following estimates hold. For all $(t,x,v) \in [0,T[ \times \R^3_x \times \R^3_v$,
\begin{flalign}
&\forall \, 0 \leq q \leq N_x, \; \; |\beta|\leq N-2, &  |v^0|^{N_v}|\mathbf{z}^q \, \widehat{Z}^{\beta} f|(t,x,v) & \lesssim \epsilon \, e^{D \Lambda} \log^{3q+3\beta_H}(3+t) , &\label{eq:bootLinf} \\
&\forall \, 0 \leq q \leq N_x-2, \; \; |\beta|\leq N, &  |v^0|^{N_v-3}|\mathbf{z}^q \, \widehat{Z}^{\beta} f|(t,x,v) & \lesssim  \epsilon \, e^{D \Lambda} \log^{3q+3\beta_H}(3+t). &\label{eq:bootLinf2}
\end{flalign}
Throughout this paper, it will be convenient to work with $\overline{\epsilon} := \epsilon \, e^{(D+1)\Lambda}$.
\end{Pro}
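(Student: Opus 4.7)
The plan is to establish both estimates simultaneously by a hierarchised induction on $(|\beta|, \beta_H, q)$, using Duhamel along characteristics via Lemma \ref{techLemTF}. Concretely, for fixed $\beta$ and $q$, I would work with the normalised quantity
\[
\Phi_\beta^q(t,x,v) := |v^0|^{N_v} \mathbf{z}^q \bigl|\widehat{Z}^\beta f\bigr|(t,x,v) \cdot \log^{-(3q+3\beta_H)}(3+t),
\]
(and the analogous one with $N_v$ replaced by $N_v-3$ for the second estimate when $|\beta| \in \{N-1,N\}$), the goal being a uniform bound by $\epsilon e^{D\Lambda}$. The logarithmic discount is exactly what lets Grönwall close at the threshold of time integrability, as in the toy model of Section \ref{SubsecdiffVlasov}.

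The first main step is to compute $\T_F(\mathbf{z}^q \widehat{Z}^\beta f)$ using Proposition \ref{Com}. This produces a principal term of the form $\mathbf{z}^q\, \widehat{v}^\mu {\mathcal{L}_{Z^\gamma}(F)_\mu}^j \partial_{v^j} \widehat{Z}^\kappa f$ with $|\gamma|+|\kappa|\leq |\beta|$ and $|\kappa|\leq|\beta|-1$, plus a weight-error term $q\,\mathbf{z}^{q-1} \widehat{v}^\mu {F_\mu}^j \partial_{v^j}\mathbf{z} \cdot \widehat{Z}^\beta f$ which has the same structure once one uses Lemma \ref{zpreserv}. Rewriting $v^0 \partial_{v^j}$ as a combination of $\widehat{\Omega}_{0j}$ and $t\partial_{x^j},\,x^j\partial_t$ (and absorbing the factors of $t,x$ against $\mathbf{z}$ and $\widehat{v}^{\underline L}$ via Lemma \ref{nullcompo} and the identity \eqref{reutiliser}) converts $\partial_v \widehat{Z}^\kappa f$ into $(v^0)^{-1}\widehat{Z}\widehat{Z}^\kappa f$-type quantities, which live at the same or a lower order in the induction hierarchy. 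I would then distinguish two regimes for the Maxwell factor: when $|\gamma|\leq N-2$, apply Lemma \ref{Lorentzforce}, whose splitting into a $(1+t)^{-2}\log$ term and a $\widehat{v}^{\underline L}(1+|t-r|)^{-1}$ term fits exactly the hypotheses of Lemma \ref{techLemTF}, the latter being integrable along characteristics thanks to the change of variables $\widetilde u = s-|X_s|$. The hierarchy supplied by Proposition \ref{Com} is used crucially here: either $\kappa_H<\beta_H$ (so the induction hypothesis gives a strictly smaller logarithmic power, absorbing a $\log^3(3+t)$ cushion), or $\kappa_H=\beta_H$ and then $\gamma_T\geq 1$, in which case the improved second estimate of Lemma \ref{Lorentzforce} offers the extra decay in $t-r$ needed to overcome the logarithmic loss coming from $\T_F(\mathbf{z}^q)$ and from the change $\partial_v \leadsto (v^0)^{-1}\widehat{Z}$.

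The second regime, $|\gamma|\in\{N-1,N\}$, forces the use of Lemma \ref{Lorentzforce2}, which costs an extra $|v^0|^3\mathbf{z}^2$; this is precisely why the second estimate is stated with weight $|v^0|^{N_v-3}$ and restricted to $q\leq N_x-2$. The key structural observation is that such a top-order Maxwell factor only occurs when $|\kappa|\leq |\beta|-(N-1)\leq 1$, so the Vlasov factor $\widehat{Z}^\kappa f$ multiplying it is always of low order and has already been controlled in the \emph{first} estimate \eqref{eq:bootLinf} at the full weight $|v^0|^{N_v}\mathbf{z}^{N_x}$; after absorbing the $|v^0|^3\mathbf{z}^2$ loss against this spare weight one still obtains an integrable-in-time source. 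Once all source terms are bounded in this way, one applies Lemma \ref{techLemTF} with $g = \Phi_\beta^q$, $h$ collecting the (already induction-controlled) lower-order contributions, and $C_g$ of order $\Lambda$, which yields $\Phi_\beta^q \lesssim \epsilon \, e^{D\Lambda}$ for a constant $D$ growing only with $N, N_v, N_x$.

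The main obstacle is to orchestrate the double induction cleanly: the weight $\mathbf{z}^q$ must be processed together with the order $|\beta|$ and the homogeneity count $\beta_H$ so that, at every step, any Lorentz factor without the improved $(1+|t-|x||)^{-1}$ decay (Lemma \ref{Lorentzforce}'s second estimate) is compensated either by the hierarchy in $\beta_H$ (strict drop in log power) or by spare weight $|v^0|^{N_v}\mathbf{z}^{N_x}$ that has been saved from the first estimate. Keeping the bookkeeping tight is what forces the quantitative thresholds $N_v \geq 15$ and $N_x > 7$ listed in Remark \ref{Rqlossvz}, and what produces the explicit logarithmic exponent $3q+3\beta_H$.
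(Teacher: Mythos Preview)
Your proposal captures the same strategy as the paper's proof and correctly identifies all the key mechanisms: the logarithmic discounting, the null-structure estimates of Lemmas \ref{Lorentzforce}--\ref{Lorentzforce2}, the $\beta_H$-hierarchy from Proposition \ref{Com}, the absorption of the top-order Maxwell loss $|v^0|^3\mathbf{z}^2$ against spare weight on the low-order Vlasov factor, and the closure via Lemma \ref{techLemTF}.

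There is, however, one organizational point on which your framing does not close as stated. You propose a genuine induction on $(|\beta|,\beta_H,q)$, applying Lemma \ref{techLemTF} to a single $\Phi_\beta^q$ with the lower-hierarchy terms feeding into $h$. But the ``good'' contributions $\widehat{\Omega}_{0j}\widehat{Z}^\kappa f$ arising from the rewriting of $v^0\partial_{v^j}$ have homogeneity count $\kappa_H+1$, which can reach $\beta_H+1$ (take $\kappa_H=\beta_H$, forcing $\gamma_H=0$): such a term sits at the \emph{same} order $|\beta|$ but \emph{higher} in $\beta_H$, so no induction hypothesis covers it. The paper sidesteps this by working with the summed norm
\[
\mathbb{E}_{N_0}^{p,q}[f](t,x,v) \;=\; \sum_{|\beta|\le N_0}\Bigl( \frac{|v^0|^p|\widehat Z^\beta f|}{\log^{3\beta_H}(3+t)} + \frac{|v^0|^p|\mathbf{z}^q\widehat Z^\beta f|}{\log^{3q+3\beta_H}(3+t)}\Bigr)
\]
and proving a single differential inequality of the form $\T_F(\mathbb{E})\lesssim \Lambda(1+t)^{-1}\log^{-2}(3+t)\,\mathbb{E}+\Lambda\,\widehat{v}^{\underline{L}}(1+|t-r|)^{-1}\log^{-2}(3+|t-r|)\,\mathbb{E}$, to which Lemma \ref{techLemTF} is applied once. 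The good terms then land on the right with the strong prefactor $(1+t)^{-3/2}$ (no $(t+r)$ factor), hence contribute integrably regardless of their homogeneity count; only the bad terms $(t+r)|v^0|^{-1}|\widehat{v}^\mu\mathcal{L}_{Z^\gamma}(F)_\mu{}^j|\,|\partial_{t,x}\widehat{Z}^\kappa f|$ invoke the $\beta_H$-dichotomy you describe. Two smaller points: the weight-error term $q\,\T_F(\mathbf{z})\,\mathbf{z}^{q-1}|\widehat{Z}^\beta f|$ is handled not via Lemma \ref{zpreserv} but via Young's inequality $\mathbf{z}^{q-1}/\log^{3q}(3+t)\lesssim \log^{-3}(3+t)\bigl(1+\mathbf{z}^q/\log^{3q}(3+t)\bigr)$, which is why both the $q=0$ and the endpoint-$q$ pieces must appear together in $\mathbb{E}$; and the intermediate values $0<q<N_x$ are then recovered by interpolation at the outset.
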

\begin{proof}
For simplicity, we assume here that $N \geq 4$ and we sketch the proof of the case $N=3$ in Remark \ref{Rqlessderiv} below. Note further that, by interpolation, it suffices to deal with the cases $q \in \{0,N_x\}$ for \eqref{eq:bootLinf} and $q \in \{ 0, N_x-2\}$ for \eqref{eq:bootLinf2}. Motivated by the analysis of the toy model carried out in Section \ref{SubsecdiffVlasov}, we introduce the following hierarchised norms in order to deal with non-integrable error terms and still obtain satisfying estimates if the electromagnetic field is large. Consider, for $(N_0,p,q)=(N-2,N_v,N_x)$ or $(N,N_v-3,N_x-2)$,
$$ \mathbb{E}_{N_0}^{p,q}[f](t,x,v) := \sum_{|\beta| \leq N_0} \frac{|v^0|^{p}|  \widehat{Z}^{\beta} f|(t,x,v)}{\log^{3\beta_H}(3+t)}+\frac{|v^0|^{p}|\mathbf{z}^q \, \widehat{Z}^{\beta} f|(t,x,v)}{\log^{3q+3\beta_H}(3+t)}$$
and let us prove that, for all $(t,x,v) \in [0,T[ \times \R^3_x \times \R^3_v$,
\begin{align}\label{eq:aprouverbound}
\T_F \Big(\mathbb{E}_{N-2}^{N_v,N_x}[f] \Big)(t,x,v) & \lesssim \frac{\Lambda \, \mathbb{E}[f]_{N-2}^{N_v,N_x}(t,x,v)}{(1+t)\log^2(3+t)} + \frac{\Lambda \, \widehat{v}^{\underline{L}}(x) \, \mathbb{E}[f]_{N-2}^{N_v,N_x}(t,x,v)}{(1+|t-|x||)\log^2(3+|t-|x||)}   , \\[4pt]
   \T_F \Big(\mathbb{E}_{N}^{N_v-3,N_x-2}[f] \Big)(t,x,v) & \lesssim \frac{\Lambda \,\mathbb{E}_{N}^{N_v-3,N_x-2}[f](t,x,v)}{(1+t)\log^2(3+t)} + \frac{\Lambda \, \widehat{v}^{\underline{L}}(x) \, \mathbb{E}_{N}^{N_v-3,N_x-2}[f]}{(1+|t-|x||)\log^2(3+|t-|x||)}   \nonumber \\[2pt]
   & \quad + \frac{\Lambda \,\mathbb{E}_{N-2}^{N_v,N_x}[f](t,x,v)}{(1+t)\log^2(3+t)} \label{eq:aprouverbound2}
\end{align}
We are able to apply $\T_F$ to these energy norms since $\T_F(|h|)=\T_F(h) \frac{h}{|h|}$ almost everywhere for any $h \in W^{1,1}_{\mathrm{loc}}$. The result would then follow from two applications of Lemma \ref{techLemTF}. Fix now $(t,x,v) \in [0,T[\times \R^3_x \times \R^3_v$ as well as either $|\beta| \leq N-2$, $p=N_v$ and $a \in \{0,N_x \}$ or $|\beta| \leq N$, $p=N_v-3$ and $a \in \{ 0, N_x-2 \}$. Remark then, since $\T_F(\log^{-1}(3+t)) <0$, that
\begin{align}
\nonumber \T_F\left( \frac{|v^0|^p \, \mathbf{z}^a \, |\widehat{Z}^{\beta} f|}{\log^{3a+3\beta_H}(3+t)} \right) &\leq p\, \T_F \big(v^0 \big) \frac{ |v^0|^{p-1}\, \mathbf{z}^a \, |\widehat{Z}^{\beta} f |}{\log^{3a+3\beta_H}(3+t)} +a\,\T_F(\mathbf{z})\frac{|v^0|^p\,\mathbf{z}^{a-1}\, |\widehat{Z}^{\beta} f|}{\log^{3a+3\beta_H}(3+t)} \\
& \quad +\T_F\big(\widehat{Z}^{\beta} f\big)\frac{\widehat{Z}^{\beta} f}{|\widehat{Z}^\beta f|} \frac{|v^0|^p\, \mathbf{z}^a}{{\log^{3a+3\beta_H}(3+t)}} . \label{TFexpression}
 \end{align}
It is important to note that the second term on the right hand side vanishes if $a=0$. We start by dealing with the first two terms on the right hand side since the last one requires a more thorough analysis. As $|\nabla_v v^0| \leq 1$, we obtain, by applying Lemma \ref{Lorentzforce},
\begin{equation}\label{boundTfv0}
 \frac{1}{v^0}\big| \T_F(v^0) \big|(t,x,v) =  \frac{1}{v^0}\left|\widehat{v}^{\mu}{F_{\mu}}^j \partial_{v^j}(v^0) \right|(t,x) \lesssim \frac{\Lambda\log(3+t)}{(1+t+|x|)^2} +\frac{\Lambda \, \widehat{v}^{\underline{L}}}{(1+t+|x|)(1+|t-|x||)} ,
\end{equation}
so that
\begin{equation}
\big|\T_F \big(v^0 \big) \big| \frac{ |v^0|^{p-1}\, |\mathbf{z}^a \widehat{Z}^{\beta} f |(t,x,v)}{\log^{3a+3\beta_H}(3+t)}  \lesssim \left( \frac{\Lambda}{(1+t)^\frac{3}{2}} +\frac{\Lambda \, \widehat{v}^{\underline{L}}}{(1+t)(1+|t-|x||)} \right)\frac{ |v^0|^{p}\, |\mathbf{z}^a \widehat{Z}^{\beta} f |(t,x,v)}{\log^{3a+3\beta_H}(3+t)}.  \color{white} \square \square \color{black} \label{eq:boundv0z}
 \end{equation}
Next, recall from \eqref{defz} the identity $\widehat{v}^{\mu}\partial_{x^\mu}(\mathbf{z})=0$ and remark that $|\nabla_v \mathbf{z}| \lesssim \frac{t+r}{v^0}$. We get, using Lemma \ref{Lorentzforce},
$$ \left| \T_F(\mathbf{z})\right|(t,x,v) \lesssim  \sum_{1 \leq j \leq 3} \frac{t+|x|}{v^0}\left|\widehat{v}^{\mu}{F_{\mu}}^j (t,x) \right| \lesssim \frac{\Lambda\log(3+t)}{1+t+|x|} +\frac{\Lambda \, \widehat{v}^{\underline{L}}}{1+|t-|x||}.$$
Using Young inequality for products, we obtain, if $a \neq 0$,
$$ \frac{\mathbf{z}^{a-1}}{\log^{3a}(3+t)} \leq  \frac{a-1}{a\log^{3}(3+t)} \frac{\mathbf{z}^a}{\log^{3a}(3+t)}+\frac{1}{a \log^{3}(3+t)} .$$
We then deduce that
\begin{align}
&a|\T_F(\mathbf{z})|\frac{|v^0|^p\,\mathbf{z}^{a-1}\, |\widehat{Z}^{\beta} f|}{\log^{3a+3\beta_H}(3+t)} \nonumber \\
& \qquad \quad  \lesssim  \left(\frac{\Lambda}{(1+t)\log^2(3+t)} +\frac{\Lambda \, \widehat{v}^{\underline{L}}}{(1+|t-|x||) \log^3(3+t)} \right) \! \left(  \frac{|v^0|^p \, |\widehat{Z}^{\beta} f|}{\log^{3\beta_H}(3+t)}+ \frac{|v^0|^p \, \mathbf{z}^a \, |\widehat{Z}^{\beta} f|}{\log^{3a+3\beta_H}(3+t)} \right). \label{eq:boundv0z0}
\end{align}
We now focus on the last term in \eqref{TFexpression}. The first step consists in applying the commutation formula of Proposition \ref{Com} and to remark that $v^0\partial_{v^i}=\widehat{\Omega}_{0i}-t\partial_{x^i}-x^i\partial_t$. We can then bound 
$$\big| \T_F \big( \widehat{Z}^\beta f \big)\big| |v^0|^p\, \mathbf{z}^a \log^{-3a-3\beta_H}(3+t)$$ by a linear combination of terms of the following form. The good ones, which are strongly decaying and can then be easily handled,
\begin{equation}\label{eq:ComTF1}
\mathcal{G}^{p,a}_{\gamma,\kappa}:=   \frac{1}{v^0}\left|\widehat{v}^{\mu} {\mathcal{L}_{Z^{\gamma}}(F)_{\mu}}^j \right|\frac{|v^0|^p \, \mathbf{z}^a\big| \widehat{\Omega}_{0j} \widehat{Z}^{\kappa} f \big|}{\log^{3a+3\beta_H}(3+t)} , \quad \qquad |\gamma|+|\kappa| \leq |\beta| , \qquad |\kappa| \leq |\beta|-1,
\end{equation}
and the bad ones,
\begin{equation}\label{eq:ComTF2}
 \mathcal{B}^{p,a}_{\gamma,\kappa}:=   (t+r)\sup_{1 \leq j \leq 3}\frac{1}{v^0}\left|\widehat{v}^{\mu} {\mathcal{L}_{Z^{\gamma}}(F)_{\mu}}^j \right| \frac{|v^0|^p \, \mathbf{z}^a\big| \partial_{t,x} \widehat{Z}^{\kappa} f \big|}{\log^{3a+3\beta_H}(3+t)} ,  \qquad   \left\{
    \begin{array}{ll}
       \gamma_H+\kappa_H \leq \beta_H , \\
        \kappa_H = \beta_H  \Rightarrow \gamma_T \geq 1,
    \end{array}
\right.
\end{equation}
where, again, $|\gamma|+|\kappa| \leq |\beta|$ and $|\kappa| \leq |\beta|-1$. We emphasize that $\widehat{Z}^{\xi}:=\partial_{t,x}\widehat{Z}^{\kappa}$ is composed by the same number of homogeneous vector fields than $\widehat{Z}^{\kappa}$, so that $\xi_H=\kappa_H$. In contrast, $\widehat{Z}^{\zeta}:=\widehat{\Omega}_{0j} \widehat{Z}^{\kappa}$ verifies $\zeta_H=\kappa_H+1$. Moreover, $\widehat{\Omega}_{0j} \widehat{Z}^{\kappa}$ and $\partial_{t,x}\widehat{Z}^{\kappa}$ are of order at most $|\beta|$.

Consider first the case $|\beta| \leq N-2$, so that $p=N_v$ and $a \in \{0, N_x \}$, and fix two multi-indices $|\gamma| \leq |\beta|$, $|\kappa| \leq |\beta|-1$. Then, according to Lemma \ref{Lorentzforce}, we have
\begin{align}
\nonumber \mathcal{G}^{N_v,a}_{\gamma,\kappa} &\lesssim \Lambda\left( \frac{\log(3+t)}{(1+t+|x|)^2} +\frac{\widehat{v}^{\underline{L}}}{(1+t+|x|)(1+|t-|x|)} \right) \frac{|v^0|^{N_v} \, \big| \mathbf{z}^a \widehat{\Omega}_{0j} \widehat{Z}^{\kappa} f \big|(t,x,v)}{\log^{3a+3\beta_H}(3+t)} \\
& \lesssim \left( \frac{\Lambda}{(1+t)^{\frac{3}{2}}} +\frac{\Lambda \, \widehat{v}^{\underline{L}}}{(1+t)^{\frac{1}{2}}(1+|t-|x|)} \right) \frac{|v^0|^{N_v} \, \big| \mathbf{z}^a \widehat{\Omega}_{0j} \widehat{Z}^{\kappa} f \big|(t,x,v)}{\log^{3a+3(\kappa_H+1)}(3+t)} . \label{eq:boundgood1}
\end{align}
We now focus on $\mathcal{B}^{N_v,a}_{\gamma,\kappa}$ and we start by treating the case $\kappa_H = \beta_H$ and $\gamma_T \geq 1$. Applying once again Lemma \ref{Lorentzforce}, we get
\begin{align}
\nonumber \mathcal{B}^{N_v,a}_{\gamma,\kappa} &\lesssim (t+|x|)\left( \frac{\Lambda}{(1+t+|x|)^{\frac{5}{2}}} +\frac{\Lambda \, \widehat{v}^{\underline{L}}}{(1+t+|x|)(1+|t-|x|)^2} \right)\frac{|v^0|^{N_v} \, \big|\mathbf{z}^a \partial_{t,x} \widehat{Z}^{\kappa} f \big|}{\log^{3a+3\beta_H}(3+t)} \\
& \leq \left( \frac{\Lambda}{(1+t)^{\frac{3}{2}}} +\frac{\Lambda \, \widehat{v}^{\underline{L}}}{(1+|t-|x|)^2} \right) \frac{|v^0|^{N_v} \, \big| \mathbf{z}^a \partial_{t,x} \widehat{Z}^{\kappa} f \big|}{\log^{3a+3\kappa_H}(3+t)}. \label{eq:boundbad2}
\end{align}
Otherwise $\kappa_H \leq \beta_H-1$, so necessarily $\beta_H \ge 1$, and 
\begin{align}
\nonumber \mathcal{B}^{N_v,a}_{\gamma,\kappa} &\lesssim (t+|x|)\left( \frac{\Lambda\log(3+t)}{(1+t+|x|)^2} +\frac{\Lambda \, \widehat{v}^{\underline{L}}}{(1+t+|x|)(1+|t-|x|)} \right) \frac{|v^0|^{N_v} \big|\mathbf{z}^a \partial_{t,x} \widehat{Z}^{\kappa} f \big|}{\log^{3a+3\beta_H}(3+t)} \\
& \leq  \left( \frac{\Lambda}{(1+t)\log^2(3+t)} +\frac{\Lambda \, \widehat{v}^{\underline{L}}}{(1+|t-|x|)\log^3(3+t)} \right) \frac{|v^0|^{N_v} \big|\mathbf{z}^a \partial_{t,x} \widehat{Z}^{\kappa} f \big|}{\log^{3a+3\kappa_H}(3+t)}. \label{eq:boundbad3}
\end{align}
We obtain from \eqref{TFexpression}-\eqref{eq:boundv0z0} and \eqref{eq:boundgood1}-\eqref{eq:boundbad3},
 $$ \T_F\left( \frac{|v^0|^{N_v} \, \mathbf{z}^a \, |\widehat{Z}^{\beta} f|}{\log^{3a+3\beta_H}(3+t)} \right) \lesssim \frac{\Lambda \,  \mathbb{E}_{N-2}^{N_v,N_x}[f](t,x,v)}{(1+t)\log^2(3+t)} + \frac{\Lambda \, \widehat{v}^{\underline{L}}(x) \, \mathbb{E}_{N-2}^{N_v,N_x}[f](t,x,v)}{(1+|t-|x||)\log^2(3+t)}+ \frac{\Lambda \, \widehat{v}^{\underline{L}}(x) \,  \mathbb{E}_{N-2}^{N_v,N_x}[f](t,x,v)}{(1+|t-|x||)^2} .$$
As $|t-r| \gtrsim t$ for $r \geq 2t$ and $t \geq |t-r|$ otherwise, we have
\begin{equation}\label{eqkevatal15}
(1+|t-r|)^{-1}\log^{-2}(3+t) \lesssim (1+t)^{-1}\log^{-2}(3+t)+(1+|t-r|)^{-1}\log^{-2}(3+|t-r|)
\end{equation}
and we then deduce that \eqref{eq:aprouverbound} holds. Lemma \ref{techLemTF} then implies \eqref{eq:bootLinf}.

Assume now that $N-1 \leq |\beta|\leq N$, $p=N_v-3$ and $a \in \{ 0, N_x-2 \}$. We fix two multi-indices $\gamma$, $\kappa$ verifying $|\gamma|+|\kappa|\leq |\beta|$, $|\kappa| \leq |\beta|-1$ and we consider two cases.

\textit{Case $1$, $|\gamma|\leq N-2$.} The Lorentz force can still be estimated using Lemma \ref{Lorentzforce}. One can then follow the analysis carried out in \eqref{eq:boundgood1}-\eqref{eqkevatal15} and obtain
\begin{equation}\label{eq:inter1}
\mathcal{G}^{N_v-3,a}_{\gamma,\kappa}, \; \mathcal{B}^{N_v-3,a}_{\gamma,\kappa}  \lesssim \frac{\Lambda \,  \mathbb{E}_{N}^{N_v-3,N_x-2}[f](t,x,v)}{(1+t)\log^2(3+t)} + \frac{\Lambda \, \widehat{v}^{\underline{L}}(x) \, \mathbb{E}_{N}^{N_v-3,N_x-2}[f](t,x,v)}{(1+|t-|x||)\log^2(3+|t-|x||)}.
\end{equation}
where the term $\mathcal{B}^{N_v-3,a}_{\gamma,\kappa}$ is of course merely defined when $\gamma_T$ and $\kappa_H$ satisfy the condition given in \eqref{eq:ComTF2}.

\textit{Case $2$, $N-1 \leq |\gamma|\leq N$.} Then, as $N \geq 4$, we have $|\kappa| \leq 1$ so that we will be able to control the terms \eqref{eq:ComTF1}-\eqref{eq:ComTF2} using \eqref{eq:bootLinf}. In particular, we are allowed to lose two powers of $|v^0|^2 \mathbf{z}$ in the upcoming estimates in order to deal with the weak decay rate of $\mathcal{L}_{Z^\gamma}F$ near the light cone. More precisely, using first Lemma \ref{Lorentzforce2} and then $a +2 \leq N_x$,
\begin{align}
\nonumber \mathcal{G}^{N_v-3,a}_{\gamma,\kappa} &\lesssim \frac{\Lambda \log(3+t+|x|)}{(1+t+|x|)^2} |v^0|^3 \mathbf{z}^2(t,x,v) \frac{|v^0|^{N_v-3}  \big|\mathbf{z}^{a} \widehat{\Omega}_{0j} \widehat{Z}^{\kappa}  f \big|(t,x,v)}{ \log^{3a+3\beta_H}(3+t) } \\
& \lesssim \frac{\Lambda}{(1+t)^{\frac{3}{2}}} \frac{|v^0|^{N_v}  \big|\mathbf{z}^{N_x} \widehat{\Omega}_{0j} \widehat{Z}^{\kappa}  f \big|(t,x,v)}{ \log^{3N_x+3(\kappa_H+1)}(3+t) }. \nonumber
\end{align}
Next, consider the terms \eqref{eq:ComTF2} and assume first that $\gamma_T \geq 1$. In that case, $\mathcal{B}^{N_v-3,a}_{\gamma,\kappa}$ can be easily handled since it is strongly decaying. Indeed, using again Lemma \ref{Lorentzforce2}, we get
\begin{align*}
 \mathcal{B}^{N_v-3,a}_{\gamma,\kappa} &\lesssim  (t+|x|)\frac{\Lambda \log(3+t+|x|)}{(1+t+|x|)^3} |v^0|^3 \mathbf{z}^2(t,x,v) \frac{ |v^0|^{N_v-3} \big|\mathbf{z}^{a} \partial_{t,x} \widehat{Z}^{\kappa}  f \big|(t,x,v)}{\log^{3a+3\beta_H}(3+t)} \\
 & \lesssim  \frac{\Lambda}{(1+t)^{\frac{3}{2}}} \frac{|v^0|^{N_v}  \big|\mathbf{z}^{N_x} \partial_{t,x} \widehat{Z}^{\kappa}  f \big|(t,x,v)}{ \log^{3N_x+3\kappa_H}(3+t) }.
 \end{align*}
Finally, if $\gamma_T=0$, we necessarily have $\gamma_H=|\gamma| \geq N-1 \geq 3$. Since $\beta_H \geq \gamma_H+\kappa_H$, we have $\kappa_H \leq \beta_H-3$, so that $3a+3\beta_H \geq 3(a+2)+3\kappa_H+3$. Thus, Lemma \ref{Lorentzforce2} yields
\begin{align*}
 \mathcal{B}^{N_v-3,a}_{\gamma,\kappa} &\lesssim  (t+|x|)\frac{\Lambda \log(3+t+|x|)}{(1+t+|x|)^2} |v^0|^3 \mathbf{z}^2(t,x,v) \frac{ |v^0|^{N_v-3} \big|\mathbf{z}^{a} \partial_{t,x} \widehat{Z}^{\kappa}  f \big|(t,x,v)}{\log^{3a+3\beta_H}(3+t)} \\
 & \lesssim \frac{\Lambda }{(1+t) \log^2(3+t)}  \frac{ |v^0|^{N_v} \big|\mathbf{z}^{a+2} \partial_{t,x} \widehat{Z}^{\kappa}  f \big|(t,x,v)}{\log^{3(a+2)+3\kappa_H}(3+t)}.
 \end{align*}
 We then deduce that, in this case,
 $$
\mathcal{G}^{N_v-3,a}_{\gamma,\kappa}, \; \mathcal{B}^{N_v-3,a}_{\gamma,\kappa}  \lesssim \frac{\Lambda }{(1+t) \log^2(3+t)}  \mathbb{E}_{N-2}^{N_v,N_x}[f](t,x,v).
$$
The estimate \eqref{eq:aprouverbound2} ensues from \eqref{eq:inter1} and this last inequality. To conclude the proof, it then remains to apply again the previous Lemma \ref{techLemTF}.
\end{proof}
\begin{Rq}\label{Rqlessderiv}
If $N=3$, the proof of Lemma \ref{estiLinfini} requires an additional step. Once the estimate for $\mathbb{E}^{N_v,N_x}_{N-2}[f]$ is proved, we need to control the intermediary norm $\mathbb{E}^{N_v-1,N_x-1}_{N-1}[f]$. For this, compared to the treatment of $\mathbb{E}^{N_v-3,N_x-2}_{N}[f]$ carried out during the proof of Proposition \ref{estiLinfini}, there are two differences.
\begin{itemize}
\item First, we can exploit the much stronger decay estimate satisfied by the derivatives of order $N-1$ of the electromagnetic field than on its top order ones (see Proposition \ref{Prodecaytoporder}). It explains why we can propagate higher moments for the derivatives of order $N-1$ of $f$ than for the top order ones.
\item Moreover, for controlling sufficiently well $\mathcal{B}^{N_v-1,0}_{\gamma,\kappa}$ and $\mathcal{B}^{N_v-1,N_x-1}_{\gamma,\kappa}$ in the case $\beta_H=\kappa_H$, we can prove, through a direct application of Lemma \ref{improderiv0}, that the good null components of $\mathcal{L}_{Z^\gamma}(F)$ still satisfy improved estimates when $|\gamma|=N-1$ and $\gamma_T \geq 1$.
\end{itemize}
Finally, in order to bound uniformly in time $\mathbb{E}^{N_v-3,N_x-2}_{N}[f]$, the analysis of the terms \eqref{eq:ComTF1}-\eqref{eq:ComTF2} is slightly more technical. It is necessary to consider three cases ($|\gamma| \leq N-2$, $|\gamma|=N-1$ as well as $|\gamma|=N$) and to use the estimates on the first two energy norms.
\end{Rq}

\subsection{Uniform boundedness of the spatial averages}

We start by a preparatory result, which will also be useful later in Section \ref{Sec6}. Recall the constant $\overline{\epsilon} := \epsilon \, e^{(D+1)\Lambda}$ introduced in Proposition \ref{estiLinfini}.
\begin{Lem}\label{Lempartieltxave}
For any $|\beta| \leq N-1$, we have
$$
\forall \, (t,v) \in [0,T[ \times \R^3_v, \qquad  |v^0|^{N_v-6}\bigg| \partial_t \int_{\R^3_x}  \widehat{Z}^{\beta} f(t,x,v) \dr x \bigg| \lesssim \overline{\epsilon} \, \frac{\log^{3N_x+3N}(3+t)}{(1+t)^2}.
$$
 \end{Lem}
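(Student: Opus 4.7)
The plan is to differentiate under the integral sign and exploit the commutation formula of Proposition~\ref{Com}, together with the change of variables $y = x-t\widehat{v}$ that trades spatial weights for $\mathbf{z}$-weights. Since the pointwise bounds from Proposition~\ref{estiLinfini} provide $\mathbf{z}^{-(N_x-2)}$ decay in $x$, the transport term $\int_{\R^3_x}\widehat{v}\cdot \nabla_x \widehat{Z}^{\beta} f\,\dr x$ vanishes by integration by parts, so that
\begin{equation*}
\partial_t \int_{\R^3_x}\widehat{Z}^\beta f\,\dr x = \int_{\R^3_x} \T_0(\widehat{Z}^\beta f)\,\dr x = \sum_{|\gamma|+|\kappa|\leq|\beta|} C^\beta_{\gamma,\kappa}\int_{\R^3_x} \widehat{v}^\mu \mathcal{L}_{Z^\gamma}(F)_\mu^{\;j}\,\partial_{v^j}\widehat{Z}^\kappa f\,\dr x,
\end{equation*}
where the sum ranges over multi-indices with $|\gamma|+|\kappa|\leq|\beta|\leq N-1$, and includes the $(\gamma=0,\kappa=\beta)$ term coming from $\T_F-\T_0$.

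I then carry out the change of variables $y=x-t\widehat{v}$ at fixed $v$ and set $g_\kappa(t,y,v):=\widehat{Z}^\kappa f(t,y+t\widehat{v},v)$. By the chain rule,
$\partial_{v^j}\widehat{Z}^\kappa f(t,y+t\widehat{v},v) = \partial_{v^j} g_\kappa(t,y,v) - t(\partial_{v^j}\widehat{v}^i)\,\partial_{y^i} g_\kappa(t,y,v)$.
The crucial point is that, unlike the naive expansion $v^0\partial_{v^j}=\widehat{\Omega}_{0j}-t\partial_{x^j}-x^j\partial_t$ which produces the growth factor $(t+|x|)$, Lemma~\ref{gweightvderiv} yields
$v^0|\partial_{v^j}g_\kappa|\leq \sum_{\widehat{Z}}|\mathbf{z}\widehat{Z}\widehat{Z}^\kappa f|(t,y+t\widehat{v},v)$ with no growth in $t$. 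Since a direct computation gives $\mathbf{z}(t,y+t\widehat{v},v)\sim \langle y\rangle$, Proposition~\ref{estiLinfini} bounds this by $\overline{\epsilon}\log^n(3+t)/(|v^0|^{N_v-2}\langle y\rangle^{N_x-3})$. Combining with the null decomposition of the Lorentz force
\begin{equation*}
\frac{|\widehat{v}^\mu\mathcal{L}_{Z^\gamma}(F)_\mu^{\;j}|}{v^0}\lesssim \frac{\Lambda\log(3+t)}{(1+t+|x|)^2}+\frac{\Lambda\mathbf{z}}{(1+t+|x|)^2(1+|t-|x||)}
\end{equation*}
(consequence of Lemma~\ref{Lorentzforce} and Lemma~\ref{nullcompo}, with an additional loss of $|v^0|^2\mathbf{z}$ when $|\gamma|=N-1$ via Remark~\ref{Rqbetteresti}) and the elementary bound $(1+t+|y+t\widehat{v}|)^{-2}\leq(1+t)^{-2}$, this first contribution integrates to $\overline{\epsilon}\log^{3N_x+3N}(3+t)/((1+t)^2|v^0|^{N_v-3})$, as $N_x>7$ ensures $\int_y\langle y\rangle^{-(N_x-4)}\dr y<\infty$.

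For the remaining contribution involving $t(\partial_{v^j}\widehat{v}^i)\partial_{y^i}g_\kappa$, I integrate by parts in $y^i$ to transfer the derivative onto $\widehat{v}^\mu\mathcal{L}_{Z^\gamma}(F)_\mu^{\;j}(t,y+t\widehat{v})$, producing a factor $\partial_{x^i}\mathcal{L}_{Z^\gamma}(F)$. By Lemma~\ref{improderiv} and the bootstrap assumptions \eqref{boot1}--\eqref{boot2}, this spatial derivative gains a factor of $(1+|t-|x||)^{-1}$. The necessary compensation of the explicit factor $t$ comes from the key observation that the weight $\langle y\rangle^{-(N_x-2)}$ effectively localises $y$ in a region where $|y|\ll t(1-|\widehat{v}|)$, so that $|y+t\widehat{v}|\sim t|\widehat{v}|$ and $1+|t-|y+t\widehat{v}||\gtrsim t(1-|\widehat{v}|)\gtrsim t|v^0|^{-2}$; splitting the integral into this ``centered'' region and its complement (where $\langle y\rangle\gtrsim t|v^0|^{-2}$) converts each extra power of $(1+|t-|x||)^{-1}$ into a factor $|v^0|^2/(1+t)$.

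The main obstacle is tracking the $v^0$ and $\mathbf{z}$ weight losses across these various pieces and ensuring the logarithmic powers from Proposition~\ref{estiLinfini} stay bounded by $3N_x+3N$. In the IBP'd piece, the losses accumulate: one power of $v^0$ from $|\partial_{v^j}\widehat{v}^i|\leq 2/v^0$, $|v^0|^2$ from the $|v^0|^2/t$ conversion of $(1+|t-|x||)^{-1}$, giving the $|v^0|^{N_v-6}$ factor claimed. Summing the contributions for all $|\gamma|+|\kappa|\leq|\beta|\leq N-1$ and using the elementary estimate $t/(1+t)\leq 1$ yields the claimed bound.
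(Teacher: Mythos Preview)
Your argument is correct in outline but takes a longer route than the paper's proof, and your $v^0$-bookkeeping in the final paragraph is garbled.

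\textbf{Comparison with the paper.} The paper does not change variables. It rewrites
\[
v^0\partial_{v^j}=\widehat{\Omega}_{0j}-(x^j-t\widehat{v}^j)\partial_t-\widehat{v}^j S+\widehat{v}^j x^i\partial_{x^i}-t\partial_{x^j},
\]
notes that $|x^j-t\widehat{v}^j|\le\mathbf{z}$, and integrates the last two terms by parts in $x$ so that the spatial derivative falls on the field. This produces exactly two kinds of integrands: $\tfrac{1}{v^0}|\widehat{v}^\mu\mathcal{L}_{Z^\gamma}(F)_\mu{}^j|\,|\mathbf{z}\widehat{Z}^\kappa f|$ and $\tfrac{t+|x|}{v^0}|\widehat{v}^\mu\nabla_{t,x}\mathcal{L}_{Z^\gamma}(F)_\mu{}^j|\,|\widehat{Z}^\kappa f|$. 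Both are then bounded pointwise using the bootstrap and Lemma~\ref{gainv}, which in one stroke converts each factor of $(1+|t-r|)^{-1}$ into $(1+t+|x|)^{-1}$ at the price of $|v^0|^2\mathbf{z}$. This yields a single estimate $\lesssim\Lambda\log\cdot(1+t+|x|)^{-2}|v^0|^3|\mathbf{z}^2\widehat{Z}^\kappa f|$, and the integral in $x$ closes by peeling off $\mathbf{z}^{N_x-4}$ and using $\int_x\mathbf{z}^{-(N_x-4)}\,\dr x<\infty$.

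Your change of variables $y=x-t\widehat{v}$ and your region-splitting are a hands-on reproof of precisely these two facts: Lemma~\ref{gweightvderiv} (which you invoke) is proved via the same rewriting of $v^0\partial_{v^j}$, and your centered/far dichotomy is exactly the content of Lemma~\ref{gainv} (or Lemma~\ref{Lemxvt}) specialised to $x=y+t\widehat{v}$. So the two proofs are morally the same; the paper's is shorter because it packages the ``$1+|t-r|\gtrsim t|v^0|^{-2}$'' mechanism into a single pointwise inequality rather than a case split on $|y|$.

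\textbf{The $v^0$-count.} Your last paragraph undercounts the loss in the IBP'd piece. After IBP you face $t\cdot(1+t+|x|)^{-1}(1+|t-r|)^{-2}$; converting \emph{both} factors $(1+|t-r|)^{-1}$ costs $|v^0|^4$ (not $|v^0|^2$), and the $2/v^0$ from $|\partial_{v^j}\widehat{v}^i|$ gives back one power, for a net loss of $|v^0|^3$. Starting from the $|v^0|^{N_v-3}$ weight in Proposition~\ref{estiLinfini} this indeed lands on $|v^0|^{N_v-6}$, so the final claim is right, but the tally ``$1+2$'' you wrote does not match the mechanism you describe.
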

\begin{proof}
Fix $|\beta| \leq N-1$, $t \in [0,T[$ and $v \in \R^3_v$. Integrating the commutation formula of Proposition \ref{Com} for $\widehat{Z}^{\beta}f$ and performing integration by parts in $x$ give
$$ \partial_t \int_{\R^3_x} \widehat{Z}^{\beta} f(t,x,v) \mathrm{d} x =  -\int_{\R^3_x} \widehat{v}^{\mu} {F_{\mu}}^j \partial_{v^j} \widehat{Z}^{\beta} f(t,x,v) \mathrm{d} x+\sum_{|\gamma|+|\kappa| \leq |\beta|}C^{\beta}_{\gamma , \kappa} \int_{\R^3_x} \widehat{v}^{\mu} {\mathcal{L}_{Z^{\gamma}}(F)_{\mu}}^j \partial_{v^j} \widehat{Z}^{\kappa}f(t,x,v) \mathrm{d} x. $$
Now, we write
$$ v^0\partial_{v^j} = \widehat{\Omega}_{0j} -x^j \partial_t -t\partial_{x^j} = \widehat{\Omega}_{0j} -(x^j-\widehat{v}^jt) \partial_t-v^jS+v^jx^i\partial_{x^i} -t\partial_{x^j}, \qquad  |x^j-\widehat{v}^jt| \leq \mathbf{z},$$
so that, integrating once again by parts,
\begin{align*}
\left| \partial_t \int_{\R^3_x}  \widehat{Z}^{\beta} f(t,x,v) \mathrm{d} x \right| \lesssim  \sum_{\substack{|\gamma|+|\kappa| \leq |\beta|+1 \\ |\gamma| \leq |\beta|}} \; \sup_{1 \leq j \leq 3} \; & \int_{\R^3_x} \frac{1}{v^0}\left|\widehat{v}^{\mu} {\mathcal{L}_{Z^{\gamma}}(F)_{\mu}}^j(t,x) \right| \left| \mathbf{z} \widehat{Z}^{\kappa} f\right|(t,x,v) \mathrm{d} x \\
 & + \int_{\R^3_x} \frac{t+|x|}{v^0} \left|\widehat{v}^{\mu} {\nabla_{t,x}\mathcal{L}_{Z^{\gamma}}(F)_{\mu}}^j(t,x) \right| \left| \widehat{Z}^{\kappa} f\right|(t,x,v) \mathrm{d} x.
 \end{align*}
According to the bootstrap assumptions \eqref{boot1}-\eqref{boot2} and Lemma \ref{gainv}, we have 
$$| {\mathcal{L}_{Z^{\gamma}}(F)_{\mu}}^j(t,x) |\lesssim \Lambda(1+t+|x|)^{-2}|v^0|^2\mathbf{z}, \qquad |\nabla_{t,x} {\mathcal{L}_{Z^{\gamma}}(F)_{\mu}}^j(t,x) |\lesssim \Lambda\log(3+t+|x|)(1+t+|x|)^{-3}|v^0|^4\mathbf{z}^2,$$
 so that, 
\begin{align*}
\left| \partial_t \int_{\R^3_x}  \widehat{Z}^{\beta}f(t,x,v) \mathrm{d} x \right| & \lesssim  \Lambda \sum_{|\kappa| \leq |\beta|+1} \int_{\R^3_x} \frac{\log(3+t+|x|)}{(1+t+|x|)^2}|v^0|^3 \left| \mathbf{z}^2 \widehat{Z}^{\kappa} f \right|(t,x,v)  \mathrm{d} x \\
& \leq \Lambda \sup_{|\kappa| \leq |\beta|+1} \sup_{x \in \R^3} \left(\frac{\log(3+t+|x|)}{(1+t+|x|)^2} |v^0|^3\left| \mathbf{z}^{N_x-2} \widehat{Z}^{\kappa} f \right|(t,x,v) \right)  \int_{\R^3_x}  \frac{\mathrm{d} x}{\mathbf{z}^{N_x-4}(t,x,v)}.
 \end{align*}
Remark then that, in view of \eqref{defz} and $N_x>7$,
\begin{equation*}
\int_{\R^3_x}  \frac{\mathrm{d} x}{\mathbf{z}^{N_x-4}(t,x,v)} \leq \int_{\R^3_x}  \frac{\mathrm{d} x}{(1+|x-\widehat{v}t|)^{N_x-4}}=\int_{y \in \R^3}  \frac{\mathrm{d} y}{(1+|y|)^{N_x-4}} < +\infty.
\end{equation*}
Then, multiply both sides of the inequality by $|v^0|^{N_v-6}$ and bound above the right hand side by applying Proposition \ref{estiLinfini}. It remains to use $\Lambda \epsilon \, e^{D\Lambda} \leq \epsilon \, e^{(D+1)\Lambda} = \overline{\epsilon}$.
\end{proof}
\begin{Rq}
If $|\beta| \le N-3$, by using the estimates of the Lorentz force provided by Lemma \ref{Lorentzforce}, we can even prove $|v^0|^{N_v} |\partial_t \int_x \widehat{Z}^\beta f\dr v| \lesssim \overline{\epsilon} (1+t)^{-2} \log^{-3N_x-3N}(3+t)$.
\end{Rq}
Note now that $|\int_x \widehat{Z}^{\beta} f(0,x,v) \dr x| \leq 2\sup_x |\mathbf{z}^4\widehat{Z}^{\beta} f|(0,x,v)\leq 2\epsilon$. Hence, by integrating in time the inequality of the previous Lemma \ref{Lempartieltxave}, we obtain, for any $|\beta| \leq N-1$,
$$\forall \, (t,v) \in [0,T[ \times \R^3_v , \qquad \quad |v^0|^{N_v-6}\bigg|\int_{\R^3_x}  \widehat{Z}^{\beta} f(t,x,v) \dr x \bigg| \lesssim \epsilon+\overline{\epsilon} \int_{\tau=0}^t \frac{\log^{3N_x+3N}(3+\tau)}{(1+\tau)^2}  \, \dr \tau \lesssim \overline{\epsilon}.$$
It directly implies the following result.
\begin{Cor}\label{Corspatialave}
Let $|\beta| \leq N-1$ and $\psi : \mathbb{S}^2_\omega \times \R^3_v \rightarrow \R$ be a function such that $\|\psi(\cdot,v) \|_{L^{\infty}_{\omega}} \lesssim |v^0|^{N_v-6}$. Then, for any $\omega \in \mathbb{S}^2$,
$$\forall \, (t,v) \in [0,T[ \times \R^3_v, \qquad \qquad \bigg|\psi(\omega,v)\int_{\R^3_x}  \widehat{Z}^{\beta} f(t,x,v) \dr x \bigg| \lesssim \overline{\epsilon} .$$
\end{Cor}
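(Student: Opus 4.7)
The plan is to argue that the corollary is essentially immediate from Lemma \ref{Lempartieltxave} once we have controlled the initial spatial average, and the role of $\psi$ is only to absorb the factor of $|v^0|^{N_v-6}$ appearing in that lemma. The argument splits into two short steps: an initial bound and a time integration.

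First, I would bound the spatial integral at $t=0$. For any $|\beta|\leq N-1$ and $v \in \R^3_v$, the initial data hypothesis in Theorem \ref{Th1} gives $|\widehat{Z}^{\beta} f_0|(x,v) \lesssim \epsilon \langle x \rangle^{-N_x-|\beta|} \langle v \rangle^{-N_v-|\beta|}$ (after using the relations $v^0\partial_{v^j} = \widehat{\Omega}_{0j}-t\partial_{x^j}-x^j\partial_t$ at $t=0$ and the translation invariance of weights), so that, since $N_x > 7 \geq 4$,
\[
 \bigg|\int_{\R^3_x}  \widehat{Z}^{\beta} f(0,x,v) \dr x \bigg| \lesssim \epsilon \, |v^0|^{-N_v} \int_{\R^3_x} \frac{\dr x}{\langle x \rangle^{N_x}} \lesssim \epsilon \, |v^0|^{-N_v}.
\]

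Next, I would combine this with Lemma \ref{Lempartieltxave}, which provides
\[
 \bigg|\partial_t \int_{\R^3_x}  \widehat{Z}^{\beta} f(t,x,v) \dr x \bigg| \lesssim \overline{\epsilon} \, |v^0|^{6-N_v}\frac{\log^{3N_x+3N}(3+t)}{(1+t)^2}.
\]
Integrating in time and using that $\int_0^{+\infty} \log^{3N_x+3N}(3+\tau)(1+\tau)^{-2} \dr \tau < +\infty$ yields
\[
|v^0|^{N_v-6}\bigg|\int_{\R^3_x}  \widehat{Z}^{\beta} f(t,x,v) \dr x \bigg| \lesssim \epsilon \, |v^0|^{-6}+ \overline{\epsilon} \lesssim \overline{\epsilon},
\]
where we have used $\epsilon \leq \overline{\epsilon}$ by definition of $\overline{\epsilon}=\epsilon\, e^{(D+1)\Lambda}$.

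Finally, to obtain the statement of the corollary, I multiply both sides by $\psi(\omega, v)$ and use the hypothesis $\|\psi(\cdot,v)\|_{L^\infty_\omega} \lesssim |v^0|^{N_v-6}$, which exactly cancels the negative power of $|v^0|$ absorbed into the left-hand side. There is no real obstacle here; the only mild subtlety is checking convergence of the logarithmically weighted time integral and ensuring that the initial bound indeed produces a factor $|v^0|^{-N_v}$ sufficient to be absorbed by $\psi$, but both are straightforward given the hypotheses $N_v \geq 15$ and $N_x > 7$.
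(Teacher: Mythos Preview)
Your proof is correct and follows essentially the same approach as the paper: bound the initial spatial average, integrate the estimate of Lemma \ref{Lempartieltxave} in time, and absorb the $|v^0|^{N_v-6}$ factor with $\psi$. One minor inaccuracy: the claimed decay $|\widehat{Z}^{\beta} f_0|(x,v) \lesssim \epsilon \langle x \rangle^{-N_x-|\beta|} \langle v \rangle^{-N_v-|\beta|}$ is overstated for the homogeneous vector fields (e.g.\ $\widehat{\Omega}_{jk}$ at $t=0$ only gives back $\langle x \rangle^{-N_x}\langle v\rangle^{-N_v}$, and $\widehat{\Omega}_{0i}$ brings in a $\Lambda$-dependent term through $\partial_t f$ and the Vlasov equation), but the weaker bound $\lesssim C(\Lambda)\epsilon\,\langle x\rangle^{-N_x}\langle v\rangle^{-N_v}$ is all you actually use and it suffices since $N_x>3$.
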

We allowed the function $\psi$ to depend on a parameter $\omega \in \mathbb{S}^2$ in order to prove optimal decay estimates on certain elements of the Glassey-Strauss decomposition of the electromagnetic field, defined as integral kernels.

\subsection{Pointwise decay estimates for velocity averages}\label{subsecaveragev} We prove here that the decay rate of $\int_v \widehat{Z}^{\beta} f \dr v$, for $|\beta| \leq N-1$, coincides with the one of the linear setting. In particular, we improve the bootstrap assumption \eqref{boot3}. The starting point consists in performing the change of variables $y=x-t\widehat{v}$. For this, recall Lemma \ref{cdv} and that $y \mapsto \widecheck{y}$ is the inverse function of $v \mapsto \widehat{v}$.
\begin{Lem}\label{cdvbis}
Let $g:[0,T[ \times \R^3_x \times \R^3_v \rightarrow \R$ be a sufficiently regular function. Then, 
$$\forall \, (t,x) \in [0,T[ \times \R^3, \qquad  t^3\int_{\R^3_v} g(t,x-\widehat{v}t,v) \dr v=\int_{|y-x|<t} (|v^0|^5g)\left(t,y,\widecheck{\frac{x-y}{t}} \right) \dr y.$$
\end{Lem}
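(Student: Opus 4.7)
The plan is to view the identity as a direct application of the change of variables formula, with substitution $y = x - \widehat{v}\,t$ in the left-hand integral, using the Jacobian computation already recorded in Lemma \ref{cdv}. The case $t=0$ is trivial (both sides vanish), so we may fix $t>0$ and $x \in \R^3$.

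First I would observe that the map $\Phi_{t,x} : v \mapsto x - \widehat{v}\, t$ is a $C^1$ diffeomorphism from $\R^3_v$ onto the open Euclidean ball $B(x,t) = \{y \in \R^3 : |y-x| < t\}$. Indeed, $\widehat{\phantom{v}}$ sends $\R^3_v$ bijectively onto $\{|\cdot|<1\}$ with inverse $\widecheck{\phantom{y}}$ (by Lemma \ref{cdv}), so after rescaling by $t$ and translating by $x$, one gets the diffeomorphism onto $B(x,t)$, with inverse
\[
y \longmapsto \widecheck{\frac{x-y}{t}}.
\]
In particular, $v = \widecheck{(x-y)/t}$ forces $\widehat{v} = (x-y)/t$, so that the momentum argument of $g$ in the integrand on the right is consistent with the substitution.

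Next, I would invoke the Jacobian identity from Lemma \ref{cdv}: the absolute value of the Jacobian determinant of $v \mapsto x - \widehat{v}\,t$ is $t^3/|v^0|^5$. Hence, under the substitution $y = \Phi_{t,x}(v)$,
\[
\mathrm{d}v \;=\; \frac{|v^0|^5}{t^3} \, \mathrm{d}y,
\]
where $|v^0|^5$ is evaluated at $v = \widecheck{(x-y)/t}$. Substituting into the left-hand side yields
\[
t^3 \int_{\R^3_v} g(t, x - \widehat{v}\,t, v)\, \mathrm{d}v
= t^3 \int_{|y-x|<t} g\!\left(t, y, \widecheck{\tfrac{x-y}{t}}\right) \frac{|v^0|^5}{t^3}\, \mathrm{d}y
= \int_{|y-x|<t} (|v^0|^5 g)\!\left(t, y, \widecheck{\tfrac{x-y}{t}}\right)\mathrm{d}y,
\]
which is the stated identity. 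No step here is subtle: the only thing to check carefully is that $\Phi_{t,x}$ is a genuine diffeomorphism onto $B(x,t)$ (not merely onto its image) and that the range of $y$ is exactly the open ball $|y-x|<t$, which both follow immediately from the explicit inverse $\widecheck{\phantom{y}}$ and from $|\widehat{v}|<1$.
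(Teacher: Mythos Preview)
Your proof is correct and matches the paper's approach exactly: the paper does not spell out a proof of this lemma, merely introducing it with ``recall Lemma~\ref{cdv} and that $y \mapsto \widecheck{y}$ is the inverse function of $v \mapsto \widehat{v}$,'' which is precisely the change of variables you carry out. Your verification that $\Phi_{t,x}$ is a diffeomorphism onto the open ball and your use of the Jacobian $t^3/|v^0|^5$ from Lemma~\ref{cdv} are the only ingredients needed.
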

This change of variables is motivated by the linear case. Any regular solution to the relativistic transport equation $\partial_t h+\widehat{v} \cdot \nabla_x  h=0$ is constant along the timelike straight lines, $h(t,x+\widehat{v}t,v)=h(0,x,v)$. The previous lemma, applied for $g(t,x,v)=h(0,x,v)$, then leads to $\int_v h(t,x,v)\dr v \lesssim t^{-3}$.

As a first step, we control $\int_v|\widehat{Z}^{\beta} f| \dr v$ for any $|\beta| \leq N$, which has a slightly slower decay rate than in the linear case in the interior of the light cone. These estimates will also be useful on their own.
\begin{Pro}\label{estimoyv}
Let $|\beta| \leq N$ and $0\leq a \leq N_x-6$. Then, the following properties hold.
\begin{itemize}
\item Almost optimal pointwise decay estimate,
$$ \forall \, (t,x) \in [0,T[ \times \R^3_x, \qquad \int_{\R^3_v} |v^0|^{N_v-8} \left|\mathbf{z}^a \widehat{Z}^{\beta} f\right|(t,x,v) \dr v  \lesssim \overline{\epsilon} \, \frac{\log^{3N_x+3N}(3+t)}{(1+t)^3}.$$
\item Improved decay estimates near and in the exterior of the light cone,
\begin{align*}
&\forall \, |x| \leq t <T,& \qquad \int_{\R^3_v} |v^0|^{N_v-8-2a} \left| \widehat{Z}^{\beta} f\right|(t,x,v) \dr v  &\lesssim \overline{\epsilon} \, \log^{3N_x+3N}(3+t) \frac{(1+t-|x|)^a}{(1+t)^{3+a}}, \\
&\forall \, t < \sup( |x|,T),& \qquad \int_{\R^3_v} |v^0|^{N_v-8-2a} \left| \widehat{Z}^{\beta} f\right|(t,x,v) \dr v  &\lesssim \overline{\epsilon} \, \frac{\log^{3N_x+3N}(3+t)}{(1+t+|x|)^{3+a}}.
\end{align*}
\end{itemize}
\end{Pro}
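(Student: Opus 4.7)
The strategy is to use the change of variables $y = x - t\widehat{v}$ from Lemma~\ref{cdvbis} together with the pointwise bounds of Proposition~\ref{estiLinfini}, exploiting the inequality $\mathbf{z}(t,x,v) \geq \langle x - t\widehat{v}\rangle$ from \eqref{eq:propz} to convert $\mathbf{z}$-weights into $\langle y\rangle$-decay suitable for integration. For the first (almost optimal) estimate with $t \geq 1$, applying Lemma~\ref{cdvbis} to $g(t,y,v) := |v^0|^{N_v-8}|\mathbf{z}^a\widehat{Z}^\beta f|(t,y+t\widehat{v},v)$ gives
$$\int_{\R^3_v}|v^0|^{N_v-8}|\mathbf{z}^a\widehat{Z}^\beta f|(t,x,v)\,\dr v = t^{-3}\int_{|y-x|<t}|v_y^0|^{N_v-3}|\mathbf{z}^a\widehat{Z}^\beta f|(t,x,v_y)\,\dr y,$$
with $v_y := \widecheck{(x-y)/t}$, since $y + t\widehat{v_y} = x$. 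Writing $\mathbf{z}^a = \mathbf{z}^{-4}\mathbf{z}^{a+4}$, bounding $\mathbf{z}(t,x,v_y)^{-4} \leq \langle y\rangle^{-4}$ and using Proposition~\ref{estiLinfini} with the weight $\mathbf{z}^{a+4}$ (permissible since $a + 4 \leq N_x - 2$), the integrand is controlled by $\overline{\epsilon}\log^{3N_x+3N}(3+t)\langle y\rangle^{-4}$, and the $y$-integral is uniformly bounded. The case $t \leq 1$ follows from a direct $L^\infty_{x,v}$ bound absorbing a $|v^0|^{-4}$ factor into the $v$-integral.

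For the interior improved estimate I would raise the first inequality of Lemma~\ref{gainv} to the power $a$, obtaining via the binomial formula
$$1 \lesssim \sum_{k=0}^{a}\binom{a}{k}(1+|t-|x||)^{a-k}\mathbf{z}^k|v^0|^{2a}(1+t+|x|)^{-a}.$$
Multiplying by $|v^0|^{N_v-8-2a}|\widehat{Z}^\beta f|$, integrating in $v$ and applying the first estimate of the proposition to each $\int_v |v^0|^{N_v-8}|\mathbf{z}^k\widehat{Z}^\beta f|\,\dr v$ (valid since $k \leq a \leq N_x-6$), the dominant $k = 0$ term provides the claimed $(1+t-|x|)^a(1+t)^{-3-a}$ rate after invoking $(1+t+|x|) \sim (1+t)$ in the interior.

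For the exterior improved estimate $|x| > t$, the sharper relation $\mathbf{z}|v^0|^2 \gtrsim 1 + t + |x|$ from Lemma~\ref{gainv} yields $|v^0|^{-2a} \lesssim \mathbf{z}^a(1+t+|x|)^{-a}$, hence
$$\int_v |v^0|^{N_v-8-2a}|\widehat{Z}^\beta f|\,\dr v \lesssim (1+t+|x|)^{-a}\int_v|v^0|^{N_v-8}|\mathbf{z}^a\widehat{Z}^\beta f|\,\dr v.$$
In the moderate exterior $t \leq |x| \leq 2t$ this immediately gives the claim via the first estimate and $(1+t)\sim(1+t+|x|)$. In the deep exterior $|x| > 2t$, the ball $\{|y-x|<t\}$ is contained in $\{|y| \geq |x|/2\}$, so refining the proof of the almost optimal estimate yields $\int_{|y-x|<t}\langle y\rangle^{-4}\,\dr y \lesssim t^3(1+|x|)^{-4}$; the $t^{-3}$ prefactor cancels, and using $1+|x|\sim 1+t+|x|$ here gives the even stronger bound $(1+t+|x|)^{-(a+4)}$. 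The main obstacle is precisely this exterior step: the $t^{-3}$ decay from $v$-integration is not uniformly comparable to $(1+t+|x|)^{-3}$, and the two lower bounds on $\mathbf{z}$ must be balanced differently depending on the ratio $|x|/t$, which forces the moderate/deep case split.
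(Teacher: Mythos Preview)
Your argument is correct and follows the same route as the paper: change of variables (Lemma~\ref{cdvbis}) combined with Proposition~\ref{estiLinfini} and the lower bound $\mathbf{z}\geq\langle x-t\widehat{v}\rangle$ for the first estimate, then Lemma~\ref{gainv} to derive the improved estimates. Two small remarks. First, the binomial expansion you write for the interior estimate is only literally valid for integer $a$; since $a\in[0,N_x-6]$ is real, you should instead use $(A+B)^a\lesssim A^a+B^a$, which yields the two terms $k=0$ and $k=a$ and suffices. Second, in the deep exterior $|x|\geq 2t$ the paper avoids the change of variables altogether: there one has $\mathbf{z}\gtrsim 1+|x-t\widehat{v}|\gtrsim 1+t+|x|$ pointwise, so extracting the full weight $\mathbf{z}^{N_x-2}$ and integrating $|v^0|^{-4}$ in $v$ directly gives $(1+t+|x|)^{-(N_x-2)}$, which is stronger than needed. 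Your refined ball-localization argument also works (for all $t>0$, not only $t\geq 1$), but the direct bound is cleaner and sidesteps any worry about small~$t$.
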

\begin{proof}
Fix $|\beta| \leq N$, $(t,x) \in [0,T[ \times \R^3_x$ and $0 \leq a \leq N_x-6$. If $t \leq 1$, we have by Proposition \ref{estiLinfini},
$$ \int_{\R^3_v} |v^0|^{N_v-7} \left| \mathbf{z}^a \widehat{Z}^{\beta} f \right|(t,x,v) \dr v \lesssim    \sup_{v \in \R^3} |v^0|^{N_v-3}\left|\mathbf{z}^{N_x-6} \widehat{Z}^{\beta} f \right|(t,x,v) \int_{ \R^3_w} \frac{\dr w}{\langle w \rangle^4} \lesssim \overline{\epsilon} .$$
Assume now, unless $T\leq 1$, that $t \geq 1$ and introduce the function $g(t,x,v) :=  |v^0|^{N_v-8}  | \mathbf{z}^{a} \widehat{Z}^{\beta} f |(t,x+t\widehat{v},v)$. Applying the previous Lemma \ref{cdvbis} to $g$, we get
$$ t^3\int_{\R^3_v} |v^0|^{N_v-8} \left|\mathbf{z}^{a} \widehat{Z}^{\beta} f \right|(t,x,v) \dr v \leq \int_{|y-x|<t} \sup_{v \in \R^3}|v^0|^5g(t,y,v) \dr y \leq \sup_{(y,v) \in \R^3 \times \R^3}|v^0|^5\langle y \rangle^4g(t,y,v) \int_{\R^3_y} \frac{\dr y}{\langle y \rangle^4}.$$
Using now Lemma \ref{gweightvderiv} and then Proposition \ref{estiLinfini}, we obtain
$$ t^3\int_{\R^3_v} |v^0|^{N_v-8} \left|\mathbf{z}^{a} \widehat{Z}^{\beta} f \right|(t,x,v) \dr v \leq \sup_{(y,v) \in \R^3 \times \R^3}|v^0|^{N_v-3}\left|\mathbf{z}^{a+4}\widehat{Z}^{\beta} f\right|(t,y,v) \lesssim \overline{\epsilon} \log^{3a+12+3N}(3+t).$$
This concludes the proof of the first estimate, which, together with Lemma \ref{gainv}, imply the second one as well as the last one in the region $t < |x| \leq 2t$. If $|x| \geq 2t$, remark that $\mathbf{z} \gtrsim 1+|x-t\widehat{v}| \gtrsim 1+t+|x|$, so that
$$\int_{\R^3_v} |v^0|^{N_v-7} \left| \widehat{Z}^{\beta} f\right|(t,x,v) \dr v \lesssim (1+t+|x|)^{-N_x+2} \sup_{(y,v) \in \R^3 \times \R^3} |v^0|^{N_v-3}\left|\mathbf{z}^{N_x-2} \widehat{Z}^{\beta} f \right|(t,y,v) \int_{ \R^3_w} \frac{\dr w}{\langle w \rangle^4} .$$
It remains to apply Proposition \ref{estiLinfini}.
\end{proof}
Our goal now is to remove the logarithmic loss of the estimate of $\int_{v} \widehat{Z}^{\beta} f \dr v$ provided by Proposition \ref{estimoyv}. Since our analysis will rely on the following result, we will not be able to deal with top order derivatives. We recall that $N_x-3>4$.
\begin{Lem}\label{Lemgmoy}
Let $g:[0,T[ \times \R^3_x \times \R^3_v \rightarrow \R$ be a sufficiently regular function. Then, for all $|x| <t < T$,
$$ \left|t^3\int_{\R^3_v} g(t,x-\widehat{v}t,v) \dr v-\int_{y \in \R} (|v^0|^5g)\left(t,y,\widecheck{\frac{x}{t}} \right) \dr y \right| \lesssim \frac{1}{t} \sup_{(y,v) \in \R^3 \times \R^3} \langle y \rangle^{N_x-3} \left(|v^0|^7 |g|+|v^0|^8|\nabla_v g|\right)(t,y,v)  .$$
\end{Lem}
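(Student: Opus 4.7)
My starting point will be Lemma~\ref{cdvbis}, which through the change of variables $v \mapsto y = x - \widehat{v}t$ yields the identity
$$ t^3\int_{\R^3_v} g(t,x-\widehat{v}t,v)\, \dr v \ = \ \int_{|y-x|<t} F(y)\, \dr y, \qquad F(y) := (|v^0|^5 g)\!\left(t, y, \widecheck{\tfrac{x-y}{t}}\right). $$
Setting $G(y) := (|v^0|^5 g)(t, y, \widecheck{x/t})$, the quantity to estimate decomposes cleanly as
$$ \int_{|y-x|<t} \bigl[ F(y) - G(y) \bigr] \, \dr y \ - \ \int_{|y-x|\geq t} G(y) \, \dr y. $$
The plan is to bound the first integral by a Taylor expansion in the velocity argument, exploiting that $\widecheck{(x-y)/t}$ is close to $\widecheck{x/t}$ when $|y|/t$ is small, and to bound the tail by combining the smallness of $|v^0(\widecheck{x/t})|^{-1}$ near the light cone with the $y$-decay of $g$.

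For the first integral, I will interpolate via the path $W(s) := \widecheck{(x-sy)/t}$, $s \in [0,1]$, which remains in $\R^3_v$ thanks to the convexity of the open unit ball together with $|(x-y)/t|, |x/t| < 1$. A direct computation of $\nabla_w\widecheck{w}$ gives $\lVert \nabla_w\widecheck{w} \rVert \leq 2(1-|w|^2)^{-3/2}$, hence $|\dot W(s)| \leq (2|y|/t)|W^0(s)|^3$. Combining this with the pointwise bound $|\nabla_v(|v^0|^5 g)| \leq 5|v^0|^4|g| + |v^0|^5|\nabla_v g|$ (which follows from $|\nabla_v v^0| \leq 1$), the fundamental theorem of calculus along $W$ produces
$$ |F(y) - G(y)| \lesssim \frac{|y|}{t}\, \sup_{v \in \R^3_v} \bigl( |v^0|^7 |g| + |v^0|^8 |\nabla_v g| \bigr)(t,y,v). $$
Since $N_x > 7$ ensures $\int_{\R^3} |y|\langle y\rangle^{-(N_x-3)}\, \dr y < \infty$, integration in $y$ produces the desired $\tfrac{1}{t}$-factor times the supremum appearing in the statement.

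For the tail, the $y$-decay of $g$ supplies $|g|(t,y,v) \leq C\,\langle y\rangle^{-(N_x-3)}|v^0|^{-7}$, with $C$ the sup on the right-hand side of the lemma. Evaluating at $v = \widecheck{x/t}$ and using $|v^0(\widecheck{x/t})|^{-2} = 1 - |x|^2/t^2 \leq 2(t-|x|)/t$ yields $|G(y)| \leq 2C(t-|x|)\langle y\rangle^{-(N_x-3)}/t$. Since $|y-x|\geq t$ and $|x|<t$ force $|y| \geq t - |x|$ by the triangle inequality, passing to spherical coordinates gives
$$ \int_{|y-x|\geq t}\frac{\dr y}{\langle y\rangle^{N_x-3}} \ \leq \ 4\pi\int_{t-|x|}^{\infty}\frac{\rho^2\, \dr \rho}{\langle \rho\rangle^{N_x-3}} \ \lesssim \ \min\!\bigl(1,\, (t-|x|)^{-(N_x-6)}\bigr). $$
Multiplying by the prefactor $(t-|x|)$ yields a uniform $O(1)$: when $t-|x| \leq 1$ the prefactor itself controls things, whereas when $t-|x| > 1$ one has $(t-|x|)^{1-(N_x-6)} \leq 1$ \emph{precisely} because $N_x > 7$. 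The main obstacle is exactly this tail estimate as $|x|$ approaches $t$, where a naive bound loses the required $\tfrac{1}{t}$; the resolution is the cancellation between the factor $1-|x|^2/t^2$ coming from $|v^0(\widecheck{x/t})|^{-2}$ and the radial integral on the right, which relies crucially on the strict inequality $N_x > 7$.
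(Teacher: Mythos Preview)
Your proof is correct and follows essentially the same approach as the paper: the same decomposition into an interior difference and a tail, the same mean value argument along the segment $s\mapsto \widecheck{(x-sy)/t}$ for the interior term, and for the tail the same key observation that $|y-x|\geq t$ forces $|y|\geq t-|x|$. The only cosmetic difference is that the paper immediately replaces the factor $t-|x|$ by $|y|$ via $t-|x|\leq |y|$, obtaining $1\leq 2|y||v^0|^2/t$ and absorbing the extra $|y|$ into the weight, whereas you keep the factor $(t-|x|)/t$ outside and balance it against the tail integral $\int_{|y|\geq t-|x|}\langle y\rangle^{-(N_x-3)}\dr y$ by a case split; both routes use $N_x>7$ in the same essential way.
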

\begin{proof}
According to Lemma \ref{cdvbis}, we have
$$ t^3\int_{\R^3_v} g(t,x-\widehat{v}t,v) \dr v-\int_{y \in \R} g\left(t,y,\widecheck{\frac{x}{t}} \right) \dr v = \mathcal{I}_1+\mathcal{I}_2,$$
where
\begin{align*}
\mathcal{I}_1 & := \int_{|x-y|<t} (|v^0|^5g)\left(t,y, \widecheck{\frac{x-y}{t}} \right) \dr y - \int_{|x-y|<t} (|v^0|^5g)\left(t,y, \widecheck{\frac{x}{t}} \right)  \dr y, \\
\mathcal{I}_2 & := - \int_{|x-y| \geq t} (|v^0|^5g)\left(t,y, \widecheck{\frac{x}{t}} \right) \dr y.
\end{align*}
Since, by Lemma \ref{cdv}, we have $|\nabla_y \widecheck{y}| \lesssim \sqrt{1-|y|^2}^{-3}=\langle \widecheck{y} \rangle^3=|v^0|^3(\widecheck{y})$, the mean value theorem gives us
$$ \left| (|v^0|^5g)\left(t,y, \widecheck{\frac{x-y}{t}} \right) -  (|v^0|^5g)\left(t,y, \widecheck{\frac{x}{t}} \right) \right| \lesssim \frac{|y|}{t}\sup_{v \in \R^3} |v^0|^7|g|(t,y,v) +|v^0|^8 |\nabla_v g|(t,y,v) .$$
Consequently, 
\begin{align*}
\left| \mathcal{I}_1 \right| \lesssim \frac{1}{t} \sup_{(y,v) \in \R^3 \times \R^3} \langle y \rangle^{N_x-3}\left(|v^0|^7|g|(t,y,v) +|v^0|^8 |\nabla_v g|(t,y,v) \right) \int_{|x-y|<t} \frac{\dr y}{\langle y \rangle^{N_x-4}}, \qquad N_x-4 >3.
\end{align*}
In order to bound $\mathcal{I}_2$ recall that $|x|<t$ and remark that, for $v=\widecheck{x/t}$ and any $y \in \R$ such that $|y-x| \geq t$,
\begin{align*}
 1 & = |v^0|^2 \left( 1-\frac{|x|^2}{t^2}\right)  \leq |v^0|^2 \frac{|y|(t+|x|)}{t^2} \leq 2 \frac{|y||v^0|^2}{t}.
 \end{align*}
We then finally deduce that
 $$ |\mathcal{I}_2| \leq \frac{2}{t} \int_{|y-x| \geq t} \left(  |v^0|^7 g\right) \left( t,y,\widecheck{\frac{x}{t}} \right) \frac{\langle y \rangle^{N_x-3}}{\langle y \rangle^{N_x-4}} \mathrm{d}y \leq \frac{4}{t} \sup_{(y,v) \in \R^3 \times \R^3} |v^0|^7\langle y \rangle^{N_x-3} |g|(t,y,v).$$
\end{proof}
We are able to prove that the decay of quantities such as $\int_v \widehat{Z}^{\beta}f \dr v$ is optimal. We state a general result since we will later have to deal with integral kernels.
\begin{Pro}\label{Proestimoyvkernet}
Let $|\beta| \leq N-1$ and $\Psi : \mathbb{S}^2_\omega \times \R^3_v \rightarrow \R$ be a sufficiently regular function such that $\|\Psi (\cdot,v) \|_{L^{\infty}_{\omega}}+\|v^0\nabla_v \Psi (\cdot,v) \|_{L^{\infty}_{\omega}} \lesssim |v^0|^{N_v-11}$. Then, for any $\omega \in \mathbb{S}^2$,
$$ \forall \, (t,x) \in [0,T[ \times \R^3_x, \qquad \left|\int_{\R^3_v} \Psi(\omega,v) \widehat{Z}^{\beta} f(t,x,v) \dr v \right| \lesssim \frac{\overline{\epsilon}}{(1+t+|x|)^3}.$$
\end{Pro}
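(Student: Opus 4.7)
The plan is to split phase space according to the light cone. For the \emph{exterior region} $|x|\geq t$ as well as for the small-time regime $t\leq 1$, I would directly apply Proposition~\ref{estimoyv} with $a=1$: since $|\Psi(\omega,v)|\lesssim |v^0|^{N_v-11}\leq |v^0|^{N_v-10}$ and $1+t+|x|\gtrsim 1+t$ in this region, the extra factor $(1+t+|x|)^{-1}$ absorbs the logarithm $\log^{3N_x+3N}(3+t)$, producing the bound $\overline{\epsilon}(1+t+|x|)^{-3}$. The small-time case is handled similarly since the logarithm is then bounded.

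For the \emph{interior region} $|x|<t$ with $t\geq 1$, which is the main case, I would introduce
\[
g(t,y,v) := \Psi(\omega,v) \, \widehat{Z}^\beta f(t, y+t\widehat{v}, v),
\]
so that $g(t, x-t\widehat{v}, v) = \Psi(\omega,v)\widehat{Z}^\beta f(t,x,v)$, and apply Lemma~\ref{Lemgmoy}. After the substitution $z=y+x$ in the leading term (using $\widehat{\widecheck{x/t}} = x/t$), this rewrites as
\[
t^3\! \int_{\R^3_v}\! \Psi(\omega,v)\widehat{Z}^\beta f(t,x,v) \dr v = \big(|v^0|^5\Psi\big)\!\left(\omega,\widecheck{x/t}\right)\!\int_{\R^3_z} \widehat{Z}^\beta f\!\left(t,z,\widecheck{x/t}\right) \dr z + \mathcal{R},
\]
with a remainder $|\mathcal{R}|\lesssim t^{-1}\sup_{y,v}\langle y\rangle^{N_x-3}\big(|v^0|^7|g|+|v^0|^8|\nabla_v g|\big)$.

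The leading term is then handled by setting $\psi(\omega,v):=|v^0|^5\Psi(\omega,v)$: the assumption gives $\|\psi(\cdot,v)\|_{L^\infty_\omega}\lesssim |v^0|^{N_v-6}$, so Corollary~\ref{Corspatialave} applied at the frozen velocity $v=\widecheck{x/t}$ yields the crucial log-free bound $\overline{\epsilon}$. For the remainder, Lemma~\ref{gweightvderiv} trades $\nabla_v$ acting on $\widehat{Z}^\beta f(t,y+t\widehat{v},v)$ for $v^0$-normalised $\widehat{Z}$-derivatives of $\widehat{Z}^\beta f$ of order at most $N$, at the cost of one factor $\mathbf{z}$, and the weight $\langle y\rangle$ is absorbed into $\mathbf{z}(t,y+t\widehat{v},v)$; Proposition~\ref{estiLinfini} (estimate \eqref{eq:bootLinf2}) then gives $|\mathcal{R}|\lesssim \overline{\epsilon}\log^{3N_x+3N}(3+t)/t$, uniformly $\lesssim \overline{\epsilon}$ for $t\geq 1$. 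Dividing by $t^3\gtrsim (1+t+|x|)^3$ concludes.

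The main obstacle is removing the logarithmic loss of Proposition~\ref{estimoyv} in the interior; the decisive point is that, after the velocity change of variables of Lemma~\ref{Lemgmoy}, the leading contribution reduces to a spatial average of $\widehat{Z}^\beta f$ at a single frozen velocity $\widecheck{x/t}$, which is log-free via Corollary~\ref{Corspatialave}, while every remainder term carries an explicit $1/t$ factor that absorbs the surviving logarithm.
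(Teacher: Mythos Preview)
Your proof is correct and follows essentially the same route as the paper: handling the exterior and small-time regions directly via Proposition~\ref{estimoyv}, and in the interior applying Lemma~\ref{Lemgmoy} to $g_\omega(t,y,v)=\Psi(\omega,v)\widehat{Z}^\beta f(t,y+t\widehat v,v)$, then bounding the leading spatial average through Corollary~\ref{Corspatialave} and the remainder through Lemma~\ref{gweightvderiv} combined with Proposition~\ref{estiLinfini}. The only cosmetic difference is your choice $a=1$ in Proposition~\ref{estimoyv} (the paper takes $a=1/2$), which is immaterial.
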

\begin{proof}
Assume first that $|x| \leq t \leq 1$ or $|x| \geq t$. Then, as $|\Psi|(\cdot,v) \lesssim |v^0|^{N_v-9}$, it suffices to use the first or the third estimate of Proposition \ref{estimoyv}, applied for $a=1/2$. Otherwise $t > \max(1,|x|)$ and we introduce, for any $\omega \in \mathbb{S}^2$, $g_\omega(t,x,v)=\Psi(\omega,v)\widehat{Z}^{\beta}f(t,x+t\widehat{v},v)$. Using first Lemma \ref{gweightvderiv} and then Proposition \ref{estiLinfini}, we have
\begin{align}
 \sup_{(y,v) \in \R^3 \times \R^3} &\langle y \rangle^{N_x-3} \left(|v^0|^7 |g_{\omega}|+|v^0|^8|\nabla_v g_{\omega}|\right)(t,y,v)\nonumber \\ & \; \lesssim \sup_{(y,v) \in \R^3 \times  \R^3}|\nabla_v \Psi|(\omega,v)|v^0|^8 \left| \mathbf{z}^{N_x-3} \widehat{Z}^{\beta}f\right|(t,y,v)+\sum_{|\kappa| \leq 1} |\Psi|(\omega,v)|v^0|^7\left| \mathbf{z}^{N_x-2} \widehat{Z}^{\kappa}\widehat{Z}^{\beta}f\right|(t,y,v) \nonumber \\
 & \; \lesssim \sum_{|\xi| \leq N} \, \sup_{(y,v) \in \R^3 \times  \R^3}|v^0|^{N_v-3} \left| \mathbf{z}^{N_x-2} \widehat{Z}^{\xi}f\right|(t,y,v) \lesssim \overline{\epsilon} \, \log^{3N_x+3N}(3+t).\label{eq:estigomega}
 \end{align} 
Now, apply Lemma \ref{Lemgmoy} to $g_\omega$ in order to get
$$\forall \, \omega \in \mathbb{S}^2, \qquad  t^3 \left| \int_{\R^3_v} \Psi(\omega,v) \widehat{Z}^{\beta} f(t,x,v) \dr v \right| \lesssim \left| \int_{\R^3_y} \left( |v^0|^5  g_{\omega}\right)\left(t,y,\widecheck{\frac{x}{t}} \right) \dr y \right|+\overline{\epsilon} \frac{\log^{3N_x+3N}(3+t)}{t},$$
As $t \geq 1$, it remains to bound by $\overline{\epsilon}$ the first term on the right hand side. For this, perform the change of variables $z=y-t\widehat{v}$ and apply Corollary \ref{Corspatialave} with $\psi (\omega,v)=|v^0|^5 \Psi(\omega,v)$.
\end{proof}
The next result is a direct application of the previous Proposition to $\Psi(\omega,v)=v^{\mu}/v^0$, for any $0 \leq \mu \leq 3$.
\begin{Cor}\label{Corboo3}
For any $|\beta| \leq N-1$, the decay of the current density $J(\widehat{Z}^\beta f)$ is optimal. There exists a constant $C>0$ independent of $\epsilon$ such that,
$$ \forall \, (t,x) \in [0,T[ \times \R^3_x, \qquad \left|\int_{\R^3_v} \frac{v^{\mu}}{v^0} \widehat{Z}^{\beta} f(t,x,v) \dr v \right| \leq \frac{C \, \overline{\epsilon}}{(1+t+|x|)^3}, \qquad 0 \leq \mu \leq 3.$$
\end{Cor}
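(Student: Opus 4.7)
The plan is to apply Proposition \ref{Proestimoyvkernet} directly, as the corollary is essentially a specialization of that result. I take the weight function $\Psi : \mathbb{S}^2_\omega \times \R^3_v \to \R$ to be $\Psi(\omega, v) := v^\mu / v^0$, which is in fact independent of the angular variable $\omega$. The only task is to verify the hypothesis $\|\Psi(\cdot, v)\|_{L^\infty_\omega} + \|v^0 \nabla_v \Psi(\cdot, v)\|_{L^\infty_\omega} \lesssim |v^0|^{N_v - 11}$.

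For the first term, $|v^\mu/v^0| \leq 1$ for every $0 \leq \mu \leq 3$, since $|v^i| \leq |v| \leq v^0$ and $|v^0/v^0| = 1$. For the second term, the case $\mu = 0$ is trivial because $v^0/v^0 = 1$ has vanishing $v$-gradient. For $\mu = i \in \{1,2,3\}$, a direct computation gives
\[
\partial_{v^j}\!\left(\frac{v^i}{v^0}\right) = \frac{\delta_{ij}}{v^0} - \frac{v^i v^j}{(v^0)^3},
\]
so $|v^0 \nabla_v \Psi| \lesssim 1$. Since $N_v \geq 15 > 11$, both bounds are controlled by $|v^0|^{N_v - 11}$, and the hypothesis of Proposition \ref{Proestimoyvkernet} is met.

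Applying the Proposition then yields, for every $|\beta| \leq N-1$ and every $0 \leq \mu \leq 3$,
\[
\left| \int_{\R^3_v} \frac{v^\mu}{v^0} \widehat{Z}^\beta f(t,x,v) \, \dr v \right| \lesssim \frac{\overline{\epsilon}}{(1+t+|x|)^3},
\]
which is the desired conclusion with an implicit constant $C > 0$ independent of $\epsilon$. There is no real obstacle here; the substantive work has been done upstream in establishing the uniform boundedness of the spatial averages (Corollary \ref{Corspatialave}) and the change of variables argument of Lemma \ref{Lemgmoy}. In particular, improving \eqref{boot3}, which requires only the case $|\beta| \leq N-2$, follows immediately by taking $C_{\mathrm{boot}} \geq 2C$ provided $\overline{\epsilon}$ is chosen small enough relative to $\Lambda$.
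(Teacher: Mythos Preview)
Your proof is correct and takes essentially the same approach as the paper, which simply states that the corollary is a direct application of Proposition~\ref{Proestimoyvkernet} to $\Psi(\omega,v)=v^{\mu}/v^0$. Your added verification of the hypothesis and the remark on improving \eqref{boot3} are accurate elaborations of what the paper leaves implicit.
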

If $\epsilon$ satisfies $C \overline{\epsilon}=C \epsilon \,  e^{(D+1)\Lambda} < C_{\mathrm{boot}} \, \Lambda$, it improves the bootstrap assumption \eqref{boot3}.

\subsection{Improved estimates for derivatives of velocity averages}

In the linear case, derivatives of averages in $v$, such as $\partial_{t,x} \int_v f \dr v$, enjoy stronger decay properties. Our study of the top order derivatives of the electromagnetic field will require the following improved estimates.
\begin{Pro}\label{gainderivdecaymoyv}
Let $|\beta| \leq N-1$, $\mu \in \llbracket 0,3 \rrbracket$ and $\Phi : \mathbb{S}^2 \times \R^3_v \rightarrow \R$ be a sufficiently regular function such that $\|\Phi (\cdot,v) \|_{L^{\infty}_{\omega}}+\|v^0\nabla_v \Phi (\cdot,v) \|_{L^{\infty}_{\omega}} \lesssim |v^0|^{N_v-10}$. Then, for any $\omega \in \mathbb{S}^2$,
$$ \forall \, (t,x) \in [0,T[ \times \R^3_x, \qquad \left|\int_{\R^3_v} \Phi(\omega,v) \partial_{x^\mu}\widehat{Z}^{\beta} f(t,x,v) \dr v \right| \lesssim \overline{\epsilon} \, \frac{\log^{3N_x+3N}(3+t)}{(1+t+|x|)^4}.$$
\end{Pro}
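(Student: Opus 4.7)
The goal is to gain one extra factor $(1+t+|x|)^{-1}$ over Proposition~\ref{Proestimoyvkernet}. I would proceed in two stages: first, reduce the case $\mu=0$ to spatial derivatives via the Vlasov equation; then, for $\mu=i$, treat the interior of the light cone at large times separately from the complementary region. The extra decay in the interior emerges geometrically by differentiating the self-similar representation of the velocity average provided by Lemma~\ref{cdvbis}.

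\textbf{Reduction $\mu=0 \rightsquigarrow \mu=i$ and the exterior/small-time regime.} By Proposition~\ref{Com},
$$\partial_t\widehat{Z}^\beta f=-\widehat{v}^i\partial_{x^i}\widehat{Z}^\beta f-\widehat{v}^\mu{F_\mu}^j\partial_{v^j}\widehat{Z}^\beta f+\sum_{\substack{|\gamma|+|\kappa|\leq|\beta|\\|\kappa|\leq|\beta|-1}}C^\beta_{\gamma,\kappa}\,\widehat{v}^\mu{\mathcal{L}_{Z^\gamma}(F)_\mu}^j\partial_{v^j}\widehat{Z}^\kappa f.$$
Pairing with $\Phi(\omega,v)$, the transport piece becomes $-\sum_i\int_v(\Phi\widehat{v}^i)\partial_{x^i}\widehat{Z}^\beta f\,\dr v$; the kernel $\Phi\widehat{v}^i$ still meets the hypothesis of the proposition since $|\widehat{v}|\leq 1$ and $|v^0\nabla_v\widehat{v}|\lesssim 1$. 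For each Lorentz force contribution, I would integrate by parts in $v^j$ (no boundary term by the decay of $f$ in $v$) and pull out the $x$-dependent factor to obtain $\pm{\mathcal{L}_{Z^\gamma}(F)_\mu}^j(t,x)\int_v\partial_{v^j}(\Phi\widehat{v}^\mu)\widehat{Z}^\kappa f\,\dr v$. Since $|\kappa|\leq|\beta|\leq N-1$ and $\partial_{v^j}(\Phi\widehat{v}^\mu)$ is of order $|v^0|^{N_v-11}$ (using the sufficient regularity of $\Phi$), Proposition~\ref{Proestimoyvkernet} controls the $v$-integral by $\overline{\epsilon}(1+t+|x|)^{-3}$, and the bootstrap bound \eqref{boot1} supplies the extra $(1+t+|x|)^{-1}$. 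For the spatial case in the exterior $|x|\geq t$ or when $t\leq 1$, pointwise bounds suffice: $\partial_{x^i}\widehat{Z}^\beta f=\widehat{Z}^{\beta'}f$ with $|\beta'|\leq N$, so the improved exterior estimate of Proposition~\ref{estimoyv} with $a=1$ delivers exactly $\overline{\epsilon}\log^{3N_x+3N}(3+t)(1+t+|x|)^{-4}$, while for $t\leq 1$ and $|x|$ bounded the first estimate of that proposition gives a uniform $\lesssim\overline{\epsilon}$, which suffices since $(1+t+|x|)^{-4}$ is bounded below.

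\textbf{Main case: spatial derivative in the interior.} For $\mu=i$, $t\geq 1$ and $|x|\leq t$, I would introduce $g(t,x,v):=\Phi(\omega,v)\widehat{Z}^\beta f(t,x+t\widehat{v},v)$ and apply Lemma~\ref{cdvbis} to obtain
$$t^3\int_{\R^3_v}\Phi\,\widehat{Z}^\beta f(t,x,v)\,\dr v=\int_{|y-x|<t}(|v^0|^5 g)\bigl(t,y,\widecheck{(x-y)/t}\bigr)\,\dr y.$$
Since $\Phi$ does not depend on $x$, the left-hand side is smooth in $x$ and differentiation in $x^j$ commutes with $\int_v$. On the right, the boundary contribution at $|y-x|=t$ vanishes because $|v^0|^5g\lesssim\overline{\epsilon}|v^0|^{-2}\to 0$ as $|v|\to\infty$ (using Proposition~\ref{estiLinfini} and the growth assumption on $\Phi$), and the chain rule produces the key $1/t$ factor via $\partial_{x^j}\widecheck{(x-y)/t}=t^{-1}(\partial_z\widecheck{z})|_{z=(x-y)/t}$, combined with $|\partial_z\widecheck{z}|\lesssim|v^0|^3$ from Lemma~\ref{cdv}. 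This yields
$$t^4\Big|\int_v\Phi\,\partial_{x^j}\widehat{Z}^\beta f\,\dr v\Big|\lesssim\sup_{v\in\R^3}\int_{\R^3_y}\bigl(|v^0|^7|g|+|v^0|^8|\nabla_v g|\bigr)(t,y,v)\,\dr y.$$
Lemma~\ref{gweightvderiv}, the hypothesis on $\Phi$, Proposition~\ref{estiLinfini} and $\mathbf{z}(t,y+t\widehat{v},v)\geq\langle y\rangle$ from \eqref{eq:propz} bound the integrand by $\overline{\epsilon}\log^{3N_x+3N}(3+t)\langle y\rangle^{-(N_x-3)}$, which is integrable in $y$ since $N_x-3>3$; combined with $t\gtrsim 1+t+|x|$ in the interior, this closes the estimate. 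The main subtlety is bookkeeping the single power of $v^0$ lost at each differentiation of $\Phi\widehat{v}^\mu$ or $\widecheck{z}$, which dictates the hypothesis $|v^0|^{N_v-10}$ rather than $|v^0|^{N_v-11}$, together with the need to apply Proposition~\ref{estiLinfini} at order $|\beta'|=|\beta|+1\leq N$, which forces the restriction $|\beta|\leq N-1$.
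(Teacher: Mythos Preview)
Your proposal is essentially correct and follows a genuinely different strategy from the paper. The paper's argument is purely algebraic: for $t-|x|\geq 1$ it invokes the relations \eqref{eq:nullderiv} (together with $t\Omega_{ij}=\frac{x^i}{r}\Omega_{0j}-\frac{x^j}{r}\Omega_{0i}$) to write $|t-|x||\,\partial_{x^\mu}$ as a bounded combination of the $Z\in\mathbb{K}$, then replaces each $Z$ by $\widehat{Z}$ plus a $v^\lambda\partial_{v^k}$ piece, integrates by parts in $v$, and applies Proposition~\ref{estimoyv} with $a=1$. This handles all $\mu\in\llbracket 0,3\rrbracket$ at once in a few lines. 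Your route is more geometric: you recover the extra $1/t$ by differentiating the self-similar representation of Lemma~\ref{cdvbis} in $x$, so that the chain rule through $\widecheck{(x-y)/t}$ produces the factor $t^{-1}$, and you then control $\nabla_v g$ via Lemma~\ref{gweightvderiv} and the pointwise bounds of Proposition~\ref{estiLinfini}. Both approaches ultimately hinge on the same structural fact, namely that $t\partial_{x^j}+v^0\partial_{v^j}$ is a good derivative; the paper packages this through the commutation vector fields, while you see it through the change of variables.

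One minor point: in your reduction $\mu=0\rightsquigarrow\mu=i$, invoking Proposition~\ref{Proestimoyvkernet} for the kernel $\Psi=\partial_{v^j}(\Phi\widehat{v}^\mu)$ would require a bound on $v^0\nabla_v\Psi$, hence on $\nabla_v^2\Phi$, which the stated hypothesis on $\Phi$ does not provide. The fix is immediate---use Proposition~\ref{estimoyv} instead (only the pointwise bound $|\Psi|\lesssim|v^0|^{N_v-11}$ is needed), which introduces an acceptable logarithmic factor already present in the target estimate. Also, after differentiating the integral over $|y-x|<t$, the resulting bound is naturally $\int_{\R^3_y}\sup_v(\cdots)\,\dr y$ rather than $\sup_v\int_y$; since your integrand bound $\overline{\epsilon}\log^{3N_x+3N}(3+t)\langle y\rangle^{-(N_x-3)}$ is uniform in $v$, this distinction is immaterial.
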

\begin{proof}
Let $(t,x) \in [0,T[ \times \R^3_x$ and remark that, if $|x| \geq t-1$, the result is given by Proposition \ref{estimoyv}, applied for $a=1$. We then consider the case $t-|x| \geq 1$. Using \eqref{eq:nullderiv} together with $t\Omega_{ij}=\frac{x^i}{r}\Omega_{0j}-\frac{x^j}{r}\Omega_{0i}$, one has
$$\mathcal{I}^\beta_{t,x} :=|t-|x|| \left|\int_{\R^3_v} \Phi(\omega,v) \partial_{x^\mu}\widehat{Z}^{\beta} f(t,x,v) \dr v \right| \leq \sum_{Z \in \mathbb{K}} \left|\int_{\R^3_v} \Phi(\omega,v) Z\widehat{Z}^{\beta} f(t,x,v) \dr v \right|.$$
Fix now $Z \in \mathbb{K}$. If $Z$ is a translation $\partial_{x^{\mu}}$ or if $Z=S$, then $Z \in \widehat{\mathbb{P}}_0$. Otherwise, either $Z=\Omega_{ij}$ is a rotation and $Z=\widehat{Z}-v^i\partial_{v^j}+v^j\partial_{v^i}$ or $Z=\Omega_{0k}$ is a Lorentz boost and $Z=\widehat{Z}-v^0\partial_{v^k}$, so that
$$\mathcal{I}^\beta_{t,x} \leq \sum_{\widehat{Z} \in \widehat{\mathbb{P}}_0} \left|\int_{\R^3_v} \Phi(\omega,v) \widehat{Z}\widehat{Z}^{\beta} f(t,x,v) \dr v \right|+\sum_{\lambda =0}^{3} \sum_{k=1}^{ 3}\left|\int_{\R^3_v} \Phi(\omega,v) v^{\lambda}\partial_{v^k}\widehat{Z}^{\beta} f(t,x,v) \dr v \right| .$$
Integration by parts and $|\partial_{v^k}(\Phi(\omega,v) v^{\lambda})| \leq v^0|\nabla_v \Phi|(\omega,v)+|\Phi|(\omega,v) \lesssim |v^0|^{N_v-10}$ yield
$$\left|\int_{\R^3_v} \Phi(\omega,v) \partial_{x^\mu}\widehat{Z}^{\beta} f(t,x,v) \dr v \right| \lesssim \frac{1}{|t-|x||}\sum_{|\kappa| \leq 1} \int_{\R^3_v} |v^0|^{N_v-10} \left| \widehat{Z}^\kappa \widehat{Z}^{\beta} f\right|(t,x,v)  \dr v.$$
As $t-|x| \geq 1$, it remains to apply once again Proposition \ref{estimoyv} for $a=1$.
\end{proof}

\section{Improvement of the bootstrap assumptions on the electromagnetic field}\label{Sec5}
We are now able to prove pointwise decay estimates for the Maxwell field and its derivatives. We improve first \eqref{boot1} whereas the case of the top order derivatives \eqref{boot2} will require a different strategy since we did not recover the linear decay $t^{-3}$ for $\int_v \widehat{Z}^\beta f(t,x,v) \dr v$, $|\beta|=N$. 

\subsection{The Glassey-Strauss decomposition of the electromagnetic field}\label{subsecGS}

We separate $F$ as well as its derivatives $\mathcal{L}_{Z^{\gamma}}(F)$ into three parts according to the Glassey-Strauss decomposition. For this, recall from \eqref{eq:defF} the relation between the electric field $E$, the magnetic field $B$ and the Faraday tensor $F$. We have $E^i=F_{0i}$, $B^1=F_{32}$, $B^2=F_{13}$ and $B^3=F_{21}$. To simplify the statement of the decomposition, we introduce a weight tensor field.
\begin{Def} 
Let $\mathbf{w}_{\mu\nu}(\omega,v)$ be the antisymmetric tensor defined for all $(\omega,v) \in \mathbb{S}^2 \times \R^3_v$ by
\begin{equation*}
\mathbf{w}_{0i}(\omega,v)=-\mathbf{w}_{i0}(\omega,v):= \omega_i+\widehat{v}_i, \qquad \mathbf{w}_{jk}(\omega,v)  := \omega_j\widehat{v}_k-\omega_k\widehat{v}_j, \qquad 1 \leq i,j,k\leq 3,
\end{equation*}
where $\omega_i:=x_i/|x|$ if $x \in \R^3$ satisfies $\omega=x/|x|$. We further define
$$\mathcal{W}_{\mu \nu}(\omega,v):= \frac{\mathbf{w}_{\mu \nu}(\omega,v)}{1+\omega \cdot \widehat{v}}.$$
\end{Def}
\begin{Rq}
Since $\mathbf{w}$ is antisymmetric, $\mathbf{w}_{\mu\mu}=0$ for any $\mu \in \llbracket 0 , 3 \rrbracket$. Note also that $1+\omega \cdot \widehat{v} =2v^L>0$.
\end{Rq}
We can now prove an adaptation of Glassey-Strauss result \cite[Theorem~$3$]{GlStrauss}. The key idea of their proof consists in rewriting the standard derivatives $\partial_{t,x}$ as combinations of derivatives tangential to a backward light cone, which naturally appears in the representation formula for solutions to wave equations, and $\mathbf{T}_0:=\partial_t+\widehat{v}\cdot \nabla_x$, the free relativistic tranport operator which is transverse to light cones. To avoid any confusion with the scaling vector field, we do not keep the notation $S$, used by Glassey-Strauss, in order to denote $\mathbf{T}_0$.
\begin{Pro}\label{GSdecompo}
Let $|\gamma| \leq N-1$. Then, there exists $C^{\gamma}_{\beta}, \, N^{\gamma}_{\xi,\kappa}\in \mathbb{N}$ such that $$4\pi\mathcal{L}_{Z^{\gamma}}(F)=\mathcal{L}_{Z^{\gamma}}(F)^{\mathrm{data}}+\mathcal{L}_{Z^{\gamma}}(F)^T+\mathcal{L}_{Z^{\gamma}}(F)^S,$$ where, for any $0 \leq \mu , \, \nu \leq 3$ and with $\omega=\frac{y-x}{|y-x|}$ in the following integrals,
\begin{itemize}
\item $\mathcal{L}_{Z^{\gamma}}(F)^{\mathrm{data}}_{\mu \nu}$ can be explicitly computed in terms of the initial data. More precisely,
$$ \mathcal{L}_{Z^{\gamma}}(F)^{\mathrm{data}}_{\mu \nu}(t,x) \! = 4\pi \mathcal{L}_{Z^{\gamma}}(F)^{\mathrm{hom}}_{\mu \nu}(t,x) - \! \sum_{|\beta| \leq |\gamma|} \frac{C^{\gamma}_{\beta}}{t}\!\int_{|y-x|=t}\! \int_{\R^3_v}\! \left( \mathcal{W}_{\mu \nu}(\omega,v)\!-\delta_\mu^0 \widehat{v}^\nu\!+\!\delta^0_{\nu}\widehat{v}^\mu\right)\! \widehat{Z}^{\beta}f(0,y,v) \dr v \dr y $$
and $\mathcal{L}_{Z^{\gamma}}(F)^{\mathrm{hom}}_{\mu \nu}$ is the unique solution to the homogeneous wave equation $\Box \mathcal{L}_{Z^{\gamma}}(F)^{\mathrm{hom}}_{\mu \nu} = 0$ which initially verifies $\mathcal{L}_{Z^{\gamma}}(F)^{\mathrm{hom}}_{\mu \nu}(0,\cdot)=\mathcal{L}_{Z^{\gamma}}(F)_{\mu \nu}(0,\cdot)$ and $\partial_t \mathcal{L}_{Z^{\gamma}}(F)^{\mathrm{hom}}_{\mu \nu}(0,\cdot)=\partial_t \mathcal{L}_{Z^{\gamma}}(F)_{\mu \nu}(0,\cdot)$.
\item The $2$-form $\mathcal{L}_{Z^{\gamma}}(F)^T$ is given by 
$$ \mathcal{L}_{Z^{\gamma}}(F)^T_{\mu \nu}(t,x) := -\sum_{|\beta| \leq |\gamma|} C^{\gamma}_{\beta} \, \big[ \widehat{Z}^{\beta} f \big]^T_{\mu \nu}(t,x),$$
where the field $[\widehat{Z}^\beta f ]^T$ generated by $\widehat{Z}^\beta f$ is  
$$\big[ \widehat{Z}^{\beta} f \big]^T_{\mu \nu}(t,x):=\int_{|y-x| \leq t} \int_{\R^3_v} \frac{\mathcal{W}_{\mu \nu}(\omega,v)}{|v^0|^2(1+\omega \cdot \widehat{v})}\widehat{Z}^{\beta} f(t-|y-x|,y,v) \frac{\mathrm{d}v \mathrm{d}y}{|y-x|^2}.$$
\item The $2$-form $\mathcal{L}_{Z^{\gamma}}(F)^S$ is defined by
$$  \mathcal{L}_{Z^{\gamma}}(F)^S_{\mu \nu}(t,x) := \sum_{|\xi|+|\kappa| \leq |\gamma|}N^{\gamma}_{\xi, \kappa} \int_{|y-x| \leq t} \int_{\R^3_v} \big(\widehat{Z}^{\kappa}f \, \widehat{v}^{\lambda}{\mathcal{L}_{Z^{\xi}}(F)_{\lambda}}^j \big)(t-|y-x|,y,v) \partial_{v^j}  \mathcal{W}_{\mu\nu}(\omega, v) \frac{\mathrm{d}v \mathrm{d}y}{|y-x|}.$$
\end{itemize}
\end{Pro}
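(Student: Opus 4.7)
My plan is to first derive a wave equation with source for each Cartesian component of $\mathcal{L}_{Z^\gamma}(F)$. Combining the commuted Maxwell system of Proposition \ref{Com}, namely $\nabla^\mu \mathcal{L}_{Z^\gamma}(F)_{\mu\nu}=\sum_{|\xi|\leq |\gamma|}C^\gamma_\xi J(\widehat Z^\xi f)_\nu$ and $\nabla^\mu {}^*\!\mathcal{L}_{Z^\gamma}(F)_{\mu\nu}=0$, together with the second Bianchi identity $\nabla_{[\lambda}\mathcal{L}_{Z^\gamma}(F)_{\mu\nu]}=0$, one obtains
\[
\Box\, \mathcal{L}_{Z^\gamma}(F)_{\mu\nu}=\sum_{|\xi|\leq |\gamma|} C^\gamma_\xi\bigl(\partial_\mu J(\widehat Z^\xi f)_\nu - \partial_\nu J(\widehat Z^\xi f)_\mu\bigr).
\]
Kirchhoff's formula applied componentwise then yields $4\pi\mathcal{L}_{Z^\gamma}(F)_{\mu\nu} = 4\pi\mathcal{L}_{Z^\gamma}(F)^{\mathrm{hom}}_{\mu\nu}+\mathcal{I}^\gamma_{\mu\nu}$, with
\[
\mathcal{I}^\gamma_{\mu\nu}(t,x) = \sum_{|\xi|\leq |\gamma|} C^\gamma_\xi \int_{|y-x|\leq t}\!\int_{\R^3_v}\frac{\bigl(\widehat v_\nu\,\partial_\mu \widehat Z^\xi f-\widehat v_\mu\,\partial_\nu \widehat Z^\xi f\bigr)(t-|y-x|,y,v)}{|y-x|}\,\dr v\,\dr y,
\]
under the convention $\widehat v_0=-1$, $\widehat v_i=\widehat v^i$.

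\textbf{Paragraph 2.} The second step is the Glassey--Strauss substitution, which trades each of the ``bad'' spacetime derivatives $\partial_\lambda \widehat Z^\xi f$ on the retarded cone for a sum of a derivative tangential to the cone and the transverse free-transport operator $\T_0 \widehat Z^\xi f$. Setting $\omega:=(y-x)/|y-x|$ and $G_\xi(t,y,v):=\widehat Z^\xi f(t-|y-x|,y,v)$, the chain rule gives on the retarded cone the identities
\[
\partial_s\widehat Z^\xi f = \frac{\T_0\widehat Z^\xi f - \widehat v^i\,\partial_{y^i}G_\xi}{1+\omega\cdot\widehat v},\qquad \partial_{y^i}\widehat Z^\xi f = \omega_i\partial_s\widehat Z^\xi f + \partial_{y^i}G_\xi.
\]
Inserting these into the antisymmetric combination $\widehat v_\nu\partial_\mu \widehat Z^\xi f-\widehat v_\mu\partial_\nu\widehat Z^\xi f$, the coefficient of $\T_0\widehat Z^\xi f$ collapses precisely to $\mathbf{w}_{\mu\nu}(\omega,v)/(1+\omega\cdot\widehat v)=\mathcal W_{\mu\nu}(\omega,v)$, while the coefficient of $\partial_{y^i}G_\xi$ becomes a specific combination of $\mathcal W_{\mu\nu}$, $\omega_i$ and $\widehat v$. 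The commuted Vlasov equation of Proposition \ref{Com} then lets me rewrite $\T_0\widehat Z^\xi f = \sum_{|\eta|+|\kappa|\leq |\xi|} \tilde C\,\widehat v^\lambda\, \mathcal{L}_{Z^\eta}(F)_\lambda{}^j\,\partial_{v^j}\widehat Z^\kappa f$, which is crucial as it moves the derivative off the particle density and onto $F$.

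\textbf{Paragraph 3.} Finally, I split $\mathcal{I}^\gamma_{\mu\nu}$ into the three announced pieces by two successive integrations by parts. The tangential terms $\partial_{y^i}G_\xi/|y-x|$ are integrated by parts in $y$ on the ball $|y-x|\leq t$: the boundary contribution at $|y-x|=t$, where $s=0$ and $|y-x|^{-1}=t^{-1}$, produces a surface integral over $\widehat Z^\beta f(0,y,v)$ which, combined with the Kirchhoff homogeneous term, yields $\mathcal{L}_{Z^\gamma}(F)^{\mathrm{data}}_{\mu\nu}$; the bulk contribution, in which the $y$-derivative falls on $|y-x|^{-1}$ and on the angular weights, upgrades the singularity to $|y-x|^{-2}$ and, after simplifying the resulting numerator via the identity $1-|\widehat v|^2=|v^0|^{-2}$ to absorb the non-$\mathcal W$ combinations of $\omega$ and $\widehat v$, reassembles exactly into the kernel $\mathcal W_{\mu\nu}/(|v^0|^2(1+\omega\cdot\widehat v))$ of $[\widehat Z^\beta f]^T_{\mu\nu}$. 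The $\T_0\widehat Z^\xi f$ contribution, after Vlasov-substitution, is integrated by parts in $v$ (justified by the fast $v$-decay of $\widehat Z^\kappa f$ guaranteed by Proposition \ref{estiLinfini}); the derivative $\partial_{v^j}$ lands on $\mathcal W_{\mu\nu}(\omega,v)$ and gives the kernel of $\mathcal{L}_{Z^\gamma}(F)^S_{\mu\nu}$. The hard part is the algebraic bookkeeping: one must verify that every $\omega$- and $\widehat v$-dependent factor reorganises into precisely $\mathcal W_{\mu\nu}$ or its $v$-gradient, and that the residual surface contribution $-\delta^0_\mu\widehat v^\nu+\delta^0_\nu\widehat v^\mu$ in $\mathcal{L}_{Z^\gamma}(F)^{\mathrm{data}}$ comes exactly from the $\partial_s$-piece specific to $\mu=0$ or $\nu=0$; this is the higher-order adaptation of the computation originally performed in \cite{GlStrauss}.
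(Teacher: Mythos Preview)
Your proposal is correct and follows the same underlying strategy as the paper. The only presentational difference is that the paper does not rederive the Glassey--Strauss decomposition from Kirchhoff's formula: instead it observes that $(f_\gamma, E_\gamma, B_\gamma)$, with $f_\gamma:=\sum_{|\beta|\leq|\gamma|}C^\gamma_\beta\,\widehat Z^\beta f$ and $E_\gamma, B_\gamma$ the electric and magnetic parts of $\mathcal L_{Z^\gamma}(F)$, satisfies exactly the hypotheses of \cite[Theorem~3]{GlStrauss}, and applies that theorem as a black box to obtain in one stroke the $\mathrm{data}$, $T$, and residual $-\int_{|y-x|\leq t}\int_v \mathcal{W}_{\mu\nu}(\omega,v)\,\T_0 f_\gamma\,\dr v\,\dr y/|y-x|$ pieces. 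The remaining work---substituting the commuted Vlasov equation \eqref{eq:comVla} for $\T_0 f_\gamma$ and integrating by parts in $v$ using $\partial_{v^j}\big(\widehat v^\lambda\,{\mathcal L_{Z^\xi}(F)_\lambda}^j\big)=0$---is identical to your last step. Your route is more self-contained, at the price of carrying out by hand the algebraic verifications (the emergence of the $|v^0|^{-2}(1+\omega\cdot\widehat v)^{-1}$ factor in the $T$-kernel from the bulk IBP, and the $-\delta^0_\mu\widehat v^\nu+\delta^0_\nu\widehat v^\mu$ boundary contribution) that you correctly flag as the delicate bookkeeping and that the paper simply outsources to \cite{GlStrauss}.
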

\begin{proof}
Fix $|\gamma| \leq N-1$ and apply Proposition \ref{Com} in order to rewrite the Maxwell equations satisfied by $\mathcal{L}_{Z^{\gamma}}(F)$ as
\begin{equation}\label{deffgamma}
 \nabla^{\mu} \mathcal{L}_{Z^{\gamma}}(F)_{\mu \nu} = \int_{\R^3_v} \frac{v_{\nu}}{v^0} f_{\gamma}(t,x,v) \dr v, \quad \nabla^{\mu} {}^* \! \mathcal{L}_{Z^{\gamma}}(F)_{\mu \nu}=0, \qquad   \nu \in \llbracket 0 , 3 \rrbracket, \quad f_{\gamma}:=\sum_{|\beta|\leq |\gamma|} C_{\beta}^{\gamma} \, \widehat{Z}^{\beta} f,
 \end{equation}
with $C^{\gamma}_{\beta} \in \mathbb{N}$. Introduce further the electric $E_{\gamma}$ and  magnetic $B_{\gamma}$ parts of $\mathcal{L}_{Z^{\gamma}}(F)$,
\begin{equation}\label{defEB}
E^i_{\gamma}:=\mathcal{L}_{Z^{\gamma}}(F)_{0i}, \quad i \in \llbracket 1,3 \rrbracket, \qquad  B_{\gamma}^1=\mathcal{L}_{Z^{\gamma}}(F)_{32}, \qquad B_{\gamma}^2=\mathcal{L}_{Z^{\gamma}}(F)_{13}, \qquad B_{\gamma}^3=\mathcal{L}_{Z^{\gamma}}(F)_{21}.
\end{equation} In such a way, our framework exactly corresponds to the one of Glassey-Strauss. More precisely, one can compute the source terms of the wave equations satisfied by the components of $E_{\gamma}$ and $B_{\gamma}$. For any $0 \leq \mu, \nu \leq 3$, we have
\begin{flalign}\label{eq:waveF}
&\quad \Box \mathcal{L}_{Z^{\gamma}}(F)_{\mu \nu}=\int_{\R^3_v} \widehat{v}_{\mu}\partial_{x^\nu}f_{\gamma}-\widehat{v}_{\nu}\partial_{x^{\mu}} f_{\gamma}\dr v, \qquad \text {so, for instance,} \qquad \Box E^i_{\gamma} = -\int_{\R^3_v} \partial_{x^i}f_{\gamma}+\widehat{v}_i\partial_t f_{\gamma}\dr v.&
\end{flalign}
Applying \cite[Theorem~$3$]{GlStrauss} to $(f_{\gamma},E_{\gamma}, B_{\gamma})$ provides us, for any $0 \leq \mu , \, \nu \leq 3$, 
$$4\pi\mathcal{L}_{Z^{\gamma}}(F)_{\mu \nu}=\mathcal{L}_{Z^{\gamma}}(F)^{\mathrm{data}}_{\mu \nu}+\mathcal{L}_{Z^{\gamma}}(F)^T_{\mu \nu}-\int_{|y-x| \leq t} \int_{\R^3_v}  \mathcal{W}_{\mu\nu}(\omega, v) \left( \mathbf{T}_0 f_{\gamma} \right)(t-|y-x|,y,v) \frac{\mathrm{d}v \mathrm{d}y}{|y-x|},$$
where we recall that $\mathbf{T}_0=\widehat{v}^{\lambda}\partial_{x^{\lambda}}$. Note that the constants $C^{\gamma}_{\beta}$ in the definitions of $\mathcal{L}_{Z^{\gamma}}(F)^{\mathrm{data}}$, $\mathcal{L}_{Z^{\gamma}}(F)^T$ and $f_{\gamma}$ are the same. Applying the commutation formula of Proposition \ref{Com} for any $|\beta| \leq |\gamma|$ yields
\begin{equation}\label{eqT0}
 \T_0 f_{\gamma} =-\sum_{|\beta| \leq |\gamma|} C^{\gamma}_{\beta} \, \widehat{v}^{\mu} {F_{\mu}}^j\partial_{v^j} \widehat{Z}^{\beta}f+ C^{\gamma}_{\beta}\sum_{|\xi|+|\kappa| \leq |\beta|} C^{\beta}_{\xi, \kappa} \widehat{v}^{\mu} {\mathcal{L}_{Z^{\xi}}(F)_{\mu}}^j\partial_{v^j} \widehat{Z}^{\kappa}f,
\end{equation}
with $C^{\beta}_{\xi, \kappa} \in \mathbb{N}$. It remains to integrate by parts in $v$ and to recall that $\nabla_{v^j}\cdot\widehat{v}^{\mu}{\mathcal{L}_{Z^{\xi}}(F)_{\mu}}^j={\mathcal{L}_{Z^{\xi}}(F)_{j}}^j=0$.
\end{proof}
It will then be important to estimate the kernels introduced in the previous Proposition. 
\begin{Lem}\label{LemtechestiW}
For all $(\omega,v) \in \mathbb{S}^2 \times \R^3_v$, we have $|\omega+\widehat{v}|^2, \, |\omega \wedge \widehat{v}|^2 \leq 2(1+\omega \cdot \widehat{v})$ and $(1+\omega \cdot \widehat{v})^{-1}\leq 2|v^0|^2$.
\end{Lem}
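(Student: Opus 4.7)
The proof is a short direct computation from the Cauchy--Schwarz inequality and the identity $1-|\widehat v|^2=|v^0|^{-2}$; I would just lay out the three bounds in order.

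\textbf{First bound.} For $|\omega+\widehat v|^2$, I expand
\[
|\omega+\widehat v|^2=|\omega|^2+2\,\omega\cdot\widehat v+|\widehat v|^2=1+2\,\omega\cdot\widehat v+|\widehat v|^2.
\]
Since $|\widehat v|<1$, one has $|\widehat v|^2\le 1$, so the right-hand side is at most $2+2\,\omega\cdot\widehat v=2(1+\omega\cdot\widehat v)$, which is the desired inequality.

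\textbf{Second bound.} For $|\omega\wedge\widehat v|^2$, I would use the Lagrange identity
\[
|\omega\wedge\widehat v|^2=|\omega|^2|\widehat v|^2-(\omega\cdot\widehat v)^2=|\widehat v|^2-(\omega\cdot\widehat v)^2.
\]
Bounding $|\widehat v|^2\le 1$ and factoring yields
\[
|\omega\wedge\widehat v|^2\le 1-(\omega\cdot\widehat v)^2=(1-\omega\cdot\widehat v)(1+\omega\cdot\widehat v)\le 2(1+\omega\cdot\widehat v),
\]
since $|\omega\cdot\widehat v|\le 1$ gives $1-\omega\cdot\widehat v\le 2$.

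\textbf{Third bound.} To prove $(1+\omega\cdot\widehat v)^{-1}\le 2|v^0|^2$, I start from $\omega\cdot\widehat v\ge -|\widehat v|$, hence
\[
1+\omega\cdot\widehat v\ge 1-|\widehat v|=\frac{(1-|\widehat v|)(1+|\widehat v|)}{1+|\widehat v|}=\frac{1-|\widehat v|^2}{1+|\widehat v|}=\frac{|v^0|^{-2}}{1+|\widehat v|}\ge\frac{1}{2|v^0|^2},
\]
where in the last step I use $1+|\widehat v|\le 2$ and the relation $1-|\widehat v|^2=|v^0|^{-2}$ already recorded in Lemma~\ref{cdv}. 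Taking reciprocals gives the claimed estimate. There is no real obstacle here; the only thing worth being careful about is to ensure $1+\omega\cdot\widehat v>0$ (which follows from $|\widehat v|<1$) so that inversion is legitimate.
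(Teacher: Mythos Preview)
Your proof is correct and follows essentially the same approach as the paper: the same expansion for $|\omega+\widehat v|^2$, the same Lagrange identity and factorisation for $|\omega\wedge\widehat v|^2$, and the same lower bound $1+\omega\cdot\widehat v\ge 1-|\widehat v|$ combined with $1-|\widehat v|^2=|v^0|^{-2}$ for the third inequality. The only cosmetic difference is that the paper writes the third computation as $2|v^0|^2(1+\omega\cdot\widehat v)\ge 2v^0(v^0-|v|)\ge(v^0+|v|)(v^0-|v|)=1$ rather than passing through the fraction $\frac{1-|\widehat v|^2}{1+|\widehat v|}$.
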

\begin{proof}
For the second inequality, simply note that $$2|v^0|^2(1+\omega \cdot \widehat{v}) \geq 2|v^0|^2( 1-|\widehat{v}|)=2v^0(v^0-|v|) \geq (v^0+|v|)(v^0-|v|)=|v^0|^2-|v|^2=1.$$
For the first ones, since $|\omega|=1$ and $|\widehat{v}| \leq 1$,
$$  |\omega+\widehat{v}|^2=|\omega|^2+|\widehat{v}|^2+2\omega \cdot \widehat{v} \leq 2(1+\omega \cdot \widehat{v}), \quad |\omega \wedge \widehat{v}|^2=|\omega|^2|\widehat{v}|^2-|\omega \cdot \widehat{v}|^2\leq (1+\omega \cdot \widehat{v})(1-\omega \cdot \widehat{v})\leq 2(1+\omega \cdot \widehat{v}).$$
\end{proof}
\begin{Cor}\label{estiW}
For any $0 \leq \mu, \, \nu \leq 3$ and all $(\omega,v) \in \mathbb{S}^2 \times \R^3_v$, there holds
$$ \left|\mathcal{W}_{\mu \nu}\right|(\omega,v) \leq 2v^0, \qquad \quad \frac{\left|\mathcal{W}_{\mu \nu}\right|(\omega,v)}{|v^0|^2(1+\omega \cdot \widehat{v})} \leq 4 v^0, \qquad \quad \left|\nabla_v \mathcal{W}_{\mu \nu}\right|(\omega,v) \leq 6v^0.$$
We have similar bounds for their first order derivatives,
$$ \left|\nabla_v \mathcal{W}_{\mu \nu}\right|(\omega,v) \lesssim v^0, \qquad \quad \left| \nabla_v \left(\frac{\mathcal{W}_{\mu \nu}(\omega,v)}{|v^0|^2(1+\omega \cdot \widehat{v})} \right) \right| \lesssim v^0, \qquad \quad \left|\nabla_v \nabla_v \mathcal{W}_{\mu \nu}\right|(\omega,v) \lesssim v^0.$$
\end{Cor}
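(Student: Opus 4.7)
The plan is to bootstrap everything from three elementary computations: (i) the identity $\partial_{v^j}\widehat{v}^i = \frac{1}{v^0}(\delta_{ij}-\widehat{v}^i\widehat{v}^j)$, which immediately gives $|\nabla_v\widehat{v}|, |\nabla_v \mathbf{w}_{\mu\nu}| \lesssim 1/v^0$ and, upon a second differentiation, $|\nabla_v^2 \widehat{v}|, |\nabla_v^2\mathbf{w}_{\mu\nu}| \lesssim 1/|v^0|^2$; (ii) the two building blocks $|\mathbf{w}_{\mu\nu}|^2 \leq 2(1+\omega\cdot\widehat{v})$ and $(1+\omega\cdot\widehat{v})^{-1} \leq 2|v^0|^2$ of Lemma \ref{LemtechestiW}; and (iii) a crucial square-root cancellation for $\nabla_v(\omega\cdot\widehat{v})$ derived below. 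The first two bounds of the corollary then follow by direct substitution. Indeed, writing $\mathcal{W}_{\mu\nu}=\mathbf{w}_{\mu\nu}/(1+\omega\cdot\widehat{v})$,
\[
|\mathcal{W}_{\mu\nu}| \leq \frac{\sqrt{2(1+\omega\cdot\widehat{v})}}{1+\omega\cdot\widehat{v}}=\frac{\sqrt{2}}{\sqrt{1+\omega\cdot\widehat{v}}}\leq 2v^0,
\]
and dividing by $|v^0|^2(1+\omega\cdot\widehat{v})$ and applying $(1+\omega\cdot\widehat{v})^{-1} \leq 2|v^0|^2$ once more gives the announced bound $4v^0$.

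The auxiliary inequality driving all the derivative estimates is
\[
|\nabla_v(\omega\cdot\widehat{v})|(\omega,v) \leq \frac{\sqrt{2(1+\omega\cdot\widehat{v})}}{v^0}.
\]
I would prove it by direct computation: from $\partial_{v^j}(\omega\cdot\widehat{v})=(\omega_j-\widehat{v}^j(\omega\cdot\widehat{v}))/v^0$, squaring and summing in $j$ yields
\[
|v^0|^2|\nabla_v(\omega\cdot\widehat{v})|^2 = 1 - (2-|\widehat{v}|^2)(\omega\cdot\widehat{v})^2 \leq 1-(\omega\cdot\widehat{v})^2 \leq 2(1+\omega\cdot\widehat{v}).
\]
Using this, expand $\nabla_v\mathcal{W}_{\mu\nu} = (1+\omega\cdot\widehat{v})^{-1}\nabla_v\mathbf{w}_{\mu\nu} - (1+\omega\cdot\widehat{v})^{-2}\mathbf{w}_{\mu\nu}\,\nabla_v(\omega\cdot\widehat{v})$. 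The first term is bounded by $(1/v^0)(2|v^0|^2) \lesssim v^0$, while in the second the product $|\mathbf{w}_{\mu\nu}|\,|\nabla_v(\omega\cdot\widehat{v})| \leq 2(1+\omega\cdot\widehat{v})/v^0$ cancels exactly one power of $(1+\omega\cdot\widehat{v})$, leaving again a bound of order $v^0$. The estimate for $\nabla_v[\mathcal{W}_{\mu\nu}/(|v^0|^2(1+\omega\cdot\widehat{v}))]$ follows identically after rewriting that quotient as $\mathbf{w}_{\mu\nu}/[|v^0|^2(1+\omega\cdot\widehat{v})^2]$ and using $|\nabla_v|v^0|^{-2}| \lesssim 1/|v^0|^3$; the extra factor $(1+\omega\cdot\widehat{v})^{-1}$ is absorbed by one more application of Lemma \ref{LemtechestiW}.

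For $|\nabla_v^2\mathcal{W}_{\mu\nu}|$, I would differentiate the above expansion once more, producing four types of terms: $\nabla_v^2\mathbf{w}/(1+\omega\cdot\widehat{v})$, $\nabla_v\mathbf{w}\otimes\nabla_v(\omega\cdot\widehat{v})/(1+\omega\cdot\widehat{v})^2$, $\mathbf{w}\otimes\nabla_v^2(\omega\cdot\widehat{v})/(1+\omega\cdot\widehat{v})^2$, and $\mathbf{w}\otimes[\nabla_v(\omega\cdot\widehat{v})]^{\otimes 2}/(1+\omega\cdot\widehat{v})^3$. A short direct computation using (i) gives $|\nabla_v^2(\omega\cdot\widehat{v})| \lesssim 1/|v^0|^2$; the first term is then a constant, and the remaining three each reduce, after the numerator cancellations from (ii)--(iii), to $\lesssim [|v^0|^2(1+\omega\cdot\widehat{v})^{3/2}]^{-1}$, which by Lemma \ref{LemtechestiW} is $\lesssim v^0$.

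The only real subtlety, and the step I expect to be decisive, is the square-root cancellation (iii) for $|\nabla_v(\omega\cdot\widehat{v})|$. Without it, each differentiation of the denominator $1+\omega\cdot\widehat{v}$ would cost a full factor $(1+\omega\cdot\widehat{v})^{-1} \sim |v^0|^2$ rather than $(1+\omega\cdot\widehat{v})^{-1/2} \sim v^0$, and every derivative bound in the corollary would fall short by exactly one power of $v^0$. Once that single cancellation is established, each subsequent estimate is a mechanical verification that every numerator $\mathbf{w}_{\mu\nu}$ or $\nabla_v(\omega\cdot\widehat{v})$ carries enough half-powers of $1+\omega\cdot\widehat{v}$ to bring the overall singularity down to $(1+\omega\cdot\widehat{v})^{-1/2}$.
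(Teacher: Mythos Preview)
Your proof is correct. The key mechanism you identify --- the square-root cancellation $|\nabla_v(\omega\cdot\widehat v)|\le \sqrt{2(1+\omega\cdot\widehat v)}/v^0$ --- is exactly the same cancellation the paper exploits, though the paper packages it differently. In Lemma~\ref{Lemapp} (Appendix~\ref{SecA}) the paper records the algebraic identity
\[
\partial_{v^j}\!\Big(\frac{1}{1+\omega\cdot\widehat v}\Big)=\frac{\widehat v^j}{v^0(1+\omega\cdot\widehat v)}-\frac{\mathbf w_{0j}(\omega,v)}{v^0(1+\omega\cdot\widehat v)^2},
\]
and then observes that each term of the form $P(\widehat v,\omega)\,Q(\mathbf w)\,|v^0|^{-p}(1+\omega\cdot\widehat v)^{-q}$ is mapped by $\partial_{v^j}$ to a linear combination of terms of the same form with the invariant $2q-d_{\mathbf w}-p$ never increasing; applying $|\mathbf w_{\mu\nu}|^2\le 2(1+\omega\cdot\widehat v)$ at the end gives the bound $|v^0|^{2q-d_{\mathbf w}-p}$ for all derivatives at once. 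Your direct inequality route and the paper's invariant-tracking route are equivalent --- your bound is precisely $|\mathbf w_{0j}-\widehat v_j(1+\omega\cdot\widehat v)|\le\sqrt{2(1+\omega\cdot\widehat v)}$ rewritten --- but the paper's formulation scales more cleanly: it handles arbitrary order derivatives and the more singular kernels $\mathcal A^k,\mathcal B^k,\mathcal C^k,\mathcal D^k$ of Corollary~\ref{estikernels} with no additional work, whereas your term-by-term Leibniz expansion would become unwieldy beyond second order.
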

\begin{proof}
The first two inequalities are a direct consequence of the previous lemma. The other ones ensue from straightforward computations carried out in Lemma \ref{Lemapp}.
\end{proof}
\begin{Rq}
These bounds are sharp. Let us focus for instance on the first one, $|\mathcal{W}_{\mu \nu}|(\omega,v) \leq 2v^0$. For this, consider, for any $v \in \R^3_v $, the function $\phi_v : \omega \mapsto 1+\omega \cdot \widehat{v}$ defined on $\mathbb{S}^2$. Then,
$$ \min_{\omega \in \mathbb{S}^2} \, \phi_v(\omega)=\frac{v^0-|v|}{v^0}=\frac{1}{v^0(v^0+|v|)} \leq \frac{1}{|v^0|^2} , \qquad \qquad \max_{\omega \in \mathbb{S}^2} \, \phi_v(\omega) = \frac{v^0+|v|}{v^0} \geq 1.$$
By continuity, there exists $\omega_v \in \mathbb{S}^2$ such that $1+\omega_v \cdot \widehat{v}=|v^0|^{-2}$. Then, using $|\omega+\widehat{v}|^2= 2(1+\omega \cdot \widehat{v}) -|v^0|^{-2}$, we have
$$ \sum_{1 \leq i \leq 3} \left|\mathcal{W}_{0i}\right|^2(\omega_v,v)=\frac{|\omega_v+\widehat{v}|^2}{|1+\omega_v \cdot \widehat{v}|^2}=\frac{1}{1+\omega_v \cdot \widehat{v}}\left( 2-\frac{1}{|v^0|^2(1+\omega_v \cdot \widehat{v})} \right)=v^0.$$
\end{Rq}
In order to improve the bootstrap assumption \eqref{boot2}, we will need to use the Glassey-Strauss decomposition of the spatial derivatives of the electromagnetic field. A similar result holds for the time derivative but we will estimate it by exploiting the Maxwell equations. For instance, one can check that \eqref{VM22}-\eqref{VM33} imply $|\nabla_{\partial_t} F| \lesssim \sum_{1 \leq k \leq 3}|\nabla_{\partial_{x^k}} F|+|J(f)|$. We lighten the notations by denoting the Lorentz force as
\begin{equation}\label{defLorforce}
K^j:=\widehat{v}^{\mu} {F_\mu}^{j}, \qquad \qquad K_\xi^j := \widehat{v}^{\mu} {\mathcal{L}_{Z^{\xi}}(F)_\mu}^{j}, \qquad 1 \leq j \leq 3, \quad  1\leq |\xi| \leq N. 
\end{equation}
\begin{Pro}\label{GSdecomoderiv}
Let $|\gamma|=N-1$ and $1 \leq k \leq 3$. Then, $\nabla_{\partial_{x^k}} \mathcal{L}_{Z^{\gamma}}(F)$ can be written as
$$ 4\pi \nabla_{\partial_{x^k}} \mathcal{L}_{Z^{\gamma}}(F) = A^{\mathrm{data}}_{\gamma,k}+A^{\mathrm{ver}}_{\gamma,k}+A^{TT}_{\gamma,k}+A^{TS}_{\gamma,k}+A^{SS}_{\gamma,k},$$
where the five $2$-forms verify the following properties. We fix $0 \leq \mu, \nu \leq 3$ and we use again the notation $\omega=\frac{y-x}{|y-x|}$ in the integrals written below. Moreover, we give the definition of the kernels at the end of the statement\footnote{We point out that we are only interested in the qualitative properties of these kernels.}.
\begin{itemize}
\item $A^{\mathrm{data}}_{\gamma,k}$ can be explicitly computed in terms of the initial data,
\begin{align*}
 A^{\mathrm{data}}_{\gamma,k, \, \mu \nu}(t,x) &= 4 \pi \partial_{x^k}\mathcal{L}_{Z^{\gamma}}(F)^{\mathrm{data}}_{\mu \nu}(t,x) - \sum_{|\beta| \leq N-1} C^{\gamma}_{\beta}\frac{1}{t^2}\int_{|y-x|=t} \int_{\R^3_v} \mathcal{D}_{\mu \nu}^k(\omega,v) \widehat{Z}^{\beta}f(0,y,v) \dr v \dr y \\
 & \quad -\sum_{|\beta| \leq N-1}C^{\gamma}_{\beta}\frac{1}{t}\int_{|y-x|=t} \int_{\R^3_v} \mathcal{C}_{\mu \nu}^k(\omega,v) \T_0\widehat{Z}^{\beta}f(0,y,v) \dr v \dr y .
 \end{align*}
\item $A^{\mathrm{ver}}_{\gamma,k}$ is the vertex term,
$$A^{\mathrm{ver}}_{\gamma,k, \, \mu \nu}(t,x) = \sum_{|\beta| \leq N-1} C^{\gamma}_{\beta} \int_{\sigma \in \mathbb{S}^2} \int_{\R^3_v} \mathcal{D}_{\mu \nu}^k(\sigma,v) \widehat{Z}^{\beta} f(t,x,v) \dr v \dr \mu_{\mathbb{S}^2}.$$
\item $A^{TT}_{\gamma,k}$ is the most singular term,
$$ A^{TT}_{\gamma,k, \, \mu\nu}(t,x) := \sum_{|\beta| \leq N-1}C^{\gamma}_{\beta} \int_{|y-x|\leq t } \int_{\R^3_v} \mathcal{A}_{\mu \nu}^k(\omega,v) \widehat{Z}^{\beta} f(t-|y-x|,y,v)  \frac{\dr v \dr y}{|y-x|^3}$$
and the crucial identity $ \int_{|\sigma|=1} \mathcal{A}_{\mu \nu}^k(\sigma,\widehat{v})\dr \mu_{\mathbb{S}^2}=0$ holds for all $v \in \R^3_v$.
\item $A^{T,S}_{\gamma,k}$ is given by
$$ A^{T,S}_{\gamma,k, \, \mu\nu}(t,x) := \sum_{|\xi|+|\kappa| \leq N-1}N^{\gamma}_{\xi, \kappa} \int_{|y-x| \leq t} \int_{\R^3_v} \nabla_v \mathcal{B}_{\mu \nu}^k(\omega,v) \cdot \big(\widehat{Z}^{\kappa}f \, K_{\xi} \big)(t-|y-x|,y,v) \frac{\mathrm{d}v \mathrm{d}y}{|y-x|^2}.$$
\item $A^{SS}_{\gamma,k}$ is the sum of the four following quantities, where $N^{\gamma}_{\xi ,\zeta, \kappa} \in \mathbb{N}$,
\begin{align*}
A^{SS,I}_{\gamma,k, \, \mu \nu}&:=\sum_{|\xi|+|\zeta|+|\kappa| \leq N-1} N^{\gamma}_{\xi ,\zeta, \kappa} \, \int_{|y-x| \leq t} \int_{\R^3_v} \left[\nabla_v \big( \nabla_v\mathcal{C}_{\mu\nu}^k(\omega,\cdot) \cdot K_{\xi} \big) \cdot K_{\zeta}\widehat{Z}^{\kappa}f\right](t-|y-x|,y,v) \frac{\mathrm{d}v \mathrm{d}y}{|y-x|}, \\
A^{SS,II}_{\gamma,k, \, \mu \nu}&:=\sum_{|\xi|+|\kappa| \leq N-1} N^{\gamma}_{\xi , \kappa} \, \int_{|y-x| \leq t} \int_{\R^3_v} \nabla_v  \mathcal{C}_{\mu\nu}^k(\omega,v) \cdot (\T_0(K_\xi) \widehat{Z}^{\kappa}f)(t-|y-x|,y,v) \frac{\mathrm{d}v \mathrm{d}y}{|y-x|}, \\
A^{SS,III}_{\gamma,k, \, \mu \nu}&:=\sum_{|\xi|+|\kappa| \leq N-1} N^{\gamma}_{\xi , \kappa} \, \int_{|y-x| \leq t} \int_{\R^3_v} \mathcal{C}_{j,\mu \nu}^{k,n} (\omega,v) (\partial_{x^n}(K_{\xi}^j)\widehat{Z}^{\kappa}f)(t-|y-x|,y,v) \frac{\mathrm{d}v \mathrm{d}y}{|y-x|}, \\
A^{SS,IV}_{\gamma,k, \, \mu \nu}&:=\sum_{|\xi|+|\kappa| \leq N-1} N^{\gamma}_{\xi , \kappa} \, \int_{|y-x| \leq t} \int_{\R^3_v} \mathcal{C}_{j,\mu \nu}^{k,n} (\omega,v) (K_{\xi}^j \partial_{x^n}\widehat{Z}^{\kappa}f)(t-|y-x|,y,v) \frac{\mathrm{d}v \mathrm{d}y}{|y-x|}.
\end{align*}
\item The kernels are smooth functions of $(\omega,v)\in \mathbb{S}^2 \times \R^3_v$ given by
\begin{align*}
\mathcal{A}_{\mu\nu}^k(\omega,v)&:=-3\frac{\mathbf{w}_{\mu \nu}(\omega,v)\omega_k}{|v^0|^4(1+\omega \cdot \widehat{v})^4}-3\frac{\mathbf{w}_{\mu \nu}(\omega,v)\widehat{v}_k}{|v^0|^2(1+\omega \cdot \widehat{v})^3}+\frac{\delta_{k\mu}\widehat{v}_{\nu}-\delta_{k\nu}\widehat{v}_{\mu}}{|v^0|^2(1+\omega \cdot \widehat{v})^2}, \\
\mathcal{B}^k_{\mu \nu} (\omega,v) &:= 3\frac{\mathbf{w}_{\mu \nu}(\omega,v)\omega_k}{|v^0|^2(1+\omega \cdot \widehat{v})^3}-2\frac{\mathbf{w}_{\mu \nu}(\omega,v)\widehat{v}_k}{(1+\omega \cdot \widehat{v})^2}-\frac{\delta_{k\mu}\widehat{v}_{\nu}-\delta_{k\nu}\widehat{v}_{\mu}}{1+\omega \cdot \widehat{v}}, \\
\mathcal{C}^k_{\mu \nu}(\omega,v) &:= \frac{\omega_k \mathbf{w}_{\mu \nu}(\omega,v)}{(1+\omega \cdot \widehat{v})^2}, \qquad \qquad \mathcal{C}_{j,\mu \nu}^{k,n} (\omega,v) := \mathcal{C}^k_{\mu \nu}(\omega,v) \frac{\delta_j^n-\widehat{v}_j\widehat{v}^n}{v^0}, \\
\mathcal{D}^k_{\mu \nu}(\omega,v) & := \frac{\omega_k \mathbf{w}_{\mu \nu}(\omega,v)}{|v^0|^2(1+\omega \cdot \widehat{v})^3}.
\end{align*}
\end{itemize}
\end{Pro}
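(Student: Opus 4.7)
The strategy is to extend the Glassey--Strauss decomposition of Proposition \ref{GSdecompo} by applying $\partial_{x^k}$ to the identity
\[
4\pi \mathcal{L}_{Z^\gamma}(F) = \mathcal{L}_{Z^\gamma}(F)^{\mathrm{data}} + \mathcal{L}_{Z^\gamma}(F)^T + \mathcal{L}_{Z^\gamma}(F)^S,
\]
and carefully analyzing each piece. The derivative of $\mathcal{L}_{Z^\gamma}(F)^{\mathrm{data}}$ gives the first contribution to $A^{\mathrm{data}}_{\gamma,k}$ directly. For the volume integrals $\mathcal{L}_{Z^\gamma}(F)^T$ and $\mathcal{L}_{Z^\gamma}(F)^S$, I would first change variables $z=y-x$ so that the integration domain $\{|z|\leq t\}$ becomes independent of $x$; the derivative $\partial_{x^k}$ then acts only through the argument $(t-|z|,x+z,v)$ of $\widehat{Z}^\beta f$ (and of the Lorentz force $K_\xi$), producing $\partial_{y^k}$ inside the integrand.

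The central computation is the decomposition of $\partial_{y^k}$ along the geometry of the past light cone. Setting $r=|z|$, $\omega = z/|z|$, $\tau = t-r$, and using $\T_0 = \partial_\tau + \widehat{v}^j\partial_{y^j}$, $\mathcal{D}_r := -\partial_\tau + \omega^j\partial_{y^j}$ (tangent to the cone generators), and the angular gradient $\slashed{\partial}_y := \nabla_y - \omega(\omega\cdot\nabla_y)$ on the sphere of radius $r$ around $x$, one derives by solving a $4{\times}4$ linear system the pointwise identity
\[
\partial_{y^k} = \frac{\omega_k}{1+\omega\cdot\widehat{v}}\T_0 + \frac{\omega_k}{1+\omega\cdot\widehat{v}}\mathcal{D}_r + \slashed{\partial}_{y^k} - \frac{\omega_k \widehat{v}^j}{1+\omega\cdot\widehat{v}}\slashed{\partial}_{y^j}.
\]
I would then treat the three resulting pieces separately. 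For the $\T_0$ piece, I would substitute the commuted Vlasov equation (Proposition \ref{Com}) to rewrite $\T_0\widehat{Z}^\beta f$ as a sum of terms of the form $K_\xi^j\partial_{v^j}\widehat{Z}^\kappa f$ and integrate by parts in $v$; the derivative thus lands on the kernel and produces integrands with factor $|y-x|^{-2}$, matching $A^{T,S}_{\gamma,k}$ (in the case of $\partial_{x^k}\mathcal{L}_{Z^\gamma}(F)^T$) or $A^{SS,I}$ and $A^{SS,II}$ (in the case of $\partial_{x^k}\mathcal{L}_{Z^\gamma}(F)^S$, where one $v$-derivative is already present and an additional contribution arises from $\T_0$ hitting the factor $K_\xi$).

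For the $\mathcal{D}_r$ piece, since the combined kernel depends only on $(\omega,v)$ after absorbing the $r^2$ coming from $\dr z = r^2\dr r\,\dr\mu_{\mathbb{S}^2}$, integration by parts in $r\in[0,t]$ generates no interior contribution but only two boundary terms: at $r=0$ these produce the vertex term $A^{\mathrm{ver}}_{\gamma,k}$, and at $r=t$ they become surface integrals over $|y-x|=t$ which are absorbed into $A^{\mathrm{data}}_{\gamma,k}$ (the kernel $\mathcal{D}^k$ coming from $\partial_{x^k}\mathcal{L}_{Z^\gamma}(F)^T$ and, via one further $v$-integration by parts combined with the Vlasov equation at $t=0$, the $\mathcal{C}^k\T_0\widehat{Z}^\beta f$ piece coming from $\partial_{x^k}\mathcal{L}_{Z^\gamma}(F)^S$). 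For the angular piece, the relation $\slashed{\partial}_y = r^{-1}\slashed{\partial}_\sigma$ on the sphere of radius $r$ produces an extra $r^{-1}$ after integration by parts on $\mathbb{S}^2$: applied to $\partial_{x^k}\mathcal{L}_{Z^\gamma}(F)^T$ this yields the critical $|y-x|^{-3}$ kernel $A^{TT}_{\gamma,k}$, and applied to $\partial_{x^k}\mathcal{L}_{Z^\gamma}(F)^S$ it contributes the remaining $|y-x|^{-2}$ part of $A^{T,S}_{\gamma,k}$ together with the terms $A^{SS,III}$ and $A^{SS,IV}$, after using the identity $v^0\partial_{v^j} = \widehat{\Omega}_{0j} - t\partial_{x^j} - x^j\partial_t$ to rewrite the spatial derivative of $K_\xi^j\widehat{Z}^\kappa f$ in terms of the tensor $(\delta_j^n-\widehat{v}_j\widehat{v}^n)/v^0$. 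The announced identity $\int_{\mathbb{S}^2}\mathcal{A}^k_{\mu\nu}(\sigma,\widehat{v})\,\dr\mu_{\mathbb{S}^2}=0$ is then automatic: $\mathcal{A}^k$ appears precisely as the angular divergence produced by the sphere integration by parts, hence has vanishing mean on $\mathbb{S}^2$ for each fixed $v$, which is what makes the singular integral defining $A^{TT}_{\gamma,k}$ well defined.

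The principal obstacle I anticipate is the bookkeeping. Matching the exact form of each of the five kernels $\mathcal{A}^k$, $\mathcal{B}^k$, $\mathcal{C}^k$, $\mathcal{C}^{k,n}_j$, $\mathcal{D}^k$ --- in particular the precise coefficients and the distribution of factors of $1+\omega\cdot\widehat{v}$ and $v^0$ --- requires tracking carefully the combinatorial factors generated by the successive $v$-integrations by parts, by the commutator terms in the Vlasov equation, and by the rewriting of spatial derivatives of Lorentz forces. This is especially delicate for $\mathcal{B}^k$, whose three summands must arise from combining contributions of the angular piece of $\partial_{x^k}\mathcal{L}_{Z^\gamma}(F)^S$ with the $\T_0$ piece of $\partial_{x^k}\mathcal{L}_{Z^\gamma}(F)^T$, and where several apparently different terms must simplify together to produce exactly the form displayed in the statement.
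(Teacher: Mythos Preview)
Your overall strategy---differentiating the decomposition of Proposition \ref{GSdecompo} and splitting $\partial_{y^k}$ into $\T_0$, radial-tangential, and angular pieces---is sound in principle and is essentially how the Glassey--Strauss formula for derivatives was originally obtained. The paper, however, does not redo this computation: it invokes \cite[Theorem~5.4.1]{Glassey} applied to $(f_\gamma,E_\gamma,B_\gamma)$ as a black box, which directly yields $A^{\mathrm{data}}$, $A^{\mathrm{ver}}$, $A^{TT}$ together with two further terms $\widetilde{A}^{TS}$, $\widetilde{A}^{SS}$ whose integrands are $\mathcal{B}^k\T_0 f_\gamma$ and $\mathcal{C}^k\T_0\T_0 f_\gamma$. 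Only then does the paper substitute the commuted Vlasov equation and integrate by parts in $v$. The tensor $(\delta_j^n-\widehat{v}_j\widehat{v}^n)/v^0$ that defines $\mathcal{C}^{k,n}_j$ arises in the paper from the commutator $[\T_0,\partial_{v^j}]=-\partial_{v^j}(\widehat{v}^n)\partial_{x^n}$ when expanding $\T_0\T_0 f_\gamma=\T_0\partial_{v^j}(K_\xi^j\widehat{Z}^\kappa f)$, not from the identity $v^0\partial_{v^j}=\widehat{\Omega}_{0j}-t\partial_{x^j}-x^j\partial_t$ you invoke.

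Your routing of terms has two concrete problems. First, the $\mathcal{D}_r$ integration by parts on $\partial_{x^k}\mathcal{L}_{Z^\gamma}(F)^S$ is \emph{not} boundary-only: the weight there is $|y-x|^{-1}$, so after $r^2\dr r$ you integrate $\int_0^t r\,\partial_r(\cdot)\,\dr r$, which produces an interior term with weight $|y-x|^{-1}$ in addition to the boundary contributions. Second, your claim that the angular piece of $\partial_{x^k}\mathcal{L}_{Z^\gamma}(F)^S$ yields $A^{SS,III}$ and $A^{SS,IV}$ cannot be right as stated: angular integration by parts on a term with weight $|y-x|^{-1}$ produces weight $|y-x|^{-2}$, whereas $A^{SS,III}$ and $A^{SS,IV}$ carry weight $|y-x|^{-1}$. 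These terms, and the tensor $(\delta_j^n-\widehat{v}_j\widehat{v}^n)/v^0$, actually originate from the $\T_0$ piece via the commutator mentioned above. Your approach can be made to work, but the bookkeeping you outlined would need to be substantially reorganised; the paper's shortcut of citing the existing Glassey--Strauss result for derivatives avoids exactly this difficulty.
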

\begin{proof}
Let $k \in \llbracket 1,3 \rrbracket$, $|\gamma|=N-1$ and recall from \eqref{deffgamma} the definition of $f_{\gamma}$ and that $\mathcal{L}_{Z^{\gamma}}(F)$ solves the Maxwell equations with source term $J(f_{\gamma})$. Recall further the electric and magnetic parts $(E_{\gamma},B_{\gamma})$ of $\mathcal{L}_{Z^{\gamma}}(F)$, introduced in \eqref{defEB}. In the same spirit as in the proof of Proposition \ref{GSdecompo}, we apply\footnote{See also the original version of the result, \cite[Theorem~$4$]{GlStrauss}.} \cite[Theorem~$5.4.1$]{Glassey} to $(f_{\gamma},E_{\gamma},B_{\gamma})$. This yields
$$ \nabla_{\partial_{x^k}} \mathcal{L}_{Z^{\gamma}}(F)_{\mu \nu} = A^{\mathrm{data}}_{\gamma,k, \, \mu \nu}+A^{\mathrm{ver}}_{\gamma,k, \, \mu \nu}+A^{TT}_{\gamma,k, \, \mu \nu}+\widetilde{A}^{\; T,S}_{\gamma,k, \, \mu \nu}+\widetilde{A}^{\,SS}_{\gamma,k, \, \mu \nu},$$
where
\begin{align*}
\widetilde{A}^{\; T,S}_{\gamma,k, \,\mu \nu}&:=\int_{|y-x| \leq t} \int_{\R^3_v} \mathcal{B}^k_{\mu \nu}(\omega,v) \big( \T_0f_\gamma \big)(t-|y-x|,y,v) \frac{\mathrm{d}v \mathrm{d}y}{|y-x|^2}, \\
\widetilde{A}^{\, SS}_{\gamma,k, \, \mu \nu} &:=-\int_{|y-x| \leq t} \int_{\R^3_v} \mathcal{C}^k_{\mu \nu}(\omega,v)  \big( \T_0\T_0f_\gamma \big)(t-|y-x|,y,v) \frac{\mathrm{d}v \mathrm{d}y}{|y-x|},
\end{align*}
as well as $ \int_{|\sigma|=1} \mathcal{A}^k_{\mu \nu}(\sigma,\widehat{v})\dr \mu_{\mathbb{S}^2}=0 $. One can then prove that $\widetilde{A}^{\; T,S}_{\gamma,k, \,\mu \nu}=A^{T,S}_{\gamma,k, \, \mu \nu}$ by rewriting $\T_0 f_{\gamma}$ using the (commuted) Vlasov equation. More precisely, we use \eqref{eqT0} and we then integrate by parts in $v$. It remains to deal with $\widetilde{A}^{\, SS}_{\gamma,k, \, \mu \nu}$ and we recall for this that $\nabla_v \cdot K_{\xi}=\nabla_{v^j}\cdot\widehat{v}^{\mu}{\mathcal{L}_{Z^{\xi}}(F)_{\mu}}^j=0$. Hence, using again \eqref{eqT0}, we get that there exists $N^{\gamma}_{\xi,\kappa} \in \mathbb{N}$ such that
$$ \T_0 \T_0(f_{\gamma})= \sum_{|\xi|+|\kappa| \leq |\gamma|}  N^{\gamma}_{\xi,\kappa} \T_0\partial_{v^j} \left( K_\xi^j \widehat{Z}^{\kappa}f\right).$$
Now, we write $\T_0 \partial_{v^j}=\partial_{v^j} \T_0-\partial_{v^j}(\widehat{v}^n)\partial_{x^n}$ and we apply the commutation formula of Proposition \ref{Com} to $\widehat{Z}^{\kappa}f$. We get
$$ \T_0\partial_{v^j} \left( \widehat{v}^{\lambda}{\mathcal{L}_{Z^{\xi}}(F)_{\lambda}}^j \widehat{Z}^{\kappa}f\right)=\partial_{v^j} \! \left(\T_0(K_\xi^j) \widehat{Z}^{\kappa}f\right)+\partial_{v^j} \! \left(K_\xi^j \T_0( \widehat{Z}^{\kappa}f)\right)-\frac{\delta_j^n-\widehat{v}_j\widehat{v}^n}{v^0}\left(\partial_{x^n}(K_{\xi}^j)\widehat{Z}^{\kappa}f+K_\xi^j \partial_{x^n} \widehat{Z}^{\kappa}f \right), $$
so that, by integration by parts in $v$ for the quantities related to the two first terms on the right hand side of the previous equality,
$$\widetilde{A}^{\, SS}_{\gamma,k, \,\mu \nu}=A^{SS,II}_{\gamma,k, \, \mu \nu}\!+A^{SS,III}_{\gamma,k, \, \mu \nu}\!+A^{SS,IV}_{\gamma,k, \,\mu \nu}\!+\!\sum_{|\xi|+|\kappa| \leq |\gamma|} \! N^{\gamma}_{\xi,\kappa}\!\int_{|y-x| \leq t} \int_{\R^3_v}\! \nabla_{v^j}\! \left( \mathcal{C}_{\mu \nu}^k(\omega,v)\right)\! (K_\xi^j \T_0( \widehat{Z}^{\kappa}f))(\tau_y,y,v) \frac{\mathrm{d}v \mathrm{d}y}{|y-x|},$$
where $\tau_y: =t-|y-x|$. Finally, we deal with the last term by applying first the commutation relation of Proposition \ref{Com}, giving that $\T_0(\widehat{Z}^{\kappa}f)=-K\cdot \nabla_v\widehat{Z}^{\kappa}f+C^{\kappa}_{\zeta,\beta} K_{\zeta}\cdot \nabla_v\widehat{Z}^{\beta}f$, and then by integrating by parts in $v$.
\end{proof}
These Kernels and their derivatives can be estimated by a direct application of Lemmas \ref{LemtechestiW} and \ref{Lemapp}.
\begin{Cor}\label{estikernels}
For any $1\leq k, \,  j , \, n \leq 3$ and for all $v\in \R^3_v$, we have
$$ \left(\left| \mathcal{A}^k\right|+\left| \nabla_v \mathcal{A}^k\right|+\left|\nabla_v \mathcal{B}^k\right|+\left| \mathcal{C}^{k,n}_j\right|+\left|\nabla_v \mathcal{C}^{k,n}_j\right|+\left|\nabla_v  \mathcal{C}^k\right|+\left|\nabla_v \nabla_v \mathcal{C}^k\right|+\left| \mathcal{D}^k\right|+\left|\nabla_v \mathcal{D}^k\right|\right)(\cdot,v) \lesssim |v^0|^3.$$
\end{Cor}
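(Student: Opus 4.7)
The proof is a direct computation relying on the two basic inequalities of Lemma \ref{LemtechestiW}, namely $|\mathbf{w}_{\mu\nu}|(\omega,v) \lesssim \sqrt{1+\omega \cdot \widehat{v}}$ (which follows from $|\omega+\widehat{v}|^2, |\omega\wedge \widehat{v}|^2 \leq 2(1+\omega\cdot\widehat{v})$ and the definition of $\mathbf{w}$) together with the key trade-off $(1+\omega \cdot \widehat{v})^{-1} \leq 2|v^0|^2$. The strategy is to first write each kernel as a product of powers of $|v^0|^{-1}$, $(1+\omega\cdot\widehat{v})^{-1}$ and $\mathbf{w}$, use the first inequality to absorb one factor of $\sqrt{1+\omega\cdot\widehat{v}}$ in the denominator, and then apply the second inequality to convert the remaining powers of $(1+\omega\cdot\widehat{v})^{-1}$ into powers of $|v^0|^2$.

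To illustrate, for the three terms composing $\mathcal{A}^k$, one gets respectively
\[
\frac{|\mathbf{w}|}{|v^0|^4(1+\omega\cdot\widehat{v})^4} \lesssim \frac{1}{|v^0|^4(1+\omega\cdot\widehat{v})^{7/2}} \lesssim |v^0|^3, \qquad \frac{|\mathbf{w}|}{|v^0|^2(1+\omega\cdot\widehat{v})^3} \lesssim |v^0|^3, \qquad \frac{1}{|v^0|^2(1+\omega\cdot\widehat{v})^2} \lesssim |v^0|^2.
\]
The same scheme yields $|\mathcal{B}^k|,|\mathcal{C}^k|,|\mathcal{D}^k| \lesssim |v^0|^3$, while the extra factor $|\delta_j^n - \widehat{v}_j \widehat{v}^n|/v^0 \leq 2/v^0$ in $\mathcal{C}^{k,n}_j$ only improves the bound.

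For the derivatives, I would invoke the elementary identities
\[
|\partial_v v^0| \leq 1, \qquad |\partial_{v^i} \widehat{v}^j| = \Big|\tfrac{\delta^j_i}{v^0} - \tfrac{v^j v_i}{|v^0|^3}\Big| \lesssim \tfrac{1}{v^0}, \qquad |\partial_v (1+\omega\cdot\widehat{v})| = |\omega \cdot \partial_v \widehat{v}| \lesssim \tfrac{1}{v^0},
\]
together with $|\partial_v \mathbf{w}_{\mu\nu}| \lesssim 1/v^0$ coming from the definition of $\mathbf{w}$. Thus each application of $\partial_v$ either decreases the power of $|v^0|$ by one, or brings down a factor $(1+\omega\cdot\widehat{v})^{-1}/v^0 \lesssim v^0$, or brings down a factor $1/v^0$; in every case the total homogeneity does not increase. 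Applying these rules termwise to $\mathcal{A}^k,\mathcal{B}^k,\mathcal{C}^k,\mathcal{C}^{k,n}_j,\mathcal{D}^k$ gives $|\nabla_v \mathcal{A}^k|+\cdots+|\nabla_v\nabla_v \mathcal{C}^k| \lesssim |v^0|^3$, where the second derivative of $\mathcal{C}^k$ is the worst case but still loses only one power of $v^0$ per differentiation. The bookkeeping is tedious but completely mechanical, and since the paper defers the explicit formulas to Lemma \ref{Lemapp} in the appendix, it is enough to invoke that lemma together with the bounds above. There is no real obstacle here; the only minor point to watch is that the sharpness of $(1+\omega\cdot\widehat{v})^{-1} \lesssim |v^0|^2$ forces one to carefully keep track of the fractional power of $(1+\omega\cdot\widehat{v})$ that is absorbed by $\mathbf{w}$, in order to end up with exactly $|v^0|^3$ and no worse.
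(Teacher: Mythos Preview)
Your approach matches the paper's: both reduce the estimate to Lemma~\ref{LemtechestiW} and the bookkeeping of Lemma~\ref{Lemapp}. However, your informal explanation of why differentiation does not raise the bound contains an inaccuracy that is worth fixing.

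In your case~2 you write that differentiating $(1+\omega\cdot\widehat{v})^{-q}$ brings down a factor $(1+\omega\cdot\widehat{v})^{-1}/v^0\lesssim v^0$, and then immediately claim ``in every case the total homogeneity does not increase.'' These two statements contradict each other: with only the crude bound $|\partial_v(1+\omega\cdot\widehat{v})|\lesssim 1/v^0$, each hit on the denominator does cost a factor of $v^0$, and after two derivatives of $\mathcal{C}^k$ you would end up with $|v^0|^5$, not $|v^0|^3$. The point you are missing in your narrative (but which Lemma~\ref{Lemapp} records precisely) is the exact identity
\[
\partial_{v^j}\!\left(\frac{1}{1+\omega\cdot\widehat{v}}\right)=\frac{\widehat{v}^j}{v^0(1+\omega\cdot\widehat{v})}-\frac{\mathbf{w}_{0j}(\omega,v)}{v^0(1+\omega\cdot\widehat{v})^2}.
\]
The first term only costs $1/v^0$; the second term increases $q$ by one \emph{and} increases $d_{\mathbf{w}}$ by one, so the index $2q-d_{\mathbf{w}}-p$ is unchanged. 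This extra $\mathbf{w}$ factor in the numerator is exactly what keeps the bound at $|v^0|^3$. Since you do invoke Lemma~\ref{Lemapp} in the end, your proof is complete; just replace your case~2 by this observation so that the heuristic matches the actual mechanism.
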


\subsection{Three integral bounds}

The estimate of most of the terms listed in Propositions \ref{GSdecompo} and \ref{GSdecomoderiv} will in fact be reduced to the analysis of three different integrals. We will deal with all of them by applying a particular case of \cite[Lemma~$6.5.2$]{Glassey}.
\begin{Lem}\label{LemGlassey}
Let $p \in \R$ and $g:\R_+^2 \rightarrow \R_+$ be a continuous function. Then, for all $(t,x) \in [0,T[ \times \R^3\setminus \{0\}$,
$$ \int_{|y-x| \leq t} g(t-|y-x|,|y|) \frac{\dr y}{|y-x|^p} = \frac{2 \pi}{|x|} \int_{\tau=0}^t \int_{\lambda=||x|-t+\tau|}^{|x|+t-\tau} g(\tau,\lambda) \lambda  \, \dr \lambda \frac{\dr \tau}{(t-\tau)^{p-1}}.$$
\end{Lem}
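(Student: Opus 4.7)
The plan is to reduce the integral to a one-dimensional radial integral by spherical symmetry in two successive changes of variables. Everything rests on the fact that the integrand depends on $y$ only through $|y-x|$ and $|y|$, so after fixing $|y-x|$ the angular dependence reduces to the single scalar $|y|$.

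First, I would introduce spherical coordinates centered at $x$: write $y = x + r\omega$ with $r = |y-x| \in [0,t]$ and $\omega \in \mathbb{S}^2$, so that $\dr y = r^2 \dr r \, \dr \mu_{\mathbb{S}^2}(\omega)$. The integral becomes
\begin{equation*}
 \int_{0}^{t} r^{2-p} \int_{\mathbb{S}^2} g\bigl(t-r,\, |x+r\omega|\bigr) \, \dr \mu_{\mathbb{S}^2}(\omega) \, \dr r.
\end{equation*}
The substitution $\tau = t-r$ (so $\dr r = -\dr \tau$ and $r = t-\tau$) converts the radial factor into $(t-\tau)^{2-p}$ with $\tau$ ranging over $[0,t]$.

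Second, I would evaluate the angular integral. Using rotational invariance of $\dr \mu_{\mathbb{S}^2}$, I may place the axis of spherical coordinates along $x/|x|$ so that, with $\theta$ the polar angle,
\begin{equation*}
 |x+r\omega|^2 = |x|^2 + 2 r |x| \cos \theta + r^2, \qquad r = t-\tau.
\end{equation*}
Setting $\lambda = |x+r\omega|$, differentiation gives $\lambda \, \dr \lambda = - r |x| \sin\theta \, \dr \theta$, and as $\theta$ runs from $0$ to $\pi$ the value of $\lambda$ decreases from $|x|+r$ to $\bigl||x|-r\bigr|$. The trivial $\dr \varphi$ integration produces a factor $2\pi$, so
\begin{equation*}
 \int_{\mathbb{S}^2} g\bigl(\tau, |x+r\omega|\bigr) \, \dr \mu_{\mathbb{S}^2}(\omega) = \frac{2\pi}{r\,|x|} \int_{\bigl||x|-r\bigr|}^{|x|+r} g(\tau,\lambda)\, \lambda \, \dr \lambda.
\end{equation*}

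Third, substituting back and using $r = t-\tau$ produces
\begin{equation*}
 \int_{|y-x|\le t} \frac{g(t-|y-x|,|y|)}{|y-x|^p} \, \dr y = \int_{0}^{t} (t-\tau)^{2-p} \cdot \frac{2\pi}{(t-\tau)|x|} \int_{||x|-t+\tau|}^{|x|+t-\tau} g(\tau,\lambda)\,\lambda \, \dr \lambda \, \dr \tau,
\end{equation*}
which collapses to the claimed identity after rewriting $(t-\tau)^{2-p}/(t-\tau) = (t-\tau)^{-(p-1)}$. There is no real obstacle here; the only point requiring a mild amount of care is keeping the orientation of the $\theta \to \lambda$ substitution correct and handling the absolute value $\bigl||x|-(t-\tau)\bigr|$ (which accounts for whether the sphere $\{|y-x|=t-\tau\}$ encloses the origin or not). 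The restriction $x \neq 0$ is only needed to give sense to the Jacobian factor $1/|x|$ on the right-hand side.
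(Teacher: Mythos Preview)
Your proof is correct and is precisely the standard derivation. The paper does not give its own proof of this lemma; it simply cites \cite[Lemma~$6.5.2$]{Glassey}, and your argument is the usual one found there.
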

The following result will be useful for controlling $\mathcal{L}_{Z^{\gamma}}(F)^S$ and $A^{SS}_{\gamma,k}$.
\begin{Lem}\label{LemJ}
For any $b \geq 4$ and for all $(t,x) \in \R_+ \times \R$, there holds
$$ \mathbf{Y}^{p=1}_{b,1}(t,x):= \int_{|y-x| \leq t } \frac{1}{(1+t-|y-x|+|y|)^{b}(1+|t-|y-x|-|y||)} \frac{\dr y}{|y-x|}\lesssim \frac{\log(3+|t-|x||)}{(1+t+|x|)(1+|t-|x||)^{b-2}}.$$
\end{Lem}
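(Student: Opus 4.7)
The plan is to apply Lemma~\ref{LemGlassey} with $p=1$ (so that $(t-\tau)^{p-1}=1$), yielding
$$\mathbf{Y}^{p=1}_{b,1}(t,x) = \frac{2\pi}{|x|}\int_0^t\int_{||x|-t+\tau|}^{|x|+t-\tau}\frac{\lambda\,\mathrm{d}\lambda\,\mathrm{d}\tau}{(1+\tau+\lambda)^b(1+|\tau-\lambda|)}.$$
A short case analysis (on whether $|x|\leq t$, and in that case on whether $\tau\leq t-|x|$ or $\tau\geq t-|x|$) shows that the integration region always satisfies $\tau+\lambda\in[|t-|x||,\,t+|x|]$. Passing to null coordinates $(A,B)=(\tau+\lambda,\tau-\lambda)$, with Jacobian $\mathrm{d}\lambda\,\mathrm{d}\tau=\tfrac12\,\mathrm{d}A\,\mathrm{d}B$, the region reads $\{A\in[|t-|x||,t+|x|],\;B\in[-A,\,t-|x|]\}$.

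I would then split this region into a bulk piece where $A\geq(1+t+|x|)/2$ and an edge piece where $A<(1+t+|x|)/2$. On the bulk, the inequality
$$(1+A)^b\;\geq\;\tfrac12\,(1+t+|x|)(1+|t-|x||)^{b-2}(1+A)$$
extracts the two desired denominator factors simultaneously; the residual $(A,B)$-integral is then controlled by elementary estimates such as $\int_{-A}^{t-|x|}\mathrm{d}B/(1+|B|)\leq\log(1+A)+\log(1+\max(t-|x|,0))$ together with $\int \mathrm{d}A/(1+A)\leq\log((1+t+|x|)/(1+|t-|x||))$. On the edge, the length of the $A$-interval is at most $(1+t+|x|)/2-|t-|x||\lesssim|x|$, which directly supplies the missing factor $|x|/(1+t+|x|)$, while the $B$-integration delivers the remaining logarithm.

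The main obstacle is the sharp bookkeeping needed to produce $\log(3+|t-|x||)$ rather than the coarser $\log(3+t+|x|)$; this is most delicate in the regime $|x|\sim t$, where $|t-|x||$ can be small while $t+|x|$ is large. In that regime one must exploit the weight $\lambda$ in the numerator via the explicit antiderivative
$$\int \frac{\lambda\,\mathrm{d}\lambda}{(1+\tau+\lambda)^2}\;=\;\log(1+\tau+\lambda)+\frac{1+\tau}{1+\tau+\lambda},$$
which yields the tight bound $\lesssim |x|/(1+|t-|x||)$ over the characteristic $\lambda$-interval of length $2|x|$; the crude inequality $\lambda\leq 1+\tau+\lambda$ would instead cost an extra logarithm and destroy the estimate near the light cone. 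Once this refinement is in place, the argument reduces to a straightforward case-by-case bookkeeping across the two regimes $|x|\lessgtr t$.
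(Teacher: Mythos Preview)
Your setup is correct and matches the paper: apply Lemma~\ref{LemGlassey} with $p=1$ and pass to null coordinates $(\underline{u},u)=(\tau+\lambda,\tau-\lambda)$. However, the diagnosis of the main obstacle is wrong. The ``crude inequality'' $\lambda\leq 1+\tau+\lambda$ does \emph{not} destroy the estimate; it is exactly what the paper uses. After first reducing to $b=4$ via \eqref{eq:gaintminusr} (which gives $\mathbf{Y}^{p=1}_{b,1}\leq(1+|t-|x||)^{4-b}\,\mathbf{Y}^{p=1}_{4,1}$), one bounds $\lambda\leq 1+\underline{u}$ and arrives directly at
\[
\mathbf{Y}^{p=1}_{4,1}(t,x)\;\lesssim\;\frac{1}{|x|}\int_{|t-|x||}^{t+|x|}\frac{\log(3+\underline{u})}{(1+\underline{u})^3}\,\dr\underline{u},
\]
the inner $u$-integral having contributed the factor $\log(3+\underline{u})$. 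The sharp logarithm $\log(3+|t-|x||)$ then comes from the elementary fact that $s\mapsto\log(3+s)/(1+s)$ is decreasing on $[0,\infty)$, so one may pull out $\log(3+|t-|x||)/(1+|t-|x||)$ and integrate the remaining $(1+\underline{u})^{-2}$ explicitly; the resulting factor $2\min(t,|x|)/[(1+|t-|x||)(1+t+|x|)]$ cancels the $1/|x|$ and gives the stated bound in one stroke.

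Your bulk/edge split is therefore unnecessary, and as written it also has a power-counting gap: after the inequality $(1+A)^b\geq\tfrac12(1+t+|x|)(1+|t-|x||)^{b-2}(1+A)$ you have used all $b$ powers of $(1+A)$, so once $\lambda\leq 1+A$ absorbs the last one there is no $(1+A)^{-1}$ left to make the $A$-integral $\int\dr A/(1+A)$ you claim. The explicit antiderivative of $\lambda/(1+\tau+\lambda)^2$ does not match the actual integrand (which has $(1+\tau+\lambda)^{-b}$ with $b\geq4$ and an extra $(1+|\tau-\lambda|)^{-1}$), and it is not needed: the monotonicity trick above is the correct and much simpler mechanism for producing $\log(3+|t-|x||)$ rather than $\log(3+t+|x|)$.
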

\begin{proof}
Note first that, on the domain of integration, 
\begin{equation}\label{eq:gaintminusr}
t-|y-x|+|y| \geq t-|y|-|x|+|y|=t-|x|, \qquad t-|y-x|+|y| \geq |y|\geq |x|-|y-x| \geq |x|-t,
\end{equation}
so that $\mathbf{Y}^{p=1}_{b,1}(t,x) \leq (1+|t-|x||)^{-b+4} \, \mathbf{Y}^{p=4}_{4,1}(t,x)$ and it suffices to treat the case $b=4$. By continuity, we can assume further that $x \neq 0$. According to Lemma \ref{LemGlassey},
$$
  \mathbf{Y}^{p=1}_{4,1}(t,x) \leq   \frac{2 \pi}{|x|} \int_{\tau=0}^t \int_{\lambda=||x|-t+\tau|}^{|x|+t-\tau}  \frac{\dr \lambda \dr \tau }{(1+\tau+\lambda)^3(1+|\tau-\lambda|)}       .
$$
We perform the change of variables $\underline{u}=\tau+\lambda$ and $u=\tau-\lambda$. Then, on the domain of integration $||x|-t| \leq \underline{u}\leq t+|x|$ and $ u \leq ||x|-t|$. Moreover, $u\geq -\underline{u}$ since $2\tau \geq 0$. Consequently,
$$
 \mathbf{Y}^{p=1}_{4,1}(t,x)   \leq \frac{ \pi}{|x|} \int_{\underline{u}=||x|-t|}^{t+|x|} \int_{u= -\underline{u}}^{||x|-t|} \frac{\dr u}{1+|u|} \frac{\dr \underline{u}}{(1+\underline{u})^3}   \leq \frac{2\pi}{|x|} \int_{\underline{u}=||x|-t|}^{t+|x|}  \frac{\log(3+\underline{u})}{(1+\underline{u})^3} \dr \underline{u}.
 $$ 
Now, remark that
\begin{align*}
\mathbf{Y}^{p=1}_{4,1}(t,x)  & \lesssim \frac{2 \pi \log(3+|t-|x||)}{(1+|t-|x||)|x|} \int_{\underline{u}=||x|-t|}^{t+|x|}  \frac{\dr \underline{u}}{(1+\underline{u})^2}  \\
 &= \frac{2 \pi \log(3+|t-|x||)}{(1+t+|x|)(1+|t-|x||)^2}\frac{t+|x|-|t-|x||}{|x|}
 \end{align*}
and it remains to note that the last factor on the right hand side is bounded by $2\min(t,|x|)/|x|\leq 2$.
\end{proof}
We will apply the next lemma in order to deal with $\mathcal{L}_{Z^{\gamma}}(F)^T$ and $A^{T,S}_{\gamma,k}$.
\begin{Lem}\label{LemKb}
Let, for any $b \geq 3$ and all $(t,x) \in \R_+ \times \R^3$,
$$ \mathbf{Y}^{p=2}_b(t,x):= \int_{|y-x| \leq t } (1+t-|y-x|+|y|)^{-b} \frac{\dr y}{|y-x|^2}  .$$
Then, the following range of estimates holds. For any $0 < \delta \leq 1$,
$$ \mathbf{Y}^{p=2}_b(t,x) \lesssim   \delta^{-1} (1+t+|x|)^{-2+\delta}(1+|t-|x||)^{-b-\delta+3}, \quad \; \;  \mathbf{Y}^{p=2}_b(t,x) \lesssim (1+t+|x|)^{-2}(1+|t-|x||)^{-b+3} \log(1+t).$$
\end{Lem}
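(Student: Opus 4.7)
The strategy mirrors that of Lemma~\ref{LemJ}. First, I would apply Lemma~\ref{LemGlassey} with $p=2$ to write, for $x\neq 0$,
$$
\mathbf{Y}^{p=2}_b(t,x) = \frac{2\pi}{|x|}\int_0^t\int_{||x|-t+\tau|}^{|x|+t-\tau}(1+\tau+\lambda)^{-b}\,\frac{\lambda}{t-\tau}\,\dr\lambda\,\dr\tau,
$$
and then perform the change of variables $\underline{u}=\tau+\lambda$, $u=\tau-\lambda$ used in the proof of Lemma~\ref{LemJ}. A case analysis of the constraints $\tau\in[0,t]$ and $\lambda\in[||x|-t+\tau|,|x|+t-\tau]$ shows that $(u,\underline{u})$ ranges over $\underline{u}\in[|t-|x||,t+|x|]$ and $u\in[-\underline{u},t-|x|]$. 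Since $\lambda/(t-\tau)=(\underline{u}-u)/(2t-\underline{u}-u)$ and the Jacobian is $1/2$, this yields
$$
\mathbf{Y}^{p=2}_b(t,x) = \frac{\pi}{|x|}\int_{|t-|x||}^{t+|x|}(1+\underline{u})^{-b}\,\Phi(\underline{u})\,\dr\underline{u},
$$
where an explicit $u$-integration (via the substitution $v=2t-\underline{u}-u$) gives
$$
\Phi(\underline{u}) = \int_{-\underline{u}}^{t-|x|}\frac{\underline{u}-u}{2t-\underline{u}-u}\,\dr u = 2(\underline{u}-t)\log\!\Bigl(\frac{2t}{t+|x|-\underline{u}}\Bigr) + (\underline{u}+t-|x|)\geq 0.
$$

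The core step is a sharp bound on $\Phi$. I would split $\underline{u}\in[|t-|x||,t+|x|]$ at $\underline{u}=t$. On the outer region $\underline{u}\geq t$ both summands of $\Phi$ are nonnegative, and the direct majorisation $\Phi(\underline{u})\leq 2|x|\log(2t/(t+|x|-\underline{u}))+2\underline{u}$ combined with elementary one-dimensional integration produces a contribution of order $(1+t)^{-b+1}$, which is dominated by both claimed right-hand sides (noting $(1+t+|x|)\leq 2(1+t)$ when $|x|\leq t$, and symmetrically in the other case). On the inner region $\underline{u}\leq t$ the log term is \emph{negative} and a cancellation has to be exhibited. The key observation is the elementary inequality $-\log(s)\geq 1-s$ for $s\in(0,1]$, applied to $s=(t+|x|-\underline{u})/(2t)$, which after a one-line computation yields the sharp bound
$$
\Phi(\underline{u}) \leq \frac{\underline{u}(\underline{u}+t-|x|)}{t} \leq \frac{2\underline{u}^2}{t},\qquad \underline{u}\in[|t-|x||,t].
$$

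Combining the two regions gives
$$
\mathbf{Y}^{p=2}_b(t,x)\lesssim \frac{1}{t|x|}\int_{|t-|x||}^{t}(1+\underline{u})^{-b+2}\,\dr\underline{u} + (1+t)^{-b+1},
$$
and both claimed estimates follow by evaluating the remaining one-dimensional integral. For the logarithmic version, $b=3$ gives $\log((1+t)/(1+|t-|x||))$; a short case split ($|x|$ close to $t$ versus $|x|$ small, the latter using $\log(1+z)\leq z$) reduces the bound to elementary verifications of $(1+t+|x|)^{2}\lesssim t|x|\log(1+t)/\log((1+t)/(1+|t-|x||))$, while $b>3$ follows by direct integration. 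For the $\delta^{-1}$ version, I would use $\log(x)\leq x^\delta/\delta$ in the regime $|x|\gtrsim t$, and, when $|x|$ is small, rely on the alternative length bound $\int_{|t-|x||}^{t}(1+\underline{u})^{-b+2}\dr\underline{u}\leq |x|(1+|t-|x||)^{-b+2}$, the two cases patching together to give $\delta^{-1}(1+t+|x|)^{-2+\delta}(1+|t-|x||)^{-b-\delta+3}$. The degenerate case $x=0$ is handled directly from $\mathbf{Y}^{p=2}_b(t,0)=4\pi t\,(1+t)^{-b}$.

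The main obstacle is establishing the sharp cancellation in $\Phi$ on the inner region. A naive estimate $\Phi\leq \underline{u}+t-|x|$ is off by a factor of $t/\log(t)$ when $|x|$ is close to $t$, and only the use of $-\log(s)\geq 1-s$ to extract $\Phi\lesssim \underline{u}(\underline{u}+t-|x|)/t$ produces the correct asymptotic $\mathbf{Y}^{p=2}_3(t,t\omega)\sim \log(t)/t^2$, on which the logarithmic version of the lemma is predicated.
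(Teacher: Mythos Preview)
Your approach is correct but takes a genuinely different route from the paper's.

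The paper first observes, via \eqref{eq:gaintminusr}, that $\mathbf{Y}^{p=2}_b \leq (1+|t-|x||)^{-b+3}\,\mathbf{Y}^{p=2}_3$, so it suffices to treat $b=3$. It then splits the ball $\{|y-x|\leq t\}$ into the \emph{near} part $|y-x|\leq t/2$ (where $t-|y-x|+|y|\gtrsim t+|x|$, giving immediately $\mathbf{K}_{[0,t/2]}\lesssim (1+t+|x|)^{-2}$) and the \emph{far} part $|y-x|\in[t/2,t]$, to which Lemma~\ref{LemGlassey} is applied with $\tau\in[0,t/2]$. The $\lambda$-integral is evaluated exactly as
\[
\int_{||x|-t+\tau|}^{|x|+t-\tau}\frac{\dr\lambda}{(1+\tau+\lambda)^2}=\frac{2\min(|x|,t-\tau)}{(1+t+|x|)(1+\tau+||x|-t+\tau|)},
\]
and the key step is the clean bound $\dfrac{\min(|x|,t-\tau)}{|x|(t-\tau)}\leq \dfrac{1}{\max(|x|,t-\tau)}\leq \dfrac{2}{\max(|x|,t)}$, which absorbs the $1/|x|$ prefactor uniformly in $|x|$. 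The remaining $\tau$-integral is then handled by the interpolation $(1+\tau+|t-|x||)\geq (1+\tau)^{1-\delta}(1+|t-|x||)^{\delta}$.

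By contrast, you apply Lemma~\ref{LemGlassey} to the full ball, pass to the null variables $(u,\underline{u})$, and carry out the $u$-integration exactly, producing the explicit kernel $\Phi(\underline{u})$. The heart of your argument is the sharp inequality $\Phi(\underline{u})\leq \underline{u}(\underline{u}+t-|x|)/t$ on $\underline{u}\leq t$, extracted from $-\log s\geq 1-s$; this is the analogue of the cancellation that the paper obtains implicitly through the $\lambda$-integration. Your outer region $\underline{u}\geq t$ plays the role of the paper's near piece. The trade-off is that the paper's $\min/\max$ device disposes of the small-$|x|$ issue in one line, whereas your route requires the separate length bound $\int_{|t-|x||}^{t}(1+\underline{u})^{-b+2}\,\dr\underline{u}\leq |x|(1+|t-|x||)^{-b+2}$ and a case split on $|x|$ to recover the $1/|x|$ factor. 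Both arguments are of comparable length; the paper's is slightly more streamlined, while yours makes the structure of the integrand (and the precise location of the logarithmic loss) more explicit. One minor imprecision: your claimed outer contribution ``of order $(1+t)^{-b+1}$'' should read $(1+t+|x|)^{-b+1}$ to cover the regime $|x|>2t$, but the subsequent comparison with the right-hand sides goes through unchanged.
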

\begin{proof}
In view of \eqref{eq:gaintminusr}, we have $\mathbf{Y}^{p=2}_b(t,x) \leq (1+|t-|x||)^{-b+3} \mathbf{Y}^{p=2}_3(t,x)$ and it suffices to treat the case $b=3$. Then remark that
$$  \mathbf{Y}^{p=2}_3(t,x)= \mathbf{K}_{[0,\frac{t}{2}]}+\mathbf{K}_{[\frac{t}{2},t]}, \qquad \qquad \mathbf{K}_I:= \int_{|y-x| \in I } (1+t-|y-x|+|y|)^{-3} \frac{\dr y}{|y-x|^2}.$$
On the domain of integration of $\mathbf{K}_{[0,\frac{t}{2}]}$, we have $t-|y-x|+|y| \gtrsim t+|x|$. Indeed, $t-|y-x| \geq t/2$ and $|y| \geq |x|-t$ (which controls $|x|/2$ if $|x| \geq 2t$). Consequently,
\begin{equation}\label{eq:tobeusedjustbelow}
 \mathbf{K}_{[0,\frac{t}{2}]} \lesssim (1+t+|x|)^{-3} \int_{r=0}^{\frac{t}{2}} \dr r \leq \frac{1}{2}(1+t+|x|)^{-2}.
 \end{equation}
Applying Lemma \ref{LemGlassey}, we have
$$ \mathbf{K}_{[\frac{t}{2},t]} \leq \frac{2 \pi}{|x|}\int_{\tau=0}^{\frac{t}{2}} \int_{\lambda=||x|-t+\tau|}^{|x|+t-\tau} \frac{\dr \lambda \dr \tau}{(1+\tau+\lambda)^{2}(t-\tau)}.$$
Now, observe that for all $0 \leq \tau \leq t/2$,
\begin{align}
 \frac{1}{|x|(t-\tau)}\int_{\lambda=||x|-t+\tau|}^{|x|+t-\tau} \frac{\dr \lambda }{(1+\tau+\lambda)^{2}}&=\frac{2\min(|x|,t-\tau)}{|x|(t-\tau)(1+t+|x|)(1+\tau+||x|-t+\tau|)} \nonumber \\
 &\leq \frac{8}{\max(|x|,t)(1+t+|x|)(1+\tau+|t-|x||)}. \label{eq:tobeusebelow2}
\end{align}
Let $0 \leq \delta \leq 1$ and write $(1+\tau+|t-|x||) \geq (1+\tau)^{1-\delta} (1+|t-|x||)^{\delta}$. It remains to integrate in $\tau$ in order to derive the expected range of estimates for $\mathbf{K}_{[\frac{t}{2},t]}$.
\end{proof}
Finally, a part of our analysis of $A^{TT}_{\gamma,k}$ will rely on the following estimate.
\begin{Lem}\label{LemKbis}
For all $(t,x) \in [1,+\infty[ \times \R^3$, we have
$$ \mathbf{Y}^{p=3}_3(t,x):= \int_{1\leq |y-x| \leq t } (1+t-|y-x|+|y|)^{-3} \frac{\dr y}{|y-x|^3} \lesssim   \frac{\log(t) }{(1+t+|x|)^3} .$$
\end{Lem}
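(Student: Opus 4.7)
The plan is to adapt the proof of the preceding Lemma \ref{LemKb} by gaining one additional power of $(1+t+|x|)$ from the extra factor $|y-x|^{-1}$, at the cost of a logarithm. The one new feature is that, because the integrand no longer enjoys a priori integrability in $y-x$ near the origin, one must first dispose of the exterior region $|x|\geq 2t$ where no cancellation is available.

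First I would split the problem according to the light cone. \textbf{Step 1 (exterior region $|x|\geq 2t$).} On the integration domain $|y-x|\leq t$ we have $|y|\geq |x|-t\geq |x|/2$, hence $1+t-|y-x|+|y|\geq |y|\gtrsim 1+t+|x|$. Polar coordinates centred at $x$ yield $\int_{1\leq |y-x|\leq t}|y-x|^{-3}\dr y = 4\pi\log t$, giving the claimed estimate immediately. \textbf{Step 2 (interior region $|x|\leq 2t$, part 1).} Write $\mathbf{Y}^{p=3}_3 = \mathbf{K}_{[1,t/2]}+\mathbf{K}_{[t/2,t]}$. On $\{|y-x|\in[1,t/2]\}$ one has $t-|y-x|\geq t/2$, so with $t\geq 1$ and $|x|\leq 2t$, $1+t-|y-x|+|y|\geq 1+t/2\gtrsim 1+t+|x|$, and the same polar coordinate computation gives $\mathbf{K}_{[1,t/2]}\lesssim \log(t)(1+t+|x|)^{-3}$.

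\textbf{Step 3 (the main piece $\mathbf{K}_{[t/2,t]}$).} Apply Lemma \ref{LemGlassey} with $p=3$ and use $\lambda(1+\tau+\lambda)^{-3}\leq (1+\tau+\lambda)^{-2}$ to obtain
$$\mathbf{K}_{[t/2,t]}\leq \frac{2\pi}{|x|}\int_{0}^{t/2}\frac{1}{(t-\tau)^{2}}\int_{||x|-t+\tau|}^{|x|+t-\tau}\frac{\dr\lambda}{(1+\tau+\lambda)^{2}}\,\dr\tau.$$
Evaluating the inner integral exactly, as in the proof of Lemma \ref{LemKb}, produces the factor $2\min(|x|,t-\tau)/[(1+\tau+||x|-t+\tau|)(1+t+|x|)]$. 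Rewriting $\min(|x|,t-\tau)/[|x|(t-\tau)] = 1/\max(|x|,t-\tau)$ and noting that, for $\tau\leq t/2$, $t\geq 1$ and $|x|\leq 2t$, one has $\max(|x|,t-\tau)(t-\tau)\gtrsim (1+t+|x|)^{2}$, reduces the problem to
$$\mathbf{K}_{[t/2,t]}\lesssim (1+t+|x|)^{-3}\int_{0}^{t/2}\frac{\dr\tau}{1+\tau+||x|-t+\tau|}.$$

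\textbf{Step 4 (the logarithm).} It remains to bound the $\tau$-integral. By direct computation, distinguishing $|x|\geq t$ (where the integrand equals $(1+|x|-t+2\tau)^{-1}$) from $|x|<t$ (where one splits at $\tau=t-|x|$), one gets
$$\int_{0}^{t/2}\frac{\dr\tau}{1+\tau+||x|-t+\tau|}\,\lesssim\,\log t,$$
which finishes the proof. \textbf{The main obstacle} is bookkeeping rather than conceptual: verifying $\max(|x|,t-\tau)(t-\tau)\gtrsim (1+t+|x|)^{2}$ is where the restriction $|x|\leq 2t$ becomes essential, and checking the elementary evaluation of the $\tau$-integral uniformly in $|x|/t$ requires splitting into the cases $|x|\leq t/2$, $t/2\leq |x|\leq t$ and $|x|\geq t$ to see where the $\log t$ factor is saturated.
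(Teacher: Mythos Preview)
Your proof is correct and follows essentially the same approach as the paper's: split according to $|y-x|\lessgtr t/2$, handle the inner shell by the crude lower bound $1+t-|y-x|+|y|\gtrsim 1+t+|x|$ and polar coordinates, and treat the outer shell via Lemma \ref{LemGlassey} together with the exact evaluation of the $\lambda$-integral. The only organizational difference is that you dispose of the region $|x|\geq 2t$ upfront (for the whole integral), whereas the paper treats it inside the analysis of $\overline{\mathbf{K}}_{[t/2,t]}$; both lead to the same estimate.
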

\begin{proof}
The inequality can be easily proved if $t \leq 2$ so we assume $t \geq 2$. Start by writing
$$  \mathbf{Y}^{p=3}_3(t,x)= \overline{\mathbf{K}}_{[1,\frac{t}{2}]}+\overline{\mathbf{K}}_{[\frac{t}{2},t]}, \qquad \qquad \overline{\mathbf{K}}_I:= \int_{|y-x| \in I } (1+t-|y-x|+|y|)^{-3} \frac{\dr y}{|y-x|^3}.$$
Following \eqref{eq:tobeusedjustbelow}, we have
$$ \overline{\mathbf{K}}_{[1,\frac{t}{2}]} \lesssim (1+t+|x|)^{-3} \int_{r=1}^{\frac{t}{2}} \frac{\dr r}{r} \leq \log \left( \frac{t}{2} \right)(1+t+|x|)^{-3}.$$
Next, we apply Lemma \ref{LemGlassey} to get
$$ \overline{\mathbf{K}}_{[\frac{t}{2},t]} \leq \frac{2 \pi}{|x|}\int_{\tau=0}^{\frac{t}{2}} \int_{\lambda=||x|-t+\tau|}^{|x|+t-\tau} \frac{\dr \lambda \dr \tau}{(1+\tau+\lambda)^{2}(t-\tau)^2}.$$
If $2t \geq |x|$, we use \eqref{eq:tobeusebelow2} and $t-\tau \geq t/2$ in order to derive $\overline{\mathbf{K}}_{[\frac{t}{2},t]} \lesssim t^{-2}(1+t+|x|)^{-1}\log(1+t/2)$, which implies the result. Otherwise, $2t \leq |x|$ and we have, for all $0 \leq \tau \leq t/2$,
\begin{align}
 \frac{1}{|x|(t-\tau)^2}\int_{\lambda=||x|-t+\tau|}^{|x|+t-\tau} \frac{\dr \lambda \dr \tau}{(1+\tau+\lambda)^{2}}&=\frac{2\min(|x|,t-\tau)}{|x|(t-\tau)^2(1+t+|x|)(1+\tau+||x|-t+\tau|)} \nonumber \\
 &\leq \frac{2}{|x|(1+t+|x|)(1+|x|-t)(t-\tau)}. \label{eq:tobeusebelow3}
\end{align}
We get, as $2 \leq 2t \leq |x|$,
\begin{align*}
\mathbf{K}_{[\frac{t}{2},t]} \leq 4 \pi \log(2) |x|^{-1}(1+|x|/2)^{-1}(1+t+|x|)^{-1} \lesssim (1+t+|x|)^{-3}.
\end{align*}
\end{proof}

\subsection{The derivatives of order up to $N-1$}\label{subsecba1}
In this subsection, we prove pointwise decay estimates for each of the elements of the decomposition of $\mathcal{L}_{Z^{\gamma}}(F)$ provided by Proposition \ref{GSdecompo}. We start by dealing with $\mathcal{L}_{Z^{\gamma}}(F)^{\mathrm{data}}$, which is defined on $\R_+ \times \R^3$.
\begin{Pro}\label{estidata}
There exists $C_{\mathrm{data}}>0$, depending only on $N$, such that
$$\forall \, |\gamma| \leq N-1, \quad  \forall \, (t,x) \in \R_+  \times \R^3, \qquad \quad \left| \mathcal{L}_{Z^{\gamma}}(F)^{\mathrm{data}} \right|(t,x) \leq \Lambda \, C_{\mathrm{data}} (1+t+|x|)^{-1}(1+|t-|x||)^{-1}.$$
\end{Pro}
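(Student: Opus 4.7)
The plan is to treat separately the two summands in the decomposition
$$\mathcal{L}_{Z^{\gamma}}(F)^{\mathrm{data}} = 4\pi \,\mathcal{L}_{Z^{\gamma}}(F)^{\mathrm{hom}} + I^{\gamma},$$
where $I^{\gamma}$ denotes the surface integral term over $\{0\}\times\R^3$. Each cartesian component of $\mathcal{L}_{Z^{\gamma}}(F)^{\mathrm{hom}}$ solves the free wave equation, so I would invoke Proposition \ref{decaylinMax} with $q=2$. This requires the bound $\mathcal{E}^2[\mathcal{L}_{Z^{\gamma}}(F)^{\mathrm{hom}}_{\mu\nu}]\lesssim \Lambda$, which in turn demands polynomial decay at $t=0$ of $\mathcal{L}_{Z^{\gamma}}(F)_{\mu\nu}$ and $\partial_t \mathcal{L}_{Z^{\gamma}}(F)_{\mu\nu}$ at rates $\langle x\rangle^{-2}$ and $\langle x\rangle^{-3}$ respectively. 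Using \eqref{eq:improtranla} to convert Lie derivatives into coordinate derivatives and then the Maxwell equations \eqref{Maxwell23} to reduce time derivatives of $F$ to spatial derivatives of $F$ plus moments $J(\partial_{t,x}^{\kappa}f)$, at most $|\gamma|+1\leq N$ spatial derivatives of $F_0$ and $f_0$ intervene, and these are controlled by the hypotheses of Theorem \ref{Th1} (which give $N+1$ derivatives of $F_0$ and $N$ of $f_0$). The resulting contribution is then $\lesssim \Lambda(1+t+|x|)^{-1}(1+|t-|x||)^{-1}$.

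For the surface integral $I^{\gamma}$, the kernel is bounded by Corollary \ref{estiW} and the triangle inequality,
$$\left|\mathcal{W}_{\mu\nu}(\omega,v)-\delta_{\mu}^0\widehat{v}^{\nu}+\delta^0_{\nu}\widehat{v}^{\mu}\right|\lesssim v^0.$$
At $t=0$, the quantities $\widehat{Z}^{\beta}f(0,y,v)$ with $|\beta|\leq |\gamma|\leq N-1$ can be expressed, using the Vlasov equation \eqref{Vlasov1} to eliminate any $\partial_t$ arising from $\widehat{Z}^{\beta}$, as polynomial combinations of $\partial_v^{\kappa}\partial_x^{\xi}f_0$ weighted by spatial derivatives of $F_0$ and coordinate factors. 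The initial hypotheses yield a bound of the form
$$\big|\widehat{Z}^{\beta} f\big|(0,y,v)\lesssim \epsilon\, e^{c_\beta\Lambda}\,\langle y\rangle^{-N_x}\langle v\rangle^{-N_v+|\beta|},$$
and the $v$-integral is finite since $N_v\geq 15>|\beta|+4$. One is then reduced to estimating $t^{-1}\int_{|y-x|=t}\langle y\rangle^{-N_x}\,\dr y$. Applying \eqref{decayhYang} with $p=N_x\geq 3$ yields
$$\frac{1}{t}\int_{|y-x|=t}\frac{\dr y}{\langle y\rangle^{N_x}}\lesssim \frac{1}{(1+t+|x|)(1+|t-|x||)^{N_x-2}},$$
which, since $N_x>7$, is much stronger than needed; the $t\to 0$ regularity follows directly since the sphere shrinks to a point. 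A constant $C_{\mathrm{data}}$ depending only on $N$ is obtained by tracking the combinatorial constants produced by \eqref{eq:improtranla} and the Vlasov and Maxwell equations.

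The step I expect to require the most care is the reduction in Step 1: once one starts replacing time derivatives of $F$ at $t=0$ by spatial derivatives plus current terms $J(\partial_{t,x}^{\kappa}f)$, further time derivatives of $f$ must themselves be eliminated using \eqref{Vlasov1}, generating a cascade whose terms must be shown to satisfy the $\langle x\rangle^{-3}$ decay required to reach $q=2$ in Proposition \ref{decaylinMax}. The fact that the assumptions grant $N+1$ derivatives of $F_0$ (one more than needed for $\mathcal{L}_{Z^{\gamma}}(F)^{\mathrm{hom}}$) and that $N_x>7$ ensures moments of $f_0$ decay faster than any power arising in this cascade; both features are precisely what makes the bookkeeping close.
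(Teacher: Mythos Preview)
Your approach is essentially the same as the paper's: split $\mathcal{L}_{Z^{\gamma}}(F)^{\mathrm{data}}$ into the homogeneous wave part (handled by Proposition~\ref{decaylinMax}) and the surface integral (handled by bounding the kernel via Corollary~\ref{estiW}, controlling $\widehat{Z}^{\beta}f(0,\cdot,\cdot)$ from the initial hypotheses, and applying \eqref{decayhYang}). The paper is terser about the time-derivative elimination you call the ``cascade'', simply noting that the equations are initially satisfied; your more detailed description of this step is accurate.

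Two small corrections. First, the bound $|\widehat{Z}^{\beta}f|(0,y,v)\lesssim \epsilon\, e^{c_\beta\Lambda}\langle y\rangle^{-N_x}\langle v\rangle^{-N_v+|\beta|}$ is an overestimate: eliminating at most $|\beta|$ time derivatives via the Vlasov equation introduces at most $|\beta|$ factors of $F_0$ or its derivatives, giving polynomial dependence $\langle\Lambda\rangle^{|\beta|}$, not exponential. The paper in fact claims $\lesssim \epsilon\langle y\rangle^{-N_x}$; either way, since $\epsilon\leq\Lambda$ and the surface-integral term has the much stronger decay $(1+|t-|x||)^{-N_x+2}$, it is absorbed into the $\Lambda\,C_{\mathrm{data}}$ bound. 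Second, the reference \eqref{eq:improtranla} is specifically about factoring out translations; what you actually need for Step~1 is the direct expansion of $\mathcal{L}_{Z^{\gamma}}(F)_{\mu\nu}$ in terms of $\nabla_{t,x}^{\kappa}F$ with polynomial weights in $(t,x)$, which follows from the explicit form of the vector fields in $\mathbb{K}$ together with \eqref{equinorm}.
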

\begin{proof}
In view of the assumptions on the initial data (see Theorem \ref{Th1}) and applying Corollary \ref{estiW} in order to estimate $\mathcal{W}_{\mu \nu}$, we have for any $|\beta| \leq N-1$, $\omega \in \mathbb{S}^2$ and $0 \leq \mu , \nu \leq 3$, 
\begin{align}
\forall \, y \in \R^3,  \qquad \qquad  \left| \mathcal{L}_{Z^{\beta}}(F) \right|(0,y)+\langle y \rangle \left| \nabla_{t,x}\mathcal{L}_{Z^{\beta}}(F) \right|(0,y) & \lesssim \sum_{|\kappa| \leq |\beta|+1} \langle y \rangle^{|\kappa|} \left| \nabla_{t,x}^{\kappa}F \right|(0,y) \lesssim \Lambda \, \langle y \rangle^{-2}, \nonumber \\
\left| \int_{\R^3_v} \left(\mathcal{W}_{\mu \nu}(\omega,\widehat{v})-\delta^0_\mu \widehat{v}^\nu+\delta^0_\nu \widehat{v}^\mu \right) \widehat{Z}^{\beta}f(0,y,v)\dr v \right| & \leq  3\int_{\R^3_v} |v^0|^{N_v} \left|\widehat{Z}^{\beta}f(0,y,v) \right| \frac{\dr v}{\langle v \rangle^{N_v-1}} \lesssim \epsilon \, \langle y \rangle^{-N_x}. \nonumber 
\end{align}
The estimates, at $t=0$, for the time derivatives of the solutions are obtained by using that the equations \eqref{VM11}-\eqref{VM33} are initially verified. Using \eqref{decayhYang} for $p=N_x \geq 3$, we then deduce that 
\begin{equation}\label{eq:partief}
\forall \, (t,x) \in \R_+ \times \R^3 , \qquad |\mathcal{L}_{Z^{\gamma}}(F)^{\mathrm{data}}-\mathcal{L}_{Z^{\gamma}}(F)^{\mathrm{hom}}|(t,x) \lesssim \epsilon(1+t+|x|)^{-1}(1+|t-|x||)^{-N_x+1}
\end{equation} 
and it remains to use $\epsilon \leq \Lambda$ and to apply Proposition \ref{decaylinMax} to $\mathcal{L}_{Z^{\gamma}}(F)^{\mathrm{hom}}_{\mu \nu}$.
\end{proof}
Next, we consider $\mathcal{L}_{Z^{\gamma}}(F)^S$, which is strongly decaying far from the light cone.
\begin{Pro}\label{estiS}
For any $|\gamma| \leq N-1$, there holds
$$ \forall \, (t,x) \in [0,T[ \times \R^3, \qquad \left| \mathcal{L}_{Z^{\gamma}}(F)^S \right|(t,x) \lesssim \overline{\epsilon} \, \Lambda \frac{\log(3+|t-|x||)}{(1+t+|x|)(1+|t-|x||)^2}.$$
\end{Pro}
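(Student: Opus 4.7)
The plan is to bound $|\mathcal{L}_{Z^{\gamma}}(F)^S|(t,x)$ by exploiting a simple but crucial structural observation: in the integrand appearing in the definition of $\mathcal{L}_{Z^\gamma}(F)^S$, the Maxwell field $\mathcal{L}_{Z^\xi}(F)_\lambda^{\ j}(\tau,y)$ (with $\tau:=t-|y-x|$ and $\omega:=(y-x)/|y-x|$) depends only on $(\tau,y)$ and not on $v$. We can therefore pull it out of the $v$-integration and write, schematically,
\[
\mathcal{L}_{Z^\gamma}(F)^S_{\mu\nu}(t,x)=-\!\!\!\sum_{|\xi|+|\kappa| \leq |\gamma|}\! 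N^\gamma_{\xi,\kappa}\!\int_{|y-x|\leq t}\!\!{\mathcal{L}_{Z^\xi}(F)_\lambda}^j\!(\tau,y)\!\left[\int_{\R^3_v}\!\Psi^{\lambda,j}_{\mu\nu}(\omega,v)\,\widehat{Z}^\kappa f(\tau,y,v)\,\dr v\right]\!\frac{\dr y}{|y-x|},
\]
where $\Psi^{\lambda,j}_{\mu\nu}(\omega,v):=\widehat{v}^\lambda\,\partial_{v^j}\mathcal{W}_{\mu\nu}(\omega,v)$.

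The inner $v$-integral is then a quantity to which Proposition \ref{Proestimoyvkernet} can be applied, provided that $\Psi^{\lambda,j}_{\mu\nu}$ satisfies the required polynomial bounds in $v^0$ and $|\kappa|\leq N-1$. The latter condition follows from $|\xi|+|\kappa|\leq |\gamma|\leq N-1$. For the former, Corollary \ref{estiW} yields $|\partial_{v^j}\mathcal{W}_{\mu\nu}|(\omega,v)\lesssim v^0$ and $|\nabla_v\partial_{v^j}\mathcal{W}_{\mu\nu}|(\omega,v)\lesssim v^0$, while $|\widehat{v}^\lambda|\leq 1$ and $|\nabla_v\widehat{v}^\lambda|\lesssim 1/v^0$. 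Combining these gives $\|\Psi^{\lambda,j}_{\mu\nu}(\cdot,v)\|_{L^\infty_\omega}+\|v^0\nabla_v\Psi^{\lambda,j}_{\mu\nu}(\cdot,v)\|_{L^\infty_\omega}\lesssim |v^0|^2\leq |v^0|^{N_v-11}$, since $N_v\geq 15$. Proposition \ref{Proestimoyvkernet} then delivers, uniformly in $\omega\in\mathbb{S}^2$,
\[
\bigg|\int_{\R^3_v}\Psi^{\lambda,j}_{\mu\nu}(\omega,v)\,\widehat{Z}^\kappa f(\tau,y,v)\,\dr v\bigg|\lesssim \frac{\overline{\epsilon}}{(1+\tau+|y|)^3}.
\]

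For the Maxwell-field factor, the bootstrap assumption \eqref{boot1} gives $|\mathcal{L}_{Z^\xi}(F)|(\tau,y)\lesssim \Lambda (1+\tau+|y|)^{-1}(1+|\tau-|y||)^{-1}$. Inserting these two bounds and writing $\tau+|y|=t-|y-x|+|y|$ and $|\tau-|y||=|t-|y-x|-|y||$, we reduce the problem to estimating
\[
\int_{|y-x|\leq t}\frac{1}{(1+t-|y-x|+|y|)^{4}(1+|t-|y-x|-|y||)}\,\frac{\dr y}{|y-x|},
\]
which is precisely $\mathbf{Y}^{p=1}_{4,1}(t,x)$ of Lemma \ref{LemJ}. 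Applying that lemma with $b=4$ yields the bound $\log(3+|t-|x||)\cdot(1+t+|x|)^{-1}(1+|t-|x||)^{-2}$, and multiplying by $\overline{\epsilon}\Lambda$ gives exactly the claimed inequality.

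The only subtle point is the verification that the kernel $\Psi^{\lambda,j}_{\mu\nu}$ indeed lies within the regularity class required by Proposition \ref{Proestimoyvkernet}; this is handled by the polynomial bounds of Corollary \ref{estiW} and the lower bound $N_v\geq 15$. Everything else is a direct combination of already-proven ingredients: pulling the $v$-independent Maxwell factor out, the uniform $v$-kernel estimate of Proposition \ref{Proestimoyvkernet}, and the integral bound of Lemma \ref{LemJ}.
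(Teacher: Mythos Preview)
Your proof is correct and follows essentially the same approach as the paper: you pull the $v$-independent Maxwell factor out of the $v$-integral, verify via Corollary \ref{estiW} that the kernel $\Psi^{\lambda,j}_{\mu\nu}(\omega,v)=\widehat v^\lambda\partial_{v^j}\mathcal{W}_{\mu\nu}(\omega,v)$ satisfies the hypotheses of Proposition \ref{Proestimoyvkernet}, apply that proposition together with the bootstrap bound \eqref{boot1}, and then reduce to $\mathbf{Y}^{p=1}_{4,1}(t,x)$ and invoke Lemma \ref{LemJ}. This is exactly the paper's argument.
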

\begin{proof}
Fix $0 \leq \mu,\nu\leq 3$ and recall from Proposition \ref{GSdecompo} the definition of $\mathcal{L}_{Z^{\gamma}}(F)^S$. We have, with $\omega=\frac{y-x}{|y-x|}$, 
\begin{align*}
\left|\mathcal{L}_{Z^{\gamma}}(F)^S_{\mu \nu}\right|&(t,x) \lesssim \\
&  \sum_{|\xi|+|\kappa| \leq |\gamma|} \int_{|y-x| \leq t}\left|{\mathcal{L}_{Z^{\xi}}(F)_{\lambda}}^j\right|(t-|y-x|,y) \left| \int_{\R^3_v} \widehat{v}^{\lambda} \partial_{v^j}  \mathcal{W}_{\mu\nu}(\omega, v) \widehat{Z}^{\kappa}f  (t-|y-x|,y,v)\mathrm{d}v \right| \frac{ \mathrm{d}y}{|y-x|}.
\end{align*}
Fix now $|\xi| + |\kappa| \leq N-1$, $j \in \llbracket 1,3 \rrbracket$ and $\lambda \in \llbracket 0 , 3 \rrbracket$. In view of Corollary \ref{estiW}, $\Psi(\omega,v):=\widehat{v}^{\lambda} \partial_{v^j}  \mathcal{W}_{\mu\nu}(\omega, v)$ satisfies $|\Psi|(\cdot,v)+|v^0 \nabla_v \Psi|(\cdot,v) \lesssim |v^0|^2 \leq |v^0|^{N_v-11}$. As $|\kappa| \leq N-1$, Proposition \ref{Proestimoyvkernet} then gives us
$$\forall \, (\sigma,\tau,y) \in \mathbb{S}^2 \times [0,T[ \times \R^3, \qquad \left| \int_{\R^3_v} \widehat{v}^{\lambda} \partial_{v^j}  \mathcal{W}_{\mu\nu}(\sigma, v) \widehat{Z}^{\kappa}f  (\tau,y,v)\mathrm{d}v \right| \lesssim \frac{\overline{\epsilon}}{(1+\tau+|y|)^{3}}.$$
Applying this last inequality for $(\sigma,\tau)=(\omega,t-|y-x|)$ and estimating the electromagnetic field using the bootstrap assumption \eqref{boot1}, we get
$$
 \left|\mathcal{L}_{Z^{\gamma}}(F)^S\right|(t,x)  \lesssim   \int_{|y-x|\leq t} \frac{ \overline{\epsilon} \, \Lambda}{(1+t-|y-x|+|y|)^4(1+|t-|y-x|-|y||)} \frac{\dr y}{|y-x|}   =  \overline{\epsilon}\, \Lambda  \mathbf{Y}^{p=1}_{4,1}(t,x). 
$$
The result then ensues from Lemma \ref{LemJ}.
\end{proof}
We finally deal with $\mathcal{L}_{Z^{\gamma}}(F)^T$, which actually enjoys stronger decay properties than $ \mathcal{L}_{Z^{\gamma}}(F)^S$ for $t\sim |x|$ (see Remark \ref{RqT} below).
\begin{Pro}\label{decayFT}
For any $|\gamma| \leq N-1$ and all $(t,x) \in [0,T[ \times \R^3$, we have
$$ \left| \mathcal{L}_{Z^{\gamma}}(F)^T \right|(t,x) \lesssim 
        \overline{\epsilon}  (1+t+|x|)^{-\frac{7}{4}}(1+|t-|x||)^{-\frac{1}{4}} .$$
\end{Pro}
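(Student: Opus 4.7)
\noindent\textit{Proof proposal.} The strategy is to reduce the estimate to the auxiliary integral $\mathbf{Y}^{p=2}_3(t,x)$ of Lemma \ref{LemKb} by first performing the $v$--integral via Proposition \ref{Proestimoyvkernet}. Fix $|\gamma|\le N-1$, $0\le\mu,\nu\le 3$ and $(t,x)\in[0,T[\times\mathbb{R}^3$. By the definition of $\mathcal{L}_{Z^\gamma}(F)^T$ in Proposition \ref{GSdecompo},
\begin{equation*}
\bigl|\mathcal{L}_{Z^\gamma}(F)^T_{\mu\nu}\bigr|(t,x)\lesssim \sum_{|\beta|\le|\gamma|}\int_{|y-x|\le t}\Bigl|\int_{\mathbb{R}^3_v}\Psi_{\mu\nu}(\omega,v)\,\widehat{Z}^\beta f(t-|y-x|,y,v)\,\dr v\Bigr|\,\frac{\dr y}{|y-x|^2},
\end{equation*}
where $\omega=(y-x)/|y-x|$ and
\begin{equation*}
\Psi_{\mu\nu}(\omega,v):=\frac{\mathcal{W}_{\mu\nu}(\omega,v)}{|v^0|^{2}(1+\omega\cdot\widehat v)}.
\end{equation*}

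Next, I would verify that $\Psi_{\mu\nu}$ meets the hypotheses of Proposition \ref{Proestimoyvkernet}. Corollary \ref{estiW} gives directly $|\Psi_{\mu\nu}|(\cdot,v)\lesssim v^{0}$ and $|\nabla_v\Psi_{\mu\nu}|(\cdot,v)\lesssim v^{0}$, so
\begin{equation*}
\|\Psi_{\mu\nu}(\cdot,v)\|_{L^\infty_\omega}+\|v^0\nabla_v\Psi_{\mu\nu}(\cdot,v)\|_{L^\infty_\omega}\lesssim |v^0|^{2}\le |v^0|^{N_v-11},
\end{equation*}
using $N_v\ge 15$. Since $|\beta|\le N-1$, Proposition \ref{Proestimoyvkernet} applies for every $\omega\in\mathbb{S}^2$, yielding
\begin{equation*}
\sup_{\omega\in\mathbb{S}^2}\Bigl|\int_{\mathbb{R}^3_v}\Psi_{\mu\nu}(\omega,v)\,\widehat{Z}^\beta f(\tau,y,v)\,\dr v\Bigr|\lesssim \frac{\overline{\epsilon}}{(1+\tau+|y|)^3},\qquad (\tau,y)\in[0,T[\times\mathbb{R}^3.
\end{equation*}

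Plugging this bound with $\tau=t-|y-x|$ into the previous display gives
\begin{equation*}
\bigl|\mathcal{L}_{Z^\gamma}(F)^T\bigr|(t,x)\lesssim \overline{\epsilon}\int_{|y-x|\le t}\frac{1}{(1+t-|y-x|+|y|)^3}\,\frac{\dr y}{|y-x|^2}=\overline{\epsilon}\,\mathbf{Y}^{p=2}_3(t,x).
\end{equation*}
Finally, applying Lemma \ref{LemKb} with $b=3$ and $\delta=\tfrac14$ gives $\mathbf{Y}^{p=2}_3(t,x)\lesssim (1+t+|x|)^{-7/4}(1+|t-|x||)^{-1/4}$, which yields the claimed bound.

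The only step that requires care is the verification that the $v$--kernel $\Psi_{\mu\nu}$ falls within the regularity/growth scope of Proposition \ref{Proestimoyvkernet}; this is where the assumption $N_v\ge 15$ is used to afford the loss of powers of $v^0$ coming from Corollary \ref{estiW}. The spacetime integral is then a direct invocation of Lemma \ref{LemKb}, the choice $\delta=\tfrac14$ being simply one convenient point in the one--parameter family of bounds it provides.
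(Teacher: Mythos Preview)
Your proof is correct and follows essentially the same approach as the paper: bound the inner $v$-integral via Proposition \ref{Proestimoyvkernet} (after checking the kernel hypothesis through Corollary \ref{estiW}), reduce to $\overline{\epsilon}\,\mathbf{Y}^{p=2}_3(t,x)$, and conclude with Lemma \ref{LemKb} at $\delta=\tfrac14$. The only cosmetic difference is that you make the use of $N_v\ge 15$ explicit when verifying $|v^0|^2\le |v^0|^{N_v-11}$, whereas the paper leaves this implicit.
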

\begin{proof}
In view of the definition of $\mathcal{L}_{Z^{\gamma}}(F)^T$, introduced in Proposition \ref{GSdecompo}, we have
$$  \left| \mathcal{L}_{Z^{\gamma}}(F)^T \right|(t,x) \lesssim \! \sum_{0 \leq \mu, \nu \leq 3} \, \sum_{|\beta| \leq |\gamma|} \int_{|y-x| \leq t } \left| \int_{\R^3_v} \frac{\mathcal{W}_{\mu\nu}(\omega,v)}{|v^0|^2(1+\omega \cdot \widehat{v})} \widehat{Z}^{\beta} f (t-|y-x|,y,v) \dr v \right| \! \frac{\dr y}{|y-x|^2}, \quad \omega=\frac{y-x}{|y-x|}.$$
Fix $0 \leq \mu, \nu \leq 3$, $|\beta| \leq |\gamma|$ and recall from Corollary \ref{estiW} that $\Psi(\sigma,v):=\frac{\mathcal{W}_{\mu\nu}(\sigma,v)}{|v^0|^2(1+\omega \cdot \widehat{v})}$ satisfies $|\Psi|(\cdot,v)+|\nabla_v \Psi|(\cdot,v) \lesssim v^0$. We then obtain from Proposition \ref{Proestimoyvkernet} that
$$ \forall \, \sigma \in \mathbb{S}^2, \; \forall \, (\tau,z) \in [0,T[\times \R^3, \qquad \left| \int_{\R^3_v} \frac{\mathcal{W}_{\mu\nu}(\sigma,v)}{|v^0|^2(1+\sigma \cdot \widehat{v})} \widehat{Z}^{\beta} f (\tau,z,v) \dr v \right| \lesssim \frac{\overline{\epsilon}}{(1+\tau+|z|)^3}.$$
Applying this estimate for $\sigma=\omega$, $\tau=t-|y-x|$ and $z=y$, we get from Lemma \ref{LemKb} that
$$  \left| \mathcal{L}_{Z^{\gamma}}(F)^T \right|(t,x) \lesssim  \overline{\epsilon} \, \mathbf{Y}^{p=2}_3(t,x) \lesssim \overline{\epsilon} (1+t+|x|)^{-\frac{7}{4}}(1+|t-|x||)^{-\frac{1}{4}} .$$
\end{proof}
\begin{Rq}\label{RqT}
In fact, Lemma \ref{LemKb} also provides $\left| \mathcal{L}_{Z^{\gamma}}(F)^T \right|(t,x) \lesssim \overline{\epsilon} (1+t+|x|)^{-2}\log(1+t)$. Moreover, the estimate could be significantly improved in the exterior of the light cone, where $|x|\geq t$.
\end{Rq}
If the constant $C_\mathrm{boot}$ is chosen such that $C_\mathrm{boot} \geq 2C_\mathrm{data}$ and if $\epsilon$ is small enough, Propositions \ref{estidata}, \ref{estiS} and \ref{decayFT} allow us to improve the bootstrap assumption \eqref{boot1}.

\subsection{The top order derivatives}\label{subsecba2}

In this section, we estimate all the terms listed in Proposition \ref{GSdecomoderiv} in order to improve the bootstrap assumption \eqref{boot2}. We start by dealing with the ones depending explicitly on the data.

\begin{Pro}\label{estidataA}
There exists a constant $\overline{C}_{\mathrm{data}}$, depending only on $N$, such that, for any $k \in \llbracket 1, 3 \rrbracket$ and $|\gamma| = N-1$,
$$ \forall \, (t,x) \in [0,T[ \times \R^3, \qquad \left|A^{\mathrm{data}}_{\gamma,k}\right|(t,x) \leq \Lambda \, \overline{C}_{\mathrm{data}} (1+t+|x|)^{-1}(1+|t-|x||)^{-2}.$$
\end{Pro}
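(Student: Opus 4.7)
The plan is to bound the three summands making up $A^{\mathrm{data}}_{\gamma,k}$ --- namely $4\pi\partial_{x^k}\mathcal{L}_{Z^\gamma}(F)^{\mathrm{data}}$ (which I further split into its homogeneous wave part and its data sphere integral, via the definition of $\mathcal{L}_{Z^\gamma}(F)^{\mathrm{data}}$ given in Proposition \ref{GSdecompo}) and the two explicit sphere integrals weighted by $\mathcal{D}^k$ and $\mathcal{C}^k$ --- by a constant multiple of $\Lambda (1+t+|x|)^{-1}(1+|t-|x||)^{-2}$. The case $t\leq 1$ is immediate once one observes that $t^{-2}\int_{|y-x|=t}$ has a continuous limit while $t^{-1}\int_{|y-x|=t}$ vanishes as $t\to 0$, so that $|A^{\mathrm{data}}_{\gamma,k}|(t,x)\lesssim \Lambda\langle x\rangle^{-3}$ follows from the initial hypotheses and the standard continuity of Kirchhoff's formula. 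From here on I would assume $t\geq 1$.

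For the homogeneous piece, I would apply Proposition \ref{decaylinMax} with $q=3$ to $\partial_{x^k}\mathcal{L}_{Z^\gamma}(F)^{\mathrm{hom}}$, a free wave solution whose initial data is $\partial_{x^k}\mathcal{L}_{Z^\gamma}(F)(0,\cdot)$; by the equivalence \eqref{equinorm} and the assumption on $F_0$ (which controls up to $N+1$ spatial derivatives, and $|\gamma|+2\leq N+1$), the quantity $\mathcal{E}^3$ for this data is bounded by a constant multiple of $\Lambda$. The $\partial_{x^k}$ of the data sphere integral in $\mathcal{L}_{Z^\gamma}(F)^{\mathrm{data}}$ is handled by parametrizing $y=x+t\sigma$, $\sigma\in\mathbb{S}^2$: the $x$-derivative either transfers onto $\partial_{y^k}\widehat{Z}^\beta f(0,y,v)$ (costing only one power of $\langle y\rangle$, affordable since $N_x-1>6$) or onto $\partial_{y^k}\omega\sim 1/t$, producing a term of exactly the same type as the $\mathcal{D}^k$-integral treated below. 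Combined with $|\mathcal{W}_{\mu\nu}|\lesssim v^0$ from Corollary \ref{estiW} and \eqref{decayhYang}, both pieces yield the claimed decay.

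The two explicit sphere integrals hinge on the kernel bounds $|\mathcal{D}^k|,|\mathcal{C}^k|\lesssim |v^0|^3$, which follow from Corollary \ref{estikernels} or directly from $|\mathbf{w}_{\mu\nu}|^2\leq 2(1+\omega\cdot\widehat{v})$ and $(1+\omega\cdot\widehat{v})^{-1}\leq 2|v^0|^2$ in Lemma \ref{LemtechestiW}. Integrating in $v$ against the hypothesis on $f_0$ leaves an integrand bounded by $\epsilon\langle y\rangle^{-N_x}$, and \eqref{decayhYang} with $p=N_x>3$ then controls the sphere integral by $C\,t(1+t+|x|)^{-1}(1+|t-|x||)^{-N_x+2}$. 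The $\mathcal{C}^k\mathbf{T}_0\widehat{Z}^\beta f$-term is treated analogously once $\mathbf{T}_0\widehat{Z}^\beta f(0,y,v)$ is rewritten, using the commuted Vlasov equation of Proposition \ref{Com}, as an algebraic expression in the initial data of $f_0$ and $F_0$ (yielding a factor $\epsilon\Lambda$ in place of $\epsilon$).

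The main obstacle is the case distinction needed to absorb the $t^{-2}$ (resp.\ $t^{-1}$) prefactor into the anisotropic decay $(1+t+|x|)^{-1}(1+|t-|x||)^{-2}$: in the interior region $|x|\leq 2t$ one uses $t^{-1}\lesssim (1+t+|x|)^{-1}$ and combines factors directly, whereas in the exterior region $|x|>2t$ one instead exploits $\langle y\rangle\sim|x|\sim 1+t+|x|\sim 1+|t-|x||$ on the sphere to trade the prefactor for additional powers of $(1+|x|)^{-1}$. This bookkeeping mirrors that of Proposition \ref{estidata} and, together with $N_x>3$, closes the estimate with a constant $\overline{C}_{\mathrm{data}}$ depending only on $N$.
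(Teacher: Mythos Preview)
Your approach is essentially the same as the paper's: decompose $A^{\mathrm{data}}_{\gamma,k}$ into the homogeneous wave piece (handled by Proposition \ref{decaylinMax} with $q=3$) and sphere integrals over initial data (handled by the kernel bounds of Corollaries \ref{estiW}, \ref{estikernels} and the estimate \eqref{decayhYang}), followed by a case split to absorb the $t^{-1}$ or $t^{-2}$ prefactor. Two small remarks: once you parametrize $y=x+t\sigma$ the variable $\omega=\sigma$ is independent of $x$, so no ``$\partial_{y^k}\omega$'' term actually appears; and the paper handles the $\T_0\widehat{Z}^\beta f(0,\cdot,\cdot)$ term simply by absorbing it into $\sum_{|\beta|\leq|\gamma|+1}|\widehat{Z}^\beta f|(0,\cdot,\cdot)$ rather than invoking the commuted equation, which avoids the extra $\Lambda$ factor (though your route also works since the $F_0$ contribution carries an additional $\langle y\rangle^{-2}$).
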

\begin{proof}
Recall from Propositions \ref{GSdecompo}, \ref{GSdecomoderiv} the expression of $A^{\mathrm{data}}_{\gamma,k}$ and from Corollaries \ref{estiW}, \ref{estikernels} the bounds on the kernels. Hence, for $(t,x) \in [0,T[ \times \R^3$,
$$\left|A^{\mathrm{data}}_{\gamma,k}\right|(t,x) \lesssim \left|\nabla_{\partial_{x^k}} \mathcal{L}_{Z^{\gamma}}(F)^{\mathrm{hom}}\right|(t,x) +\sum_{|\beta| \leq |\gamma|+1}\min(t^{-1},t^{-2}) \int_{|y-x|=t} \int_{\R^3_v} |v^0|^3 \left| \widehat{Z}^{\beta}f\right|(0,y,v) \dr v \dr y.$$
As $[\partial_{x^\mu},Z]=0$ or $[\partial_{x^\mu},Z]=\pm \partial_{x^\nu}$ for any $Z \in \mathbb{K}$, by the equivalence of the pointwise norms \eqref{equinorm} and in view of the smallness assumptions on the initial data, there holds
\begin{align*}
 \left|\nabla_{\partial_{x^k}} \mathcal{L}_{Z^{\gamma}}(F)^{\mathrm{hom}} \right|(0,y) =\left|\nabla_{\partial_{x^k}} \mathcal{L}_{Z^{\gamma}}(F) \right|(0,y)  \lesssim \sum_{1 \leq |\kappa| \leq N} \langle y \rangle^{|\kappa|-1} \left| \nabla_{t,x}^{\kappa}F \right|(0,y) \lesssim  \Lambda \langle y \rangle^{-3}, \\
 \left|\nabla_{t,x}\nabla_{\partial_{x^k}} \mathcal{L}_{Z^{\gamma}}(F)^{\mathrm{hom}} \right|(0,y) = \left|\nabla_{t,x} \nabla_{\partial_{x^k}} \mathcal{L}_{Z^{\gamma}}(F) \right|(0,y)  \lesssim \sum_{2 \leq |\kappa| \leq N+1} \langle y \rangle^{|\kappa|-2} \left| \nabla_{t,x}^{\kappa}F \right|(0,y) \lesssim  \Lambda \langle y \rangle^{-4}
\end{align*}
As $\nabla_{\partial_{x^k}} \mathcal{L}_{Z^{\gamma}}(F)^{\mathrm{hom}}_{\mu \nu}$ is solution to the homogeneous wave equation, Proposition \ref{decaylinMax} gives
$$ \left|\nabla_{\partial_{x^k}} \mathcal{L}_{Z^{\gamma}}(F)^{\mathrm{hom}}\right|(t,x) \lesssim  \Lambda (1+t+|x|)^{-1}(1+|t-|x||)^{-2}.$$
Since $|v^0|^{-N_v+3} \in L^1(\R^3_v)$, we have for any $|\beta| \leq N$,
$$\int_{\R^3_v} |v^0|^3 \left| \widehat{Z}^{\beta}f\right|(0,y,v) \dr v \lesssim \langle y \rangle^{-N_x} \sup_{|\kappa|+|\xi| \leq N}\sup_{(x,v) \in \R^6} |v^0|^{N_v+|\xi|}\langle x \rangle^{N_x+|\kappa|} \left| \partial_{v}^{\kappa} \partial_x^{\xi} f\right|(0,x,v) \lesssim \epsilon \, \langle y \rangle^{-N_x}.$$
Consequently, as $N_x \geq 5$, we have
$$\left|A^{\mathrm{data}}_{\gamma,k}\right|(t,x) \lesssim  \Lambda (1+t+|x|)^{-1}(1+|t-|x||)^{-2} +\epsilon \min(t^{-1},t^{-2}) \mathcal{Q}_{t,x}, \qquad \mathcal{Q}_{t,x}:= \int_{|y-x|=t} \frac{ \dr \mu_{\mathbb{S}^2}}{\langle y \rangle^{5}}.$$
As $\epsilon \leq \Lambda$, it remains to prove $\min(t^{-1},t^{-2}) \mathcal{Q}_{t,x} \lesssim (1+t+|x|)^{-1}(1+|t-|x||)^{-2}$ and, for this, we consider two cases.
\begin{itemize}
\item If $t\leq 1$, then $|y|\geq |x|-1$ on the domain of integration and $\mathcal{Q}_{t,x} \lesssim 4\pi t^2\langle x \rangle^{-5}$. It remains to remark that $\langle x \rangle \geq 1+t+|x| \geq 1+|t-|x||$ and $t^{-1} \leq t^{-2}$ in the region considered.
\item Otherwise $t \geq 1$ and we have $\mathcal{Q}_{t,x} \lesssim t(1+t+|x|)^{-1}(1+|t-|x||)^{-2}$ according to the estimate \eqref{decayhYang}. The result ensues from $t^{-2} \leq t^{-1}$ in the domain treated here.
\end{itemize}
\end{proof}
Next, we consider the vertex term.
\begin{Pro}\label{Aw}
Let $k \in \llbracket 1, 3 \rrbracket$ and $|\gamma| = N-1$. We have 
$$\forall \, (t,x) \in [0,T[ \times \R^3, \qquad \left|A^{\mathrm{ver}}_{\gamma,k}\right|(t,x) \lesssim \overline{\epsilon} \, (1+t+|x|)^{-3}.$$
\end{Pro}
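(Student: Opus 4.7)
The plan is to reduce the estimate to a direct application of Proposition \ref{Proestimoyvkernet}, which provides the optimal pointwise decay $(1+t+|x|)^{-3}$ for velocity averages of derivatives of $f$ of order at most $N-1$ weighted by kernels depending on $(\omega,v)$. Indeed, the expression of $A^{\mathrm{ver}}_{\gamma,k}$ given in Proposition \ref{GSdecomoderiv} is, for each $(\mu,\nu)$, a finite sum over $|\beta| \leq N-1$ of spherical averages of the quantities
$$
\int_{\R^3_v} \mathcal{D}^k_{\mu \nu}(\sigma,v) \, \widehat{Z}^\beta f(t,x,v) \, \dr v, \qquad \sigma \in \mathbb{S}^2.
$$
In particular, the argument is pointwise in $(t,x)$ and the integrand is evaluated at the base point, so no Glassey--Strauss style integral over a backward cone appears: only the last step of Section \ref{subsecaveragev} is needed.

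First I would verify that the kernel $\Psi(\sigma,v):=\mathcal{D}^k_{\mu \nu}(\sigma,v)$ satisfies the hypothesis of Proposition \ref{Proestimoyvkernet}. By Corollary \ref{estikernels},
$$
|\mathcal{D}^k|(\sigma,v)+|\nabla_v \mathcal{D}^k|(\sigma,v) \lesssim |v^0|^3,
$$
uniformly in $\sigma \in \mathbb{S}^2$, so that $\|\Psi(\cdot,v)\|_{L^\infty_\omega}+\|v^0 \nabla_v \Psi(\cdot,v)\|_{L^\infty_\omega} \lesssim |v^0|^4 \leq |v^0|^{N_v-11}$ thanks to the assumption $N_v \geq 15$. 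Since $|\beta| \leq N-1$, Proposition \ref{Proestimoyvkernet} then yields, uniformly in $\sigma \in \mathbb{S}^2$,
$$
\left| \int_{\R^3_v} \mathcal{D}^k_{\mu \nu}(\sigma,v) \, \widehat{Z}^\beta f(t,x,v) \, \dr v \right| \lesssim \frac{\overline{\epsilon}}{(1+t+|x|)^3}
$$
for every $(t,x) \in [0,T[ \times \R^3$.

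Finally, integrating this uniform bound in $\sigma$ over $\mathbb{S}^2$ contributes only a harmless factor $4\pi$, and summing over the finitely many multi-indices $|\beta| \leq N-1$ and tensorial indices $(\mu,\nu)$ with the constants $C^\gamma_\beta$ gives the stated estimate. There is no real obstacle here; the result is essentially a repackaging of the optimal velocity-average decay from Corollary \ref{Corboo3} in the form needed for kernels depending on an auxiliary angular parameter, which is precisely why Proposition \ref{Proestimoyvkernet} was stated in that slightly more general form.
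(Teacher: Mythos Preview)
Your proof is correct and follows exactly the same approach as the paper: bound the kernel $\mathcal{D}^k_{\mu\nu}$ and its $v$-gradient via Corollary \ref{estikernels}, invoke $N_v \geq 15$ to meet the hypothesis of Proposition \ref{Proestimoyvkernet}, and then integrate the resulting uniform-in-$\sigma$ decay estimate over $\mathbb{S}^2$. There is no meaningful difference between your argument and the paper's.
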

\begin{proof}
Fix $0 \leq \mu,\nu \leq 3$, $(t,x) \in [0,T[\times \R^3$ and recall $N_v \geq 15$, so that Corollary \ref{estikernels} implies $| \mathcal{D}^k_{\mu\nu}|(\omega,v)+|v^0\nabla_v \mathcal{D}^k_{\mu\nu}|(\omega,v) \lesssim |v^0|^{N_v-11}$. Proposition \ref{Proestimoyvkernet}, applied for $\Psi=\mathcal{D}^k_{\mu \nu}$ and to any $|\beta| \leq N-1$, then yields
$$ |A^{\mathrm{ver}}_{\gamma,k, \, \mu\nu}|(t,x) \lesssim \sum_{|\beta| \leq |\gamma|} \int_{\sigma \in \mathbb{S}^2} \left|\int_{\R^3_v} \mathcal{D}_{\mu \nu}^k(\sigma,v) \widehat{Z}^{\beta} f(t,x,v) \dr v \right| \dr \mu_{\mathbb{S}^2} \lesssim \frac{\overline{\epsilon}}{ (1+t+|x|)^{3}}  \int_{\sigma \in \mathbb{S}^2}  \dr \mu_{\mathbb{S}^2}=\frac{4 \pi \overline{\epsilon}}{ (1+t+|x|)^{3}} .$$
\end{proof}
We now estimate $A^{T,S}_{\gamma,k}$. Note that the next result could be easily improved but it is more than enough for the purpose of improving the bootstrap assumption \eqref{boot2}.
\begin{Pro}\label{ATS}
For any $k \in \llbracket 1,3 \rrbracket$ and $|\gamma|=N-1$, there holds
$$\forall \, (t,x) \in [0,T[ \times \R^3, \qquad \left|A^{T,S}_{\gamma,k}\right|(t,x) \lesssim \overline{\epsilon} \, \Lambda (1+t+|x|)^{-\frac{3}{2}}(1+|t-|x||)^{-2}.$$
\end{Pro}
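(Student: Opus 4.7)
The plan is to follow the general scheme of Proposition \ref{decayFT}, with the extra Lorentz-force factor $K_\xi^j(\tau,y,v)=\widehat{v}^\lambda{\mathcal{L}_{Z^\xi}(F)_\lambda}^j(\tau,y)$ appearing in the definition of $A^{T,S}_{\gamma,k}$. Fix $0\leq \mu,\nu \leq 3$ and multi-indices with $|\xi|+|\kappa|\leq N-1$. Since ${\mathcal{L}_{Z^\xi}(F)_\lambda}^j$ depends only on $(\tau,y)$ and not on $v$, the first step is to extract it from the velocity integral:
\begin{align*}
\int_{\R^3_v}\partial_{v^j}\mathcal{B}^k_{\mu\nu}(\omega,v)\,K^j_\xi\,\widehat{Z}^\kappa f\,\dr v = {\mathcal{L}_{Z^\xi}(F)_\lambda}^j(\tau,y)\int_{\R^3_v}\Psi^{\lambda,j}(\omega,v)\,\widehat{Z}^\kappa f(\tau,y,v)\,\dr v,
\end{align*}
where $\Psi^{\lambda,j}(\omega,v):=\widehat{v}^\lambda\partial_{v^j}\mathcal{B}^k_{\mu\nu}(\omega,v)$. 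By Corollary \ref{estikernels}, $\|\Psi^{\lambda,j}(\cdot,v)\|_{L^\infty_\omega}+\|v^0\nabla_v \Psi^{\lambda,j}(\cdot,v)\|_{L^\infty_\omega}\lesssim |v^0|^4\lesssim |v^0|^{N_v-11}$ since $N_v\geq 15$, so Proposition \ref{Proestimoyvkernet} applies for $|\kappa|\leq N-1$, yielding $\big|\int\Psi^{\lambda,j}\widehat{Z}^\kappa f\,\dr v\big|(\tau,y)\lesssim\overline{\epsilon}(1+\tau+|y|)^{-3}$.

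Combining this with the bootstrap assumption \eqref{boot1} applied to $|\mathcal{L}_{Z^\xi}(F)|$, the problem reduces to bounding
\begin{align*}
\int_{|y-x|\leq t}\frac{\dr y}{(1+\tau+|y|)^4(1+|\tau-|y||)|y-x|^2},\qquad \tau = t-|y-x|,
\end{align*}
by a constant times the target $(1+t+|x|)^{-3/2}(1+|t-|x||)^{-2}$. The plan is to split the domain at $|y-x|\sim (1+|t-|x||)/8$. In the inner region, the triangle inequality yields $|\tau-|y||\gtrsim 1+|t-|x||$ when $|t-|x||$ is large (the small-$|t-|x||$ regime being handled directly by Lemma \ref{LemKb}), so the factor $(1+|\tau-|y||)^{-1}$ provides a pointwise gain of $(1+|t-|x||)^{-1}$; combined with $1+\tau+|y|\gtrsim 1+t+|x|$ and $\int|y-x|^{-2}\dr y\lesssim 1+|t-|x||$ over this ball, the inner contribution is bounded by $(1+t+|x|)^{-4}$, which is stronger than the target. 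In the outer region, I plan to pull out $|y-x|^{-2}\lesssim (1+|t-|x||)^{-2}$ and estimate the remaining integral using the sharper velocity-moment bound of Proposition \ref{estimoyv}, which provides the extra gain $(1+\tau-|y|)^a$ in the interior of the backward light cone, together with Lemma \ref{LemKb} at the effectively higher power $(1+\tau+|y|)^{-5}$.

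The main obstacle is this refined analysis of the spatial integral. A naive application of Lemma \ref{LemKb}, after dropping the null factor $(1+|\tau-|y||)^{-1}\leq 1$, gives only $(1+t+|x|)^{-3/2}(1+|t-|x||)^{-3/2}$ (choosing $\delta=1/2$). Upgrading to the claimed $(1+|t-|x||)^{-2}$ decay relies on exploiting the null factor through the domain decomposition above, or alternatively on harvesting the extra interior decay of $v$-moments from Proposition \ref{estimoyv} to effectively upgrade the power of $(1+\tau+|y|)$ in the integrand from $4$ to $5$, before applying Lemma \ref{LemKb}.
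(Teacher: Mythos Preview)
Your ``alternative'' at the very end---using Proposition~\ref{estimoyv} to harvest an extra $(1+|\tau-|y||)$ and thereby upgrade the power of $(1+\tau+|y|)$---is exactly what the paper does, and it is by far the cleanest route. Concretely, instead of invoking Proposition~\ref{Proestimoyvkernet}, take absolute values inside the $v$-integral, bound $|\partial_{v^j}\mathcal{B}^k_{\mu\nu}|\lesssim|v^0|^3$ by Corollary~\ref{estikernels}, and apply Proposition~\ref{estimoyv} with $a=1$ (this only needs $N_v\geq 13$). Combining the interior and exterior estimates there, and absorbing the logarithm into a half-power, yields
\[
\int_{\R^3_v}|\partial_{v^j}\mathcal{B}^k_{\mu\nu}(\omega,v)|\,|\widehat{Z}^\kappa f|(\tau,y,v)\,\dr v\;\lesssim\;\overline{\epsilon}\,\frac{1+|\tau-|y||}{(1+\tau+|y|)^{7/2}}.
\]
The factor $(1+|\tau-|y||)$ in the numerator cancels the one in the denominator coming from \eqref{boot1}, and you are left with $\overline{\epsilon}\,\Lambda\,\mathbf{Y}^{p=2}_{9/2}(t,x)$; Lemma~\ref{LemKb} with $b=9/2$ and $\delta=1/2$ gives precisely $(1+t+|x|)^{-3/2}(1+|t-|x||)^{-2}$. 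No domain decomposition is needed.

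By contrast, your primary plan has a genuine inconsistency in the outer region. Once you pull out $|y-x|^{-2}\lesssim(1+|t-|x||)^{-2}$, the remaining spatial integral no longer carries the weight $|y-x|^{-2}$, so Lemma~\ref{LemKb} (which estimates $\mathbf{Y}^{p=2}_b$) does not apply. If you instead bound the weightless integral $\int_{|y-x|\leq t}(1+\tau+|y|)^{-5}\dr y$ directly, a short computation in null coordinates shows it is only $\lesssim(1+|t-|x||)^{-2}$, so that after restoring the pulled-out factor you obtain $(1+|t-|x||)^{-4}$, which is weaker than the target near the light cone (take for instance $|t-|x||\sim (t+|x|)^{1/2}$). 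The point is that the splitting scale $(1+|t-|x||)/8$ is tied to the wrong parameter: what drives the $(1+t+|x|)^{-3/2}$ decay in Lemma~\ref{LemKb} is the region $|y-x|\sim t$, and discarding the $|y-x|^{-2}$ weight there throws this away. Since your inner-region analysis is correct and the alternative handles everything uniformly, just drop the decomposition and run the alternative globally.
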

\begin{proof}
Let $0 \leq \mu,\nu \leq 3$, $(t,x) \in [0,T[\times \R^3$ and recall that $K_{\xi}^j:=\widehat{v}^{\lambda} {\mathcal{L}_{Z^{\xi}}(F)_{\lambda}}^j$. Consequently, $|A^{T,S}_{\gamma,k,\mu\nu}|(t,x)$ is bounded by a linear combination of terms of the form
$$ \mathcal{Q}^{\xi,\kappa}_{t,x}:= \int_{|y-x| \leq t} \! \left| {\mathcal{L}_{Z^{\xi}}(F)_{\lambda}}^j\right|(t-|y-x|,y)  \int_{\R^3_v} \big|\partial_{v^j} \mathcal{B}_{\mu \nu}^k(\omega,v)  \big| \left|\widehat{Z}^{\kappa}f(t-|y-x|,y,v) \right| \mathrm{d}v\frac{  \mathrm{d}y}{|y-x|^2},$$
with $|\xi|+|\kappa| \leq N-1$ and where we recall that $\omega=(y-x)/|y-x|$. Since $|\partial_{v^j} \mathcal{B}^k_{\mu \nu}(\omega,v)| \lesssim |v^0|^3$ by Corollary \ref{estikernels} and $N_v\geq 13$, Proposition \ref{estimoyv}, applied for $a=1$, provides
$$ \int_{\R^3_v} \big|\partial_{v^j} \mathcal{B}^k_{\mu \nu}(\omega,v)  \big| \left|\widehat{Z}^{\kappa}f(t-|y-x|,y,v) \right| \mathrm{d}v \lesssim \frac{\overline{\epsilon}\, (1+|t-|y-x|-|y||)}{(1+t-|y-x|+|y|)^{3+\frac{1}{2}}}.$$
Moreover, as $|\xi| \leq N-1$, the bootstrap assumption \eqref{boot1} gives 
$$\left| {\mathcal{L}_{Z^{\xi}}(F)_{\lambda}}^j\right|(\tau,y)\lesssim \Lambda (1+t-|y-x|+|y|)^{-1}(1+|t-|y-x|-|y||)^{-1}.$$ Consequently, the last two estimates yield
$$ \mathcal{Q}^{\xi,\kappa}_{t,x} \lesssim \overline{\epsilon} \, \Lambda \int_{|y-x| \leq t} (1+t-|y-x|)^{-4-\frac{1}{2}} \frac{  \mathrm{d}y}{|y-x|^2} = \overline{\epsilon} \, \Lambda \mathbf{Y}_{4+1/2}^{p=2}(t,x)$$
and the result follows from Lemma \ref{LemKb}.
\end{proof}
We pursue with the analysis of $A^{SS}_{\gamma,k}$. As for the previous term, the estimate could be improved.
\begin{Pro}\label{ASS}
We have, for any $k \in \llbracket 1,3 \rrbracket$ and $|\gamma|=N-1$,
$$\forall \, (t,x) \in [0,T[ \times \R^3, \qquad \left|A^{SS}_{\gamma,k}\right|(t,x) \lesssim \overline{\epsilon} \, \Lambda \, \langle \Lambda \rangle \, (1+t+|x|)^{-1}(1+|t-|x||)^{-\frac{5}{2}}.$$
\end{Pro}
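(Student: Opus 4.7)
The plan is to decompose $A^{SS}_{\gamma,k} = A^{SS,I}_{\gamma,k}+A^{SS,II}_{\gamma,k}+A^{SS,III}_{\gamma,k}+A^{SS,IV}_{\gamma,k}$ according to Proposition \ref{GSdecomoderiv} and treat each of the four terms by the same three-step procedure: bound the kernel by $|v^0|^3$ using Corollary \ref{estikernels}, estimate the Lorentz force and the velocity averages pointwise, then reduce the remaining spatial integral to the ones controlled by Lemma \ref{LemJ}. Since the kernels $\nabla_v \nabla_v \mathcal{C}^k$, $\nabla_v \mathcal{C}^k$, $\mathcal{C}^{k,n}_j$ and $\nabla_v \mathcal{C}^{k,n}_j$ together with $|v^0|\nabla_v$ applied to them are all bounded by $|v^0|^{N_v-11}$ (as $N_v\geq 15$), the weighted $v$-averages can be controlled through Propositions \ref{Proestimoyvkernet}, \ref{estimoyv} and \ref{gainderivdecaymoyv}.

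More concretely, for $A^{SS,III}$ and $A^{SS,IV}$, I would fix a multi-index $|\xi|+|\kappa|\leq N-1$ and bound the Lorentz force factor by
\[
|\partial_{x^n}(K^j_\xi)|(\tau,y) \lesssim \Lambda \log(3+|\tau-|y||)(1+\tau+|y|)^{-1}(1+|\tau-|y||)^{-2},
\]
using bootstrap assumption \eqref{boot2} if $|\xi|=N-1$ and Proposition \ref{Prodecaytoporder} (with $\gamma_T\geq 1$) otherwise. For the velocity average against $\widehat{Z}^\kappa f$, I would apply Proposition \ref{estimoyv} with $a=1$ for $A^{SS,III}$, giving an extra $(1+\tau+|y|)^{-1}(1+|\tau-|y||)$, and Proposition \ref{gainderivdecaymoyv} for $A^{SS,IV}$ (since a derivative falls on $f$), obtaining decay of order $(1+\tau+|y|)^{-4}$. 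In both cases, the combined integrand is bounded by $\overline{\epsilon}\,\Lambda\,\log^*(3+t)\,(1+\tau+|y|)^{-5}(1+|\tau-|y||)^{-1}$, where $\log^*$ stands for a polynomial in $\log$. Lemma \ref{LemJ} with $b=5$ then yields a contribution of order
\[
\overline{\epsilon}\,\Lambda\,\log^*(3+t)\,(1+t+|x|)^{-1}(1+|t-|x||)^{-3},
\]
and any surplus logarithmic factor can be absorbed in $(1+|t-|x||)^{-1/2}$ via $\log(3+s) \lesssim_\delta (1+s)^\delta$ for small $\delta>0$, yielding the desired $(1+|t-|x||)^{-5/2}$ bound.

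The analysis of $A^{SS,II}$ is analogous, as $\T_0(K_\xi)$ obeys the same pointwise estimate as $\partial_{x^n}(K_\xi)$, and $\nabla_v\mathcal{C}^k$ satisfies the required hypothesis of Proposition \ref{Proestimoyvkernet}, giving $|\int \widehat{v}^\lambda\widehat{v}^\mu \nabla_v \mathcal{C}^k \widehat{Z}^\kappa f\dr v|\lesssim \overline{\epsilon}(1+\tau+|y|)^{-3}$. For $A^{SS,I}$, which carries two Lorentz force factors $K_\xi$ and $K_\zeta$ with $|\xi|+|\zeta|+|\kappa|\leq N-1$, bootstrap assumption \eqref{boot1} gives
\[
|K_\xi\cdot K_\zeta|(\tau,y) \lesssim \Lambda^2(1+\tau+|y|)^{-2}(1+|\tau-|y||)^{-2},
\]
while the velocity average of $\widehat{Z}^\kappa f$ against $\nabla_v(\nabla_v\mathcal{C}^k \cdot v)$ is of order $\overline{\epsilon}(1+\tau+|y|)^{-3}$. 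The resulting integrand has total profile $\overline{\epsilon}\,\Lambda^2\,(1+\tau+|y|)^{-5}(1+|\tau-|y||)^{-2}$ and, after bounding $(1+|\tau-|y||)^{-2}\leq (1+|\tau-|y||)^{-1}$, Lemma \ref{LemJ} again produces the stated decay. The factor $\Lambda^2$ is precisely what forces the $\langle \Lambda\rangle$ in the statement.

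The only subtlety I foresee is the careful absorption of the polynomial logarithmic factors -- coming from \eqref{boot2}, from Proposition \ref{estimoyv} and from Lemma \ref{LemJ} itself -- into fractional powers of $(1+|t-|x||)$ without spoiling the sharp $(1+t+|x|)^{-1}$ decay. This works because all such logs depend on quantities bounded by $t+|x|$, and one has enough room in $(1+|t-|x||)$ (we aim for $-5/2$ while the raw bound delivers $-3$) to accommodate them via $\log^k(3+s)\lesssim_\delta (1+s)^\delta$ with small $\delta$.
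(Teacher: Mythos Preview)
Your overall strategy matches the paper's: split $A^{SS}_{\gamma,k}$ into the four pieces of Proposition~\ref{GSdecomoderiv}, bound the kernels by $|v^0|^3$ via Corollary~\ref{estikernels}, estimate the field factors through \eqref{boot1}--\eqref{boot2} and Proposition~\ref{Prodecaytoporder}, control the velocity averages, and reduce to $\mathbf{Y}^{p=1}_{b,1}$ via Lemma~\ref{LemJ}. Two technical points, however, need correction.

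For $A^{SS,II}$ you invoke Proposition~\ref{Proestimoyvkernet}, which only yields $\overline{\epsilon}(1+\tau+|y|)^{-3}$ for the velocity average. Combined with $|\T_0(K_\xi)|\lesssim\Lambda\log(3+|\tau-|y||)(1+\tau+|y|)^{-1}(1+|\tau-|y||)^{-2}$, the integrand is at best $(1+\tau+|y|)^{-4}(1+|\tau-|y||)^{-1}$, and Lemma~\ref{LemJ} with $b=4$ delivers only $(1+|t-|x||)^{-2}$, not $-5/2$. The paper instead uses the \emph{improved} estimate of Proposition~\ref{estimoyv} with $a=1$ (as you correctly do for $A^{SS,III}$), gaining an extra factor $(1+|\tau-|y||)/(1+\tau+|y|)$ that pushes the exponent to $b\sim 5$.

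Your log-absorption also does not work as stated. You pull $\log^*(3+t)$ outside the integral and then propose to absorb it into $(1+|t-|x||)^{1/2}$; near the light cone, where $|t-|x||\sim 1$, this fails because $\log^*(3+t)$ is unbounded. The fix---which the paper uses---is to observe that every logarithm in the integrand (from \eqref{boot2}, Proposition~\ref{estimoyv}, or Proposition~\ref{gainderivdecaymoyv}) is bounded by a power of $\log(3+\tau+|y|)$ and to absorb it \emph{inside} the integrand via $\log^M(3+\tau+|y|)\lesssim(1+\tau+|y|)^{1/4}$. This reduces the effective $b$ to $4+\tfrac{3}{4}$, and Lemma~\ref{LemJ} then gives $(1+t+|x|)^{-1}(1+|t-|x||)^{-11/4}\log(3+|t-|x||)\lesssim(1+t+|x|)^{-1}(1+|t-|x||)^{-5/2}$ as required. (A minor slip: in $A^{SS,IV}$ the field factor is $K_\xi$, not $\partial_{x^n}K_\xi$, so it is \eqref{boot1} rather than \eqref{boot2} that applies; this does not affect the final bound.)
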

\begin{proof}
We fix $(t,x) \in [0,T[ \times \R^3$ and we recall that $K_{\xi}^j:=\widehat{v}^{\lambda} {\mathcal{L}_{Z^{\xi}}(F)_{\lambda}}^j$. Recall further from Proposition \ref{GSdecomoderiv} that $A^{SS}_{\gamma,k}$ can be decomposed as the sum of four terms. Bounding the kernel in $A^{SS,I}_{\gamma,k}$ by Corollary \ref{estikernels} and estimating the derivatives of the electromagnetic field using \eqref{boot1}, we have
$$ \left|A^{SS,I}_{\gamma,k}\right|\!(t,x) \lesssim \! \sum_{|\kappa| \leq N-1} \int_{|y-x|\leq t} \frac{ \Lambda^2}{(1+\tau+|y|)^2(1+|\tau-|y||)^2} \int_{\R^3_v}\! |v^0|^3 \! \left| \widehat{Z}^{\kappa} f \right|(\tau,y,v) \dr v \frac{\dr y}{|y-x|}, \quad \tau:=t-|y-x|.$$
For the next two terms, recall that $\T_0=\widehat{v}^{\lambda} \partial_{x^{\lambda}}$ and the expression of $K_{\xi}$. Recall further from Corollary \ref{estikernels} that the integral kernels are bounded by $|v^0|^3$. Consequently, we can bound $|A^{SS,II}_{\gamma,k}|(t,x)+|A^{SS,III}_{\gamma,k}|(t,x)$ by a linear combination of terms of the form
$$ \mathcal{R}^{\xi,\kappa}_{t,x}:= \int_{|y-x| \leq t} \! \left| \nabla_{t,x}\mathcal{L}_{Z^{\xi}}(F)\right|(t-|y-x|,y)  \int_{\R^3_v}|v^0|^3 \left| \widehat{Z}^{\kappa}f(t-|y-x|,y,v) \right| \mathrm{d}v\frac{  \mathrm{d}y}{|y-x|},$$ 
where $|\xi|+|\kappa| \leq N-1$. We estimate the electromagnetic field through \eqref{boot2} if $|\xi|=N-1$ or by Proposition \ref{Prodecaytoporder} if $|\xi|\leq N-2$. This leads to the bound
$$ \left|A^{SS,II}_{\gamma,k}\right|\!(t,x)+\left|A^{SS,III}_{\gamma,k}\right|\!(t,x) \lesssim \! \sum_{|\kappa| \leq N-1} \int_{|y-x|\leq t} \frac{\Lambda \log(3+|\tau-|y||)}{(1+\tau+|y|)(1+|\tau-|y||)^2} \int_{\R^3_v} |v^0|^3 \left| \widehat{Z}^{\kappa} f \right|(\tau,y,v) \dr v \frac{\dr y}{|y-x|},$$
where $\tau=t-|y-x|$. Controlling the velocity average through the improved estimates of Proposition \ref{estimoyv} yields, as $N_v \geq 13$,
\begin{align*}
&\left|A^{SS,I}_{\gamma,k}\right|\!(t,x)+ \left|A^{SS,II}_{\gamma,k}\right|\!(t,x)+\left|A^{SS,III}_{\gamma,k}\right|\!(t,x) \\ &\qquad \qquad \quad  \lesssim \overline{\epsilon} \, \Lambda \, \langle \Lambda \rangle \int_{|y-x|\leq t} \frac{\log^{3N_x+3N+1}(3+t-|y-x|+|y|)}{(1+t-|y-x|+|y|)^5(1+|t-|y-x|-|y||)}  \frac{\dr y}{|y-x|}  \lesssim \overline{\epsilon} \, \Lambda \, \langle \Lambda \rangle \mathbf{Y}^{p=1}_{4+3/4,1}(t,x).
\end{align*}
Finally, we can bound similarly $|A^{SS,IV}_{\gamma,k}|(t,x)$ by a linear combination of terms of the form
$$ \overline{\mathcal{R}}^{\xi,\kappa}_{t,x}:= \int_{|y-x| \leq t} \! \left| \mathcal{L}_{Z^{\xi}}(F)\right|(t-|y-x|,y) \left| \int_{\R^3_v} \mathcal{V}(\omega,v) \partial_{x^n} \widehat{Z}^{\kappa}f(t-|y-x|,y,v) \mathrm{d}v\right|\!\frac{  \mathrm{d}y}{|y-x|},$$ 
where $|\xi|+|\kappa| \leq N-1$, $1 \leq n \leq 3$ and $\mathcal{V}(\omega,v)$ is of the form $\mathcal{C}^{k,n}_{j,\mu \nu}(\omega,v) \widehat{v}^{\lambda}$. We get from Corollary \ref{estikernels} that $|\mathcal{V}(\omega,v)|+|v^0\nabla_v \mathcal{V}(\omega,v)| \lesssim |v^0|^4$, so that Proposition \ref{gainderivdecaymoyv} gives
$$ \forall \, (\tau,y,\sigma)\in [0,T[\times \R^3 \times \mathbb{S}^2, \qquad \Big| \int_{\R^3_v} \mathcal{V}(\sigma,v)  \partial_{x^n}\widehat{Z}^{\kappa}f(\tau,y,v) \mathrm{d}v\Big| \lesssim \overline{\epsilon} \, \frac{\log^{3N_x+3N}(3+\tau)}{(1+\tau+|y|)^4}.$$
Applying it to $\sigma=\omega$ and $\tau=t-|y-x|$ and estimating the electromagnetic field using \eqref{boot1}, we get
$$\overline{\mathcal{R}}^{\xi,\kappa}_{t,x}\lesssim \overline{\epsilon} \, \Lambda \int_{|y-x| \leq t} \frac{\log^{3N_x+3N}(3+t-|y-x|)}{(1+t-|y-x|+|y|)^5(1+|t-|y-x|-|y||)}\frac{  \mathrm{d}y}{|y-x|}\lesssim \overline{\epsilon} \, \Lambda \mathbf{Y}^{p=1}_{4+3/4,1}(t,x).$$
Consequently, $|A^{SS}_{\gamma,k}|(t,x) \lesssim \overline{\epsilon} \, \Lambda \, \langle \Lambda \rangle \, \mathbf{Y}^{p=1}_{4+3/4,1}(t,x)$, so that the result ensues from Lemma \ref{LemJ}.
\end{proof}
Finally, we deal with the most problematic term, the one with an integral kernel carrying the non-integrable weight $|y-x|^{-3}$.
\begin{Pro}\label{ATT}
Let $k \in \llbracket 1,3 \rrbracket$ and $|\gamma|=N-1$. Then,
$$\forall \, (t,x) \in [0,T[ \times \R^3, \qquad \left|A^{TT}_{\gamma,k}\right|(t,x) \lesssim \overline{\epsilon} \, \frac{\log(3+t)}{(1+t+|x|)^3}.$$
\end{Pro}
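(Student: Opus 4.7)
The main obstacle is the non-integrable kernel weight $|y-x|^{-3}$; near the vertex $y=x$ a naive absolute-value estimate diverges, so we must exploit the cancellation property $\int_{\mathbb{S}^2}\mathcal{A}^k_{\mu\nu}(\sigma,\widehat{v})\,\mathrm{d}\mu_{\mathbb{S}^2}=0$ stated in Proposition \ref{GSdecomoderiv}. The plan is to split the domain of integration at $|y-x|=1$ (the case $t\leq 1$ being trivial by Proposition \ref{estiLinfini} and Corollary \ref{estikernels}), writing
\[
A^{TT}_{\gamma,k}(t,x) = A^{TT,\mathrm{far}}_{\gamma,k}(t,x) + A^{TT,\mathrm{ver}}_{\gamma,k}(t,x),
\]
where $A^{TT,\mathrm{far}}_{\gamma,k}$ is the integral over $\{1\le |y-x|\le t\}$ and $A^{TT,\mathrm{ver}}_{\gamma,k}$ is the integral over $\{|y-x|\le 1\}$.

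For the far part, since $N_v\geq 15$, Corollary \ref{estikernels} gives $|\mathcal{A}^k_{\mu\nu}|(\cdot,v)+|v^0\nabla_v\mathcal{A}^k_{\mu\nu}|(\cdot,v)\lesssim |v^0|^4 \leq |v^0|^{N_v-11}$, so Proposition \ref{Proestimoyvkernet} applied with $\Psi=\mathcal{A}^k_{\mu\nu}$ to each $\widehat{Z}^\beta f$, $|\beta|\leq N-1$, yields
\[
\Big|\int_{\R^3_v}\mathcal{A}^k_{\mu\nu}(\omega,v)\,\widehat{Z}^\beta f(\tau,y,v)\,\mathrm{d}v\Big|\lesssim \frac{\overline{\epsilon}}{(1+\tau+|y|)^3},\qquad \tau=t-|y-x|,
\]
uniformly in $\omega\in\mathbb{S}^2$. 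Integrating against $|y-x|^{-3}\mathbf{1}_{\{|y-x|\ge 1\}}$ and using Lemma \ref{LemKbis} produces exactly the target bound $\overline{\epsilon}\,\log(3+t)(1+t+|x|)^{-3}$ for $A^{TT,\mathrm{far}}_{\gamma,k}$.

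For the vertex part, passing to spherical coordinates $y=x+s\sigma$, the cancellation identity allows us to subtract the value at the apex without changing the integral:
\[
A^{TT,\mathrm{ver}}_{\gamma,k,\mu\nu}(t,x)=\sum_{|\beta|\leq N-1} C^\gamma_\beta\int_0^1\int_{\mathbb{S}^2}\!\int_{\R^3_v}\!\mathcal{A}^k_{\mu\nu}(\sigma,v)\bigl[\widehat{Z}^\beta f(t-s,x+s\sigma,v)-\widehat{Z}^\beta f(t,x,v)\bigr]\frac{\mathrm{d}v\,\mathrm{d}\mu_{\mathbb{S}^2}\mathrm{d}s}{s}.
\]
The fundamental theorem of calculus along the backward light ray $\tau\mapsto(t-\tau,x+\tau\sigma,v)$ produces the crucial factor of $s$ that tames the singular weight: using $-\partial_t+\sigma\cdot\nabla_x=-\T_0+(\sigma+\widehat{v})\cdot\nabla_x$, the bracket equals $\int_0^s[-\T_0\widehat{Z}^\beta f+(\sigma+\widehat{v})\cdot\nabla_x\widehat{Z}^\beta f](t-\tau,x+\tau\sigma,v)\,\mathrm{d}\tau$. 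The $\T_0$-term is recast through the commuted Vlasov equation of Proposition \ref{Com} as a sum of $\widehat{v}^\mu \mathcal{L}_{Z^\xi}(F)_\mu{}^j\partial_{v^j}\widehat{Z}^\kappa f$, $|\xi|+|\kappa|\leq |\beta|$, and one integrates by parts in $v$ (shifting $\partial_{v^j}$ onto $\mathcal{A}^k_{\mu\nu}$ and $\widehat{v}^\mu$, producing only kernels still bounded by $|v^0|^4$ in view of Corollary \ref{estikernels}); the electromagnetic factor is then controlled through \eqref{boot1} and velocity averages through Proposition \ref{Proestimoyvkernet}. For the remaining $(\sigma+\widehat{v})\cdot\nabla_x$ term, the null identity $|\sigma+\widehat{v}|^2\leq 2(1+\sigma\cdot\widehat{v})$ (Lemma \ref{LemtechestiW}) keeps the combined kernel $\mathcal{A}^k(\sigma+\widehat{v})$ bounded by a moderate power of $v^0$, so Proposition \ref{gainderivdecaymoyv} applies and provides the extra $(1+t+|x|)^{-1}$ decay needed. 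After all these manipulations, every integrand is bounded on the compact region $\{s\le 1,\,\tau\le s,\,\sigma\in\mathbb{S}^2\}$ by a quantity of the form $\overline{\epsilon}\,\Lambda (1+t+|x|)^{-4}$ (with at worst a $\log(3+t)$ factor), and the remaining finite angular/radial integral $\int_0^1 s^{-1}\int_0^s\mathrm{d}\tau\,\mathrm{d}s=1$ is harmless.

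The hard part is organizing the cancellation argument in the vertex region: one must track the null structure of $\mathcal{A}^k(\sigma+\widehat{v})$ and keep the derivatives of $\mathcal{A}^k$ produced by the $v$-integration by parts under the $|v^0|^{N_v-11}$ threshold of Proposition \ref{Proestimoyvkernet}. Combining both pieces concludes $|A^{TT}_{\gamma,k}|(t,x)\lesssim \overline{\epsilon}\,\log(3+t)(1+t+|x|)^{-3}$.
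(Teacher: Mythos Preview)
Your far part ($|y-x|\geq 1$) is identical to the paper's and correct. For the vertex part, your approach works but is more circuitous than necessary. You subtract the apex value $\widehat Z^\beta f(t,x,v)$, which forces you to differentiate along the backward light ray and hence to split $-\partial_t+\sigma\cdot\nabla_x=-\T_0+(\sigma+\widehat v)\cdot\nabla_x$; the $\T_0$ piece then drags in the commuted Vlasov equation, integration by parts in $v$, and the field via \eqref{boot1}. This is valid, but it leaves a stray factor of $\Lambda$ in the vertex bound, so your final estimate is really $\lesssim\overline\epsilon\big(\log(3+t)+\Lambda\big)(1+t+|x|)^{-3}$ rather than the stated $\Lambda$-independent bound (harmless for closing the bootstrap, but not literally the proposition). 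The null identity you invoke for the $(\sigma+\widehat v)\cdot\nabla_x$ piece is also unnecessary: the trivial bound $|\sigma+\widehat v|\leq 2$ already keeps the kernel well under the $|v^0|^{N_v-10}$ threshold of Proposition~\ref{gainderivdecaymoyv}.

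The paper's simplification is to subtract instead at $(t-|y-x|,x)$, i.e.\ at the same time slice as the integrand but at the spatial center. This subtraction is equally legitimate (for each fixed radius $s=|y-x|$ the subtracted term depends only on $v$, so the angular average of $\mathcal A^k$ still annihilates it), and now the difference is controlled by the mean value theorem \emph{in the spatial variable only}. No time derivative appears, so $\T_0$, the Vlasov equation, the integration by parts, and the field factor never enter; one applies Proposition~\ref{gainderivdecaymoyv} directly to the spatial gradient of the velocity average and obtains the clean $\overline\epsilon$ bound with no $\Lambda$ contamination.
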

\begin{proof}
Let $0 \leq \mu,\nu \leq 3$, $|\beta| \leq N-1$ and
$$ G^{\beta}_{\sigma}(\tau,y):= \int_{\R^3} \mathcal{A}_{\mu \nu}^k (\sigma,v) \widehat{Z}^{\beta}f(\tau,y,v) \dr v, \qquad  (\sigma, \tau , y ) \in \mathbb{S}^2\times [0,T[ \times \R^3.$$
Recall from Corollary \ref{estikernels} the bound on the kernel $\mathcal{A}^k_{\mu \nu}$ and apply Proposition \ref{Proestimoyvkernet} for $\Psi= \mathcal{A}^k_{\mu \nu}$. We obtain
$$ \forall \, (\sigma, \tau , y ) \in \mathbb{S}^2\times [0,T[ \times \R^3, \qquad \qquad \left|G^{\beta}_{\sigma}\right|(\tau,y) \lesssim \overline{\epsilon} \, (1+\tau+|y|)^{-3},$$
which, applied for $(\sigma,\tau)=(\omega,t-|y-x|)$, yields
$$ \left|A^{TT}_{\gamma,k,\mu \nu} \right|(t,x) \lesssim \overline{\epsilon} \, \mathbf{Y}^{p=3}_3(t,x)+\sum_{|\beta| \leq N-1} \mathcal{U}^\beta_{t,x}, \qquad \mathcal{U}^\beta_{t,x}:= \left|\int_{|y-x|\leq 1 } \int_{\R^3_v} \mathcal{A}^k_{\mu \nu}(\omega,v) \widehat{Z}^{\beta} f(t-|y-x|,y,v)   \frac{\dr v \dr y}{|y-x|^3} \right|.$$
Fix $|\beta| \leq N-1$ and recall from Proposition \ref{GSdecomoderiv} that the average of $\sigma \mapsto \mathcal{A}^k_{\mu \nu}(\sigma,\cdot)$ on $\mathbb{S}^2$ vanishes. Hence,
\begin{align*}
\mathcal{U}^\beta_{t,x}&=\left| \int_{|y-x|\leq 1 } \int_{\R^3_v} \mathcal{A}^k_{\mu \nu}(\omega,v) \left(\widehat{Z}^{\beta} f(t-|y-x|,y,v) -\widehat{Z}^{\beta}f(t-|y-x|,x,v) \right) \frac{\dr v \dr y}{|y-x|^3}\right| \\
& \leq \int_{|y-x|\leq 1 } \left|G^{\beta}_{\omega}(t-|y-x|,y)-G^{\beta}_{\omega}(t-|y-x|,x) \right| \frac{ \dr y}{|y-x|^3}.
\end{align*}
For any $(\sigma,\tau)\in \mathbb{S}^2 \times [0,T[$, we apply the mean value theorem to $s \mapsto  G^{\beta}_{\sigma}(\tau,x+s(y-x))$ on the interval $[0,1]$. Then, there exists $x_{\sigma,\tau}$ in the segment $[x,y]\subset \R^3$ such that
$$ G^{\beta}_{\sigma}(\tau,y)-G^{\beta}_{\sigma}(\tau,x) =\omega \cdot \nabla_{x} G^{\beta}_{\sigma}(\tau,x_{\sigma,\tau}) |y-x|, \qquad \omega=\frac{y-x}{|y-x|}.$$
Apply now Proposition \ref{gainderivdecaymoyv} for $\Phi=\mathcal{A}_{\mu \nu}$ in order to get, for any $1 \leq i \leq 3$,
$$ \forall \, (\sigma, \tau , z ) \in \mathbb{S}^2\times [0,T[ \times \R^3,  \qquad \left|\partial_{x^i}G^{\beta}_{\sigma}\right|(\tau,z)= \bigg| \int_{\R^3} \mathcal{A}^k_{\mu \nu} (\sigma,v) \partial_{x^i}\widehat{Z}^{\beta}f(\tau,z,v) \dr v \bigg|\lesssim \overline{\epsilon} \, \frac{\log^{3N_x+3N}(3+\tau)}{(1+\tau+|z|)^{4}}.$$
Applying the last two identities for $\sigma=\omega$, $\tau=t-|y-x|$ and $z=x_{\sigma,\tau}$ yields
$$ \mathcal{U}^\beta_{t,x} \lesssim  \int_{|y-x|\leq 1 } \frac{\overline{\epsilon}}{(1+t-|y-x|+|x_{\omega,t-|y-x|}|)^3} \frac{ \dr y}{|y-x|^2}.$$
As $|y-x| \leq 1$ and $x_{\omega,t-|y-x|} \in [x,y]$, we have $1+t-|y-x| \geq \frac{1}{2}(1+t)$ and $|x_{\omega,t-|y-x|}| \geq |x|-1$, so that
$$ \left|A^{TT}_{\gamma,k,\mu \nu} \right|(t,x) \lesssim \overline{\epsilon} \, \mathbf{Y}^{p=3}_3(t,x)+\overline{\epsilon} \, (1+t+|x|)^{-3}.$$
We conclude the proof by applying Lemma \ref{LemKbis}.
\end{proof}
As in the previous subsection, if $C_\mathrm{boot}$ is chosen such that $C_\mathrm{boot}\geq 2\overline{C}_\mathrm{data}$ and if $\epsilon$ is small enough, we can improve the bootstrap assumption \eqref{boot2} for the spatial derivatives $\nabla_{\partial_{x^k}} \mathcal{L}_{Z^\gamma}(F)$, with $1 \leq k \leq 3$ and $|\gamma|=N-1$, by applying Propositions \ref{estidataA}-\ref{ATT}. The time derivative can then be controlled using 
$$\big|\nabla_{\partial_t} \mathcal{L}_{Z^\gamma}(F) \big| \lesssim \sum_{1 \leq k \leq 3} \big|\nabla_{\partial_{x^k}} \mathcal{L}_{Z^\gamma}(F)\big|+\sum_{|\beta| \leq |\gamma|} |J(\widehat{Z}^\beta f)|,$$ which follows from the commuted Maxwell equations (see Proposition \ref{Com}). We stress, for the smallness condition on $\epsilon$, that $\overline{\epsilon} \, \langle \Lambda \rangle^2 \leq 2 \epsilon \, e^{(D+3)\Lambda}$.

\section{Modified scattering for the distribution function}\label{Sec6}
In this section, we determine the precise asymptotic behavior of the particle density and its derivatives under the additional assumption \eqref{assumasymptoexp} on the initial electromagnetic field. In particular, we determine the self-similar profile of the currect density $J(f)$ as well as the one of the Maxwell field $F$ and we define modified trajectories along which $f$ converges to a new smooth density function.
\subsection{Convergence of the spatial averages}
Since the solution $(f,F)$ is global in time, all the statements of Sections \ref{Sec3}-\ref{Sec5} hold true for $T=+\infty$. We can then deduce that $\int_x \widehat{Z}^\beta f \dr x$ converges to a function defined on $\R^3_v$.
\begin{Pro}\label{ProQinf}
Let $|\beta| \leq N-1$. There exists a continuous function $ Q_{\infty}^{\beta} \in L^1_v \cap L^{\infty}_v $ such that
$$\forall \, t \in \R_+, \qquad \bigg\|   |v^0|^{N_v-6}\bigg(Q_{\infty}^\beta- \int_{\R^3_x} \widehat{Z}^{\beta}f(t,x,\cdot) \dr x \bigg) \bigg\|_{L^{\infty}(\R^3_v)} \lesssim \overline{\epsilon} \, \frac{\log^{3N_x+3N}(3+t)}{1+t} .$$
\end{Pro}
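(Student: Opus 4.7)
The plan is essentially to deduce the convergence as a Cauchy criterion in time by integrating the pointwise bound of Lemma \ref{Lempartieltxave}. Fix $|\beta| \leq N-1$ and $v \in \R^3_v$. For any $0 \leq t_1 \leq t_2$, the fundamental theorem of calculus gives
$$ \int_{\R^3_x} \widehat{Z}^\beta f(t_2,x,v) \dr x -\int_{\R^3_x} \widehat{Z}^\beta f(t_1,x,v) \dr x =\int_{t_1}^{t_2} \partial_\tau \! \int_{\R^3_x} \widehat{Z}^\beta f(\tau,x,v) \dr x \, \dr \tau. $$
By Lemma \ref{Lempartieltxave}, the integrand is bounded in absolute value by $\overline{\epsilon} \, |v^0|^{-N_v+6}\log^{3N_x+3N}(3+\tau)(1+\tau)^{-2}$, which is integrable on $\R_+$. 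Hence the map $t\mapsto \int_x \widehat{Z}^\beta f(t,x,v)\dr x$ is Cauchy as $t\to\infty$, uniformly in $v$ once weighted by $|v^0|^{N_v-6}$, so it admits a limit which we name $Q_\infty^\beta(v)$.

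Letting $t_2 \to +\infty$ with $t_1=t$ in the above identity and estimating the remainder,
$$ |v^0|^{N_v-6} \Big| Q_\infty^\beta(v)-\int_{\R^3_x} \widehat{Z}^\beta f(t,x,v) \dr x \Big| \lesssim \overline{\epsilon} \int_t^{+\infty} \frac{\log^{3N_x+3N}(3+\tau)}{(1+\tau)^2} \, \dr \tau \lesssim \overline{\epsilon}\,\frac{\log^{3N_x+3N}(3+t)}{1+t}, $$
which is the quantitative estimate claimed. Taking $t=0$ and using the initial data assumption $|\int_x \widehat{Z}^\beta f(0,x,v) \dr x| \lesssim \epsilon \, \langle v \rangle^{-N_v}$ (via $\sup_x \langle x \rangle^{N_x} |\widehat{Z}^\beta f_0| \lesssim \epsilon$ together with $N_x>3$) yields $\||v^0|^{N_v-6}Q_\infty^\beta\|_{L^\infty_v}\lesssim \overline{\epsilon}$. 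Since $N_v-6 \geq 9 > 3$, this upper bound is integrable over $\R^3_v$, so $Q_\infty^\beta \in L^1_v \cap L^\infty_v$.

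For the continuity of $Q_\infty^\beta$, note that for each fixed $t$ the map $v \mapsto \int_x \widehat{Z}^\beta f(t,x,v) \dr x$ is continuous: the integrand is a continuous function of $(x,v)$ and is dominated by $\sup_x \mathbf{z}^4|\widehat{Z}^\beta f|(t,x,v)\,(1+|x-t\widehat{v}|)^{-4}$, which is integrable in $x$ uniformly on compact sets in $v$ by Proposition \ref{estiLinfini}. Since the convergence to $Q_\infty^\beta$ is uniform in $v$ in the weighted norm, and the weight $|v^0|^{N_v-6}$ is continuous and nonvanishing, $Q_\infty^\beta$ is a uniform limit of continuous functions on every compact set of $\R^3_v$ and is therefore continuous.

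I do not anticipate a genuine obstacle here since Lemma \ref{Lempartieltxave} provides an $L^1_\tau$-integrable bound on $\partial_\tau \int_x \widehat{Z}^\beta f \dr x$; the only points requiring care are the uniformity in $v$ of the Cauchy criterion (handled by the weight $|v^0|^{N_v-6}$), and the passage from an $L^\infty_v$ bound on the limit to an $L^1_v$ bound, which is ensured by the relatively large value of $N_v$.
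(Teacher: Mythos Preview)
Your proof is correct and follows essentially the same approach as the paper: integrate the bound of Lemma~\ref{Lempartieltxave} in time to obtain a Cauchy sequence, deduce the limit $Q_\infty^\beta$ and the stated rate, and infer $L^1_v\cap L^\infty_v$ from the $|v^0|^{N_v-6}$ weight together with $N_v\geq 15$. You add a short justification of the continuity of $Q_\infty^\beta$ via dominated convergence and uniform limits, which the paper leaves implicit.
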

\begin{Rq}
This estimate directly implies that $|v^0|^{N_v-10}\int_{\R^3_x} \widehat{Z}^{\beta}f(t,x,\cdot) \dr x \to |v^0|^{N_v-10}Q^\beta_{\infty}$ in $L^1(\R^3_v)$, as $t \to +\infty$, with the same rate for convergence. 
\end{Rq}
\begin{proof}
Let $v\in \R^3_v$ and apply Lemma \ref{Lempartieltxave} in order to get, for all $0 \leq t \leq s$,
$$|v^0|^{N_v-6}\left|\int_{\R^3_x} \widehat{Z}^{\beta}f(s,x,v) \dr x-\int_{\R^3_x} \widehat{Z}^{\beta}f(t,x,v) \dr x\right| \lesssim \overline{\epsilon} \int_{\tau=t}^s \frac{\log^{3N_x+3N}(3+\tau)}{(1+\tau)^2}\dr \tau \leq \overline{\epsilon} \, \frac{\log^{3N_x+3N}(3+t)}{1+t}.$$
Consequently, there exists $Q_{\infty}^{\beta} \in  L^{\infty}_v $ such that $\int_{\R^3_x} \widehat{Z}^{\beta}f(s,x,v) \dr x \to Q_{\infty}^{\beta}$ in $L^{\infty}_v$ as $s \to +\infty$. Moreover, letting $s \to +\infty$ in the previous estimate provides the rate of convergence stated in the Proposition. It implies $|v^0|^{N_v-6}Q^{\beta}_{\infty} \in L^{\infty}_v$ and then, as $N_v >9$, $Q_{\infty}^{\beta} \in  L^1_v$.
\end{proof}
It turns out that these functions are differentiable for $|\beta| \leq N-2$ and that $\partial_{v^i} Q^{\beta}_{\infty}$ can be related to other such functions $Q^\kappa_\infty$. For this reason, if $\widehat{Z}^\kappa = \widehat{\Omega}_{0i} \widehat{Z}^\beta$, we will use $Q^{\widehat{\Omega}_{0i} \beta}_\infty$ in order to denote $Q^\kappa_\infty$.
\begin{Pro}\label{derivVQ}
For any $|\beta| \leq N-2$, $Q^\beta_\infty \in C^{N-1-|\beta|}(\R^3_v)$ and its derivatives can be obtained by iterating the relations
\begin{equation}\label{lienderiv} v^0 \partial_{v^i} Q^{\beta}_\infty = Q^{\widehat{\Omega}_{0i} \beta}_\infty-\widehat{v}^iQ^\beta_\infty, \qquad 1 \leq i \leq 3.
\end{equation}
\end{Pro}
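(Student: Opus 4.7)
The plan is to prove uniform convergence (with an appropriate $v^0$-weight) of the $v$-derivatives of $G^\beta(t,v):=\int_{\R^3_x} \widehat{Z}^\beta f(t,x,v)\,\dr x$ as $t \to +\infty$. Combined with the uniform convergence $G^\beta \to Q^\beta_\infty$ already established in Proposition \ref{ProQinf}, the classical theorem on termwise differentiation will yield both $Q^\beta_\infty \in C^1(\R^3_v)$ and the identity \eqref{lienderiv}. The $C^{N-1-|\beta|}$ regularity will then follow by induction: \eqref{lienderiv} expresses a $v$-derivative of $Q^\beta_\infty$ in terms of $Q^{\widehat{\Omega}_{0i}\beta}_\infty$ and $Q^\beta_\infty$, so iterating $k$ times is legal as long as the order stays $\leq N-1$, producing exactly $N-1-|\beta|$ derivatives.

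For the first step, I would derive an exact fixed-$t$ identity for $v^0\partial_{v^i}G^\beta$. Commuting $v^0 \partial_{v^i} = \widehat{\Omega}_{0i} - t\partial_{x^i} - x^i\partial_t$ under the integral kills the $t\partial_{x^i}$ term after integration by parts in $x$, and decomposing $x^i = -z_{0i} + t\widehat{v}^i$ gives
\[
v^0 \partial_{v^i} G^\beta(t,v) = G^{\widehat{\Omega}_{0i}\beta}(t,v) + \int_{\R^3_x} z_{0i}\, \partial_t \widehat{Z}^\beta f\,\dr x - t\widehat{v}^i \partial_t G^\beta(t,v).
\]
Substituting the Vlasov equation for $\partial_t\widehat{Z}^\beta f$ and integrating by parts the transport term against $z_{0i}$, using $\widehat{v}\cdot\nabla_x z_{0i}=-\widehat{v}^i$, isolates the target contribution $-\widehat{v}^i G^\beta(t,v)$. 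What remains, by the commutation formula of Proposition \ref{Com}, is a finite combination of error integrals of the form
\[
\int_{\R^3_x} z_{0i}\, \widehat{v}^\mu {\mathcal{L}_{Z^\gamma}(F)_\mu}^j \partial_{v^j}\widehat{Z}^\kappa f\,\dr x, \qquad |\gamma|+|\kappa|\leq |\beta|,
\]
plus $-t\widehat{v}^i \partial_t G^\beta$, which is already controlled by Lemma \ref{Lempartieltxave}.

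The main obstacle will be showing that these error integrals decay to zero as $t\to+\infty$, uniformly in $v$ with some $v^0$-weight. The strategy mirrors that of Lemma \ref{Lempartieltxave}: re-express $v^0\partial_{v^j}$ as $\widehat{\Omega}_{0j} - (x^j-\widehat{v}^jt)\partial_t - v^j S + v^j x^i \partial_{x^i} - t\partial_{x^j}$ (so that the $\partial_t$-coefficient is $\mathbf{z}$-bounded), integrate by parts the spatial derivatives onto the Lorentz force factor, and estimate the resulting integrand by combining Lemma \ref{Lorentzforce} (exploiting the null decomposition to trade $(1+|t-|x||)^{-1}$ for $(1+t+|x|)^{-1}\widehat{v}^{\underline{L}}$) with the weighted $L^\infty_{x,v}$ bounds on $\widehat{Z}^\kappa f$ from Proposition \ref{estiLinfini}. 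The delicate point compared with Lemma \ref{Lempartieltxave} is the additional factor $|z_{0i}|\leq \mathbf{z}$, which consumes one extra power of $\mathbf{z}$ from the pointwise bound; finiteness of the resulting $x$-integral is preserved thanks to the assumption $N_x>7$ together with the $\widehat{v}^{\underline{L}}$ gain, which provides enough room in the exponent. The final error bound will take the schematic form $\overline{\epsilon}\,\Lambda\,\log^C(3+t)/(1+t)$ with an appropriate inverse $v^0$-weight, more than enough to vanish in the limit and to conclude the uniform convergence needed to close the argument.
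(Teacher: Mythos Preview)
Your approach is essentially the same as the paper's. Both start from $v^0\partial_{v^i}=\widehat\Omega_{0i}-t\partial_{x^i}-x^i\partial_t$, identify $G^{\widehat\Omega_{0i}\beta}-\widehat v^iG^\beta$ as the limit, and show the residual integral decays like $\log^C(3+t)/(1+t)$ by mimicking the proof of Lemma~\ref{Lempartieltxave}. The only cosmetic difference is that the paper keeps the weight $x^i$ in the error integral (writing $\partial_t=-\widehat v\cdot\nabla_x-\widehat v^\mu{F_\mu}^j\partial_{v^j}+\T_F$ directly), whereas you first split $x^i=-z_{0i}+t\widehat v^i$, handing the $t\widehat v^i\partial_tG^\beta$ piece to Lemma~\ref{Lempartieltxave} and carrying $z_{0i}$ through the other; the two decompositions are algebraically equivalent.

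One small caution on your bookkeeping: the extra factor $|z_{0i}|\le\mathbf z$ does cost a power of $\mathbf z$, and the ``$\widehat v^{\underline L}$ gain'' you invoke does not recover it---using Lemma~\ref{nullcompo} to unpack $\widehat v^{\underline L}$ actually reintroduces a $\mathbf z$ rather than saving one. If you strictly follow your outline you would need $N_x>8$ rather than $N_x>7$ to integrate $\mathbf z^{-(N_x-5)}$. The painless fix is to bound $|z_{0i}|\le 1+t+|x|$ instead (sacrificing one power of $t+|x|$ in the decay, exactly as the paper does), which keeps you within the stated hypothesis $N_x>7$.
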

\begin{proof}
Let $(t,v) \in \R_+ \times \R^3_v$ and remark that
$$ v^0 \partial_{v^i}\int_{\R^3_x} \widehat{Z}^\beta f(t,x,v) \dr x = \int_{\R^3_x} \widehat{\Omega}_{0i} \widehat{Z}^\beta f(t,x,v) \dr x-t\int_{\R^3_x}\partial_{x^i} \widehat{Z}^\beta f(t,x,v) \dr x-\int_{\R^3_x}x^i \partial_t\widehat{Z}^\beta f(t,x,v) \dr x.$$
Writing $\partial_t=-\widehat{v} \cdot \nabla_x-\widehat{v}^\mu {F_{\mu}}^j \partial_{v^j}+\T_F$, we get by performing integration by parts,
$$ v^0 \partial_{v^i}\int_{\R^3_x} \widehat{Z}^\beta f(t,x,v) \dr x = \int_{\R^3_x} \widehat{\Omega}_{0i} \widehat{Z}^\beta f(t,x,v) -\widehat{v}^i \widehat{Z}^\beta f(t,x,v) \dr x+\int_{\R^3_x}x^i\left(\widehat{v}^\mu {F_{\mu}}^j \partial_{v^j}-\T_F \right)\left( \widehat{Z}^\beta f \right)(t,x,v) \dr x .$$
According to the previous Proposition \ref{ProQinf}, the first term on the right hand side converges to $Q^{\widehat{\Omega}_{0i} \beta}_\infty-\widehat{v}^iQ^\beta_\infty$, as $t \to +\infty$ and in $L^\infty(\R^3_v)$. Following the proof of Lemma \ref{Lempartieltxave} and then using Proposition \ref{estiLinfini}, one can prove
\begin{align*}
\left| \int_{\R^3_x}x^i\left(\widehat{v}^\mu {F_{\mu}}^j \partial_{v^j}-\T_F \right)\left( \widehat{Z}^\beta f \right)(t,x,v) \dr x \right| & \lesssim \Lambda \frac{\log(3+t)}{1+t}\sup_{|\kappa| \leq |\beta|+1} \sup_{x \in \R^3}  \left||v^0|^3 \mathbf{z}^{N_x-2} \widehat{Z}^{\kappa} f \right|(t,x,v) \\
& \lesssim \overline{\epsilon} \, \frac{\log^{3N_x+3N}(3+t)}{1+t} .
 \end{align*}
We then deduce \eqref{lienderiv} and, by a direct induction, $Q^\beta_\infty \in C^{N-1-|\beta|}(\R^3_v)$.
\end{proof}
Let us mention that any $Q^\beta_\infty$ can be written as a combination of $Q_\infty$ and $Q^\kappa_\infty$, where $\widehat{Z}^\kappa$ is only composed by complete lifts of Lorentz boosts $\widehat{\Omega}_{0i}$.
\begin{Pro}\label{ProformQ}
Let $|\beta| \leq N-1$. Then,
\begin{itemize}
\item if $\beta_T \geq 1$, which means that $\widehat{Z}^\beta$ is composed by at least one translation, we have $Q^\beta_\infty=0$.
\item Otherwise there exists $n+|\kappa| \leq |\beta|$ such that $\widehat{Z}^\beta=S^n \widehat{Z}^\kappa$ and $Q^\beta_\infty=(-3)^n Q^\kappa_\infty$.
\item Moreover, if $\widehat{Z}^\beta=\widehat{\Omega}_{jk} \widehat{Z}^\kappa$, $1 \leq j< k \leq 3$, then $Q^\beta_\infty=\widehat{v}^j Q^{ \widehat{\Omega}_{0k}\kappa}-\widehat{v}^k Q^{\widehat{\Omega}_{0j}\kappa}$.
\end{itemize}
\end{Pro}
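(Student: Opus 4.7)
The three statements are essentially statements about how the spatial average operator $\int_{\R^3_x} \cdot \, \dr x$ interacts with the vector fields of $\widehat{\mathbb{P}}_0$, passed to the limit $t \to +\infty$ using Proposition \ref{ProQinf} together with Lemma \ref{Lempartieltxave}. The plan is to treat the three bullets separately, each after an algebraic rearrangement of $\widehat{Z}^\beta$ based on the commutation relations within $\widehat{\mathbb{P}}_0$.

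For the first bullet, the key observation is that commutators of the translations $\partial_{x^\mu}$ with any element of $\widehat{\mathbb{P}}_0$ are again translations, as can be checked directly ($[\widehat{\Omega}_{0i},\partial_{x^j}] = -\delta^i_j \partial_t$, $[\widehat{\Omega}_{0i},\partial_t] = -\partial_{x^i}$, $[\widehat{\Omega}_{jk},\partial_{x^i}] = \delta_{ik}\partial_{x^j}-\delta_{ij}\partial_{x^k}$, $[S,\partial_{x^\mu}]=\partial_{x^\mu}$, and translations commute among themselves). By a finite induction bringing one translation to the outermost position, one writes $\widehat{Z}^\beta f$ as a linear combination of terms $\partial_{x^\mu} \widehat{Z}^{\beta'} f$ with $|\beta'| < |\beta|$ and $\beta'_T \geq \beta_T - 1$. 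For spatial $\mu$, integration by parts (using that $\widehat{Z}^{\beta'}f$ decays rapidly enough in $x$ thanks to the pointwise bounds of Proposition \ref{estiLinfini}) gives $\int_x \partial_{x^i}\widehat{Z}^{\beta'}f \,\dr x = 0$. For $\mu=0$, Lemma \ref{Lempartieltxave} yields $\int_x \partial_t \widehat{Z}^{\beta'}f\,\dr x = \partial_t \int_x \widehat{Z}^{\beta'}f\,\dr x \to 0$ as $t \to +\infty$. Passing to the limit gives $Q_\infty^\beta = 0$.

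For the second bullet, a direct computation shows $[S,\widehat{\Omega}_{0i}] = [S,\widehat{\Omega}_{jk}] = 0$ (and trivially $[S,S]=0$), so when $\beta_T = 0$ one may move all scaling factors to the left and write $\widehat{Z}^\beta = S^n \widehat{Z}^\kappa$ with $n+|\kappa| = |\beta|$ and $\widehat{Z}^\kappa$ built only from Lorentz boosts and rotations. By integration by parts in the spatial variables,
\begin{equation*}
\int_{\R^3_x} S g(t,x,v) \, \dr x = t \partial_t \int_{\R^3_x} g(t,x,v)\,\dr x - 3\int_{\R^3_x} g(t,x,v)\,\dr x.
\end{equation*}
Applied to $g = S^{n-1}\widehat{Z}^\kappa f$, Lemma \ref{Lempartieltxave} gives $t\, |\partial_t \int_x g\,\dr x| \lesssim \overline{\epsilon}\, \log^{3N_x+3N}(3+t)/(1+t) \to 0$, while the second term converges to $-3Q_\infty^{S^{n-1}\widehat{Z}^\kappa}$. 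An immediate induction on $n$ delivers $Q_\infty^\beta = (-3)^n Q_\infty^\kappa$.

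For the third bullet, write $\widehat{\Omega}_{jk} = x^j\partial_{x^k}-x^k\partial_{x^j}+v^j\partial_{v^k}-v^k\partial_{v^j}$. Integration by parts cancels the spatial piece since $\int_x x^j \partial_{x^k} g\,\dr x = -\delta_{jk}\int_x g\,\dr x$, leaving
\begin{equation*}
\int_{\R^3_x} \widehat{\Omega}_{jk}\widehat{Z}^\kappa f(t,x,v)\,\dr x = \bigl(v^j\partial_{v^k}-v^k\partial_{v^j}\bigr)\int_{\R^3_x}\widehat{Z}^\kappa f(t,x,v)\,\dr x.
\end{equation*}
Passing to the limit $t\to +\infty$ using the $L^\infty_v$ convergence established in Proposition \ref{ProQinf} together with the differentiability statement of Proposition \ref{derivVQ}, and then substituting the relation $v^0\partial_{v^i}Q_\infty^\kappa = Q_\infty^{\widehat{\Omega}_{0i}\kappa}-\widehat{v}^i Q_\infty^\kappa$ of \eqref{lienderiv}, the cross terms $\widehat{v}^j\widehat{v}^k Q_\infty^\kappa$ cancel and one obtains the claimed identity. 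The only mildly delicate point in the whole argument is justifying that the $L^\infty_v$ convergence of $\int_x \widehat{Z}^\kappa f \,\dr x$ to $Q_\infty^\kappa$ together with the analogous convergence for $\widehat{\Omega}_{0i}\widehat{Z}^\kappa$ is strong enough to pass to the limit inside the $v$-derivatives, but this is precisely what Proposition \ref{derivVQ} already provides.
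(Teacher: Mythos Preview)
Your proof is correct and follows essentially the same route as the paper's: reduce to a translation on the left via the commutation relations and use integration by parts together with Lemma \ref{Lempartieltxave} for the first bullet; commute $S$ to the left and exploit $\int_x Sg\,\dr x = t\,\partial_t\!\int_x g\,\dr x - 3\!\int_x g\,\dr x$ for the second; and kill the spatial part of $\widehat{\Omega}_{jk}$ by integration by parts (since $j\neq k$) before invoking Proposition \ref{derivVQ} for the third. One inconsequential slip: $[S,\partial_{x^\mu}]=-\partial_{x^\mu}$ rather than $+\partial_{x^\mu}$, but this does not affect the argument.
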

\begin{proof} Assume first that $\beta_T \geq 1$. Since $[\widehat{Z},\partial_{x^\mu}]=0$ or $\pm \partial_{x^\nu}$ for any $0 \leq \mu \leq 3$ and $\widehat{Z} \in \widehat{\mathbb{P}}_0$, it suffices to consider the case $\widehat{Z}^\beta=\partial_{x^\mu} \widehat{Z}^\xi$. Then, by either applying Lemma \ref{Lempartieltxave} or by performing integration by parts,
\begin{equation*}
 \left|\int_{\R^3_x} \partial_t \widehat{Z}^{\xi} f(t,x,v) \dr x \right| \lesssim \overline{\epsilon} (1+t)^{-\frac{3}{2}} \to 0, \qquad \int_{\R^3_x} \partial_{x^i} \widehat{Z}^\xi f(t,x,v) \dr x=0, \qquad 1 \leq i \leq 3.
 \end{equation*}
Otherwise $\beta_T=0$ and since $S$ commute with $\widehat{\Omega}_{jk}$ and $\widehat{\Omega}_{0i}$, there exists $n+|\kappa| \leq |\beta|$ such that $\widehat{Z}^\beta=S^n \widehat{Z}^\kappa$. The result follows from an easy induction and the following properties, which hold for any $|\xi | \leq N-2$,
\begin{equation*}
\left|\int_{\R^3_x}t \partial_t \widehat{Z}^{\xi} f(t,x,v) \dr x \right| \lesssim \overline{\epsilon} (1+t)^{-\frac{1}{2}} \to 0, \qquad \int_{\R^3_x} x_i\partial_{x^i} \widehat{Z}^\xi f(t,x,v) \dr x=-\int_{\R^3_x}\widehat{Z}^\xi f(t,x,v) \dr x, \qquad 1 \leq i \leq 3.
\end{equation*}
 Finally, if $\widehat{Z}^\beta=\widehat{\Omega}_{jk} \widehat{Z}^\kappa$, note that by integration by parts, $\int_x \widehat{Z}^\beta f \dr x= \widehat{v}^j\int_x v^0\partial_{v^k} \widehat{Z}^\kappa f \dr x-\widehat{v}^k \int_x v^0 \partial_{v^j}  \widehat{Z}^\kappa f \dr x$ and it remains to apply the previous Proposition \ref{derivVQ}.
\end{proof}
We are now able to establish the precise behavior of $J(f)$ in the interior of the light cone. In other words, we improve Corollary \ref{Corboo3}. No such result holds for the exterior region since the decay can be arbitrary fast (we refer for this to the third estimate of Proposition \ref{estimoyv}). Recall the notation $x^0=t$.
\begin{Pro}\label{ProprifilJ}
For any $|\beta| \leq N-1$, the components of the electric current density $J(\widehat{Z}^{\beta}f)$, that is $J^{\mu}(\widehat{Z}^{\beta}f)=\int_{\R^3_v} \frac{v^\mu}{v^0} \widehat{Z}^{\beta}f \mathrm{d} v $, satisfy
$$\forall \; |x| <t, \qquad \bigg| t^3J^{\mu}(\widehat{Z}^{\beta}f)(t,x) - \frac{x^{\mu}}{t}\left( |v^0|^5 Q^{\beta}_{\infty}\right) \left( \widecheck{\frac{x}{t}} \right) \bigg| \lesssim \overline{\epsilon} \, \frac{\log^{3N_x+3N}(3+t)}{t}, \qquad \mu \in \llbracket 0,3 \rrbracket. $$
\end{Pro}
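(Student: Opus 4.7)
The strategy is to reduce the statement to a combination of two convergence results already at our disposal: the quantitative change of variables provided by Lemma~\ref{Lemgmoy}, and the $L^\infty_v$ convergence of the spatial averages established in Proposition~\ref{ProQinf}. Fix $|\beta|\leq N-1$, $\mu \in \llbracket 0,3 \rrbracket$ and $(t,x)$ with $|x|<t$; for $t$ of order one, the estimate is immediate from Corollary~\ref{Corboo3} and the boundedness of the profile on compact subsets of $\{|x|<t\}$, so we assume $t\geq 1$. Introduce
$$ g(t,y,v) := \frac{v^\mu}{v^0}\,\widehat{Z}^\beta f\big(t,y+t\widehat{v},v\big), $$
so that $t^3 J^\mu(\widehat{Z}^\beta f)(t,x) = t^3\int_{\R^3_v} g(t,x-t\widehat{v},v)\,\dr v$ and Lemma~\ref{Lemgmoy} is directly applicable.

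First I would apply Lemma~\ref{Lemgmoy} to $g$. The error term is $t^{-1}\sup_{(y,v)} \langle y\rangle^{N_x-3}(|v^0|^7|g|+|v^0|^8|\nabla_v g|)(t,y,v)$. Using Lemma~\ref{gweightvderiv}, the quantity $\langle y\rangle^{N_x-3}|v^0|^8|\nabla_v g|(t,y,v)$ is controlled by $|v^0|^7\sum_{\widehat{Z}\in \widehat{\mathbb{P}}_0}|\mathbf{z}^{N_x-2}\widehat{Z}\widehat{Z}^\beta f|(t,y+t\widehat{v},v)$, plus a similar contribution coming from $\partial_v(v^\mu/v^0)$; an analogous bound applies to the $|v^0|^7|g|$ term. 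Since $|\beta|+1\leq N$, Proposition~\ref{estiLinfini} (specifically \eqref{eq:bootLinf2}) with $N_v\geq 15$, $N_x>7$ gives
$$ \sup_{(y,v)\in \R^3\times\R^3} \langle y\rangle^{N_x-3}\big(|v^0|^7|g|+|v^0|^8|\nabla_v g|\big)(t,y,v) \lesssim \overline{\epsilon}\,\log^{3N_x+3N}(3+t), $$
which yields the required error rate $\overline{\epsilon}\,\log^{3N_x+3N}(3+t)/t$ for this first approximation.

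Second, I would evaluate the remaining main term. At $v=\widecheck{x/t}$ we have $\widehat{v}=x/t$ (so $v^\mu/v^0=x^\mu/t$ with the convention $x^0=t$), hence
$$ \int_{\R^3_y}\!(|v^0|^5 g)\!\left(t,y,\widecheck{\tfrac{x}{t}}\right)\!\dr y = \frac{x^\mu}{t}\,|v^0|^5\!\left(\widecheck{\tfrac{x}{t}}\right)\!\int_{\R^3_y}\!\widehat{Z}^\beta f\!\left(t,y+x,\widecheck{\tfrac{x}{t}}\right)\!\dr y = \frac{x^\mu}{t}\,|v^0|^5\!\left(\widecheck{\tfrac{x}{t}}\right)\!\int_{\R^3_z}\!\widehat{Z}^\beta f\!\left(t,z,\widecheck{\tfrac{x}{t}}\right)\!\dr z $$
after the trivial change of variable $z=y+x$. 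Finally I would invoke Proposition~\ref{ProQinf} at $v=\widecheck{x/t}$ to replace the spatial integral by $Q^\beta_\infty(\widecheck{x/t})$, producing an error bounded by
$$ \frac{|x^\mu|}{t}\,|v^0|^5\!\left(\widecheck{\tfrac{x}{t}}\right)\cdot \overline{\epsilon}\,\frac{\log^{3N_x+3N}(3+t)}{(1+t)\,|v^0|^{N_v-6}(\widecheck{x/t})} \,\lesssim\, \overline{\epsilon}\,\frac{\log^{3N_x+3N}(3+t)}{t}, $$
since $|x^\mu|/t\leq 1$ for $|x|<t$ and $11-N_v\leq 0$, so that the blowing-up factor $|v^0|^5(\widecheck{x/t})$ is absorbed by the $|v^0|^{N_v-6}(\widecheck{x/t})$ in the denominator.

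The only subtle point is precisely this last compensation: the profile $\frac{x^\mu}{t}|v^0|^5Q^\beta_\infty(\widecheck{x/t})$ genuinely becomes unbounded as $|x|\to t^-$, but the $L^\infty_v$-rate furnished by Proposition~\ref{ProQinf} carries the \emph{stronger} weight $|v^0|^{N_v-6}$, and the hypothesis $N_v\geq 15$ was precisely tailored (see Remark~\ref{Rqlossvz}) to leave a net positive power of $v^0$ in the denominator after this cancellation. No additional analysis is required for the exterior region since the statement is restricted to $|x|<t$.
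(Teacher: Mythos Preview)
Your proof is correct and follows essentially the same approach as the paper: define $g(t,y,v)=\widehat{v}^\mu\,\widehat{Z}^\beta f(t,y+t\widehat{v},v)$, apply Lemma~\ref{Lemgmoy} with the error controlled via Lemma~\ref{gweightvderiv} and Proposition~\ref{estiLinfini} (the paper packages this as the estimate~\eqref{eq:estigomega}), then use Proposition~\ref{ProQinf} at $v=\widecheck{x/t}$ together with $N_v-6\geq 5$ to absorb the $|v^0|^5$ factor. Your separate treatment of small $t$ is harmless but unnecessary, since the $1/t$ in the target bound already absorbs that regime.
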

\begin{proof}
Let $|\beta| \leq N-1$, $0 \leq \mu \leq 3$ and $|x| < t$. Apply Lemma \ref{Lemgmoy} and the estimate \eqref{eq:estigomega} to $g(t,x,v):=\widehat{v}^\mu \widehat{Z}^{\beta}f(t,x+t\widehat{v},v)$. Since the spatial average of $|v^0|^5g$ is equal to the one of $\widehat{v}^{\mu}|v^0|^5\widehat{Z}^{\beta}f$, we get
\begin{equation}\label{eq:machintrucbidule}
\bigg| t^3\int_{\R^3_v} \frac{v^{\mu}}{v^0} \widehat{Z}^{\beta}f(t,x,v) \dr v - \int_{\R^3_y} \left( \frac{v^{\mu}}{v^0} |v^0|^5\widehat{Z}^{\beta} f \right) \left(t,y, \widecheck{\frac{x}{t}} \right) \dr y \bigg| \lesssim \overline{\epsilon} \, \frac{\log^{3N_x+3N}(3+t)}{t}.
\end{equation}
As $N_v-6 \geq 5$, we obtain from Proposition \ref{ProQinf} that
$$\forall \, v \in \R^3_v, \qquad \bigg|   \frac{v^{\mu}}{v^0} |v^0|^5Q_{\infty}^\beta(v)- \frac{v^{\mu}}{v^0} |v^0|^5\int_{\R^3_y} \widehat{Z}^{\beta}f(t,y,v) \dr y \bigg| \lesssim \overline{\epsilon} \, \frac{\log^{3N_x+3N}(3+t)}{1+t}.$$
The result follows from \eqref{eq:machintrucbidule} and the last estimate, applied for $v=\frac{\widecheck{x}}{t}$.
\end{proof}
\subsection{Self-similar asymptotic profile of the electromagnetic field}\label{Subsecmachin}
To identify the profile of $F$, we will see that $Q_\infty$ generates an effective electromagnetic field. For this, we study $F^T$ since it is the element of the Glassey-Strauss decomposition of $F$ with the slower decay rate along timelike geodesics $t\mapsto (t,x+t\widehat{v})$. If the plasma is not neutral, $Q_F \neq 0$, we will also have to improve the estimate for $F^{\mathrm{data}}$.
\subsubsection{Behavior of $\mathcal{L}_{Z^\gamma}(F)^T$ along timelike straight lines} It will be convenient to lighten the notations by denoting the kernel in the integral defining $F^T$, which was bounded in Corollary \ref{estiW}, as
\begin{equation}\label{eq:Wt}
 \mathcal{W}^T(\omega,v):=\frac{\mathcal{W}(\omega,v)}{|v^0|^2(1+\omega \cdot \widehat{v})}, \qquad \left|\mathcal{W}^T\right|(\cdot,v)+\left|\nabla_v \mathcal{W}^T\right|(\cdot,v) \lesssim v^0.
\end{equation}
\begin{Def}\label{Defexplici}
Let, for any $|\beta| \leq N-1$, $[\widehat{Z}^{\beta}f]^{\infty}(v)$ be the $2$-form defined as
$$\forall \, v \in \R^3_v, \qquad \left[\widehat{Z}^{\beta}f \right]^{\infty}(v) := \int_{\substack{|z| \leq 1 \\  |z+\widehat{v}| < 1-|z|  }} \mathcal{W}^T \! \left(\frac{z}{|z|},\frac{\widecheck{z+\widehat{v}}}{1-|z|} \right) \left( |v^0|^5Q^{\beta}_{\infty} \right)\left(\frac{\widecheck{z+\widehat{v}}}{1-|z|} \right) \frac{\dr z}{|z|^2(1-|z|)^3}.$$
\end{Def}
\begin{Rq}
We recall our convention $(|v^0|^5Q^\beta_\infty)(w):=|w^0|^5Q^\beta_\infty(w)$, for any $w \in \R^3_v$. 
\end{Rq}
\begin{Rq}\label{welldef}
It is crucial to observe that the domain of integration is included in $\{0 \leq |z| \leq \frac{1+|\widehat{v}|}{2}\}$. Indeed, if $|z| \geq \frac{1+|\widehat{v}|}{2}$, we have
$$ |z+\widehat{v}| \geq |z|-1+1-|\widehat{v}| \geq \frac{1-|\widehat{v}|}{2} \geq 1-|z|.$$
Consequently, 
\begin{equation*}
|z| \leq 1, \, |z+\widehat{v}| <1-|z| \quad \Rightarrow \quad  \frac{1}{4|v^0|^2} \leq \frac{1-|\widehat{v}|}{2} \leq  1-|z| \leq 1.
 \end{equation*}
\end{Rq}
In order to transform decay in $|t-r|$ into decay in $t$ along timelike trajectories, we will use the next property.
\begin{Lem}\label{Lemxvt}
Let $(x,v) \in \R^3_x \times \R^3_v$. Then,
$$ \forall \, 1 \leq t \leq 4 \langle x \rangle|v^0|^2, \quad 1 \leq 4\frac{\langle x \rangle |v^0|^2}{t}, \qquad \qquad \forall \, t \geq  4 \langle x \rangle|v^0|^2, \qquad t-|x+t\widehat{v}| \geq \frac{t}{4|v^0|^2}.$$
\end{Lem}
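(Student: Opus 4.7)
The first assertion is immediate: if $1 \leq t \leq 4\langle x \rangle|v^0|^2$, then dividing through by $t>0$ gives $1 \leq 4\langle x \rangle|v^0|^2/t$, so nothing has to be done there.

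For the second assertion, the plan is a one-line triangle inequality once the key identity for $1-|\widehat{v}|$ is invoked. First, I would estimate
\begin{equation*}
t - |x+t\widehat{v}| \; \geq \; t - |x| - t|\widehat{v}| \; = \; t(1-|\widehat{v}|) - |x|.
\end{equation*}
Then I would use the algebraic identity already pointed out at the end of Section~\ref{subsecweight}, namely
\begin{equation*}
1-|\widehat{v}| \; = \; \frac{1-|\widehat{v}|^2}{1+|\widehat{v}|} \; = \; \frac{1}{|v^0|^2(1+|\widehat{v}|)} \; \geq \; \frac{1}{2|v^0|^2},
\end{equation*}
so that $t - |x+t\widehat{v}| \geq t/(2|v^0|^2) - |x|$.

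Finally, under the hypothesis $t \geq 4\langle x\rangle|v^0|^2$, we have $|x| \leq \langle x \rangle \leq t/(4|v^0|^2)$, which yields
\begin{equation*}
t - |x+t\widehat{v}| \; \geq \; \frac{t}{2|v^0|^2} - \frac{t}{4|v^0|^2} \; = \; \frac{t}{4|v^0|^2},
\end{equation*}
as claimed. There is no real obstacle here; the only minor subtlety is remembering that the gain $1-|\widehat{v}| \sim |v^0|^{-2}$ is exactly what converts the smallness condition $t \geq 4\langle x\rangle|v^0|^2$ into a lower bound on $t-|x+t\widehat{v}|$ of size $t|v^0|^{-2}$.
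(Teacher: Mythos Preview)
Your proof is correct and follows essentially the same approach as the paper: triangle inequality $t-|x+t\widehat v|\geq t(1-|\widehat v|)-|x|$, the identity $1-|\widehat v|\geq \frac{1}{2|v^0|^2}$, and the hypothesis $|x|\leq\langle x\rangle\leq t/(4|v^0|^2)$. The only cosmetic difference is the order in which the two inequalities are invoked; the paper first converts $|x|\leq \tfrac{1-|\widehat v|}{2}t$ and then applies $1-|\widehat v|\geq \tfrac{1}{2|v^0|^2}$ at the end, whereas you apply the latter first---the arithmetic is identical.
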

\begin{proof} It suffices to observe that
\begin{equation*}
 \forall \, t \geq 4|x||v^0|^2, \qquad t \geq \frac{2|x|}{1-|\widehat{v}|}, \qquad \text{so that} \qquad  t-|x+t\widehat{v}| \geq t-\frac{1-|\widehat{v}|}{2}t-|\widehat{v}|t = t-\frac{1+|\widehat{v}|}{2}t \geq \frac{t}{4|v^0|^2}.
\end{equation*}
\end{proof}
We have the following convergence result.
\begin{Pro}\label{Proinducedfield}
Let $|\beta| \leq N-1$ and $(x,v) \in \R^3_x \times \R^3_v$. For all $t \geq 1$, there holds
$$ \left| t^2\big[\widehat{Z}^\beta f\big]^T(t,x+\widehat{v}t) - \big[\widehat{Z}^\beta f\big]^{\infty}(v) \right| \lesssim \overline{\epsilon} \, \langle x \rangle^2|v^0|^8 \frac{\log^{3N_x+3N+1}(3+t)}{t}.$$
\end{Pro}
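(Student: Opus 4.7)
The natural approach is to perform the change of variables $y = x + \widehat{v}t + tz$ in the integral defining $[\widehat{Z}^\beta f]^T(t, x+\widehat{v}t)$, so as to reduce the analysis to the asymptotic behavior of velocity averages along timelike trajectories. Under this substitution the domain $|y-(x+\widehat{v}t)| \leq t$ becomes $|z| \leq 1$, the time argument in the integrand is $\tau := t(1-|z|)$, the spatial argument is $X := x + \widehat{v}t + tz$, and $\omega = z/|z|$. Since $|y - (x+\widehat{v}t)|^2 = t^2|z|^2$ and $\dr y = t^3 \dr z$, a direct computation gives
$$t^2 \big[\widehat{Z}^\beta f\big]^T(t, x+\widehat{v}t) = t^3 \int_{|z| \leq 1} \int_{\R^3_w} \mathcal{W}^T\!\left(\frac{z}{|z|}, w\right) \widehat{Z}^\beta f\big(\tau, X, w\big) \frac{\dr w\, \dr z}{|z|^2}.$$
Observe that $X/\tau = (x/t + \widehat{v} + z)/(1-|z|)$ converges to $(\widehat{v}+z)/(1-|z|)$ as $t \to \infty$, and the condition $|X| < \tau$ translates to $|x/t+\widehat{v}+z|<1-|z|$, which in the limit recovers precisely the domain appearing in Definition \ref{Defexplici}.

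The second step consists in approximating the inner $w$-integral at fixed $z$. The proof of Proposition \ref{ProprifilJ}, built on Lemma \ref{Lemgmoy} and Proposition \ref{ProQinf}, generalizes without modification to integrals weighted by any kernel $\Psi(\omega,w)$ of sufficient $v$-regularity; for $\Psi = \mathcal{W}^T(z/|z|, \cdot\,)$ the bounds $|\Psi| + |v^0 \nabla_w \Psi| \lesssim v^0$ from \eqref{eq:Wt} are more than enough thanks to $N_v \geq 15$. One then obtains, on the region $|X| < \tau$,
$$\tau^3 \!\int_{\R^3_w}\! \mathcal{W}^T\!\left(\frac{z}{|z|}, w\right) \widehat{Z}^\beta f(\tau, X, w)\, \dr w = \mathcal{W}^T\!\left(\frac{z}{|z|}, \widecheck{X/\tau}\right)\! (|v^0|^5 Q^\beta_\infty)(\widecheck{X/\tau}) + O\!\left(\overline{\epsilon} \, |v^0|^4 \, \frac{\log^{3N_x+3N}(3+\tau)}{\tau}\right)\!.$$
Dividing by $\tau^3 = t^3(1-|z|)^3$ and inserting this expansion into the $z$-integral produces a leading term which, after replacing $X/\tau$ by its limit $(\widehat{v}+z)/(1-|z|)$, matches exactly $[\widehat{Z}^\beta f]^\infty(v)$. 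The replacement error is of order $|x|/(t(1-|z|))$ by the $C^1$-regularity of $\mathcal{W}^T$ and the differentiability of $Q^\beta_\infty$ provided by Proposition \ref{derivVQ}.

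The main obstacle is to make these estimates uniform. The error controls degenerate both as $\tau \to 0$ (i.e.\ as $|z| \to 1$) and as $\widecheck{X/\tau}$ concentrates on the boundary of the limiting domain. To handle both we rely on the dichotomy of Lemma \ref{Lemxvt}. When $t \leq 4\langle x \rangle |v^0|^2$, the target bound $\langle x \rangle^2 |v^0|^8/t$ already dominates, so it suffices to estimate both $t^2|[\widehat{Z}^\beta f]^T(t, x+\widehat{v}t)|$ via Proposition \ref{decayFT} and $|[\widehat{Z}^\beta f]^\infty(v)|$ via a direct computation that uses the high-moment bound $|v^0|^{N_v-6} Q^\beta_\infty \in L^\infty_v$ from Proposition \ref{ProQinf}. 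When $t \geq 4\langle x \rangle |v^0|^2$, Lemma \ref{Lemxvt} yields $t - |x+\widehat{v}t| \geq t/(4|v^0|^2)$, which propagates to a uniform lower bound $\tau \gtrsim t/|v^0|^2$ on the portion of the $z$-ball where $|X| < \tau$; the error terms are then integrable in $z$ against $\dr z/|z|^2$, producing the factor $|v^0|^2/t$ together with one extra logarithm from the boundary layer near $|z|=1$. The contribution of the exterior region $|X| \geq \tau$ is absorbed using the improved decay of $\int_w \widehat{Z}^\beta f\, \dr w$ outside the light cone provided by Proposition \ref{estimoyv}. The powers $\langle x \rangle^2$ and $|v^0|^8$ collect the worst polynomial losses from these various splittings, while the extra logarithmic factor in the conclusion arises from integrating $\log^{3N_x+3N}(3+\tau)$ in $z$ over a region shrinking towards $|z|=1$.
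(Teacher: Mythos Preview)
Your overall strategy—the change of variables $y=x+t\widehat v+tz$, the splitting $|X|<\tau$ versus $|X|\geq\tau$, and the use of Remark~\ref{welldef} (via Lemma~\ref{Lemxvt}) to get $1-|z|\gtrsim |v^0|^{-2}$ on the relevant domain—is the same as the paper's. The paper carries out the inner $w$-integral by a further change of variables $w\to y$ and then splits $\mathcal J-[\widehat Z^\beta f]^\infty$ into three explicit pieces $\mathcal J_1,\mathcal J_2,\mathcal J_3$, but this is a repackaging of your steps rather than a different idea.

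There is, however, a genuine gap in your argument at $|\beta|=N-1$. In your step where you replace $\widecheck{X/\tau}$ by its limit $\widecheck{(z+\widehat v)/(1-|z|)}$ in the leading term, you invoke the differentiability of $Q^\beta_\infty$ via Proposition~\ref{derivVQ}. But that proposition only applies for $|\beta|\leq N-2$; for $|\beta|=N-1$ no regularity of $Q^\beta_\infty$ beyond continuity is available, since the relation $v^0\partial_{v^i}Q^\beta_\infty=Q^{\widehat\Omega_{0i}\beta}_\infty-\widehat v^iQ^\beta_\infty$ would require $Q^{\widehat\Omega_{0i}\beta}_\infty$ with $|\widehat\Omega_{0i}\beta|=N$, which is not defined. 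The paper avoids this by performing the velocity-argument shift \emph{before} passing to the limit: in their $\mathcal J_1$ they compare $(|v^0|^5 g^\beta)(\tau,y,w)$ at the two velocity arguments directly, controlling the difference through Lemma~\ref{gweightvderiv}, which bounds $v^0|\nabla_v g^\beta|$ by $\sum_{|\kappa|\leq|\beta|+1}|\mathbf z\,\widehat Z^\kappa f|$ and thus only needs derivatives of $f$ up to order $N$. Only afterwards, in $\mathcal J_3$, do they invoke Proposition~\ref{ProQinf}—at a fixed velocity argument, so no differentiation of $Q^\beta_\infty$ is required. To close your argument you must reorder these two steps.

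Two smaller points. First, the domain $\{|X|<\tau\}$ is not the same as $\{|z+\widehat v|<1-|z|\}$, and the contribution of their symmetric difference (the paper's $\mathcal J_2$) must be treated separately; you do not mention it. Second, the extra logarithm in the statement does not arise from a ``boundary layer near $|z|=1$'' but from the exterior piece: after estimating the $|X|\geq\tau$ contribution one is left with $\langle x\rangle\cdot t^2\mathbf Y^{p=2}_3(t,x+t\widehat v)$, and it is Lemma~\ref{LemKb} that produces $\log(1+t)$.
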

\begin{proof}
Fix $|\beta| \leq N-1$, $(t,x,v) \in [1,+\infty[ \times \R^3_x \times \R^3_v$ and recall from Proposition \ref{GSdecompo} the definition of $[\widehat{Z}^\beta f]^T$. Next, we split the domain of integration of $[\widehat{Z}^\beta f]^T$ into two parts,
\begin{align*}
t^2\big[\widehat{Z}^\beta f\big]^T(t,x+\widehat{v}t)&= t^2 \int_{\substack{|y-x-t\widehat{v}|\leq t \\ |y-x| \geq t-|y-x-t\widehat{v}|}} \int_{\R^3_w} \mathcal{W}^T\!\left(\frac{y-x}{|y-x|},w\right) \widehat{Z}^{\beta}f\big(t-|y-x-t\widehat{v}|,y,w\big) \frac{\mathrm{d}w \mathrm{d}y}{|y-x-t\widehat{v}|^2}+\mathcal{J}, \\
\mathcal{J}&:=\int_{\substack{|z| \leq 1 \\ |z+\widehat{v}| < 1-|z|} } \int_{\R^3_w} \mathcal{W}^T\!\left(\frac{z}{|z|},w\right) \widehat{Z}^{\beta}f\big(t(1-|z|),x+tz+t\widehat{v},w\big) \mathrm{d}w \frac{ t^3\mathrm{d}z}{|z|^2},
\end{align*}
where we performed the change of variables $z=(y-x-t\widehat{v})/t$ in order to obtain the second integral $\mathcal{J}$. As we shall see below, this splitting is useful in order to identify and isolate the asymptotic profile. We start by controlling the first term. For this, note that \eqref{eq:Wt}, $N_v \geq 10$ and the last two estimates of Proposition \ref{estimoyv}, applied for $a=1$, yields, for all $(\omega,\tau,y) \in \mathbb{S}^2\times \R_+\times\R^3$,
$$ \bigg|\int_{\R^3_w} \mathcal{W}^T\!\left(\omega,w\right) Z^{\beta}f\big(\tau,y,w\big)\dr w\bigg| \lesssim \int_{\R^3_w} w^0 \left| Z^{\beta}f \right|\big(\tau,y,w\big) \dr w \lesssim \overline{\epsilon} \, \log^{3N_x+3N}(3+\tau) \frac{1+\max(\tau-|y|,0)}{(1+\tau+|y|)^4}.$$
Remark now that $|y-x| \geq t-|y-x-t\widehat{v}|$ implies
$$t-|y-x-t\widehat{v}|-|y| \leq t-|y-x-t\widehat{v}|-|y-x|+ |x|\leq |x|.$$
Hence, applying first the previous estimate for $\tau=t-|y-x-t\widehat{v}|$ and then \eqref{eq:gaintminusr}, we get
\begin{align*}
 \left|t^2\big[\widehat{Z}^\beta f\big]^T(t,x+\widehat{v}t) - \mathcal{J} \right| & \lesssim \overline{\epsilon} \, (1+|x|) t^2 \int_{\substack{|y-x-t\widehat{v}|\leq t \\ |y-x| \geq t-|y-x-t\widehat{v}|}}  \frac{\log^{3N_x+3N}(3+t-|y-x-t\widehat{v}|)}{(1+t-|y-x-t\widehat{v}|+|y|)^4}\frac{ \mathrm{d}y}{|y-x-t\widehat{v}|^2} \\
 & \lesssim \overline{\epsilon} \, \langle x \rangle \, \frac{\log^{3N_x+3N}(3+|t-|x+t\widehat{v}||)}{1+|t-|x+t\widehat{v}||} t^2 \, \mathbf{Y}^{p=2}_{3}(t,x+t\widehat{v}) .
\end{align*}
According to Lemma \ref{LemKb}, $t^2 \mathbf{Y}^{p=2}_3(t,x+t\widehat{v})\lesssim \log(1+t)$. By applying Lemma \ref{Lemxvt}, we then deduce
\begin{align*}
 \left|t^2\big[\widehat{Z}^\beta f\big]^T(t,x+\widehat{v}t) - \mathcal{J} \right| & \lesssim  \overline{\epsilon} \, \langle x \rangle \log(1+t) \left( \frac{\langle x \rangle|v^0|^2}{1+t}+|v^0|^2\frac{\log^{3N_x+3N}(3+t)}{1+t}\right) ,
\end{align*}
so that it remains us to compare $\mathcal{J}$ with $[\widehat{Z}^{\beta}f]^\infty(v)$. As in Section \ref{subsecaveragev}, it is convenient to change the reference frame and work with $g^\beta(\tau,y,w):=\widehat{Z}^{\beta}f(\tau,y+\tau\widehat{w},w)$. In view of Lemma \ref{cdv}, the change of variables $y=x+tz+\widehat{v}t-\widehat{w}t(1-|z|)$, for $z$ fixed, leads to
$$ \mathcal{J}=\int_{\substack{|z| \leq 1 \\  |z+\widehat{v}| < 1-|z|  }} \int_{\left|x-y+tz+\widehat{v}t\right| < t(1-|z|)}\mathcal{W}^T\!\left(\frac{z}{|z|},w\right)\, \big(|v^0|^5g^\beta \big)\big(t(1-|z|),y,w \big) \frac{\mathrm{d} y \mathrm{d}z}{|z|^2(1-|z|)^3},$$
where we used $w$ to denote the following function of $(y,z)$,
$$ w=\frac{\widecheck{x-y+tz+t\widehat{v}}}{t(1-|z|)} \Leftrightarrow \widehat{w}=\frac{x-y+tz+t\widehat{v}}{t(1-|z|)}.$$
By the triangular inequality, we have $ |\mathcal{J}-[\widehat{Z}^{\beta}f]^\infty| \leq \mathcal{J}_1+\mathcal{J}_2+\mathcal{J}_3$, where

\begin{align*}
\mathcal{J}_1 & := \int_{\substack{|z| \leq 1 \\  |z+\widehat{v}| < 1-|z|  }} \int_{\left|x-y+tz+t\widehat{v}\right| < t(1-|z|) }\left|\Delta_1^\beta \right| \frac{\mathrm{d} y \mathrm{d}z}{|z|^2(1-|z|)^3}, \\
\Delta_1^\beta & := \mathcal{W}^T\!\left(\frac{z}{|z|},w\right)\big(|v^0|^5g^\beta \big) \big(t(1-|z|),y,w \big)- \mathcal{W}^T\!\left( \frac{z}{|z|}, \frac{\widecheck{z+\widehat{v}}}{1-|z|} \right) \big(|v^0|^5g^\beta \big) \!\left(t(1-|z|),y,\frac{\widecheck{z+\widehat{v}}}{1-|z|} \right), \\
\mathcal{J}_2 & := \left|\int_{\substack{|z| \leq 1 \\  |z+\widehat{v}| < 1-|z|  }} \int_{\left|x-y+tz+t\widehat{v}\right| \geq t(1-|z|) }  \mathcal{W}^T\!\left( \frac{z}{|z|}, \frac{\widecheck{z+\widehat{v}}}{1-|z|} \right) \!\big(|v^0|^5g^\beta \big)\! \left(t(1-|z|),y,\frac{\widecheck{z+\widehat{v}}}{1-|z|} \right) \frac{\mathrm{d} y \mathrm{d}z}{|z|^2(1-|z|)^3} \right|\!, \\
\mathcal{J}_3 & := \int_{\substack{|z| \leq 1 \\  |z+\widehat{v}| < 1-|z|  }}\left| \Delta_3^{\beta} \right|\, \frac{ \mathrm{d}z}{|z|^2(1-|z|)^3} ,\\
\Delta^\beta_3 & :=  \mathcal{W}^T \! \left( \frac{z}{|z|}, \frac{\widecheck{z+\widehat{v}}}{1-|z|} \right) \left[ \int_{\R^3_y } \big(|v^0|^5\widehat{Z}^{\beta} f \big)\left(t(1-|z|),y,\frac{\widecheck{z+\widehat{v}}}{1-|z|} \right)\mathrm{d}y -  \left( |v^0|^5Q^{\beta}_{\infty} \right)\left(\frac{\widecheck{z+\widehat{v}}}{1-|z|} \right)  \right],
\end{align*}
where, for $\Delta_3^\beta$, we used that the spatial average of $g^\beta$ is equal to the one of $\widehat{Z}^{\beta}f$. In view of Remark \ref{welldef}, we will be able to transform time decay for the integrands of $\mathcal{J}_i$ into decay in $t$, at the cost of powers of $v^0$. In particular, Remark \ref{welldef} and $N_x>7$ imply the following inequality that we will use several times,
\begin{equation}\label{eq:easybound}
 \int_{\substack{|z| \leq 1 \\  |z+\widehat{v}| < 1-|z|  }} \int_{\R^3_y} \frac{\mathrm{d} y}{\langle y \rangle^{N_x-4}} \frac{\mathrm{d} z}{|z|^2(1-|z|)^n} \lesssim  \int_{\substack{|z| \leq 1 \\  |z+\widehat{v}| < 1-|z|  }} \frac{\mathrm{d} z}{|z|^2(1-|z|)^n} \leq 2^{2n+2} \pi |v^0|^{2n}, \qquad n \in \mathbb{N} .
\end{equation}
We start by dealing with $\mathcal{J}_1$. Since $|\nabla_V \widecheck{V}| \lesssim (1-|V|^2)^{-3/2} = |\widecheck{V}^0|^3$ for all $|V|<1$ by Lemma \ref{cdv} and in view of the bounds \eqref{eq:Wt} on $\mathcal{W}^T$, the mean value theorem yields
\begin{align*}
\left| \Delta_1^\beta \right|  &\leq  \frac{|x-y|}{t(1-|z|)}\sup_{V \in \R^3}  |V^0|^{9}\left(|g^\beta|+|\nabla_v g^{\beta}| \right)(t(1-|z|),y,V) \\
& \leq \frac{1+|x|}{t(1-|z|)\langle y \rangle^{N_x-4}}\sup_{(X,V) \in \R^6}  |V^0|^{9} \langle X \rangle^{N_x-3}|\left(|g^\beta|+|\nabla_v g^{\beta}| \right)(t(1-|z|),X,V) .
 \end{align*}
By applying Lemma \ref{gweightvderiv} and then the estimates of Proposition \ref{estiLinfini}, we obtain
$$\left| \Delta_1^\beta \right|  \leq  \frac{\langle x \rangle}{t(1-|z|)\langle y \rangle^{N_x-4}} \sum_{|\kappa| \leq N}\sup_{(X,V) \in \R^6}  |V^0|^{9} \left| \mathbf{z}^{N_x-2} \widehat{Z}^{\kappa} f \right|(t(1-|z|),X,V) \lesssim  \frac{\overline{\epsilon} \, \langle x \rangle \, \log^{3N_x+3N}(3+t)}{t(1-|z|)\langle y \rangle^{N_x-4}}   ,$$
where we used $N_v \geq 12$ and $|\beta|+1 \leq N$. We then deduce from \eqref{eq:easybound} that
$$ \mathcal{J}_1 \lesssim \overline{\epsilon} \, \langle x \rangle|v^0|^8\frac{\log^{3N_x+3N}(3+t)}{t} .$$
Next, we control $\Delta_3^\beta$ using $|\mathcal{W}^T|(\cdot,V) \lesssim V^0$, $N_v \geq 12$ and Proposition \ref{ProQinf}. This allows us to bound $\mathcal{J}_3$ through \eqref{eq:easybound},  
$$\Delta_3^{\beta} \lesssim \overline{\epsilon} \, \frac{\log^{3N_x+3N}(3+t)}{(1+t)(1-|z|)}, \qquad \qquad \mathcal{J}_3 \lesssim  \overline{\epsilon} \, |v^0|^8 \frac{\log^{3N_x+3N}(3+t)}{1+t}.$$
Finally, remark that on the domain of integration of $\mathcal{J}_2$, we have, for $\widehat{w}= \frac{z+\widehat{v}}{1-|z|}$,
$$ 1 = |w^0|^2 \left( 1- \frac{|z+\widehat{v}|^2}{(1-|z|)^2} \right)=|w^0|^2\frac{(1-|z|+|z+\widehat{v}|)(1-|z|-|z+\widehat{v}|)}{(1-|z|)^2} \leq |w^0|^2\frac{2|x-y|}{(1-|z|)t}.$$
Since $|\mathcal{W}^T|(\cdot,w)\lesssim w^0$, we get
$$\mathcal{J}_2  := \frac{\langle x \rangle}{t} \sup_{\tau \leq t}\sup_{(y,w)\in \R^6} |w^0|^8 \langle y \rangle^{N_x-3} \left|g^\beta \right|(\tau,y,w) \int_{\substack{|z| \leq 1 \\  |z+\widehat{v}| < 1-|z|  }} \int_{\R^3_y } \frac{\dr y}{\langle y \rangle^{N_x-4}} \frac{ \mathrm{d}z}{|z|^2(1-|z|)^4} .$$
Using once again Lemma \ref{gweightvderiv} together with Proposition \ref{estiLinfini}, we get, in view of \eqref{eq:easybound},
$$ \mathcal{J}_2  \lesssim \overline{\epsilon} \,  \langle x \rangle \, t^{-1} \log^{3N_x+3N}(3+t) |v^0|^8.$$
\end{proof}
This directly provides us the asymptotic profile of $\mathcal{L}_{Z^\gamma}(F)^T=-\sum_{|\beta| \leq |\gamma|} C_\beta^\gamma [\widehat{Z}^\beta f]^T$.
\begin{Cor}\label{Corinduced}
Let $|\gamma| \leq N-1$ and $\mathcal{L}_{Z^\gamma}(F)^{\infty} :=-\sum_{|\beta| \leq |\gamma|} C_\beta^\gamma [\widehat{Z}^\beta f]^\infty$. Then,
$$ \forall \, (t,x,v)\in [1,+\infty[ \times \R^3_x \times \R^3_v, \qquad \left| t^2\mathcal{L}_{Z^\gamma}(F)^T(t,x+\widehat{v}t) - \mathcal{L}_{Z^\gamma}(F)^{\infty}(v) \right| \lesssim \overline{\epsilon} \, \langle x \rangle^2|v^0|^8 \frac{\log^{3N_x+3N+1}(3+t)}{t}.$$
Moreover, if $Z^\gamma$ contains a translation $\partial_{x^\mu}$ or the scaling vector field $S$, then $\mathcal{L}_{Z^\gamma}(F)^{\infty}=0$.
\end{Cor}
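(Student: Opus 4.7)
The plan is to prove both assertions by unpacking the definition of $\mathcal{L}_{Z^\gamma}(F)^\infty$ and reducing to Propositions~\ref{Proinducedfield} and~\ref{ProformQ}. The first estimate will be essentially a triangle inequality: since $\mathcal{L}_{Z^\gamma}(F)^T = -\sum_{|\beta|\le|\gamma|} C^\gamma_\beta [\widehat{Z}^\beta f]^T$ by Proposition~\ref{GSdecompo}, summing the pointwise estimates provided by Proposition~\ref{Proinducedfield} over the finitely many multi-indices $|\beta|\le|\gamma|\le N-1$ immediately yields the claimed decay, with an implicit constant depending on $N$ only through the structure constants $C^\gamma_\beta$.

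For the vanishing claim I would first observe that, by Definition~\ref{Defexplici}, the factor $(|v^0|^5 Q_\infty^\beta)(\cdot)$ enters linearly in the integrand of $[\widehat{Z}^\beta f]^\infty$; in particular $Q_\infty^\beta\equiv 0$ forces $[\widehat{Z}^\beta f]^\infty=0$, and more generally the assignment $Q_\infty^\beta\mapsto[\widehat{Z}^\beta f]^\infty$ is linear. The argument then splits into two cases.

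\textbf{Case 1 (translation).} If $Z^\gamma$ contains a translation $\partial_{x^\mu}$, then since $\widehat{\partial_{x^\mu}}=\partial_{x^\mu}$ is itself a translation, iterating Lemma~\ref{LemCom} shows that every $\widehat{Z}^\beta$ appearing in the source $f_\gamma=\sum C^\gamma_\beta \widehat{Z}^\beta f$ of the commuted Maxwell equation carries at least one translation, i.e.\ $\beta_T\ge 1$. Proposition~\ref{ProformQ} then gives $Q_\infty^\beta=0$, so each $[\widehat{Z}^\beta f]^\infty$ vanishes and the full sum collapses.

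\textbf{Case 2 (scaling).} If instead $Z^\gamma$ contains $S$, the plan is to exploit the cancellation between the extra summand $3J(g)$ in $\nabla^\mu \mathcal{L}_S(G)_{\mu\nu}=J(Sg)+3J(g)$ and the factor $-3$ in the identity $Q_\infty^{S\kappa}=-3\,Q_\infty^\kappa$. After reducing (modulo lower-order commutators of the form $[S,Z]\in\{0\}\cup\mathbb{K}$, which are themselves handled by the same case analysis) to the case $Z^\gamma=S\,Z^{\gamma'}$, Lemma~\ref{LemCom} yields
$$f_\gamma=S f_{\gamma'}+3f_{\gamma'}=\sum_\beta C^{\gamma'}_\beta \bigl(\widehat{Z}^{S\beta}f+3\,\widehat{Z}^\beta f\bigr),$$
where I write $\widehat{Z}^{S\beta}:=S\widehat{Z}^\beta$. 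For each $\beta$, either $\beta_T\ge 1$ (in which case both $Q_\infty^\beta$ and $Q_\infty^{S\beta}$ vanish) or $\beta_T=0$ (in which case Proposition~\ref{ProformQ} gives $Q_\infty^{S\beta}=-3Q_\infty^\beta$, hence $[\widehat{Z}^{S\beta}f]^\infty=-3[\widehat{Z}^\beta f]^\infty$ by the linearity noted above); in either alternative the two contributions satisfy $[\widehat{Z}^{S\beta}f]^\infty+3[\widehat{Z}^\beta f]^\infty=0$, and summation over $\beta$ gives $\mathcal{L}_{Z^\gamma}(F)^\infty=0$. The only potentially non-routine point is making the reduction rigorous when $S$ lies in the interior of $Z^\gamma$, which I would handle by rearranging the order of application of Lemma~\ref{LemCom} using that the bracket of $S$ with any $Z\in\mathbb{K}$ either vanishes or is itself an element of $\mathbb{K}$.
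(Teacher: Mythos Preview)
Your proposal is correct and follows essentially the same route as the paper: the first estimate is obtained by summing Proposition~\ref{Proinducedfield} over $|\beta|\le|\gamma|$, and the vanishing statement is proved by reducing to $Z^\gamma=\partial_{x^\lambda}Z^\kappa$ or $Z^\gamma=SZ^\kappa$, then invoking Proposition~\ref{ProformQ} to obtain either $Q_\infty^\beta=0$ (translation case) or the cancellation $Q_\infty^{S\beta}+3Q_\infty^\beta=0$ (scaling case). One minor simplification: in your Case~2 reduction you worry about commutators $[S,Z]$, but since $[S,\Omega_{0i}]=[S,\Omega_{jk}]=0$, when $\gamma_T=0$ the scaling commutes exactly to the front and no lower-order terms appear; the only nontrivial bracket $[S,\partial_{x^\mu}]=-\partial_{x^\mu}$ arises when $\gamma_T\ge1$, which is already covered by Case~1.
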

\begin{proof}
We only have to focus on the second part of the statement. Recall from the proof of Proposition \ref{ProformQ} that we can reduce the analysis to the cases $Z^\gamma=\partial_{x^\lambda} Z^\kappa$, if $\gamma_T \geq 1$, and $Z^\gamma=S Z^\kappa$ otherwise. Recall further from the commutation formula of Lemma \ref{LemCom} and Proposition \ref{Com}, that
$$ \nabla^{\mu} \mathcal{L}_{\partial_{x^\lambda} Z^\kappa}(F)_{\mu \nu} = \sum_{|\xi| \leq |\kappa|} C^\kappa_\xi J(\partial_{x^\lambda} \widehat{Z}^\xi f )_{\nu}, \qquad \nabla^{\mu} \mathcal{L}_{S Z^\kappa}(F)_{\mu \nu} = \sum_{|\xi| \leq |\kappa|} C^\kappa_\xi J(S \widehat{Z}^\xi f )_{\nu}+3C^\kappa_\xi J(\widehat{Z}^\xi f)_{\nu}.$$
It remains to recall from Proposition \ref{ProformQ} that $Q^{\partial_{x^\lambda} \xi}_\infty=0$ and $Q^{S\xi}_\infty=-3Q^\xi$, so that $\mathcal{L}_{Z^\gamma}(F)^{\infty}=0$.
\end{proof}

\subsubsection{Behavior of $\mathcal{L}_{Z^\gamma}(F)^{\mathrm{data}}$ along timelike straight lines}

Recall from Proposition \ref{GSdecompo} and \eqref{eq:partief} that $F^{\mathrm{data}}$ is the sum of $F^{\mathrm{hom}}$, which verifies $\Box F^{\mathrm{hom}}_{\mu \nu}=0$, and a term which is strongly decaying in the interior of the light cone. If $Q_F \neq 0$, $F$ decay initially as $r^{-2}$ and one cannot expect to prove strong decay estimates for $ F^{\mathrm{hom}}$ through Proposition \ref{decaylinMax}. For this reason, we need to analyse in detail the homogeneous part $F^{\mathrm{hom}}$. It turns out that it decays faster in the interior of the light cone and then along timelike straight lines, so that it will not contribute to the asymptotic Lorentz force. 

In order to improve the naive estimate of Proposition \ref{estidata}, one can remark that the leading order term $\overline{F}(0,x)=\frac{Q_Fx_i}{4\pi|x|^3}  \mathrm{d} t \wedge \dr x^i$ of the asymptotic expansion of $F^{\mathrm{hom}}(0,\cdot)$ corresponds to the static electromagnetic field generated by a point charge $Q_F$ located at $x=0$. It is derived from the potential $A=Q(4\pi  r)^{-1} \dr t$ which satisfies the Lorenz gauge, and then $\Box A_\mu =0$, on $\R\times (\R^3\setminus \{ 0 \})$. To deal with our evolution problem and the singularity of the Newton potential, we introduce 
$$\widetilde{A}(t,x):= \chi(|x|-t) A(t,x)=\frac{Q_F}{4 \pi |x|} \chi (|x|-t) \dr t, \qquad  \chi \in C^\infty (\R,[0,1]), \quad \chi \vert_{]-\infty,1/2]}=0, \quad \chi \vert_{[1,+\infty[}=1.$$ 
Then, $\widetilde{A}$ is smooth on $\R_+ \times \R^3$ and $\Box \widetilde{A}_\mu =0$. It motivates the introduction of
\begin{align*} 
\widetilde{F}(t,x) := \mathrm{d} \widetilde{A}(t,x)&=\frac{Q_Fx_i}{4\pi|x|^3} \chi(|x|-t) \mathrm{d} t \wedge \dr x^i-\frac{Q_Fx_i}{4 \pi |x|^2} \chi' (|x|-t) \mathrm{d} t \wedge \dr x^i \\
& = \chi (|x|-t) \overline{F}(t,x) -\frac{Q_Fx_i}{4 \pi |x|^2} \chi' (|x|-t) \mathrm{d} t \wedge \dr x^i,
\end{align*}
which, in view of $[\Box,\partial_{x^\lambda}]=0$ and $\Box \widetilde{A}_\lambda =0$, verifies $\Box \widetilde{F}_{\mu \nu}=0$. Since,
\begin{itemize}
\item for any $\Gamma \in \mathbb{K} \setminus \{ S \}$, $[\Box \,, \Gamma]=0$ and $[\Box \, , S]=2 \, \Box$,
\item for any $Z=Z^\lambda \partial_{x^\lambda} \in \mathbb{K}$ and any $2$-form $H$, we have $\mathcal{L}_Z(H)_{\mu \nu}= Z(H_{\mu \nu})+\partial_{x^\mu}(Z^\lambda)H_{\lambda \nu}+\partial_{x^\nu}(Z^\lambda)H_{\mu \lambda }$,
\end{itemize}
we further have $\Box \mathcal{L}_{Z^\gamma}(\widetilde{F})_{\mu \nu}=0$ for any $|\gamma| \leq N-1$. The key idea will then be to consider $\mathcal{L}_{Z^\gamma}(F)^{\mathrm{hom}}-\mathcal{L}_{Z^\gamma}(\widetilde{F})$. More precisely, the following estimates hold.

\begin{Pro}\label{Cordataconvcharge0}
For any $|\gamma | \leq N-1$, we have
$$ \forall \, (t,x) \in \R_+ \times \R^3, \qquad \left|\mathcal{L}_{Z^\gamma}(F)^{\mathrm{data}}(t,x)- \mathcal{L}_{Z^\gamma}\big( \widetilde{F} \big) (t,x) \right| \lesssim \Lambda (1+t+|x|)^{-1} (1+|t-|x||)^{-1-\delta}.$$
\end{Pro}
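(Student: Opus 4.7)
The plan is to first use \eqref{eq:partief}: since $N_x-1>6\geq 1+\delta$ and $\epsilon\leq\Lambda$, that inequality already bounds $\mathcal{L}_{Z^\gamma}(F)^{\mathrm{data}}-\mathcal{L}_{Z^\gamma}(F)^{\mathrm{hom}}$ by the desired right-hand side. It thus suffices to prove the same bound for each Cartesian component of the $2$-form
\[ H := \mathcal{L}_{Z^\gamma}(F)^{\mathrm{hom}} - \mathcal{L}_{Z^\gamma}(\widetilde F). \]

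Next I would observe that each scalar $H_{\mu\nu}$ solves the free wave equation $\Box H_{\mu\nu}=0$. For the first summand this is the defining property in Proposition \ref{GSdecompo}; for the second it follows from $\Box\widetilde F_{\mu\nu}=0$, which was precisely the motivation for introducing $\widetilde F$, combined with the commutators $[\Box,Z]=0$ for $Z\in\mathbb{K}\setminus\{S\}$ and $[\Box,S]=2\Box$ iterated along the string $Z^\gamma$. By linearity and the defining conditions on $\mathcal{L}_{Z^\gamma}(F)^{\mathrm{hom}}$, the initial data of $H_{\mu\nu}$ at $t=0$ coincide with those of the commuted $2$-form $\mathcal{L}_{Z^\gamma}(F-\widetilde F)_{\mu\nu}$.

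The key structural input is that the cut-off $\chi$ is designed so that $\widetilde F\equiv \overline F$ identically on $\{|x|-t\geq 1\}$. Hence, for any $x$ with $|x|\geq 1$, every space-time derivative of $\widetilde F$ at $(0,x)$ equals that of $\overline F$, and these vanish as soon as a $\partial_t$ is involved because $\overline F$ is static. Therefore $\partial_{t,x}^\kappa(F-\widetilde F)(0,x)=\partial_{t,x}^\kappa(F-\overline F)(0,x)$ on $\{|x|\geq 1\}$, which the hypothesis \eqref{assumasymptoexp} bounds by $\Lambda\langle x\rangle^{-2-\delta-|\kappa|}$ for $|\kappa|\leq N+1$; the region $|x|\leq 1$ is handled by mere smoothness. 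Since every $Z\in\mathbb{K}$, evaluated at $t=0$, reduces to an operator of the form $x^\alpha\partial_{t,x}^\kappa$ with $|\alpha|\leq|\kappa|\leq 1$, an induction on $|\gamma|$ shows that $\mathcal{L}_{Z^\gamma}(G)(0,\cdot)$ is a linear combination of terms $x^\alpha\partial_{t,x}^\kappa G(0,x)$ with $|\alpha|\leq|\kappa|\leq|\gamma|$. Combined with the previous pointwise bound, this yields $|H(0,x)|\lesssim\Lambda\langle x\rangle^{-2-\delta}$ and, through the commutation $[\partial_t,Z^\gamma]$, also $|\partial_t H(0,x)|\lesssim\Lambda\langle x\rangle^{-3-\delta}$.

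It then remains to apply Proposition \ref{decaylinMax} with $q=2+\delta\in[2,3]$ to each Cartesian component of $H$: this yields $|H|(t,x)\lesssim\Lambda(1+t+|x|)^{-1}(1+|t-|x||)^{-1-\delta}$, and combining with the first reduction closes the proof. I expect the main technical difficulty to lie in the Lie-derivative bookkeeping at $t=0$, namely the inductive verification that every polynomial weight in $x$ produced by the operators in $\mathbb{K}$ is accompanied by at least as many derivatives, so that the decay rate of $(F-\widetilde F)(0,\cdot)$ is preserved when passing to $H(0,\cdot)$. Once this structural fact is in hand, the remaining steps amount to combining the hypothesis \eqref{assumasymptoexp}, the static character of $\overline F$, and the pointwise decay estimate for the homogeneous wave equation.
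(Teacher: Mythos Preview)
Your proposal is correct and follows essentially the same route as the paper: reduce via \eqref{eq:partief} to bounding $H=\mathcal{L}_{Z^\gamma}(F)^{\mathrm{hom}}-\mathcal{L}_{Z^\gamma}(\widetilde F)$, observe that each component solves the free wave equation, control its Cauchy data by $\Lambda\langle x\rangle^{-2-\delta}$ and $\Lambda\langle x\rangle^{-3-\delta}$, and conclude by Proposition~\ref{decaylinMax} with $q=2+\delta$. The paper isolates the initial-data estimate as a separate lemma (Lemma~\ref{Lemdata}), which you reproduce inline; your observation that $\widetilde F\equiv\overline F$ on $\{|x|-t\geq1\}$ together with the smoothness of $\chi$ at $1$ is exactly what makes the identification $\partial_{t,x}^\kappa\widetilde F(0,x)=\partial_{t,x}^\kappa\overline F(0,x)$ valid for $|x|\geq1$, and your homogeneity bookkeeping for $Z^\gamma$ at $t=0$ is the content of the first displayed inequality in the proof of Lemma~\ref{Lemdata}.
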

\begin{Rq}
We will not need it in this article, but we have 
$$\big|\mathcal{L}_{Z^\gamma}\big( \widetilde{F} \big)-\chi (|x|-t) \mathcal{L}_{Z^\gamma}\big( \overline{F} \big) (t,x) \big| \lesssim Q_F (1+t)^{-1} \mathds{1}_{0 \leq |x|-t \leq 1} .$$
Moreover, $\mathcal{L}_{\partial_t}(\overline{F})= \mathcal{L}_{\Omega_{jk}}(\overline{F})= \mathcal{L}_{S}(\overline{F})=0$ for all $1 \leq j < k \leq 3$. We refer to \cite[Section~$5$]{dim3} for more informations concerning $\overline{F}$.
\end{Rq}
This result implies that the leading order term of $\mathcal{L}_{Z^\gamma}(F)^{\mathrm{data}}(t,x)$ is supported in the exterior of the light cone. Before proving it, let us investigate its direct consequence for the behavior of $F^{\mathrm{data}}$ along timelike trajectories.
\begin{Pro}\label{Cordataconvcharge}
For any $|\gamma | \leq N-1$, we have
$$ \forall \, (t,x,v) \in [1,+\infty[ \times \R^3_x \times \R^3_v, \qquad \left|t^2 \mathcal{L}_{Z^\gamma}(F)^{\mathrm{data}}(t,x+t\widehat{v}) \right| \lesssim \Lambda \, \langle x \rangle^2 |v^0|^4  t^{-\delta}.$$
\end{Pro}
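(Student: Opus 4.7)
The plan is to split the analysis in two regimes according to Lemma \ref{Lemxvt}, treating separately the region where the timelike trajectory stays near the light cone and the deep interior region, and in the latter to exploit the support properties of $\widetilde{F}$ together with Proposition \ref{Cordataconvcharge0}.

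\textbf{First regime: $1 \leq t \leq 4\langle x\rangle|v^0|^2$.} Here the only tool available is the universal decay estimate of Proposition \ref{estidata},
$$\bigl|\mathcal{L}_{Z^\gamma}(F)^{\mathrm{data}}\bigr|(t,x+t\widehat{v}) \lesssim \Lambda\, (1+t)^{-1}\bigl(1+\bigl|t-|x+t\widehat{v}|\bigr|\bigr)^{-1} \lesssim \Lambda\, t^{-1}.$$
Multiplying by $t^2$ yields the bound $\Lambda\, t$. The regime assumption gives $t^{1+\delta} \leq (4\langle x\rangle|v^0|^2)^{1+\delta} \lesssim \langle x\rangle^2|v^0|^4$ (using $\delta\leq 1$ and $\langle x\rangle|v^0|^2 \geq 1$), which delivers the claimed $t^{-\delta}$ decay.

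\textbf{Second regime: $t \geq 4\langle x\rangle|v^0|^2$.} Lemma \ref{Lemxvt} gives $t-|x+t\widehat{v}| \geq t/(4|v^0|^2)$, and since $t \geq 4|v^0|^2$ in this region, one has in particular $|x+t\widehat{v}| \leq t - 1$. The $2$-form $\widetilde{F}$ is supported in $\{|x|\geq t+1/2\}$, and since $\mathcal{L}_Z$ is a local operator acting smoothly in the coefficients, every iterated Lie derivative $\mathcal{L}_{Z^\gamma}(\widetilde{F})$ vanishes identically on the open set $\{|x|<t+1/2\}$. In particular $\mathcal{L}_{Z^\gamma}(\widetilde{F})(t,x+t\widehat{v})=0$, so Proposition \ref{Cordataconvcharge0} reduces the estimate to
$$\bigl|\mathcal{L}_{Z^\gamma}(F)^{\mathrm{data}}\bigr|(t,x+t\widehat{v}) \lesssim \Lambda\, (1+t)^{-1}\bigl(t/|v^0|^2\bigr)^{-1-\delta} \lesssim \Lambda\, |v^0|^{2+2\delta}\, t^{-2-\delta}.$$
Multiplying by $t^2$ and using $\delta\leq 1$ gives the bound $\Lambda\, |v^0|^4\, t^{-\delta} \leq \Lambda\,\langle x\rangle^2|v^0|^4\, t^{-\delta}$.

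\textbf{Obstacle.} The argument is essentially a bookkeeping exercise once Proposition \ref{Cordataconvcharge0} is available; the only delicate point is to make sure the two regimes glue properly, i.e.\ that in the ``small $t$'' regime the naive bound of Proposition \ref{estidata} is enough (which reduces to $\delta\leq 1$), and that in the ``large $t$'' regime the timelike trajectory $\{(t,x+t\widehat{v})\}$ strictly enters the interior of the light cone so as to annihilate all derivatives of $\widetilde{F}$. The real work lies in Proposition \ref{Cordataconvcharge0}; granting it, the above case split is essentially immediate.
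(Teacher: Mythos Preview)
Your proof is correct and follows essentially the same approach as the paper: split according to Lemma \ref{Lemxvt}, use Proposition \ref{estidata} in the first regime, and in the second regime observe that $\mathcal{L}_{Z^\gamma}(\widetilde{F})$ vanishes at $(t,x+t\widehat{v})$ so that Proposition \ref{Cordataconvcharge0} directly gives the bound. Your bookkeeping in the second regime ($|v^0|^{2+2\delta}t^{-2-\delta}\leq |v^0|^4 t^{-2-\delta}$) is in fact cleaner than the paper's, which writes $16|v^0|^4 t^{-3}$ and thus tacitly uses $\delta=1$ at that step.
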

\begin{proof}
Let $(t,x,v) \in [1,+\infty[ \times \R^3_x \times \R^3_v$. If $t \leq 4 \langle x \rangle|v^0|^2$, it suffices to apply Proposition \ref{estidata}, providing 
$$|\mathcal{L}_{Z^\gamma}(F)^{\mathrm{data}}(t,x+t\widehat{v})| \lesssim \Lambda \, t^{-1} \leq 16\Lambda \, \langle x \rangle^2 |v^0|^4 \, t^{-3}.$$
Otherwise, according to Lemma \ref{Lemxvt}, we have $t-|x+t\widehat{v}| \geq \frac{t}{4|v^0|^2}$, so that $\chi^{(n)} (|x+t\widehat{v}|-t)=0$ for all $n \in \mathbb{N}$. Consequently, we get from Proposition \ref{Cordataconvcharge0} that
$$ |\mathcal{L}_{Z^\gamma}(F)^{\mathrm{data}}(t,x+t\widehat{v})| \lesssim \Lambda \, t^{-1} (1+|t-|x+t\widehat{v}||)^{-1 -\delta} \leq 16\Lambda \,  |v^0|^4 \, t^{-3}.$$
\end{proof}
The first step of the proof of Proposition \ref{Cordataconvcharge0} consists in controlling the initial data for $\mathcal{L}_{Z^\gamma}(F)^{\mathrm{hom}}$.
\begin{Lem}\label{Lemdata}
The assumption \eqref{assumasymptoexp} on the initial electromagnetic field $F(0,\cdot)$ implies
\begin{equation}\label{eq:data}
 \forall \, |\gamma| \leq N-1, \qquad \sup_{|\kappa| \leq 1} \, \sup_{|x| \geq 1} \langle x \rangle^{2+\delta+|\kappa|} \left| \nabla_{t,x}^\kappa \mathcal{L}_{Z^\gamma}(F)^{\mathrm{hom}}-\nabla_{t,x}^\kappa\mathcal{L}_{Z^\gamma}(\overline{F})\right|(0,x) \lesssim \Lambda.
\end{equation}
Note that $\nabla_{t,x}^\kappa\mathcal{L}_{Z^\gamma}(\overline{F})(0,x)=\nabla_{t,x}^\kappa\mathcal{L}_{Z^\gamma}(\widetilde{F})(0,x)$ for all $|x| \geq 1$ since $\chi =1$ on $[1,+\infty[$.
\end{Lem}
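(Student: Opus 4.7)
The plan is to argue by a reduction, splitting the quantity to be estimated into two pieces, one of which vanishes identically by the construction of $\widetilde F$ and the other of which follows directly from the assumption \eqref{assumasymptoexp}.

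First, I would verify the remark included in the statement: that $\widetilde{F}$ and $\overline{F}$, together with all their $\partial_{t,x}$-derivatives, coincide on $\{t=0\}\cap\{|x|\geq 1\}$. Indeed, on this set $\chi(|x|-t)=1$ and $\chi^{(j)}(|x|-t)=0$ for every $j\geq 1$, so every term in $\widetilde F-\chi(|x|-t)\overline F$ and in any $\partial_{t,x}^{\xi}\widetilde F$ that is not already present in the corresponding derivative of $\overline F$ carries a factor $\chi^{(j)}$ with $j\geq 1$ (recall that $\overline F$ is static, so $\partial_t^k\overline F=0$ for $k\geq 1$). Taking Lie derivatives commutes with evaluation and with $\nabla_{\partial_{t,x}}$, and therefore $\nabla_{t,x}^{\kappa}\mathcal{L}_{Z^\gamma}(\widetilde F)(0,x)=\nabla_{t,x}^{\kappa}\mathcal{L}_{Z^\gamma}(\overline F)(0,x)$ on $\{|x|\geq 1\}$ for any $\kappa$. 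On the other hand, by the very definition of the homogeneous part in Proposition~\ref{GSdecompo}, $\mathcal{L}_{Z^\gamma}(F)^{\mathrm{hom}}$ agrees with $\mathcal{L}_{Z^\gamma}(F)$ at $t=0$ together with its $\partial_t$-derivative; the spatial derivatives agree automatically. Hence the lemma reduces to proving
\begin{equation*}
\sup_{|\kappa|\leq 1}\,\sup_{|x|\geq 1}\,\langle x\rangle^{2+\delta+|\kappa|}\,\big|\nabla_{t,x}^{\kappa}\mathcal{L}_{Z^\gamma}(F-\overline F)\big|(0,x)\lesssim\Lambda.
\end{equation*}

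To establish this, I would use the equivalence of pointwise norms \eqref{equinorm} to replace $|\mathcal{L}_{Z^\gamma}(F-\overline F)|$ by $\sum_{|\beta|\leq|\gamma|,\,\mu,\nu}|Z^\beta(F-\overline F)_{\mu\nu}|$, and similarly after one $\partial_{t,x}$. Next, each $Z\in\mathbb{K}$ is a first-order differential operator whose coefficients are polynomials in $(t,x)$ of degree at most $1$. A direct induction shows that the coefficient of $\partial_{t,x}^{\xi}$ in $Z^\beta$ is a polynomial in $(t,x)$ of degree at most $|\xi|$, from which one deduces, at $t=0$,
\begin{equation*}
|Z^\beta H|(0,x)\lesssim\sum_{|\xi|\leq|\beta|}\langle x\rangle^{|\xi|}\,|\partial_{t,x}^{\xi}H|(0,x),\qquad \big|\partial_{t,x} Z^\beta H\big|(0,x)\lesssim\sum_{|\xi|\leq|\beta|+1}\langle x\rangle^{|\xi|-1}\,|\partial_{t,x}^{\xi}H|(0,x).
\end{equation*}

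Applied to $H=(F-\overline F)_{\mu\nu}$ and combined with the assumption \eqref{assumasymptoexp}, which gives $|\partial_{t,x}^{\xi}(F-\overline F)|(0,x)\lesssim\Lambda\,\langle x\rangle^{-2-\delta-|\xi|}$ for $|\xi|\leq N+1$, each summand becomes $\Lambda\,\langle x\rangle^{-2-\delta-|\kappa|}$; since $|\gamma|+|\kappa|\leq N$ we never exceed the range of \eqref{assumasymptoexp}. The argument is really just bookkeeping: there is no serious analytic obstacle, and the only care needed is to track the degree of the polynomial coefficients produced by composing the commutation vector fields so that the powers of $\langle x\rangle$ cancel exactly to leave the expected decay rate.
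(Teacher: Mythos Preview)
Your argument is correct and follows the same reduction as the paper: identify $\mathcal{L}_{Z^\gamma}(F)^{\mathrm{hom}}$ with $\mathcal{L}_{Z^\gamma}(F)$ at $t=0$ (and for $\partial_t$), replace $\overline F$ by $\widetilde F$ on $\{|x|\geq 1\}$, and then control $\nabla_{t,x}^\kappa\mathcal{L}_{Z^\gamma}(F-\overline F)(0,x)$ by $\sum_{|\xi|\le|\gamma|+1}\langle x\rangle^{|\xi|}|\partial_{t,x}^\xi(F-\overline F)|(0,x)$ using that the coefficients of $Z^\beta$ are polynomials in $(t,x)$ of degree at most the derivative order. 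Your inductive check of this degree count is the right justification.

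There is one point where the paper goes slightly further. You invoke \eqref{assumasymptoexp} directly for all $\partial_{t,x}^\xi$, which is legitimate given the way the hypothesis is stated. The paper, however, treats the time derivatives separately: since $\overline F$ is static, any $\partial_t^n\nabla_x^\beta(F-\overline F)(0,\cdot)$ with $n\geq 1$ equals $\partial_t^n\nabla_x^\beta F(0,\cdot)$, and the Maxwell equations (together with $\nabla_x\times\overline E=0$, $\overline B=0$) let one reduce this inductively to purely spatial derivatives of $F-\overline F$ plus velocity averages of $f$. This shows that the time-derivative part of \eqref{assumasymptoexp} is in fact implied by the spatial-derivative part and the smallness assumption on $f_0$, so the hypothesis is really a condition on the initial field alone. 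Your direct route is shorter; the paper's route clarifies that no genuinely new information about the dynamics is being assumed.
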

\begin{proof}
As $\overline{F}$ is defined on $\R\times \R^3 \setminus \{ 0 \}$, $\mathcal{L}_{Z^\gamma}(\overline{F})$ is well-defined for $|x| \geq 1$. We point out that $\nabla_t \mathcal{L}_{Z^\gamma}(\overline{F}) (0,\cdot)$ does not necessarily vanish (consider for instance the case $Z^\gamma= \Omega_{01}$). Moreover, $\mathcal{L}_{Z^\gamma}(F)^{\mathrm{hom}}(0,\cdot)=\mathcal{L}_{Z^\gamma}(F)(0,\cdot)$ by definition. Hence, the left hand side of \eqref{eq:data} is bounded by
\begin{align}
\nonumber \sup_{|\kappa| \leq 1} \, \sup_{|x| \geq 1} \, \langle x\rangle^{2+\delta+|\kappa|} \left| \nabla_{t,x}^\kappa \mathcal{L}_{Z^\gamma}(F-\overline{F})\right| \! (0,x) & \lesssim \sup_{|\xi| \leq |\gamma|+1}  \, \sup_{|x| \geq 1} \, \langle x \rangle^{2+\delta+|\xi|} \, \big| \nabla_{t,x}^\xi (F-\overline{F})\big|  (0,x) \color{white} \\
 & \leq \Lambda +\sup_{|\beta| \leq |\gamma|}  \, \sup_{|x| \geq 1} \, \langle x \rangle^{2+\delta+n+|\beta|} \, \big|\nabla_{\partial_t} \nabla_{t,x}^\beta F\big| (0,x), \label{eq:toboundunandaoutri}
 \end{align}
where, in the last step, we used the assumption \eqref{assumasymptoexp} and that $\overline{F}$ is independent of $t$. Now, remark that if $n \geq 1$, the Maxwell equations implies
$$\partial_t (\partial_t^{n-1} \partial_x^\beta B)= -\partial_t^{n-1} \partial_x^\beta ( \nabla_x \times E), \qquad  \partial_t (\partial_t^{n-1} \partial_x^\beta E)= \partial_t^{n-1} \partial_x^\beta ( \nabla_x \times B)-\int_{\R^3_v} \widehat{v} \partial_t^{n-1} \partial_x^\beta f \dr v. $$
Let $\overline{E}$ and $\overline{B}$ be the electric and magnetic field associated to $\overline{F}$ according to \eqref{eq:defF}, so that $\overline{E}^i=\frac{x^i}{4 \pi r^3}Q_F$ and $\overline{B}=0$. As $\nabla_x \times \overline{E}= \nabla_x \times \overline{B} = 0$, we can bound \eqref{eq:toboundunandaoutri} by $\Lambda$ by performing an induction and using \eqref{assumasymptoexp} as well as the initial assumptions on $f$.
\end{proof}

We are now able to prove Proposition \ref{Cordataconvcharge0} and conclude this subsection. As $\epsilon \leq \Lambda$, \eqref{eq:partief} implies 
$$ \forall \, (t,x) \in \R_+ \times \R^3, \qquad \quad | \mathcal{L}_{Z^\gamma}(F)^{\mathrm{data}}-  \mathcal{L}_{Z^\gamma}(F)^{\mathrm{hom}} |(t,x) \lesssim  \Lambda (1+t+|x|)^{-1}(1+|t-|x||)^{-1-\delta}.$$
Finally, $\Box \mathcal{L}_{Z^\gamma}(F)^{\mathrm{hom}}_{\mu \nu}-\Box \mathcal{L}_{Z^\gamma}(\widetilde{F})_{\mu \nu}=0$, the decay assumptions on the initial data given by Lemma \ref{Lemdata} and Proposition \ref{decaylinMax} yield
$$ \forall \, (t,x) \in \R_+ \times \R^3, \qquad \quad | \mathcal{L}_{Z^\gamma}(F)^{\mathrm{hom}}-  \mathcal{L}_{Z^\gamma}(\widetilde{F}) |(t,x) \lesssim  \Lambda (1+t+|x|)^{-1}(1+|t-|x||)^{-1-\delta}.$$

\subsubsection{Self-similar asymptotic profile of $\mathcal{L}_{Z^\gamma}(F)$}
We are now able to study the full Maxwell field.
\begin{Cor}\label{Corinduced2}
For any $|\gamma| \leq N-1$, there exists a $2$-form $ \mathcal{L}_{Z^\gamma}(F)^\infty$, independent of $t$, such that
$$ \forall \, (t,x,v)\in [1,\infty[ \times \R^3_x \times \R^3_v, \qquad \left| t^2\mathcal{L}_{Z^\gamma}(F)(t,x+\widehat{v}t) - \mathcal{L}_{Z^\gamma}(F)^\infty (v) \right| \lesssim \Lambda \, \langle x \rangle^2|v^0|^8 \frac{\log^{3N_x+3N+1}(3+t)}{t^\delta}.$$
Moreover, for any $\eta>0$, there exists $C_\eta >0$ such that,
$$ \forall \, (t,x) \in [1,+\infty[ \times \R^3_x, \; \frac{|x|}{t} \leq 1-\eta, \qquad \left| t^2\mathcal{L}_{Z^\gamma}(F)(t,x) - \mathcal{L}_{Z^\gamma}(F)^\infty \left( \frac{\widecheck{x}}{t} \right) \right| \lesssim \Lambda C_\eta \frac{\log^{3N_x+3N+1}(3+t)}{t^\delta}.$$ 
\end{Cor}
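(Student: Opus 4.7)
The plan is to define
$$\mathcal{L}_{Z^\gamma}(F)^\infty := -\frac{1}{4\pi}\sum_{|\beta|\leq|\gamma|} C^\gamma_\beta \big[\widehat{Z}^\beta f\big]^\infty,$$
where $[\widehat{Z}^\beta f]^\infty$ is the velocity profile of Definition \ref{Defexplici}, and then obtain the convergence by assembling the three contributions of the Glassey-Strauss decomposition
$$4\pi\, \mathcal{L}_{Z^\gamma}(F) \;=\; \mathcal{L}_{Z^\gamma}(F)^{\mathrm{data}} + \mathcal{L}_{Z^\gamma}(F)^T + \mathcal{L}_{Z^\gamma}(F)^S$$
of Proposition \ref{GSdecompo}. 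The leading contribution comes from $\mathcal{L}_{Z^\gamma}(F)^T$, whose limit along timelike straight lines is given by Corollary \ref{Corinduced}, which directly yields
$$\big|\,t^2\mathcal{L}_{Z^\gamma}(F)^T(t,x+t\widehat v) + 4\pi\,\mathcal{L}_{Z^\gamma}(F)^\infty(v)\,\big| \lesssim \overline{\epsilon}\,\langle x\rangle^2|v^0|^8\,\frac{\log^{3N_x+3N+1}(3+t)}{t}.$$
Since $\overline{\epsilon}\leq \Lambda$ (for $\epsilon$ small) and $t^{-1}\leq t^{-\delta}$ when $t\geq 1$, this term is already of the desired size.

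The two remaining pieces must be shown to be lower order along timelike straight lines. For the data term, Proposition \ref{Cordataconvcharge} gives directly
$$|t^2\mathcal{L}_{Z^\gamma}(F)^{\mathrm{data}}(t,x+t\widehat v)| \lesssim \Lambda\,\langle x\rangle^2|v^0|^4\, t^{-\delta}.$$
For the $S$ term I plan to use the pointwise estimate of Proposition \ref{estiS} together with the dichotomy of Lemma \ref{Lemxvt}. If $1\leq t\leq 4\langle x\rangle|v^0|^2$, we simply drop the factor $(1+|t-|x+t\widehat v||)^{-2}$ and bound
$$t^2|\mathcal{L}_{Z^\gamma}(F)^S|(t,x+t\widehat v) \lesssim \overline{\epsilon}\,\Lambda\, t\log(3+t),$$
which is controlled by $\overline{\epsilon}\,\Lambda\,\langle x\rangle^2|v^0|^8\,t^{-\delta}$ since $t^{1+\delta}\leq (4\langle x\rangle|v^0|^2)^{1+\delta}\leq 16\langle x\rangle^2|v^0|^4$ for $0<\delta\leq 1$. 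If instead $t\geq 4\langle x\rangle|v^0|^2$, Lemma \ref{Lemxvt} yields $1+|t-|x+t\widehat v||\gtrsim t/|v^0|^2$ and hence
$$t^2|\mathcal{L}_{Z^\gamma}(F)^S|(t,x+t\widehat v)\lesssim \overline{\epsilon}\,\Lambda\,|v^0|^4\,\frac{\log(3+t)}{t}\lesssim \overline{\epsilon}\,\Lambda\,|v^0|^4\,t^{-\delta}.$$
Summing these three contributions gives the first claimed estimate.

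The second statement follows by specialization: given $(t,x)$ with $|x|\leq(1-\eta)t$, set $v:=\widecheck{x/t}$, so that $\widehat v = x/t$ and the first estimate applied at the base point $0$ (i.e. with the auxiliary spatial parameter equal to $0$ so that $0+t\widehat v = x$) gives
$$\left|\,t^2\mathcal{L}_{Z^\gamma}(F)(t,x) - \mathcal{L}_{Z^\gamma}(F)^\infty\!\left(\widecheck{x/t}\right)\,\right| \lesssim \Lambda\,|v^0|^8\,\frac{\log^{3N_x+3N+1}(3+t)}{t^\delta}.$$
Since $|v^0|^2 = (1-|x/t|^2)^{-1}\leq \eta^{-1}$, the factor $|v^0|^8$ is absorbed into a constant $C_\eta\lesssim\eta^{-4}$, which concludes the proof. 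The one point requiring a little care is the $S$-term dichotomy; the rest of the argument is a direct assembly of results already proved in Sections \ref{subsecba1} and \ref{Subsecmachin}.
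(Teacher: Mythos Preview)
Your argument is correct and follows exactly the paper's own proof: decompose via Glassey--Strauss, invoke Corollary~\ref{Corinduced} for the $T$-part, Proposition~\ref{Cordataconvcharge} for the data part, handle the $S$-part by Proposition~\ref{estiS} together with the dichotomy of Lemma~\ref{Lemxvt}, and specialize at the base point $0$ with $\widehat v=x/t$ for the second claim. The only slip is a sign in your displayed $T$-estimate: since $t^2\mathcal{L}_{Z^\gamma}(F)^T\to 4\pi\,\mathcal{L}_{Z^\gamma}(F)^\infty$ with your normalization, the expression should read $\bigl|\,t^2\mathcal{L}_{Z^\gamma}(F)^T-4\pi\,\mathcal{L}_{Z^\gamma}(F)^\infty\,\bigr|$ rather than with a plus.
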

\begin{Rq}\label{Rqexplicompu}
For the most important case, $|\gamma|=0$, we have $4\pi F^\infty= -[f]^\infty$, where $[f]^\infty$ is explicitly written in Definition \ref{Defexplici}.
\end{Rq}
\begin{proof}
Fix $|\gamma| \leq N-1$ and $(t,x,v)\in [1,\infty[ \times \R^3_x \times \R^3_v$. Applying Proposition \ref{estiS} and Lemma \ref{Lemxvt}, we have
$$ t^2 \left| \mathcal{L}_{Z^\gamma}(F)^S\right|(t,x+\widehat{v}t) \lesssim \overline{\epsilon} \,  \Lambda \frac{t \log(3+|t-|x-t\widehat{v}||)}{(1+|t-|x-t\widehat{v}||)^2} \lesssim \Lambda \left(\frac{\langle x \rangle^2|v^0|^4}{t}+|v^0|^4 \frac{\log (3+t)}{t} \right).$$
We then get the first part of the statement using the Glassey-Strauss decomposition given by Proposition \ref{GSdecompo}, Corollary \ref{Corinduced}, where $\mathcal{L}_{Z^\gamma}(F)^\infty$ is introduced, and Proposition \ref{Cordataconvcharge}. For the second part, it suffices to apply the first estimate, with a slight abuse of notation, for $x=0$ and $\widehat{v}=x/t$.
\end{proof}
We deduce from the previous result a uniform bound on $\mathcal{L}_{Z^\gamma}(F)^\infty$. Moreover, it turns out that this quantity vanishes in certain cases, providing improved estimates for $\mathcal{L}_{Z^{\gamma}}(F)$.
\begin{Pro}\label{estiFinfty}
For any $|\gamma| \leq N-1$, we have $| \mathcal{L}_{Z^\gamma}(F)^\infty|(v) \lesssim \overline{\epsilon} \sqrt{v^0}$. Moreover, if $|\gamma| \geq 1$ and $Z^\gamma$ contains a translation $\partial_{x^\mu}$ or the scaling vector field $S$, then $\mathcal{L}_{Z^\gamma}(F)^\infty=0$.
\end{Pro}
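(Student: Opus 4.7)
The two assertions will be handled separately. The second one follows essentially for free from Corollary \ref{Corinduced}: since $\mathcal{L}_{Z^\gamma}(F)^\infty$ is defined there as $-\sum_{|\beta|\leq|\gamma|} C^\gamma_\beta [\widehat{Z}^\beta f]^\infty$, and Definition \ref{Defexplici} expresses each $[\widehat{Z}^\beta f]^\infty$ entirely in terms of $Q^\beta_\infty$, the identities of Proposition \ref{ProformQ} (namely $Q^\beta_\infty = 0$ whenever $\widehat{Z}^\beta$ contains a translation, and $Q^{S\kappa}_\infty = -3\, Q^\kappa_\infty$), combined with the commutation formula of Proposition \ref{Com}, produce exactly the cancellations already recorded in the second part of Corollary \ref{Corinduced}. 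This is precisely the desired vanishing.

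For the pointwise bound on $|\mathcal{L}_{Z^\gamma}(F)^\infty|(v)$, my plan is to pass through the limit representation along timelike rays provided by Corollary \ref{Corinduced2}: taking $x=0$ gives
\[
\mathcal{L}_{Z^\gamma}(F)^\infty(v) \;=\; \lim_{t\to+\infty} t^2\, \mathcal{L}_{Z^\gamma}(F)(t, t\widehat{v}).
\]
I then decompose through the Glassey--Strauss formula of Proposition \ref{GSdecompo} into its data, $T$, and $S$ components, and exploit the elementary inequality $|t-|x|| = t(1-|\widehat{v}|) \geq t/(2|v^0|^2)$ along the trajectory $x = t\widehat{v}$. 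The data and $S$ contributions decay strictly faster than $t^{-2}$ along this ray: Proposition \ref{Cordataconvcharge} supplies an extra $t^{-\delta}$ for the data term, while Proposition \ref{estiS} gives a factor $(1+|t-|x||)^{-2} \lesssim |v^0|^4 t^{-2}$ for the $S$ term. Both vanish in the limit, so the bound is determined entirely by the $T$ piece.

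For that remaining piece, Proposition \ref{decayFT} yields
\[
t^2 \bigl|\mathcal{L}_{Z^\gamma}(F)^T\bigr|(t, t\widehat{v}) \;\lesssim\; \overline{\epsilon}\, t^{1/4}\bigl(1+t(1-|\widehat{v}|)\bigr)^{-1/4} \;\lesssim\; \overline{\epsilon}\,\sqrt{v^0},
\]
uniformly in $t \geq 1$, where in the last step I used once more $1-|\widehat v| \geq (2|v^0|^2)^{-1}$. Passing to the limit produces the stated $|\mathcal{L}_{Z^\gamma}(F)^\infty|(v) \lesssim \overline{\epsilon}\sqrt{v^0}$.

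I do not anticipate any real obstacle: every ingredient is already in place and the whole estimate reduces to this one-line computation. The only small point worth tracking is that Proposition \ref{Cordataconvcharge} carries a prefactor $\Lambda$ rather than $\overline{\epsilon}$; this is harmless because the data piece goes to zero in the limit, so only the $T$ contribution survives and the constant $\overline{\epsilon}$ (rather than $\Lambda$) appears in the final bound, as required.
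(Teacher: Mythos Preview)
Your proof is correct and follows essentially the same approach as the paper. The only difference is that the paper is slightly more direct: since Corollary~\ref{Corinduced} already identifies $\mathcal{L}_{Z^\gamma}(F)^\infty$ as the limit of $t^2\mathcal{L}_{Z^\gamma}(F)^T(t,t\widehat{v})$ alone (and records the vanishing for translations and scaling), one can apply Proposition~\ref{decayFT} immediately without passing through Corollary~\ref{Corinduced2} and separately disposing of the data and $S$ pieces.
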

\begin{proof}
According to Propositon \ref{decayFT}, $t^2 |\mathcal{L}_{Z^\gamma}(F)^T|(t,t\widehat{v}) \lesssim \overline{\epsilon} (1-|\widehat{v}|)^{-1/4} \leq 2 \, \overline{\epsilon} \sqrt{v^0} $. All the properties then ensue from Corollary \ref{Corinduced}. 
\end{proof}
Finally, we investigate the regularity of $\mathcal{L}_{Z^\gamma}(F)^\infty$. 
\begin{Pro}\label{Proderivinfini}
For any $|\gamma| \leq N-2$ and $0 \leq \mu , \, \nu \leq 3$, $\mathcal{L}_{Z^\gamma}(F)^\infty_{\mu \nu}$ is of class $C^{N-1-|\gamma|}$. Moreover, for any $1 \leq k \leq 3$, we have
$$v^0\partial_{v^k}\mathcal{L}_{Z^\gamma}(F)^\infty_{\mu \nu}= \mathcal{L}_{\Omega_{0k}Z^\gamma}(F)^\infty_{\mu \nu}+2\widehat{v}^k\mathcal{L}_{Z^\gamma}(F)^\infty_{\mu \nu}-\delta_\mu^0  \mathcal{L}_{Z^\gamma}(F)_{k \nu}^\infty-\delta_\mu^k \mathcal{L}_{Z^\gamma}(F)_{0 \nu}^\infty-\delta_\nu^0  \mathcal{L}_{Z^\gamma}(F)^\infty_{\mu k}-\delta_\nu^k \mathcal{L}_{Z^\gamma}(F)_{\mu 0}^\infty.$$
The angular derivatives satisfy
$$(v^j \partial_{v^k}-v^k \partial_{v^j})\mathcal{L}_{Z^\gamma}(F)^\infty_{\mu \nu}=\mathcal{L}_{\Omega_{jk}Z^\gamma}(F)^\infty_{\mu \nu}-\delta_\mu^j  \mathcal{L}_{Z^\gamma}(F)_{k \nu}^\infty+ \delta_\mu^k \mathcal{L}_{Z^\gamma}(F)_{j \nu}^\infty-\delta_\nu^j  \mathcal{L}_{Z^\gamma}(F)_{\mu k}^\infty+ \delta_\nu^k \mathcal{L}_{Z^\gamma}(F)_{\mu j}^\infty.$$
\end{Pro}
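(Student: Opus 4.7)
The plan is to start from $\mathcal{L}_{Z^\gamma}(F)^\infty(v) = \lim_{t\to\infty} t^2 \mathcal{L}_{Z^\gamma}(F)(t,t\widehat{v})$, provided by Corollary \ref{Corinduced2} at $x=0$, and to compute the $v$-derivatives of the limit by first differentiating the approximating sequence $\phi^\gamma_{\mu\nu}(t,v) := t^2 \mathcal{L}_{Z^\gamma}(F)_{\mu\nu}(t,t\widehat{v})$ in $v$ before passing to $t\to\infty$. Writing $H = \mathcal{L}_{Z^\gamma}(F)_{\mu\nu}$ and using $v^0\partial_{v^k}\widehat{v}^\ell = \delta^\ell_k - \widehat{v}^\ell\widehat{v}^k$, the chain rule gives $v^0\partial_{v^k}\phi^\gamma_{\mu\nu} = t^3(\delta^j_k - \widehat{v}^j\widehat{v}^k)(\partial_{x^j}H)(t,t\widehat{v})$. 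Invoking the Killing identities $t\partial_{x^j} = \Omega_{0j} - x^j\partial_t$ at $x=t\widehat{v}$, and combining them with $S = t\partial_t + x^j\partial_{x^j}$ (which at such points yields $t|v^0|^{-2}\partial_t = S - \widehat{v}^j\Omega_{0j}$), the two $\partial_t H$ contributions collapse and one reaches the key identities
\begin{align*}
v^0\partial_{v^k}\phi^\gamma_{\mu\nu}(t,v) &= t^2(\Omega_{0k}H)(t,t\widehat{v}) - \widehat{v}^k\, t^2(SH)(t,t\widehat{v}),\\
(v^j\partial_{v^k}-v^k\partial_{v^j})\phi^\gamma_{\mu\nu}(t,v) &= t^2(\Omega_{jk}H)(t,t\widehat{v}),
\end{align*}
which trade parameter derivatives in $v$ for geometric Killing derivatives in $(t,x)$.

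Next, I would use that for each $Z \in \{S,\Omega_{0k},\Omega_{jk}\}$, the coordinate expression of the Lie derivative acting on the $2$-form $\mathcal{L}_{Z^\gamma}(F)$ reads
\[Z(\mathcal{L}_{Z^\gamma}(F)_{\mu\nu}) = \mathcal{L}_{ZZ^\gamma}(F)_{\mu\nu} - \partial_\mu(Z^\lambda)\mathcal{L}_{Z^\gamma}(F)_{\lambda\nu} - \partial_\nu(Z^\lambda)\mathcal{L}_{Z^\gamma}(F)_{\mu\lambda},\]
the $S$-contribution producing the extra piece $-2\mathcal{L}_{Z^\gamma}(F)_{\mu\nu}$. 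Multiplying by $t^2$ and passing to the limit, Corollary \ref{Corinduced2} applies to every resulting term since $|ZZ^\gamma|\leq |\gamma|+1 \leq N-1$, and the essential cancellation is that $SZ^\gamma$ necessarily contains $S$, so $\mathcal{L}_{SZ^\gamma}(F)^\infty = 0$ by Corollary \ref{Corinduced}. This forces $\lim_{t\to\infty} t^2(SH)(t,t\widehat{v}) = -2\mathcal{L}_{Z^\gamma}(F)^\infty_{\mu\nu}$, and substituting back into the identities above recovers exactly the two claimed formulas for the $v$-derivatives of $\mathcal{L}_{Z^\gamma}(F)^\infty_{\mu\nu}$.

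To promote these pointwise limit identities into genuine differentiability, I would apply the Weierstrass-type criterion: each right-hand side is a finite combination of quantities $t^2\mathcal{L}_{Z^\beta}(F)(t,t\widehat{v})$ with $|\beta| \leq |\gamma|+1 \leq N-1$, and the convergence is uniform on compact subsets of $\R^3_v$ because the rate $\lesssim \Lambda |v^0|^8 t^{-\delta}$ (up to logarithms) from Corollary \ref{Corinduced2} is locally uniform in $v$. Thus the $v$-derivatives of $\phi^\gamma$ converge uniformly on compacts, which together with the pointwise convergence of $\phi^\gamma$ itself yields that $\mathcal{L}_{Z^\gamma}(F)^\infty \in C^1$ with derivatives as displayed. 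Higher regularity follows by descending induction on $|\gamma|$: once $\mathcal{L}_{Z^\beta}(F)^\infty \in C^{N-1-|\beta|}$ for $|\gamma| < |\beta| \leq N-2$, the right-hand sides are of class $C^{N-2-|\gamma|}$, so $\mathcal{L}_{Z^\gamma}(F)^\infty \in C^{N-1-|\gamma|}$. The main technical obstacle is precisely the algebraic cancellation in the first paragraph that kills the $\partial_t H$ term: a direct application of Proposition \ref{Prodecaytoporder} only yields $t^3|\partial_t \mathcal{L}_{Z^\gamma}(F)|(t,t\widehat{v}) \lesssim \Lambda|v^0|^4$, a quantity that does not vanish along timelike trajectories, and the introduction of $S$ via $t|v^0|^{-2}\partial_t = S - \widehat{v}^j\Omega_{0j}$ is what rescues the computation.
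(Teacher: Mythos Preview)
Your proof is correct and follows essentially the same approach as the paper. The only difference is that you specialize to $x=0$ from the outset, evaluating along $(t,t\widehat{v})$ rather than $(t,x+t\widehat{v})$; this makes the algebraic cancellation exact (your identity $v^0\partial_{v^k}\phi^\gamma_{\mu\nu}=t^2(\Omega_{0k}H)-\widehat{v}^k t^2(SH)$ at $x=t\widehat{v}$), whereas the paper carries a general $x$ and must separately bound the remainder terms $-x^k\partial_t H(t,X)+\widehat{v}^k x^i\partial_{x^i}H(t,X)$ via Proposition \ref{Prodecaytoporder}. Both routes use the same Lie-derivative identities and the vanishing $\mathcal{L}_{SZ^\gamma}(F)^\infty=0$, and both conclude regularity by induction; your choice is a harmless simplification that loses only the explicit $\langle x\rangle$-dependent rate (which is not needed for the proposition as stated).
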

\begin{proof}
In order to lighten the notations, we introduce $X:=x+t\widehat{v}$. Then, we compute
\begin{align*}
 v^0 \partial_{v^k}\!\left(\mathcal{L}_{Z^\gamma}(F)_{\mu \nu}(t,X)\right) & = t\left(\delta_k^i-\widehat{v}^k \widehat{v}^i \right) \partial_{x^i} \left( \mathcal{L}_{Z^\gamma}(F)_{\mu \nu} \right)(t,X)  \\
 &=  \left(\Omega_{0k} \mathcal{L}_{Z^\gamma}(F)_{\mu \nu} \right)\!(t,X)-X^k\partial_t\left(\mathcal{L}_{Z^\gamma}(F)_{\mu \nu} \right)\!(t,X)+\widehat{v}^k(x^i-X^i)\partial_{x^i} \left( \mathcal{L}_{Z^\gamma}(F)_{\mu \nu} \right)\!(t,X) \\
 &= \left(\Omega_{0k} \mathcal{L}_{Z^\gamma}(F)_{\mu \nu} \right)(t,X)-\widehat{v}^k \left( S\mathcal{L}_{Z^\gamma}(F)_{\mu \nu} \right)(t,X)\\
 & \quad -x^k\partial_t\left(\mathcal{L}_{Z^\gamma}(F)_{\mu \nu} \right)(t,X)+\widehat{v}^kx^i\partial_{x^i} \left( \mathcal{L}_{Z^\gamma}(F)_{\mu \nu} \right)(t,X).
 \end{align*}
 One can already notice that the last two terms enjoy strong decay properties. More precisely, since Lemma \ref{Lemxvt} implies $ 1+|t-|X|| \gtrsim \frac{1+t}{\langle x \rangle |v^0|^2}$, we have from Proposition \ref{Prodecaytoporder}
 $$t^2\left| -x^k\partial_t\left(\mathcal{L}_{Z^\gamma}(F)_{\mu \nu} \right)(t,X)+\widehat{v}^kx^i\partial_{x^i} \left( \mathcal{L}_{Z^\gamma}(F)_{\mu \nu} \right)(t,X) \right| \lesssim  \frac{\Lambda \langle x \rangle^3 |v^0|^4}{1+t}.$$
The result then follows from
\begin{align}
\mathcal{L}_{SZ^\gamma}(F)_{\mu \nu} &= S \left(  \mathcal{L}_{Z^\gamma}(F)_{\mu \nu}\right)+2\mathcal{L}_{Z^\gamma}(F)_{\mu \nu}, \qquad \qquad \mathcal{L}_{SZ^\gamma}(F)^{\infty}=0, \label{eq:scal} \\
\mathcal{L}_{\Omega_{0k}Z^\gamma}(F)_{\mu \nu} &= \Omega_{0k} \left(  \mathcal{L}_{Z^\gamma}(F)_{\mu \nu} \right)+\delta_\mu^0  \mathcal{L}_{Z^\gamma}(F)_{k \nu}+ \delta_\mu^k \mathcal{L}_{Z^\gamma}(F)_{0 \nu}+\delta_\nu^0  \mathcal{L}_{Z^\gamma}(F)_{\mu k}+ \delta_\nu^k \mathcal{L}_{Z^\gamma}(F)_{\mu 0} \nonumber
\end{align}
and Corollary \ref{Corinduced2}, which give us
$$  \left| t^2v^0 \partial_{v^k}\mathcal{L}_{Z^\gamma}(F)_{\mu \nu}(t,x+t\widehat{v}) - v^0\partial_{v^k}\mathcal{L}_{Z^\gamma}(F)^\infty_{\mu \nu}(v) \right| \lesssim \Lambda \langle x \rangle^3|v^0|^8 \frac{\log^{1+3N_x+3N}(3+t)}{(1+t)^\delta},$$
where $v^0\partial_{v^k}\mathcal{L}_{Z^\gamma}(F)^\infty_{\mu \nu}(v)$ is given in the statement of the Proposition. To get the expression of the angular derivatives, notice that 
\begin{align*}
& (v^j \partial_{v^k}-v^k \partial_{v^j})\left(\mathcal{L}_{Z^\gamma}(F)_{\mu \nu}(t,X)\right)  =  \left(\Omega_{jk} \mathcal{L}_{Z^\gamma}(F)_{\mu \nu} \right)(t,X) -(x^j\partial_{x^k}-x^k\partial_{x^j} )\left( \mathcal{L}_{Z^\gamma}(F)_{\mu \nu} \right)(t,X) , \\
& \mathcal{L}_{\Omega_{jk}Z^\gamma}(F)_{\mu \nu} = \Omega_{jk} \left(  \mathcal{L}_{Z^\gamma}(F)_{\mu \nu} \right)+\delta_\mu^j  \mathcal{L}_{Z^\gamma}(F)_{k \nu}- \delta_\mu^k \mathcal{L}_{Z^\gamma}(F)_{j \nu}+\delta_\nu^j  \mathcal{L}_{Z^\gamma}(F)_{\mu k}- \delta_\nu^k \mathcal{L}_{Z^\gamma}(F)_{\mu j}
 \end{align*}
 and apply the same arguments. The $C^{N-1-|\gamma|}$ regularity is obtained by an induction.
\end{proof}
For later use, we prove that the structure of the asymptotic Lorentz force is preserved by differentiation.
\begin{Cor}\label{CorLorentzforceasympt}
Let $0 \leq \nu \leq 3$ and define
$$ \Delta_{Z^\gamma ,\nu}(t,x,v) := t^2 \frac{\widehat{v}^{\mu}}{v^0}\mathcal{L}_{Z^{\gamma}}(F)_{\mu\nu}(t,x)-\frac{\widehat{v}^\mu}{v^0}\mathcal{L}_{Z^\gamma}(F)^\infty_{\mu \nu}(v), \qquad |\gamma| \leq N-1.$$
For any $|\gamma| \leq N-2$, there holds
\begin{align*}
S \left(\Delta_{Z^\gamma ,\nu} \right) & = \Delta_{SZ^\gamma ,\nu} , \\
\widehat{\Omega}_{jk}\left( \Delta_{Z^\gamma , \nu} \right) & = \Delta_{\Omega_{jk}Z^\gamma , \nu}-\delta_{\nu}^j \Delta_{Z^\gamma , k}+\delta_{\nu}^k \Delta_{Z^\gamma , j}, \qquad 1 \leq j < k \leq 3, \\
\widehat{\Omega}_{0i}\left( \Delta_{Z^\gamma , \nu} \right) & = \Delta_{\Omega_{0i}Z^\gamma , \nu}-\delta_{\nu}^0 \Delta_{Z^\gamma , i}-\delta_{\nu}^i \Delta_{Z^\gamma , 0}+2\frac{t}{v^0}(x^i-t\widehat{v}^i)\widehat{v}^\mu \mathcal{L}_{Z^\gamma}(F)_{\mu \nu}(t,x), \qquad 1 \leq i \leq 3.
\end{align*}
\end{Cor}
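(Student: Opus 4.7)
All three identities are purely algebraic consequences of the Leibniz rule combined with the Lie-derivative formulas already gathered in Proposition \ref{Proderivinfini}, in particular \eqref{eq:scal}, the analogous expression for $\mathcal{L}_{\Omega_{jk}Z^\gamma}$, and the two relations for $v^0\partial_{v^i}$ and $(v^j\partial_{v^k}-v^k\partial_{v^j})$ acting on $\mathcal{L}_{Z^\gamma}(F)^\infty_{\mu\nu}$. The idea is to compute $\widehat{Z}\bigl(\Delta_{Z^\gamma,\nu}\bigr)$ by splitting it into a contribution from the "physical" piece $t^2\widehat{v}^\mu \mathcal{L}_{Z^\gamma}(F)_{\mu\nu}(t,x)/v^0$ and a contribution from the asymptotic piece $\widehat{v}^\mu \mathcal{L}_{Z^\gamma}(F)^\infty_{\mu\nu}(v)/v^0$, applying the Leibniz rule to each and noting that many $\mu$-indexed $\delta$-terms produced by the Lie derivative identity cancel exactly against derivatives of the prefactor $\widehat{v}^\mu/v^0$.

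The scaling case is immediate: since $S$ does not act on $v$, $S(\widehat{v}^\mu/v^0)=0$, and \eqref{eq:scal} together with $S(t^2)=2t^2$ give $S\bigl(t^2\mathcal{L}_{Z^\gamma}(F)_{\mu\nu}(t,x)\bigr)=t^2\mathcal{L}_{SZ^\gamma}(F)_{\mu\nu}(t,x)$; the asymptotic piece is independent of $(t,x)$ and Corollary \ref{Corinduced} gives $\mathcal{L}_{SZ^\gamma}(F)^\infty\equiv 0$. For $\widehat{\Omega}_{jk}$, one uses $\widehat{\Omega}_{jk}(v^0)=0$ to get $\widehat{\Omega}_{jk}(\widehat{v}^\mu/v^0)=(\widehat{v}^j\delta^\mu_k-\widehat{v}^k\delta^\mu_j)/v^0$; contracting with $\mathcal{L}_{Z^\gamma}(F)_{\mu\nu}$ (respectively $\mathcal{L}_{Z^\gamma}(F)^\infty_{\mu\nu}$) precisely cancels the $\delta^j_\mu$, $\delta^k_\mu$ terms produced by the Lie derivative identity for $\mathcal{L}_{\Omega_{jk}Z^\gamma}$, so that only the $\delta^j_\nu$, $\delta^k_\nu$ terms survive.

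The Lorentz boost case is the only one requiring real bookkeeping. First, $\widehat{\Omega}_{0i}(t^2)=2tx^i$ produces the extra term $2tx^i\widehat{v}^\mu\mathcal{L}_{Z^\gamma}(F)_{\mu\nu}(t,x)/v^0$. Second, splitting the sum $\mu=0$ versus $\mu\geq 1$ and using $v^0\partial_{v^i}(1/v^0)=-\widehat{v}^i/v^0$ and $v^0\partial_{v^i}(v^l/(v^0)^2)=(\delta_i^l-2\widehat{v}^l\widehat{v}^i)/v^0$, together with the identity $\widehat{v}^l\mathcal{L}_{Z^\gamma}(F)_{l\nu}=\widehat{v}^\mu\mathcal{L}_{Z^\gamma}(F)_{\mu\nu}-\mathcal{L}_{Z^\gamma}(F)_{0\nu}$, yields
\[
 v^0\partial_{v^i}\!\Bigl(\tfrac{\widehat{v}^\mu}{v^0}\Bigr)\mathcal{L}_{Z^\gamma}(F)_{\mu\nu}=\tfrac{1}{v^0}\bigl(\widehat{v}^i\mathcal{L}_{Z^\gamma}(F)_{0\nu}+\mathcal{L}_{Z^\gamma}(F)_{i\nu}-2\widehat{v}^i\widehat{v}^\mu\mathcal{L}_{Z^\gamma}(F)_{\mu\nu}\bigr).
\]
The $\delta^0_\mu$ and $\delta^i_\mu$ contributions in the Lie derivative identity for $\mathcal{L}_{\Omega_{0i}Z^\gamma}(F)_{\mu\nu}$ exactly cancel the first two terms above; what remains is the piece $-2t^2\widehat{v}^i\widehat{v}^\mu\mathcal{L}_{Z^\gamma}(F)_{\mu\nu}/v^0$ which combines with $2tx^i\widehat{v}^\mu\mathcal{L}_{Z^\gamma}(F)_{\mu\nu}/v^0$ into the announced residue $\frac{2t}{v^0}(x^i-t\widehat{v}^i)\widehat{v}^\mu\mathcal{L}_{Z^\gamma}(F)_{\mu\nu}(t,x)$, together with the two $\delta^0_\nu$ and $\delta^i_\nu$ terms. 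On the asymptotic side, the same splitting produces the same $\widehat{v}^i\mathcal{L}^\infty_{0\nu}+\mathcal{L}^\infty_{i\nu}-2\widehat{v}^i\widehat{v}^\mu\mathcal{L}^\infty_{\mu\nu}$ combination; but this time the term $2\widehat{v}^i\mathcal{L}_{Z^\gamma}(F)^\infty_{\mu\nu}$ appearing in the formula of Proposition \ref{Proderivinfini} for $v^0\partial_{v^i}\mathcal{L}_{Z^\gamma}(F)^\infty_{\mu\nu}$ precisely cancels this combination together with the $\delta^0_\mu$, $\delta^i_\mu$ pieces, leaving only the $\delta^0_\nu$ and $\delta^i_\nu$ terms and no residue in $v$ alone. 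Subtracting the two computations gives the claimed identity. The main difficulty is not conceptual but rather keeping track of the several $\delta$-contractions to verify the cancellations; the specific algebraic identity relating $\widehat{v}^l\mathcal{L}_{l\nu}$ to $\widehat{v}^\mu\mathcal{L}_{\mu\nu}-\mathcal{L}_{0\nu}$ is what makes the residue $(x^i-t\widehat{v}^i)\widehat{v}^\mu\mathcal{L}_{Z^\gamma}(F)_{\mu\nu}$ emerge naturally rather than a more complicated combination.
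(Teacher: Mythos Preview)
Your proof is correct and follows essentially the same approach as the paper's: both compute $\widehat{Z}\bigl(t^2\tfrac{\widehat v^\mu}{v^0}\mathcal L_{Z^\gamma}(F)_{\mu\nu}\bigr)$ and $\widehat{Z}\bigl(\tfrac{\widehat v^\mu}{v^0}\mathcal L_{Z^\gamma}(F)^\infty_{\mu\nu}\bigr)$ separately via Leibniz and Proposition~\ref{Proderivinfini}, and verify that the $\delta_\mu$-terms from the Lie-derivative identity cancel against the derivatives of the prefactor $\widehat v^\mu/v^0$. The only stylistic difference is that the paper packages the spacetime-side cancellation geometrically, writing the decomposition $\widehat Z(v^\mu\mathcal L_{Z^\gamma}(F)_{\mu\nu})=\mathcal L_{ZZ^\gamma}(F)(\mathbf v,\partial_{x^\nu})+\mathcal L_{Z^\gamma}(F)(\mathbf v,[Z,\partial_{x^\nu}])+\mathcal L_{Z^\gamma}(F)([Z,\mathbf v],\partial_{x^\nu})+\widehat Z(v^\mu)\mathcal L_{Z^\gamma}(F)_{\mu\nu}$ and observing that $[Z,\mathbf v]=-\widehat Z(v^\mu)\partial_{x^\mu}$ for $Z=\Omega_{0i},\Omega_{jk}$, whereas you carry out the equivalent cancellation in explicit components via the identity $\widehat v^l\mathcal L_{l\nu}=\widehat v^\mu\mathcal L_{\mu\nu}-\mathcal L_{0\nu}$.
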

\begin{proof}
The first identity follows from $S(t^2)=2t^2$ and \eqref{eq:scal}. For the other ones, start by noticing that, according to Proposition \ref{Proderivinfini} and for $1 \leq i \leq 3$,
\begin{align}
 \widehat{\Omega}_{0i}\left( \frac{\widehat{v}^{\mu}}{v^0}\mathcal{L}_{Z^\gamma}(F)^\infty_{\mu \nu}(v) \right)&= v^0 \partial_{v^i}\left( \frac{\widehat{v}^{\mu}}{v^0}\mathcal{L}_{Z^\gamma}(F)^\infty_{\mu \nu}(v) \right) \nonumber \\
 &= \frac{\widehat{v}^{\mu}}{v^0}\mathcal{L}_{\Omega_{0i}Z^\gamma}(F)^\infty_{\mu \nu}(v)-\delta_\nu^0 \frac{\widehat{v}^{\mu}}{v^0} \mathcal{L}_{Z^\gamma}(F)^\infty_{\mu i}-\delta_\nu^i \frac{\widehat{v}^{\mu}}{v^0} \mathcal{L}_{Z^\gamma}(F)_{\mu 0}^\infty. \label{eq:boosti}
 \end{align}
Similarly, for $1 \leq j < k \leq 3$,
\begin{equation}\label{eq:rotation}
 \widehat{\Omega}_{jk}\left( \frac{\widehat{v}^{\mu}}{v^0}\mathcal{L}_{Z^\gamma}(F)^\infty_{\mu \nu}(v) \right) = \frac{\widehat{v}^{\mu}}{v^0}\mathcal{L}_{\Omega_{jk}Z^\gamma}(F)^\infty_{\mu \nu}(v)-\delta_\nu^j  \frac{\widehat{v}^{\mu}}{v^0} \mathcal{L}_{Z^\gamma}(F)_{\mu k}^\infty+ \delta_\nu^k \frac{\widehat{v}^{\mu}}{v^0} \mathcal{L}_{Z^\gamma}(F)_{\mu j}^\infty.
\end{equation}
Recall that we denote by $\mathbf{v}$ the $4$-vector $(v^\mu)_{0 \leq \mu \leq 4}$, so that
\begin{align}
\widehat{Z}  \left( t^2 \frac{\widehat{v}^{\mu}}{v^0}\mathcal{L}_{Z^{\gamma}}(F)_{\mu\nu} \right) &=\widehat{Z} \left(  \frac{t^2}{|v^0|^2} \right) v^\mu \mathcal{L}_{Z^{\gamma}}(F)_{\mu\nu}+\frac{t^2}{|v^0|^2} \mathcal{L}_{ZZ^{\gamma}}(F)(\mathbf{v},\partial_{x^\nu})+\frac{t^2}{|v^0|^2} \mathcal{L}_{Z^{\gamma}}(F)(\mathbf{v},[Z,\partial_{x^\nu}])  \label{nonulleici} \\
 & \quad +\frac{t^2}{|v^0|^2} \mathcal{L}_{Z^{\gamma}}(F)([Z,\mathbf{v}],\partial_{x^\nu}) + \frac{t^2}{|v^0|^2}\widehat{Z}\left( v^\mu \right)  \mathcal{L}_{Z^{\gamma}}(F)_{\mu \nu} . \label{secompensejustela}
\end{align}
\begin{itemize}
\item If $Z= \Omega_{0i}$, we have $[Z,\mathbf{v}]=-v^i \partial_t-v^0 \partial_{x^i}$ and $\widehat{Z}(v^\mu)=\delta_\mu^0v^i+\delta_\mu^i v^0$, so that the sum of two terms in \eqref{secompensejustela} vanishes. It remains to remark that $[Z,\partial_{x^\nu}]=-\delta_{\nu}^i\partial_t-\delta_\nu^0\partial_i$, $\widehat{Z}(t^2/|v^0|^2)=2t(x^i-t\widehat{v}^i)/|v^0|^2$ and to combine \eqref{eq:boosti} with \eqref{nonulleici}.
\item If $Z=\Omega_{jk}$, there holds $[Z,\mathbf{v}]=-v^j \partial_{x^k}+v^k \partial_{x^j}$ and $\widehat{Z}(v^\mu)=\delta_\mu^k v^j-\delta_\mu^j v^k$, so that the sum of the two terms in \eqref{secompensejustela} vanishes once again. The result then ensues from $\widehat{Z}(t^2/|v^0|^2)=0$, $[Z,\partial_{x^\nu}]=-\delta_{\nu}^j\partial_{x^k}+\delta_\nu^k\partial_{x^j}$, \eqref{eq:rotation} and \eqref{nonulleici}.
\end{itemize}
\end{proof}

\subsection{Convergence of the distribution function along modified characteristics}

Motivated by the discussion in Section \ref{subsecmotivatedby} and by Corollary \ref{Corinduced2}, we modify the linear spatial characteristics $t \mapsto x+t\widehat{v}$ as follows. 
\begin{Def}\label{DeXc}
For $(x,v) \in \R^3_x \times \R^3_v$, let $ X_\C(\cdot,x,v):t \mapsto x+t\widehat{v}+\C(t,v)$ be the trajectory\footnote{Recall that $F^\infty$ is a $2$-form, so that $\widehat{v}^\mu\widehat{v}^\nu F^\infty_{\mu \nu } =0$.}
\begin{align}
 X^{i}_\C(t,x,v) & := x^i+t\widehat{v}^i-\log(t) \widehat{v}^{\mu}F^{\infty,j}_{\mu}(v)\frac{ \delta^i_{j}-\widehat{v}_j \widehat{v}^i}{v^0}  \label{defXC}\\ \nonumber 
 & = x^i+t\widehat{v}^i-\frac{\log(t)}{v^0}\Big( \widehat{v}^{\mu}F^{\infty}_{\mu i}(v)+\widehat{v}^i \widehat{v}^\mu F^{\infty}_{\mu 0} (v) \Big) , \qquad t \in \R_+^*, \quad i \in \llbracket 1,3 \rrbracket .
\end{align}
For simplicity, we will often write $X_\C$ instead of $X_\C(t,x,v)$. By Proposition \ref{estiFinfty}, the components $\C^i$ of the correction term $\C$ verify
\begin{equation}\label{estCor}
\forall \, t >0, \qquad \left| \C^i \right|(t,v)  \lesssim \overline{\epsilon} |v^0|^{-\frac{1}{2}} \log(t), \qquad i \in \llbracket 1,3 \rrbracket.
\end{equation}
\end{Def}
We now bound the time derivative of a function evaluated along the modified characteristics.
\begin{Pro}\label{VMh}
Let $\mathbf{f}:\R_+ \times \R^3_x\times \R^3_v \rightarrow \R$ be a sufficiently regular function and introduce $h(t,x,v):=\mathbf{f}(t,X_\C(t,x,v),v)$. Then, for all $(t,x,v) \in [1,+\infty[ \times \R^3_x \times \R^3_v$,
\begin{align*}
 |\partial_t h| (t,x,v) &\leq \left| \T_F(\mathbf{f}) \right|(t,X_\mathscr{C},v)+\Lambda \frac{\log^{3+3N_x+3N}(3+t)}{(1+t)^{1+\delta}} \sum_{\widehat{Z} \in \widehat{\mathbb{P}}_0}  \left||v^0|^7 \mathbf{z}^2 \widehat{Z}\mathbf{f}\right|(t,X_\mathscr{C},v).
 \end{align*}
\end{Pro}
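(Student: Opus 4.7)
The plan is to apply the chain rule and use the complete-lift identity relating $v^0 \partial_{v^j}$ to the vector fields of $\widehat{\mathbb{P}}_0$ in order to isolate $\T_F(\mathbf{f})(t,X_\C,v)$ as the main term. Writing $\dot{X}_\C^j = \widehat{v}^j + \dot{\C}^j$, one immediately gets
$$\partial_t h = \T_F(\mathbf{f})(t,X_\C,v) + \dot{\C}^j(t,v)\,(\partial_{x^j}\mathbf{f})(t,X_\C,v) - (\widehat{v}^\mu F_\mu{}^j \partial_{v^j}\mathbf{f})(t,X_\C,v),$$
so the whole content of the proposition reduces to estimating the error
$$E := \dot{\C}^j\,(\partial_{x^j}\mathbf{f})(t,X_\C,v) - \frac{\widehat{v}^\mu F_\mu{}^j(t,X_\C)}{v^0}\,(v^0\partial_{v^j}\mathbf{f})(t,X_\C,v).$$

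Next, I would rewrite $v^0\partial_{v^j}\mathbf{f}(t,X_\C,v)$ using the complete-lift identity $v^0\partial_{v^j} = \widehat{\Omega}_{0j} - t\partial_{x^j} - y^j\partial_t$, evaluated at $(t,y,v) = (t,X_\C,v)$ (so that $y^j$ is replaced by $X_\C^j$), thereby expressing the velocity derivative in terms of $\widehat{Z}\mathbf{f}$ for $\widehat{Z}\in\widehat{\mathbb{P}}_0$. This decomposes $E$ into three pieces: a $\partial_{x^j}\mathbf{f}$-piece with coefficient $\dot{\C}^j + t\widehat{v}^\mu F_\mu{}^j(t,X_\C)/v^0$, a $\partial_t\mathbf{f}$-piece with coefficient $X_\C^j\widehat{v}^\mu F_\mu{}^j(t,X_\C)/v^0$, and a $\widehat{\Omega}_{0j}\mathbf{f}$-piece with coefficient $-\widehat{v}^\mu F_\mu{}^j(t,X_\C)/v^0$. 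The role of the logarithmic correction in Definition \ref{DeXc} is precisely to kill the leading term of the first bracket: substituting Corollary \ref{Corinduced2} in the form $t^2F_\mu{}^j(t,X_\C) = F^{\infty,j}_\mu(v) + O(\Lambda\,\mathbf{z}^2|v^0|^8\log^{\bullet}(3+t)/t^\delta)$ (using \eqref{estCor} and $\mathbf{z}(t,X_\C,v)\geq\langle x+\C\rangle$ to replace the $\langle x\rangle^2$ of that corollary by $\mathbf{z}^2$), together with the antisymmetry identity $\widehat{v}^j\widehat{v}^\mu F^\infty_{\mu j}=-\widehat{v}^\mu F^\infty_{\mu 0}$, shows that the $F^{\infty,j}_\mu$-part of $t\widehat{v}^\mu F_\mu{}^j/v^0$ is cancelled exactly by $\dot{\C}^j$, leaving only a remainder of the desired order $\Lambda\,\mathbf{z}^2|v^0|^7\log^{\bullet}(3+t)/(1+t)^{1+\delta}$.

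The remaining leading contributions from the $\partial_t\mathbf{f}$- and $\widehat{\Omega}_{0j}\mathbf{f}$-pieces are handled as follows. Writing $X_\C^j = t\widehat{v}^j + (x^j+\C^j)$, the $t\widehat{v}^j$-part of the $\partial_t\mathbf{f}$-coefficient combines with the residue $-\widehat{v}^j\widehat{v}^\mu F^\infty_{\mu 0}/(tv^0)$ left over from the first bracket, via the same antisymmetry identity, into $-\widehat{v}^\mu F^\infty_{\mu 0}/(tv^0)\,\T_0(\mathbf{f})(t,X_\C,v)$; decomposing $\T_0 = \T_F - \widehat{v}^\nu F_\nu{}^k\partial_{v^k}$ and using $|\widehat{v}^\mu F^\infty_{\mu 0}|\lesssim \overline{\epsilon}\sqrt{v^0}$ from Proposition \ref{estiFinfty}, this contribution is absorbed into $|\T_F(\mathbf{f})|$ up to another $\widehat{Z}\mathbf{f}$-error of the right size. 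The subleading $(x^j+\C^j)$-part is bounded directly using $|x^j+\C^j|\leq\mathbf{z}$ together with $|F^\infty|\lesssim\overline{\epsilon}\sqrt{v^0}$, gaining an extra power of $1/t$. Finally, for the $\widehat{\Omega}_{0j}\mathbf{f}$-piece, Lemma \ref{Lorentzforce} combined with Lemma \ref{gainv}, which converts decay in $1+|t-|X_\C||$ into decay in $1+t$ at the cost of two powers of $v^0$ and one of $\mathbf{z}$, together with $\delta\leq 1$, delivers exactly the claimed bound.

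The principal obstacle is the careful bookkeeping of all the remainder terms and of their $v^0$- and $\mathbf{z}$-weights, so as to fit within the budget $|v^0|^7\mathbf{z}^2\log^{3+3N_x+3N}(3+t)$ of the stated error. The most delicate point is the residual $-\widehat{v}^\mu F^\infty_{\mu 0}\T_0(\mathbf{f})/(tv^0)$: its coefficient decays only like $\overline{\epsilon}/t$ rather than $\overline{\epsilon}/t^{1+\delta}$, so it cannot be placed in the error directly but must be folded back into $|\T_F(\mathbf{f})|$, which is why the inequality in the statement carries $|\T_F(\mathbf{f})|$ with coefficient one. Any remaining $\T_F(\mathbf{f})$-proportional contribution is then expanded once more through the complete-lift identity $v^0\partial_{v^k} = \widehat{\Omega}_{0k} - t\partial_{x^k} - X_\C^k\partial_t$ in order to express it as a genuine $\widehat{Z}\mathbf{f}$-error, closing the estimate.
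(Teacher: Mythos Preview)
Your overall strategy is sound and close to the paper's, but there is a genuine gap in the handling of the residual $-\frac{\widehat{v}^\mu F^\infty_{\mu 0}}{tv^0}\,\T_0(\mathbf{f})(t,X_\C,v)$. Writing $\T_0 = \T_F - \widehat{v}^\nu F_\nu{}^k\partial_{v^k}$ produces an extra $\T_F(\mathbf{f})$-contribution with coefficient $O(\overline{\epsilon}/t)$; this changes the coefficient in front of $|\T_F(\mathbf{f})|$ from $1$ to $1 + O(\overline{\epsilon}/t)$, which does not match the stated inequality. Your last sentence, about re-expressing this remaining $\T_F$-proportional piece ``once more through the complete-lift identity'' as a genuine $\widehat{Z}\mathbf{f}$-error, is incorrect: $\T_F(\mathbf{f})$ is not a velocity derivative and cannot be rewritten in that way with a decaying coefficient.

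The paper avoids this issue by using, in place of the simple identity $v^0\partial_{v^j} = \widehat{\Omega}_{0j} - t\partial_{x^j} - y^j\partial_t$, the more refined decomposition \eqref{derivv:eq},
\[
v^0\partial_{v^j} = -t\bigl(\partial_{x^j} - \widehat{v}^j\widehat{v}^i\partial_{x^i}\bigr) + \widehat{\Omega}_{0j} + z_{0j}\partial_t - \widehat{v}^jS - \widehat{v}^j z_{0i}\partial_{x^i}.
\]
The projector $(\delta^i_j - \widehat{v}^j\widehat{v}^i)$ in the first group matches the structure of $\dot{\C}^i$ exactly, so the cancellation with $\dot{\C}^i\partial_{x^i}\mathbf{f}$ is complete and leaves only $\frac{1}{tv^0}\,|t^2 F(t,X_\C)-F^\infty(v)|\,|\partial_{t,x}\mathbf{f}|$. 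The remaining four terms $\widehat{\Omega}_{0j}$, $z_{0j}\partial_t$, $-\widehat{v}^jS$, $-\widehat{v}^jz_{0i}\partial_{x^i}$ each carry the factor $\frac{\mathbf{z}}{v^0}|\widehat{v}^\mu F_\mu{}^j|$, which by Remark~\ref{Rqbetteresti} is $\lesssim \Lambda v^0\mathbf{z}(1+t)^{-2}$, and since $S\in\widehat{\mathbb{P}}_0$ every such term fits directly into the stated error with the $|\T_F(\mathbf{f})|$ coefficient kept at exactly one. Equivalently, your $1/t$-residue can be repaired by the observation $\T_0 = t^{-1}\bigl(S + z_{0i}\partial_{x^i}\bigr)$ at $(t,X_\C,v)$, which gives $|\T_0(\mathbf{f})|\le t^{-1}\mathbf{z}\sum_{\widehat{Z}\in\widehat{\mathbb{P}}_0}|\widehat{Z}\mathbf{f}|$ directly and places the whole contribution into the error with $t^{-2}$ decay --- this is precisely the role of the scaling vector field $S$ that your decomposition omitted.
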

\begin{proof}
We have, for all $(t,x,v) \in [1,+\infty[ \times \R^3_x \times \R^3_v$,
\begin{align}
\nonumber \partial_t h(t,x,v) &= \left( \partial_t \mathbf{f}+\widehat{v}^i \partial_{x^i} \mathbf{f} \right)(t,X_\mathscr{C},v)+\partial_t \mathscr{C}^i (t,v) \partial_{x^i} \mathbf{f} (t,X_\mathscr{C},v), \\
 & = \T_F(\mathbf{f})(t,X_\mathscr{C})- \widehat{v}^{\mu} {F_{\mu}}^j(t,X_\mathscr{C}) \partial_{v^j} \mathbf{f}(t,X_\mathscr{C},v)+\partial_t \mathscr{C}^i (t,v) \partial_{x^i} \mathbf{f} (t,X_\mathscr{C},v). \label{eq:areprendre}
\end{align}
Recall from \eqref{reutiliser} the relation
\begin{equation}\label{derivv:eq}
v^0\partial_{v^j}=-t\left(\partial_{x^j}-\widehat{v}^j \widehat{v}^i \partial_{x^i}\right) +\widehat{\Omega}_{0j}+z_{0j} \partial_t-\widehat{v}^j S -\sum_{1 \leq i \leq 3}\widehat{v}^j z_{0i} \partial_{x^i}, \qquad 1 \leq j \leq 3,
\end{equation}
in order to rewrite $\partial_{v^j}\mathbf{f}(t,X_\C,v)$. As $v^0\partial_t\C^i(t,v)=-\frac{1}{t}\widehat{v}^{\mu}F^{\infty,j}_{\mu}(v)( \delta^i_{j}-\widehat{v}_j \widehat{v}^i )$, we get
\begin{align*}
 |\partial_t h| (t,x,v) &\leq \left| \T_F(\mathbf{f}) \right|(t,X_\mathscr{C},v)+\sum_{1 \leq j \leq 3} \, \sum_{\widehat{Z} \in \widehat{\mathbb{P}}_0}\left| \widehat{v}^{\mu} {F_{\mu}}^j\right|(t,X_\mathscr{C}) \left| \frac{\mathbf{z}}{v^0} \widehat{Z}\mathbf{f}\right|(t,X_\mathscr{C},v)   \\ 
 & \quad +\frac{1}{tv^0}\left|t^2 F(t,X_\mathscr{C})-F^{\infty}(v) \right| \left|\partial_{t,x} \mathbf{f} \right| (t,X_\mathscr{C},v). 
 \end{align*}
 We deal with the second term on the right hand side of the previous inequality by controlling the Lorentz force through Remark \ref{Rqbetteresti}, so that $\left| \widehat{v}^{\mu} {F_{\mu}}^j\right|(t,X_\C)\lesssim \Lambda (1+t)^{-2} |v^0|^2 \mathbf{z}(t,X_\C,v)$. Next, by Corollary \ref{Corinduced2} and the mean value theorem,
 \begin{align*}
  \left|t^2 F(t,X_\mathscr{C})-F^{\infty}(v) \right| & \leq \left|t^2 F(t,x+t\widehat{v})-F^{\infty}(v) \right|+t^2\left|F(t,X_\mathscr{C})-F(t,x+t\widehat{v}) \right| \\
  & \lesssim \Lambda \, \langle x \rangle^2 |v^0|^8 \frac{\log^{3N_x+3N+1}(3+t)}{(1+t)^\delta}+t^2|\C(t,v)| \sup_{|y-X_\C| \leq |\C|(t,v)}|\nabla_{t,x} F|(t,y) .
  \end{align*}
In view of the estimate of $\nabla_{t,x} F$ given by Lemma \ref{Lorentzforce2} and the bound \eqref{estCor} on $\C$, we have
$$ t^2|\C(t,v)|\sup_{|y-x| \leq |\C|(t,v)}|\nabla_{t,x} F|(t,y)  \lesssim \frac{\Lambda}{\sqrt{v^0}}  t^2 \log (3+t)  \frac{\log(3+t)}{(1+t)^3} |v^0|^4  \sup_{|y-X_\C| \leq |C|(t,v)} \mathbf{z}^2(t,y,v).$$
Since $|\nabla_x \mathbf{z}| \lesssim 1$, the mean value theorem yields
\begin{equation}\label{eq:boundzX}
\mathbf{z}(t,x+t\widehat{v},v) \leq \sup_{|y-X_\C| \leq |\C|(t,v)} \mathbf{z}(t,y,v) \leq   \mathbf{z}(t,X_\C,v)+\frac{\overline{\epsilon}}{\sqrt{v^0}}  \log(3+t) \lesssim   \log(3+t) \, \mathbf{z}(t,X_\C,v) .
\end{equation}
Consequently, as $\langle x \rangle \leq \mathbf{z}(t,x+t\widehat{v},v)$, we have
$$ \left|t^2 F(t,X_\mathscr{C})-F^{\infty}(v) \right| \lesssim \Lambda (1+t)^{-\delta} \log^{3N_x+3N+3}(3+t) |v^0|^8 \mathbf{z}^2(t,X_\C,v) .$$
We then deduce the result from the previous estimates.
\end{proof}
By applying this result to $f$, we obtain that there exists $f_\infty \in L^\infty_{x,v}$ such that $f(t,X_\C,v) \to f_\infty(x,v)$ as $t \to 0$ (see Proposition \ref{mainresultPro} for more details). Applying it again to $\partial_x^\kappa f$ we could easily deduce that $f_\infty$ is smooth with respect to the spatial variables. However, obtaining the regularity in the velocity variables requires a more thorough analysis. Indeed, $\partial_{v^i}(f(t,X_\C,v))$ is deeply related to $\widehat{\Omega}_{0i}f(t,X_\C,v)$ which does not converge.

\subsection{Modified commutators}\label{subsecmodicom}

Let $\widehat{Z} \in \widehat{\mathbb{P}}_0 \setminus \{\partial_t, \partial_{x^1} , \partial_{x^2}, \partial_{x^3} \}$ be a homogeneous vector field. Contrary to the case of the translations, the error term $[\T_F,\widehat{Z}](f)$ does not decay sufficiently fast in order to prove a convergence result for $\widehat{Z}f$, even along the modified characteristics. Indeed, recall from Lemma \ref{LemCom} that
$$ \T_F(\widehat{Z}f)=- \widehat{v}^\mu {\mathcal{L}_Z(F)_{\mu}}^j \partial_{v^j} f+\delta_{\widehat{Z}}^S \, \widehat{v}^\mu {F_\mu}^{j} \partial_{v^j} f$$
and let us identify the terms with the slowest decay rate. Rewriting $\partial_{v^j}$ by using \eqref{derivv:eq} and estimating the electromagnetic field through Remark \ref{Rqbetteresti}, we have
\begin{equation}\label{badterms:eq}
\left| \T_F(\widehat{Z}f) -\frac{t}{v^0}\left(\widehat{v}^\mu {\mathcal{L}_Z(F)_{\mu}}^j-\delta_{\widehat{Z}}^S \, \widehat{v}^\mu {F_\mu}^{j}\right)\left(\delta_j^i-\widehat{v}_j\widehat{v}^i  \right)\partial_{x^i}f \right| \lesssim \Lambda (1+t)^{-2} \sum_{\widehat{\Gamma} \in \widehat{\mathbb{P}}_0} v^0\left|\mathbf{z}^2 \widehat{\Gamma} f \right|.
\end{equation}
In view of Proposition \ref{estiLinfini}, the right hand side is bounded by $\overline{\epsilon} \, (1+t)^{-2} \log^9(3+t)$ and then belongs to $L^1_t L^\infty_{x,v}$. On the other hand, if $\mathcal{L}_Z(F)^\infty$ and $F^\infty$ does not vanish, the decay rate of $t| \mathcal{L}_ZF|+t|F| \lesssim t^{-1}$ along timelike trajectories is at the threshold of time integrability. For this reason, we modify the linear commutator $\widehat{Z}$ in a similar way than we modify the spatial characteristics. More precisely, motivated by Corollary \ref{Corinduced2} and \eqref{badterms:eq}, we introduce the following vector fields.
\begin{Def}\label{Defmodcom}
For any $\widehat{Z} \in \widehat{\mathbb{P}}_0 \setminus \{\partial_t, \partial_{x^1} , \partial_{x^2}, \partial_{x^3},S \}$, we define $\widehat{Z}^{\mathrm{mod}}$ and $S^{\mathrm{mod}}$ as
$$ \widehat{Z}^{\mathrm{mod}} := \widehat{Z}-\log(t)\widehat{v}^{\mu}\mathcal{L}_Z(F)_{\mu}^{\infty, \, j}(v)\frac{\delta_j^i-\widehat{v}_j\widehat{v}^i  }{v^0}\partial_{x^i}, \qquad S^{\mathrm{mod}}:=S+ \log(t)\widehat{v}^{\mu}F_{\mu}^{\infty, \, j}(v)\frac{\delta_j^i-\widehat{v}_j\widehat{v}^i  }{v^0}\partial_{x^i}.$$
We further define the correction coefficients $\C_S^i(t,v)=-\C^i(t,v)$ and
$$ \C_{\widehat{Z}}^i(t,v)= -\log(t)\widehat{v}^{\mu}\mathcal{L}_Z(F)_{\mu}^{\infty, \, j}(v)\frac{\delta_j^i-\widehat{v}_j\widehat{v}^i  }{v^0} = -\frac{\log(t)}{v^0}\Big( \widehat{v}^{\mu} \mathcal{L}_Z(F)_{\mu}^{\infty, \, i}(v)+\widehat{v}^i \widehat{v}^\mu \mathcal{L}_Z(F)_{\mu 0}^{\infty}(v) \Big),$$
so that $S^{\mathrm{mod}}=S+\C^i_S(t,v)\partial_{x^i}$ and $\widehat{Z}^{\mathrm{mod}}=\widehat{Z}+\C_{\widehat{Z}}^i(t,v) \partial_{x^i}$.
\end{Def}
\begin{Rq}\label{Rqscaling}
Recall that $t|\mathcal{L}_S(F)| \lesssim (1+t)^{-1-\delta}$ in domains of the form $\{ t \geq (1- \delta )r \}$ since $\mathcal{L}_S(F)^\infty=0$. This is why we do not need to compensate the term related to $\mathcal{L}_S(F)$ in \eqref{badterms:eq}.
\end{Rq}
We have the improved commutation relations.
\begin{Pro}\label{improvedCom0}
Let $Z \in \mathbb{K} $ be a rotational vector field $\Omega_{jk}$ or a Lorentz boost $\Omega_{0i}$. Then, for $t >0$,
\begin{align*}
[\T_F, \widehat{Z}^{\mathrm{mod}}] & = \frac{1}{t}\left(t^2 \widehat{v}^\mu \left( {\mathcal{L}_Z(F)_{\mu}}^j- \mathcal{L}_Z(F)_{\mu}^{\infty,j}  \right) \frac{\delta_j^i-\widehat{v}_j \widehat{v}^i}{v^0} \right) \partial_{x^i}   \\
& \quad  - \frac{\widehat{v}^\mu}{v^0} {\mathcal{L}_Z(F)_{\mu}}^j\left(\widehat{\Omega}_{0j}+z_{0j} \partial_t-\widehat{v}^j S -\sum_{1 \leq i \leq 3}\widehat{v}^j z_{0i} \partial_{x^i}\right)-\C_{\widehat{Z}}^i \, \widehat{v}^\mu {\mathcal{L}_{\partial_{x^i}}(F)_{\mu}}^j\partial_{v^j}+\widehat{v}^\mu {F_{\mu}}^j \partial_{v^j} \C^i_{\widehat{Z}} \, \partial_{x^i}.
\end{align*}
For the scaling vector field, we have
\begin{multline*}
[\T_F, S^{\mathrm{mod}}]  = -\frac{1}{t}\left( t^2\widehat{v}^\mu \left( {F_{\mu}}^j- F_{\mu}^{\infty,j} \right) \frac{\delta_j^i-\widehat{v}_j \widehat{v}^i}{v^0} \right) \partial_{x^i} +\frac{1}{t}\left(t^2 \widehat{v}^\mu \left( {\mathcal{L}_S(F)_{\mu}}^j- \mathcal{L}_S(F)_{\mu}^{\infty,j}  \right) \frac{\delta_j^i-\widehat{v}_j \widehat{v}^i}{v^0} \right) \partial_{x^i} \\  + \frac{\widehat{v}^\mu}{v^0} \big({F_{\mu}}^j- {\mathcal{L}_S(F)_{\mu}}^j\big) \!\Big(\widehat{\Omega}_{0j}+z_{0j} \partial_t-\widehat{v}^j S -\sum_{1 \leq i \leq 3}\widehat{v}^j z_{0i} \partial_{x^i}\Big) -\C^i_S \, \widehat{v}^\mu {\mathcal{L}_{\partial_{x^i}}(F)_{\mu}}^j\partial_{v^j}+\widehat{v}^\mu {F_{\mu}}^j \partial_{v^j} \C^i_S \, \partial_{x^i}.
\end{multline*}
\end{Pro}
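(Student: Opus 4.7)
The plan is to split each modified vector field into its linear part plus its correction — writing $\widehat{Z}^{\mathrm{mod}} = \widehat{Z} + \C_{\widehat{Z}}^i \partial_{x^i}$ and similarly $S^{\mathrm{mod}} = S + \C_S^i \partial_{x^i}$ — and to use the additivity $[\T_F, \widehat{Z}^{\mathrm{mod}}] = [\T_F, \widehat{Z}] + [\T_F, \C_{\widehat{Z}}^i \partial_{x^i}]$. For $\widehat{Z} \in \K \setminus \{S\}$, combining $[v^0 \T_0, \widehat{Z}] = 0$ (recalled in Section~\ref{subsecvfm}) with Lemma~\ref{LemCom} applied to the Lorentz-force piece $\widehat{v}^\mu {F_\mu}^j \partial_{v^j}$ yields $[\T_F, \widehat{Z}] = (\widehat{Z}(v^0)/v^0)\,\T_F - \widehat{v}^\mu {\mathcal{L}_Z(F)_\mu}^j \partial_{v^j}$; for $\widehat{Z} = S$ the commutation $[\T_0, S] = \T_0$ combined with the corresponding line of Lemma~\ref{LemCom} gives $[\T_F, S] = \T_F + \widehat{v}^\mu (F_\mu^{\,j} - {\mathcal{L}_S(F)_\mu}^j) \partial_{v^j}$. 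The $\T_F$-prefactors annihilate any Vlasov solution and drop out in accordance with the convention already used in Proposition~\ref{Com}.

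For the commutator with the correction, since $\C = \C(t,v)$ is $x$-independent and $\partial_{x^i}({F_\mu}^j) = {\mathcal{L}_{\partial_{x^i}}(F)_\mu}^j$, a direct unrolling of definitions gives
\[
[\T_F, \C^i \partial_{x^i}] = \partial_t(\C^i)\,\partial_{x^i} + \widehat{v}^\mu {F_\mu}^j \partial_{v^j}(\C^i)\,\partial_{x^i} - \C^i\,\widehat{v}^\mu {\mathcal{L}_{\partial_{x^i}}(F)_\mu}^j \partial_{v^j},
\]
whose last two summands already coincide with the last two terms of both identities in the statement. It therefore remains to reorganize $-\widehat{v}^\mu {\mathcal{L}_Z(F)_\mu}^j \partial_{v^j} + \partial_t(\C_{\widehat{Z}}^i)\partial_{x^i}$ (respectively its scaling analogue) into the first two lines of the claim. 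For this I substitute the identity \eqref{derivv:eq} for $v^0 \partial_{v^j}$ in the mixed basis $\{\partial_{x^i}, \partial_t, \widehat{\Omega}_{0j}, S\}$: this splits $-\widehat{v}^\mu {\mathcal{L}_Z(F)_\mu}^j \partial_{v^j}$ into the leading piece $t\,\widehat{v}^\mu {\mathcal{L}_Z(F)_\mu}^j \frac{\delta_j^i - \widehat{v}^j\widehat{v}^i}{v^0} \partial_{x^i}$ and exactly the combination $-\frac{\widehat{v}^\mu {\mathcal{L}_Z(F)_\mu}^j}{v^0}\bigl(\widehat{\Omega}_{0j} + z_{0j}\partial_t - \widehat{v}^j S - \sum_i \widehat{v}^j z_{0i}\partial_{x^i}\bigr)$ appearing on the second line of the statement. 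Direct differentiation of Definition~\ref{Defmodcom} gives $\partial_t(\C_{\widehat{Z}}^i) = -(tv^0)^{-1}\widehat{v}^\mu \mathcal{L}_Z(F)^{\infty,j}_\mu(\delta_j^i - \widehat{v}^j\widehat{v}^i)$, and summing it with the leading piece telescopes the coefficient of $\partial_{x^i}$ into $t^{-1}\bigl(t^2 \widehat{v}^\mu({\mathcal{L}_Z(F)_\mu}^j - \mathcal{L}_Z(F)^{\infty,j}_\mu)\bigr)(\delta_j^i - \widehat{v}^j\widehat{v}^i)/v^0$, which is the first term of the boost/rotation identity.

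The scaling case proceeds by the same mechanism but needs extra care: the input $\widehat{v}^\mu(F_\mu^j - {\mathcal{L}_S(F)_\mu}^j)\partial_{v^j}$ produces two separate $\partial_{x^i}$-coefficients — one with $F$ and one with $\mathcal{L}_S(F)$ — which, after adding $\partial_t(\C_S^i)\partial_{x^i} = (tv^0)^{-1}\widehat{v}^\mu F^{\infty,j}_\mu(\delta_j^i - \widehat{v}^j\widehat{v}^i)\partial_{x^i}$ and invoking Corollary~\ref{Corinduced} ($\mathcal{L}_S(F)^\infty = 0$) to rewrite $t\,\widehat{v}^\mu {\mathcal{L}_S(F)_\mu}^j = t^{-1}\bigl(t^2\widehat{v}^\mu({\mathcal{L}_S(F)_\mu}^j - \mathcal{L}_S(F)^{\infty,j}_\mu)\bigr)$, reproduce exactly the two distinct $t^{-1}(t^2\,\cdots)\partial_{x^i}$ pieces of the scaling formula. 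The entire argument is algebraic; I expect the main obstacle to be precisely this scaling bookkeeping, where a nontrivial $F^\infty$ and a vanishing $\mathcal{L}_S(F)^\infty$ must be cleanly tracked side-by-side to recognise the two separate $t^{-1}(t^2\,\cdots)\partial_{x^i}$ contributions and to avoid sign errors in the $\C_S = -\C$ identification.
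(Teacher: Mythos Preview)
Your proof is correct and follows essentially the same approach as the paper: split $\widehat{Z}^{\mathrm{mod}} = \widehat{Z} + \C^i_{\widehat{Z}}\partial_{x^i}$, use Lemma~\ref{LemCom} for $[\T_F,\widehat{Z}]$ and $[\T_F,\partial_{x^i}]$, compute $\T_F(\C^i_{\widehat{Z}})$, and rewrite $\partial_{v^j}$ via \eqref{derivv:eq}. You are in fact slightly more explicit than the paper, correctly isolating the $\frac{\widehat{Z}(v^0)}{v^0}\T_F$ (resp.\ $\T_F$) prefactor that the paper silently drops, and spelling out the scaling bookkeeping where $\mathcal{L}_S(F)^\infty=0$ is invoked.
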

\begin{proof}
Consider first the case $Z=\Omega_{jk}$ or $Z=\Omega_{0i}$. In view of the commutation relation of Lemma \ref{LemCom},
\begin{align*}
[\T_F, \widehat{Z}^{\mathrm{mod}}]&= \T_F \left( \C^i_{\widehat{Z}} \right)\partial_{x^i}+[\T_F,\widehat{Z}]+\C^i_{\widehat{Z}} \, [\T_F,\partial_{x^i}]=\T_F \! \left( \C^i_{\widehat{Z}} \right)\partial_{x^i}-\widehat{v}^\mu {\mathcal{L}_Z(F)_\mu}^j \partial_{v^j}-\C_{\widehat{Z}}^i \, \widehat{v}^\mu {\mathcal{L}_{\partial_{x^i}}(F)_{\mu}}^j\partial_{v^j}.
\end{align*}
It then suffices to use \eqref{derivv:eq} in order to rewrite $\partial_{v^j}$ in the second term and to compute
$$ \T_F \left( \C^i_{\widehat{Z}} \right)=-\frac{1}{t}\widehat{v}^{\mu}\mathcal{L}_Z(F)_{\mu}^{\infty,j}(v)\frac{\delta_j^i-\widehat{v}_j\widehat{v}^i  }{v^0}+\widehat{v}^\mu {F_{\mu}}^j \partial_{v^j} \C^i_{\widehat{Z}}.$$
The case of the scaling $S$ can be treated similarly since $\mathcal{L}_S(F)^\infty=0$ according to Proposition \ref{estiFinfty}.
\end{proof}
Apart from the term involving $\mathcal{L}_S(F)$, already discussed in Remark \ref{Rqscaling}, it is clear that any of the error terms decays almost as $t^{-1-\delta}$ for, say, $|x| <t/2$. At this point, we could then prove that $f_\infty$ is $C^1$ in $v$. However, since we would like to show $f_\infty \in C^{N-2}(\R^3_x \times \R^3_v)$, we need to state higher order commutator formula for the modified vector fields. For this purpose, we introduce the set 
$$\widehat{\mathbb{P}}_0^{\mathrm{mod}} := \left\{ \partial_t, \, \partial_{x^i}, \, \widehat{\Omega}_{0i}^{\mathrm{mod}}, \, \widehat{\Omega}_{jk}^{\mathrm{mod}}, \, S^{\mathrm{mod}} \, \big| \; \quad 1 \leq i \leq 3, \, 1 \leq j < k \leq 3 \right\}, $$
and we consider an ordering on it, so that $\widehat{\mathbb{P}}_0^{\mathrm{mod}}=\{ \widehat{Z}^{\mathrm{mod},i} \, | \; 1 \leq i \leq 11 \}$. Given a multi-index $\beta \in \llbracket 1 , 11 \rrbracket^p$, we will then denote $\widehat{Z}^{\mathrm{mod},\beta_1} \dots \widehat{Z}^{\mathrm{mod},\beta_p}$ by $\widehat{Z}^{\mathrm{mod},\beta}$. We will further denote by $\beta_H$ (respectively $\beta_T$) the number of modified vector fields (respectively translations) composing $\widehat{Z}^{\mathrm{mod},\beta}$, so that $|\beta|=\beta_H+\beta_T$. Furthermore, we will use the schematic notation $P_{p,q}(\C)$ in order to denote any quantity of the form
$$ \prod_{1 \leq k \leq p}   \widehat{Z}^{\xi_k} \! \left( \C^{i_k}_{\widehat{Z}^k}\right), \quad (p,q) \in \mathbb{N}^2, \quad 1 \leq i_k \leq 3, \quad \widehat{Z}^k \! \in \widehat{\mathbb{P}}_0, \quad \sum_{1 \leq k \leq p} |\xi_k| =q, \quad q_T:= \sum_{1 \leq k \leq q}\xi_{k,T}, \quad q_H:=q-q_T, $$
where $q_T \geq 1$ when at least one translation $\partial_{x^\mu}$ is applied to at least one of the correction coefficients. By convention, we set $P_{0,0}(\C)=1$ for $p=q=0$. We recall from \eqref{defzzzz} the weights $z_{\lambda k} \in \mathbf{k}_1$, $0 \leq \lambda < k \leq 3$, which commute with the linear transport operator $\T_0$.
\begin{Pro}\label{improvedCom}
Let $\widehat{Z}^{\mathrm{mod},\beta} \in \widehat{\mathbb{P}}_0^{|\beta|}$. Then, $[\T_F,\widehat{Z}^{\mathrm{mod},\beta}]$ can be written as a linear combination of the following type of terms,
\begin{alignat}{2}
& \frac{1}{v^0t} R\left(\frac{1}{t},\widehat{v}, z\right)P_{p,q}(\C)\Big(t^2 \widehat{v}^\mu  {\mathcal{L}_{Z^\gamma}(F)_{\mu \nu}}- \widehat{v}^\mu \mathcal{L}_{Z^\gamma}(F)_{\mu \nu}^{\infty}(v)  \Big)  \widehat{Z}^{\kappa}, \qquad \qquad &&  \tag{T-1} \label{T1} \\
&  \frac{1}{v^0}R\left(\frac{1}{t},\widehat{v}, z\right) P_{p,q}(\C)\mathcal{L}_{Z^\gamma}(F)_{\lambda \nu} \, \widehat{Z}^\kappa, \qquad \qquad &&   \label{T2} \tag{T-2} \\
& \frac{x^\alpha}{v^0} R\left(\frac{1}{t},\widehat{v}, z\right)  P_{p,q}(\C) \mathcal{L}_{Z^\gamma}(F)_{\lambda \nu} \, \widehat{Z}^\kappa, \qquad  \qquad && \text{$q_T+\gamma_T \geq 1$}, \label{T3} \tag{T-3}
\end{alignat}
where $R$ is a polynomial in $1/t$, $\widehat{v}=(\widehat{v}^i)_{1 \leq i \leq 3} $ and $z=(z_{\mu k})_{0 \leq \mu < k \leq 3}$, of degree $\deg_z R$ in $z$, and 
$$q_H+\deg_{z}R \leq \beta_H, \qquad p \leq \beta_H,  \qquad q+|\gamma|+|\kappa| \leq |\beta|+1, \qquad q, \, |\gamma|, \, |\kappa| \leq |\beta|, \qquad 0 \leq \alpha, \lambda,\nu \leq3.$$
\end{Pro}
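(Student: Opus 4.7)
The proof plan is induction on $|\beta|$. The case $|\beta|=0$ is the trivial identity $[\T_F,\mathrm{Id}]=0$, and the case $|\beta|=1$ is read off directly from Proposition \ref{improvedCom0}: for a translation $\partial_{x^\mu}$ the commutator $-\widehat{v}^\nu {\mathcal{L}_{\partial_{x^\mu}}(F)_\nu}^j \partial_{v^j}$, once $\partial_{v^j}$ is eliminated via \eqref{derivv:eq}, becomes a combination of T-2 and T-3 terms (T-3 because $\mathcal{L}_{\partial_{x^\mu}}(F)$ has $\gamma_T=1$, and the $-t\partial_{x^j}$ piece of \eqref{derivv:eq} contributes the admissible factor $x^0=t$). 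For a homogeneous modified field every term listed in Proposition \ref{improvedCom0} fits directly into one of the three schemata after the same rewriting; in particular the third term $-\C^i_{\widehat Z}\widehat v^\mu {\mathcal L_{\partial_{x^i}}(F)_\mu}^j\partial_{v^j}$ contributes T-3 pieces with $q_T+\gamma_T=1$ via the $\gamma_T=1$ of $\mathcal L_{\partial_{x^i}}(F)$.

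For the inductive step, factorise $\widehat{Z}^{\mathrm{mod},\beta} = \widehat{Z}^{\mathrm{mod}} \widehat{Z}^{\mathrm{mod},\beta'}$ with $|\beta'|=|\beta|-1$ and use
$$[\T_F, \widehat{Z}^{\mathrm{mod}} \widehat{Z}^{\mathrm{mod},\beta'}] = [\T_F, \widehat{Z}^{\mathrm{mod}}]\,\widehat{Z}^{\mathrm{mod},\beta'} + \widehat{Z}^{\mathrm{mod}}\,[\T_F, \widehat{Z}^{\mathrm{mod},\beta'}].$$
The first summand is handled by expanding $\widehat{Z}^{\mathrm{mod},\beta'}$ as a finite sum of operators of the form $P_{p,q}(\C)\widehat{Z}^\kappa$ (via $\widehat{Z}^{\mathrm{mod}} = \widehat{Z} + \C^i_{\widehat{Z}}\partial_{x^i}$) and composing it with the T-1/T-2/T-3 output of the base case. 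For the second summand the induction hypothesis expresses $[\T_F, \widehat{Z}^{\mathrm{mod},\beta'}]$ as a sum of T-1/T-2/T-3 terms, and I distribute $\widehat{Z}^{\mathrm{mod}}$ by the Leibniz rule over every factor. The transformation rules are explicit: $\widehat{Z}\in\widehat{\mathbb{P}}_0$ applied to $1/t$, $\widehat{v}$ or $x^\alpha$ produces polynomials in $(1/t,\widehat{v},z,x^\alpha)$; applied to a weight $z_{\mu k}\in\mathbf{k}_1$ it produces at most one new $\mathbf{k}_1$-weight (Lemma \ref{zpreserv}); applied to $P_{p,q}(\C)$ it yields $P_{p,q+1}(\C)$ with the correct translation/homogeneous split; applied to $\mathcal{L}_{Z^\gamma}(F)$ it produces $\mathcal{L}_{ZZ^\gamma}(F)$ plus lower-order Maxwell components generated by the complete-lift part; and applied to the trailing $\widehat{Z}^\kappa$ it simply lengthens the product by one. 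The essential input is Corollary \ref{CorLorentzforceasympt}, which handles the action of a homogeneous $\widehat{Z}$ on the T-1 difference $\Delta_{Z^\gamma,\nu}$: it outputs $\Delta_{ZZ^\gamma,\nu}$ plus lower-order $\Delta$-terms and, in the boost case, an extra $\frac{2t(x^i-t\widehat{v}^i)}{v^0}\widehat{v}^\mu \mathcal{L}_{Z^\gamma}(F)_{\mu\nu}$ which, upon writing $x^i-t\widehat{v}^i=-z_{0i}$ and absorbing the factor $t$ into the prefactor $1/t$ in front of T-1, becomes a T-2 contribution with $\deg_z R$ incremented by one. Finally, the extra piece $\C^i_{\widehat{Z}}\partial_{x^i}$ of the modified field adds a factor $P_{p+1,q}(\C)$ together with a translation derivative that systematically increases $q_T$ and, when applied to the Maxwell-field factor, increases $\gamma_T$.

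The main obstacle is the bookkeeping of the two structural constraints, $q_H + \deg_z R \leq \beta_H$ and, for T-3 terms, $q_T + \gamma_T \geq 1$. The hierarchy $q_H + \deg_z R \leq \beta_H$ is preserved because each homogeneous vector field that hits a weight $z$ or a correction $\C$ increases the left-hand side by at most one while increasing $\beta_H$ by exactly one; meanwhile a translation leaves $1/t$ and $\widehat v$ untouched and acts on $z\in\mathbf{k}_1$ only by producing factors of $\widehat{v}$, so $\deg_z R$ never grows under translations. The constraint $q_T + \gamma_T \geq 1$ must be ensured whenever a factor of $x^\alpha$ is produced (for instance from $\widehat{\Omega}_{0i}(1/t)$, from \eqref{derivv:eq}, or from the extra boost term of Corollary \ref{CorLorentzforceasympt}); this holds automatically because such factors always appear in company with a translation landing either on $\C$ or on $F$, reflecting the asymptotic cancellation built into Definition \ref{Defmodcom}. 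Tracking these invariants across the handful of sub-cases is mechanical but lengthy, which is why the conclusion is stated schematically; the only non-routine ingredient is Corollary \ref{CorLorentzforceasympt}, designed precisely so that T-1 terms reproduce themselves under modified commutators.
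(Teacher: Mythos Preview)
Your proof is correct and follows the same approach as the paper's: induction on $|\beta|$ via the commutator splitting $[\T_F,\widehat{Z}^{\mathrm{mod}}\widehat{Z}^{\mathrm{mod},\beta'}]=[\T_F,\widehat{Z}^{\mathrm{mod}}]\widehat{Z}^{\mathrm{mod},\beta'}+\widehat{Z}^{\mathrm{mod}}[\T_F,\widehat{Z}^{\mathrm{mod},\beta'}]$, with $\widehat{Z}^{\mathrm{mod},\beta'}$ expanded as $\sum P_{p,q}(\C)\widehat{Z}^\zeta$ for the first summand and Leibniz distribution plus Corollary \ref{CorLorentzforceasympt} for the second. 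Two minor points: the action of $\widehat{Z}\in\widehat{\mathbb{P}}_0$ on individual weights $z\in\mathbf{k}_1$ is governed by the identity $\widehat{\Gamma}(v^0z)\in\{0\}\cup v^0\mathbf{k}_1$ from \cite[Lemma~3.2]{dim3} (quoted in the paper's proof) rather than Lemma \ref{zpreserv}, and factors such as $\widehat{\Omega}_{0i}(1/t)=-x^i/t^2$ are absorbed into $R$ via $x^i=-z_{0i}+t\widehat{v}^i$ (so they raise $\deg_z R$, not produce standalone $x^\alpha$'s requiring the T-3 constraint).
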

\begin{Rq}
In fact, we could prove that, as for the first order commutation formula, most of the error terms satisfy a form of null condition. Since this property is not crucial for our purpose, we chose to demonstrate a result requiring a much simpler analysis.
\end{Rq}
\begin{proof}
Note first that the result holds for any $|\beta|=1$. One can see it by applying either Lemma \ref{LemCom}, for the translation, or Proposition \ref{improvedCom0} and by rewriting all the $v$ derivatives as $v^0 \partial_{v^j}=\widehat{\Omega}_{0j}-t\partial_{x^j}-x^j \partial_t$. Let $n \geq 1$ such that the Proposition holds for any $|\beta| =n$ and consider a multi-index $|\beta_0|=n+1$. Consider further $|\beta|=n$ as well as $\widehat{Z}^{\mathrm{mod}} \in \widehat{\mathbb{P}}_0^{\mathrm{mod}}$ such that $\widehat{Z}^{\mathrm{mod},\beta_0}=\widehat{Z}^{\mathrm{mod}} \widehat{Z}^{\mathrm{mod},\beta}$ and remark
\begin{equation}\label{Comformulaind}
[\T_F,\widehat{Z}^{\mathrm{mod},\beta_0}]= [\T_F,\widehat{Z}^{\mathrm{mod}}]\widehat{Z}^{\mathrm{mod},\beta}+\widehat{Z}^{\mathrm{mod}} [\T_F,\widehat{Z}^{\mathrm{mod},\beta}].
\end{equation}
We can deal with the first term on the right hand side by applying the result for first order operators and by noticing that $\widehat{Z}^\xi \widehat{Z}^{\mathrm{mod},\beta}$, for $|\xi| \leq 1$, can be written as a linear combination of terms of the form
\begin{equation}\label{eq:rewritemachin}
 P_{p,q}(\C)\widehat{Z}^\zeta, \qquad p \leq \beta_H, \quad q_H \leq \beta_H+\xi_H-1, \quad  q \leq |\beta|+|\xi|-1,  \quad q+|\zeta| \leq |\beta|+|\xi|.
 \end{equation}
For the second term, we apply the induction hypothesis, so that $[\T_F,\widehat{Z}^{\mathrm{mod},\beta}]$ can be written as a linear combination of terms of the form \eqref{T1}-\eqref{T3}. In order to deal with them, we will use the following properties.
\begin{itemize}
\item $\partial_t (t)=1$, $\widehat{\Omega}_{0j}^{\mathrm{mod}}(t)=x^j=-z_{0j}-t\widehat{v}^j$, $S^{\mathrm{mod}}(t)=t$ and $\widehat{Z}^{\mathrm{mod}}(t)=0$ otherwise.
\item If $\widehat{Z}^{\mathrm{mod}}=\partial_{x^\mu}$, then $\widehat{Z}^{\mathrm{mod}}(x^k ) =\delta_{\mu}^k$. Otherwise, there exists $0 \leq \lambda \leq 3$ such that $\widehat{Z}^{\mathrm{mod}}(x^k)=\pm x^\lambda+\C^k_{\widehat{Z}}$.
\item $\widehat{\Omega}^{\mathrm{mod}}_{0j}(v^0)=v^j$ for any $1 \leq j \leq 3$ and $\widehat{Z}^{\mathrm{mod}}(v^0)=0$ ortherwise.
\item There exists four polynomials $R_0$, ..., $R_3$ such that
$$ \widehat{Z}^{\mathrm{mod}}\! \left(  R \! \left( 1/t,\widehat{v},z \right) \right) =R_0 \! \left( 1/t,\widehat{v},z \right)+\C_{\widehat{Z}}^i\, R_i \! \left( 1/t,\widehat{v},z \right) \!, \quad \deg_z \! R_0 \leq \deg_z \! R+1, \; \; \deg_z \! R_{i}\leq\deg_z \! R,$$
where we set $\C^i_{\partial_{x^\mu}}:=0$. Moreover, if $\widehat{Z}^{\mathrm{mod}} \neq \widehat{\Omega}_{0j}^{\mathrm{mod}}$, then $\deg_z \! R_0 \leq \deg_z \! R$. This can be obtained by the first property and \cite[Lemma~$3.2$]{dim3}, giving
$$\forall \, \widehat{\Gamma} \in \widehat{\mathbb{P}}_0, \; \forall \, 1 \leq i \leq 3, \; \forall \, z \in \mathbf{k}_1, \qquad \widehat{\Gamma}(v^0 z) \in \{0 \} \cup \mathbf{k}_1, \qquad   \partial_{x^i}(z) \in \{0,1,\widehat{v}^{k} \, | \; 1 \leq k  \leq 3 \} .$$
\item If $\widehat{Z}^{\mathrm{mod}}=\partial_{x^\mu}$, we schematically have $\widehat{Z}^{\mathrm{mod}}\left( P_{p,q}(\C) \right)= P^0_{p,q^0}(\C)$, with $q^0=q+1$ and $q^0_H=q_H$. Otherwise, $\widehat{Z}^{\mathrm{mod}}\left( P_{p,q}(\C) \right)= P^1_{p,q^1}(\C)+P^2_{p+1,q^2}(\C)$, with $q^1=q^2=q+1$, $q^1_H=q_H+1$ and $q^2_H=q_H$.
\item $\widehat{Z}^{\mathrm{mod}} \widehat{Z}^\kappa = \widehat{Z}\widehat{Z}^\kappa+\C^i_{\widehat{Z}} \partial_{x^i} \widehat{Z}^\kappa$ and $\widehat{Z}^{\mathrm{mod}} \mathcal{L}_{Z^\gamma}(F)_{\lambda \nu}$ can be written as a linear combination of
$$ \mathcal{L}_{ZZ^\gamma}(F)_{\lambda \nu}, \qquad \C^i_{\widehat{Z}} \mathcal{L}_{\partial_{x^i}Z^\gamma}(F)_{\lambda \nu}, \qquad \mathcal{L}_{Z^\gamma}(F)_{\mu \xi}, \qquad 0 \leq \mu, \xi \leq 3.$$
\end{itemize}
Hence, we obtain by applying $\widehat{Z}^{\mathrm{mod}}$ to any quantity of the form \eqref{T1}, \eqref{T2} or \eqref{T3} (corresponding to $|\beta|=n$), a combination of terms of the form \eqref{T1}-\eqref{T3} (corresponding to $|\beta_0|=n+1$) as well as
$$ \mathcal{T}[\widehat{Z}^{\mathrm{mod}}]= \frac{1}{t} R\left(\frac{1}{t},\widehat{v}, z\right)P_{p,q}(\C)\widehat{Z}^{\mathrm{mod}}\left(t^2 \frac{\widehat{v}^\mu}{v^0}  {\mathcal{L}_{Z^\gamma}(F)_{\mu \nu}}- \frac{\widehat{v}^\mu}{v^0} \mathcal{L}_{Z^\gamma}(F)_{\mu \nu}^{\infty}(v)  \right)  \widehat{Z}^{\kappa} ,  $$
where $0 \leq \nu \leq 3$, $q+|\gamma|+|\kappa| \leq |\beta|+1$, $\max(q ,  |\gamma|, |\kappa|) \leq |\beta|$, $p \leq \beta_H$ and $q_H+\deg_z R \leq \beta_H$. Assume first that $\widehat{Z}^{\mathrm{mod}}$ is a translation $\partial_{x^\lambda}$. Then,
$$ \mathcal{T}[\partial_{x^\lambda}] = \frac{ 2\delta_{\lambda}^0}{v^0} R\left(1/t,\widehat{v}, z\right)P_{p,q}(\C) \widehat{v}^\mu  {\mathcal{L}_{Z^\gamma}(F)_{\mu \nu}}  \widehat{Z}^{\kappa}+\frac{t}{v^0} R\left(1/t,\widehat{v}, z\right)P_{p,q}(\C)\widehat{v}^\mu  {\mathcal{L}_{\partial_{x^\lambda}Z^\gamma}(F)_{\mu \nu}}  \widehat{Z}^{\kappa}$$
is the sum of a term of type \eqref{T2} and a term of type \eqref{T3}. Otherwise, $\widehat{Z}^{\mathrm{mod}}=\widehat{Z}+\C^i_{\widehat{Z}} \partial_{x^i}$ and, following the previous computations, we have
$$\mathcal{T}[\widehat{Z}^{\mathrm{mod}}]=\mathcal{T}[\widehat{Z}]+\C_{\widehat{Z}}^i \mathcal{T}[\partial_{x^i}]=\mathcal{T}[\widehat{Z}]+\frac{t}{v^0} R\left(1/t,\widehat{v}, z\right)P_{p,q}(\C) \C^i_{\widehat{Z}} \, \widehat{v}^\mu  {\mathcal{L}_{\partial_{x^i}Z^\gamma}(F)_{\mu \nu}}  \widehat{Z}^{\kappa},$$
where the last three terms are of type \eqref{T3}. According to Corollary \ref{CorLorentzforceasympt}, $\mathcal{T}[\widehat{Z}]$ is a combination of terms of type \eqref{T1} and, in the case $\widehat{Z}=\widehat{\Omega}_{0j}$, \eqref{T2}. 
\end{proof}
We now control these error terms and then prove a uniform boundedness statement for $\widehat{Z}^{\mathrm{mod},\beta}f$. Because of regularity issues on the coefficients $\C_{\widehat{Z}}$, which are of class $C^{N-2}$, we are not able to deal with the multi-indices $|\beta| \geq N-1$.
\begin{Pro}\label{ProestCommod}
Let $|\beta| \leq N-2$. For all $(t,x,v) \in [3,+\infty[ \times \R^3_x \times \R^3_v$, there holds
$$  \left| \T_F\left( \widehat{Z}^{\mathrm{mod},\beta} f \right)\right|(t,x,v) \lesssim \Lambda \frac{\log^{3N_x+4N}(t)}{t^{1+\delta}}\sum_{|\kappa| \leq |\beta|}  |v^0|^7 \left| \mathbf{z}^{2+\beta_H} \widehat{Z}^\kappa f \right|(t,x,v).$$
Moreover, 
$$ |v^0|^{N_v-7} \left|\mathbf{z}^{N_x-2-\beta_H} \, \T_F\left( \widehat{Z}^{\mathrm{mod},\beta} f \right)\right|(t,x,v) \lesssim \overline{\epsilon} \, \frac{\log^{6N_x+7N}(t)}{t^{1+\delta}}.$$
\end{Pro}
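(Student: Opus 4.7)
The strategy is to write $\T_F(\widehat{Z}^{\mathrm{mod},\beta} f) = [\T_F, \widehat{Z}^{\mathrm{mod},\beta}](f)$ (using $\T_F(f)=0$) and apply the commutator formula of Proposition~\ref{improvedCom}, which expresses this commutator as a linear combination of terms of the three types \eqref{T1}, \eqref{T2}, \eqref{T3}. I would then estimate each type separately and deduce the second inequality of the statement from the first by inserting the $L^\infty_{x,v}$ bounds of Proposition~\ref{estiLinfini}.

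A preliminary uniform bound on $P_{p,q}(\C)$ is needed. Each factor $\widehat{Z}^{\xi_k}(\C^{i_k}_{\widehat{Z}^k})$ comes from differentiating $\log(t)\Psi^i_{\widehat{Z}}(v)/v^0$, where $\Psi^i_{\widehat{Z}}$ is a linear combination of components of $\widehat{v}^\mu\mathcal{L}_Z(F)^\infty_{\mu\nu}$, bounded by $\overline{\epsilon}\sqrt{v^0}$ thanks to Proposition~\ref{estiFinfty}. Time derivatives produce $t^{-1}$, homogeneous derivatives produce $v$-derivatives of $\mathcal{L}_Z(F)^\infty$ handled through the recursive identities of Proposition~\ref{Proderivinfini}, and any $x$-factors introduced by Lorentz boosts or rotations are absorbed into the polynomial $R(1/t,\widehat{v},z)$ already present in the commutator formula (at the cost of increasing $\deg_z R$, consistent with the bound $\deg_z R+q_H\leq\beta_H$). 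The regularity restriction $|\beta|\leq N-2$ is precisely what ensures the asymptotic electromagnetic field can be differentiated up to the required order, as $F^\infty\in C^{N-1}$ by Proposition~\ref{Proderivinfini}. A careful bookkeeping then yields $|P_{p,q}(\C)|\lesssim \overline{\epsilon}^{\,p}\log^{p}(t)$ times admissible powers of $v^0$.

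For \eqref{T1}, the crucial input is Corollary~\ref{Corinduced2} applied with ``initial position'' $x-t\widehat{v}$, using $\langle x-t\widehat{v}\rangle\leq\mathbf{z}$ (which follows from $z_{0i}\in\mathbf{k}_1$ and \eqref{eq:propz}):
\begin{equation*}
\big|t^2\widehat{v}^\mu\mathcal{L}_{Z^\gamma}(F)_{\mu\nu}(t,x)-\widehat{v}^\mu\mathcal{L}_{Z^\gamma}(F)^\infty_{\mu\nu}(v)\big|\lesssim \Lambda\,\mathbf{z}^2|v^0|^9\,\frac{\log^{3N_x+3N+1}(3+t)}{t^\delta}.
\end{equation*}
Combined with the $t^{-1}$ prefactor and the bounds $|R|\lesssim\mathbf{z}^{\deg_z R}$, $|P_{p,q}(\C)|\lesssim\log^p(t)$, this gives decay $t^{-1-\delta}$ with weight $\mathbf{z}^{\deg_z R+2}$ and logarithmic loss $\log^{3N_x+3N+1+p}(t)$. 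For \eqref{T2}, the bootstrap \eqref{boot1} combined with the elementary consequence $(1+|t-|x||)^{-1}\lesssim|v^0|^2\mathbf{z}/(1+t+|x|)$ of Lemma~\ref{gainv} yields $|\mathcal{L}_{Z^\gamma}(F)|\lesssim\Lambda|v^0|^2\mathbf{z}/(1+t)^2$, hence decay $t^{-2}$, which is more than enough. For \eqref{T3}, the large factor $x^\alpha$ (of order $t+\mathbf{z}$) is compensated by the constraint $q_T+\gamma_T\geq 1$: if $\gamma_T\geq 1$, an application of Lemma~\ref{improderiv} to $\mathcal{L}_{Z^\gamma}(F)=\nabla_{t,x}^{\gamma_T}\mathcal{L}_{Z^\xi}(F)$ supplies an extra $(1+|t-|x||)^{-1}$, converted via Lemma~\ref{gainv} to $t^{-1}$ at the cost of further $v^0\mathbf{z}$-weights; if $q_T\geq 1$, the only non-vanishing possibility is $\partial_t$ acting on a correction coefficient (since $\partial_{x^i}\C=0$), directly yielding $t^{-1}$.

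Summing the three contributions, using $\deg_z R + q_H\leq\beta_H$ and $p\leq\beta_H\leq N-2$, the total weight stays within $\mathbf{z}^{2+\beta_H}$ and the logarithmic loss within $\log^{3N_x+4N}(t)$, proving the first estimate. The second follows by multiplying by $|v^0|^{N_v-7}\mathbf{z}^{N_x-2-\beta_H}$ and applying Proposition~\ref{estiLinfini} to $|v^0|^{N_v}\mathbf{z}^{N_x}|\widehat{Z}^\kappa f|$ (valid since $|\kappa|\leq|\beta|\leq N-2$), absorbing $\Lambda\overline{\epsilon}\lesssim\overline{\epsilon}$ and adding the two logarithmic powers to reach $\log^{6N_x+7N}(t)$. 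The principal obstacle is the combinatorial bookkeeping in \eqref{T3}, where the balance between the large factor $x^\alpha$, the $q_T+\gamma_T\geq 1$ improvement, the weight $\mathbf{z}^{\deg_z R}$ from $R$, and the powers of $v^0$ and $\mathbf{z}$ hidden in $P_{p,q}(\C)$ must be tracked carefully to remain within the budget $|v^0|^7\mathbf{z}^{2+\beta_H}$.
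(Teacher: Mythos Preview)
Your proposal is correct and follows essentially the same approach as the paper: decompose via Proposition~\ref{improvedCom}, bound each of \eqref{T1}--\eqref{T3} using Corollary~\ref{Corinduced2}, the bootstrap \eqref{boot1} with Lemma~\ref{gainv}, and the constraint $q_T+\gamma_T\geq 1$ respectively, then conclude via Proposition~\ref{estiLinfini}. One small correction: the paper does not absorb the $x$-factors from $\widehat{Z}^{\xi}(\C^i_{\widehat{Z}})$ into $R$ but bounds $P_{p,q}(\C)$ directly via $|\widehat{Z}^{\gamma}\log(t)|\lesssim t^{-\gamma_T}\mathbf{z}^{\gamma_H}\log(t)$ (using $|x|/t\leq 2\mathbf{z}$), giving the combined estimate $|R\,P_{p,q}(\C)|\lesssim t^{-q_T}\mathbf{z}^{\beta_H}\log^{N-2}(t)$ --- so your stated bound $|P_{p,q}(\C)|\lesssim\overline{\epsilon}^{\,p}\log^{p}(t)$ is missing the $\mathbf{z}^{q_H}t^{-q_T}$ factor, though you correctly invoke $\deg_z R+q_H\leq\beta_H$ afterwards.
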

\begin{proof}
Fix $(t,x,v) \in [3,+\infty[ \times \R^3_x \times \R^3_v$ and let us prove first the following property. Consider $P_{p,q}(\C)$ and $R(1/t,\widehat{v},z)$ a polynomial such that $p \leq \beta_H$, $q \leq |\beta|$ and $q_H+\deg_{z} R \leq \beta_H$. Then,
\begin{align}
\big| R(1/t,\widehat{v},z) \big|\,  \big| P_{p,q} (\C)\big|(t,x,v)  & \lesssim  \frac{\log^{N-2}(t)}{t^{q_T}} \mathbf{z}^{\beta_H}(t,x,v). \label{boundPpq}
\end{align}
For this, remark first that for $|\xi| \leq N-2$, $i \in \llbracket 1 , 3 \rrbracket$ and $\widehat{Z}\in \widehat{\mathbb{P}}_0 \setminus \{ \partial_t, \partial_{x^1}, \partial_{x^2}, \partial_{x^3} ,S\}$,
$$ \left| \widehat{Z}^{\xi} \left( \C^i_{\widehat{Z}} \right) \right|(t,x,v) \leq \sum_{\substack{|\gamma|+|\kappa| \leq |\xi| \\ \gamma_T=\xi_T}}  \mathcal{I}_{\gamma , \kappa}, \qquad \mathcal{I}_{\gamma , \kappa}:= \sum_{0 \leq \nu \leq 3} \left| \widehat{Z}^{\gamma} \log (t) \right|\left| \widehat{Z}^\kappa \left( \frac{\widehat{v}^\mu}{v^0} \mathcal{L}_Z(F)^\infty_{\mu \nu} \right) \right|(v).$$
Note that the case $\widehat{Z}=S$ leads to a similar estimate. 
\begin{itemize}
\item We have $| \widehat{Z}^{\gamma} \log (t) | \lesssim t^{-\xi_T}\mathbf{z}^{\xi_H}(t,x,v) \log(t)$. Indeed, $ |\widehat{Z}^{\gamma} \log (t) |\leq |t^{-\gamma_T} P_{\gamma_H}(x/t) \log(t)|$, where $P_{\gamma_H}$ is a polynomial of degree at most $\gamma_H \leq \xi_H$, and $\gamma_T=\xi_T$. Finally, recall that $|x| /t \leq |x-t\widehat{v}|/t+1 \leq 2 \mathbf{z}(t,x,v)$.
\item To deal with the last factor in $\mathcal{I}_{\gamma, \kappa}$, note first that $|\kappa|+1 \leq N-1$ and that this quantity vanishes if $\kappa$ is composed by at least a translation or the scaling vector field $S$ according to Proposition \ref{estiFinfty}. Then, using first the relations \eqref{eq:boosti}-\eqref{eq:rotation} and then Propositions \ref{estiFinfty}, we get
\begin{equation}\label{eq:calculZmodi}
 \left| \widehat{Z}^\kappa \left( \frac{\widehat{v}^\mu}{v^0} \mathcal{L}_Z(F)^\infty_{\mu \nu} \right) \right|(v) \lesssim \sum_{|\zeta| \leq |\kappa|+1} \left| \frac{\widehat{v}^\mu}{v^0} \mathcal{L}_{Z^\zeta}(F)^\infty_{\mu \nu} \right|(v) \lesssim 
     \overline{\epsilon} .
 \end{equation}
\end{itemize}
We then deduce that
$$ |R(1/t,\widehat{v},z)|\left|  P_{p,q} (\C)\right|(t,x,v) \lesssim \mathbf{z}^{\deg_z\! R}(t,x,v)t^{-q_T} \mathbf{z}^{q_H}(t,x,v) \log^p(t) \, \overline{\epsilon}^{p}  ,$$
which implies \eqref{boundPpq}.

Apply Proposition \ref{improvedCom} in order to reduce the analysis to the treatment of terms of type \eqref{T1}, \eqref{T2} and \eqref{T3}. By Corollary \ref{Corinduced2} and \eqref{boundPpq}, we can bound any term of type \eqref{T1} by
$$ \Lambda \frac{ |v^0|^{8} \log^{3N_x+4N}(t)}{v^0t^{1+\delta}} \langle x-t\widehat{v} \rangle^2  \left|\mathbf{z}^{\beta_H} \widehat{Z}^\kappa f \right|(t,x,v)\lesssim \Lambda \frac{\log^{3N_x+4N}(t)}{t^{1+\delta} }|v^0|^7  \left|\mathbf{z}^{2+\beta_H} \widehat{Z}^\kappa f\right|(t,x,v), $$
since $\langle x-t\widehat{v} \rangle \leq \mathbf{z}(t,x,v)$ and where $|\kappa| \leq N-2$. We deal with the ones of type \eqref{T2} by using \eqref{boot1}, \eqref{boundPpq} and Lemma \ref{gainv}. There are bounded above by
$$ \frac{ \Lambda \log^{N-2}(t) }{(t+|x|)(1+|t-|x||)v^0} \frac{(1+|t-|x||)|v^0|^2\mathbf{z}}{t+|x|} \left| \mathbf{z}^{\beta_H} \widehat{Z}^\kappa f\right| (t,x,v) \lesssim \Lambda \frac{\log^{N-2}(t)}{t^2}v^0\left| \mathbf{z}^{1+\beta_H} \widehat{Z}^\kappa f\right|(t,x,v).$$
Finally, let $\mathcal{T}_3$ be a term of type \eqref{T3}. Using first \eqref{boundPpq} together with Proposition \ref{Prodecaytoporder} and then Lemma \ref{gainv}, 
$$ \mathcal{T}_3 \lesssim  \frac{\Lambda\log^{N-2}(t)}{v^0t^{q_T}(1+|t-|x||)^{1+\gamma_T}}\left| \mathbf{z}^{\beta_H } \widehat{Z}^\kappa f\right|(t,x,v)\lesssim  \Lambda \frac{\log^{N}(t)}{t^2}|v^0|^3\left| \mathbf{z}^{2+\beta_H } \widehat{Z}^\kappa f\right|(t,x,v).$$
We deduce from that the first estimate of the statement, which, through an application of Proposition \ref{estiLinfini}, implies the second one.
\end{proof}

\begin{Cor}\label{Proimproderiv}
Let $|\beta| \leq N-2$. If $\beta_H \leq N_x-2$, there exists $\overline{D} >0$ such that
\begin{equation}\label{lastboot}
  \forall \, t \geq 3, \qquad \big\| |v^0|^{N_v-7}  \widehat{Z}^{\mathrm{mod},\beta} f(t,\cdot , \cdot) \big\|_{L^\infty_{x,v}} \lesssim \epsilon \, e^{\overline{D} \Lambda} .
 \end{equation}
\end{Cor}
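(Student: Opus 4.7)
The plan is to run the same kind of Grönwall/characteristic argument used for Proposition~\ref{estiLinfini}, but with a crucially different source: Proposition~\ref{ProestCommod} already gives an \emph{integrable in time} bound $\overline{\epsilon}\,\log^{6N_x+7N}(t)/t^{1+\delta}$ for $\T_F(\widehat{Z}^{\mathrm{mod},\beta}f)$, so no hierarchical norm trick is needed. Concretely, I would work with the weighted quantity
\[
  g_\beta(t,x,v) := |v^0|^{N_v-7}\,\mathbf{z}^{N_x-2-\beta_H}(t,x,v)\,\bigl|\widehat{Z}^{\mathrm{mod},\beta}f\bigr|(t,x,v),
\]
the assumption $\beta_H\leq N_x-2$ being precisely what ensures the exponent on $\mathbf{z}$ is nonnegative.

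First I would handle the initial data at $t=3$. By iterating Definition~\ref{Defmodcom} exactly as in the proof of Proposition~\ref{improvedCom} (see the identity \eqref{eq:rewritemachin}), the operator $\widehat{Z}^{\mathrm{mod},\beta}$ expands as a linear combination of $P_{p,q}(\C)\,\widehat{Z}^\kappa$ with $|\kappa|\leq|\beta|$, $p\leq\beta_H$ and $q_H\leq\beta_H$. At $t=3$ the pointwise bound \eqref{boundPpq} established inside the proof of Proposition~\ref{ProestCommod} reduces to $|P_{p,q}(\C)|(3,x,v)\lesssim \mathbf{z}^{\beta_H}(3,x,v)$, so
\[
  g_\beta(3,x,v)\;\lesssim\;\sum_{|\kappa|\leq|\beta|}|v^0|^{N_v-7}\,\mathbf{z}^{N_x-2}\,\bigl|\widehat{Z}^\kappa f\bigr|(3,x,v)\;\lesssim\;\overline{\epsilon},
\]
by Proposition~\ref{estiLinfini}.

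Next I would compute $\T_F(g_\beta)$ via the product rule. The factors $\T_F(|v^0|^{N_v-7})$ and $\T_F(\mathbf{z}^{N_x-2-\beta_H})$ are controlled exactly as in \eqref{boundTfv0}--\eqref{eq:boundv0z0} of the proof of Proposition~\ref{estiLinfini}, producing contributions of the type
\[
  \frac{C\Lambda}{(1+t)\log^2(3+t)}\,g_\beta \;+\; \frac{C\Lambda\,\widehat{v}^{\underline{L}}}{(1+|t-|x||)\log^2(3+|t-|x||)}\,g_\beta.
\]
The remaining factor $|v^0|^{N_v-7}\mathbf{z}^{N_x-2-\beta_H}\,\T_F(\widehat{Z}^{\mathrm{mod},\beta}f)$ is bounded by the second estimate of Proposition~\ref{ProestCommod}, giving the integrable source $\overline{\epsilon}\,\log^{6N_x+7N}(t)/t^{1+\delta}$. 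Rewriting this source as $\bigl[\overline{\epsilon}\,\log^{6N_x+7N+2}(t)/t^{\delta}\bigr]\cdot\bigl[(1+t)\log^2(3+t)\bigr]^{-1}$, its numerator is uniformly bounded on $[3,+\infty[$ by a constant multiple of $\overline{\epsilon}$, so the hypotheses of (a direct variant of) Lemma~\ref{techLemTF} applied on $[3,+\infty[$ are satisfied with $\|h\|_{L^\infty_{t,x,v}}\lesssim \overline{\epsilon}$. Duhamel along the characteristics of $\T_F$ and the Grönwall estimate in Lemma~\ref{techLemTF} then yield $g_\beta(t,x,v)\lesssim \overline{\epsilon}\,e^{6C\Lambda}\lesssim \epsilon\,e^{\overline{D}\Lambda}$ for $\overline{D}:=D+1+6C$, which is the desired bound after dropping the $\mathbf{z}^{N_x-2-\beta_H}\geq 1$ factor.

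The main technical point is the initial estimate at $t=3$: the key combinatorial statement is that the $p$-fold product of differentiated correction coefficients $P_{p,q}(\C)$, despite involving up to $|\beta|+1$ derivatives of $F^\infty$ through the computations \eqref{eq:calculZmodi}, grows at most like $\log^{N-2}(t)\,t^{-q_T}\,\mathbf{z}^{\beta_H}$. This is exactly where the assumption $\beta_H\leq N_x-2$ is used, as it provides enough surplus $\mathbf{z}$-weight to absorb the $\mathbf{z}^{\beta_H}$ factor coming from \eqref{boundPpq} and still leave $\mathbf{z}^{N_x-2}$ inside the $L^\infty_{x,v}$ estimate of Proposition~\ref{estiLinfini}. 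Everything else is a routine adaptation of the arguments already deployed in Section~\ref{Sec4}.
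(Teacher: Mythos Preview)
Your overall strategy (control at $t=3$, product rule, Proposition~\ref{ProestCommod} as the integrable source, then Lemma~\ref{techLemTF}) is exactly the paper's, and your treatment of the initial data and of the source term is correct. There is, however, a genuine gap in the step where you handle the $\mathbf{z}$-weight.

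You claim that the contribution of $\T_F(\mathbf{z}^{N_x-2-\beta_H})$ is ``controlled exactly as in \eqref{boundTfv0}--\eqref{eq:boundv0z0}'' and produces a coefficient of the form $\frac{C\Lambda}{(1+t)\log^2(3+t)}+\frac{C\Lambda\,\widehat{v}^{\underline{L}}}{(1+|t-|x||)\log^2(3+|t-|x||)}$ in front of $g_\beta$. This is not what the computation gives. One only has
\[
|\T_F(\mathbf{z})|\;\lesssim\;\frac{\Lambda\log(3+t)}{1+t+|x|}+\frac{\Lambda\,\widehat{v}^{\underline{L}}}{1+|t-|x||},
\]
so the coefficient of $g_\beta$ coming from $a\,\mathbf{z}^{-1}\T_F(\mathbf{z})$ contains $\frac{\Lambda\log(3+t)}{1+t}$, which is \emph{not} time-integrable and does not fit the hypotheses of Lemma~\ref{techLemTF}. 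The estimate \eqref{eq:boundv0z0} you invoke buys the extra $\log^{-2}$ only because in Proposition~\ref{estiLinfini} the norm is hierarchised: it carries both a $\mathbf{z}^0$ and a $\mathbf{z}^a$ term, each divided by a suitable power of $\log(3+t)$, and the Young trick $\mathbf{z}^{a-1}\leq \tfrac{a-1}{a}\log^{-3}\mathbf{z}^a+\tfrac1a\log^{-3}$ trades between them. Your single norm $g_\beta$ has neither the $\log$-division nor the companion $\mathbf{z}^0$ term, so that mechanism is unavailable. Treating the whole $\T_F(\mathbf{z}^a)$ piece as a source instead does not help either, since after expanding $\widehat{Z}^{\mathrm{mod},\beta}f$ via \eqref{eq:rewritemachin} and Proposition~\ref{estiLinfini} one only gets a source $\sim \Lambda\overline{\epsilon}\,\log^{O(1)}(3+t)/(1+t)$, whose time integral diverges.

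The fix is immediate: drop the $\mathbf{z}^{N_x-2-\beta_H}$ weight and work with $|v^0|^{N_v-7}|\widehat{Z}^{\mathrm{mod},\beta}f|$ alone, which is all the Corollary asks for. Then the product rule only produces the $\T_F(v^0)$ contribution, and \eqref{boundTfv0} gives the genuinely integrable coefficients $\frac{\Lambda}{(1+t)^{3/2}}$ and $\frac{\Lambda\,\widehat{v}^{\underline{L}}}{(1+|t-|x||)^2}$ (using $1+t+|x|\geq 1+|t-|x||$). The source is still supplied by the second estimate of Proposition~\ref{ProestCommod} together with $\mathbf{z}\geq 1$, and Lemma~\ref{techLemTF} closes. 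This is precisely the paper's argument; your assumption $\beta_H\leq N_x-2$ is then used only indirectly, to make the second estimate of Proposition~\ref{ProestCommod} available.
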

\begin{proof}
Note first that we can obtain, by a much simpler analysis than in the proof of Proposition \ref{estiLinfini}, that $\| |v^0|^{N_v} \mathbf{z}^{N_x} \widehat{Z}^{\beta} f(3,\cdot , \cdot) \|_{L^\infty_{x,v}} \lesssim  \overline{\epsilon} $ for all $|\beta| \leq N$. Consequently, using \eqref{eq:rewritemachin} and \eqref{eq:calculZmodi}, we get
$$\forall \, |\beta| \leq N-1, \qquad \big\| |v^0|^{N_v} \mathbf{z}^{N_x} \widehat{Z}^{\mathrm{mod},\beta} f(3,\cdot , \cdot) \big\|_{L^\infty_{x,v}} \lesssim  \sum_{|\kappa| \leq |\beta|} \big\| |v^0|^{N_v} \mathbf{z}^{N_x} \widehat{Z}^{\kappa} f(3,\cdot , \cdot) \big\|_{L^\infty_{x,v}} \lesssim  \overline{\epsilon} .$$ 
Hence, it suffices to prove, according to Lemma \ref{techLemTF}, that
$$ \left| \T_F \left(|v^0|^{N_v-7}  \widehat{Z}^{\mathrm{mod},\beta} f \right)\right|(t,x,v) \lesssim \left(\frac{\Lambda \, |v^0|^{N_v-7} \big| \widehat{Z}^{\mathrm{mod},\beta} f \big|}{(1+t)^{\frac{3}{2}}}+ \frac{\Lambda \, \widehat{v}^{\underline{L}} |v^0|^{N_v-7} \big| \widehat{Z}^{\mathrm{mod},\beta} f \big|}{(1+|t-|x||)^2} \right)+\frac{\overline{\epsilon}}{(1+t)\log^2(3+t)}.$$
for all $(t,x,v) \in [3,T[ \times \R^3_x \times \R^3_v$ and any $|\beta| \leq N-2$. For this, we bound $\T_F(v^0)$ using \eqref{boundTfv0} and we apply the previous Proposition \ref{ProestCommod} in order to control $\T_F(\widehat{Z}^{\mathrm{mod},\beta} f)$.
\end{proof}

\subsection{Regularity of the asymptotic state}

In order to prove that $f_\infty$ is differentiable with respect to $v$, we will need to compute the first order $v$ derivatives of the correction terms in the modified spatial characteristics and to bound their higher order derivatives.
\begin{Lem}\label{dvC}
Let $(i,k) \in \llbracket 1 , 3 \rrbracket^2$. Then, for all $(t,x,v) \in [3,+\infty[\times \R_x^3 \times \R^3_v$,
$$ v^0 \partial_{v^k} \C^i (t,v)= \C_{\Omega_{0k}}^i (t,v)-\widehat{v}^i \C^k (t,v).$$
More generally, for any multi-index $|\kappa| \leq N-1$, 
$$|v^0|^{|\kappa|} |\partial^\kappa_v \C^i| (t,v) \lesssim   \overline{\epsilon} \, |v^0|^{-\frac{1}{2}} \log(t) . $$
\end{Lem}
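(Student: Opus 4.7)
The plan is to prove the first identity by a direct computation using Proposition \ref{Proderivinfini}, and the general bound by a short induction that reduces $|v^0|^{|\kappa|}\partial_v^\kappa$ to products of Lorentz boosts lifts and then invokes \eqref{eq:calculZmodi} together with Proposition \ref{estiFinfty}.

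Write $\C^i(t,v) = \log(t)\cdot h^i(v)$ where
\[ h^i(v) := -\frac{1}{v^0}\bigl(\widehat{v}^{\mu}F^{\infty}_{\mu i}(v)+\widehat{v}^i\widehat{v}^{\mu}F^{\infty}_{\mu 0}(v)\bigr). \]
Since the prefactor $\log(t)$ is $v$-independent, it suffices to evaluate $v^0\partial_{v^k}h^i$. Using $v^0\partial_{v^k}(1/v^0)=-\widehat{v}^k/v^0$, the identities $v^0\partial_{v^k}\widehat{v}^0=0$ and $v^0\partial_{v^k}\widehat{v}^\ell=\delta^\ell_k-\widehat{v}^\ell\widehat{v}^k$ for $\ell\geq 1$, and Proposition \ref{Proderivinfini} (applied with $Z^\gamma=\mathrm{Id}$) to expand $v^0\partial_{v^k}F^{\infty}_{\mu\nu}$, one computes after grouping terms and exploiting the antisymmetry $F^{\infty}_{\mu\nu}=-F^{\infty}_{\nu\mu}$,
\[ v^0\partial_{v^k}(\widehat{v}^{\mu}F^{\infty}_{\mu\nu})=\widehat{v}^{\mu}\mathcal{L}_{\Omega_{0k}}(F)^\infty_{\mu\nu}+\widehat{v}^k\widehat{v}^{\mu}F^{\infty}_{\mu\nu}-\delta_\nu^0\widehat{v}^\mu F^{\infty}_{\mu k}-\delta_\nu^k\widehat{v}^\mu F^{\infty}_{\mu 0}, \qquad 0\le \nu \le 3. \]
Applying this to $\nu=i$ and to $\nu=0$ and combining with $v^0\partial_{v^k}\widehat{v}^i = \delta^i_k-\widehat{v}^i\widehat{v}^k$ in the second summand of $h^i$, the identity $\widehat{v}^k \widehat{v}^\mu F^{\infty}_{\mu i} - \widehat{v}^i\widehat{v}^\mu F^{\infty}_{\mu k} = \widehat{v}^k A^i - \widehat{v}^i A^k$ with $A^i:=\widehat{v}^{\mu}F^{\infty}_{\mu i}+\widehat{v}^i\widehat{v}^{\mu}F^{\infty}_{\mu 0}$ makes the $\widehat{v}^k$-terms cancel, leaving
\[ v^0\partial_{v^k}\C^i(t,v)=\C^i_{\Omega_{0k}}(t,v)-\widehat{v}^i\C^k(t,v). \]

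For the general bound, I first observe that on any function $\varphi$ depending only on $v$ we have $\widehat{\Omega}_{0k}\varphi=v^0\partial_{v^k}\varphi$ (the $t\partial_{x^k}$ and $x^k\partial_t$ pieces vanish). A straightforward induction on $|\kappa|$ using this equivalence and the identity $v^0\partial_{v^k}(1/v^0)=-\widehat{v}^k/v^0$ yields
\[ |v^0|^{|\kappa|}\,|\partial_v^\kappa h^i|(v)\lesssim \sum_{|\lambda|\leq |\kappa|}|\widehat{Z}^{\lambda}h^i|(v), \]
where the $\widehat{Z}^{\lambda}$ range over products of complete lifts of Lorentz boosts.

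Iterating the computation above (with $F$ replaced successively by $\mathcal{L}_{Z^\gamma}(F)$ for $|\gamma|\leq |\lambda|$), each $\widehat{Z}^{\lambda}h^i(v)$ is a finite $\R$-linear combination of quantities of the form $\widehat{v}^{\mu}\mathcal{L}_{Z^\gamma}(F)^\infty_{\mu\nu}/v^0$ and $\widehat{v}^j\widehat{v}^\mu\mathcal{L}_{Z^\gamma}(F)^\infty_{\mu\nu}/v^0$ with $|\gamma|\leq |\lambda|+1\leq N-1$; this is precisely the bookkeeping already carried out in the proof of Proposition \ref{ProestCommod}, culminating in \eqref{eq:calculZmodi}. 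Since Proposition \ref{estiFinfty} gives $|\mathcal{L}_{Z^\gamma}(F)^\infty|(v)\lesssim\overline{\epsilon}\sqrt{v^0}$ for $|\gamma|\leq N-1$, each such term is bounded by $\overline{\epsilon}|v^0|^{-1/2}$. Reinstating the $\log(t)$ prefactor yields the stated estimate. The only mild subtlety is keeping track that the change from $\partial_v$ to $\widehat{\Omega}_{0k}$ is essentially free on $v$-only functions; there is no genuine obstacle.
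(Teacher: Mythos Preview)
Your proof is correct and follows essentially the same route as the paper. The paper quotes the pre-packaged identity \eqref{eq:boosti} (itself a consequence of Proposition~\ref{Proderivinfini}) to differentiate $\frac{\widehat v^\mu}{v^0}F^\infty_{\mu\nu}$ in one step, whereas you re-derive the analogous identity for $\widehat v^\mu F^\infty_{\mu\nu}$ directly from Proposition~\ref{Proderivinfini} and handle the $1/v^0$ factor separately; for the second part both arguments reduce to Propositions~\ref{estiFinfty}--\ref{Proderivinfini}, and your invocation of \eqref{eq:calculZmodi} is legitimate since that estimate is established earlier. One cosmetic point: the combinations you obtain after iterating are not literally $\R$-linear but carry bounded coefficients in $\widehat v$, which of course does not affect the estimate.
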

\begin{proof}
According to \eqref{eq:boosti}, we have for any $\nu \in \llbracket 0,3 \rrbracket$,
$$v^0 \partial_{v^k}\left( \frac{\widehat{v}^\mu}{v^0} F^\infty_{\mu \nu} \right) =\frac{\widehat{v}^\mu}{v^0}\mathcal{L}_{\Omega_{0k}}(F)^\infty_{\mu \nu}-\delta_\nu^0 \frac{\widehat{v}^\mu}{v^0} F^\infty_{\mu k}-\delta_\nu^k \frac{\widehat{v}^\mu}{v^0} F_{\mu 0}^\infty. $$
This implies in particular that
\begin{align*}
v^0 \partial_{v^k}\!\left( \frac{\widehat{v}^i\widehat{v}^\mu}{v^0}F^\infty_{\mu 0}+\frac{\widehat{v}^\mu}{v^0}F^\infty_{\mu i} \right) &= \frac{\widehat{v}^i\widehat{v}^\mu}{v^0}\mathcal{L}_{\Omega_{0k}}(F)^\infty_{\mu 0}+\frac{\widehat{v}^\mu}{v^0}\mathcal{L}_{\Omega_{0k}}(F)^\infty_{\mu i}  -\widehat{v}^i\left(\frac{\widehat{v}^k\widehat{v}^\mu}{v^0}F^\infty_{\mu 0}+\frac{\widehat{v}^\mu}{v^0}F^\infty_{\mu k}\right)\!. 
\end{align*}
In view of the definition of the correction coefficients (see Definitions \ref{DeXc} and \ref{Defmodcom}), we deduce from this last equality the first part of the statement. The second part follows from a direct induction as well as Propositions \ref{estiFinfty}-\ref{Proderivinfini}.
\end{proof}
\begin{Rq}\label{RqLor}
Similarly, we could prove using \eqref{eq:rotation} that $ \Omega_{jk}^v \C^i (t,v)= \C_{\Omega_{jk}}^i (t,v)-\delta^i_j \C^k (t,v)+\delta^i_k \C^j(t,v)$, where $\Omega_{jk}^v:=v^j\partial_{v^k}-v^k \partial_{v^j}$. Consequently, the following quantities, related to the asymptotic Lorentz force, 
$$\Gamma(v):=  \frac{\widehat{v}^\mu}{v^0} \big(F^\infty_{\mu i}(v)+\widehat{v}_i F^\infty_{\mu 0}(v) \big) \dr v^i, \qquad  \Gamma_{Z}(v):= \frac{\widehat{v}^\mu}{v^0} \big(\mathcal{L}_{Z}(F)^\infty_{\mu i}(v)+\widehat{v}_i \mathcal{L}_{Z}(F)^\infty_{\mu 0}(v) \big) \dr v^i,$$
satisfies $\mathcal{L}_{v^0 \partial_{v^k}} (\Gamma)=\Gamma_{\Omega_{0k}}$ and $\mathcal{L}_{\Omega_{jk}^v}(\Gamma)= \Gamma_{\Omega_{jk}}$.
\end{Rq}
We now perform a computation, which holds for any sufficiently regular function $\mathbf{f}$. In particular, we will apply it to $\mathbf{f}=\partial_{t,x}^\kappa f$. We have
\begin{align*}
v^0 \partial_{v^k} \left( \mathbf{f}(t,X_\C,v) \right)&=t\partial_{x^k}\mathbf{f}(t,X_\C,v)-t\widehat{v}^k \widehat{v}^i \partial_{x^i} \mathbf{f}(t,X_\C,v)+v^0 \partial_{v^k} \mathbf{f}(t,X_\C,v)+v^0 \partial_{v^k} \C^i(t,v) \partial_{x^i} \mathbf{f}(t,X_\C,v) .
\end{align*}
Then, we use \eqref{derivv:eq} in order to rewrite the third term on the right hand side. We get
\begin{equation*}
 v^0 \partial_{v^k}\left( \mathbf{f}(t,X_\C,v)\right)=\!\left( \widehat{\Omega}_{0k}  \mathbf{f} + z_{0k} \partial_{t} \mathbf{f} -\widehat{v}^k S \mathbf{f}-\widehat{v}^k \sum_{1 \leq i \leq 3}z_{0i} \partial_{x^i} \mathbf{f} \right)\!(t,X_\C,v)+v^0\partial_{v^k} \C^i(t,v) \partial_{x^i} \mathbf{f}(t,X_\C,v) . 
\end{equation*}
Hence, as $z_{0i}(t,X_\C,v)=-x^i-\C^i(t,v)$, 
\begin{align*}
v^0 \partial_{v^k} (\mathbf{f}(t,X_\C,v))& =\left( \widehat{\Omega}_{0k}  \mathbf{f}\right)(t,X_\C,v)-x^k(\partial_t \mathbf{f}) (t,X_\C,v)-\frac{\C^k(t,v)}{t} (S\mathbf{f})(t,X_\C,v) \\
& \quad +\frac{\C^k(t,v)}{t}X^i_\C \partial_{x^i}\mathbf{f}(t,X_\C,v)  -\widehat{v}^k (S\mathbf{f})(t,X_\C,v)+\widehat{v}^k \C^i(t,v) \partial_{x^i}\mathbf{f}(t,X_\C,v) \\
& \quad +\widehat{v}^k x^i \partial_{x^i}\mathbf{f}(t,X_\C,v)  +v^0 \partial_{v^k} \C^i(t,v) \partial_{x^i} \mathbf{f}(t,X_\C,v) .&
\end{align*}
Now, according to Lemma \ref{dvC},
$$\widehat{\Omega}_{0k}+ v^0 \partial_{v^k} \C^i(t,v) \partial_{x^i}=\widehat{\Omega}_{0k}+\C_{\Omega_{0k}}^i(t,v) \partial_{x^i} -\C^k(t,v) \widehat{v}^i \partial_{x^i}=\widehat{\Omega}_{0k}^{\mathrm{mod}}-\C^k(t,v) \widehat{v}^i \partial_{x^i},$$
and, in view of the relations $S^{\mathrm{mod}}=S-\C^i(t,v)\partial_{x^i}$ and $X_\C^i=x^i+t\widehat{v}^i+\C^i(t,v)$,
\begin{align} \nonumber
v^0 \partial_{v^k} (\mathbf{f}(t,X_\C,v))&= \left(\widehat{\Omega}_{0k}^{\mathrm{mod}}  \mathbf{f}\right)(t,X_\C,v)-\left( \widehat{v}^k+\frac{\C^k(t,v)}{t}\right) \left( S^{\mathrm{mod}}\mathbf{f}\right) (t,X_\C,v)\\
& \quad -x^k \left( \partial_t \mathbf{f} \right) (t,X_\C,v)+\left(\widehat{v}^k+\frac{\C^k(t,v)}{t}\right) x^i \partial_{x^i}\mathbf{f}(t,X_\C,v) . \label{eq:forscmap}
\end{align}
Iterating this process to the functions $\mathbf{f}=\partial_{t,x}^\kappa f$ yields the following result.
\begin{Pro}\label{derivhexpression}
Let $|\kappa|+|\xi| \leq N-2$. Then, there exists functions $P_{\beta}^{\kappa,\xi}$ such that,
$$\forall (t,x,v) \in [3,+\infty[ \times \R^3_x \times \R^3_v, \qquad |v^0|^{|\xi|} \partial^\xi_v \Big( (\partial_{t,x}^\kappa f)(t,X_\C,v) \Big) = \sum_{|\beta| \leq |\kappa|+|\xi|} P_{\beta }^{\kappa,\xi}(t,x,v) \widehat{Z}^{\mathrm{mod},\beta} f(t,X_\C,v)$$
and $P_{\beta}^{\kappa,\xi}(t,x,v)$ is a linear combination of terms of the form $P(x,\widehat{v})M(\C)$, where $P$ is a polynomial and 
$$M(\C)=\prod_{ k =1}^d \frac{1}{t}|v^0|^{|\xi_k|} \partial_v^{\xi_k} \C^{i_k} (t,v), \qquad d+\sum_{1 \leq k \leq d} |\xi_k|  \leq |\xi|, \quad |\beta|+\sum_{1 \leq k \leq d} |\xi_k| \leq |\xi| , \quad \deg_x(P)+\beta_H \leq |\xi|.$$
The value $d=0$ is allowed, in which case we set $M(\C)=1$.
\end{Pro}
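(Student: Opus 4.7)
The plan is to argue by induction on $|\xi|$, with identity \eqref{eq:forscmap} (applied componentwise with $\mathbf{f}=\widehat{Z}^{\mathrm{mod},\beta}f$) serving as the engine of the inductive step. For the base case $|\xi|=0$, the formula reduces to the tautology $(\partial_{t,x}^\kappa f)(t,X_\mathscr{C},v)=\widehat{Z}^{\mathrm{mod},\kappa} f(t,X_\mathscr{C},v)$, since every translation belongs to $\widehat{\mathbb{P}}_0^{\mathrm{mod}}$ unmodified; we take $P_\kappa^{\kappa,0}=1$ with $d=0$, $M(\mathscr{C})=1$, $P=1$, and every structural constraint is vacuously satisfied.

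For the inductive step, write $\partial_v^{\xi+\delta_\ell}=\partial_{v^\ell}\partial_v^\xi$ and use
\[
|v^0|^{|\xi|+1}\partial_{v^\ell}\partial_v^\xi=v^0\partial_{v^\ell}\bigl(|v^0|^{|\xi|}\partial_v^\xi\bigr)-|\xi|\,\widehat{v}^\ell\,|v^0|^{|\xi|}\partial_v^\xi,
\]
the identity $v^0\partial_{v^\ell}(|v^0|^{|\xi|})=|\xi|\widehat{v}^\ell|v^0|^{|\xi|}$ being immediate. The second term just multiplies each coefficient of the inductive decomposition by the polynomial $-|\xi|\widehat{v}^\ell$, preserving every structural bound. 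For the first term, apply $v^0\partial_{v^\ell}$ via the Leibniz rule to the sum $\sum_\beta P_\beta^{\kappa,\xi}\widehat{Z}^{\mathrm{mod},\beta}f(t,X_\mathscr{C},v)$ provided by the inductive hypothesis. When it hits a coefficient $P(x,\widehat{v})M(\mathscr{C})$, the identity $v^0\partial_{v^\ell}\widehat{v}^j=\delta^j_\ell-\widehat{v}^\ell\widehat{v}^j$ preserves the polynomial structure of $P$ with $\deg_x$ unchanged, while the Leibniz rule on $M$ together with Lemma \ref{dvC} either multiplies one factor by a polynomial in $\widehat{v}$ or promotes $|v^0|^{|\xi_k|}\partial_v^{\xi_k}\mathscr{C}^{i_k}/t$ to $|v^0|^{|\xi_k|+1}\partial_v^{\xi_k+\delta_\ell}\mathscr{C}^{i_k}/t$. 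When it hits $\widehat{Z}^{\mathrm{mod},\beta}f(t,X_\mathscr{C},v)$, identity \eqref{eq:forscmap} produces six branches, each of the form $C\cdot\widehat{Z}^{\mathrm{mod},\beta'}f(t,X_\mathscr{C},v)$ with $C\in\{1,-\widehat{v}^\ell,-\mathscr{C}^\ell/t,-x^\ell,\widehat{v}^\ell x^i,x^i\mathscr{C}^\ell/t\}$ and $\beta'$ obtained from $\beta$ by appending $\widehat{\Omega}_{0\ell}^{\mathrm{mod}}$, $S^{\mathrm{mod}}$, $\partial_t$ or $\partial_{x^i}$.

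The main obstacle, though elementary, is the combinatorial bookkeeping: one must verify that each of the branches above preserves the three structural inequalities (with $|\xi|$ replaced by $|\xi|+1$). The $v^0\partial_{v^\ell}$-action on $M(\mathscr{C})$ raises $\sum|\xi_k|$ by at most $1$ without touching $|\beta|$ or $\deg_x P$; a promotion by $\widehat{\Omega}_{0\ell}^{\mathrm{mod}}$ or $S^{\mathrm{mod}}$ raises $(|\beta|,\beta_H)$ by $(1,1)$; the ``translational'' branches either raise $\deg_x P$ by $1$ via an explicit $x$-factor (while $\beta_H$ is unchanged since a translation is appended) or introduce a new $\mathscr{C}^\ell/t$ factor, which raises $d$ by $1$ with the corresponding $|\xi_{d+1}|=0$. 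A direct case check confirms that each counter grows by at most one unit per branch, closing the induction.

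The only mild subtlety occurs in the two branches producing $-(\mathscr{C}^\ell/t)\,S^{\mathrm{mod}}$ and $(\mathscr{C}^\ell/t)x^i\partial_{x^i}$, where $d$ and $|\beta|$ are incremented simultaneously; however, the newly created $|\xi_{d+1}|=0$ ensures that both $d+\sum|\xi_k|$ and $|\beta|+\sum|\xi_k|$ still rise by exactly one. Having verified the bounds, one reads off the explicit form of $P_{\beta'}^{\kappa,\xi+\delta_\ell}(t,x,v)$ as a linear combination of products $P'(x,\widehat{v})M'(\mathscr{C})$ of the prescribed type, which completes the induction and proves the statement.
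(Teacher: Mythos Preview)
Your proof is correct and follows exactly the approach the paper has in mind: the paper's entire proof is the sentence ``Iterating this process to the functions $\mathbf{f}=\partial_{t,x}^\kappa f$ yields the following result,'' and what you have written is precisely that iteration made explicit, via induction on $|\xi|$ with \eqref{eq:forscmap} driving the step. Your case-by-case verification of the three structural inequalities is accurate; one small remark is that the invocation of Lemma~\ref{dvC} when differentiating $M(\mathscr{C})$ is unnecessary, since the promotion $|v^0|^{|\xi_k|}\partial_v^{\xi_k}\mathscr{C}^{i_k}\mapsto |v^0|^{|\xi_k|+1}\partial_v^{\xi_k+\delta_\ell}\mathscr{C}^{i_k}$ is just the product rule on $|v^0|^{|\xi_k|}$ and needs no structural information about $\mathscr{C}$.
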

In order to prove, through Proposition \ref{VMh}, that the functions considered in the previous statement converge, as $t \to +\infty$, we will be lead to estimate these polynomials and their time derivative.
\begin{Lem}\label{LemestiPoly}
Let $|\kappa|+|\xi| \leq N-2$ and $|\beta| \leq |\kappa|+|\xi|$. Then, for all $(t,x,v) \in [3,+\infty[ \times \R^3_x \times \R^3_v$,
\begin{align*}
\left| P_{\beta}^{\kappa,\xi} \right|(t,x,v)  \lesssim \langle x \rangle^{|\xi|-\beta_H} , \qquad \qquad \left| \partial_t  P_{\beta}^{\kappa,\xi} \right|(t,x,v)  \lesssim \overline{\epsilon} \, \langle x \rangle^{|\xi|-\beta_H} \frac{\log(t)}{t^2}.
\end{align*}
\end{Lem}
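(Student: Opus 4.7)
The proof will exploit the product structure of $P_{\beta}^{\kappa,\xi}$ asserted in Proposition \ref{derivhexpression}. Since any such summand factorises as $P(x,\widehat{v}) M(\C)$ with $\deg_x P + \beta_H \leq |\xi|$, and the polynomial $P(x,\widehat{v})$ does not depend on $t$, only $M(\C)$ needs to be estimated and differentiated in $t$.

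For the pointwise bound, I would first use $|\widehat{v}|\leq 1$ to obtain $|P(x,\widehat{v})|\lesssim \langle x\rangle^{\deg_x P}\leq \langle x\rangle^{|\xi|-\beta_H}$. To control $M(\C)=\prod_{k=1}^d \tfrac{1}{t}|v^0|^{|\xi_k|}\partial_v^{\xi_k}\C^{i_k}(t,v)$, I would apply the higher order estimate of Lemma \ref{dvC} to each factor, giving
\[
\left|\tfrac{1}{t}|v^0|^{|\xi_k|}\partial_v^{\xi_k}\C^{i_k}\right|(t,v)\lesssim \overline{\epsilon}\,|v^0|^{-\frac{1}{2}}\frac{\log(t)}{t}\leq 1
\]
for $t\geq 3$ and $\overline{\epsilon}$ small enough. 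Taking the product over $k$ yields $|M(\C)|\lesssim 1$, which multiplies into the polynomial bound to deliver the first estimate.

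For the time derivative, if $d=0$ then $P_\beta^{\kappa,\xi}=P(x,\widehat{v})$ is $t$-independent and the estimate is trivial. If $d\geq 1$, the product rule applied to $M(\C)$ singles out one factor to differentiate while the others keep their uniform bound from step one. The key input here is that, by the explicit expressions in Definitions \ref{DeXc} and \ref{Defmodcom}, each $\C^i$ (and each $\C^i_{\widehat{Z}}$) is precisely $\log(t)$ times a smooth function of $v$; hence $t\partial_t\C^i$ is the same smooth function, and Lemma \ref{dvC} applied to this log-free companion gives
\[
\left|\partial_t\!\left(\tfrac{1}{t}|v^0|^{|\xi_k|}\partial_v^{\xi_k}\C^{i_k}\right)\right|(t,v)\lesssim \overline{\epsilon}\,|v^0|^{-\frac{1}{2}}\frac{\log(t)}{t^2},
\]
the worst contribution coming equally from the $-t^{-2}$ derivative of the prefactor and from $\partial_t \C^{i_k}=t^{-1}\times(\text{log-free})$. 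Combining with the uniform bound on the remaining $d-1$ factors produces $|\partial_t M(\C)|\lesssim \overline{\epsilon}\log(t)/t^2$, and multiplying by the polynomial bound for $P(x,\widehat{v})$ completes the proof.

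The argument is essentially routine once Lemma \ref{dvC} is in hand; the only point requiring a small verification is the sharper bound on $\partial_t\C^{i_k}$ and its $v$-derivatives, which is immediate from the explicit $\log(t)$-factorisation of the correction coefficients. There is no genuine obstacle — the main care is bookkeeping the exponents of $\langle x\rangle$ and $|v^0|$ through the product rule.
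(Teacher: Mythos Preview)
Your proof is correct and follows essentially the same approach as the paper's own argument: reduce to a single summand $P(x,\widehat{v})M(\C)$, bound the polynomial by $\langle x\rangle^{|\xi|-\beta_H}$, control each factor of $M(\C)$ via Lemma \ref{dvC}, and for the time derivative use the product rule together with the explicit $\log(t)$-factorisation of the correction coefficients. The only cosmetic difference is that the paper records the sharper intermediate bound $|M(\C)|\lesssim \overline{\epsilon}^{\,d}\log^d(t)t^{-d}$ (and likewise $|\partial_t M(\C)|\lesssim \overline{\epsilon}^{\,d}\log^d(t)t^{-d-1}$) before absorbing the extra factors, whereas you pass directly to the uniform bound; also, the appeal to ``$\overline{\epsilon}$ small enough'' is not really needed since $\log(t)/t$ is already bounded on $[3,\infty)$ and $\overline{\epsilon}\lesssim 1$ in the paper's regime.
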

\begin{proof}
It suffices to bound terms of the form $P(x,\widehat{v}) M(\C)$ satisfying the conditions given in Proposition \ref{derivhexpression}. The first factor satisfies $|P(x,\widehat{v})| \lesssim \langle x \rangle^{\deg_x \! P}\leq\langle x \rangle^{|\xi|-\beta_H}$ and does not depend on $t$. In view of Lemma \ref{dvC}, we have $| M(\C)| \lesssim \overline{\epsilon}^{\, d} \log^d(t)t^{-d}$, which implies the first estimate. The second one can be obtained similarly. Either $|\partial_t M(\C)|=0$ or $d \geq 1$ and $|\partial_t M(\C)| \lesssim  \overline{\epsilon}^{\, d} \log^d(t)t^{-d-1} $ by Lemma \ref{dvC}.
\end{proof}
We are now able to prove the main result of this paper. For this, let us introduce
$$ h:(t,x,v) \mapsto f(t,x+t\widehat{v}+\C(t,v),v), \qquad h^{\xi,\kappa}:=|v^0|^{|\xi|} \partial_v^\xi \partial_x^\kappa h(t,x,v)=  |v^0|^{|\xi|} \partial_v^\xi \Big(\partial_x^\kappa f (t,X_\C (t,x,v),v) \Big).$$
\begin{Pro}\label{mainresultPro}
There exists a function $f_\infty \in C^{N-2}( \R^3_x \times \R^3_v,\R_+)$ such that, for any $|\kappa|+|\xi| \leq N-2$, 
$$ \forall \, t \geq 3, \qquad \left\||v^0|^{N_v-10+|\xi|}\langle x \rangle^{N_x-4-|\xi|} \Big( \partial_v^{\xi}\partial_{x}^{\kappa} h(t,\cdot , \cdot)- \partial_v^\xi \partial_x^{\kappa} f_\infty \Big) \right\|_{L^{\infty}_{x,v}} \lesssim \overline{\epsilon} \, \frac{\log^{7(N_x+N)}(t)}{t^{\delta}}. $$
In particular, as $N_v >13$ and if $N_x >7+|\xi|$, we have $\partial_v^\xi\partial_x^\kappa f_\infty \in L^1_{x,v}$. 
\end{Pro}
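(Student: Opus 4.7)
The strategy is to show that, for each $|\kappa|+|\xi| \leq N-2$, the function $t \mapsto h^{\xi,\kappa}(t,x,v)$ is uniformly Cauchy in a suitably weighted $L^\infty_{x,v}$-norm as $t \to +\infty$, with an explicit rate. The limit will then define $\partial_v^\xi \partial_x^\kappa f_\infty$, and the uniform convergence of every such derivative up to order $N-2$ will simultaneously produce $f_\infty$ and ensure it lies in $C^{N-2}$. The starting point is the decomposition furnished by Proposition \ref{derivhexpression},
\[
h^{\xi,\kappa}(t,x,v) = \sum_{|\beta| \leq |\kappa|+|\xi|} P_\beta^{\kappa,\xi}(t,x,v) \, g^\beta(t,x,v), \qquad g^\beta(t,x,v) := \widehat{Z}^{\mathrm{mod},\beta} f \bigl( t, X_\C(t,x,v), v \bigr),
\]
so that controlling $\partial_t h^{\xi,\kappa}$ reduces to controlling $\partial_t P_\beta^{\kappa,\xi}$ (via Lemma \ref{LemestiPoly}) and $\partial_t g^\beta$ (via Proposition \ref{VMh} applied to $\mathbf{f}=\widehat{Z}^{\mathrm{mod},\beta}f$).

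The first contribution, by Lemma \ref{LemestiPoly}, is bounded by $\overline{\epsilon} \, \langle x \rangle^{|\xi|-\beta_H} \log(t) \, t^{-2}|g^\beta|(t,x,v)$, while $g^\beta$ is uniformly controlled by $\overline{\epsilon} |v^0|^{-(N_v-7)}$ thanks to Corollary \ref{Proimproderiv} (note that $|\beta| \leq N-2$ and $\beta_H \leq |\xi| \leq N-2 \leq N_x-2$). For the second contribution, Proposition \ref{VMh} yields
\[
|\partial_t g^\beta|(t,x,v) \lesssim \bigl| \T_F(\widehat{Z}^{\mathrm{mod},\beta}f) \bigr|(t,X_\C,v) + \Lambda \, \frac{\log^{3+3N_x+3N}(3+t)}{(1+t)^{1+\delta}} \sum_{\widehat{Z} \in \widehat{\mathbb{P}}_0} \bigl| |v^0|^7 \mathbf{z}^2 \, \widehat{Z} \widehat{Z}^{\mathrm{mod},\beta}f \bigr|(t,X_\C,v),
\]
where the first term is dominated, via Proposition \ref{ProestCommod}, by $\overline{\epsilon} \log^{6N_x+7N}(t)\,t^{-1-\delta} |v^0|^{-(N_v-7)} \mathbf{z}^{-(N_x-2-\beta_H)}(t,X_\C,v)$, and the second term, after rewriting $\widehat{Z}\widehat{Z}^{\mathrm{mod},\beta}$ as a combination of at most $|\beta|+1 \leq N-1$ lifted vector fields applied to $f$ with polynomially bounded coefficients in $\mathcal{C}$, is controlled by Proposition \ref{estiLinfini}.

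Combining both contributions with the bound $|P_\beta^{\kappa,\xi}|\lesssim \langle x \rangle^{|\xi|-\beta_H}$ and the key pointwise estimate $\langle x \rangle \lesssim \mathbf{z}(t,x+t\widehat{v},v) \lesssim \log(t)\, \mathbf{z}(t,X_\C,v)$ provided by \eqref{eq:propz} and \eqref{eq:boundzX}, each summand satisfies
\[
\langle x \rangle^{N_x-4-|\xi|} |v^0|^{N_v-10+|\xi|} \bigl| \partial_t \bigl( P_\beta^{\kappa,\xi} g^\beta \bigr) \bigr|(t,x,v) \lesssim \overline{\epsilon}\, \frac{\log^{7(N_x+N)}(t)}{t^{1+\delta}},
\]
the loss of three powers of $v^0$ being absorbed in the available margin $N_v-7 \geq (N_v-10+|\xi|)$ for $|\xi| \leq N-2$ (using $N_v \geq 15$ and $N \leq N_v - 7$ is not needed, only the slack in the proposition statement). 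Integrating in time between $t_1$ and $t_2 \geq t_1 \geq 3$ produces a Cauchy sequence in the weighted $L^\infty_{x,v}$-norm, and letting $t_2 \to +\infty$ defines a limit $\Phi_{\xi,\kappa}(x,v)$ with the claimed rate of convergence.

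It remains to identify $\Phi_{\xi,\kappa}$ with $\partial_v^\xi \partial_x^\kappa f_\infty$, where $f_\infty := \Phi_{0,0}$. Since $h^{\xi,\kappa} = |v^0|^{|\xi|} \partial_v^\xi \partial_x^\kappa h$ converges uniformly on compact subsets of $\R^3_x \times \R^3_v$ for every $|\xi|+|\kappa| \leq N-2$, standard results on term-by-term differentiation of uniformly convergent sequences let us exchange limits and derivatives, giving $\Phi_{\xi,\kappa} = |v^0|^{|\xi|} \partial_v^\xi \partial_x^\kappa f_\infty$ and $f_\infty \in C^{N-2}(\R^3_x \times \R^3_v)$. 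The nonnegativity of $f_\infty$ is inherited from that of $f$, and the integrability $\partial_v^\xi \partial_x^\kappa f_\infty \in L^1_{x,v}$ under $N_x > 7+|\xi|$ follows from the weighted $L^\infty$-bound together with $N_v > 13$. The principal technical difficulty lies in the careful bookkeeping of the interplay between the weights $\langle x \rangle^{|\xi|-\beta_H}$ coming from $P_\beta^{\kappa,\xi}$, the weights $\mathbf{z}^{-(N_x-2-\beta_H)}(t,X_\C,v)$ coming from the $\T_F$-estimate, and the transfer from $\mathbf{z}(t,X_\C,v)$ back to $\langle x \rangle$ through Definition \ref{DeXc} and \eqref{eq:boundzX}, which is precisely what forces the logarithmic loss $\log^{7(N_x+N)}(t)$ in the final rate.
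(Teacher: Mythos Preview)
Your proof is correct and follows essentially the same approach as the paper's: decompose $h^{\xi,\kappa}$ via Proposition \ref{derivhexpression}, control $\partial_t g^\beta$ through Proposition \ref{VMh} and Proposition \ref{ProestCommod}, handle the remaining factors by rewriting the modified vector fields via \eqref{eq:rewritemachin}--\eqref{boundPpq} and invoking Proposition \ref{estiLinfini}, transfer between $\langle x\rangle$ and $\mathbf{z}(t,X_\C,v)$ using \eqref{eq:boundzX}, and integrate in time. One small point: the inequality $N-2 \leq N_x-2$ you cite to invoke Corollary \ref{Proimproderiv} is not among the paper's standing hypotheses, but that term is just as easily bounded by the same rewriting-plus-Proposition \ref{estiLinfini} argument you already use for the $\widehat{Z}\widehat{Z}^{\mathrm{mod},\beta}f$ contribution, so no harm is done.
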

\begin{proof}
Fix $t \geq 3$ and $(x,v) \in  \R^3_x \times \R^3_v$. Applying the previous Proposition \ref{derivhexpression} and Lemma \ref{LemestiPoly}, we get
$$ |\partial_t  h^{\xi,\kappa}|(t,x,v) \lesssim  \sum_{\substack{|\beta| \leq N-2 \\ \beta_H \leq |\xi|}} \langle x \rangle^{|\xi|-\beta_H} \left| \partial_t \widehat{Z}^{\mathrm{mod},\beta} f \right|\! (t,X_\C,v)+\overline{\epsilon} \, \frac{\log(t)}{t^2} \langle x \rangle^{|\xi|-\beta_H}  \left| \widehat{Z}^{\mathrm{mod},\beta} f \right|\! (t,X_\C,v).$$
Next, we recall from \eqref{eq:boundzX} the inequality $\langle x \rangle \lesssim \log(t) \, \mathbf{z}(t,X_\C,v)$ and remark, using the same arguments, that $\mathbf{z}(t,X_\C,v) \lesssim  \log(t) \, \langle x \rangle$ holds as well. Bounding $\partial_t \widehat{Z}^{\mathrm{mod},\beta}f$ by Proposition \ref{VMh}, we then get
\begin{align*}
|v^0|^{N_v-10}\langle x \rangle^{N_x-4-|\xi|} \left|\partial_t h^{\kappa , \xi} \right|(t,x,v) \lesssim \sum_{\substack{|\beta| \leq N-2 \\ \beta_H \leq |\xi|}} \log^{N_x}(t) \, |v^0|^{N_v-10}\left|\mathbf{z}^{N_x-4-\beta_H} \T_F(\widehat{Z}^{\mathrm{mod},\beta} f) \right|(t,X_\mathscr{C},v) &\\
+ \Lambda \frac{\log^{4N_x+3N}(t)}{t^{1+\delta}} \sum_{|\gamma| \leq 1}  |v^0|^{N_v-3}\left| \mathbf{z}^{N_x-2-\beta_H} \widehat{Z}^\gamma \widehat{Z}^{\mathrm{mod},\beta} f\right|(t,X_\mathscr{C},v)&.
\end{align*}
We control the first term on the right hand side by Proposition \ref{ProestCommod} and we claim that the second one is bounded by
$$ \Lambda \frac{\log^{4N_x+4N}(t)}{t^{1+\delta}}\sum_{|\kappa| \leq N-1} |v^0|^{N_v-3} \left| \mathbf{z}^{N_x-2} \widehat{Z}^\kappa  f\right|(t,X_\mathscr{C},v).$$
Indeed, we rewrite the modified vector fields using \eqref{eq:rewritemachin} and we control $P_{p,q}(\C)$ by \eqref{boundPpq}. We then deduce from Proposition \ref{estiLinfini} that
$$ |v^0|^{N_v-10}\langle x \rangle^{N_x-4-|\xi|} \left|\partial_t h^{\kappa , \xi} \right|(t,x,v) \lesssim  \overline{\epsilon} \, \frac{\log^{7N_x+7N}(t)}{t^{1+\delta}} .$$
We obtain from that
\begin{equation}\label{eq:onreutilisedirecteme} \forall \, 3 \leq t \leq \tau , \qquad \Big||v^0|^{N_v-10}\langle x \rangle^{N_x-4-|\xi|}\left( h^{\kappa,\xi}(\tau,x,v)-h^{\kappa , \xi}(t,x,v) \right) \Big| \lesssim \overline{\epsilon} \, \frac{\log^{7(N_x+N)}(t)}{t^\delta}.
\end{equation}
Consequently, there exists $f^{\kappa ,\xi}_\infty \in L^{\infty}_{x,v}$ such that $h^{\kappa , \xi}(t,\cdot,\cdot) \to f^{\kappa , \xi}_{\infty}$ as $t \to +\infty$, uniformly on any compact subset of $\R^3_x \times \R^3_v$. By uniqueness of the limit in $\mathcal{D}'(\R^3_x \times \R^3_v)$ and by continuity of the distributional partial derivatives, we get $f^{\kappa , \xi}_\infty=|v^0|^{|\xi|} \partial_v^\xi \partial_x^\kappa f_\infty$. Letting $\tau \to +\infty$ in \eqref{eq:onreutilisedirecteme} yields the stated rate of convergence and concludes the proof.
\end{proof}
\begin{Rq}\label{Rqfinfty}
We can improve the result for $f_\infty$. Propositions \ref{estiLinfini} and \ref{VMh} give
$$ \forall \, t \geq 3, \qquad \left\||v^0|^{N_v-7}\langle x \rangle^{N_x-2} \left(  f(t,X_\C (t,\cdot,\cdot) , \cdot)- f_\infty \right) \right\|_{L^{\infty}_{x,v}} \lesssim \overline{\epsilon}  \, \frac{\log^{12+3N_x+3N}(t)}{t^{\delta}}. $$
Moreover, we could prove that $f_\infty$ is of class $C^{N-1}$ according to the spatial variables $x$.
\end{Rq}
\begin{Rq}
We could prove that $\partial_v^\xi(\partial_t^n \partial_x^\kappa f(t,X_\C,v)) \to \partial_v^\xi (-\widehat{v} \cdot \nabla_x)^n \partial_x^\kappa f_\infty$. The idea consists in rewriting the time derivatives using $\partial_t=-\widehat{v}\cdot \nabla_x+\T_F-\widehat{v}^\mu {F_\mu}^j\partial_{v^j}$.
\end{Rq}

\section{Scattering result for the electromagnetic field}\label{Sec7}

In this section, we start by defining the scattering state of a sufficiently regular Maxwell field. Then, we construct a scattering map for the vacuum Maxwell equations. Finally, we apply these results together with the estimates derived in Section \ref{subsecpointwisedecay} in order to prove that the electromagnetic field $F$ scatters, in the sense that it is approached by a solution to the homogeneous Maxwell equations.

Since the asymptotic states will be functions of the variables $(u,\theta , \varphi)$, defined on future null infinity $\mathcal{I}^+$ introduced in Section \ref{subsec2}, it will be convenient to work in null coordinates. For a function $\psi (t,x)$, in order to simplify the presentation, we will write $\psi (u,\underline{u},\omega)$ to denote $\psi(\frac{\underline{u}+u}{2},\frac{\underline{u}-u}{2}\omega)$, where $(u,\underline{u},\omega)$ are the null coordinates such that $x=r\omega$, $\underline{u}=t+r$ and $u=t-r$.

The scattering state of a smooth electromagnetic field $G$ will give the leading order term in the asymptotic expansion of $rG$, as $\underline{u} \to +\infty$. This motivates the introduction of the following terminology.
\begin{Def}
Let $\phi : \R_+ \times \R^3 \rightarrow \R$ be a function such that the limit
$$  \Phi(u,\omega):= \lim_{r \to +\infty}  r \phi (u+r,r\omega)= \lim_{\underline{u} \to + \infty} (r  \phi) (u,\underline{u},\omega), \qquad \qquad \Phi(u,\omega) <+\infty$$
exists and is finite for all $(u,\omega) \in \R_u \times \mathbb{S}^2$. Then, we say that the function $\Phi$, defined on $\R_u \times \mathbb{S}^2$, is the radiation field $ \mathscr{R}(\phi)$ of $\phi$ along future null infinity $\mathcal{I}^+$.
\end{Def}
\begin{Def}
Similarly, consider $\beta$, a $1$-form on $\R_+\times \R^3$ tangential to the $2$-spheres\footnote{More generally, we could consider tensor fields tangential to the cones $\underline{C}_{\underline{u}}$.} such that $\beta_{e_\theta}$ and $\beta_{e_{\varphi}}$ have a radiation field $\beta_{e_\theta}^{\mathcal{I}^+}$ and $\beta_{e_\varphi}^{\mathcal{I}^+}$. Then, $\beta^{\mathcal{I}^+}$, defined on $\R_u \times \mathbb{S }^2$ as the $1$-form $\beta^{\mathcal{I}^+}_{e_\theta}\dr \theta+\beta^{\mathcal{I}^+}_{e_\varphi} \dr \varphi$ tangential to the $2$-spheres, is called the radiation field of $\beta$ along $\mathcal{I}^+$.

If $\beta^{\mathcal{I}^+}$ is of class $C^1$, we define
$$\nabla_{\partial_u} (\beta):= \partial_u(\beta^{\mathcal{I}^+}_{e_\theta})\dr \theta+\partial_u(\beta^{\mathcal{I}^+}_{e_\varphi}) \dr \varphi, \qquad \slashed{\nabla}_{e_{\theta}} (\beta)(u,\cdot,\cdot) : = \slashed{\nabla}_{e_\theta}(\beta (u,\cdot,\cdot)), \qquad \slashed{\nabla}_{e_{\varphi}} (\beta)(u,\cdot,\cdot) : = \slashed{\nabla}_{e_\varphi}(\beta (u,\cdot,\cdot)),$$
where $\slashed{\nabla}$ denotes the covariant derivative on $\mathbb{S}^2$.
\end{Def}
We already know from Corollary \ref{Corgoodnull} that, given a sufficiently decaying electromagnetic field $G$, the radiation field of the good null components $\alpha (G)$, $\rho(G)$ and $\sigma (G)$ exist and vanish. Concerning the component $\underline{\alpha} (G)$, we have the following result.
\begin{Pro}\label{Corgoodnull2}
Let $G$ be a $C^1$ solution to the Maxwell equations \eqref{Maxwithsource} with a continuous source term $J$. Assume that there exists three constants $C[G] >0$, $p \in \mathbb{N}$ and $q >0$ such that
\begin{equation}\label{eq:assump0}
 \forall \; (t,x) \in \R_+ \times \R^3, \qquad r|J|(t,x)+\sum_{|\gamma| \leq 1} |\rho(\mathcal{L}_{Z^{\gamma}}G)|(t,x)+|\sigma(\mathcal{L}_{Z^{\gamma}}G)|(t,x) \leq \frac{C[G] \log^p(3+t+|x|)}{(1+t+|x|)^{1+q}}.
 \end{equation}
Then, $\underline{\alpha} (G)$ has a radiation field along $\mathcal{I}^+$. For any $B \in \{\theta , \varphi \}$ and for all $(u,\omega) \in \R_u \times \mathbb{S}^2$, the limit 
$$\underline{\alpha}_{e_B}^{\mathcal{I}^+}(u,\omega) :=\lim_{r \to +\infty} r\underline{\alpha}(G)_{e_B}(r+u,r\omega) = \lim_{\underline{u} \to +\infty} r\underline{\alpha}(G)_{e_B}(u,\underline{u},\omega)  $$
exists and is finite. Moreover,
$$ \forall \; (t,x) \in \R_+ \times \R^3, \qquad \left| r\underline{\alpha}(G)_{e_B}(t,x)-\underline{\alpha}^{\mathcal{I}^+}_{e_B}\left(t-|x|,\frac{x}{|x|} \right) \right| \lesssim C[G]\frac{ \log^p(3+t+|x|)}{(1+t+|x|)^q}. $$
Consequently, $\underline{\alpha}^{\mathcal{I}^+}$ is a continuous tensor field, defined on $\R_u\times \mathbb{S}^2$ and tangential to the $2$-spheres.
\end{Pro}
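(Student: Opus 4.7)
\begin{refproof}[Proof proposal]
The plan is to exploit the improved transport equation for $r\underline{\alpha}(G)$ along the outgoing null direction $L$ provided by Lemma \ref{improderiv0} and then integrate in $\underline{u}$ at fixed $(u,\omega)$. Since $L=\partial_t+\partial_r=2\partial_{\underline{u}}$ in the null coordinates $(u,\underline{u},\omega)$, and since $\nabla_L e_{A}=0$, the covariant derivative $\nabla_L(r\underline{\alpha}(G))_{e_A}$ reduces to the scalar derivative $2\partial_{\underline{u}}(r\underline{\alpha}(G)_{e_A})$. This turns the problem into a one-dimensional Cauchy criterion along the backward light cones parametrized by $(u,\omega)$.

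Concretely, I would fix $B\in\{\theta,\varphi\}$ and set $\psi_B(u,\underline{u},\omega):=r\underline{\alpha}(G)_{e_B}(u,\underline{u},\omega)$. Lemma \ref{improderiv0} combined with the hypothesis \eqref{eq:assump0} then gives
\begin{equation*}
\big|\partial_{\underline{u}}\psi_B\big|(u,\underline{u},\omega)\;\lesssim\; \frac{C[G]\log^p(3+\underline{u})}{(1+\underline{u})^{1+q}},
\end{equation*}
a bound that is uniform in $(u,\omega)$ and, because $q>0$, integrable on $[\,|u|,+\infty[$. For any $(u,\omega)\in\R\times\mathbb{S}^2$ and any $|u|\le \underline{u}_1<\underline{u}_2$, integration yields
\begin{equation*}
\big|\psi_B(u,\underline{u}_2,\omega)-\psi_B(u,\underline{u}_1,\omega)\big|\;\lesssim\; C[G]\int_{\underline{u}_1}^{+\infty}\!\frac{\log^p(3+s)}{(1+s)^{1+q}}\,\mathrm{d}s\;\lesssim\;\frac{C[G]\log^p(3+\underline{u}_1)}{(1+\underline{u}_1)^q},
\end{equation*}
which identifies $\psi_B(u,\cdot,\omega)$ as a Cauchy net as $\underline{u}\to+\infty$. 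I would then define $\underline{\alpha}^{\mathcal{I}^+}_{e_B}(u,\omega)$ as the limit and, letting $\underline{u}_2\to+\infty$ in the previous display with $\underline{u}_1=\underline{u}$, extract precisely the pointwise estimate stated in the proposition (recalling $\underline{u}=t+|x|$, $u=t-|x|$).

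It remains to check that $\underline{\alpha}^{\mathcal{I}^+}$ is a continuous tensor field on $\R_u\times\mathbb{S}^2$. The right-hand side of the quantitative estimate depends only on $\underline{u}$, so the convergence $\psi_B(u,\underline{u},\omega)\to\underline{\alpha}^{\mathcal{I}^+}_{e_B}(u,\omega)$ is uniform on $\R_u\times\mathbb{S}^2$; combined with the continuity of $\psi_B$ in $(u,\omega)$ for each fixed $\underline{u}$ (which follows from the $C^1$ regularity of $G$), a standard three-$\varepsilon$ argument yields the continuity of $\underline{\alpha}^{\mathcal{I}^+}_{e_B}$. Reassembling the components as the tensor field $\underline{\alpha}^{\mathcal{I}^+}=\underline{\alpha}^{\mathcal{I}^+}_{e_\theta}\dr\theta+\underline{\alpha}^{\mathcal{I}^+}_{e_\varphi}\dr\varphi$ on $\R_u\times\mathbb{S}^2$ concludes the argument. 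There is no real obstacle here: the only delicate point is the simultaneous treatment of the source term $r|J|$ and the good null components $\rho(\mathcal{L}_{Z^\gamma}G)$, $\sigma(\mathcal{L}_{Z^\gamma}G)$ that appear on the right-hand side of Lemma \ref{improderiv0}, and this is exactly what the assumption \eqref{eq:assump0} is tailored for.
\end{refproof}
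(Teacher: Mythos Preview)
Your proposal is correct and follows essentially the same approach as the paper: both use the last estimate of Lemma \ref{improderiv0} together with \eqref{eq:assump0} to bound $|\nabla_L(r\underline{\alpha}(G))|\lesssim C[G]\log^p(3+\underline{u})(1+\underline{u})^{-1-q}$, then integrate in $\underline{u}$ (using $L=2\partial_{\underline{u}}$ and $\nabla_L e_B=0$) to obtain the Cauchy criterion and the stated rate. Your proof is slightly more explicit about the continuity of $\underline{\alpha}^{\mathcal{I}^+}$ via uniform convergence, which the paper leaves implicit.
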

\begin{proof}
The last inequality of Lemma \ref{improderiv0}, together with \eqref{eq:assump0}, provides
\begin{equation}\label{eq:nablaLralaphabarre}
 \forall \; (t,x) \in \R_+ \times \R^3, \qquad \quad  \left| \nabla_L \left( r \underline{\alpha}(G) \right)\right|(t,x)  \lesssim \log^p(3+t+|x|)(1+t+|x|)^{-1-q}.
\end{equation}
Using the null coordinates $\underline{u}=t+r$ and $u=t-r$, where $x=r\omega$, we get, as $L=2\partial_{\underline{u}}$ and $\nabla_L e_B=0$,
$$ \forall \; 0 \leq \underline{u} \leq \underline{z}, \qquad \left|r\underline{\alpha}(F)(u,\underline{z},\omega)-r\underline{\alpha}(F)(u,\underline{u},\omega)\right| \lesssim \int_{s=\underline{u}}^{\underline{z}} \frac{\log^p(3+s)\dr s}{(1+s)^{1+q}} \lesssim \frac{\log^p(3+\underline{u})}{(1+\underline{u})^{q}},$$
implying the existence of $\underline{\alpha}^{\mathcal{I}^+}_{e_B}$, for any $B\in \{\theta, \varphi\}$, and the rate of convergence given in the statement.
\end{proof}
If the electromagnetic field is sufficiently regular, we can relate the radiation fields of the derivatives of $G$ to the ones of $\underline{\alpha}^{\mathcal{I}^+}$. For this, we will use the bounded functions $\omega_i:=x^i/|x|$ and $\omega_i^A:=\langle \partial_{x^i},e_A \rangle$, where $1 \leq i \leq 3$ and $A \in \{\theta , \varphi \}$, which depend only on $\omega \in \mathbb{S}^2$ and which are given explicitly in Appendix \ref{SecB}. 
\begin{Pro}\label{derivscatMax}
Suppose that $G$ verifies, in addition to the hypotheses of the previous Proposition \ref{Corgoodnull2}, the inequality $|rG|(t,x)\leq C[G]$. Then, for any $Z \in \mathbb{K}$, 
$$\exists \, \underline{\alpha}^{\mathcal{I}^+}_Z \in \mathcal{D}'(\R_u \times \mathbb{S}^2),  \qquad \qquad r \underline{\alpha}(\mathcal{L}_Z G)(\cdot,\underline{u},\cdot)   \xrightharpoonup[\underline{u} \to + \infty]{} \underline{\alpha}^{\mathcal{I}^+}_Z \qquad \text{in $\mathcal{D}'(\R_u \times \mathbb{S}^2)$}.$$
Moreover, for any $1 \leq i \leq 3$ and $1 \leq j < k \leq 3$,
\begin{alignat*}{2}
&  \underline{\alpha}^{\mathcal{I}^+}_{\partial_t}= \nabla_u \underline{\alpha}^{\mathcal{I}^+}, \qquad &&\underline{\alpha}^{\mathcal{I}^+}_{\partial_{x^i}}=-\omega_i \nabla_u \underline{\alpha}^{\mathcal{I}^+}, \qquad \underline{\alpha}^{\mathcal{I}^+}_{S}=u\nabla_u \underline{\alpha}^{\mathcal{I}^+}+\underline{\alpha}^{\mathcal{I}^+} \\
&\underline{\alpha}^{\mathcal{I}^+}_{\Omega_{jk}} = \mathcal{L}_{\Omega_{jk}} \big( \underline{\alpha}^{\mathcal{I}^+} \big), \qquad && \underline{\alpha}^{\mathcal{I}^+}_{\Omega_{0i}}=- \omega_i u\nabla_u  \underline{\alpha}^{\mathcal{I}^+}-2\omega_i \underline{\alpha}^{\mathcal{I}^+}+ \omega^{e_A}_i \slashed{\nabla}_{e_A}  \underline{\alpha}^{\mathcal{I}^+} .
\end{alignat*}
\end{Pro}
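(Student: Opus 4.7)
The approach is to compute, for each $Z\in\mathbb{K}$, the quantity $\underline{\alpha}(\mathcal{L}_Z G)_{e_A} = (\mathcal{L}_Z G)(e_A, \underline{L})$ via the Cartan-type identity
\begin{equation*}
(\mathcal{L}_Z G)(X, Y) = Z(G(X,Y)) - G([Z,X], Y) - G(X, [Z,Y]),
\end{equation*}
and then to pass to the distributional limit as $\underline{u}\to+\infty$. The main analytic inputs are: $r\underline{\alpha}(G) \to \underline{\alpha}^{\mathcal{I}^+}$ uniformly on bounded $u$-intervals with rate $\log^p(3+\underline{u})/\underline{u}^{q}$ from Proposition \ref{Corgoodnull2}; the improved estimate $|\nabla_L(r\underline{\alpha}(G))| \lesssim \log^p/\underline{u}^{1+q}$ from \eqref{eq:nablaLralaphabarre}, which will annihilate the $\partial_{\underline{u}}(r\underline{\alpha}(G))$-type contributions; the pointwise decay $|\rho(G)| + |\sigma(G)| \lesssim \log^p/\underline{u}^{1+q}$ assumed in \eqref{eq:assump0}; and the uniform bound $|rG| \lesssim C[G]$, which controls the remaining frame-component errors. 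Since $\partial_u$ and the angular derivatives commute with the distributional limit, identifying $\underline{\alpha}^{\mathcal{I}^+}_Z$ reduces to an algebraic extraction of $r$-factors in $(u,\underline{u},\omega)$ coordinates, where $r\partial_u = \partial_u(r\,\cdot\,) + 1/2$ and $r\partial_{\underline{u}} = \partial_{\underline{u}}(r\,\cdot\,) - 1/2$.

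The case $Z = \partial_t$ is immediate: the commutators vanish and $r\partial_t \underline{\alpha}(G)_{e_A} = \partial_u(r\underline{\alpha}(G)_{e_A}) + \partial_{\underline{u}}(r\underline{\alpha}(G)_{e_A})$; the first term converges to $\partial_u \underline{\alpha}^{\mathcal{I}^+}_{e_A}$ and the second vanishes by \eqref{eq:nablaLralaphabarre}. Spatial translations are handled via the decomposition $\partial_{x^i} = \omega_i\partial_r + \omega_i^A e_A$: the radial part reproduces the previous analysis and yields $-\omega_i \partial_u \underline{\alpha}^{\mathcal{I}^+}$, while the angular part combined with the $O(1/r)$ corrections from $[\partial_{x^i}, e_A]$ and $[\partial_{x^i}, \underline{L}]$ produces, after rearrangement, only terms of the form $r\sigma(G)$, $r\alpha(G)$ or the vanishing piece $F = O(1/r)$, all of which disappear in the limit. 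For the scaling $S = u\partial_u + \underline{u}\partial_{\underline{u}}$, the relations $[S,\underline{L}] = -\underline{L}$ and $[S,e_A] = -e_A$ give $\underline{\alpha}(\mathcal{L}_S G)_{e_A} = S(\underline{\alpha}(G)_{e_A}) + 2\underline{\alpha}(G)_{e_A}$; extracting $r$ from $rS$ and using $\underline{u}\,\underline{\alpha}(G)_{e_A} \to 2\,\underline{\alpha}^{\mathcal{I}^+}_{e_A}$ (which follows from $\underline{u}/r \to 2$ and the rate of Proposition \ref{Corgoodnull2}) assembles the limit as $u\nabla_u \underline{\alpha}^{\mathcal{I}^+} + \underline{\alpha}^{\mathcal{I}^+}$. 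The rotations $\Omega_{jk}$ are tangent to the spheres, commute with $\underline{L}$ and with multiplication by $r$; hence one obtains directly $\underline{\alpha}^{\mathcal{I}^+}_{\Omega_{jk}} = \mathcal{L}_{\Omega_{jk}}\underline{\alpha}^{\mathcal{I}^+}$.

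The Lorentz boost $\Omega_{0i}$ is the main difficulty. Using the null-frame decomposition
\begin{equation*}
\Omega_{0i} = \tfrac{\underline{u}}{2}\omega_i L - \tfrac{u}{2}\omega_i \underline{L} + t\,\omega_i^A e_A,
\end{equation*}
a direct computation gives $[\Omega_{0i}, \underline{L}] = \omega_i \underline{L} - (1+t/r)\omega_i^A e_A$, and a similar expression holds for $[\Omega_{0i}, e_A]$. Applying the identity above, the $-\tfrac{u}{2}\omega_i\underline{L}$ piece produces $-\omega_i u \nabla_u \underline{\alpha}^{\mathcal{I}^+}$, while the a priori leading $\tfrac{\underline{u}}{2}\omega_i L$ piece reorganises as $\omega_i \underline{u}\, \partial_{\underline{u}}(r\underline{\alpha}(G)_{e_A}) - \tfrac{\omega_i}{2}\underline{u}\,\underline{\alpha}(G)_{e_A}$, the first half vanishing by \eqref{eq:nablaLralaphabarre} and the second converging to $-\omega_i \underline{\alpha}^{\mathcal{I}^+}_{e_A}$; the commutator contribution $-G(e_A,[\Omega_{0i},\underline{L}])$ produces an additional $-\omega_i \underline{\alpha}^{\mathcal{I}^+}_{e_A}$ (the piece pairing with $\sigma(G)$ dies by the assumed decay on $\sigma(G)$); finally the angular piece $t\omega_i^A e_A$, together with the sphere-tangential corrections from $[\Omega_{0i}, e_A]$, assembles into $\omega_i^A \slashed{\nabla}_{e_A} \underline{\alpha}^{\mathcal{I}^+}$. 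The main obstacle is precisely the bookkeeping required to show that all naively $\underline{u}$-unbounded contributions either cancel algebraically or vanish through \eqref{eq:nablaLralaphabarre}, and that the finite remainders combine with the correct numerical coefficients $(-u, -2, +\omega_i^A)$; this is exactly the place where the additional hypothesis $|rG|\lesssim C[G]$ is needed, to control the frame-commutator pieces that are not directly of good-null-component type.
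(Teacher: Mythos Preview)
Your proposal is correct and follows essentially the same route as the paper: the Cartan identity $(\mathcal{L}_Z G)(e_A,\underline{L}) = Z(G(e_A,\underline{L})) - G([Z,e_A],\underline{L}) - G(e_A,[Z,\underline{L}])$, the null-frame decomposition of each $Z\in\mathbb{K}$, and the same analytic inputs (the rate from Proposition~\ref{Corgoodnull2}, the bound \eqref{eq:nablaLralaphabarre} on $\nabla_L(r\underline{\alpha})$, the decay of $\rho,\sigma$, and the uniform bound $|rG|\le C[G]$) to pass to the limit term by term. The paper organises things slightly differently---it first isolates the needed weak convergences as a separate lemma (Lemma~\ref{Lemweakconv}) and works on the angular patch $\{\sin\theta>1/8\}$ to avoid the degeneracy of spherical coordinates, covering the full sphere by a second chart---but your bookkeeping for $\Omega_{0i}$ (the only genuinely delicate case) matches the paper's line by line, including the two separate $-\omega_i\underline{\alpha}^{\mathcal{I}^+}$ contributions and the vanishing of the $\underline{u}\,\rho(G)$ and $\underline{u}\,\sigma(G)$ terms.
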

This result is proved in Appendix \ref{SecB}.

\subsection{Scattering map for the vacuum Maxwell equations}\label{subsecscatMax}

Before starting the construction of the forward map for the homogeneous Maxwell equations, we introduce two functional spaces adapted to our problem. The first one contains the initial electromagnetic fields which are in $L^2$ and the second one contains the scattering states which belong to $L^2$. For a smooth solution $F$ to \eqref{Maxvac}, this state will be the radiation field of $\underline{\alpha}(F)$. Note that the electromagnetic fields considered in this subsection \ref{subsecscatMax} will be denoted by $F$. Since, we will only consider solutions to the homogeneous Maxwell equations here, there is no risk of confusion with the electromagnetic field of the plasma considered in the remainder of the article.
\begin{Def}
Let $\mathcal{E}_{\{t=0 \}}$ be the set containing all the $2$-form on $\R^{1+3}$ which does not depend on $t$ and which are in $L^2(\R^3)$. Equipped with the norm
$$\|F_0\|^2_{\mathcal{E}_{\{t=0 \}}} := \int_{\R^3_x} \left( |\alpha(F_0)|^2+|\underline{\alpha}(F_0)|^2+2|\rho(F_0)|^2+2|\sigma(F_0)|^2 \right)(x) \dr x,$$
$\mathcal{E}_{\{t=0 \}}$ is a Hilbert space.

We define $\mathcal{E}_{\mathcal{I}^+}$ as the set of the $1$-form on $\R_u \times \mathbb{S}^2$, which are tangential to the $2$-spheres and in $L^2$. For
$$ \| \underline{\alpha}^{\mathcal{I}^+} \|^2_{\mathcal{I}^+} := \int_{\R_u} \int_{\mathbb{S}^2_{\omega}} |\underline{\alpha}^{\mathcal{I}^+}|^2(u,\omega) \dr \mu_{\mathbb{S}^2} \dr u ,$$
$(\mathcal{E}_{\mathcal{I}^+}, \| \cdot \|_{\mathcal{I}^+})$ is a Hilbert space.
\end{Def}
We now state the two main results of this section.
\begin{Th}\label{Thscat}
The linear map
\begin{array}[t]{lrcl}
&\mathscr{F}^+ : & \mathcal{E}_{\{t=0\}} \cap C_c^{\infty}& \longrightarrow  \mathcal{E}_{\mathcal{I}^+} \\
 &   & F_0 & \longmapsto  \lim_{\underline{u} \to +\infty} r\underline{\alpha}(F)(u,\underline{u},\omega), 
\end{array}

\noindent where $F$ is the unique solution to the vacuum Maxwell equations \eqref{Maxvac} such that $F(0,\cdot)=F_0$, is well-defined and preserves the norm $\|F_0\|_{\mathcal{E}_{\{t=0 \}}} =\| \mathscr{F}^+(F_0) \|_{\mathcal{I}^+}$. 

Moreover, this forward map can be uniquely extended in a bijective isometry $\mathscr{F}^+ : \mathcal{E}_{\{t=0\}} \rightarrow \mathcal{E}_{\mathcal{I}^+}$.
\end{Th}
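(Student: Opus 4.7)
The plan is as follows. The first step will be to check that $\mathscr{F}^+$ is well-defined on smooth compactly supported initial data. For $F_0 \in \mathcal{E}_{\{t=0\}} \cap C_c^{\infty}$, the unique global solution $F$ to \eqref{Maxvac} is smooth, and, in Cartesian coordinates, each component $F_{\mu \nu}$ satisfies $\Box F_{\mu \nu}=0$. Proposition \ref{decaylinMax} then yields strong pointwise decay for $F$ and, via the commutation of the $Z \in \mathbb{K}$ with $\Box$, for all the Lie derivatives $\mathcal{L}_{Z^\gamma}F$, so that the hypotheses of Corollary \ref{Corgoodnull2} are satisfied with arbitrarily large exponent $q$. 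This gives the existence of $\mathscr{F}^+(F_0)(u,\omega) = \lim_{\underline{u} \to +\infty} r\underline{\alpha}(F)(u,\underline{u},\omega)$; Huygens' principle further ensures that the limit has compact support in $(u,\omega)$, so that $\mathscr{F}^+(F_0) \in \mathcal{E}_{\mathcal{I}^+}$.

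The isometry $\|F_0\|_{\mathcal{E}_{\{t=0\}}} = \|\mathscr{F}^+(F_0)\|_{\mathcal{I}^+}$ on this dense subspace will be derived from the conservation of the stress--energy tensor $\mathbb{T}[F]$ of Definition \ref{Defenergytensor}. Since $F$ solves the vacuum Maxwell equations, $\nabla^\mu \mathbb{T}[F]_{\mu \nu}=0$, and the vector field $P^\mu := \mathbb{T}[F]^{\mu \nu}(\partial_t)_\nu$ is divergence-free. Integrating $\nabla_\mu P^\mu = 0$ on the spacetime region bounded by $\{t=0\}$, by $\underline{C}_{\underline{u}}$, and by a far-away spacelike cap, and using the null-frame identities of Definition \ref{Defenergytensor}, I obtain, up to fixed compatible positive constants,
\[
\int_{\R^3}\bigl(|\alpha|^2+|\underline{\alpha}|^2+2|\rho|^2+2|\sigma|^2\bigr)(0,x)\,\dr x \;=\; \int_{\underline{C}_{\underline{u}}}\bigl(|\underline{\alpha}|^2+|\rho|^2+|\sigma|^2\bigr)(F)\, r^2 \, \dr u \, \dr \mu_{\mathbb{S}^2} \;+\; (\text{far flux}).
\]
The far flux vanishes as the cap is sent to spatial infinity because of Huygens, and the $\rho$ and $\sigma$ contributions on $\underline{C}_{\underline{u}}$ vanish as $\underline{u} \to +\infty$ thanks to the improved decay $|r^2 \rho|+|r^2 \sigma| \lesssim \log(\underline{u})/\underline{u}$ provided by Corollary \ref{Corgoodnull}; dominated convergence of $r\underline{\alpha}(\cdot,\underline{u},\cdot)$ towards $\mathscr{F}^+(F_0)$ then yields the equality of norms. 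Thus $\mathscr{F}^+$ is a linear isometry from the dense subspace $\mathcal{E}_{\{t=0\}} \cap C_c^{\infty}$ of $\mathcal{E}_{\{t=0\}}$ into the complete space $\mathcal{E}_{\mathcal{I}^+}$ and it extends uniquely by continuity to a linear isometry $\mathscr{F}^+ : \mathcal{E}_{\{t=0\}} \rightarrow \mathcal{E}_{\mathcal{I}^+}$, whose image is automatically closed.

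The main obstacle will be surjectivity. Since the image is closed, it is enough to build preimages for a dense class of scattering data, for instance for $\underline{\alpha}^{\mathcal{I}^+} \in C_c^{\infty}(\R_u \times \mathbb{S}^2)$. The natural strategy, echoing the construction of $F^{\mathrm{vac}}$ announced in Theorem \ref{Th2}, is to pass through the Lorenz gauge: any smooth vacuum solution $F$ can be written as $F=\mathrm{d}A$ with $\nabla^\mu A_\mu=0$, which forces $\Box A_\mu = 0$ componentwise. Given $\underline{\alpha}^{\mathcal{I}^+}$, I will associate to it a compatible family of scalar radiation fields for a potential $A$, apply the backward scattering map for the homogeneous wave equation of \cite{SchlueLindblad} to produce each $A_\mu$ in Lorenz gauge, and set $F := \mathrm{d}A$. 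The identities in Proposition \ref{derivscatMax}, together with the commutation $\underline{L}(rA_{e_B}) \to $ (suitable derivative of the wave radiation field), will then let me verify that the radiation field of $\underline{\alpha}(F)$ equals the prescribed $\underline{\alpha}^{\mathcal{I}^+}$, thereby concluding the proof. The delicate point will be to produce the data $A_\mu^{\mathcal{I}^+}$ in a way that is both consistent with the gauge condition and recovers exactly the prescribed $\underline{\alpha}^{\mathcal{I}^+}$; this will require integrating the relation $\underline{\alpha}_{e_B} = -\nabla_{\underline{L}}A_{e_B} + \slashed{\nabla}_{e_B} A_{\underline{L}}$ along $u$ and working with the angular Hodge decomposition of the $1$-form $\underline{\alpha}^{\mathcal{I}^+}$ on $\mathbb{S}^2$.
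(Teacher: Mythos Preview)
Your argument for well-definedness and the isometry on $\mathcal{E}_{\{t=0\}}\cap C_c^\infty$, followed by extension by density, is essentially the same as the paper's (Lemma \ref{estiweighednorm}): pointwise decay of $\mathcal{L}_{Z^\gamma}F$, Corollary \ref{Corgoodnull2} for the existence of $\underline{\alpha}^{\mathcal{I}^+}$, Huygens for compact $u$-support, and the divergence theorem applied to $\mathbb{T}[F]_{\mu 0}$ on $\{t+|x|\le \underline u\}$ to pass to the limit.

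Where you diverge from the paper is in the surjectivity step, and this is also where your argument has a genuine unresolved gap. You propose to work at the level of a Lorenz-gauge potential $A$: reconstruct radiation data $A_\mu^{\mathcal{I}^+}$ compatible with the prescribed $\underline\alpha^{\mathcal{I}^+}$, invoke the backward scattering of \cite{SchlueLindblad} for $\Box A_\mu=0$, and set $F=\mathrm{d}A$. As you yourself flag, this forces you to integrate $\underline\alpha_{e_B}=-\nabla_{\underline L}A_{e_B}+\slashed\nabla_{e_B}A_{\underline L}$ in $u$, perform an angular Hodge decomposition on $\mathbb{S}^2$, and simultaneously arrange the Lorenz condition at $\mathcal{I}^+$; none of these steps is carried out, and the compatibility between the gauge constraint and the $u$-primitive (including boundary conditions at $u\to\pm\infty$) is not obvious.

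The paper sidesteps all of this by applying the wave backward scattering \emph{directly to the Cartesian components $F_{\mu\nu}$}, which also satisfy $\Box F_{\mu\nu}=0$. One first reads off from the null-frame transition matrix what the radiation field of each $F_{\mu\nu}$ must be, namely
\[
\mathscr{R}(F_{\mu\nu})=-\tfrac12\big(\omega_\mu^{e_A}\omega_\nu-\omega_\mu\,\omega_\nu^{e_A}\big)\,\underline\alpha^{\mathcal{I}^+}_{e_A},
\]
then uses Lemma \ref{LemVolker} to produce each $F_{\mu\nu}$. The only thing left to check is that the resulting $2$-form actually solves the vacuum Maxwell equations; since $\partial^\mu F_{\mu\nu}$ and $\partial_{[\lambda}F_{\mu\nu]}$ are again solutions of $\Box=0$, it suffices to verify that their radiation fields vanish, which is a short algebraic computation using $\mathscr{R}(\partial_\lambda F_{\mu\nu})=-\omega_\lambda\partial_u\mathscr{R}(F_{\mu\nu})$ (Lemma \ref{Fried}). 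This avoids potentials, gauges, and Hodge decompositions entirely, and immediately gives $\mathscr{F}^+(F(0,\cdot))=\underline\alpha^{\mathcal{I}^+}$. Your detour through $A$ is not wrong in principle, but it trades a two-line algebraic check for an analysis you have not completed.
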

\begin{Rq}
When $F_0 \notin C_c^{\infty}$ but is still sufficiently regular, then $\mathscr{F}^+(F_0)$ is also given by the formula written in Theorem \ref{Thscat}. Otherwise, $\mathscr{F}^+(F_0)$ can still be interpreted, in a weak sense, as the radiation field of $\underline{\alpha}(F)$, with $F$ the solution to \eqref{Maxvac} arising from the data $F_0$ (see Lemma \ref{estiweighednorm} below).
\end{Rq}
The proof will in particular rely on the following result, which is also important in itself. It provides precise estimates for solutions arising from the preimage by $\mathscr{F}^+$ of smooth elements of $\mathcal{E}_{\mathcal{I}^+}$.
\begin{Pro}\label{ProscatMax}
Let $0<a<1/2$, $N \in \mathbb{N}$ and $\underline{\alpha}^{\mathcal{I}^+} \in \mathcal{E}_{\mathcal{I}^+}$ be a sufficiently regular scattering state. Then, the unique solution $F$ to the vacuum Maxwell equations \eqref{Maxvac} satisfying $\mathscr{F}^+(F)=\underline{\alpha}^{\mathcal{I}^+}$ verifies, for any $0\leq q-\frac{1}{2} < a$,
$$  \sum_{|\gamma| \leq N } \| \langle t-r \rangle^{q-\frac{1}{2}} \left| \mathcal{L}_{Z^{\gamma}} F \right|(t,\cdot) \|^2_{L^2_x} \lesssim C[\underline{\alpha}^{\mathcal{I}^+}]:=\sum_{n_1+n_2+n_3 \leq N+3}\int_{\R_u} \! \int_{\mathbb{S}^2_{\omega}} \langle u \rangle^{2a+2n_1} \! \left|\nabla^{n_1}_u \slashed{\nabla}^{n_2}_{e_\theta} \slashed{\nabla}^{n_3}_{e_\varphi} \underline{\alpha}^{\mathcal{I}^+}\right|^2 \! (u,\omega) \dr \mu_{\mathbb{S}^2} \dr u$$
for all $t \in \R_+$. In particular, if $N \geq 4$, we have for any $|\gamma| \leq N-3$ and $|\xi| \leq N-4$,
\begin{align*}
\forall \; (t,x) \in \R_+ \times \R^3, \qquad \quad \left( |\alpha(\mathcal{L}_{Z^{\gamma}}F)|+|\rho(\mathcal{L}_{Z^{\gamma}}F)|+|\sigma(\mathcal{L}_{Z^{\gamma}}F)|\right)(t,x) & \leq \frac{C}{(1+t+|x|)^{1+q}} ,\\
\left|r\underline{\alpha}(\mathcal{L}_{Z^{\xi}}F)(t,x) - \mathscr{F}^+(\mathcal{L}_{Z^{\xi}}F(0,\cdot))  \left( t-|x|,\frac{x}{|x|} \right) \right| & \leq \frac{C}{(1+t+|x|)^q},
\end{align*}
where the constant $C$ depends only on $C[\underline{\alpha}^{\mathcal{I}^+}]$ and $q$.
\end{Pro}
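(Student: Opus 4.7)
The plan is to first establish the weighted $L^2$ bound by a vector field method applied to the commuted Maxwell system, and then deduce the pointwise estimates from it together with the null transport equations of Lemma \ref{improderiv0} and Proposition \ref{Corgoodnull2}. Existence and uniqueness of $F$ are granted by Theorem \ref{Thscat}; the whole content of the proposition is quantitative.

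The first step is to set up the commuted problem. For any $Z \in \mathbb{K}$, Lemma \ref{LemCom} implies that $\mathcal{L}_{Z^\gamma} F$ still solves the vacuum Maxwell equations \eqref{Maxvac} (the scaling $S$ contributes only a $3 J(0) = 0$ inhomogeneity). Proposition \ref{derivscatMax} then identifies the radiation field of $\underline{\alpha}(\mathcal{L}_{Z^\gamma} F)$ as an explicit linear combination of $\nabla_u^{n_1} \slashed{\nabla}^{n_2+n_3} \underline{\alpha}^{\mathcal{I}^+}$ with angular coefficients and at most one factor of $u$, whose weighted $L^2$ norms are bounded by $C[\underline{\alpha}^{\mathcal{I}^+}]$. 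It therefore suffices to prove the weighted $L^2$ estimate simultaneously for all $|\gamma| \leq N$, each $\mathcal{L}_{Z^\gamma}F$ being a solution of \eqref{Maxvac} with a known scattering state.

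The core of the argument is a weighted energy estimate with multiplier $X_q := \langle u \rangle^{2q-1} \underline{L}$, which is null, future-directed and independent of the derivative order. Integrating the divergence identity $\nabla^\mu(\mathbb{T}[\mathcal{L}_{Z^\gamma}F]_{\mu\nu} X_q^\nu) = \tfrac{1}{2}\mathbb{T}[\mathcal{L}_{Z^\gamma}F]^{\mu\nu}(\pi_{X_q})_{\mu\nu}$ over the region bounded by $\{t=0\}$ and a backward cone $\underline{C}_{\underline{u}_n}$ with $\underline{u}_n \to +\infty$, the boundary flux at $t=0$ controls $\int_{\R^3}\langle r\rangle^{2q-1}(|\alpha|^2 + 2|\rho|^2 + 2|\sigma|^2)(0,\cdot)\,dx$, while the limiting boundary flux along $\mathcal{I}^+$ produces $\tfrac{1}{4}\int \langle u \rangle^{2q-1} |\underline{\alpha}^{\mathcal{I}^+}_{Z^\gamma}|^2\,\dr u\, \dr \mu_{\mathbb{S}^2}$. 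A direct computation shows that $(\pi_{X_q})_{\mu\nu}$ produces a bulk of the schematic form $\langle u \rangle^{2q-2} T_{LL}$ multiplied by a nonnegative function (using $u(2q-1)/\langle u \rangle^2 \leq 0$ on the relevant sign region and absorbing the rest thanks to $q - 1/2 < a < 1/2$); in particular it has a favorable sign up to terms absorbable by a Grönwall argument in $\underline{u}$. A time-translation argument then transfers the bound from $t=0$ to arbitrary $t \in \R_+$, yielding
\[
\sum_{|\gamma|\leq N}\bigl\|\langle t-r\rangle^{q-1/2}|\mathcal{L}_{Z^\gamma}F|(t,\cdot)\bigr\|_{L^2_x}^2 \lesssim C[\underline{\alpha}^{\mathcal{I}^+}].
\]

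From the weighted $L^2$ bound, pointwise estimates follow via a Klainerman--Sobolev inequality on the $L^2$-level (the commutators $\Omega_{jk}$ control angular regularity, $\underline{L}, L$ provide two extra derivatives), giving $|\mathcal{L}_{Z^\gamma}F|(t,x) \lesssim (1+t+|x|)^{-1}(1+|t-|x||)^{-q}$ for $|\gamma|\leq N-2$. Corollary \ref{Corgoodnull}, applied with $J=0$, then upgrades this bound for the good null components of $\mathcal{L}_{Z^\gamma}F$, $|\gamma|\leq N-3$, to the announced $(1+t+|x|)^{-1-q}$ decay. Plugging these bounds into Proposition \ref{Corgoodnull2}, applied to $\mathcal{L}_{Z^\xi}F$ (which satisfies its hypotheses with $p=0$ once the improved decay on $\rho, \sigma$ of $\mathcal{L}_Z\mathcal{L}_{Z^\xi}F$ is available, whence the restriction $|\xi|\leq N-4$), finally yields the quantitative convergence to the radiation field.

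The main obstacle is the bulk term produced by the deformation tensor $\pi_{X_q}$: choosing the weight exponent $2q-1$ is dictated by $q-1/2 < a$ on the data side, and one has to check that this choice keeps the bulk either nonnegative or of sufficiently low order to be absorbed uniformly in the approximation parameter $\underline{u}_n$. A secondary, but easier, technical point is to match exactly the explicit formulas of Proposition \ref{derivscatMax} with the flux contributions at $\mathcal{I}^+$ so that the full $C[\underline{\alpha}^{\mathcal{I}^+}]$ is recovered when commuting with all $Z \in \mathbb{K}$.
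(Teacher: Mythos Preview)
Your approach is genuinely different from the paper's, and it has a real logical gap.

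\textbf{Circularity.} You write that ``existence and uniqueness of $F$ are granted by Theorem \ref{Thscat}''. But look at the paper's structure: the surjectivity half of Theorem \ref{Thscat} is proved \emph{by} establishing Proposition \ref{ProscatMax} (``To conclude the proof of Theorem \ref{Thscat}, it remains us to show that $\mathscr{F}^+$ is surjective. For this, it suffices to prove Proposition \ref{ProscatMax}''). So you cannot invoke Theorem \ref{Thscat} to get existence of $F$; constructing $F$ is part of what the proposition is for.

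\textbf{What the paper does instead.} The paper does not run a weighted energy estimate on Maxwell at all. It reduces to the scalar wave equation: each Cartesian component $F_{\mu\nu}$ must solve $\Box F_{\mu\nu}=0$, and its radiation field is an explicit angular multiple of $\underline{\alpha}^{\mathcal{I}^+}$ (equation \eqref{defradfield}). Lemma \ref{LemVolker}, which is \cite[Theorem 1.1]{SchlueLindblad} quoted as a black box, then \emph{constructs} each $F_{\mu\nu}$ with the desired weighted $L^2$ control. The remaining work is linear algebra: one checks that the six scalar solutions assemble into a $2$-form solving \eqref{Maxvac}, by showing (via Lemma \ref{Fried}) that the radiation fields of $\partial^\mu F_{\mu\nu}$ and $\partial_{[\lambda}F_{\mu\nu]}$ vanish, whence these quantities vanish identically by uniqueness for $\Box$. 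The pointwise step then matches yours: weighted Klainerman--Sobolev (here \cite[Lemma 3.3]{SchlueLindblad}) followed by Corollary \ref{Corgoodnull} and Proposition \ref{Corgoodnull2}.

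\textbf{On your energy estimate.} Even setting aside the circularity, the multiplier computation is only sketched. The deformation tensor of $\langle u\rangle^{2q-1}\underline{L}$ has an angular piece from $\nabla\underline{L}$ (of size $r^{-1}$) in addition to the $u$-weight derivative, and the contraction with $\mathbb{T}[F]$ produces several null components, not only $T_{LL}$. For a \emph{backward} problem (controlling $t=0$ by $\mathcal{I}^+$), the sign that is ``good'' for forward $r^p$-type estimates is typically bad, and your appeal to ``a Gr\"onwall argument in $\underline{u}$'' with $\underline{u}\to\infty$ is exactly where such arguments fail. None of this is checked. If you want to pursue a direct Maxwell energy argument, you would need to design the multiplier so that the bulk has a definite sign in the backward direction and so that the $\{t=0\}$ flux actually dominates $\langle r\rangle^{2q-1}|F|^2$ (your multiplier gives only $|\underline{\alpha}|^2+|\rho|^2+|\sigma|^2$ on $\{t=0\}$, missing $|\alpha|^2$). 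The paper sidesteps all of this by importing the scalar result.
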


We start by proving that $\mathscr{F}^+$ is well-defined for sufficiently regular electromagnetic field, including those arising from smooth compactly supported data.
\begin{Lem}\label{estiweighednorm}
The linear map $\mathscr{F}^+$ introduced in Theorem \ref{Thscat} is well-defined and extends in an injective isometry from $\mathcal{E}_{\{t=0\}}$ to $\mathcal{E}_{\mathcal{I}^+}$. Moreover, if $F$ is a solution to the free Maxwell equations \eqref{Maxvac} such that
\begin{equation}\label{hypo}
 C_F:= \sum_{|\gamma| \leq 4} \left\|  \mathcal{L}_{Z^{\gamma}}F(0,\cdot) \right\|_{\{t=0\}} < +\infty,
 \end{equation}
then, $\underline{\alpha}(F)$ has a continuous radiation field $\mathscr{F}^+(F(0,\cdot))$ and
\begin{align}
\forall \; (t,x) \in \R_+ \times \R^3, \quad \qquad \qquad \left( |\alpha(F)|+|\rho(F)|+|\sigma(F)|\right)(t,x) & \lesssim C_F(1+t+|x|)^{-\frac{3}{2}} ,\label{esti1good}\\
\left|r\underline{\alpha}(F)(t,x) - \mathscr{F}^+(F(0,\cdot))  \left( t-|x|,\frac{x}{|x|} \right) \right| & \lesssim C_F(1+t+|x|)^{-\frac{1}{2}}. \label{esti2badnull}
\end{align}
This implies that the radiation fields of $\alpha(F)$, $\rho(F)$ and $\sigma(F)$ vanish.

Finally, if $F$ is a mildly regular solution to \eqref{Maxvac} such that $F(0,\cdot) \in \mathcal{E}_{\{t=0\}}$, then $r\underline{\alpha}(F)$ converges to $\mathscr{F}^+(F(0,\cdot))$, as $\underline{u} \to +\infty$, in the space of distributions $\mathcal{D}'(\R_u \times  \mathbb{S}^2)$.
\end{Lem}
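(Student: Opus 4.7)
The plan proceeds in three main stages. First, I use commutation with $\mathbb{K}$ and a Klainerman--Sobolev inequality to obtain pointwise decay on $F$ and its derivatives, which in turn gives existence of the radiation field with the sharp rate \eqref{esti2badnull}. Second, I derive the isometry on the dense subspace of smooth, compactly supported (constraint-satisfying) data from the energy identity applied on a truncated backward light cone. Third, I extend the isometry by density and deduce the distributional convergence from it.

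For the first stage, Lemma \ref{LemCom} with $g=0$ ensures that $\mathcal{L}_{Z^\gamma}F$ again solves \eqref{Maxvac} for every multi-index $\gamma$; the standard energy identity for the divergence-free current $\mathbb{T}[F](\cdot,\partial_t)$ (Definition \ref{Defenergytensor}) then conserves $\sum_{|\gamma|\leq 4}\|\mathcal{L}_{Z^\gamma}F(t,\cdot)\|_{\{t=0\}}$, bounding it by $C_F$ for all $t$. A Klainerman--Sobolev inequality applied componentwise, together with \eqref{equinorm}, yields
\begin{equation*}
\sum_{|\gamma|\leq 2}\left|\mathcal{L}_{Z^\gamma}F\right|(t,x) \lesssim C_F(1+t+|x|)^{-1}(1+|t-|x||)^{-1/2},
\end{equation*}
which feeds, with $q=1/2$, into Corollary \ref{Corgoodnull} (vacuum case) to prove \eqref{esti1good}; applying the same chain to $\mathcal{L}_Z F$ consumes all four derivatives encoded in $C_F$. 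Combining with the second estimate of Lemma \ref{improderiv0} specialized to $J=0$ gives $|\nabla_L(r\underline{\alpha})|(t,x) \lesssim C_F(1+t+|x|)^{-3/2}$. Since $L=2\partial_{\underline{u}}$, integrating along outgoing null rays $\underline{u}\mapsto(u,\underline{u},\omega)$ produces a Cauchy family whose limit is $\mathscr{F}^+(F(0,\cdot))(u,\omega)$, continuous as a uniform limit of continuous functions, together with the rate \eqref{esti2badnull}; the vanishing of the radiation fields of $\alpha$, $\rho$, $\sigma$ then follows from \eqref{esti1good}.

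For the second stage, apply the divergence theorem to $\mathbb{T}[F](\cdot,\partial_t)$ on $R_{\underline{u}_0}:=\{t\geq 0,\,t+r\leq \underline{u}_0\}$. The $\{t=0\}$ face gives $\int \mathbb{T}[F](\partial_t,\partial_t)(0,x)\,dx = \tfrac14\|F(0,\cdot)\|^2_{\mathcal{E}_{\{t=0\}}}$, while the null boundary $\underline{C}_{\underline{u}_0}$, with normal $\underline{L}$ and measure $\tfrac{r^2}{2}du\,d\mu_{\mathbb{S}^2}$ from Section \ref{subsec2}, contributes $\tfrac14\int r^2(|\underline{\alpha}|^2+|\rho|^2+|\sigma|^2)\,du\,d\mu_{\mathbb{S}^2}$ (using $\mathbb{T}[F](\underline{L},\partial_t)=\tfrac12(|\underline{\alpha}|^2+|\rho|^2+|\sigma|^2)$). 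For $F_0\in C_c^\infty$, Huygens' principle confines the solution to a strip $|u|\leq R_0$ on every cone; on this strip \eqref{esti1good} forces $\int r^2(|\rho|^2+|\sigma|^2)\,du\,d\mu_{\mathbb{S}^2}\to 0$, while \eqref{esti2badnull} supplies both pointwise convergence $r\underline{\alpha}\to\mathscr{F}^+(F_0)$ and a uniform dominating function so that dominated convergence yields $\int r^2|\underline{\alpha}|^2\,du\,d\mu_{\mathbb{S}^2}\to\|\mathscr{F}^+(F_0)\|^2_{\mathcal{I}^+}$. This proves $\|F_0\|_{\mathcal{E}_{\{t=0\}}}=\|\mathscr{F}^+(F_0)\|_{\mathcal{I}^+}$ on the dense subspace.

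Finally, the isometry extends uniquely and injectively to $\mathcal{E}_{\{t=0\}}$. For $F(0,\cdot)\in\mathcal{E}_{\{t=0\}}$ only, approximate by $F_{0,n}\in C_c^\infty$ and split $F=F_n+(F-F_n)$: the same identity applied to $F-F_n$ bounds $\int r^2|\underline{\alpha}(F-F_n)|^2\,du\,d\mu_{\mathbb{S}^2}$ uniformly in $\underline{u}_0$ by $\|F(0,\cdot)-F_{0,n}\|^2_{\mathcal{E}_{\{t=0\}}}$, while \eqref{esti2badnull} provides uniform convergence of $r\underline{\alpha}(F_n)$ to $\mathscr{F}^+(F_{0,n})$; pairing against any test function in $\mathcal{D}(\R_u\times\mathbb{S}^2)$ and passing to the limit first in $\underline{u}_0$ and then in $n$ concludes. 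The main obstacle will be the second stage: the careful setup of the null Stokes identity and the limiting argument, especially ensuring the normalization of $d\mu_{\underline{C}_{\underline{u}}}$ matches the definition of $\mathcal{E}_{\mathcal{I}^+}$ without stray factors, and justifying the dominated convergence for the $r^2|\underline{\alpha}|^2$ integral uniformly in $\underline{u}_0$.
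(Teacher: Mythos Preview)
Your proof is correct and follows essentially the same approach as the paper: energy conservation via $\mathbb{T}[F]$ plus Klainerman--Sobolev to obtain pointwise decay, then Corollary~\ref{Corgoodnull} and Lemma~\ref{improderiv0} for \eqref{esti1good}--\eqref{esti2badnull}, followed by the null energy identity on $\{t+|x|\leq\underline{u}\}$ combined with Huygens' principle for the isometry on $C_c^\infty$ data, and finally the same density/splitting argument for the distributional convergence. The only cosmetic differences are that the paper invokes Proposition~\ref{Corgoodnull2} (whose proof is exactly your integration of $\nabla_L(r\underline{\alpha})$) and uses the triangle inequality in $L^2$ rather than dominated convergence for the limit of $\int r^2|\underline{\alpha}|^2$, but these are equivalent here.
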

\begin{proof}
Recall from Definition \ref{Defenergytensor} the energy momentum tensor $\mathbb{T}[F]_{\mu \nu}$, its principal null components and that $\nabla^{\mu} \mathbb{T}[F]_{\mu 0}=0$. For any $t >0$, the divergence theorem, applied to $\mathbb{T}[F]_{\mu 0}$ in the domain $\{(s,x) \in \R^{1+3} \; | \; 0 \leq s \leq t \}$, gives
$$ \| F(0, \cdot) \|_{\{ t=0 \}} = 4\int_{\R^3_x} \mathbb{T}[F]_{00}(0,x) \dr x= 4\int_{\R^3_x} \mathbb{T}[F]_{00}(t,x) \dr x =2 \sum_{0 \leq \mu , \nu \leq 3}\int_{\R^3_x} |F_{\mu \nu}|^2(t,x) \dr x = 2\|F(t,\cdot) \|_{L^2_x}.$$
This also applies to $\mathcal{L}_{Z^{\gamma}}(F)$, for any $|\gamma| \leq 4$, since it is a solution to the free Maxwell equations \eqref{Maxvac} as well. In view of the equivalence of the pointwise norms \eqref{equinorm}, the standard Klainerman-Sobolev inequality (see for instance Theorem $1.3$ of \cite[Chapter~$II$]{Sogge}) yields, for any $|\gamma| \leq 2$,
\begin{equation}\label{decaytoremember}
\forall \; (t,x) \in \R_+\times \R^3_x, \qquad |\mathcal{L}_{Z^{\gamma}}F|(t,x)  \lesssim \sum_{|\beta| \leq 2+|\gamma|} \;\sum_{0 \leq \mu, \nu \leq 3} |Z^{\beta}(F_{\mu \nu})|(t,x) \lesssim \frac{C_F}{(1+t+|x|)(1+|t-|x||)^{\frac{1}{2}}}.
\end{equation}
Applying Corollary \ref{Corgoodnull} to $\mathcal{L}_{Z^{\xi}}F$, for any $|\xi| \leq 1$ and $q=1/2$, gives
$$ \forall \; |\xi| \leq 1, \; \; \forall \; (t,x) \in \R_+ \times \R^3, \qquad \qquad \left( |\alpha(\mathcal{L}_{Z^{\xi}}F)|+|\rho(\mathcal{L}_{Z^{\xi}}F)|+|\sigma(\mathcal{L}_{Z^{\xi}}F)|\right)(t,x)  \leq C_F(1+t+|x|)^{-\frac{3}{2}}.$$
The existence of the radiation field $\underline{\alpha}^{\mathcal{I}^+}$ of $\underline{\alpha}(F)$ and the rate of convergence given in the statement then follows from Corollary \ref{Corgoodnull2}. Since the convergence is uniform in $(u,\omega)$, $\underline{\alpha}^{\mathcal{I}^+}$ is continuous on $\R_u \times \mathbb{S}^2$. 

Before defining $\mathscr{F}^+$, we need to bound the $L^2$ norm of the radiation field. For this, we prove conservation laws which hold for any mildly regular solution $G$ to the free Maxwell equations \eqref{Maxvac}.

Fix $\underline{u} \geq 0$ and apply the divergence theorem to $\mathbb{T}[G]_{\mu 0}$, in the domain $\{ t+|x| \leq \underline{u} \}$, in order to get
\begin{flalign}\label{energyeq0}
&  \int_{\underline{C}_{\underline{u}}}\! \mathbb{T}[G]_{\underline{L}0} \dr \mu_{\underline{C}_{\underline{u}}}=\!\int_{|x| \leq \underline{u}}\! \mathbb{T}[G]_{00}(0,x) \dr x =\! \frac{1}{4}\! \int_{|x| \leq \underline{u}}\! \left( |\alpha(G)|^2+|\underline{\alpha}(G)|^2+2|\rho(F)|^2+2|\sigma(G)|^2 \right)\!(0,x) \dr x,&
  \end{flalign}
  where
  \begin{equation}\label{energyeq1} 
 \int_{\underline{C}_{\underline{u}}}\mathbb{T}[G]_{\underline{L}0} \dr \mu_{\underline{C}_{\underline{u}}} =\frac{1}{4} \int_{|u| \leq \underline{u}} \int_{\mathbb{S}^2_{\omega}} \left( |\underline{\alpha}(G)|^2+|\rho(G)|^2+|\sigma(G)|^2 \right)(u,\underline{u},\omega) r^2 \dr \mu_{\mathbb{S}^2} \dr u.
 \end{equation}
 
Assume now that $F_{\mu \nu}(0,\cdot) \in C_c^{\infty}(\R^3_x)$ for all $0 \leq \mu,\nu \leq 3$ and let us apply the previous equality to $F$. On the one hand, the right hand side of \eqref{energyeq0} converges to $1/4\|F(0,\cdot)\|^2_{\{t=0\}}$ as $\underline{u} \to +\infty$. On the other hand, we know from Huygens-Fresnel principle that there exists $U>0$ such that $F(t,x)=0$ for all $|t-|x||=|u|\geq U$. This implies that the domain of integration of the integrals in \eqref{energyeq1} is in fact included in $\{|u| \leq U \}$ for all $\underline{u} \geq 0$. The triangular inequality in $L^2$ together with the estimates \eqref{esti1good}-\eqref{esti2badnull} then lead to
 $$\int_{\underline{C}_{\underline{u}}}\mathbb{T}[F]_{\underline{L}0} \dr \mu_{\underline{C}_{\underline{u}}} \xrightarrow[ \underline{u} \to +\infty ]{} \frac{1}{4}\int_{|u| \leq U} \int_{\mathbb{S}^2_{\omega}} |\mathscr{F}^+(F(0,\cdot))|^2 \dr \mu_{\mathbb{S}^2} \dr u = \frac{1}{4}\| \underline{\alpha}^{\mathcal{I}^+} \|^2_{\mathcal{I}^+}  .$$
We can then define $\mathscr{F}^+ : \mathcal{E}_{\{t=0\}} \cap C_c^{\infty} \rightarrow \mathcal{E}_{\mathcal{I}^+}$, with $ \mathscr{F}^+(F(0,\cdot)):=\underline{\alpha}^{\mathcal{I}^+}$, and extends it to an injective isometry from $\mathcal{E}_{\{t=0\}}$ to $\mathcal{E}_{\mathcal{I}^+}$.

Consider now a, say, $C^1$ solution $F$ to \eqref{Maxvac} such that $F(0,\cdot) \in \mathcal{E}_{\{t=0\}}$. Fix $\psi \in C^{\infty}_c(\R_u \times \mathbb{S}^2)$ and $R>0$ satisfying $\mathrm{supp}(\psi) \subset [-R,R] \times \mathbb{S}^2$. Let further $(F_n)_{n \geq 0}$ be a sequence of smooth solutions to the vacuum Maxwell equations such that $F_n(0,\cdot)$ is compactly supported for any $n \in \mathbb{N}$ and $F_n(0,\cdot) \to F(0,\cdot)$ in $\mathcal{E}_{\{t=0\}}$. Fix $A \in \{\theta, \varphi \}$ and start by observing that
$$ \left| \left(r\underline{\alpha}(F)_{e_A}-\mathscr{F}^+(F(0,\cdot))_{e_A} \right) \psi \right| \! \lesssim \! \left(\left| r\underline{\alpha}(F)- r\underline{\alpha}(F_n)\right| \! + \! \left| r\underline{\alpha}(F_n)\!- \! \mathscr{F}^+(F_n(0,\cdot))\right|\! + \! \left| \mathscr{F}^+((F_n-F)(0,\cdot)) \right| \right) \! \mathds{1}_{|u| \leq R}.$$
Then, in order to prove $r\underline{\alpha}_{e_A} \rightarrow \mathscr{F}^+(F(0,\cdot))_{e_A}$ in $\mathcal{D}'(\R_u \times \mathbb{S}^2)$, as $\underline{u} \to +\infty$, it suffices to prove that the integral on $\R_u \times \mathbb{S}^2$ of each of the three terms on the right hand side converges to $0$ as $\underline{u} \to + \infty$. For this, consider $\epsilon >0$ and start by noticing that the energy equality \eqref{energyeq0}-\eqref{energyeq1}, applied to $F-F_n$, gives
$$\forall \; n \geq 0, \quad \forall \underline{u} \geq 0, \qquad  \int_{\R_u} \int_{\mathbb{S}^2_{\omega}}  |r\underline{\alpha}(F)-r\underline{\alpha}(F_n)|^2(u,\underline{u},\omega)  \dr \mu_{\mathbb{S}^2} \dr u \leq \| F(0,\cdot)-F_n(0,\cdot)\|^2_{\{t=0\}}.$$
According to \eqref{esti2badnull}, applied to $F_n$, there exists a contant $C_n$, such that
$$\forall \; n \in \mathbb{N}, \; \; \forall \; \underline{u} \geq 0, \qquad \int_{|u| \leq R} \int_{\mathbb{S}^2_{\omega}}  |r\underline{\alpha}(F_n)(u,\underline{u},\omega)-\mathscr{F}^+(F_n(0,\cdot))(u,\omega)|  \dr \mu_{\mathbb{S}^2} \dr u \leq \frac{C_n}{(1+\underline{u})^{\frac{1}{2}}}.$$
Moreover, since $\mathscr{F}^+$ is an isometry, we have $\left\| \mathscr{F}^+(F_n(0,\cdot))-\mathscr{F}^+(F(0,\cdot)) \right\|_{\mathcal{I}^+}=\| F(0,\cdot)-F_n(0,\cdot)\|_{\{t=0\}}$. The last four estimates, together with the Cauchy-Schwarz inequality in $L^2([-R,R]\times \mathbb{S}^2)$, yields
\begin{align*}
\left|\int_{\R_u}\int_{\mathbb{S}^2_{\omega}} \left(r\underline{\alpha}(F)_{e_A}(u,\underline{u},\omega)-\mathscr{F}^+(F)_{e_A}(u,\omega) \right) \psi (u,\omega) \dr \mu_{\mathbb{S}^2} \dr u \right|\lesssim & \| F(0,\cdot)-F_n(0,\cdot)\|_{\{t=0\}}+\frac{C_n}{(1+\underline{u})^{\frac{1}{2}}} ,
\end{align*}
for all $n \in \mathbb{N}$ and $\underline{u} \geq 0$. For a sufficiently large $n$ and $\underline{U}$, which depends on $n$, we can bound the right hand side by $\epsilon$ for all $\underline{u} \geq \underline{U}$. This concludes the proof of the last part of the lemma. 

It remains to show that for any $F(0,\cdot)$ satisfying \eqref{hypo}, then $\mathscr{F}^+(F(0,\cdot))=\underline{\alpha}^{\mathcal{I}^+}$. For this, it suffices to recall that we proved $r\underline{\alpha}(F) \rightarrow \underline{\alpha}^{\mathcal{I}^+}$ in $L^{\infty}_{u,\omega}$.

\end{proof}
\begin{Rq}
In fact, assuming more decay on the initial data, we could prove using the equations $(M''_2)$, $(M_5'')$ and $(M_6'')$ of \cite{CK} that $|\alpha(F)| = O(\underline{u}^{-2-\delta})$ and that $r^2\rho(F)$ as well as $r^2 \sigma(F)$ converge as $\underline{u} \to + \infty$.
\end{Rq}
To conclude the proof of Theorem \ref{Thscat}, it remains us to show that $\mathscr{F}^+$ is surjective. For this, it suffices to prove Proposition \ref{ProscatMax}, which in particular implies that any smooth and compactly supported $\underline{\alpha}^{\mathcal{I}^+}$ has a preimage by $\mathscr{F}^+$. For this, we will make crucial use of \cite[Theorem~$1.1$]{SchlueLindblad}, which is a similar result for solutions to the homogeneous wave equation, and exploit that $\Box F_{\mu \nu}=0$ for any cartesian component $F_{\mu \nu}$.
\begin{Lem}\label{LemVolker}
Let $\Phi \in C(\R_u \times \mathbb{S}^2)$ be a sufficiently regular function, $0< a < \frac{1}{2}$ and $ N \in \mathbb{N}$. Then, there exists a unique solution to wave equation $\Box \phi =0$ on $\R_+ \times \R^3$ satisfying, for any $0< \delta \leq a$,
\begin{align*}
&\forall \; t \in \R_+, \quad \sum_{|\gamma| \leq N} \left\| \langle t-r \rangle^{a-\delta} Z^{\gamma} \phi(t,\cdot) \right\|^2_{L^2(\R^3_x)} \lesssim \sum_{|k|+|\beta| \leq N+3} \int_{u=-\infty}^{+\infty} \int_{\omega \in \mathbb{S}^2}\left| \left( \langle u \rangle \partial_u \right)^k \partial_{\omega}^{\beta} \Phi(u,\omega)\right|^2 \langle u \rangle^{2a} \dr \mu_{\mathbb{S}^2} \dr u 
\end{align*}
and such that $\Phi$ is the radiation field $\mathscr{R}(\phi)$ of $\phi$ along $\mathcal{I}^+$.
\end{Lem}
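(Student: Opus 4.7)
The plan is to invoke \cite[Theorem~$1.1$]{SchlueLindblad} to produce $\phi$ from $\Phi$ and then to propagate its weighted $L^2$ estimate to the derivatives $Z^\gamma\phi$ by exploiting that every $Z\in\mathbb{K}\setminus\{S\}$ is Killing ($[\Box,Z]=0$) while $[\Box,S]=2\Box$. In both cases $\Box(Z^\gamma\phi)=0$ whenever $\Box\phi=0$, so each $Z^\gamma\phi$ is itself a solution of the free wave equation. Uniqueness of $\phi$ with prescribed $\Phi$ follows at once from the Lindblad-Schlue statement: two such solutions differ by a wave solution with vanishing radiation field, which their weighted $L^2$ estimate forces to vanish identically.

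The core computational step is to identify the radiation field of $Z^\gamma\phi$ in terms of $\Phi$. Passing to null coordinates $(u,\underline{u},\omega)$, where $S=u\partial_u+\underline{u}\partial_{\underline{u}}$, $\partial_t=\partial_u+\partial_{\underline{u}}$, and $\partial_{x^i}=\omega_i(\partial_{\underline{u}}-\partial_u)+r^{-1}\omega_i^A e_A$, and using that $\partial_{\underline{u}}(r\phi)=o(\underline{u}^{-1})$ together with $\underline{u}/r\to 2$ as $\underline{u}\to\infty$ at fixed $u$, a direct check gives the single-vector-field formulas
\[
\mathscr{R}(\partial_t\phi)=\partial_u\Phi,\quad \mathscr{R}(\partial_{x^i}\phi)=-\omega_i\partial_u\Phi,\quad \mathscr{R}(\Omega_{jk}\phi)=\Omega_{jk}\Phi,
\]
\[
\mathscr{R}(\Omega_{0i}\phi)=-\omega_i u\partial_u\Phi+\omega_i^A\slashed{\nabla}_{e_A}\Phi,\quad \mathscr{R}(S\phi)=u\partial_u\Phi-\Phi,
\]
with smooth $\omega$-dependent coefficients. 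An induction on $|\gamma|$ then shows that $\mathscr{R}(Z^\gamma\phi)$ is a finite linear combination of terms of the form $P(\omega)(\langle u\rangle\partial_u)^k\partial_\omega^\beta\Phi$ with smooth bounded coefficients $P(\omega)$ and multi-indices satisfying $|k|+|\beta|\leq|\gamma|$.

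With this identification in hand, I would apply \cite[Theorem~$1.1$]{SchlueLindblad} to each $Z^\gamma\phi$: the theorem bounds $\|\langle t-r\rangle^{a-\delta}Z^\gamma\phi(t,\cdot)\|_{L^2(\R^3)}^2$ by a $\langle u\rangle^{2a}$-weighted norm of $\mathscr{R}(Z^\gamma\phi)$ together with up to three of its $(\langle u\rangle\partial_u,\partial_\omega)$-derivatives. Substituting the previous expression for $\mathscr{R}(Z^\gamma\phi)$ and summing over $|\gamma|\leq N$ yields the claim, with the total derivative count on $\Phi$ bounded by $|\gamma|+3\leq N+3$.

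The main obstacle is tracking the lower-order corrections generated when two vector fields in $\mathbb{K}$ are composed: each successive application of some $Z$ to a coefficient $\omega_i$ or $\omega_i^A$, or to a power of $u$, produces $\omega$-smooth remainders and additional factors of $u$ that must be absorbed into the weighted derivatives $(\langle u\rangle\partial_u)^k$. This bookkeeping is straightforward in any individual case but tedious to write out in full generality; once performed, the lemma reduces to a direct invocation of the Lindblad-Schlue scattering construction for the scalar wave equation.
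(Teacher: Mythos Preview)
The paper gives no proof of this lemma: it is stated as a direct restatement of \cite[Theorem~$1.1$]{SchlueLindblad}, with no argument supplied. Your proposal is therefore more detailed than what the paper does, and your strategy---apply the cited theorem to $\phi$ itself, then separately to each $Z^\gamma\phi$ after identifying $\mathscr{R}(Z^\gamma\phi)$ in terms of derivatives of $\Phi$---is a correct way to obtain the estimate for all $|\gamma|\le N$ from a base case.

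Two small points. First, whether your bootstrap is actually needed depends on the precise form of the Lindblad--Schlue statement: if their Theorem~1.1 already includes the commuted estimates $\sum_{|\gamma|\le N}\|\langle t-r\rangle^{a-\delta}Z^\gamma\phi\|_{L^2}^2$ directly (which the paper's phrasing suggests), then the lemma is a pure citation and your reduction is superfluous, though harmless. Second, in your computation of $\mathscr{R}(S\phi)$ you wrote $u\partial_u\Phi-\Phi$; compare this carefully with how the paper computes analogous radiation fields elsewhere (e.g.\ Proposition~\ref{derivscatMax} gives $\underline{\alpha}^{\mathcal{I}^+}_S=u\nabla_u\underline{\alpha}^{\mathcal{I}^+}+\underline{\alpha}^{\mathcal{I}^+}$ for the Maxwell case, and the scalar version also has a $+\Phi$ from the $\underline{u}\partial_{\underline{u}}$ part acting on $r\phi$). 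The sign does not affect the final inequality since only $|\mathscr{R}(Z^\gamma\phi)|^2$ enters, but it is worth getting right if you write out the induction.
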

We will also require standard estimates for smooth solutions to the wave equation.
\begin{Lem}\label{Fried}
Let $\phi$ be a smooth solution to the wave equation $\Box \phi=0$ such that $\|Z^{\gamma}\phi(0,\cdot) \|_{L^2_x}<+\infty$ for any $|\gamma| \le 5$. Then, for any $|\beta| \leq 1$, the radiation field $\mathscr{R} (\partial_{t,x}^{\beta}\phi)$ of $\partial_{t,x}^{\beta}\phi$ is well-defined and 
$$ \forall \, \underline{u } \geq 1, \forall \,  (u,\omega) \in [-\underline{u},\underline{u}] \times \mathbb{S}^2, \qquad \left|r \partial^{\beta}_{t,x}\phi(u,\underline{u},\omega)-\mathscr{R}(\partial^{\beta}_{t,x}\phi)(u,\omega) \right| \lesssim \underline{u}^{-\frac{1}{2}}.$$
Moreover, $\mathscr{R}(\partial_t \phi)=\partial_u \mathscr{R}(\phi)$ and $\mathscr{R}(\partial_{x^i} \phi)=-\frac{x^i}{|x|}\partial_u \mathscr{R}(\phi)$ for all $i \in \llbracket 1 , 3 \rrbracket$.
\end{Lem}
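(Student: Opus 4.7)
\emph{Proof plan.} The strategy is to treat the wave equation $\Box\phi=0$ in null coordinates exactly as the Maxwell system was treated in Section \ref{subsecscatMax}, combining Klainerman-Sobolev pointwise decay on $Z$-derivatives of $\phi$ with the identity
$$L\underline{L}(r\phi) = \frac{1}{r}\Delta_{\mathbb{S}^2}\phi,$$
valid for any solution to $\Box\phi=0$. Since $\partial_{t,x}^\beta\phi$ for $|\beta|\le 1$ is itself a smooth solution to $\Box=0$ satisfying the same hypothesis (with $|\gamma|\le 4$ in place of $|\gamma|\le 5$), it is enough to prove existence, the convergence rate, and the identification for $\phi$, and then apply the same reasoning to $\partial_t\phi$ and $\partial_{x^i}\phi$.

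\textbf{Pointwise decay.} Each $Z^\gamma\phi$ for $Z\in\mathbb{K}$ is again a solution to $\Box=0$ (up to a multiplicative constant arising from $S$), so $\|\nabla_{t,x}Z^\gamma\phi(t,\cdot)\|_{L^2_x}$ is conserved in time for $|\gamma|\le 4$. The classical Klainerman-Sobolev inequality then yields
$$\sum_{|\gamma|\le 2}|Z^\gamma\phi|(t,x)\lesssim (1+t+|x|)^{-1}(1+|t-|x||)^{-1/2},$$
together with the improved bound $|L\phi|\lesssim (1+t+|x|)^{-2}(1+|t-|x||)^{-1/2}$ obtained from the identity $(t+r)L = S+\omega^i\Omega_{0i}$ applied to $Z\phi$.

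\textbf{Existence and rate for $\mathscr{R}(\phi)$.} Expressing $\Delta_{\mathbb{S}^2}$ as a polynomial combination of rotational vector fields $\Omega_{jk}$ and using the decay just established, the null-coordinate form of the wave equation yields
$$\bigl|\partial_u\partial_{\underline{u}}(r\phi)\bigr|(u,\underline{u},\omega)\lesssim \underline{u}^{-2}\langle u\rangle^{-1/2}, \qquad r\gtrsim 1+t.$$
Meanwhile, $\partial_{\underline{u}}(r\phi) = (rL\phi+\phi)/2 = O(\underline{u}^{-1}\langle u\rangle^{-1/2})$ directly from Step 1. Integrating the transport equation for $\partial_{\underline{u}}(r\phi)$ along outgoing null rays, using the initial-data bounds on the $t=0$ slice (i.e.\ at $u=-\underline{u}$) to pin down the integration constant, upgrades this to the sharp decay
$$\bigl|\partial_{\underline{u}}(r\phi)\bigr|(u,\underline{u},\omega)\lesssim \underline{u}^{-3/2}\langle u\rangle^{-1/2}.$$
A further integration in $\underline{u}$ gives both the existence of $\mathscr{R}(\phi)(u,\omega):=\lim_{\underline{u}\to\infty}r\phi(u,\underline{u},\omega)$ and the rate $|r\phi(u,\underline{u},\omega)-\mathscr{R}(\phi)(u,\omega)|\lesssim \underline{u}^{-1/2}$, uniformly for $|u|\le\underline{u}$. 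Applying the same argument to $\partial_{t,x}^\beta\phi$ establishes the first part of the statement. For the identification formulas, decompose
$$r\partial_t\phi = \partial_u(r\phi)+\partial_{\underline{u}}(r\phi), \qquad r\partial_{x^i}\phi = \omega^i\bigl[\partial_{\underline{u}}(r\phi)-\partial_u(r\phi)-\phi\bigr]+\omega_i^A\cdot re_A\phi,$$
with $re_A\phi$ a bounded combination of the rotational derivatives $\Omega_{jk}\phi$ divided by $r$. As $\underline{u}\to\infty$, the quantities $\partial_{\underline{u}}(r\phi)$, $\phi$ and $\Omega_{jk}\phi$ all vanish by the pointwise decay, while $\partial_u(r\phi)\to\partial_u\mathscr{R}(\phi)$, justified by the estimate $|\partial_u\partial_{\underline{u}}(r\phi)|\lesssim\underline{u}^{-2}$, which permits interchanging limit and $u$-differentiation. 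One concludes $\mathscr{R}(\partial_t\phi)=\partial_u\mathscr{R}(\phi)$ and $\mathscr{R}(\partial_{x^i}\phi)=-(x^i/|x|)\,\partial_u\mathscr{R}(\phi)$.

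\textbf{Main obstacle.} The crux is the upgrade from the naive Klainerman-Sobolev bound $|\partial_{\underline{u}}(r\phi)|=O(\underline{u}^{-1})$ to the sharp $O(\underline{u}^{-3/2})$ that is required for the $\underline{u}^{-1/2}$ convergence rate. This requires using the transport equation $\partial_u\partial_{\underline{u}}(r\phi)=(4r)^{-1}\Delta_{\mathbb{S}^2}\phi$ together with information coming from the initial slice, which is the scalar analog of the good-null-component structure exploited in Lemma \ref{improderiv0} to control $\nabla_L(r\underline{\alpha}(F))$.
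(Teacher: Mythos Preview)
Your approach is essentially the same as the paper's: Klainerman--Sobolev for pointwise decay, the null-form identity $\underline{L}L(r\phi)=r^{-1}\Delta_{\mathbb{S}^2}\phi$ integrated in $u$ from the initial slice to obtain $|L(r\phi)|\lesssim(1+\underline{u})^{-3/2}$, and then the decomposition of $r\partial_{x^\mu}\phi$ in the null frame for the identification formulas. Two minor slips: the integration you describe (from $u=-\underline{u}$ at fixed $\underline{u}$) runs along the \emph{incoming} cone $\underline{C}_{\underline{u}}$, not an outgoing ray; and the upgraded bound the argument actually gives is $|\partial_{\underline{u}}(r\phi)|\lesssim\underline{u}^{-3/2}$, without the extra $\langle u\rangle^{-1/2}$ factor you claim (the integral $\int_{-\underline{u}}^u(1+|u'|)^{-1/2}du'$ contributes $\underline{u}^{1/2}$, not $\langle u\rangle^{1/2}$), but this weaker bound is all that is needed.
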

\begin{proof}
The first part of the result is classical. Indeed, since $\Box Z^{\gamma} \phi =0$ for any $|\gamma| \leq 4$, we obtain by applying the standard Klainerman-Sobolev inequality and then an energy inequality (for a proof, see for instance Theorem $1.3$ and Lemma $3.5$ of \cite[Chapter~$II$]{Sogge}), that, for all $|\gamma| \leq 2$,
\begin{equation}\label{KSwave}
 \forall (t,x) \in \R_+ \times \R^3,    \quad (1+t+|x|)(1+|t-|x||)^{\frac{1}{2}}\left| Z^{\gamma} \phi \right|(t,x) \lesssim \sum_{|\beta| \leq |\gamma|+2} \left\| Z^{\beta} \phi (t,\cdot) \right\|_{L^2_x} \lesssim \sum_{|\beta| \leq 4} \left\| Z^{\beta} \phi (0,\cdot) \right\|_{L^2_x}\!.
 \end{equation}
 Now we claim that
 $$ \forall (t,x) \in \R_+ \times \R^3, \qquad |L(r\phi)|(t,x) \lesssim (1+t+|x|)^{-\frac{3}{2}}.$$
Indeed, if $|x|=r \leq \frac{1+t}{2}$, we have $1+t+r \lesssim 1+|t-r|$. Moreover, \eqref{eq:nullderiv} leads to $|L(r\phi)| \leq \sum_{|\beta| \leq 1}|Z^{\beta} \phi|$, so that the claim is implied by \eqref{KSwave}. Otherwise, $|x| \gtrsim 1+t+|x|=1+\underline{u}$ and, by writing the d'Alembertian in spherical coordinates, we obtain from $\Box \, \phi=0$ that
 \begin{equation}\label{eq:scatwave}
0= -L\underline{L} \phi+\frac{2}{r}\frac{L-\underline{L}}{2}\phi+\sum_{1 \leq i < j \leq 3}\frac{ \Omega_{ij}\Omega_{ij} \phi}{r^2}, \qquad \text{leading to} \qquad \underline{L}\left( L(r\phi) \right)  = \sum_{1 \leq i < j \leq 3}\frac{ \Omega_{ij}\Omega_{ij} \phi}{r}.
\end{equation}
In order to integrate along a null straight line $t+r=\underline{u}$, it will be convenient to work with the null coordinate system. We then write $x=|x|\omega$, with $\omega \in \mathbb{S}^2$. As $\underline{L}=2\partial_u$ and in view of \eqref{KSwave}-\eqref{eq:scatwave}, we have
\begin{align*}
 \left|L(r\phi)\right|(t,x)&=\left|L(r\phi)\right|(t-|x|,\underline{u},\omega)\leq\left|L(r\phi)\right|(-t-|x|,\underline{u},\omega)+\frac{1}{2}\int_{u=-t-|x|}^{t-|x|} |\underline{L}(L(r\phi)|(u,\underline{u},\omega) \dr u \\
 & \lesssim \left|L(r\phi)\right|(0,(t+|x|)\omega)+\int_{u=-t-|x|}^{t-|x|} \frac{ \dr u}{(1+\underline{u})^2(1+|u|)^{\frac{1}{2}}} \lesssim (1+\underline{u})^{-\frac{3}{2}},
 \end{align*}
 which concludes the proof of the claim. As $L=2\partial_{\underline{u}}$, we directly deduce from it that
 $$ \forall \, \underline{z} \geq \underline{u} \geq 0, \; \forall \, |u| \leq \underline{u}, \; \forall \, \omega \in \mathbb{S}^2, \qquad |r\phi(u,\underline{z},\omega)-r\phi(u,\underline{u},\omega)| \lesssim \int_{s=\underline{u}}^{\underline{z}}|L(r\phi)|(u,s,\omega) \dr s \lesssim (1+\underline{u})^{-\frac{1}{2}}.$$
 This implies the existence of the radiation field $\mathscr{R}(\phi)$ of $\phi$ as well as the rate of convergence given in the statement of the Lemma. Since $\Box \partial_{x^{\mu}} \phi =0$ and $\|Z^{\gamma}\partial_{x^{\mu}}\phi(0,\cdot) \|_{L^2_x}<+\infty$ for any $|\gamma| \le 4$, the same applies to $\partial_{x^{\mu}} \phi$. Now, note that
 $$ 2r\partial_t \phi = rL\phi+r\underline{L} \phi, \qquad 2 r\partial_{x^i} \phi = \frac{x^i}{|x|}rL\phi-\frac{x^i}{|x|}r\underline{L} \phi+2\langle \partial_{x^i}, e_{\theta} \rangle \, re_{\theta} \phi+2\langle \partial_{x^i}, e_{\varphi} \rangle \, re_{\varphi} \phi, \quad 1 \leq i \leq 3.$$
 Combining \eqref{KSwave} with \eqref{eq:nullderiv} yields $r|L\phi|+r|e_{\theta} \phi|+r|e_{\varphi}|+|\phi| \lesssim \underline{u}^{-1}$ so that $$ \exists \, \phi_{\infty}^{\underline{L}} \in L^\infty (\R_u \times \mathbb{S}^2_\omega), \qquad \underline{L}(r\phi) \xrightarrow[\underline{u} \to + \infty]{L^\infty_{u,\omega}} \phi_{\infty}^{\underline{L}}, \qquad \phi_{\infty}^{\underline{L}}=2\mathscr{R}(\partial_t \phi), \quad \frac{x^i}{|x|}\phi_{\infty}^{\underline{L}}=-2\mathscr{R}(\partial_{x^i} \phi).$$ It remains to use that $\underline{L}(r\phi) (\cdot, \underline{u}, \cdot) \rightharpoonup 2\partial_u \mathscr{R}(\phi)$ in $\mathcal{D}'(\R_u \times \mathbb{S}^2)$ since $r\phi (\cdot, \underline{u}, \cdot)$ converges to $\mathscr{R}(\phi)$ in $L^\infty_{u,\omega}$.
\end{proof}
We are now ready for the last part of this subsection.
\begin{refproof}[Proof of Proposition \ref{ProscatMax}.]
Fix $0 \leq q-\frac{1}{2} < a<1/2$, $N \in \mathbb{N}$ and $\underline{\alpha}^{\mathcal{I}^+} \in \mathcal{E}_{\mathcal{I}^+}$ such that the norm $C[\underline{\alpha}^{\mathcal{I}^+}]$ is finite. Recall that any sufficiently regular solution $F$ to the vacuum Maxwell equations \eqref{Maxvac} satisfies $\Box \, F_{\mu \nu}=0$ for any $0 \leq \mu , \nu \leq 3$. The first step consists in constructing each cartesian component $F_{\mu \nu}$ of the electromagnetic field by applying Lemma \ref{LemVolker} to well-chosen radiation fields. This will define a $2$-form $F$ which will verify the stated estimate. Then, we will prove that $F$ is indeed a solution to the Maxwell equations and, finally, we will derive the pointwise decay estimates.

Assume first that $N \geq 5$ and let us start by identifying the expected radiation field of $F_{\mu \nu}$. For this, assume that $F$ exists and recall the transfer matrix between the cartesian and the null frame
$$\partial_t=\frac{1}{2}L+\frac{1}{2}\underline{L}, \qquad \partial_{x^{i}} = \frac{\omega_i}{2} L-\frac{\omega_i}{2} \underline{L}+\omega_i^{e_\theta} e_{\theta}+\omega_i^{e_\varphi} e_{\varphi}, \qquad 1 \leq i \leq 3, $$
where $\omega_i$ and $\omega_i^{e_A}$ are bounded functions of the spherical variables and are given explicitly in Appendix \ref{SecB}. For convenience, we set $\omega_0:=-1$ and $\omega_0^{e_A}:=0$. Consequently, for any $0 \leq \mu , \nu \leq 3$, there exists smooth functions of $\omega \in \mathbb{S}^2$, $g^{\alpha,\theta}_{\mu \nu}$, $g^{\alpha,\varphi}_{\mu \nu}$, $g^{\rho}_{\mu \nu}$ and $g^{\sigma}_{\mu \nu}$, such that
$$ rF_{\mu \nu} = -\frac{1}{2}\left(\omega_{\mu}^{e_A} \omega_{\nu}-\omega_{\mu} \omega_{\nu}^{e_A}\right) r\underline{\alpha}(F)_{e_A}+g^{\alpha,A}_{\mu \nu} r \alpha(F)_{e_A}+g^{\rho}_{\mu \nu} r \rho(F)+g^{\sigma}_{\mu \nu} r\sigma(F).$$
We then obtain by \eqref{esti1good}-\eqref{esti2badnull} that
\begin{equation}\label{defradfield}
 \mathscr{R}(F_{\mu \nu})=-\frac{1}{2}\left(\omega_{\mu}^{e_A} \omega_{\nu}-\omega_{\mu} \omega_{\nu}^{e_A}\right) \underline{\alpha}^{\mathcal{I}^+}_{e_A}, \qquad 0 \leq \mu,\nu \leq 3.
 \end{equation}
According to Lemma \ref{LemVolker}, we can indeed define a $2$-form $F$ verifying \eqref{defradfield} as well as $\Box F_{\mu \nu}=0$ and
\begin{equation}\label{decayFsatisfied}
\forall \, t \in \R_+, \quad \sum_{|\gamma| \leq N} \| \langle t-r \rangle^{q-\frac{1}{2}} \left| \mathcal{L}_{Z^{\gamma}} F \right| (t,\cdot) \|_{L^2_x} \lesssim \sum_{|\gamma| \leq N} \, \sum_{0\leq \mu,\nu \leq 3} \| \langle t-r \rangle^{q-\frac{1}{2}} Z^{\gamma}( F_{\mu \nu})(t,\cdot)  \|_{L^2_x} \lesssim C[\underline{\alpha}^{\mathcal{I}^+}].
\end{equation}
The remainder of the proof of the case $N \geq 5$ essentially consists in performing linear algebra computations. In order to lighten the notations we temporarily denote $\partial_{x^{\lambda}}$ by $\partial_{\lambda}$. Our goal now is to prove that $F$ is solution to the vacuum Maxwell equations \eqref{Maxvac}, which read in cartesian coordinates 
\begin{equation}\label{Maxcart}
\partial^{\mu} F_{\mu \nu}=0, \qquad \partial^{\mu} {}^* \! F_{\mu \nu}=\partial_{[\lambda} F_{\mu\nu]}:=\partial_{\lambda} F_{\mu \nu}+\partial_{\mu} F_{\nu \lambda}+\partial_{\nu} F_{\lambda \mu}=0.
\end{equation}
For a proof of the second identity, see for instance \cite[Lemma~$2.2$]{massless}. Since $\Box \partial^{\mu} F_{\mu \nu}=0$ and $\Box \partial^{\mu}{}^*\! F_{\mu \nu}=0$, \eqref{Maxcart} would be implied, according to Lemma \ref{LemVolker}, by
$$ \mathscr{R}(\partial^{\mu} F_{\mu \nu})=0, \qquad \mathscr{R}(\partial^{\mu}{}^* \! F_{\mu \nu})=0, \qquad \qquad 0 \leq \nu \leq 3.$$
We compute, using Lemma \ref{Fried}, that for any $0 \leq \lambda \leq 3$,
$$ \mathscr{R}(\partial_{\lambda}F_{\mu \nu})=-\omega_\lambda \partial_u \mathscr{R}(F_{\mu \nu})=\frac{1}{2}\omega_\lambda\left(\omega_{\mu}^{e_A} \omega_{\nu}-\omega_{\mu} \omega_{\nu}^{e_A}\right)\partial_u \underline{\alpha}^{\mathcal{I}^+}_{e_A}, \qquad 0 \leq \mu,\nu \leq 3.$$
This implies in particular that $ \mathscr{R}(\partial_{[\lambda} F_{\mu\nu]})=0$. Furthermore, as $\partial^\mu=\eta^{\mu \lambda}\partial_{\lambda}$, we have
$$ \mathscr{R}(\partial^{\mu}F_{\mu \nu})=\frac{1}{2}\eta^{\mu \lambda} \omega_\lambda\left(\omega_{\mu}^{e_A} \omega_{\nu}-\omega_{\mu} \omega_{\nu}^{e_A}\right)\partial_u \underline{\alpha}^{\mathcal{I}^+}_{e_A}=\frac{1}{2}(\eta(e_A,L)\omega_\nu-\eta(L,L)\omega_\nu^{e_A})\partial_u \underline{\alpha}^{\mathcal{I}^+}_{e_A}=0.$$
We then deduce that $F$ is a smooth solution to the vacuum Maxwell equations. Finally, since the cartesian components of $\underline{L}={\eta_{\underline{L}}}^\mu \partial_\mu$ and $e_A={\eta_{e_A}}^\mu \partial_\mu$ are bounded functions of  $\omega \in \mathbb{S}^2$, we obtain from \eqref{defradfield} and Lemmas \ref{estiweighednorm}, \ref{Fried} that
$$ \mathscr{F}^+(F(0,\cdot))_{e_A}= \lim_{\underline{u} \to \infty} r \underline{\alpha}(F)_{e_A}(\cdot, \underline{u},\cdot)={\eta_{e_A}}^\mu {\eta_{\underline{L}}}^\nu \lim_{\underline{u} \to \infty} r F_{\mu \nu}(\cdot, \underline{u},\cdot)= {\eta_{e_A}}^\mu {\eta_{\underline{L}}}^\nu \mathscr{R}(F_{\mu \nu}) = \underline{\alpha}^{\mathcal{I}^+}_{e_A}, \qquad A \in \{ \theta , \varphi \}.$$
This concludes the proof of the first part of the Proposition for the case $N \geq 5$. Consider now the case $N=0$ and define similarly $F_{\mu\nu}$, through Lemma \ref{LemVolker}, as the unique solution to $\Box F_{\mu \nu}=0$ such that \eqref{defradfield} holds. This directly provides the estimate \eqref{decayFsatisfied} and let us prove that $F$ is a weak solution to \eqref{Maxvac}. For this, consider a sequence $(\underline{\alpha}^{I^+}_n) \in\mathcal{E}_{\mathcal{I}^+}^{\mathbb{N}}$ of smooth and compactly supported scattering states such that $C[\underline{\alpha}^{\mathcal{I}^+}-\underline{\alpha}^{I^+}_n] \to 0$ as $n \to +\infty$. Then, denote by $F_n$ the unique smooth solution to the vacuum Maxwell equations such that $\mathscr{F}^+(F_n (0,\cdot))=\underline{\alpha}^{\mathcal{I}^+}_n$. Applying once again Lemma \ref{LemVolker} to $\mathscr{R}(F_{\mu\nu}-F_{n,\mu \nu})$ yields
\begin{equation}\label{conditionconverge}
\sup_{t \in \R_+}  \|F(t,\cdot)-F_n(t,\cdot) \|_{L^2_x} \lesssim C\big[\underline{\alpha}^{I^+}-\underline{\alpha}^{I^+}_n \big].
\end{equation}
Fix $\psi \in C_c^{\infty}(\R_+\times \R^3_x)$ and $T_{\psi}$ such that $\psi(t,\cdot)=0$ for all $t \geq T_{\psi}$. Remark, since $F_n$ is a classical and then a weak solution to \eqref{Maxvac}, that for any $0 \leq \nu \leq 3$ and $n \in \mathbb{N}$,
\begin{align}
&\left|\int_{\R_+\times \R^3_x} F_{\mu\nu}(t,x) \partial^{\mu}\psi(t,x)\dr x \dr t+\int_{\R^3_x} F_{\mu \nu}(0,x) \psi(0,x)\dr x\right| \label{eq:tovanishtoprove} \\  & =\left| \int_{\R_+\times \R^3_x} (F-F_n)_{\mu\nu}(t,x) \partial^{\mu}\psi(t,x)\dr x \dr+\int_{\R^3_x} (F-F_n)_{\mu \nu}(0,x) \psi(0,x)\dr x \right| \nonumber  \lesssim (1+T_{\psi})\sup_{t \in \R_+}  \|(F-F_n)(t,\cdot) \|_{L^2_x}. 
\end{align}
By \eqref{conditionconverge}, the right hand side converges to $0$ as $n \to+\infty$ whereas the left hand side does not depend on $n$. This implies that \eqref{eq:tovanishtoprove} vanishes. The same applies applies to ${}^*\!F$, so that $F$ is a weak solution to the vacuum Maxwell equations \eqref{Maxvac}. Finally, by continuity of $\mathscr{F}^+$ and \eqref{conditionconverge}, $\mathscr{F}^+(F(0,\cdot))=\underline{\alpha}^{\mathcal{I}^+}$. 

We now focus on the second part of Proposition \ref{ProscatMax}, which merely concerns the cases $N \geq 4$. We apply \cite[Lemma~$3.3$]{SchlueLindblad}, a weighted version of the standard Klainerman-Sobolev inequality, to $Z^{\beta}(F_{\mu \nu})$. Using \eqref{equinorm}, we obtain for any $|\gamma| \leq N-2$ and all $(t,x) \in \R_+ \times \R^3$,
\begin{equation}\label{eq:refschlue}
 \left| \mathcal{L}_{Z^{\gamma}}(F)\right|(t,x) \lesssim \sum_{|\beta|\leq N-2} \sum_{0\leq \mu,\nu \leq 3} |Z^{\beta}(F_{\mu \nu})|(t,x)\lesssim \sum_{|\beta|\leq N} \frac{\big\| \langle t-r \rangle^{q-\frac{1}{2}} \left| \mathcal{L}_{Z^{\beta}}(F)\right|(t,\cdot) \big\|_{L^2_x}}{(1+t+|x|)(1+|t-|x||)^{q}}. 
\end{equation}
The numerator in the right hand side is bounded by $C[\underline{\alpha}^{\mathcal{I}^+}]$. Recall now that $\mathcal{L}_{Z^{\gamma}}(F)$ is a solution to the vacuum Maxwell equations as well. To conclude the proof, it then suffices to use the previous estimate and to apply Corollary \ref{Corgoodnull}, to $\mathcal{L}_{Z^{\gamma}}(F)$ for any $|\gamma| \leq N-3$, as well as Corollary \ref{Corgoodnull2}, to $\mathcal{L}_{Z^\xi}(F)$ for any $|\xi| \leq N-4$.
\end{refproof}

\begin{Rq}
A statement similar to Theorem \ref{Thscat} holds for scattering toward past null infinity $\mathcal{I}^-\cong \R_{\underline{u}} \times \mathbb{S}^2$. One can construct the past forward evolution bijective isometry $\mathscr{F}^- : \mathcal{E}_{\{t=0\}} \rightarrow \mathcal{E}_{\mathcal{I}^-}$, where, if $F(0,\cdot) \in \mathcal{E}_{\{t=0\}} \cap C_c^{\infty}$, $\mathscr{F}^-(F)(\underline{u},\omega):= \lim_{u \to -\infty} r \alpha(F)(u,\underline{u},\omega)$ and $\| \cdot\|_{\mathcal{I}^-}:=\|\cdot\|_{L^2(\R_{\underline{u}} \times \mathbb{S}^2)}$. The scattering map $\mathscr{S}= (\mathscr{F}^-)^{-1} \circ \mathscr{F}^+$ then defines a unitary isomorphism of Hilbert spaces.
\end{Rq}

Finally, we state a direct consequence of Theorem \ref{Thscat}, Proposition \ref{ProscatMax} and the commutation properties of the vacuum Maxwell equations with $\mathcal{L}_Z$, $Z \in \mathbb{K}$.

\begin{Def}
Let $N \geq 0$ and $\mathcal{E}^N_{\{t=0 \}} \subset \mathcal{E}_{\{t=0 \}}$ be the set of the $2$-forms on $\R^{1+3}$ independent of $t$ verifying
$$\|F_0\|^2_{\mathcal{E}^N_{\{t=0 \}}} := \sum_{|\gamma| \leq N}  \|\mathcal{L}_{Z^\gamma}(F_0)(0,\cdot) \|^2_{\mathcal{E}_{\{t=0 \}}} <+\infty.$$
Consider $\mathcal{E}^N_{\mathcal{I}^+} \subset\mathcal{E}_{\mathcal{I}^+}$, the set of the $1$-forms on $\R_u \times \mathbb{S}^2$ which are tangential to the $2$-spheres and such that
$$ \| \underline{\alpha}^{\mathcal{I}^+} \|^2_{N,\mathcal{I}^+} :=  \sum_{|\gamma| \leq N} \| \underline{\alpha}^{\mathcal{I}^+}_{Z^\gamma} \|^2_{\mathcal{I}^+}<+\infty ,$$
where $\underline{\alpha}^{\mathcal{I}^+}_{Z^\gamma}$ is defined recursively from $\underline{\alpha}^{\mathcal{I}^+}$ through Proposition \ref{derivscatMax}. Then, $(\mathcal{E}^N_{\{t=0 \}},\|\cdot \|^2_{\mathcal{E}^N_{\{t=0 \}}})$ and $(\mathcal{E}^N_{\mathcal{I}^+}, \| \cdot \|_{N,\mathcal{I}^+})$ are Hilbert spaces.
\end{Def}
\begin{Cor}\label{Corhigherscat}
For any $N \geq 0$, the restriction of $\mathscr{F}^+$ to $\mathcal{E}^N_{\{t=0 \}}$ is a bijective isometry from $\mathcal{E}^N_{\{t=0 \}}$ to $\mathcal{E}^N_{\mathcal{I}^+}$.
\end{Cor}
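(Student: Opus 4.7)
The plan is to reduce the corollary to Theorem~\ref{Thscat} applied to each Lie derivative $\mathcal{L}_{Z^\gamma}(F_0)$ separately, using that $\mathcal{L}_Z$ commutes with the vacuum Maxwell system for every $Z \in \mathbb{K}$. Concretely, for $F_0 \in \mathcal{E}^N_{\{t=0\}}$ and $|\gamma| \leq N$, the $2$-form $\mathcal{L}_{Z^\gamma}(F_0)$ lies in $\mathcal{E}_{\{t=0\}}$ by definition of the norm, so Theorem~\ref{Thscat} yields $\mathscr{F}^+(\mathcal{L}_{Z^\gamma}(F_0)) \in \mathcal{E}_{\mathcal{I}^+}$ together with $\|\mathscr{F}^+(\mathcal{L}_{Z^\gamma}(F_0))\|_{\mathcal{I}^+} = \|\mathcal{L}_{Z^\gamma}(F_0)\|_{\mathcal{E}_{\{t=0\}}}$. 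Granted the key identification
\[
\mathscr{F}^+(\mathcal{L}_{Z^\gamma}(F_0)) = \underline{\alpha}^{\mathcal{I}^+}_{Z^\gamma}, \qquad \underline{\alpha}^{\mathcal{I}^+} := \mathscr{F}^+(F_0),
\]
where the right-hand side is the distribution constructed recursively from $\underline{\alpha}^{\mathcal{I}^+}$ via the explicit formulas of Proposition~\ref{derivscatMax}, summing the above norm equalities over $|\gamma| \leq N$ gives $\|F_0\|^2_{\mathcal{E}^N_{\{t=0\}}} = \|\underline{\alpha}^{\mathcal{I}^+}\|^2_{N,\mathcal{I}^+}$. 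Surjectivity is then immediate: given $\underline{\alpha}^{\mathcal{I}^+} \in \mathcal{E}^N_{\mathcal{I}^+}$, set $F_0 := (\mathscr{F}^+)^{-1}(\underline{\alpha}^{\mathcal{I}^+}) \in \mathcal{E}_{\{t=0\}}$, apply Theorem~\ref{Thscat} to each $\underline{\alpha}^{\mathcal{I}^+}_{Z^\gamma} \in \mathcal{E}_{\mathcal{I}^+}$ to produce preimages in $\mathcal{E}_{\{t=0\}}$, and use the identification (in reverse) to recognize these preimages as $\mathcal{L}_{Z^\gamma}(F_0)$, which places $F_0$ in $\mathcal{E}^N_{\{t=0\}}$.

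To prove the identification I would proceed by induction on $|\gamma|$, with $\gamma = 0$ being Theorem~\ref{Thscat} itself. The inductive step reduces to adding a single vector field $Z \in \mathbb{K}$: assuming $\mathscr{F}^+(\mathcal{L}_{Z^\gamma}(F_0)) = \underline{\alpha}^{\mathcal{I}^+}_{Z^\gamma}$, one must verify that $\mathscr{F}^+(\mathcal{L}_Z \mathcal{L}_{Z^\gamma}(F_0))$ coincides with the explicit distributional combination defining $\underline{\alpha}^{\mathcal{I}^+}_{Z Z^\gamma}$ in Proposition~\ref{derivscatMax}. For smooth compactly supported $F_0$ (in which case every $\mathcal{L}_{Z^\xi}F$ satisfies the decay hypotheses of Lemma~\ref{estiweighednorm}), both sides are obtained as the uniform limit of $r\underline{\alpha}(\mathcal{L}_Z \mathcal{L}_{Z^\gamma} F)$ as $\underline{u} \to +\infty$, and uniqueness of the weak limit forces equality. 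For generic $F_0 \in \mathcal{E}^N_{\{t=0\}}$, one approximates by smooth compactly supported $F_0^n$; the isometry of Theorem~\ref{Thscat} propagates the convergence $\mathcal{L}_{Z^\xi}(F_0^n) \to \mathcal{L}_{Z^\xi}(F_0)$ in $\mathcal{E}_{\{t=0\}}$ to $L^2$-convergence of $\mathscr{F}^+(\mathcal{L}_{Z^\xi}(F_0^n))$, hence to convergence in $\mathcal{D}'$, while the recursive formulas of Proposition~\ref{derivscatMax} are built from distributional derivatives $\nabla_u$, $\slashed{\nabla}_{e_A}$, $\mathcal{L}_{\Omega_{jk}}$ and multiplications by the bounded functions $u$, $\omega_i$, $\omega_i^{e_A}$, and are therefore continuous on $\mathcal{D}'$; matching the two $\mathcal{D}'$-limits yields the identification at level $|\gamma|+1$.

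The main technical obstacle is producing the approximating sequence $F_0^n$, namely ensuring that $\mathcal{L}_{Z^\xi}(F_0^n) \to \mathcal{L}_{Z^\xi}(F_0)$ in $\mathcal{E}_{\{t=0\}}$ for every $|\xi| \leq N$, despite the unbounded weights in $x$ carried by Lorentz boosts and the scaling field. I would address it by combining standard mollification in $x$ with a smooth spatial cutoff on dyadic scales, chosen so that the commutators between $\mathcal{L}_{Z^\xi}$ and the regularization are controlled by $L^2$-norms of Lie derivatives of $F_0$ already bounded by $\|F_0\|_{\mathcal{E}^N_{\{t=0\}}}$. This is a standard but nontrivial density argument in the weighted $L^2$ spaces associated with the algebra generated by $\mathbb{K}$; once settled, the remainder of the argument is purely algebraic and follows the plan above.
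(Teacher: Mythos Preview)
Your forward direction (that $\mathscr{F}^+$ restricted to $\mathcal{E}^N_{\{t=0\}}$ is an isometry into $\mathcal{E}^N_{\mathcal{I}^+}$) is essentially the paper's argument: commutation of $\mathcal{L}_Z$ with the vacuum Maxwell system, Theorem~\ref{Thscat} applied to each $\mathcal{L}_{Z^\gamma}(F_0)$, Proposition~\ref{derivscatMax} on smooth data, and density. That part is fine, modulo the density step you flag yourself.

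The gap is in surjectivity. You write: take $F_0=(\mathscr{F}^+)^{-1}(\underline{\alpha}^{\mathcal{I}^+})$, produce $G_\gamma:=(\mathscr{F}^+)^{-1}(\underline{\alpha}^{\mathcal{I}^+}_{Z^\gamma})\in\mathcal{E}_{\{t=0\}}$ by Theorem~\ref{Thscat}, and then ``use the identification in reverse'' to recognise $G_\gamma=\mathcal{L}_{Z^\gamma}(F_0)$. But your identification $\mathscr{F}^+(\mathcal{L}_{Z^\gamma}(F_0))=\underline{\alpha}^{\mathcal{I}^+}_{Z^\gamma}$ was established \emph{assuming} $F_0\in\mathcal{E}^N_{\{t=0\}}$; here that is exactly what you are trying to prove. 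If you try to close this by approximating $\underline{\alpha}^{\mathcal{I}^+}$ in $\mathcal{E}^N_{\mathcal{I}^+}$ by smooth compactly supported $\underline{\alpha}_n^{\mathcal{I}^+}$, you still need to know that the preimages $F_0^n:=(\mathscr{F}^+)^{-1}(\underline{\alpha}_n^{\mathcal{I}^+})$ lie in $\mathcal{E}^N_{\{t=0\}}$ in order to invoke the identification on them and run the limit. Theorem~\ref{Thscat} alone gives only $F_0^n\in\mathcal{E}_{\{t=0\}}$; nothing in your argument forces the higher Lie derivatives $\mathcal{L}_{Z^\gamma}(F_0^n)$ to be in $L^2$.

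This is precisely why the paper lists Proposition~\ref{ProscatMax} as an ingredient. That proposition (built on the backward scattering construction for the wave equation from \cite{SchlueLindblad}) shows directly that for a sufficiently regular scattering state the preimage $F$ satisfies $\sum_{|\gamma|\le N}\|\mathcal{L}_{Z^\gamma}F(t,\cdot)\|_{L^2_x}<\infty$, i.e.\ $F_0\in\mathcal{E}^N_{\{t=0\}}$. Once you have this for the smooth approximants, your density and identification arguments go through and surjectivity follows. Without a regularity statement of this kind coming from the $\mathcal{I}^+$ side, the ``identification in reverse'' is circular.
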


\subsection{Existence of an asymptotic state for $F$ and its derivatives}

In order to avoid any confusion, we precise that, as in Sections \ref{Sec3}-\ref{Sec6}, $F$ denotes the electromagnetic field of our solution to the Vlasov-Maxwell system $(f,F)$. The following statement can be easily deduced from previous results. 

\begin{Pro}\label{Proscatinpractice}
For any $|\gamma| \leq N-3$, $\underline{\alpha}(\mathcal{L}_{Z^{\gamma}}F)$ has a continuous radiation field $\underline{\alpha}^{\mathcal{I}^+}_{\gamma}$. Moreover, for any $0 \leq \delta < 1$, we have the rate of convergence
$$ \forall \, \underline{u} \in \R_+, \; |u| \leq \underline{u}, \; \omega \in \mathbb{S}^2, \qquad \left| \langle u \rangle^{\delta} \left( r\underline{\alpha}(\mathcal{L}_{Z^{\gamma}}F)(\underline{u},u,\omega)-\underline{\alpha}^{\mathcal{I}^+}_{\gamma}(u,\omega) \right) \right| \lesssim \Lambda \frac{\log (3+\underline{u})}{(1+\underline{u})^{1-\delta}}.$$
If $|\gamma|=0$, we simply denote the radiation field of $F$ by $\underline{\alpha}^{\mathcal{I}^+}$.
\end{Pro}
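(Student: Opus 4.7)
The plan is to derive a pointwise estimate for $\nabla_L(r\underline{\alpha}(\mathcal{L}_{Z^\gamma}F))$ and integrate it along outgoing null rays. By Proposition \ref{Com}, the $2$-form $\mathcal{L}_{Z^\gamma}(F)$ satisfies the Maxwell equations with source $J_\gamma := \sum_{|\beta|\le|\gamma|} C_\beta^\gamma J(\widehat{Z}^\beta f)$, and since $|\gamma| \le N-3 \le N-1$, Corollary \ref{Corboo3} yields $r|J_\gamma|(t,x) \lesssim \overline{\epsilon}(1+t+|x|)^{-2}$. For any $|\xi|\le 1$, the commuted field $\mathcal{L}_{Z^\xi Z^\gamma}(F)$ is of order at most $N-2$, so Proposition \ref{Proproofnullcompo} gives $(|\rho|+|\sigma|)(\mathcal{L}_{Z^\xi Z^\gamma}F)(t,x) \lesssim \Lambda\log(3+t)(1+t+|x|)^{-2}$. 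Feeding these bounds into the second estimate of Lemma \ref{improderiv0} (and using $\overline{\epsilon}\le\Lambda$) produces the key inequality
$$\big|\nabla_L\big(r\underline{\alpha}(\mathcal{L}_{Z^\gamma}F)\big)\big|(t,x) \lesssim \Lambda\log(3+t+|x|)(1+t+|x|)^{-2}.$$

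Since $L = 2\partial_{\underline{u}}$, integrating at fixed $(u,\omega)$ between $\underline{u}_1 \leq \underline{u}_2$ and invoking the elementary bound $\int_{\underline{u}}^{+\infty} \log(3+s)(1+s)^{-2}\,\dr s \lesssim \log(3+\underline{u})(1+\underline{u})^{-1}$ (straightforward integration by parts) yields the Cauchy property, hence the existence of the pointwise limit $\underline{\alpha}^{\mathcal{I}^+}_\gamma(u,\omega):= \lim_{\underline{u}\to+\infty} r\underline{\alpha}(\mathcal{L}_{Z^\gamma}F)(u,\underline{u},\omega)$ together with the stated convergence rate in the case $\delta = 0$. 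Continuity of $\underline{\alpha}^{\mathcal{I}^+}_\gamma$ on $\R_u \times \mathbb{S}^2$ will follow because the convergence is uniform on every compact subset and $r\underline{\alpha}(\mathcal{L}_{Z^\gamma}F)$ is itself continuous in $(u,\underline{u},\omega)$.

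To upgrade to general $\delta \in [0,1)$, I will interpolate with a second, purely pointwise bound. The bootstrap assumption \eqref{boot1} gives $r|\underline{\alpha}(\mathcal{L}_{Z^\gamma}F)|(t,x) \le r|\mathcal{L}_{Z^\gamma}F|(t,x) \lesssim \Lambda\langle u \rangle^{-1}$, and letting $\underline{u}\to+\infty$ passes this inequality to the radiation field, so $|r\underline{\alpha}(\mathcal{L}_{Z^\gamma}F) - \underline{\alpha}^{\mathcal{I}^+}_\gamma| \lesssim \Lambda\langle u\rangle^{-1}$ as well. Combining with the $\delta=0$ bound via the trivial inequality $A \le B_1^{1-\delta}B_2^\delta$ applied to $B_1 = C\Lambda\log(3+\underline{u})(1+\underline{u})^{-1}$ and $B_2 = C\Lambda\langle u\rangle^{-1}$, and using $\log^{1-\delta}(3+\underline{u}) \le \log(3+\underline{u})$, produces the desired estimate. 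I do not anticipate a substantial obstacle: all heavy lifting has been performed in Sections \ref{Sec3}--\ref{Sec5}, and the threshold $|\gamma|\le N-3$ is forced precisely by the need to absorb one extra derivative in Lemma \ref{improderiv0} while remaining within the range $|\gamma|+1\le N-2$ where the improved estimates on $\rho$ and $\sigma$ of Proposition \ref{Proproofnullcompo} are available.
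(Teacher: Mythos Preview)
Your proof is correct and follows essentially the same approach as the paper: both verify the hypotheses needed to control $\nabla_L(r\underline{\alpha}(\mathcal{L}_{Z^\gamma}F))$ via Lemma~\ref{improderiv0}, using Proposition~\ref{Proproofnullcompo} for $\rho,\sigma$ and Corollary~\ref{Corboo3} for the source term. The paper simply packages this reasoning as an application of Proposition~\ref{Corgoodnull2}, whereas you carry it out explicitly. One remark: your interpolation step for $\delta>0$ is correct but unnecessary, since the constraint $|u|\le\underline{u}$ already gives $\langle u\rangle^{\delta}\lesssim(1+\underline{u})^{\delta}$, and multiplying the $\delta=0$ bound by this factor immediately yields the stated estimate.
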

\begin{proof}
Recall from Proposition \ref{Com} the form of the source term in the commuted Maxwell equations. Hence, according to the estimates of Proposition \ref{Proproofnullcompo} and Corollary \ref{Corboo3}, $\mathcal{L}_{Z^{\gamma}}F$ satisfies the hypotheses of Proposition \ref{Corgoodnull2}.
\end{proof}

The last goal of this section consists in proving, if $N$ is large enough, that $F$ can be approached by a solution to the vacuum Maxwell equations through an application of Proposition \ref{ProscatMax}, which requires to control $\underline{\alpha}^{\mathcal{I}^+}$ and its derivatives up to order at least $3$. Note then that by iterating Proposition \ref{derivscatMax}, we get that $\underline{\alpha}^{\mathcal{I}^+}_{\gamma}$ can be computed in terms of derivatives of $\underline{\alpha}^{\mathcal{I}^+}$. Conversally, for any $0 \leq a <1/2$, we have
$$\sum_{n_u+n_\theta+n_\varphi \leq N-3}\int_{\R_u} \int_{\mathbb{S}^2} \langle u \rangle^{2a+2n_u} \left| \nabla^{n_u}_u \slashed{\nabla}^{n_\theta}_{e_\theta} \slashed{\nabla}^{n_\varphi}_{e_\varphi} \underline{\alpha}^{\mathcal{I}^+} \right|^2(u,\omega) \dr \mu_{\mathbb{S}^2} \dr u \lesssim \sum_{|\gamma| \leq N-3} \int_{\R_u} \int_{\mathbb{S}^2} \langle u \rangle^{2a} \left|  \underline{\alpha}^{\mathcal{I}^+}_{\gamma} \right|^2(u,\omega) \dr \mu_{\mathbb{S}^2} \dr u .$$
Applying Proposition \ref{Proscatinpractice} for $\delta = (3+2a)/4$ then yields
\begin{equation}\label{eq:defnormscatste}
\sum_{n_u+n_\theta+n_\varphi \leq N-3}\int_{\R_u} \int_{\mathbb{S}^2} \langle u \rangle^{2a+2n_u} \left| \nabla^{n_u}_u \slashed{\nabla}^{n_\theta}_{e_\theta} \slashed{\nabla}^{n_\varphi}_{e_\varphi} \underline{\alpha}^{\mathcal{I}^+} \right|^2(u,\omega) \dr \mu_{\mathbb{S}^2} \dr u \lesssim \Lambda \int_{\R_u} \langle u \rangle^{a-\frac{3}{2}} \dr u \lesssim \frac{\Lambda}{1-2a}.
\end{equation}
We are now ready to prove the following result.
\begin{Pro}
If $N \geq 10$, there exists a solution $F^{\mathrm{vac}}$ of class $C^{N-8}$ to the vacuum Maxwell equations \eqref{Maxvac} such that, for any $\frac{1}{2} \leq q < 1$ and $|\gamma| \leq N-10$,
$$ \forall \; (t,x) \in \R_+ \times \R^3, \qquad r|\mathcal{L}_{Z^{\gamma}}(F)-\mathcal{L}_{Z^{\gamma}}(F^{\mathrm{vac}})|(t,x) \leq \Lambda C_q (1+t+|x|)^{-q},$$
where the constant $C_q>0$ depends on $q$.
\end{Pro}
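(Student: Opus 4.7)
The strategy is to construct $F^{\mathrm{vac}}$ by feeding the radiation field $\underline{\alpha}^{\mathcal{I}^+}$ of $F$ (obtained in Proposition \ref{Proscatinpractice}) into the forward evolution map of Section \ref{subsecscatMax}, and then compare $F$ and $F^{\mathrm{vac}}$ null component by null component. For any $1/2 \le q <1$, I will fix $q<q'<1$ and $a\in(q'-1/2,1/2)$. The control of the derivatives of $\underline{\alpha}^{\mathcal{I}^+}$ up to order $N-3$ proved in \eqref{eq:defnormscatste} yields that the norm $C[\underline{\alpha}^{\mathcal{I}^+}]$ associated to $N':=N-6\ge 4$ (so that $N'+3=N-3$) and to the parameter $a$ is bounded by $\Lambda/(1-2a)$. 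Proposition \ref{ProscatMax} then produces a unique solution $F^{\mathrm{vac}}$ of the vacuum Maxwell equations \eqref{Maxvac} such that $\mathscr{F}^{+}(F^{\mathrm{vac}}(0,\cdot))=\underline{\alpha}^{\mathcal{I}^+}$, and the weighted $L^2$ bound of the same proposition combined with the Klainerman-Sobolev inequality \eqref{eq:refschlue} shows that $F^{\mathrm{vac}}$ is of class $C^{N-8}$, with $|\mathcal{L}_{Z^\gamma}F^{\mathrm{vac}}|\lesssim \Lambda(1+t+|x|)^{-1}(1+|t-|x||)^{-q'}$ for all $|\gamma|\le N-8$.

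Next I will identify the radiation fields. By Corollary \ref{Corboo3}, Proposition \ref{Proproofnullcompo} and the uniform bound $r|\mathcal{L}_{Z^\gamma}F|\lesssim \Lambda$ implied by \eqref{boot1}, the $2$-form $\mathcal{L}_{Z^\gamma}F$ satisfies the hypotheses of Proposition \ref{derivscatMax} for every $|\gamma|\le N-3$. The analogous hypotheses are verified by $\mathcal{L}_{Z^\gamma}F^{\mathrm{vac}}$ via Proposition \ref{ProscatMax}. The commutation formulas in Proposition \ref{derivscatMax} being algebraic in $Z^\gamma$ and the same formulas apply to both fields, the radiation fields $\underline{\alpha}^{\mathcal{I}^+}_\gamma$ of $\mathcal{L}_{Z^\gamma}F$ produced by Proposition \ref{Proscatinpractice} coincide with $\mathscr{F}^{+}(\mathcal{L}_{Z^\gamma}F^{\mathrm{vac}}(0,\cdot))$ for every $|\gamma|\le N-10$, since both are obtained by applying the same iterated differential operator to the common scattering state $\underline{\alpha}^{\mathcal{I}^+}$.

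I will then estimate $H_\gamma := \mathcal{L}_{Z^\gamma}(F-F^{\mathrm{vac}})$ for each $|\gamma|\le N-10$ by splitting space into two regions. In the interior region $|x|\le (1+t)/2$, one has $1+|t-|x||\sim 1+t+|x|$, so the bootstrap bound \eqref{boot1} yields $|\mathcal{L}_{Z^\gamma}F|\lesssim \Lambda(1+t+|x|)^{-2}$, while the pointwise bound on $F^{\mathrm{vac}}$ derived above gives $|\mathcal{L}_{Z^\gamma}F^{\mathrm{vac}}|\lesssim \Lambda(1+t+|x|)^{-1-q'}$, both of which dominate the target decay $\Lambda(1+t+|x|)^{-1-q}$. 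In the exterior region $|x|\ge (1+t)/2$, where $r\gtrsim 1+t+|x|$, I will estimate the null components of $H_\gamma$ separately. The good null components of $\mathcal{L}_{Z^\gamma}F$ are controlled by Proposition \ref{Proproofnullcompo}, which provides $\Lambda\log(3+t)(1+t+|x|)^{-2}$, while the good null components of $\mathcal{L}_{Z^\gamma}F^{\mathrm{vac}}$ obey the bound $\Lambda(1+t+|x|)^{-1-q'}$ from Proposition \ref{ProscatMax}; absorbing the logarithm using the strict inequality $q<q'$, both are dominated by $\Lambda C_q(1+t+|x|)^{-1-q}$. For the remaining component $\underline{\alpha}(H_\gamma)$, the triangle inequality with the common radiation field $\underline{\alpha}^{\mathcal{I}^+}_\gamma$ gives
\[
  r\,|\underline{\alpha}(H_\gamma)|(t,x) \le \bigl|r\underline{\alpha}(\mathcal{L}_{Z^\gamma}F)-\underline{\alpha}^{\mathcal{I}^+}_\gamma\bigr| + \bigl|r\underline{\alpha}(\mathcal{L}_{Z^\gamma}F^{\mathrm{vac}})-\underline{\alpha}^{\mathcal{I}^+}_\gamma\bigr|.
\]
Using Proposition \ref{Proscatinpractice} with $\delta=1-q'$ and Proposition \ref{ProscatMax}, the right-hand side is $\lesssim \Lambda \log(3+\underline{u})(1+\underline{u})^{-q'}\langle u\rangle^{q'-1}+\Lambda(1+\underline{u})^{-q'}\lesssim \Lambda C_q (1+\underline{u})^{-q}$, where $\langle u\rangle\ge 1$ has been used to remove the $\langle u\rangle^{q'-1}$ weight and the log has been absorbed by $q<q'$. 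Dividing by $r\sim 1+t+|x|$ yields the claimed bound on $|\underline{\alpha}(H_\gamma)|$.

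The main obstacle will be the coherence of the different regimes at the transition between the interior and exterior region, which I expect to handle by absorbing the logarithmic loss in Propositions \ref{Proproofnullcompo} and \ref{Proscatinpractice} against the margin $q'-q>0$. The construction itself is linear in the input data, so only the qualitative identification of the radiation fields in Proposition \ref{derivscatMax} - ensuring that the two radiation fields genuinely coincide rather than differ by an unknown scattering datum that would produce an additional vacuum solution - requires care; here the key is that Proposition \ref{derivscatMax} makes no use of the source term $J$, so the formulas apply verbatim to both $F$ and $F^{\mathrm{vac}}$.
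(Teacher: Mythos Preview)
Your proof is correct and follows essentially the same approach as the paper: construct $F^{\mathrm{vac}}$ from the radiation field $\underline{\alpha}^{\mathcal{I}^+}$ via Proposition~\ref{ProscatMax}, identify the higher-order radiation fields through Proposition~\ref{derivscatMax}, and compare null component by null component using Propositions~\ref{Proproofnullcompo} and~\ref{Proscatinpractice}. The only cosmetic differences are that the paper applies Proposition~\ref{derivscatMax} directly to the difference $F-F^{\mathrm{vac}}$ rather than to each field separately, and it dispenses with both the interior/exterior split and the auxiliary exponent $q'$, absorbing the logarithms straight into the strict inequality $q<1$.
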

\begin{proof}
We fix $0 \leq q- \frac{1}{2} <a<1/2$. Since \eqref{eq:defnormscatste} holds, we get from Proposition \ref{ProscatMax} that there exists a solution $F^{\mathrm{vac}}$ of class $C^{N-8}$ to the vacuum Maxwell equations satisfying, for any $|\gamma| \leq N-9$ and $|\xi| \leq N-10$,
\begin{align}
\forall \; (t,x) \in \R_+ \times \R^3, \qquad \quad \left( |\alpha(\mathcal{L}_{Z^{\gamma}}F^{\mathrm{vac}})|+|\rho(\mathcal{L}_{Z^{\gamma}}F^{\mathrm{vac}})|+|\sigma(\mathcal{L}_{Z^{\gamma}}F^{\mathrm{vac}})|\right)(t,x) & \lesssim \frac{\Lambda}{(1+t+|x|)^{1+q}} , \label{eq:1tofinishpaper}\\
\left|r\underline{\alpha}(\mathcal{L}_{Z^{\xi}}F^{\mathrm{vac}})(t,x) -\mathscr{F}^+(\mathcal{L}_{Z^{\xi}}F^{\mathrm{vac}}(0,\cdot))\left( t-|x|,\frac{x}{|x|} \right) \right| & \lesssim \frac{\Lambda}{(1+t+|x|)^q} \label{eq:2tofinishpaper}
\end{align} 
and $\mathscr{F}^+(F^{\mathrm{vac}}(0,\cdot))=\underline{\alpha}^{\mathcal{I}^+}$. Together with Proposition \ref{Proproofnullcompo} and Corollary \ref{Corboo3}, these estimates imply that $\mathcal{L}_{Z^{\gamma}}(F-F^{\mathrm{vac}})$ verifies the assumptions of Proposition \ref{derivscatMax} for any $|\gamma| \leq N-10$. We then deduce, by a straightforward induction, that $\underline{\alpha}^{\mathcal{I}^+}_\gamma=\mathscr{F}^+(\mathcal{L}_{Z^{\gamma}}F^{\mathrm{vac}}(0,\cdot))$. Combining \eqref{eq:2tofinishpaper} with Proposition \ref{Proscatinpractice} then yields
$$ \forall \; (t,x) \in \R_+ \times \R^3, \qquad r\left|\underline{\alpha}(\mathcal{L}_{Z^{\gamma}}F)-\underline{\alpha}(\mathcal{L}_{Z^{\gamma}}F^{\mathrm{vac}})\right|(t,x) \lesssim \Lambda (1+t+|x|)^{-q}, \qquad \quad |\gamma| \leq N-10.$$ 
On the other hand, Proposition \ref{Proproofnullcompo} and \eqref{eq:1tofinishpaper} give, for any null component $\zeta \in \{ \alpha , \rho, \sigma \}$,
$$ \forall \; (t,x) \in \R_+ \times \R^3, \qquad r\left|\zeta(\mathcal{L}_{Z^{\gamma}}F)-\zeta(\mathcal{L}_{Z^{\gamma}}F^{\mathrm{vac}})\right|(t,x) \lesssim \Lambda (1+t+|x|)^{-q}, \qquad \quad |\gamma| \leq N-9,$$
which concludes the proof. 
\end{proof}
\begin{Rq}
According to Corollary \ref{Corhigherscat} and Lemma \ref{estiweighednorm}, $F^{\mathrm{vac}}$ is in fact of class $C^{N-5}$. Moreover, if $N \geq 7$, then the statement of Proposition \ref{Proscatinpractice} still holds for any $|\gamma| \leq N-7$ and the particular value $q=1/2$.
\end{Rq}

\section{Conservation of the total energy of the system}\label{Secenergy}

Since $(f,F)$ is a solution to the Vlasov-Maxwell system, the energy momentum tensor $\mathbb{T}[f,F]$, defined as
$$ \mathbb{T}[f,F]_{\mu \nu} := \mathbb{T}[f]_{\mu \nu}+\mathbb{T}[F]_{\mu \nu}, \qquad \mathbb{T}[f]_{\mu \nu} := \int_{\R^3_v} f v_\mu v_\nu \frac{\dr v}{v^0}, \quad \mathbb{T}[F]_{\mu \nu}:=F_{\mu \beta} {F_{\nu}}^{\beta}-\frac{1}{4}\eta_{\mu \nu} F_{\xi \lambda} F^{\xi \lambda},$$
is divergence free. It provides the conservation of the total energy of the system
$$ \mathbb{E}_t := \int_{\R^3_x} \int_{\R^3_v} v^0 f(t,x,v) \, \dr v \dr x+\frac{1}{2} \int_{\R^3_x} |F|^2(t,x) \, \dr x = \mathbb{E}_0, \qquad |F|^2=\sum_{0 \leq \mu < \nu \leq 3}|F_{\mu \nu}|^2=|E|^2+|B|^2.$$
We would like to relate $\mathbb{E}_0$ to the energy of the scattering states $f_\infty$ and $\underline{\alpha}^{\mathcal{I}^+}$. More precisely, the goal of this section is to prove
\begin{equation}\label{eq:energycons}
 \mathbb{E}_\infty := \int_{\R^3_x} \int_{\R^3_v} v^0 f_\infty(x,v) \, \dr v \dr x+\frac{1}{4} \int_{\R_u} \int_{\mathbb{S}^2_\omega} |\underline{\alpha}^{\mathcal{I}^+}|^2(u,\omega) \, \dr \mu_{\mathbb{S}^2} \dr u=  \mathbb{E}_0.
\end{equation}
Note that $\mathbb{E}_\infty < +\infty$ according to Remark \ref{Rqfinfty} and Proposition \ref{Proscatinpractice}. The statement \eqref{eq:energycons} is a consequence of $\mathbb{E}_t=\mathbb{E}_0$ and the following two Propositions. 
\begin{Pro}\label{Profinfty}
There holds
$$ \lim_{t \to +\infty} \int_{\R^3_x} \int_{\R^3_v} v^0 f(t,x,v) \, \dr v \dr x = \int_{\R^3_x} \int_{\R^3_v} v^0 f_\infty(x,v) \, \dr v \dr x.$$
\end{Pro}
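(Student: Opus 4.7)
The plan is to reduce the statement to two ingredients already at our disposal: Proposition~\ref{ProQinf}, which gives the convergence of the spatial averages $\int_{\R^3_x} f(t,x,v)\,\dr x$ to $Q_\infty(v)$, and the modified scattering result for $f$ (Proposition~\ref{mainresultPro} together with Remark~\ref{Rqfinfty}). The key identification to establish along the way is that $Q_\infty(v) = \int_{\R^3_x} f_\infty(x,v)\,\dr x$.

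First, by Fubini,
$$\int_{\R^3_x}\int_{\R^3_v} v^0 f(t,x,v)\,\dr v\,\dr x = \int_{\R^3_v} v^0 \Big(\int_{\R^3_x} f(t,x,v)\,\dr x\Big)\,\dr v.$$
Integrating Lemma~\ref{Lempartieltxave} (applied with $\beta = 0$) in time and using the smallness of the initial average yields the uniform bound $|v^0|^{N_v-6}\big|\int_{\R^3_x} f(t,x,v)\,\dr x\big| \lesssim \overline{\epsilon}$, so that the integrand on the right is dominated by $\overline{\epsilon}\,|v^0|^{-(N_v-7)}$, which belongs to $L^1(\R^3_v)$ since $N_v - 7 \geq 8 > 3$. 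Proposition~\ref{ProQinf} provides pointwise convergence $\int_{\R^3_x} f(t,x,v)\,\dr x \to Q_\infty(v)$, so dominated convergence gives
$$\lim_{t\to+\infty} \int_{\R^3_x}\int_{\R^3_v} v^0 f(t,x,v)\,\dr v\,\dr x = \int_{\R^3_v} v^0 Q_\infty(v)\,\dr v.$$

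To identify $Q_\infty(v)$ with $\int_{\R^3_x} f_\infty(x,v)\,\dr x$, I would fix $v$ and perform the change of variables $y = X_\C(t,x,v) = x + t\widehat{v} + \C(t,v)$, which for fixed $(t,v)$ is merely a translation in $x$ of unit Jacobian. This gives
$$\int_{\R^3_y} f(t,y,v)\,\dr y = \int_{\R^3_x} f(t,X_\C(t,x,v),v)\,\dr x.$$
By Remark~\ref{Rqfinfty},
$$\big|f(t,X_\C(t,x,v),v) - f_\infty(x,v)\big| \lesssim \overline{\epsilon}\, |v^0|^{-(N_v-7)} \langle x\rangle^{-(N_x-2)} \frac{\log^{12+3N_x+3N}(t)}{t^\delta},$$
and since $N_x - 2 > 5$, integrating this pointwise estimate over $\R^3_x$ produces a quantity which vanishes as $t\to+\infty$. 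Hence the right-hand side converges to $\int_{\R^3_x} f_\infty(x,v)\,\dr x$, while the left-hand side converges to $Q_\infty(v)$ by Proposition~\ref{ProQinf}. This yields $Q_\infty(v) = \int_{\R^3_x} f_\infty(x,v)\,\dr x$, and substituting into the first step together with a final application of Fubini concludes the proof.

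The main obstacle is essentially bookkeeping: one has to verify that the various $L^\infty_v$ and $L^1_v$ controls — the uniform bound on $v^0 \int_{\R^3_x} f\,\dr x$, the pointwise convergence in Proposition~\ref{ProQinf}, and the spatial decay encoded in Remark~\ref{Rqfinfty} — fit together to license the two uses of dominated convergence (once in $v$, once in $x$). This is guaranteed by the assumptions $N_v \geq 15$ and $N_x > 7$, which provide ample integrability at each stage; no sharpening of the pointwise estimates already derived is needed.
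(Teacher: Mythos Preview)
Your proof is correct. The paper's argument is slightly more direct: it performs the change of variables $x \mapsto X_\C(t,x,v)$ in the full double integral at once, obtaining
\[
\int_{\R^3_x}\int_{\R^3_v} v^0 f(t,x,v)\,\dr v\,\dr x = \int_{\R^3_x}\int_{\R^3_v} v^0 f\big(t,X_\C(t,x,v),v\big)\,\dr v\,\dr x,
\]
and then bounds the difference with $\int\!\!\int v^0 f_\infty\,\dr v\,\dr x$ in one step via Remark~\ref{Rqfinfty}, never passing through $Q_\infty$. Your route factors through Proposition~\ref{ProQinf} and dominated convergence first, then identifies $Q_\infty(v)=\int_{\R^3_x} f_\infty(x,v)\,\dr x$ by what is essentially the paper's argument applied at fixed $v$. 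Both approaches rest on the same two ingredients (the translation $x\mapsto X_\C$ and the weighted convergence of Remark~\ref{Rqfinfty}); the paper's is shorter, while yours records the identity $Q_\infty=\int f_\infty\,\dr x$ as a byproduct, which is a natural fact to have made explicit.
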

\begin{proof}
Let $t \geq 3$ and perform the change of variables $x^j=y^j+\widehat{v}^j t-\log(t)\widehat{v}^{\mu} (F_{\mu j}^{\infty}(v)+\widehat{v}^j F_{\mu 0}^{\infty}(v))$ to get
\begin{align*}
 \int_{\R^3_x} \int_{\R^3_v} v^0 f(t,x,v) \, \dr v \dr x &= \int_{\R^3_y} \int_{\R^3_v} v^0 f\big(t,X_\C(t,y,v),v \big) \, \dr v \dr y.
 \end{align*}
We then deduce that
 $$ \left|  \int_{\R^3_x} \int_{\R^3_v} v^0 f(t,x,v) \, \dr v \dr x- \int_{\R^3_x} \int_{\R^3_v} v^0 f_\infty(x,v) \, \dr v \dr x\right| \leq \sup_{(x,v) \in \R^6} \langle x \rangle^{\frac{7}{2}} |v^0|^5 \big|f\big(t,X_\C(t,x,v),v\big)-f_\infty(x,v)\big|,$$
which, in view of $N_v \geq 12$, $N_x \geq 11/2$ and Remark \ref{Rqfinfty}, implies the result.
\end{proof}

\begin{Pro}\label{Prounderalphainfty}
We have
$$ \lim_{t \to +\infty} \frac{1}{2} \int_{\R^3_x} |F|^2(t,x) \, \dr x = \frac{1}{4} \int_{\R_u} \int_{\mathbb{S}^2_\omega} |\underline{\alpha}^{\mathcal{I}^+}|^2(u,\omega) \, \dr \mu_{\mathbb{S}^2} \dr u.$$
\end{Pro}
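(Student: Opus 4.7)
The plan is to carry out an energy estimate for the Maxwell field on the space-time region bounded below by the constant time slice $\{s=t\}$ and above by a backward light cone $\underline{C}_{\underline{u}}$, and then to take the two limits $\underline{u}\to+\infty$ (to capture the radiation field) and $t \to +\infty$ (so that the inhomogeneous exchange term with the Vlasov field vanishes). Concretely, for any $\underline{u} > t \geq 0$ introduce the region $\Omega_{t,\underline{u}}:=\{(s,y)\in \R_+\times \R^3\,|\,t\leq s,\;s+|y|\leq \underline{u}\}$. Applying the divergence theorem to $\mathbb{T}[F]_{\mu 0}$ on $\Omega_{t,\underline{u}}$ (as in the proof of Lemma~\ref{estiweighednorm}) and using $\nabla^\mu \mathbb{T}[F]_{\mu 0}=F_{0\lambda}J(f)^\lambda$ from Definition~\ref{Defenergytensor}, I obtain
\begin{equation*}
 \int_{|y|\leq \underline{u}-t} \!\! \mathbb{T}[F]_{00}(t,y)\,\dr y \; = \; \int_{\underline{C}_{\underline{u}}\cap\{s\geq t\}} \!\!\mathbb{T}[F]_{\underline{L}0}\,\dr\mu_{\underline{C}_{\underline{u}}} \; + \;\int_{\Omega_{t,\underline{u}}} F_{0\lambda}J(f)^\lambda \,\dr V.
\end{equation*}
Recalling that $\mathbb{T}[F]_{00}=\tfrac{1}{2}|F|^2$ and $\mathbb{T}[F]_{\underline{L}0}=\tfrac{1}{2}(|\underline{\alpha}|^2+|\rho|^2+|\sigma|^2)$, this yields the identity on which everything rests.

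Next, I take $\underline{u}\to+\infty$ with $t$ fixed. The left-hand side converges to $\tfrac{1}{2}\int_{\R^3}|F|^2(t,x)\,\dr x$ by monotone convergence (and integrability follows from~\eqref{boot1}). On the cone, $\dr\mu_{\underline{C}_{\underline{u}}}=\tfrac{1}{2}r^2 \dr u \, \dr\mu_{\mathbb{S}^2}$, so the boundary integral equals
$\tfrac{1}{4}\int_{2t-\underline{u}}^{\underline{u}}\int_{\mathbb{S}^2} (r^2|\underline{\alpha}|^2+r^2|\rho|^2+r^2|\sigma|^2)(u,\underline{u},\omega)\,\dr\mu_{\mathbb{S}^2}\dr u$. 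Proposition~\ref{Proproofnullcompo} gives $r|\rho|+r|\sigma|\lesssim \Lambda \log(\underline{u})/\underline{u}$ uniformly in $u$, so those contributions vanish after integrating over the $u$-range of length at most $\underline{u}$. For the $\underline{\alpha}$ term, Proposition~\ref{Proscatinpractice} provides the pointwise convergence $r\underline{\alpha}(u,\underline{u},\omega)\to \underline{\alpha}^{\mathcal{I}^+}(u,\omega)$, while the bootstrap bound~\eqref{boot1} supplies the uniform $L^2$ dominant $|r\underline{\alpha}|^2 \lesssim \Lambda^2/(1+|u|)^2 \in L^1(\dr u \, \dr\mu_{\mathbb{S}^2})$; dominated convergence then gives
\[
\lim_{\underline{u}\to+\infty}\int_{\underline{C}_{\underline{u}}\cap\{s\geq t\}}\mathbb{T}[F]_{\underline{L}0}\,\dr\mu_{\underline{C}_{\underline{u}}} \; = \; \frac{1}{4}\int_{\R_u}\int_{\mathbb{S}^2_\omega}|\underline{\alpha}^{\mathcal{I}^+}|^2(u,\omega)\,\dr\mu_{\mathbb{S}^2}\dr u,
\]
independently of $t$. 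Similarly, the source term passes to $\int_{\Omega_{t,\infty}}F_{0\lambda}J(f)^\lambda \dr V$ with $\Omega_{t,\infty}:=\{s\geq t\}$, by dominated convergence.

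Finally, I let $t\to+\infty$ and show the source term is $o(1)$. Combining~\eqref{boot1} with Corollary~\ref{Corboo3}, I bound $|F_{0\lambda}J(f)^\lambda|\lesssim \overline{\epsilon}\,\Lambda(1+s+|y|)^{-4}(1+|s-|y||)^{-1}$. A routine spatial integration (splitting $|y|<s/2$, $s/2\leq |y|\leq 2s$, $|y|>2s$) yields $\int_{\R^3}|F_{0\lambda}J(f)^\lambda|(s,y)\,\dr y \lesssim \overline{\epsilon}\,\Lambda\,\log(2+s)/(1+s)^2$, and integrating in $s\in[t,+\infty)$ gives $|\int_{\Omega_{t,\infty}}F_{0\lambda}J(f)^\lambda \dr V| \lesssim \overline{\epsilon}\,\Lambda\,\log(1+t)/(1+t)$, which vanishes as $t\to+\infty$. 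Putting the three limits together yields the claim. The main delicate step is verifying that the exchange term $\int_{\Omega_{t,\infty}}F_{0\lambda}J(f)^\lambda\dr V$ tends to zero, which requires both the sharp $t^{-3}$ decay of the charge-current density (Corollary~\ref{Corboo3}, itself the outcome of considerable effort in Section~\ref{Sec4}) and the integrability of $|F|/(1+|s-|y||)$ along timelike slices, both of which are already available.
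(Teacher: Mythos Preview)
Your proof is correct and follows essentially the same route as the paper: apply the divergence theorem for $\mathbb{T}[F]_{\mu 0}$ on the region between $\{s=t\}$ and $\underline{C}_{\underline{u}}$, let $\underline{u}\to+\infty$ using the decay of $\rho,\sigma$ (Proposition~\ref{Proproofnullcompo}) and the convergence of $r\underline{\alpha}$ to $\underline{\alpha}^{\mathcal{I}^+}$ (Proposition~\ref{Proscatinpractice}), and then let $t\to+\infty$ so that the exchange term $\int F_{0\lambda}J(f)^\lambda$ vanishes by~\eqref{boot1} and Corollary~\ref{Corboo3}. The only cosmetic difference is that you justify the cone limit via dominated convergence with the integrable dominant $|r\underline{\alpha}|^2\lesssim \Lambda^2\langle u\rangle^{-2}$ coming from~\eqref{boot1}, whereas the paper appeals directly to the weighted rate in Proposition~\ref{Proscatinpractice}; both are valid.
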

\begin{proof}
Consider $\underline{u} \geq \tau \geq 3$ and introduce the domain $\mathcal{D}_{\underline{u}}^\tau =\{ t+|x| \leq \underline{u}, \, t \geq \tau \}$, which is bounded by the truncated backward light cone $\underline{C}_{\underline{u}}^\tau := \{ t+|x|=\underline{u}, \, t \geq \tau \}$ and $\{t=\tau \} \cap \{ |x| \leq \underline{u}-\tau \}$. In the same spirit as \eqref{energyeq0}, the divergence theorem, applied to $\mathbb{T}[F]_{\mu 0}$ in $\mathcal{D}_{\underline{u}}^\tau$, yields
\begin{equation}\label{eq:tooberagain}
 \int_{\underline{C}_{\underline{u}}^\tau} \mathbb{T}[F]_{\underline{L}0} \dr \mu_{\underline{C}_{\underline{u}}}=\int_{|x| \leq \underline{u}-\tau} \mathbb{T}[F]_{00}(\tau,x) \dr x+\int_{(t,x) \in \mathcal{D}_{\underline{u}}^\tau} F_{0 \lambda} J(f)^\lambda \dr x \dr t.
 \end{equation}
First, we have
\begin{equation*}
\lim_{\underline{u} \to + \infty}\int_{|x| \leq \underline{u}-\tau} \mathbb{T}[F]_{00}(\tau,x) \dr x = \lim_{\underline{u} \to + \infty} \frac{1}{2} \int_{|x| \leq \underline{u}-\tau} |F|^2(\tau,x) \dr x = \frac{1}{2} \int_{\R^3_x} |F|^2(\tau,x) \dr x.
\end{equation*}
Next, since $|F|(t,x) \lesssim (1+t+|x|)^{-1}(1+|t-|x||)^{-1}$ by \eqref{boot1} and $|J(f)|\lesssim  (1+t+|x|)^{-3}$ by Corollary \ref{Corboo3},
$$
\int_{(t,x) \in \mathcal{D}_{\underline{u}}^\tau} F_{0 \lambda} J(f)^\lambda \dr x \dr t \lesssim \int_{t=\tau}^{+\infty}\int_{r=0}^{+\infty} \frac{r^2 \dr r \dr t}{(1+t+r)^4(1+|t-r|)} \lesssim \int_{t=\tau}^{+\infty} \int_{r=0}^{+\infty} \frac{ \dr r \dr t }{(1+t)^{\frac{3}{2}}(1+|t-r|)^{\frac{3}{2}}}  \lesssim \tau^{-\frac{1}{2}}.
$$
Recall from Definition \ref{Defenergytensor} the value of the null components of $\mathbb{T}[F]$. As $|\rho|(t,x)+|\sigma|(t,x) \lesssim (1+t+|x|)^{-\frac{7}{4}}$ by Proposition \ref{Proproofnullcompo} and in view of Proposition \ref{Proscatinpractice}, applied for $\delta > 1/2$,
\begin{align*}
 \int_{\underline{C}_{\underline{u}}^\tau} \mathbb{T}[F]_{\underline{L}0} \dr \mu_{\underline{C}_{\underline{u}}} &=\frac{1}{4} \! \int_{2\tau-\underline{u} \leq u \leq \underline{u}} \int_{\mathbb{S}^2_{\omega}} \left( |\underline{\alpha}(F)|^2+|\rho(F)|^2+|\sigma(F)|^2 \right)(u,\underline{u},\omega) r^2 \dr \mu_{\mathbb{S}^2} \dr u \\
 & = \frac{1}{4} \! \int_{2\tau-\underline{u} \leq u \leq \underline{u}} \! \int_{\mathbb{S}^2_{\omega}}  r^2|\underline{\alpha}(F)|^2(u,\underline{u},\omega)  \dr \mu_{\mathbb{S}^2} \dr u +O(\underline{u}^{-\frac{1}{2}}) \xrightarrow[\underline{u} \to + \infty]{}  \frac{1}{4} \! \int_{\R_u} \! \int_{\mathbb{S}^2_{\omega}} \! \big|\underline{\alpha}^{\mathcal{I}^+}\big|^2(u,\omega)  \dr \mu_{\mathbb{S}^2} \dr u.
 \end{align*}
Letting $\underline{u} \to + \infty$ and then $\tau \to + \infty$ in \eqref{eq:tooberagain} yields the result.
\end{proof}

\appendix

\section{Estimates for the gradients of the kernels}\label{SecA}

In order to estimate the kernels and their derivatives in the integrals of Propositions \ref{GSdecompo} and \ref{GSdecomoderiv}, we introduce the following class of terms.
\begin{Def}
Let $(p,q,d,d_{\mathbf{w}}) \in \mathbb{N}^4$. We define $\mathbf{S}^{d,d_{\mathbf{w}}}_{p,q}$ as the set of the functions $\mathcal{G}:\mathbb{S}^2 \times \R^3 \rightarrow \R$ of the form
\begin{equation}\label{deftermfor}
\mathcal{G}(\omega,v)=\frac{P(\widehat{v},\omega)Q(\mathbf{w}(\omega,v))}{|v^0|^p(1+\omega \cdot \widehat{v})^q},
\end{equation}
where $P$ is a monomial of degree $d$ in $(\widehat{v}^1,\widehat{v}^2, \widehat{v}^3,\omega_1,\omega_2,\omega_3)$ and $Q$ is a monomial of degree $d_{\mathbf{w}}$ in $\mathbf{w}_{\mu \nu}(\omega,v)$, where $0 \leq \mu < \nu \leq 3$.
\end{Def}
All the kernels considered in this paper can be written as linear combination of such terms, with $d_{\mathbf{w}}\in \llbracket 0,3 \rrbracket$. Moreover, if $2q \geq d_{\mathbf{w}}$, by a direct application of Lemma \ref{LemtechestiW}, one can bound $\mathcal{G}(\omega,v)$ in \eqref{deftermfor} by $|v^0|^{2q-d_{\mathbf{w}}-p}$. The estimates of Corollaries \ref{estiW} and \ref{estikernels} of the derivatives of the kernels then follows from the next result.
\begin{Lem}\label{Lemapp}
Let $(p,q,d,d_{\mathbf{w}}) \in \mathbb{N}^4$ and consider $\mathcal{G} \in \mathbf{S}^{d,d_{\mathbf{w}}}_{p,q}$. Then, for any multi-index $\gamma$, $\partial^{\gamma}_v\mathcal{G}(\omega,v)$ can be written as linear combination of terms belonging to certain $\mathbf{S}^{d_0,d_{\mathbf{w},0}}_{p_0,q_0}$, where 
$$(p_0,q_0,d_0,d_{\mathbf{w},0}) \in \mathbb{N}^4, \qquad 2q_0-d_{\mathbf{w},0}-p_0\leq 2q-d_{\mathbf{w}}-p, \qquad q-d_{\mathbf{w}} \leq q_0-d_{\mathbf{w},0} .$$
This implies $|\partial^{\gamma}_v\mathcal{G}|(\cdot,v) \lesssim |v^0|^{2q-d_{\mathbf{w}}-p}$ if $2q \geq d_{\mathbf{w}}$.
\end{Lem}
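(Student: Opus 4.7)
The plan is to proceed by induction on the order $|\gamma|$ of differentiation, the base case $|\gamma|=0$ being trivial since $\mathcal{G}$ itself belongs to $\mathbf{S}_{p,q}^{d,d_{\mathbf{w}}}$. For the inductive step, it suffices to show that for every $\mathcal{G}\in\mathbf{S}^{d,d_{\mathbf{w}}}_{p,q}$ and every $1\leq i\leq 3$, the derivative $\partial_{v^{i}}\mathcal{G}$ can be written as a linear combination of functions in classes $\mathbf{S}^{d_{0},d_{\mathbf{w},0}}_{p_{0},q_{0}}$ satisfying the two invariants $2q_{0}-d_{\mathbf{w},0}-p_{0}\leq 2q-d_{\mathbf{w}}-p$ and $q_{0}-d_{\mathbf{w},0}\geq q-d_{\mathbf{w}}$, as the class-by-class composition of such transformations preserves these inequalities by induction. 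Leibniz rule reduces the task to differentiating each of the four elementary factors: $|v^{0}|^{-p}$, a monomial in $\widehat{v}$ and $\omega$, $(1+\omega\cdot\widehat{v})^{-q}$, and the monomial $Q(\mathbf{w})$.

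The computations are then straightforward. Using $\partial_{v^{i}}v^{0}=\widehat{v}^{i}$, $\partial_{v^{i}}\widehat{v}^{j}=(\delta^{ij}-\widehat{v}^{i}\widehat{v}^{j})/v^{0}$, and the analogous formulas for $\mathbf{w}_{0j}=\omega_{j}+\widehat{v}^{j}$ and $\mathbf{w}_{jk}=\omega_{j}\widehat{v}^{k}-\omega_{k}\widehat{v}^{j}$, each such derivative either produces a factor $1/v^{0}$ (raising $p$ by $1$ without touching $q$ or $d_{\mathbf{w}}$) or $1/|v^{0}|^{2}$ (raising $p$ by $2$ without touching $q$ or $d_{\mathbf{w}}$), while modifying the polynomial $P$ by a bounded amount. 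The only subtle case is $\partial_{v^{i}}(1+\omega\cdot\widehat{v})^{-q}$, which yields $-q(\omega_{i}-\widehat{v}^{i}(\omega\cdot\widehat{v}))/(v^{0}(1+\omega\cdot\widehat{v})^{q+1})$ and therefore a priori raises $q$ by $1$ while raising $p$ by only $1$, which would violate the first invariant $2q_{0}-d_{\mathbf{w},0}-p_{0}\leq 2q-d_{\mathbf{w}}-p$. This is the main obstacle, and it is overcome by the algebraic identity
\[
\omega_{i}-\widehat{v}^{i}(\omega\cdot\widehat{v})=(\omega_{i}+\widehat{v}^{i})-\widehat{v}^{i}(1+\omega\cdot\widehat{v})=\mathbf{w}_{0i}(\omega,v)-\widehat{v}^{i}(1+\omega\cdot\widehat{v}),
\]
which splits the dangerous term into a contribution with $(q_{0},d_{\mathbf{w},0},p_{0})=(q+1,d_{\mathbf{w}}+1,p+1)$ (paying the new $(1+\omega\cdot\widehat{v})^{-1}$ with an extra factor of $\mathbf{w}_{0i}$) and a contribution with $(q,d_{\mathbf{w}},p+1)$. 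Both contributions are easily seen to satisfy the two required invariants, and the same simple bookkeeping handles the remaining factors.

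Finally, the pointwise bound on $|\partial_{v}^{\gamma}\mathcal{G}|$ under the assumption $2q\geq d_{\mathbf{w}}$ follows by inspecting a representative term in $\mathbf{S}^{d_{0},d_{\mathbf{w},0}}_{p_{0},q_{0}}$. Since $|\widehat{v}|,|\omega|\leq 1$ one has $|P|\lesssim 1$, while Lemma \ref{LemtechestiW} gives $|\mathbf{w}_{\mu\nu}|^{2}\leq 2(1+\omega\cdot\widehat{v})$ and therefore $|Q|\lesssim(1+\omega\cdot\widehat{v})^{d_{\mathbf{w},0}/2}$, so that
\[
|\mathcal{G}|\lesssim |v^{0}|^{-p_{0}}\,(1+\omega\cdot\widehat{v})^{d_{\mathbf{w},0}/2-q_{0}}.
\]
If $d_{\mathbf{w},0}/2-q_{0}\geq 0$ one uses $1+\omega\cdot\widehat{v}\leq 2$ to get $|\mathcal{G}|\lesssim|v^{0}|^{-p_{0}}\leq|v^{0}|^{-p}\leq|v^{0}|^{2q-d_{\mathbf{w}}-p}$, where the last step uses $2q\geq d_{\mathbf{w}}$ and $p_{0}\geq p$; otherwise the second bound of Lemma \ref{LemtechestiW} gives $(1+\omega\cdot\widehat{v})^{-1}\leq 2|v^{0}|^{2}$, hence $|\mathcal{G}|\lesssim|v^{0}|^{2q_{0}-d_{\mathbf{w},0}-p_{0}}\leq|v^{0}|^{2q-d_{\mathbf{w}}-p}$ by the first invariant and $|v^{0}|\geq 1$. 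This completes the proof.
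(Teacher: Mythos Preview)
Your proof is correct and follows essentially the same approach as the paper: induction on $|\gamma|$, Leibniz rule, and the key algebraic rewriting of $\partial_{v^i}(1+\omega\cdot\widehat{v})^{-1}$ so that the extra power of $(1+\omega\cdot\widehat{v})^{-1}$ is compensated by a factor $\mathbf{w}_{0i}$. The paper's proof simply lists the elementary derivative identities (in particular $\partial_{v^j}(1+\omega\cdot\widehat{v})^{-1}=\widehat{v}^j/(v^0(1+\omega\cdot\widehat{v}))-\mathbf{w}_{0j}/(v^0(1+\omega\cdot\widehat{v})^{2})$) and leaves the bookkeeping implicit; you have made that bookkeeping explicit. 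One small remark: in your final bound for the case $d_{\mathbf{w},0}\geq 2q_0$ you invoke $p_0\geq p$, which is not one of the two stated invariants but does hold because every single differentiation raises $p$ by exactly one---it would be cleaner to note this explicitly as a third (trivially preserved) invariant.
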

\begin{proof} This follows from a straightforward induction and the following relations. For any $(i,j,k) \in \llbracket 1 , 3 \rrbracket^3$, 
\begin{align*}
& \partial_{v^j}\widehat{v}^i=\frac{\delta_i^j-\widehat{v}^i \widehat{v}^j}{v^0}, \qquad \partial_{v^j} \omega^i=0, \qquad \partial_{v^j}|v^0|^{-p}=-p\frac{\widehat{v}^j}{|v^0|^{p+1}},  \qquad \partial_{v^j}\mathbf{w}_{0i}(\omega,v)=\frac{\delta_i^j-\widehat{v}^i \widehat{v}^j}{v^0}, \\
&\partial_{v^j}\mathbf{w}_{ik}(\omega,v)=\omega^i\frac{\delta_k^j-\widehat{v}^k \widehat{v}^j}{v^0}-\omega^k\frac{\delta_i^j-\widehat{v}^i \widehat{v}^j}{v^0}, \qquad \partial_{v^j}\left( \frac{1}{1+\omega \cdot \widehat{v}} \right) =\frac{\widehat{v}^j}{v^0(1+\omega \cdot \widehat{v})}-\frac{\mathbf{w}_{0j}(\omega,v)}{v^0(1+\omega \cdot \widehat{v})^2}.
\end{align*}
\end{proof}

\section{The radiation field of the derivatives of the Maxwell field}\label{SecB}

We fix, for all this section, a $C^1$ solution $G$ to the Maxwell equations \eqref{Maxwithsource} with a continuous source term $J$. We assume that there exist $C[G] >0$ and $q >0$ such that, for all $(t,x) \in \R_+ \times \R^3$,
\begin{equation*}
|rG|(t,x)\leq C[G], \qquad r|J|(t,x)+\sum_{|\gamma| \leq 1} |\rho(\mathcal{L}_{Z^{\gamma}}G)|(t,x)+|\sigma(\mathcal{L}_{Z^{\gamma}}G)|(t,x) \leq \frac{C[G]}{(1+t+|x|)^{1+q}}.
 \end{equation*}
As a consequence, $G$ verifies the hypotheses \eqref{eq:assump0} of Proposition \ref{Corgoodnull2} and then has a radiation field $\underline{\alpha}^{\mathcal{I}^+}$. The purpose of this section is to prove that, for any $Z \in \mathbb{K}$, $\mathcal{L}_Z G$ has a radiation field $\underline{\alpha}^{\mathcal{I}^+}_Z$ which can be expressed in terms of the derivatives of $\underline{\alpha}^{\mathcal{I}^+}$. For this, we will use the following bounded functions depending only on the spherical variables,
\begin{align*}
&\omega_i:= \langle \partial_{x^i}, \partial_r \rangle= \frac{x^i}{|x|} , \qquad \omega^{e_A}_i:=\langle \partial_{x^i}, e_A \rangle, \qquad 1 \leq i \leq 3, \quad A \in \{\theta, \varphi \},\\ &\omega^{e_\theta}_1=\cos (\varphi ) \cos (\theta), \quad \omega^{e_\theta}_2=\sin (\varphi) \cos (\theta), \quad \omega^{e_\theta}_3= -\sin (\theta), \quad  \omega^{e_\varphi}_1=-\sin(\varphi), \quad \omega^{e_\varphi}_2=\cos (\varphi), \quad \omega^{e_\varphi}_3=0,
\end{align*}
and we will work in the space of distributions $\mathcal{D}'(\R_u \times \mathbb{S}^2)$. For simplicity, we will simply write $\psi \rightharpoonup \psi^{\mathcal{I}^+}$ if the weak convergence
$$\psi(u,\underline{u},\omega)   \xrightharpoonup[\underline{u} \to + \infty]{} \psi^{\mathcal{I}^+}(u,\omega) \qquad \text{in $\mathcal{D}'(\R_u \times \mathbb{S}^2)$}$$
holds. In particular, the following convergences will be crucial for us.
\begin{Lem}\label{Lemweakconv}
For any $1 \leq  i \leq 3$ and $B \in \{\theta, \varphi \}$,
\begin{align*}
&|G| \rightharpoonup 0, \qquad \frac{r}{2}\underline{L} \big( \underline{\alpha}(G)_{e_B} \big) \rightharpoonup \partial_u \left(\underline{\alpha}^{\mathcal{I}^+}_{e_B}\right), \qquad r^2 L\big( \underline{\alpha}(G)_{e_B}\big) \rightharpoonup -\underline{\alpha}^{\mathcal{I}^+}_{e_B}, \\ & r^2\omega_i^A e_A\big( \underline{\alpha}(G)_{e_B} \big) \rightharpoonup \omega_i^A e_A \left( \underline{\alpha}^{\mathcal{I}^+}_{e_B} \right), \qquad r \rho (G) \rightharpoonup 0, \qquad  r \sigma (G) \rightharpoonup 0.
\end{align*}
Since $2r=\underline{u}-u$, we also have $r L\big( \underline{\alpha}(G)_{e_B}\big) \rightharpoonup 0$, $r\omega_i^A e_A\big( \underline{\alpha}(G)_{e_B} \big) \rightharpoonup 0$, $\rho (G) \rightharpoonup 0$ and $ \sigma (G) \rightharpoonup 0$.
\end{Lem}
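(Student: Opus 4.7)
The unifying idea is to convert every weak convergence statement into uniform convergence on compact slabs $\{|u|\leq R\}\times\mathbb{S}^2$ by means of the pointwise rate already established in Proposition~\ref{Corgoodnull2}, namely $|r\underline{\alpha}(G)_{e_B}(u,\underline{u},\omega)-\underline{\alpha}^{\mathcal{I}^+}_{e_B}(u,\omega)|\lesssim C[G](1+\underline{u})^{-q}$. Once each quantity is shown to converge uniformly on such a slab, pairing with a test function $\psi\in C^\infty_c(\mathbb{R}_u\times\mathbb{S}^2)$ supported in $\{|u|\leq R\}\times\mathbb{S}^2$ and invoking dominated convergence yields the stated distributional limit.

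\textbf{Easy cases (trivial decay to zero).} The assumption $|rG|\leq C[G]$ gives $|G|(u,\underline{u},\omega)\leq 2C[G]/(\underline{u}-u)\leq 2C[G]/(\underline{u}-R)\to 0$ uniformly on $|u|\leq R$, so $|G|\rightharpoonup 0$. Similarly $r|\rho(G)|(u,\underline{u},\omega)+r|\sigma(G)|(u,\underline{u},\omega)\lesssim C[G](1+\underline{u})^{-q}\to 0$ uniformly, giving $r\rho(G),\, r\sigma(G)\rightharpoonup 0$.

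\textbf{The three key identities.} In the null coordinates $(u,\underline{u},\omega)$ one has $\underline{L}=2\partial_u$, $L=2\partial_{\underline{u}}$, and $\partial_\theta r=\partial_\varphi r=0$ since $2r=\underline{u}-u$; moreover $\nabla_L e_B=0$, so $L$ and $\underline{L}$ act on $\underline{\alpha}(G)_{e_B}$ as scalar derivatives.
\begin{itemize}
\item For the $\underline{L}$-identity, expand
\[
 \tfrac{r}{2}\underline{L}\big(\underline{\alpha}(G)_{e_B}\big)=r\,\partial_u\big(\underline{\alpha}(G)_{e_B}\big)=\partial_u\big(r\,\underline{\alpha}(G)_{e_B}\big)+\tfrac{1}{2}\,\underline{\alpha}(G)_{e_B}.
\]
The first summand converges to $\partial_u\underline{\alpha}^{\mathcal{I}^+}_{e_B}$ in $\mathcal{D}'$ by continuity of the distributional derivative applied to the uniform limit $r\,\underline{\alpha}(G)_{e_B}\to\underline{\alpha}^{\mathcal{I}^+}_{e_B}$, while the second tends to zero uniformly by the $|G|\rightharpoonup 0$ argument.
\item For the $L$-identity, expand
\[
r^2 L\big(\underline{\alpha}(G)_{e_B}\big)=2r\,\partial_{\underline{u}}\big(r\,\underline{\alpha}(G)_{e_B}\big)-r\,\underline{\alpha}(G)_{e_B}.
\]
The last estimate of Lemma~\ref{improderiv0} gives $|\nabla_L(r\,\underline{\alpha}(G))|\lesssim C[G](1+\underline{u})^{-1-q}$, and since $\nabla_L e_B=0$ this forces $|\partial_{\underline{u}}(r\,\underline{\alpha}(G)_{e_B})|\lesssim (1+\underline{u})^{-1-q}$. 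Consequently $|2r\,\partial_{\underline{u}}(r\,\underline{\alpha}(G)_{e_B})|\lesssim (1+\underline{u})^{-q}\to 0$ uniformly on $|u|\leq R$, while $-r\,\underline{\alpha}(G)_{e_B}\to -\underline{\alpha}^{\mathcal{I}^+}_{e_B}$ uniformly.
\item For the angular identity, since $r$ depends only on $(u,\underline{u})$, the rescaled vector fields $r e_\theta$ and $r e_\varphi$ act as the intrinsic unit-sphere derivatives $\partial_\theta$ and $(\sin\theta)^{-1}\partial_\varphi$, and commute with multiplication by $r$. Hence $r^2 e_A(\underline{\alpha}(G)_{e_B})=e_A^{\mathbb{S}^2}(r\,\underline{\alpha}(G)_{e_B})\rightharpoonup e_A^{\mathbb{S}^2}(\underline{\alpha}^{\mathcal{I}^+}_{e_B})$ in $\mathcal{D}'$ by continuity of the distributional sphere derivative. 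Multiplying by the bounded function $\omega^A_i$ of $\omega$ alone yields the statement for $r^2\omega^A_i e_A(\underline{\alpha}(G)_{e_B})$.
\end{itemize}

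\textbf{Consequences and main obstacle.} The secondary limits $rL(\underline{\alpha}(G)_{e_B})\rightharpoonup 0$, $r\omega^A_i e_A(\underline{\alpha}(G)_{e_B})\rightharpoonup 0$, $\rho(G),\sigma(G)\rightharpoonup 0$ now follow mechanically: each quantity equals $r^{-1}$ times the corresponding already-controlled quantity, and $r^{-1}\leq 2/(\underline{u}-R)\to 0$ uniformly on $|u|\leq R$, while the partner quantity (e.g.\ $r^2 L(\underline{\alpha}(G)_{e_B})$ or $r\rho(G)$) is uniformly bounded by the estimates just derived. The only mildly delicate point—the ``main obstacle''—is establishing the rate $|\partial_{\underline{u}}(r\,\underline{\alpha}(G)_{e_B})|\lesssim (1+\underline{u})^{-1-q}$ that powers the proof of the $r^2 L$ identity, and this is precisely the content of the last estimate of Lemma~\ref{improderiv0}, so no new analytical input is required.
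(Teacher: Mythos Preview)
Your proof is correct and follows essentially the same route as the paper: uniform convergence on compact $u$-slabs from Proposition~\ref{Corgoodnull2}, the Leibniz identities for $\tfrac{r}{2}\underline{L}$ and $r^2L$ acting on $\underline{\alpha}(G)_{e_B}$, the rate $|\nabla_L(r\underline{\alpha}(G))|\lesssim(1+\underline{u})^{-1-q}$ from Lemma~\ref{improderiv0}, and continuity of the distributional angular derivative (the paper phrases this last step as the divergence theorem on $\mathbb{S}^2$, which is the same thing).

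One small slip: in the ``secondary limits'' paragraph you assert that the partner quantity $r^2\omega_i^A e_A(\underline{\alpha}(G)_{e_B})$ is \emph{uniformly bounded}, but you only established its weak convergence in $\mathcal{D}'$; under the stated hypotheses there is no pointwise $C^1$ angular bound on $r\underline{\alpha}(G)_{e_B}$. The fix is immediate and avoids the ``$r^{-1}\times$bounded'' scheme for this term: simply write $r\omega_i^A e_A(\underline{\alpha}(G)_{e_B})=\omega_i^A e_A^{\mathbb{S}^2}(\underline{\alpha}(G)_{e_B})$ and use that $\underline{\alpha}(G)_{e_B}\to 0$ uniformly on compact $u$-slabs (from $|G|\rightharpoonup 0$), so its distributional angular derivative tends to $0$.
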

\begin{proof}
The first weak convergence follows from $2|G|(u,\underline{u},\omega) \leq C[G](\underline{u}-u)^{-1}$, so that $|G|(\cdot, \underline{u},\cdot) \to 0$ uniformly on any compact subset of $\R_u \times \mathbb{S}^2$. The other ones are a direct consequence of the strong uniform convergence $r\underline{\alpha}(G)(u,\underline{u},\omega) \to \underline{\alpha}^{\mathcal{I}^+}(u,\omega)$, as $\underline{u} \to +\infty$, which is given by Proposition \ref{Corgoodnull2} since $G$ verifies \eqref{eq:assump0}. 
\begin{itemize}
\item For the second one, use $r \underline{L}=\underline{L}r +1$, $\underline{L}=2\partial_u$ and that $\underline{\alpha}(F)_{e_B}(\cdot,\underline{u},\cdot) \to 0$ uniformly on compact subsets of $\R_u \times \mathbb{S}^2$. 
\item The third one is in fact a strong and uniform convergence. Indeed, $r^2 L (\underline{\alpha}(G)_{e_B})=rL(r\underline{\alpha}(G)_{e_B})-r\underline{\alpha}(G)_{e_B}$ and according to \eqref{eq:nablaLralaphabarre}, $r|L(r\underline{\alpha}(G)_{e_B})|\lesssim \underline{u}^{-q}$. 
\item Next, fix $(t,r) \in \R_+^2$, $\psi \in C^{\infty}_c(\R_u \times \mathbb{S}^2)$ and denote by $\vec{v}_i$ the vector field $\omega_i^{e_A}e_A$, which is the projection on the $2$-spheres of $\partial_{x^i}$. Since $(r e_\theta, r e_\varphi)= (\partial_\theta, \partial_{\varphi}/\sin(\theta))$, we have
\begin{align*}
\omega_i^A r^2\big( e_A (\underline{\alpha}(G)_{e_B}) \big)(t,r\omega) \psi(u,\omega)&=  r \psi(u,\omega)  \vec{v}_i \cdot \slashed{\nabla}\big( \underline{\alpha}(G)_{e_B} (t,r\omega)\big), \\
 \omega_i^A  e_A (\underline{\alpha}_{e_B}^{\mathcal{I}^+} \big)(u,\omega) \psi(u,\omega)&= \psi(u,\omega)  \vec{v}_i \cdot \slashed{\nabla}\big( \underline{\alpha}_{e_B}^{\mathcal{I}^+}\big) (u,\omega),
\end{align*}
so that it suffices to apply the divergence theorem on $\mathbb{S}^2$ and to use $r \underline{\alpha}(G)_{e_B} \rightharpoonup \underline{\alpha}_{e_B}^{\mathcal{I}^+}$.
\item Finally, the last two ones ensue from $r|\rho (G)|+r|\sigma (G) | \lesssim \underline{u}^{-q}$.
\end{itemize}
\end{proof}
We now prove a result which directly implies Proposition \ref{derivscatMax}. We consider a more general setting since it does not complicate the proof and in order to be able to apply these properties in different contexts. For this, given a strictly increasing and unbounded sequence of advanced times $\mathbf{s}=(\underline{u}_n)_{n \geq 0}$, we will write $\psi \rightharpoonup_{\mathbf{s}} \psi^{\mathcal{I}^+}$ if the following weak convergence holds,
$$\psi(u,\underline{u}_n,\omega)   \xrightharpoonup[n \to + \infty]{} \psi^{\mathcal{I}^+}(u,\omega) \qquad \text{in $\mathcal{D}'(\R_u \times \mathbb{S}^2)$}.$$
\begin{Pro}
Consider $G$ an $H^1_{\mathrm{loc}}(\R_+ \times \R^3)$ $2$-form and $\underline{\alpha}^{\mathcal{I}^+}$ an $L^2_{\mathrm{loc}}(\R_u \times \mathbb{S}^2_\omega)$ $2$-form tangential to the spheres. Assume that there exists a strictly increasing and unbounded sequence of advanced times $\mathbf{s}=(\underline{u}_n)_{n \geq 0}$ such that
\begin{itemize}
\item $r \underline{\alpha}(G) \rightharpoonup_{\mathbf{s}} \underline{\alpha}^{\mathcal{I}^+}$,
\item all the weak convergences of Lemma \ref{Lemweakconv} holds, at least for the sequence $\mathbf{s}\subset \R_{+,\underline{u}}$.
\end{itemize}
Then, for any $Z \in \mathbb{K}$, there exists $\underline{\alpha}^{\mathcal{I}^+}_Z \in L^2_{\mathrm{loc}}(\R_u \times \mathbb{S}^2_\omega)$, a $2$-form tangential to the spheres, which satisfies $r \underline{\alpha}(\mathcal{L}_Z G)  \rightharpoonup_{\mathbf{s}} \underline{\alpha}^{\mathcal{I}^+}_Z $. Moreover, for any $1 \leq i \leq 3$ and $1 \leq j < k \leq 3$,
\begin{alignat*}{2}
&  \underline{\alpha}^{\mathcal{I}^+}_{\partial_t}= \nabla_u \underline{\alpha}^{\mathcal{I}^+}, \qquad &&\underline{\alpha}^{\mathcal{I}^+}_{\partial_{x^i}}=-\omega_i \nabla_u \underline{\alpha}^{\mathcal{I}^+}, \qquad \underline{\alpha}^{\mathcal{I}^+}_{S}=u\nabla_u \underline{\alpha}^{\mathcal{I}^+}+\underline{\alpha}^{\mathcal{I}^+} \\
&\underline{\alpha}^{\mathcal{I}^+}_{\Omega_{jk}} = \mathcal{L}_{\Omega_{jk}} \big( \underline{\alpha}^{\mathcal{I}^+} \big), \qquad && \underline{\alpha}^{\mathcal{I}^+}_{\Omega_{0i}}=- \omega_i u\nabla_u  \underline{\alpha}^{\mathcal{I}^+}-2\omega_i \underline{\alpha}^{\mathcal{I}^+}+ \omega^{e_A}_i \slashed{\nabla}_{e_A}  \underline{\alpha}^{\mathcal{I}^+} .
\end{alignat*}
\end{Pro}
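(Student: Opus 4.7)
The plan is to apply the Leibniz identity
$$\mathcal{L}_Z G(X, Y) = Z\big(G(X, Y)\big) - G([Z, X], Y) - G(X, [Z, Y])$$
with $(X, Y) = (e_A, \underline{L})$ to each $Z \in \mathbb{K}$, yielding
$$r\,\underline{\alpha}(\mathcal{L}_Z G)_{e_A} \;=\; r\,Z\big(\underline{\alpha}(G)_{e_A}\big) \;-\; r\,G([Z, e_A], \underline{L}) \;-\; r\,G(e_A, [Z, \underline{L}]),$$
and to pass to the weak limit along $\mathbf{s}$. I would first expand each $Z$ in the null frame (for instance $\partial_{x^i} = \tfrac{\omega_i}{2}(L - \underline{L}) + \omega_i^{e_B}e_B$, $S = \tfrac{\underline{u}}{2}L + \tfrac{u}{2}\underline{L}$, and $\Omega_{0i} = \omega_i(\tfrac{\underline{u}}{2}L - \tfrac{u}{2}\underline{L}) + t\omega_i^{e_B}e_B$) and compute the commutators $[Z, \underline{L}]$ and $[Z, e_A]$ using $[\partial_{x^i}, \partial_r] = r^{-1}(\partial_{x^i} - \omega_i\partial_r)$ together with the fact that $e_B = \omega_k^{e_B}\partial_{x^k}$ has cartesian coefficients depending only on $\omega$. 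The coefficients that arise are smooth in $(t,r,\omega)$ and, on any slab $\{|u|\le R\}$, enjoy the uniform asymptotics $\underline{u}/r \to 2$, $t/r \to 1$, $u/r \to 0$, so weakly convergent sequences in Lemma \ref{Lemweakconv} may be multiplied by them without losing their limits.

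The cases $\partial_t$, $\partial_{x^i}$, $\Omega_{jk}$ and $S$ are direct. For $\partial_t$ both commutators vanish and $r\partial_t\underline{\alpha}_{e_A} = \tfrac{1}{2}rL\underline{\alpha}_{e_A} + \tfrac{r}{2}\underline{L}\underline{\alpha}_{e_A}\rightharpoonup_{\mathbf{s}}\partial_u\underline{\alpha}^{\mathcal{I}^+}_{e_A}$. For $\partial_{x^i}$, both commutators are $O(1/r)$ valued in tangential directions, hence $rG([\partial_{x^i},\cdot],\cdot) = O(|G|) \rightharpoonup_{\mathbf{s}} 0$, while the remaining piece $rZ(\underline{\alpha}_{e_A})$ converges to $-\omega_i\partial_u\underline{\alpha}^{\mathcal{I}^+}_{e_A}$ via $r\omega_i^{e_B}e_B\underline{\alpha}(G)_{e_A}\rightharpoonup_{\mathbf{s}}0$ and $rL\underline{\alpha}(G)_{e_A}\rightharpoonup_{\mathbf{s}}0$. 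For $\Omega_{jk}$ one has $[\Omega_{jk},\underline{L}] = 0$ and $[\Omega_{jk},e_A]$ tangential, giving $r\underline{\alpha}(\mathcal{L}_{\Omega_{jk}}G)_{e_A} = \mathcal{L}_{\Omega_{jk}}\big(r\underline{\alpha}(G)\big)(e_A)\rightharpoonup_{\mathbf{s}} \mathcal{L}_{\Omega_{jk}}\big(\underline{\alpha}^{\mathcal{I}^+}\big)(e_A)$ by continuity of $\mathcal{L}_{\Omega_{jk}}$ on distributions. For $S$, the commutators $[S,\underline{L}] = -\underline{L}$ and $[S,e_A] = -e_A$ produce $r\underline{\alpha}(\mathcal{L}_S G)_{e_A} = rS(\underline{\alpha}_{e_A}) + 2r\underline{\alpha}_{e_A}$; writing $rS = \tfrac{\underline{u}}{2r}\cdot r^2 L + u\cdot \tfrac{r}{2}\underline{L}$ and using $r^2L\underline{\alpha}(G) \rightharpoonup_{\mathbf{s}} -\underline{\alpha}^{\mathcal{I}^+}$ together with $\underline{u}/r \to 2$ on compact $u$-sets yields $u\partial_u\underline{\alpha}^{\mathcal{I}^+} + \underline{\alpha}^{\mathcal{I}^+}$ as claimed.

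The hard part is the Lorentz boost $Z=\Omega_{0i}$, for which
$$[\Omega_{0i}, \underline{L}] = \omega_i\underline{L} - \tfrac{\underline{u}}{r}\omega_i^{e_B}e_B, \qquad [\Omega_{0i}, e_A] = t\,\partial_{x^i}(\omega_k^{e_A})\partial_{x^k} - \omega_i^{e_A}\partial_t,$$
featuring coefficients of size $t$ or $\underline{u}$. These growing weights are absorbed by the decay of the good null components: $r\rho(G)$ and $r\sigma(G)$ weakly vanish, so $\underline{u}\,\sigma(G) = (\underline{u}/r)\cdot r\sigma(G)$ and $t\,\rho(G) = (t/r)\cdot r\rho(G)$ still weakly vanish on any compact $u$-set. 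The surviving contributions are: the $\omega_i\underline{L}$ piece of $[\Omega_{0i},\underline{L}]$ gives $-\omega_i \underline{\alpha}^{\mathcal{I}^+}$; the $-\omega_i^{e_A}\partial_t$ piece of $[\Omega_{0i},e_A]$ contributes $-\omega_i^{e_A}\rho(G)$ and hence weakly vanishes after multiplication by $r$; the $t\omega_i^{e_B}e_B$ piece of $\Omega_{0i}$ acting on $\underline{\alpha}_{e_A}$ produces $\tfrac{t}{r}\cdot r^2 \omega_i^{e_B}e_B\underline{\alpha}_{e_A} \rightharpoonup_{\mathbf{s}} \omega_i^{e_B}e_B\underline{\alpha}^{\mathcal{I}^+}_{e_A}$; and the $\omega_i\big(\tfrac{\underline{u}}{2}L - \tfrac{u}{2}\underline{L}\big)$ piece of $\Omega_{0i}$ gives, via $\tfrac{\underline{u}}{2r}\cdot r^2 L\underline{\alpha} \rightharpoonup_{\mathbf{s}}-\underline{\alpha}^{\mathcal{I}^+}$ and $\tfrac{r}{2}\underline{L}\underline{\alpha} \rightharpoonup_{\mathbf{s}} \partial_u\underline{\alpha}^{\mathcal{I}^+}$, the two contributions $-\omega_i \underline{\alpha}^{\mathcal{I}^+}$ and $-\omega_i u\partial_u\underline{\alpha}^{\mathcal{I}^+}$. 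Summing the four pieces yields
$$\underline{\alpha}^{\mathcal{I}^+}_{\Omega_{0i}} \;=\; -\omega_i u\partial_u\underline{\alpha}^{\mathcal{I}^+} - 2\omega_i \underline{\alpha}^{\mathcal{I}^+} + \omega_i^{e_A}\slashed{\nabla}_{e_A}\underline{\alpha}^{\mathcal{I}^+}.$$
The delicate bookkeeping lies in identifying $\omega_i^{e_B}e_B\underline{\alpha}^{\mathcal{I}^+}_{e_A}$ with $\omega_i^{e_A}\slashed{\nabla}_{e_A}\underline{\alpha}^{\mathcal{I}^+}$ once the $\mathbb{S}^2$ Christoffel contributions from $t\partial_{x^i}(\omega_k^{e_A})\partial_{x^k}$ are matched against the difference between coordinate and covariant angular derivatives; this boils down to a bounded algebraic identity involving only $(\omega_k, \omega_k^{e_A})$, independent of $G$, and does not interact with the weak-limit procedure.
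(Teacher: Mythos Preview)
Your proposal is correct and follows essentially the same route as the paper: both start from the Leibniz identity $r\underline{\alpha}(\mathcal{L}_Z G)_{e_B}=rZ(\underline{\alpha}(G)_{e_B})-rG([Z,e_B],\underline{L})-rG(e_B,[Z,\underline{L}])$, expand each $Z\in\mathbb{K}$ in the null frame, compute the commutators with $\underline{L}$ and $e_B$, and pass to weak limits via Lemma~\ref{Lemweakconv}. The only cosmetic differences are that the paper writes the commutator $[\Omega_{0i},e_B]$ directly in the null frame with explicit $\mathbb{S}^2$ Christoffel symbols $\slashed{\Gamma}^D_{AB}$ (making the identification $\omega_i^{e_A}e_A(\underline{\alpha}^{\mathcal{I}^+}_{e_B})-\omega_i^{e_A}\slashed{\Gamma}^D_{AB}\underline{\alpha}^{\mathcal{I}^+}_{e_D}=\omega_i^{e_A}\slashed{\nabla}_{e_A}(\underline{\alpha}^{\mathcal{I}^+})_{e_B}$ transparent), and it restricts to a chart $K=\{\sin\theta>1/8\}$ to avoid coordinate degeneracies before globalizing; you handle both points implicitly but correctly.
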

\begin{proof}
In order to avoid technical difficulties related to the degeneracies of the spherical coordinate system, we will in fact prove weak convergences in 
$$\mathcal{D}'(\R_u \times K), \qquad K := \{ \omega \in \mathbb{S}^2 \, | \, \sin \theta > 1/8 \}.$$
 The convergences in the full space $\mathcal{D}'(\R_u \times \mathbb{S}^2)$ can then be obtained by applying the upcoming results to an other well-chosen spherical coordinate system. 

We fix, for all this proof, $B \in \{\theta, \varphi \}$, $i \in \llbracket 1,3 \rrbracket$ and we recall that for any $Z \in \mathbb{K}$,
$$ r\underline{\alpha}( \mathcal{L}_Z G)_{e_B}=rZ(\underline{\alpha}(G)_{e_B})-rG([Z,e_B],\underline{L})-rG(e_B,[Z,\underline{L}]).$$
Then, we have
$$ r\underline{\alpha}( \mathcal{L}_{\partial_t} G)_{e_B}=\frac{r}{2}\underline{L}(\underline{\alpha}(G)_{e_B})+\frac{r}{2}L(\underline{\alpha}(G)_{e_B}) \rightharpoonup_{\mathbf{s}} \partial_u \left(\underline{\alpha}^{\mathcal{I}^+}_{e_B}\right).$$
For the spatial translation $\partial_{x^i}=-\frac{\omega_i}{2}\underline{L}+\frac{\omega_i}{2}L+\omega_i^Ae_A$, we use that $[\partial_{x^i} , \underline{L}]=-\frac{\omega_i^{e_A}}{r} e_A$ and $[\partial_{x^i}, e_A]=\partial_{x^i}(\omega^{e_A}_j) \partial_{x^j}$, with $\partial_{x^i}(\omega^{e_A}_j)\lesssim r^{-1}$ on $K$. We get
\begin{align*}
r\underline{\alpha}( \mathcal{L}_{\partial_{x^i}} G)_{e_B}&=-\frac{\omega_i r}{2}\underline{L}(\underline{\alpha}(G)_{e_B})+\frac{\omega_i r}{2}L(\underline{\alpha}(G)_{e_B})+r\omega_i^Ae_A(\underline{\alpha}(G)_{e_B})-r\partial_{x^i}(\omega^{e_A}_j)G(e_B,\partial_{x^j})+\omega_i^{e_A}G(e_B, e_A)\\ 
& \rightharpoonup_{\mathbf{s}} -\omega_i \partial_u \left(\underline{\alpha}^{\mathcal{I}^+}_{e_B}\right).
 \end{align*}
For the scaling, recall that $[S,\underline{L}]=-\underline{L}$ and $[S,e_B]=-e_B$. As $2S=u\underline{L}+(u+2r)L$, we have
$$ r\underline{\alpha}( \mathcal{L}_S G)_{e_B}=\frac{u}{2}r\underline{L}(\underline{\alpha}(G)_{e_A})+\frac{u+2r}{2}rL(\underline{\alpha}(G)_{e_A})+2rG(e_B,\underline{L}) \rightharpoonup_{\mathbf{s}} u \partial_u \left(\underline{\alpha}^{\mathcal{I}^+}_{e_B}\right)+\underline{\alpha}^{\mathcal{I}^+}_{e_B}.$$
Next, for the Lorentz boost $\Omega_{0i}$, we use
$$ \Omega_{0i}=\frac{\omega_i}{2}(\underline{u} L-u \underline{L})+t \omega_i^{e_A} e_A, \qquad [\Omega_{0i},e_B]=\frac{\omega_i^{e_B}}{2r}(u \underline{L}-\underline{u}L)+\frac{t}{r}\omega_i^{e_A} \slashed{\Gamma}^D_{AB}e_D , \qquad  [\Omega_{0i}, \underline{L}]=\omega_i \underline{L}-\frac{\underline{u}}{r} \omega_i^{e_A}e_A,$$
where $\slashed{\Gamma}^D_{AB}$ are the Cristofell symbols of $\mathbb{S}^2$ in the nonholonomic basis $(e_\theta,e_\varphi)$. In particular, $\slashed{\Gamma}^D_{AB}$ is bounded on $K$. As $\underline{u}=u+2r$ and $t=u+r$, we obtain
\begin{align*}
 r\underline{\alpha}( \mathcal{L}_{\Omega_{0i}} G)_{e_B}=&-\frac{\omega_i u}{2}r\underline{L}(\underline{\alpha}(G)_{e_A})+\frac{\omega_i (u+2r)}{2}rL(\underline{\alpha}(G)_{e_B})+ \omega^{e_A}_i (u+r)r e_A(\underline{\alpha}(G)_{e_B}) -\frac{\omega_i^{e_B}}{2}uG(\underline{L},\underline{L})\\
 & +\frac{\omega_i^{e_B}}{2}\underline{u}G(L,\underline{L})-(u+r)\omega_i^{e_A} \slashed{\Gamma}^D_{AB}G(e_D,\underline{L}) -\omega_i rG(e_B,\underline{L})+\underline{u}\omega_i^{e_A} G(e_B,e_A) .
 \end{align*}
Since $G(\underline{L},\underline{L})=0$ and $\underline{u}(|G(L,\underline{L})|+|G(e_A,e_B)|)= (u+2r) (2|\rho(G)|+|\sigma(G)|)\rightharpoonup_{\mathbf{s}} 0$, we get
\begin{align*}
 r\underline{\alpha}( \mathcal{L}_{\Omega_{0i}} G)_{e_B} &\rightharpoonup_{\mathbf{s}} - \omega_i u \partial_u \left(\underline{\alpha}^{\mathcal{I}^+}_{e_B}\right)-\omega_i \underline{\alpha}^{\mathcal{I}^+}_{e_B}+\omega^{e_A}_i e_A \left( \underline{\alpha}^{\mathcal{I}^+}_{e_B} \right)-0+0- \omega_i^{e_A} \slashed{\Gamma}^D_{AB}\underline{\alpha}^{\mathcal{I}^+}_{e_D}-\omega_i \underline{\alpha}^{\mathcal{I}^+}_{e_B} +0 \\
 &= - \omega_i u\partial_u\left(  \underline{\alpha}^{\mathcal{I}^+}_{e_B}\right)-2\omega_i \underline{\alpha}^{\mathcal{I}^+}_{e_B}+ \omega^{e_A}_i \slashed{\nabla}_{e_A} \big( \underline{\alpha}^{\mathcal{I}^+} \big)_{e_B}.
 \end{align*}
Finally, we recall the expression of the rotations in the spherical coordinate system $(t,r,\theta,\varphi)$,
$$ \Omega_{12}=\partial_{\varphi}, \qquad \Omega_{13}=\cos(\varphi ) \partial_{\theta }- \cot(\theta) \sin (\varphi) \partial_{\varphi}, \qquad \Omega_{23}=-\sin(\varphi ) \partial_{\theta }- \cot(\theta) \cos (\varphi) \partial_{\varphi}.$$
In particular, these vector fields, tangential to the spheres, are well-defined on $\mathcal{I}^+ \simeq \R_u \times \mathbb{S}^2$. Fix now $(j,k) \in \llbracket 1 , 3 \rrbracket^2$ and write $\Omega_{jk}=\Omega_{jk}^\theta \partial_\theta+\Omega_{jk}^{\varphi} \partial_{\varphi}$. Remark, using first $[\Omega_{jk},\underline{L}]=0$ and then the expression of the Lie derivative in the spherical coordinate system, that
$$\underline{\alpha}(\mathcal{L}_{\Omega_{jk}} G)_{\partial_B}=\mathcal{L}_{\Omega_{jk}} \left(\underline{\alpha}( G)\right)_{\partial_B}= \Omega_{jk} \left( \underline{\alpha}(G)_{\partial_B} \right)+ \partial_B\left( \Omega_{jk}^A \right)\underline{\alpha}(G)_{\partial_A}.$$
Recall now that $(re_\theta,re_\varphi)=(\partial_\theta,\partial_{\varphi}/\sin(\theta))$ on $\R_+ \times \R^3$ and $(e_\theta,e_\varphi)=(\partial_\theta,\partial_{\varphi}/\sin(\theta))$ on $\R_u\times \mathbb{S}^2$. Hence, using $r \underline{\alpha}(G)_{e_A} \rightharpoonup_{\mathbf{s}} \underline{\alpha}^{\mathcal{I}^+}_{e_A}$ and since any of the quantities considered is smooth and bounded on $K$,
\begin{align*}
r\underline{\alpha}(\mathcal{L}_{\Omega_{jk}} G)_{e_\theta}&= \Omega_{jk}^{\theta} \partial_{\theta}\left( r\underline{\alpha}(G)_{e_\theta} \right)+\Omega_{jk}^{\varphi} \partial_{\varphi}\left(r \underline{\alpha}(G)_{e_\theta} \right)+ \partial_\theta \left( \Omega_{jk}^\theta \right)r\underline{\alpha}(G)_{e_\theta}+\sin(\theta)\partial_\theta \left( \Omega_{jk}^\varphi \right)r\underline{\alpha}(G)_{e_\varphi} \\
& \rightharpoonup_{\mathbf{s}} \Omega_{jk} \left( \underline{\alpha}^{\mathcal{I}^+}_{e_\theta} \right)+ \partial_\theta \left( \Omega_{jk}^\theta \right)\underline{\alpha}^{\mathcal{I}^+}_{e_\theta}+\sin(\theta)\partial_\theta \left( \Omega_{jk}^\varphi \right)\underline{\alpha}^{\mathcal{I}^+}_{e_\varphi} \\
& =\Omega_{jk} \left( \underline{\alpha}^{\mathcal{I}^+}_{\partial_\theta} \right)+ \partial_\theta\left( \Omega_{jk}^A \right)\underline{\alpha}^{\mathcal{I}^+}_{\partial_A} = \mathcal{L}_{\Omega_{kl}}\big(\underline{\alpha}^{\mathcal{I}^+} \big)_{e_\theta}.
\end{align*}
Similarly, we get
\begin{align*}
r\underline{\alpha}(\mathcal{L}_{\Omega_{jk}} G)_{e_\varphi}&= \Omega_{jk} \left(r \underline{\alpha}(G)_{e_\varphi} \right)-\Omega_{kl}\left( \frac{1}{\sin \theta} \right)r \underline{\alpha}(G)_{e_\varphi} + \frac{1}{\sin \theta}\partial_\varphi \left( \Omega_{jk}^\theta \right)r\underline{\alpha}(G)_{e_\theta}+\partial_\varphi \left( \Omega_{jk}^\varphi \right)r\underline{\alpha}(G)_{e_\varphi} \\
& \rightharpoonup_{\mathbf{s}} \Omega_{jk} \left( \underline{\alpha}^{\mathcal{I}^+}_{e_\varphi} \right)-\Omega_{kl}\left( \frac{1}{\sin \theta} \right)\underline{\alpha}^{\mathcal{I}^+}_{e_\varphi}+ \frac{1}{\sin \theta}\partial_\varphi \left( \Omega_{jk}^\theta \right)\underline{\alpha}^{\mathcal{I}^+}_{e_\theta}+\partial_\varphi \left( \Omega_{jk}^\varphi \right)\underline{\alpha}^{\mathcal{I}^+}_{e_\varphi}  = \mathcal{L}_{\Omega_{kl}}\big(\underline{\alpha}^{\mathcal{I}^+} \big)_{e_\varphi}.
\end{align*}
\end{proof}

\section{Remarks on $F^\infty$ and the modified characteristics}\label{SecAnnexeC}

\subsection{Alternative expression for $F^\infty$}\label{AnnexeC1} We could define $F^\infty$ in a slightly different way. However, contrary to what we did in Section \ref{Subsecmachin}, we could not define in such a way $\mathcal{L}_{Z^\gamma}(F)^\infty$ for the derivatives of order $|\gamma|=N-1$. Using the representation formula for the wave equation satisfied by $F_{\mu \nu}$,
\begin{align*}
 F_{\mu \nu} = F^{\mathrm{hom}}_{\mu \nu}+ \big[ f \big]^{\mathrm{inh}}_{\mu \nu}, \qquad \left[ f \right]^{\mathrm{inh}}_{\mu \nu}(t,x) := \frac{1}{4 \pi}\int_{|y-x|\leq t} \int_{\R^3_v}\left(\widehat{v}_{\mu} \partial_{x^{\nu}}  f-\widehat{v}_{\nu} \partial_{x^\mu}  f\right)(t-|y-x|,y,v) \dr v \frac{\dr y}{|y-x|}.
\end{align*}  
In order to investigate the asymptotic behavior of $ [  f ]^{\mathrm{inh}}$, it is important to determine the asymptotic profile of the source term of the wave equation. In particular, we need to obtain a better estimate than the one given by Proposition \ref{gainderivdecaymoyv} which does not provide the expected time decay $t^{-4}$. The starting point consists in observing that a kind of null condition holds,
\begin{alignat*}{2}
& t\left(\partial_{x^i}+\widehat{v}_i \partial_t \right)=\Omega_{0i}+z_{0i} \partial_t=\widehat{\Omega}_{0i}-v^0 \partial_{v^i}+ \partial_t z_{0i}-\widehat{v}_i=\widehat{\Omega}_{0i}-\partial_{v^i} v^0+\partial_t z_{0i}, \qquad \quad && 1 \leq i \leq 3, \\
& t\left(\widehat{v}_j\partial_{x^k}-\widehat{v}_k \partial_{x^j} \right) = \widehat{v}_j \widehat{\Omega}_{0k}-\widehat{v}_k \widehat{\Omega}_{0j}-\partial_t z_{jk}-\widehat{v}_j \partial_{v^k} v^0+\widehat{v}_k \partial_{v^j} v^0, \qquad && 1 \leq j < k \leq 3.
\end{alignat*}
Hence, using the convention $\widehat{\Omega}_{00}=0$ and peforming integration by parts, we obtain, for any $0 \leq \mu < \nu \leq 3$,
\begin{equation*}
 \int_{\R^3_v} \widehat{v}_{\mu} \partial_{x^{\nu}}  f-\widehat{v}_{\nu} \partial_{x^\mu}  f \dr v= \frac{1}{t}\int_{\R^3_v}  \left(\widehat{v}_\mu \widehat{\Omega}_{0\nu}  f- \widehat{v}_{\nu}  \widehat{\Omega}_{0\mu}  f\right)\dr v-\frac{1}{t}\int_{\R^3_v} \partial_t \left( z_{\mu \nu}  f \right)\dr v.
 \end{equation*}
The leading order term of its asymptotic expansion is the first term on the right hand side. Its behavior can be easily obtained from Proposition \ref{ProprifilJ}. Following the proof of Proposition \ref{gainderivdecaymoyv}, one could prove that last term almost decay as $t^{-5}$. It will then be convenient to use the notation $Q^{\widehat{\Omega}_{0\ell}  }_{\infty}$ in order to denote $Q^{\kappa}_{\infty}$, where $\widehat{Z}^\kappa=\widehat{\Omega}_{0 \ell} $, and to set $Q^{\widehat{\Omega}_{00}}_\infty:=0$. Following the proof Proposition \ref{Proinducedfield}, we could obtain
$$
\lim_{t \to + \infty} t^2\big[ f \big]^{\mathrm{inh}}_{\mu \nu}(t,x+t\widehat{v}) := \frac{1}{4 \pi}\int_{\substack{|z| \leq 1 \\  |z+\widehat{v}| < 1-|z|  }}  \left( |v^0|^5 \left( \widehat{v}_\mu Q^{\widehat{\Omega}_{0\nu}}_{\infty}-\widehat{v}_\nu Q^{\widehat{\Omega}_{0\mu}}_{\infty} \right) \right)\left(\frac{\widecheck{z+\widehat{v}}}{1-|z|} \right) \frac{\dr z}{|z|(1-|z|)^4},
$$
which is necessarily equal to $F^{\infty}$.
\subsection{The support of the corrections of the linear characteristics and commutators}\label{AnnexeC2} We could obtain similar results by modifying the trajectories and the homogeneous vector fields only inside the light cone. More precisely, we could consider, for $\widehat{Z} \in \widehat{\mathbb{P}}_0 \setminus \{ \partial_t, \partial_{x^1} , \partial_{x^2},  \partial_{x^3} \} $,
$$ \widetilde{X}_{\C}(t,x,v):=x+t\widehat{v}+\C(t,v) \, \chi (t-|x-t\widehat{v}|), \qquad \widetilde{Z}^{\mathrm{mod}}:=\widehat{Z}+\C_{\widehat{Z}}^i \, \chi (t-r) \partial_{x^i},$$
where $\chi$ is a cutoff function satisfying $\chi(s)=0$ for $s \leq 1$ and $\chi(s)=1$ for $s \geq 2$. It is not surprising that all the results proved for $X_\C$ and $\widehat{Z}^{\mathrm{mod}}$ hold as well with these corrections since the Vlasov field enjoy strong decay properties in the exterior of the light cone (see Lemma \ref{gainv}). We could even avoid the loss of the weight $\mathbf{z}^{\beta_H}$ in Proposition \ref{ProestCommod} and Corollary \ref{Proimproderiv}. Indeed, these weights come from the identity $x^i/t=(x^i-t\widehat{v}^i)/t+\widehat{v}^i$ that we performed during the proof of Proposition \ref{improvedCom}. On the support of $\chi$, we can simply use that $|x|/t \leq 1$. However, we could not save any $\langle x \rangle$ weight in the analogue version of the scattering statement of Proposition \ref{mainresultPro} since we would have to lose a power of $\mathbf{z}^{\beta_H}$ in order to estimate $|v^0|^{|\xi}\partial_v^{\xi} (\chi (t-|x+t\widehat{v}|))$.

\renewcommand{\refname}{References}
\bibliographystyle{abbrv}
\bibliography{biblio}

\end{document}